\documentclass[11pt,reqno]{amsart}
 \usepackage{amsmath,amsfonts,amssymb,amsthm,mathrsfs}

\usepackage{bm}
\usepackage{esint}
 \usepackage{hyperref}
 \usepackage{color}
\usepackage{graphicx}
\usepackage{graphics}
\usepackage{geometry}

\usepackage{tikz}
\usepackage{tikz-cd}
\usepackage{extpfeil}

\usepackage[all]{xy}

 \newtheorem{theorem}{Theorem}[section]
 \newtheorem{lemma}[theorem]{Lemma}
 \newtheorem{corollary}[theorem]{Corollary}
 \newtheorem{proposition}[theorem]{Proposition}
  
 \newtheorem{conjecture}[theorem]{Conjecture}

 \theoremstyle{definition}
 \newtheorem{definition}[theorem]{Definition}

 \newtheorem{remark}[theorem]{Remark}

\numberwithin{equation}{section}

\newcommand{\bo}{\bm{0}}

\newcommand{\cR}{\mathcal{R}}

\newcommand{\dR}{\mathbb{R}}

\newcommand{\dZ}{\mathbb{Z}}

\newcommand{\fs}{\mathfrak{s}}

\newcommand{\p}{\partial}

\newcommand{\fm}{\mathfrak{m}}

\newcommand{\di}{\mathop{}\!\mathrm{d}}

\newcommand{\meas}{\mathfrak{m}}

\DeclareMathOperator{\Alex}{Alex}

\DeclareMathOperator{\CAT}{CAT}
\DeclareMathOperator{\CBA}{CBA}
\DeclareMathOperator{\Ch}{Ch}
\DeclareMathOperator{\ct}{ct}

\DeclareMathOperator{\Cut}{Cut}

\DeclareMathOperator{\dist}{\mathsf{d}}
\DeclareMathOperator{\diam}{diam}
\DeclareMathOperator{\dvol}{dvol}

\DeclareMathOperator{\Euc}{Euc}

\DeclareMathOperator{\Hess}{Hess}

\DeclareMathOperator{\Id}{Id}
\DeclareMathOperator{\Int}{Int}

\DeclareMathOperator{\Injrad}{Injrad}

\DeclareMathOperator{\Leb}{Leb}

\DeclareMathOperator{\Lip}{Lip}

\DeclareMathOperator{\loc}{loc}

\DeclareMathOperator{\pr}{pr}
\DeclareMathOperator{\RCD}{RCD}

\DeclareMathOperator{\Ric}{Ric}
\DeclareMathOperator{\Rm}{Rm}
\DeclareMathOperator{\Sym}{Sym}

\DeclareMathOperator{\tr}{tr}
\DeclareMathOperator{\sn}{sn}

\DeclareMathOperator{\md}{md}

\DeclareMathOperator{\Tan}{Tan}
\DeclareMathOperator{\Test}{Test}

\DeclareMathOperator{\Vol}{Vol}

\newcommand{\haus}{\mathscr{H}}
\DeclareMathOperator{\supp}{supp}

\begin{document}

\title[Spaces with synthetic Ricci bounds and positive injectivity radius]{Regularity and structure of spaces with synthetic Ricci bounds and positive injectivity radius} 
 
\author{Shouhei Honda} 
\address{Graduate School of Mathematical Sciences
, University of Tokyo, Tokyo, 153-8914, Japan}
\email{shouhei@ms.u-tokyo.ac.jp}

\author{Ruobing Zhang
} 
\address{Department of Mathematics, University of California, San Diego, 92093, USA}
\email{ruz071@ucsd.edu}

   \thanks{The first named author acknowledges supports of the Grant-in-Aid for Scientific Research (A)
of 25H00586,
of the Grant-inAid for Scientific Research (B) of 20H01799, and Grant-in-Aid for Transformative Research Areas (A) of 22H05105. The second named author is partially supported by NSF Grants DMS-2304818 and DMS-2550348.}

\begin{abstract}
    In the paper, we develop a structure theory for metric measure spaces with synthetic lower Ricci curvature bounds, known as RCD spaces. 
    Under a positive injectivity-radius assumption, we recover a smooth differential structure and prove a regularity result: such spaces are $W^{1,p}_{\loc}\cap C^{0,\alpha}_{\loc}$-Riemannian manifolds whose weight functions also belong to $W^{1,p}_{\loc}\cap C^{0,\alpha}_{\loc}$, in both distance and harmonic charts for all $p<\infty$ and $\alpha \in (0,1)$. We also establish a quantitative lower bound for the harmonic radius, together with compactness theorems and quantitative geometric bounds. As a byproduct, we develop a comprehensive elliptic regularity theory in this setting.

    These results apply, in particular, to smooth weighted Riemannian manifolds and to RCD spaces satisfying a 
synthetic 
    curvature upper bound, or CBA condition. Even in these settings, the resulting regularity statements are new. In the latter case, both the Riemannian metric and the weight function are shown to be locally Lipschitz. As further applications, we establish fibration theorems and use metric smoothing to confirm, 
in a synthetic framework, a conjecture of V. Kapovitch concerning almost flat manifolds with mixed curvature bounds.
\end{abstract}

\maketitle

\tableofcontents

\section{Introduction}

The purpose of this paper is to identify a {\it purely metric condition} that forces a space with a synthetic lower Ricci curvature bound to enter the manifold regime.
More precisely, we show that positive injectivity radius provides such a condition and develop a regularity and structure theory for $\RCD$ spaces under this assumption.

In the smooth setting, the underlying manifold structure and the  metric tensor are already available outset, while elliptic differential operators, such as the Laplacian, admit explicit coordinate representations and a well-developed regularity theory. None of these structures is available a priori in the present synthetic setting. A central contribution of this paper is to recover them systematically through a new regularity mechanism. This mechanism also leads to quantitative harmonic radius estimates, compactness, and structure  theorems for collapsing spaces.

\subsection{Background on RCD spaces}
Let $(M^n, g)$ be a complete Riemannian $n$-manifold with
\begin{equation}\label{19999}
    \Ric^g\ge K  g.
\end{equation}
Then $M^n$ enjoys a rich theory of comparison geometry. For instance, the \textit{Bishop-Gromov  volume comparison} asserts that the function
\begin{equation}
   r \mapsto \frac{\Vol^g (B_r(x))}{\Vol_{K,n}(r)}
\end{equation}
is {\it non-increasing},
where $\Vol_{K,n}(r)$ denotes the volume of a ball of radius $r>0$ in the $n$-dimensional simply connected space form of constant sectional curvature $K/(n-1)$. This monotonicity implies that any sequence of complete $n$-dimensional Riemannian manifolds satisfying \eqref{19999} has a subsequence converging in the pointed measured Gromov–Hausdorff sense to  a metric measure space $(X, \dist_X, \meas_X)$. Such a space is called a \textit{Ricci limit space}, and its structure theory was developed in a series of foundational works by Cheeger and Colding  \cite{CheegerColding1, CheegerColding2, Cheeger-Colding-III}; see also \cite{ColdingN, CJN} for further developments. Broadly speaking, their theory is based on the {\it  non-smooth} comparison geometry obtained by passing smooth comparison results to the limit.
The theory of Ricci limit spaces also has applications in other geometric settings, including K\"ahler geometry; see, for instance \cite{DonS,SZ}.

 A possible synthetic treatment of Ricci limit spaces was discussed in an appendix of \cite{CheegerColding1}, together with difficulties such as establishing a splitting theorem.

Following the independent pioneering works \cite{LottVillani,St1, St2}, the class of
$\RCD(K, N)$ \textit{spaces} (or $\RCD$ \textit{spaces}, for short) was subsequently introduced by Ambrosio-Gigli-Savar\'e  \cite{AGSrcd} for $N=\infty$ and by Gigli \cite{Gigli} for $N<\infty$. Here $K$ represents a synthetic lower Ricci curvature bound, while  $N$ represents a synthetic upper dimension bound; see Definition \ref{defrcd}.  This provides a general framework for treating lower Ricci curvature bounds on singular metric measure spaces, beyond Ricci limit spaces. The class of RCD spaces includes Ricci limit spaces, Alexandrov spaces (see \cite{Petrunin, ZZcd}),  smooth weighted Riemannian manifolds with Bakry-\'Emery Ricci curvature bounded below, and their pointed measured Gromov-Hausdorff limits.

The structure theory of $\RCD$ spaces has developed rapidly. Major results include rectifiability \cite{MN};   uniqueness of the effective regular set \cite{BrueSemola};  non-branching property of geodesics \cite{Deng}; and Reifenberg flatness for the special class of \textit{non-collapsed} RCD spaces, established in \cite{DG,KM}. See \cite{ambrosio,Giglisurvey,sturmsurvery} for recent surveys. RCD theory has also been applied to singular K\"ahler geometry; see \cite{S25}.
These results provide a remarkably detailed description of $\RCD$ spaces at infinitesimal scales or from the measure-theoretic aspect.
For many questions in geometry and analysis, however, this is only the first step. One must understand when infinitesimal geometric information extends to genuine (manifold) neighborhoods and, more importantly, when it propagates quantitatively across scales. These are essential for establishing the refined regularity needed to obtain stronger geometric control and to support more refined geometric structures. The main goal of this paper is to identify a natural condition under which this upgrade occurs and to develop the resulting regularity and structure theory.

\subsection{Main regularity results}\label{submain}
Let $X=(X,\dist_X,\meas_X)$ be an RCD space. We are motivated by the following questions:
\begin{enumerate}
    \item[(Q1)] Under what conditions does a point of $X$ admit a topological manifold neighborhood?
    \item[(Q2)] Under what conditions does such a neighborhood admit a bi-Lipschitz chart?
    \item[(Q3)] Under what conditions can a bi-Lipschitz chart be upgraded to one with sufficient regularity to support a comprehensive elliptic regularity theory?\end{enumerate}

These questions concern three genuinely different levels of structure: local topology, quantitative metric geometry, and higher regularity. None has a positive answer for general Ricci limit or $\RCD$ spaces. 
  The work \cite{HNW} provides four dimensional Ricci limit spaces with no topological manifold points; see also \cite{Zhou}  for three-dimensional examples. This tells us that, in general, \textit{charts} in Ricci limit spaces do not support satisfactory geometric or topological regularity for the study. However, in the non-collapsed setting, Reifenberg flatness gives a positive answer near an \textit{almost} regular point \cite{Cheeger-Colding-III, KM}; see \cite{BNS} for a refinement of such charts.      Moreover, it is conjectured, for example in \cite{CheegerColding1}, that every regular point in the non-collapsed setting admits a bi-Lipschitz chart. However, the work of Sire  and the first named author \cite{HS}, building on De Philippis–Zimbr{\'o}n \cite{DZ} and an example of Otsu-Shioya \cite{OS} shows that such a bi-Lipschitz chart \textit{cannot}, in general, be chosen to have better regularity; for example, it cannot be always chosen to be \textit{harmonic}.

Our first main theorem gives simultaneous positive answers to (Q1)-(Q3) under a positive injectivity radius assumption. For  $A \subset X$, denote by $\Injrad(A)$ its injectivity radius; see Definition \ref{definject}. It should be emphasized that no non-collapsing assumption is imposed.

\begin{theorem}[$W^{1,p}_{\loc}\cap C^{0,\alpha}_{\loc}$ regularity]\label{theorem:inj} 
Given $K\in\dR$ and $N \in [1, \infty)$, let $X$ be an $\RCD(K, N)$ space of essential dimension $n$.
\begin{enumerate}
\item If $\Injrad(A)  > 0$ for every compact subset $A \subset X$, then $X$ is a smooth $n$-manifold. In both distance and harmonic charts, the canonical Riemannian metric $g_X$ belongs to $W^{1,p}_{\loc} \cap C^{0,\alpha}_{\loc}$ for every $p<\infty$ and every $\alpha \in (0,1)$. Moreover, $\meas_X$ has a positive density with respect to the Hausdorff measure $\haus^n$, and this density also belongs to $W^{1,p}_{\loc} \cap C^{0,\alpha}_{\loc}$ for every $p<\infty$ and every $\alpha \in (0,1)$.

\item 
Let $\mathcal{M}=\mathcal{M}(K, N, D, n, \iota_0)$ be the class of compact $\RCD(K, N)$ spaces $X$ of essential dimension $n$ satisfying 
$
\diam(X) \leq  D$, $\Injrad(X) \geq \iota_0 > 0$ and $\fm_X(X) = 1$. Then $\mathcal{M}$
is  $C^{0,\alpha}$-compact for any $\alpha\in (0, 1)$. In particular, there exists $Q_0 = Q_0 (K, N, D, \iota_0) < \infty$ such that the spaces in $\mathcal{M}$ have at most 
 $Q_0$ distinct diffeomorphism types.
\end{enumerate}
\end{theorem}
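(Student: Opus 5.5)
The plan has three stages: get topological and bi-Lipschitz control from the injectivity radius, build harmonic coordinates and show the metric coefficients lie in $W^{1,p}$ by an elliptic bootstrap, and then deduce compactness.

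\textbf{Topology and bi-Lipschitz charts from the injectivity radius.} Fix a compact $A$ with $\Injrad(A)\ge\iota>0$. Points at distance less than $\iota$ are joined by unique minimal geodesics that extend to length $\iota$. I will combine this with the non-branching of geodesics in $\RCD$ spaces \cite{Deng}.

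First I will show that every tangent cone at $x\in A$ is a metric cone in which every ray from the vertex extends to a line. By the splitting theorem, this forces the tangent cone to be $\dR^k$. Rectifiability \cite{MN} and constancy of dimension \cite{BrueSemola} then give $k=n$, and I expect to obtain this uniformly on $A$.

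Second, uniform Euclidean tangents alone only give almost-regularity. I will improve this to a quantitative statement: for some $r_0=r_0(K,N,\iota)$, every ball $B_r(x)$ with $r\le r_0$ is $\epsilon r$-GH close to a Euclidean ball, with $\epsilon$ small. I will argue by contradiction and compactness. The injectivity radius bound should pass to pGH limits, because the geodesic extendibility is stable. After rescaling, a limit of counterexamples has infinite injectivity radius and Ricci curvature $\ge 0$, so every geodesic extends to a line, and by iterated splitting it is $\dR^n$. This is the contradiction.

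Third, I will pass from GH-closeness to charts. Cheeger--Colding/Reifenberg-type $\delta$-splitting maps $u=(u_1,\dots,u_n)$ with harmonic components give harmonic charts that are bi-H\"older homeomorphisms. The distance charts $(\dist_{p_1},\dots,\dist_{p_n})$ are handled the same way: since the $p_i$ lie within the injectivity radius, the distance functions are non-singular along geodesics.

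\textbf{Regularity of $g_X$ and the density.} In a harmonic chart, set $g^{ij}=\langle\nabla u_i,\nabla u_j\rangle$. Bochner's inequality on $\RCD(K,N)$ spaces gives $|\nabla u|^2\in W^{1,2}$, together with a bound on $\int|\Hess u|^2$. To reach $W^{1,p}$ for every $p<\infty$ I will use the Laplacian. Writing $\meas_X=\rho\,\di x$ in coordinates, one has $0=\Delta u_k = \rho^{-1}\partial_i(\rho\sqrt{\det g}\,g^{ik})\cdot(\ldots)$. Hence $w^{ik}:=\rho\, g^{ik}\sqrt{\det g}^{-1}$ is divergence-free.

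The $\Gamma_2$ calculus of the Bochner formula then gives a divergence-form elliptic system for $g^{ij}$ with $L^\infty$ coefficients $w$ and right-hand side controlled by $|\Hess u|^2$ and $K$. This yields $g^{ij}\in C^{0,\alpha}$ by De Giorgi--Nash--Moser. With continuous coefficients, $L^p$ Calder\'on--Zygmund theory for divergence-form equations bootstraps $\nabla g^{ij}\in L^p$ for all $p<\infty$. The density $\rho$ is recovered from the divergence-free identity and receives the same regularity. Finally, distance charts and harmonic charts are related by $W^{2,p}$ maps, so the regularity transfers between them. Smoothness of the manifold structure follows because the harmonic transition maps satisfy elliptic equations with $C^{0,\alpha}$ coefficients. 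They are then $C^{1,\alpha}$, and a $C^1$ atlas can be smoothed to a smooth structure.

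\textbf{Compactness.} The proof of part (1) is quantitative: the harmonic radius has a lower bound depending on $(K,N,\iota_0)$, and the $C^{0,\alpha}$ norms of $g$ are uniformly bounded. Gromov's precompactness, combined with Arzel\`a--Ascoli in these charts, gives $C^{0,\alpha'}$ subconvergence for every $\alpha'<\alpha$. The limit stays in $\mathcal M$ by stability of $\RCD$ and of the injectivity-radius bound. Finiteness of diffeomorphism types follows from the standard Cheeger--Anderson argument: nearby $C^{0,\alpha}$-close structures with a uniform harmonic radius are diffeomorphic.

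\textbf{Main obstacle.} The hardest step is the first: deriving uniform $n$-dimensional Euclidean GH-closeness, and hence a non-collapsed chart, in the collapsed measure setting. The difficulty is that the injectivity radius must control the metric, not the measure. The contradiction argument above is my first attempt. It requires checking carefully that extendibility of geodesics survives rescaled pmGH limits.
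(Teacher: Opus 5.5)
Your proposal has two genuine gaps, and the second one removes the actual regularity mechanism.

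\textbf{Gap 1: dimension of the tangent cones.} Nothing in Stage 1 shows that any of the following have dimension $n$: the Euclidean tangent cones at a given point, the limits in your contradiction argument, or the limit space in part (2). Rectifiability and constancy of dimension are almost-everywhere statements. Lower semicontinuity of essential dimension only gives $\dR^k$ with $k\le n$, so ``by iterated splitting it is $\dR^n$'' does not follow.

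The point you single out as the main obstacle is actually harmless: upper semicontinuity of $\Injrad$ under pGH convergence is immediate from the definition of the cut locus (Proposition \ref{asasaso}). You also do not need, and in the collapsed setting cannot assert, that tangent cones are metric cones. Infinite injectivity radius plus the line-splitting theorem already gives $\dR^k$.

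What is missing is a mechanism that prevents the dimension from dropping, and in the paper it is analytic. A quantitative $C^{0,\frac12}$ bound on the angle functions $\langle\nabla\dist_{z_j},\nabla\dist_{z_k}\rangle$ transports near-orthonormality from regular points $x_i\to x$ to $x$ itself. Hence the blow-ups of $\dist_{z_1},\dots,\dist_{z_n}$ at $x$ are $n$ independent linear functions. The same quantitative estimate gives $\dim=n$ for the limit in (2). The resulting continuity of $g^{\alpha\beta}$ in distance charts is also what yields Reifenberg flatness.

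\textbf{Gap 2: Stage 2 does not work.} On an $\RCD(K,N)$ space the Bochner formula for harmonic $u$ is only the inequality $\frac12\Delta|\nabla u|^2\ge|\Hess_u|^2+K|\nabla u|^2$. The exact expression for $\Delta g^{ij}$ contains a Ricci term that is only a measure bounded from below, and $|\Hess_u|^2$ is a priori only in $L^1$. So there is no elliptic system for $g^{ij}$ with controlled right-hand side, and neither De Giorgi--Nash--Moser nor Calder\'on--Zygmund applies.

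As a sanity check: Stage 2 never uses the injectivity radius, so it would apply verbatim to the Colding--Naber space \cite{CN} recalled in Section \ref{secexample}. That space is non-collapsed with all tangent cones Euclidean, hence Reifenberg flat with bi-H\"older harmonic charts. Yet its distance does not induce a $C^{0,\alpha}$ Riemannian metric near the bad point.

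The missing idea is Proposition \ref{prop:injec}: extendibility of geodesics gives $|\Delta\dist_x|\le C$ on annuli. One extends the geodesic from $x$ through $y$ to $\gamma(2t)$ and uses that $\dist_x+\dist_{\gamma(2t)}$ is minimized along it; in the $\RCD$ setting this is done via needle decomposition \cite{CM}. The paper then proceeds as follows.
\begin{enumerate}
\item Bochner with $|\nabla\dist_x|\equiv1$ gives the Morrey bound $s\fint_{B_s(y)}|\Hess_{\dist_x}|^2\di\meas_X\le C$.
\item This yields H\"older angles, bi-Lipschitz distance charts with $g_X\in C^{0,\frac12}$, and Ahlfors regularity of $\meas_X$ with a H\"older density. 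You need these before you can even write $\meas_X=\rho\,\di x$ and a chart formula for $\Delta$ (Theorem \ref{asnasi}).
\item The $W^{1,p}$ upgrade is the Anderson--Cheeger trick \cite{AC}, not an equation for $g$. $W^{2,2}\to W^{2,p}$ theory for $g^{jk}\partial_j\partial_k$ with $C^{0,\frac12}$ coefficients gives $\dist_z\circ\tilde\Phi^{-1}\in W^{2,p}_{\loc}$, because $\Delta\dist_z\in L^\infty_{\loc}$.
\item The identity $|\nabla\dist_{z_j}|^2=1$ at $n(n+1)/2$ suitably placed points expresses $\tilde g^{kl}$ rationally in the first derivatives $\partial_m\dist_{z_j}\in W^{1,p}_{\loc}$.
\end{enumerate}
The same Laplacian bound is what relates distance and harmonic charts in $W^{2,p}$, which you assert without justification. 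Your divergence-free identity for the density is fine once this metric regularity is in hand.
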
 

Theorem \ref{theorem:inj} provides both local differential structure and regularity results for metrics and measures. Its significance can be summarized as follows.
First, the hypothesis is purely metric which is independent of any differentiability property or manifold structure. Second, the theorem simultaneously recovers the geometric and analytic structures needed for PDE.  In the theorem, coordinate system, the metric regularity, the elliptic theory, and quantitative estimates are produced together so that we obtain the smooth structure as well as the full $C^{0,\alpha} \cap W^{1,p}$-compactness.

The theorem is new even in the smooth weighted category. The presence of a nontrivial density prevents the classical unweighted arguments from being applied directly: the regularity of the Riemannian metric and that of the weight are coupled and must be established simultaneously. This  regularity is precisely one of the conclusions of our main theorem.
We also note that $C^{0, 1/2}_{\loc}$-regularity of the Riemannian structure was previously known for the more restricted class of \textit{Alexandrov spaces}; see \cite{Bere, OS}. 

\medskip

The theorem has a particularly strong consequence for spaces with \emph{mixed curvature bounds}. Recall that a metric space is $\CBA(\kappa)$ if its curvature is bounded above by $\kappa$ in the synthetic sense; see Definition \ref{defcba}. Kapovitch--Kell--Ketterer \cite{KKK} proved that a boundaryless metric measure space with mixed curvature bounds carries a $C^0\cap\mathrm{BV}$ Riemannian structure and has locally positive injectivity radius.  The theorem above provides a substantial upgrade, which
yields not only $W^{1,p}_{\loc}\cap C^{0,\alpha}_{\loc}$ regularity, but local Lipschitz regularity.

\begin{theorem}
[Lipschitz regularity for metric measure spaces with mixed curvature bounds]\label{thmmixed}
Given $K\in \dR$, $N \in [1, \infty)$, and $\kappa > 0$, let $X$ be an $\RCD(K,N)$ space of essential dimension $n$ whose metric structure is $\CBA(\kappa)$ without boundary. Then $X$ is a smooth $n$-manifold, and the canonical Riemannian metric $g_X$ induced from $\dist_X$ is locally Lipschitz in distance charts. 
Moreover, $\meas_X$ has a locally Lipschitz density   with respect to $\haus^n$.
\end{theorem}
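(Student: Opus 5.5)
The plan is to reduce Theorem \ref{thmmixed} to Theorem \ref{theorem:inj} and then improve the regularity by using the upper curvature bound. The argument has four steps.

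\textbf{Step 1 (positive injectivity radius).} By the result of Kapovitch--Kell--Ketterer \cite{KKK}, a boundaryless $\RCD(K,N)$ space that is also $\CBA(\kappa)$ has locally positive injectivity radius. Concretely, any two points at distance less than $\pi/\sqrt{\kappa}$ inside a small convex ball are joined by a unique geodesic, which depends continuously on its endpoints. The absence of boundary together with non-branching then lets geodesics be extended. So $\Injrad(A)>0$ for every compact $A$. Theorem \ref{theorem:inj}(1) now applies: $X$ is a smooth $n$-manifold, $g_X\in W^{1,p}_{\loc}\cap C^{0,\alpha}_{\loc}$ in distance and harmonic charts, and $\meas_X=e^{-f}\haus^n$ with $f$ in the same class.

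\textbf{Step 2 (Lipschitz metric via the two-sided Hessian bound).} The aim is to show that in a distance chart $x=(\dist(p_1,\cdot),\dots,\dist(p_n,\cdot))$, built from points $p_i$ at a fixed small distance, the coefficients $g^{ij}=\langle\nabla x^i,\nabla x^j\rangle$ are Lipschitz. Away from the $p_i$, the distance functions satisfy two Hessian comparisons:
\begin{itemize}
\item the lower Ricci bound, through the curvature-dimension condition, gives a one-sided bound;
\item $\CBA(\kappa)$ gives, through the comparison inequality, that $\dist_{p_i}$ is semiconvex, so $\Hess\, \dist_{p_i}\ge -C$ in the barrier sense;
\item the lower bound of Alexandrov type (in the sense of Perelman--Petrunin DC functions) should come from the fact that in $\CBA$ spaces with extendible geodesics, distance functions are DC. This is Perelman's DC-structure argument adapted by Lytchenko--Nagano to spaces with curvature bounded above.
\end{itemize}
Thus each $x^i$ is a DC function with $|\Hess\, x^i|\le C$ in a measure sense. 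Since $g_X$ is already $W^{1,p}$, the Hessian is genuinely in $L^\infty$ rather than a measure with a singular part. The second-order calculus of Gigli identifies $\nabla g^{ij}$ with $\Hess\, x^i(\nabla x^j,\cdot)+\Hess\, x^j(\nabla x^i,\cdot)$. This gives $g^{ij}\in W^{1,\infty}_{\loc}$, hence locally Lipschitz.

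\textbf{Step 3 (Lipschitz density).} Write $\Delta_{\meas}\dist_{p_i}=\Delta_g \dist_{p_i}-\langle\nabla f,\nabla\dist_{p_i}\rangle$. Laplacian comparison from $\RCD(K,N)$ bounds $\Delta_\meas \dist_{p_i}$ from above, and the upper curvature bound bounds it from below. With $g\in W^{1,\infty}$ and $\Hess$ bounded, $\Delta_g\dist_{p_i}\in L^\infty$. Therefore $\langle\nabla f,\nabla x^i\rangle\in L^\infty$ for each $i$, and since the $\nabla x^i$ form a frame, $\nabla f\in L^\infty$. This means $f$ is locally Lipschitz.

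\textbf{Main obstacle.} The hardest step is the lower Laplacian and Hessian bound for $\dist_{p_i}$ coming from $\CBA$. It must hold as an absolutely continuous $L^\infty$ bound with respect to the weighted measure, and not only as a DC measure bound. My first approach is to combine the barrier estimate from $\CBA$ comparison with the $W^{1,p}$ regularity from Step 1. That regularity rules out a singular part, because a DC function with $W^{2,p}$ regularity has Hessian measure absolutely continuous with bounded density.
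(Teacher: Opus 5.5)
There is a genuine gap. Nothing in your argument produces an \emph{upper} bound on $\Hess_{\dist_{p_i}}$, and Step 3 is circular because it depends on that bound.

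Each of your sources for the upper bound fails:
\begin{itemize}
\item The lower Ricci bound controls only the trace (the Laplacian), not individual eigenvalues. An Alexandrov-type upper Hessian bound would need sectional curvature bounded below, which is not assumed.
\item DC regularity, which is automatic here since semiconvex functions are DC, only makes the Hessian a signed Radon measure. That is exactly the $C^0\cap\mathrm{BV}$ conclusion of \cite{KKK}, not Lipschitz regularity.
\item $W^{1,p}_{\loc}$-regularity of $g_X$ for all $p<\infty$ does not turn an absolutely continuous semiconvex Hessian into an $L^\infty$ one. For example, $u(t)=-t^2\log|t|$ is convex near $0$ and lies in $W^{2,p}_{\loc}$ for every $p<\infty$, yet $u''=-2\log|t|-3$ is unbounded.
\end{itemize}
In fact Theorem \ref{theorem:inj} already gives $|\Hess_{\dist_x}|\in L^p_{\loc}$ for all $p<\infty$ away from $x$ (Corollary \ref{corangleimprove}). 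The whole content of Theorem \ref{thmmixed} is the step to $p=\infty$.

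Step 3 then deduces $\nabla f\in L^\infty$ from the boundedness of $\Delta_g\dist_{p_i}$, which has not been proved. Also, $\CBA(\kappa)$ bounds the unweighted trace from below. It does not bound $\Delta_{\meas}\dist_{p_i}$ from below unless $\nabla f$ is already controlled. The two-sided bound on $\Delta_{\meas}\dist_{p_i}$ comes instead from geodesic extendibility (Proposition \ref{prop:injec}).

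The missing idea is to control $\nabla f$ first, independently of the Hessian, and only then use the trace. The paper proceeds in this order:
\begin{itemize}
\item From \cite{KKK}, the density $\Theta=e^{-f}$ is $\tau$-concave (Lemma \ref{c:Lipschitz-density}). A bounded semiconcave function is Lipschitz along geodesics (Corollary \ref{gradient weight}), which proves the density statement directly.
\item On small annuli, $\Delta\dist_x\in L^\infty$ (Proposition \ref{prop:injec}). The weighted formula $\Delta\dist_x=\tr(\Hess_{\dist_x})-\langle\nabla f,\nabla\dist_x\rangle$ (Proposition \ref{noncocll}) then gives $|\tr(\Hess_{\dist_x})|\le C$.
\item Combined with the $\CBA$ lower bound $\Hess_{\dist_x}\ge\kappa_1$ (Lemma \ref{l:hessian-lower-bound}), every eigenvalue is at most $C+(n-1)|\kappa_1|$. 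This is Corollary \ref{cor:hessbd}.
\item Only now does $|\nabla g^{ij}|\le|\Hess_{\dist_{z_i}}|+|\Hess_{\dist_{z_j}}|\in L^\infty$ give Lipschitz $g_X$ in the distance charts of Theorem \ref{cor:distance coordinate}.
\end{itemize}
Most of your ingredients are the right ones: positive injectivity radius, $\CBA$ semiconvexity, and the weighted Laplacian formula. They must be assembled in this order, with the $\tau$-concavity of the density as the additional input.
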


The proof of Theorem \ref{thmmixed} also yields an analogous regularity result for Alexandrov spaces; see Theorem \ref{alexandrovreg}.

To prove the results above, we use the following new technical ingredient, which is another central result in the paper. It also provides a positive answer to (Q3). Here $D_{\loc}(U)$ denotes the domain of local Laplacian on $U$. 

\begin{theorem}[Laplacian formula on chart]\label{thmlapchart}
    Given $K\in\dR$ and $N \in [1, \infty)$, let $X$ be an $\RCD(K, N)$ space  of essential dimension $n$. Let $U \subset X$ be open  with $\mathrm{Injrad}(U)>0$, and let $\Phi=(\phi_{1}, \ldots, \phi_{n}):U \to \mathbb{R}^n$ be a bi-Lipschitz chart such that $\phi_i \in D_{\loc}(U)$ with $\Delta_X\phi_i \in L^{\infty}_{\loc}(U)$ for every $i\in \{1,\ldots, n\}$. Then, after identifying $U$ with $\Phi(U)$, the Laplacian $\Delta_X$ can be written as 
    \begin{equation}\label{asnairoasarsiasnrkkwkwns}
        \Delta_X =\sum_{j,k=1}^ng^{jk}\cdot\frac{\partial^2}{\partial x_j\partial x_k}+\sum_{j=1}^n (\Delta_X\phi_{j})\cdot\frac{\partial}{\partial x_j}, \quad g^{jk}:=\langle \nabla \phi_j, \nabla \phi_k\rangle.
    \end{equation}
\end{theorem}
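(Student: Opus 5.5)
The plan is to verify the formula \eqref{asnairoasarsiasnrkkwkwns} in its weak form, using the chain rule for the Laplacian in RCD spaces. Fix $u$ that, in the coordinates given by $\Phi$, is a $C^2$ (or $C^\infty$) function of $x$, i.e.\ $u=f\circ\Phi$ with $f\in C^2(\Phi(U))$. The first step is to establish the nonsmooth chain rule
\begin{equation*}
\Delta_X(f\circ\Phi)=\sum_{j,k}(\partial_j\partial_k f)\circ\Phi\,\langle\nabla\phi_j,\nabla\phi_k\rangle+\sum_j(\partial_j f)\circ\Phi\,\Delta_X\phi_j
\end{equation*}
in $D_{\loc}(U)$. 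This identity holds on any RCD space for $\phi_i\in D_{\loc}(U)\cap \Lip_{\loc}$ and $f\in C^2$. The proof tests against a Lipschitz $\psi$ with compact support in $U$. One uses $\nabla(f\circ\Phi)=\sum_j \partial_jf\circ\Phi\,\nabla\phi_j$ and the Leibniz rule $\langle\nabla(\partial_j f\circ\Phi\,\psi),\nabla\phi_j\rangle$, then integrates by parts against $\Delta_X\phi_j$. No injectivity radius is used here. The output is that $u\in D_{\loc}(U)$ with $\Delta_X u\in L^\infty_{\loc}$.

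The second step is to show that \eqref{asnairoasarsiasnrkkwkwns} determines $\Delta_X$ as an operator, and not merely on the class $C^2\circ\Phi$. Since $\Phi$ is bi-Lipschitz, $C^2\circ\Phi$ is dense in $H^{1,2}_{\loc}(U)$. The weak formulation is then extended by density, and in coordinates the Laplacian is a second-order operator whose coefficients $g^{jk}$ and $b_j=\Delta_X\phi_j$ are $L^\infty$ functions on $\Phi(U)$. The remaining issue is what these coefficients mean as functions on $\Phi(U)$. Here $\meas_X$ and the pointwise inner products are only defined $\meas_X$-a.e., so I need $\Phi_\#\meas_X\ll\mathcal L^n$ and the uniform ellipticity of $(g^{jk})$. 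Ellipticity follows from bi-Lipschitzness together with the rectifiability results, since essential dimension $n$ gives $\meas_X\ll\haus^n$ and the Jacobian of $\Phi$ is a.e.\ invertible.

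The main obstacle is the identification of $\Delta_X$ with the \emph{non-divergence} form written above. In coordinates the weak formulation naturally gives the divergence form $\rho^{-1}\partial_j(\rho g^{jk}\partial_k)$, where $\rho$ is the density of $\Phi_\#\meas_X$. Matching the two forms requires $\partial_j(\rho g^{jk})=\rho\,\Delta_X\phi_k$ distributionally. This holds by applying the divergence form to $u=\phi_k=x_k$. However, expanding $\rho^{-1}\partial_j(\rho g^{jk}\partial_k u)$ requires $\rho g^{jk}\in W^{1,1}_{\loc}$ in order to apply the product rule, and that regularity is where $\Injrad(U)>0$ must enter. I would get it from the chain rule of the first step applied with $f=$ distance-type functions: positive injectivity radius makes $\dist_p^2$ smooth-like (semiconcave with Laplacian bounds, having Hessian in $L^\infty$) away from $p$. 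I expect this to give $W^{1,2}$ control of $g^{jk}$ through the Bochner inequality, which is precisely where I would focus effort. With $\rho g^{jk}$ Sobolev, the product rule is valid, and the two forms agree, giving \eqref{asnairoasarsiasnrkkwkwns}.
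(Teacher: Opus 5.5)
Your chain rule for $f\circ\Phi$ with $f\in C^2$ is correct. So is your identity $\partial_j(\rho g^{jk})=\rho\,\Delta_X\phi_k$, which is exactly Step~1 of the paper's proof (Theorem~\ref{asnasi}). There is nevertheless a genuine gap at the step you call the main obstacle, and a second one you do not flag.

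\textbf{The unflagged gap.} The formula concerns arbitrary $\psi\in D_{\loc}(\Delta,U)$. You must therefore know $\tilde\psi:=\psi\circ\Phi^{-1}\in W^{2,2}_{\loc}(\Phi(U))$ before $\partial_j\partial_k\tilde\psi$ even makes sense. Density of $C^2\circ\Phi$ in $W^{1,2}_{\loc}$ does not give this: whether the non-divergence operator on $C^2$ closes onto $\Delta_X$ is precisely a $W^{2,2}$-regularity question.

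\textbf{The flagged gap.} Your plan to get $\rho g^{jk}\in W^{1,1}_{\loc}$ from injectivity radius via distance functions and Bochner targets the wrong factor.
\begin{itemize}
\item The coefficients need no help. On any RCD space, $\Hess_{\phi_j}\in L^2_{\loc}$ and $|\nabla\phi_j|\in L^\infty_{\loc}$ give $|\nabla g^{jk}|\le|\Hess_{\phi_j}||\nabla\phi_k|+|\Hess_{\phi_k}||\nabla\phi_j|\in L^2_{\loc}$.
\item The real unknown is the density $\rho=e^{-f\circ\Phi^{-1}}\sqrt{\det(g^{kl})}^{-1}\circ\Phi^{-1}$, i.e.\ $e^{-f}=\di\meas_X/\di\haus^n$. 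Hessian bounds for distance functions say nothing about it. In the paper, the Sobolev regularity of $f$ is a consequence of this very formula (Corollary~\ref{corimprovedmeas}).
\item Positive injectivity radius does not give semiconcavity of $\dist_p$ or $\Hess_{\dist_p}\in L^\infty$ on RCD spaces. It gives $\Delta\dist_p\in L^\infty$ (Proposition~\ref{prop:injec}) and a Morrey-type $L^2$ Hessian bound. $L^\infty$ Hessian bounds require the extra hypotheses of Theorems~\ref{thmmixed} and~\ref{alexandrovreg}.
\end{itemize}

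\textbf{The missing idea.} Let the derivative fall on $\partial_k\tilde\psi$ instead of on $\rho g^{jk}$.
\begin{itemize}
\item The field $V_k:=\rho\,(g^{jk}\circ\Phi^{-1})\,\partial_j$ is bounded, and its distributional divergence is $\rho\,(\Delta_X\phi_k)\circ\Phi^{-1}\in L^\infty_{\loc}$ by your identity.
\item The field in the divergence form of $\Delta_X\psi$ is $\sum_k h_kV_k$ with $h_k:=\partial_k\tilde\psi$.
\item The Leibniz rule $\mathrm{div}(h_kV_k)=h_k\,\mathrm{div}V_k+dh_k(V_k)$ then gives the non-divergence form at once, with no regularity of $\rho$ at all. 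The paper cites \cite[Theorem 5.3]{BCM}; here it is elementary, mollifying only $h_k$.
\end{itemize}

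What this does need is $h_k\in W^{1,2}_{\loc}\cap L^\infty_{\loc}$ for locally Lipschitz $\psi$. That comes from Gigli's second-order calculus, not from injectivity radius. Since all Hessians lie in $L^2$, $\frac{\partial\psi}{\partial\phi_i}=\sum_jg_{ij}\langle\nabla\phi_j,\nabla\psi\rangle\in W^{1,2}_{\loc}$. This transfers to $\Phi(U)$ by Lemma~\ref{sobolevcomp} (Proposition~\ref{deri}(3)).

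That transfer, and the change of variables of Proposition~\ref{coarea123}, require $\Phi_\sharp\meas_X$ to be comparable to $\mathcal{L}^n$. In other words, you need $\meas_X=e^{-f}\haus^n$ with $f\in L^\infty_{\loc}$; absolute continuity $\meas_X\ll\haus^n$ alone does not give this. This is the actual role of $\Injrad(U)>0$: via Corollary~\ref{c:hoelder-measure-density} it provides such an $f$, which is even H\"older continuous.
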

The first order term in \eqref{asnairoasarsiasnrkkwkwns} vanishes when $\Phi$ is harmonic. 
The formula allows us to establish the elliptic $W^{2,p}_{\loc}$ and $C^{2, \alpha}_{\loc}$ estimates needed to prove Theorem \ref{theorem:inj}; see Corollaries \ref{newcor} and \ref{calp}. 
This theorem builds a bridge between synthetic differential calculus and classical elliptic theory in the smooth setting that makes the higher regularity results possible.

As an immediate consequence, we obtain the following regularity result for the Hessian.

\begin{corollary}[Higher integrability of Hessian]\label{corhess}
    Let $U$ be in Theorem \ref{thmlapchart}. For every $1<p<\infty$ and every $\phi \in W^{1,p}_{\loc}\cap D_{\loc}(U)$, one has, $\Delta_X \phi \in L^{p}_{\loc}(U)$ holds if and only if $|\mathrm{Hess}_{\phi}| \in L^p_{\loc}(U)$ holds.

\end{corollary}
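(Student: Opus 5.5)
The plan is to reduce Corollary \ref{corhess} to classical Calder\'on--Zygmund theory for a non-divergence-form operator in the chart $\Phi$ of Theorem \ref{thmlapchart}, together with a bridge between the synthetic Hessian and the Euclidean coordinate Hessian.

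\textbf{Step 1: local reduction.} Fix a small ball $B\Subset U$ and identify it with $\Phi(B)\subset\dR^n$. Since $\Phi$ is bi-Lipschitz, $L^p_{\loc}$ and $W^{1,p}_{\loc}$ are the same on both sides. Moreover, $\meas_X$ is comparable to Lebesgue measure there; I would obtain this from the density results used for Theorem \ref{theorem:inj}, or directly from Ahlfors regularity in the chart.

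Theorem \ref{thmlapchart} gives
\[
\Delta_X\phi=g^{jk}\partial_j\partial_k\phi+(\Delta_X\phi_j)\partial_j\phi .
\]
Two inputs are needed for Calder\'on--Zygmund theory.
\begin{itemize}
\item The coefficients $g^{jk}=\langle\nabla\phi_j,\nabla\phi_k\rangle$ are bounded and uniformly elliptic (bi-Lipschitz property). They are also continuous; I would take the $C^{0,\alpha}$ or VMO regularity of $g^{jk}$ from the earlier regularity analysis under the $\Injrad$ hypothesis. This continuity is exactly what Calder\'on--Zygmund theory requires.
\item The drift coefficients $b_j=\Delta_X\phi_j$ lie in $L^\infty_{\loc}$, and $\partial_j\phi\in L^p_{\loc}$. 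Hence the lower-order term lies in $L^p_{\loc}$.
\end{itemize}

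\textbf{Step 2: coordinate Hessian versus $\Hess_\phi$.} From the synthetic calculus I would derive
\[
\Hess_\phi(\nabla\phi_a,\nabla\phi_b)=\langle\nabla\phi_a,\nabla\langle\nabla\phi,\nabla\phi_b\rangle\rangle-\Hess_{\phi_b}(\nabla\phi,\nabla\phi_a).
\]
Writing $\nabla\phi=\partial_k\phi\,\nabla\phi_k$, this becomes
\[
\Hess_\phi(\nabla\phi_a,\nabla\phi_b)=g^{aj}g^{bk}\partial_j\partial_k\phi+(\text{terms in } \partial_l\phi\cdot\Hess_{\phi_k}).
\]
So $|\Hess_\phi|$ and $|\partial^2\phi|$ differ by at most $C|\nabla\phi|\sum_k|\Hess_{\phi_k}|$.

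Applying Step 1 to $\phi_k$ itself gives $\Hess_{\phi_k}\in L^q_{\loc}$ for all $q$. Indeed, in coordinates $\partial^2\phi_k=0$, so the coordinate functions are trivially regular, and the identity then yields $|\Hess_{\phi_k}|\in L^q_{\loc}$ for all $q<\infty$. Because $\phi\in W^{1,p}$ only, the error term needs a little care. I would handle it by a bootstrap: $\phi\in W^{2,p}$ gives $\nabla\phi\in L^{p^*}$, and I would then use H\"older's inequality with $\Hess_{\phi_k}\in L^q$ for large $q$, iterating.

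\textbf{Step 3: the two implications.}
\begin{itemize}
\item ($\Leftarrow$) Suppose $|\Hess_\phi|\in L^p_{\loc}$. Then $\Delta_X\phi=\tr\Hess_\phi$ pointwise a.e., so $|\Delta_X\phi|\le\sqrt n|\Hess_\phi|\in L^p_{\loc}$.
\item ($\Rightarrow$) Suppose $\Delta_X\phi\in L^p_{\loc}$. Then $\phi$ is a $W^{1,p}$ strong solution candidate of
\[
g^{jk}\partial_{jk}\phi=f, \qquad f:=\Delta_X\phi-b_j\partial_j\phi\in L^p_{\loc}.
\]
Interior $W^{2,p}$ estimates for continuous-coefficient non-divergence operators then give $\partial^2\phi\in L^p_{\loc}$, and Step 2 converts this to $|\Hess_\phi|\in L^p_{\loc}$.
\end{itemize}

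\textbf{Main obstacle.} The hard step is justifying that the synthetic identity really produces a strong $W^{2,p}$ solution in the Euclidean sense. A priori $\phi$ is only in $W^{1,p}\cap D_{\loc}$, and the formula of Theorem \ref{thmlapchart} holds distributionally, tested against Lipschitz functions with respect to $\meas_X$.

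I would first try approximation. Mollify $\phi$ in the chart to get $\phi_\epsilon$, apply the formula to the smooth functions $\phi_\epsilon$, and use the uniqueness part of $W^{2,p}$ theory. That uniqueness holds for VMO coefficients, or via the $L^p$ solvability of the Dirichlet problem. With it, the weak solution $\phi$ coincides with the $W^{2,p}$ strong solution having the same data.
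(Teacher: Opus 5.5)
There are two genuine gaps. Both come from the same missing input: integrability of the Christoffel symbols, which is exactly where $\Injrad(U)>0$ must be used.

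\emph{The step $|\Hess_{\phi_k}|\in L^q_{\loc}$ for all $q$ is circular.} Your Step 2 identity is correct; it is Proposition \ref{chrhess}. Your Sobolev--H\"older bootstrap is also what the paper does. But the bootstrap needs the Christoffel symbols in $L^q_{\loc}$ for large $q$, equivalently $|\Hess_{\phi_k}|$ or $|\nabla g^{ij}|$ (Remark \ref{remintegra}). Applying your identity to $\phi=\phi_k$ only returns $\Hess_{\phi_k}=-\sum_{i,j}\Gamma^k_{ij}\,d\phi_i\otimes d\phi_j$, i.e.\ $|\Hess_{\phi_k}|\le C\sum_m|\Hess_{\phi_m}|$. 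That is a tautology: $\partial^2\phi_k=0$ in the chart says nothing about how far $g^{ij}$ is from constant. Hessian integrability beyond $L^2$ is never automatic on RCD spaces ($\mathrm{CZ}_p$ fails for $p>2$ in general), yet in your proposal the injectivity radius enters only through continuity of $g^{jk}$. The paper obtains this integrability in Theorem \ref{thm:nonsmoothandssasasas} via the Anderson--Cheeger trick:
\begin{itemize}
\item $|\nabla\dist_{z_j}|\equiv 1$ at $n(n+1)/2$ suitably placed points expresses $g^{kl}$ rationally in terms of $\partial\dist_{z_j}/\partial\phi_m$;
\item Proposition \ref{prop:injec}, which is where the injectivity radius is used, gives $\Delta\dist_{z_j}\in L^\infty_{\loc}$;
\item Corollary \ref{newcor} then yields $\dist_{z_j}\circ\Phi^{-1}\in W^{2,q}_{\loc}$, hence $g^{kl}\in W^{1,q}_{\loc}$ for all $q<\infty$.
\end{itemize}
The paper then runs your Step 3 in that harmonic chart, where $g^{ij}\in C^{0,\alpha}_{\loc}$ and $\Gamma^k_{ij}\in L^t_{\loc}$ for all $t$ are already known. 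The strong-solution issue you flag is settled inside Corollary \ref{newcor}. It first treats locally Lipschitz $\psi$, where Theorem \ref{asnasi} gives $W^{2,2}_{\loc}$ and Chiarenza--Frasca--Longo upgrades this to $W^{2,p}_{\loc}$. It then approximates a general $\psi$ by the heat flow $\mathsf{h}_t\psi$.

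\emph{The converse uses a false formula.} The identity $\Delta_X\phi=\tr\Hess_\phi$ holds only in the unweighted, non-collapsed case. Here $\meas_X=e^{-f}\di\haus^n$ with $f$ non-constant in general, and Proposition \ref{noncocll} gives $\Delta_X\phi=\tr(\Hess_\phi)-\langle\nabla f,\nabla\phi\rangle$. The paper's own example shows the difference: on $(\mathbb{R},\dist,e^{-x}\di x)$ the function $\phi(x)=x$ has $\Hess_\phi=0$ but $\Delta\phi=-1$. So you must also put the drift term in $L^p_{\loc}$. You only know $\phi\in W^{1,p}$, and $\nabla f$ lies only in $L^t_{\loc}$ for all $t<\infty$ (Corollary \ref{corimprovedmeas}, which again rests on the Christoffel integrability above). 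You therefore need a gain in integrability: $|\nabla|\nabla\phi||\le|\Hess_\phi|\in L^p_{\loc}$ together with a Sobolev inequality gives $|\nabla\phi|\in L^q_{\loc}$ for some $q>p$, and H\"older then closes the argument. This is the paper's proof of the converse.
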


The new mechanism developed above also yields a quantitative lower bound for the harmonic radius; see Theorem \ref{thm:nonsmoothandssasasas}. In the unweighted Riemannian setting, the corresponding estimate is due to Anderson-Cheeger \cite{AC}, whose argument makes essential use of the underlying smooth manifold structure.  
Our approach is substantially different at the technical level and does not rely on such a priori smoothness. In particular, it yields a new proof of the Anderson–Cheeger type harmonic-radius estimate even in the smooth unweighted setting.

We also establish a global $L^p$ Calder\'on--Zygmund inequality for every $p\in(1,\infty)$ under a uniform positive injectivity-radius bound; see Theorem \ref{thmcz}. Together, these results provide a comprehensive elliptic regularity package for this class of synthetic spaces.

Finally, the regularity results above yield the following smoothness theorem from the \textit{Bochner identity}; see Theorem \ref{thmsmooothreg}.

\begin{theorem}[From Bochner to Ricci soliton]\label{smoothsoli}
    Let $X$ be an $\RCD(K, N)$ space for some $K \in \mathbb{R}$ and some $N \in [1, \infty)$, and let $U\subset X$ be open. Then the following two conditions are equivalent:
    \begin{enumerate}
        \item $U$ has locally positive injectivity radius and satisfies the Bochner identity;
        \item $U$ is isometric to a smooth, not necessarily complete, Ricci soliton.
    \end{enumerate}
\end{theorem}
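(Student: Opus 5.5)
We have to prove both directions of Theorem \ref{smoothsoli}. The direction (2)$\Rightarrow$(1) is straightforward, so the plan spends most of its effort on (1)$\Rightarrow$(2).

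\textbf{(2)$\Rightarrow$(1).} Let $U$ be isometric to a smooth Ricci soliton $(M,g,f)$ with $\meas_X = e^{-f}\,\mathrm{vol}_g$ and
\[
\Ric_g+\Hess f=\lambda g .
\]
Then the Bochner formula for the weighted Laplacian $\Delta_f=\Delta-\langle\nabla f,\nabla\cdot\rangle$ reads
\[
\tfrac12\Delta_f|\nabla u|^2=|\Hess u|^2+\langle\nabla u,\nabla\Delta_f u\rangle+(\Ric+\Hess f)(\nabla u,\nabla u),
\]
which is exactly the Bochner identity with constant $\lambda$. A smooth manifold has locally positive injectivity radius. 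The only care needed is to check that $\Delta_X$ coincides with $\Delta_f$ and that this matches the paper's definition of the Bochner identity, presumably an equality of measures with the measure-valued Laplacian.

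\textbf{(1)$\Rightarrow$(2), Step 1: $C^{0,\alpha}$ regularity.} Apply Theorem \ref{theorem:inj}(1) locally, which holds because its proof is local. This shows $U$ is a smooth $n$-manifold with metric $g$ and density $e^{-f}$, both in $W^{1,p}_{\loc}\cap C^{0,\alpha}_{\loc}$ in harmonic charts. By Theorem \ref{thmlapchart}, in harmonic coordinates $\Phi$ we have $\Delta_X=g^{jk}\partial_j\partial_k$.

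\textbf{Step 2: bootstrap.} Here the Bochner identity is used. Take the identity with test functions $u=\phi_i$ and $u=\phi_i+\phi_j$ and polarize. Since $\Delta_X\phi_i=0$, this gives
\[
\tfrac12\Delta_X g^{ij}=\langle\Hess\phi_i,\Hess\phi_j\rangle+\lambda g^{ij}.
\]
The right-hand side involves only first derivatives of $g$, so in harmonic coordinates it is quadratic in $\partial g$ and lies in $L^{p}$ for all $p$. The system is therefore
\[
g^{kl}\partial_k\partial_l g^{ij}=Q(g,\partial g),
\]
with $C^{0,\alpha}$ leading coefficients. Elliptic $W^{2,p}$ estimates (Corollary \ref{newcor}) give $g\in W^{2,p}_{\loc}$, hence $C^{1,\alpha}$. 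Then $Q\in C^{0,\alpha}$, so Schauder theory (Corollary \ref{calp}) gives $g\in C^{2,\alpha}$, and iterating gives $g\in C^\infty$. The harmonic coordinates are then smooth relative to the smooth structure.

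For the weight: write $\Delta_X u=\Delta_g u-\langle\nabla f,\nabla u\rangle$ and apply this to the smooth coordinate functions. The identity $\Delta_X\phi_i=0$ then gives
\[
g^{ij}\partial_j f = -\Delta_g\phi_i ,
\]
and the right-hand side is determined by the now smooth metric. Hence $df$ is smooth and so is $f$.

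\textbf{Step 3: the soliton equation.} With $(g,f)$ smooth, compare the Bochner identity with the smooth weighted Bochner formula. For every smooth $u$ this yields
\[
(\Ric+\Hess f)(\nabla u,\nabla u)=\lambda|\nabla u|^2 .
\]
Polarizing gives $\Ric+\Hess f=\lambda g$ on $U$.

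\textbf{Main obstacle.} The hardest part is Step 2's first stage. One must justify using the coordinates $\phi_i$ as test functions and interpret the Bochner identity, which a priori holds only as an equality of measures with $\Delta|\nabla u|^2$ possibly measure-valued. One must then show that $g^{ij}=\langle\nabla\phi_i,\nabla\phi_j\rangle$ lies in $D_{\loc}$ with $L^p$ Laplacian. To handle this, first deduce from the identity that the measure-valued $\Delta g^{ij}$ is absolutely continuous with density $2\langle\Hess\phi_i,\Hess\phi_j\rangle+2\lambda g^{ij}$. Then use Corollary \ref{corhess} to get $|\Hess\phi_i|\in L^p_{\loc}$, so this density is in $L^{p/2}_{\loc}$ for all $p$. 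This is what makes Theorem \ref{thmlapchart} applicable to $g^{ij}$ and starts the bootstrap.
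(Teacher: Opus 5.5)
Your proposal is correct and follows the paper's proof essentially step by step. The shared steps are: a harmonic chart from Theorem \ref{thm:nonsmoothandssasasas}; polarizing the Bochner identity for harmonic functions, via a density argument, to get $g^{ij}\in D_{\loc}(\Delta)$ with $\Delta g^{ij}=2\langle \Hess_{\phi_i},\Hess_{\phi_j}\rangle+2c\,g^{ij}\in L^p_{\loc}$; the $W^{2,p}$-then-Schauder bootstrap for $\sum_{a,b}g^{ab}\partial_a\partial_b g^{ij}=Q(\partial g,\partial g)+2c\,g^{ij}$; and smoothness of $f$ from $\Delta_X\phi_i=0$.

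There are two minor corrections. The paper's definition imposes the Bochner identity only for harmonic $\psi$, so in your Step 3 you should test with $u=\sum_i a_i\phi_i$, whose gradients span each tangent space, rather than with arbitrary smooth $u$. Also, the weight equation should read $\sum_j g^{ij}\partial_j f=\Delta_g\phi_i$; your minus sign is a harmless slip.
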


The injectivity radius assumption is essential, as shown by the metric cone over a sufficiently small circle; see Section \ref{smoothapp}.

\subsection{Main results for collapsing spaces}\label{subseccoppal}

In this subsection, we introduce a series of structural results for RCD spaces collapsing to lower dimensional spaces. 
\begin{definition}
 [Bounded covering geometry] \label{d:BCG}
Let $X$ be an $\RCD(K, N)$ space for some $K \in \mathbb{R}$ and some $N \in [1, \infty)$. Given a positive number  $\iota_0 > 0$, 
$X$ is said to have $\iota_0$-{\it bounded covering geometry} ($\iota_0$-BCG) if  $\Injrad(\widetilde{X}) \geq \iota_0$, where $\widetilde{X}$ is the universal cover of $X$.
\end{definition}

Applying Theorem \ref{theorem:inj} to the universal cover $\widetilde{X}$ of an $\RCD(K,N)$ space $X$ with $\iota_0$-BCG shows that the induced Riemannian metric $g_X$ of $X$ has $W^{1,p}_{\loc}\cap C^{0,\alpha}_{\loc}$ regularity.

We are now ready to state the main results associated with this notion. 

\begin{theorem}[Fibration I]\label{fibration} Given $K \in  \dR$, $N \in [1, \infty)$, and $\iota_0 > 0$,
let $X_j$ be a sequence of compact $\RCD(K, N)$ spaces satisfying $\fm_{X_j}(X_j) = 1$ 
and $\iota_0$-BCG, and assume that $X_j \xrightarrow{mGH} X_{\infty}$, where $X_{\infty}$ is a compact $\RCD(K, N)$ space. 
Then, for any sufficiently large $j$, 
there exists a possibly singular fibration $\mathscr{F}_j : X_j \to X_{\infty}$ whose fibers are diffeomorphic to infranilmanifolds. In particular, if $X_{\infty}$ is smooth, then $\mathscr{F}_j$ is a smooth fiber bundle map.
\end{theorem}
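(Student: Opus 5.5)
The idea is to pass to universal covers, where Theorem \ref{theorem:inj} supplies uniform harmonic-coordinate control, and then adapt the Cheeger--Fukaya--Gromov and Naber--Tian equivariant framework with harmonic coordinates in place of smooth curvature bounds.

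\textbf{Step 1 (uniform regularity on covers).} For each $x\in X_j$, take a lift $\tilde x\in\widetilde X_j$. By $\iota_0$-BCG, $\Injrad(\widetilde X_j)\ge\iota_0$. Applying Theorem \ref{theorem:inj} together with the quantitative harmonic radius bound (Theorem \ref{thm:nonsmoothandssasasas}) gives harmonic coordinates on balls $B_{r_0}(\tilde x)$ with $r_0=r_0(K,N,\iota_0)$. In these coordinates $g$ and the density are uniformly controlled in $C^{0,\alpha}\cap W^{1,p}$.

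\textbf{Step 2 (equivariant limit).} By the local compactness in Theorem \ref{theorem:inj}(2), localized to balls, the triples $(B_{r_0}(\tilde x_j),\tilde x_j,\Gamma_j)$, with $\Gamma_j=\pi_1(X_j)$ acting by deck transformations, subconverge in the equivariant $C^{0,\alpha}$ sense to $(B_{r_0}(\tilde x_\infty),\tilde x_\infty,G)$. Here $G$ is a Lie group of isometries. By elliptic regularity (Theorem \ref{thmlapchart}), isometries act by $C^{1,\alpha}$ diffeomorphisms in harmonic charts, and $G$ is the closed limit group. The quotient is locally $B_{r_0}(\tilde x_\infty)/G\cong B_{r_0}(x_\infty)\subset X_\infty$.

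\textbf{Step 3 (almost-Lie-group structure).} Apply the generalized Margulis lemma for $\RCD(K,N)$ spaces (Kapovitch--Wilking type, valid for RCD via Deng's non-branching) to the subgroup of $\Gamma_j$ generated by elements of small displacement at $\tilde x_j$. For large $j$ this subgroup is virtually nilpotent, and the identity component $G_0$ is nilpotent. Since $\Gamma_j$ acts freely and properly discontinuously on a manifold with uniform injectivity radius, the standard Fukaya/CFG argument identifies the orbits of $G_0$ with nilpotent Lie group orbits and the fibers with $\Gamma'\backslash N$, i.e.\ infranilmanifolds.

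\textbf{Step 4 (constructing $\mathscr F_j$).} First build $G$-equivariant maps on $\widetilde X_j$ by the center-of-mass/frame-bundle technique, or by equivariant harmonic splitting maps. These descend to local maps $X_j\supset B\to X_\infty$ whose fibers are $\Gamma_j$-orbit quotients, i.e.\ infranilmanifolds. Glue the local maps via center of mass in $C^{1,\alpha}$ harmonic charts of $X_\infty$ where $X_\infty$ is smooth, giving a smooth bundle when $X_\infty$ is smooth. Near singular points of $X_\infty$, where $G$ has nontrivial isotropy, the fibration becomes singular and the fibers are quotients by finite isotropy.

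\textbf{Main obstacle.} The hardest step is Steps 3--4: producing the fibration with only $C^{0,\alpha}\cap W^{1,p}$ metric regularity rather than the bounded curvature that CFG use. I would first try to replace the frame-bundle smoothing with harmonic coordinates from Theorem \ref{thmlapchart}. These are $C^{1,\alpha}$, which suffices for transversality and the implicit function theorem, and the nilpotent structure can be transplanted from the limit group $G_0$ via equivariant Gromov--Hausdorff approximations that are upgraded to $C^{1,\alpha}$ almost-isometries.
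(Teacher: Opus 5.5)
\textbf{The gap is in Steps 3--4.} The Fukaya/Cheeger--Fukaya--Gromov argument you invoke relies on a two-sided sectional curvature bound in two essential places, and you have no such bound.

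First, the frame bundle is metrized using the Levi-Civita connection. Horizontal lifts are then controlled, and the limit of the frame bundles is a smooth manifold with a free $O(n)$-action, on which the fibration is actually built. With $g_X\in W^{1,p}\cap C^{0,\alpha}$, the Christoffel symbols are only $L^p$, so parallel transport along curves is not even well defined, let alone controlled.

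Second, the fibers are identified as infranilmanifolds through the Gromov--Ruh almost flat theorem. That requires a bound on the intrinsic curvature of the fibers, i.e.\ second-order control of the ambient metric and of the fibers' second fundamental forms. $C^{1,\alpha}$ harmonic charts give only first-order control. Even the Hessians of distance functions are merely $L^p$ (Corollary \ref{corangleimprove}), so a center-of-mass map built from them gives fibers with no curvature bound.

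The Margulis lemma does not close this. It tells you the short-loop subgroup is virtually nilpotent, which is homotopy-theoretic information and cannot pin down diffeomorphism type (exotic tori exist). Your phrase ``transplant the nilpotent structure via $C^{1,\alpha}$ almost-isometries'' names the missing argument rather than reducing it to a known one.

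\textbf{The missing idea is metric smoothing.} After your Step 1 (the uniform covering regularity scale, Lemma \ref{l:covering-regularity-scale}), the paper applies Theorem \ref{t:main-smoothing}. This is the $L^2$-embedding smoothing of \cite{PWY}, and it applies because in harmonic charts the RCD Laplacian is $\sum g^{ij}\partial_i\partial_j$ with the weight dropping out (Theorem \ref{asnasi}). It produces smooth metrics $g_{j,\delta}$ with $e^{-\delta}g_j\le g_{j,\delta}\le e^{\delta}g_j$ and $|\nabla^k \Rm_{g_{j,\delta}}|\le C_k(\delta)$ uniformly in $j$; Ricci-flow smoothing via \cite{Simon} is an alternative.

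The frame bundle $F(X_j)$ is then metrized using the Levi-Civita connection of $g_{j,\epsilon_0}$. The $O(n)$-equivariant limit $Y_{\infty,\delta_0}$ of the smoothed frame bundles is smooth. The classical bounded-curvature theory then gives an $O(n)$-equivariant fibration $F(X_j)\to Y_{\infty,\delta_0}$ with infranil fibers, and this descends to $\mathscr{F}_j:X_j\to X_\infty$.

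So the fix is to keep your Step 1 and insert the smoothing. Your Steps 2--4 then need no re-proof at low regularity, since they become the standard bounded-curvature fibration theorem.
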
 
\begin{corollary}\label{corinfl} Given $K \in  \dR$, $N  \in [1, \infty) $, and $\iota_0 > 0$, there exists  
a positive number $\epsilon = \epsilon (K, N, \iota_0) > 0$ such that if  a compact $\RCD(K, N)$ space $X$ satisfies 
$
\diam(X) \leq \epsilon$
 and $\Injrad(\widetilde{X}) \geq \iota_0, 
$
where $\widetilde{X}$ is the universal cover of $X$, then $X$ is diffeomorphic to an infranilmanifold.
\end{corollary}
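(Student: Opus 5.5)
The plan is to argue by contradiction and reduce Corollary \ref{corinfl} to Theorem \ref{fibration}, with a point as the limit space. Suppose the corollary fails. Then there is a sequence of compact $\RCD(K,N)$ spaces $X_j$ with $\diam(X_j)\le 1/j$ and $\Injrad(\widetilde X_j)\ge \iota_0$, none of which is diffeomorphic to an infranilmanifold. Rescaling the reference measure does not change the $\RCD(K,N)$ condition, the universal cover, the metric, or the smooth structure. So we may assume $\fm_{X_j}(X_j)=1$.

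Since $\diam(X_j)\to 0$, the sequence $X_j$ converges in the measured Gromov--Hausdorff sense to the one-point space $X_\infty=(\{p\},0,\delta_p)$. This is a compact $\RCD(K,N)$ space, and it is trivially a smooth $0$-manifold. Theorem \ref{fibration} then applies. For all large $j$ it gives a smooth fiber bundle map $\mathscr F_j:X_j\to\{p\}$ whose fibers are diffeomorphic to infranilmanifolds. The only fiber is $X_j$ itself, so $X_j$ is diffeomorphic to an infranilmanifold, which is a contradiction. Note that $X_j$ is a smooth manifold in the first place: its universal cover has positive injectivity radius, so Theorem \ref{theorem:inj}(1) makes $\widetilde X_j$ smooth, and the deck transformations act by isometries, which are smooth (they are $C^{1,\alpha}$ in harmonic charts). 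Hence the quotient inherits a smooth structure.

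The main obstacle is whether the degenerate limit is really allowed in Theorem \ref{fibration}, that is, whether its proof handles the case where $X_\infty$ is a point. There the "possibly singular fibration" should be the constant map, and the content lies entirely in identifying the fiber. If the proof of Theorem \ref{fibration} excludes this case, I would instead run its fiber argument directly. First I would pass to the universal covers $\widetilde X_j$ with their deck groups $\Gamma_j$. By Theorem \ref{theorem:inj}(2) and the harmonic-radius bound, the $\widetilde X_j$ have uniform $C^{0,\alpha}\cap W^{1,p}$ control on balls of radius comparable to $\iota_0$. They therefore subconverge in the equivariant sense to a smooth limit $(\widetilde X_\infty,\Gamma_\infty)$ with $\widetilde X_\infty/\Gamma_\infty$ a point. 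By the generalized Margulis lemma for $\RCD$ spaces, $\Gamma_j$ is virtually nilpotent and $\Gamma_\infty$ is a nilpotent Lie group acting transitively. Finally I would use metric smoothing on the $X_j$ to obtain nearby metrics of bounded curvature and injectivity-radius-controlled covers. This puts us in the setting of Gromov--Ruh almost flat manifolds (more precisely, the bounded-covering-geometry version of Cheeger--Fukaya--Gromov), which identifies $X_j$ with an infranilmanifold $N/\Gamma_j$.
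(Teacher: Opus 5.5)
Your argument is correct, but it takes a different route from the paper. The paper only says the corollary ``can be checked similarly'' to Corollary \ref{cor:kapovitch}, and that argument is direct, with no contradiction and no limit space:
\begin{itemize}
\item Fix, say, $\delta=1$ in Theorem \ref{t:main-smoothing}. This gives a smooth metric $g_1$ on $X$ with $e^{-1}g_X\le g_1\le e\,g_X$ and $\sup_X|\Rm_{g_1}|\le C_0(\iota_0,K,N)$.
\item The key point is that this curvature bound does not depend on $\diam(X)$. Hence $\diam_{g_1}(X)^2\sup_X|\Rm_{g_1}|\le e\,\epsilon^2C_0$.
\item This quantity is below $\min_{n\le N}\epsilon_{Gr}(n)$ once $\epsilon$ is small, so Gromov--Ruh applies and gives the infranil structure, with an explicit $\epsilon$.
\end{itemize}

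Your primary route instead treats the corollary as the degenerate, point-limit case of Theorem \ref{fibration}. That reduction is sound. The paper's standing convention excluding one-point spaces is only a convention, and the frame-bundle proof of Theorem \ref{fibration} still works when $X_\infty$ is a point. In that case $Y_\infty=O(n)/H$ is still a smooth manifold. The $O(n)$-equivariant fibration then descends to the constant map $X_j\to\{p\}$, whose only fiber is $X_j$.

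The route is roundabout, though. The identification of the fibers as infranilmanifolds in the Cheeger--Fukaya--Gromov fibration theorem itself rests on Gromov's almost flat manifold theorem. So you are invoking a heavier result to recover its base case.

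In your fallback, the last step alone (smoothing to bounded curvature, then Gromov--Ruh) is already a complete proof, and it is the paper's. The equivariant limits and the Margulis lemma can be dropped. So can the claim that $\Gamma_\infty$ is a nilpotent Lie group acting transitively, which is false in general. For example, for small flat Klein bottles the limit group is $\mathbb{R}^2\rtimes\mathbb{Z}_2$, which is not nilpotent; only the identity component of $\Gamma_\infty$ is nilpotent.
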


We next state the main result under mixed curvature bounds.

\begin{theorem}
[Fibration II]\label{fibr}  Let $K \in  \dR$, $N \in [1, \infty)$, $\kappa > 0$,  
let $X_j$ be a sequence of compact $\RCD(K, N)$ space whose metric structure is $\CBA(\kappa)$ without boundary. Assume that $\meas_{X_j}(X_j)=1$, and  $X_j \xrightarrow{mGH} X_{\infty}$ for some compact $\RCD(K, N)$ space $X_{\infty}$. 
Then, for any sufficiently large $j$, 
there exists a possibly singular fibration $\mathscr{F}_j : X_j \to X_{\infty}$ whose fibers are diffeomorphic to infranilmanifolds.  
\end{theorem}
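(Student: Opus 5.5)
The plan is to reduce Theorem \ref{fibr} to Theorem \ref{fibration} by showing that a uniform bounded covering geometry holds automatically under the stated mixed curvature bounds. Concretely, I will prove that there is $\iota_0=\iota_0(K,N,\kappa)>0$ such that every $\RCD(K,N)$ space $X$ whose metric is $\CBA(\kappa)$ without boundary satisfies $\Injrad(\widetilde{X})\ge \iota_0$. Given this, each $X_j$ satisfies the hypotheses of Theorem \ref{fibration} with one fixed $\iota_0$. Since $\meas_{X_j}(X_j)=1$ and $X_j\to X_\infty$ in the mGH sense, Theorem \ref{fibration} then produces the possibly singular fibrations $\mathscr{F}_j$ with infranilmanifold fibers for all large $j$.

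\textbf{Step 1.} Pass to the universal cover. The universal cover $\widetilde{X}$ with the lifted distance and measure is again $\RCD(K,N)$; this is standard, via Mondino--Wei for the existence of the universal cover and the local-to-global property of $\RCD$. Being $\CBA(\kappa)$ is a local condition, so $\widetilde{X}$ is still locally $\CBA(\kappa)$ and has no boundary. By Theorem \ref{thmmixed}, or by \cite{KKK}, $\widetilde{X}$ is a topological manifold. It is also simply connected and complete.

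\textbf{Step 2.} Apply a Cartan--Hadamard type globalization for upper curvature bounds (Alexander--Bishop). A complete simply connected geodesic space that is locally $\CBA(\kappa)$ is globally $\CAT(\kappa)$ provided it contains no closed geodesics of length less than $2\pi/\sqrt{\kappa}$. I will exclude such short closed geodesics, or equivalently ensure that geodesics of length $<\pi/\sqrt\kappa$ are unique. Once $\widetilde{X}$ is globally $\CAT(\kappa)$, geodesics of length less than $\pi/\sqrt{\kappa}$ are unique and vary continuously in their endpoints. Since $\widetilde{X}$ is a boundaryless manifold, these geodesics extend locally; the non-branching property of $\RCD$ spaces \cite{Deng} also applies. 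From this one gets $\Injrad(\widetilde{X})\ge \pi/\sqrt{\kappa}$, in the sense of Definition \ref{definject}.

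\textbf{Main obstacle.} The hard part is Step 2, specifically excluding short closed geodesics uniformly. The Cartan--Hadamard argument itself holds only in the nonpositive case, and for $\kappa>0$ a simply connected space can have injectivity radius well below $\pi/\sqrt{\kappa}$; the round sphere of curvature $\kappa$ is the model example.

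My first approach is a Klingenberg-type argument. Suppose a point has injectivity radius $r$ much smaller than $\pi/\sqrt\kappa$. Then there are two distinct geodesics of length at most $r$ between a pair of nearby points, which together form a short geodesic loop. On a simply connected space this loop is null-homotopic. Shrinking it through short curves contradicts local uniqueness of geodesics inside $\CAT(\kappa)$ balls of radius $\pi/(2\sqrt\kappa)$. The lower Ricci bound enters through the noncollapsing of small balls: the local $\CAT(\kappa)$ structure together with the manifold structure gives a lower volume bound, hence a uniform scale on which the loop can be contracted.

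If the Klingenberg argument does not close, the fallback is a compactness/contradiction argument. Take spaces with $\Injrad(\widetilde{X}_i)\to 0$ and rescale at the bad points. The rescaled limits are $\CBA(0)$, $\RCD(0,N)$, boundaryless, and simply connected in a suitable limiting sense. By Cartan--Hadamard such a limit is a $\CAT(0)$ manifold, hence has infinite injectivity radius. Lower semicontinuity of the injectivity radius under these bounds, which follows from the uniform local $\CAT$ structure, then gives the contradiction.
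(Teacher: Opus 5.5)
There is a genuine gap. Your reduction to Theorem \ref{fibration} rests on the claim that some $\iota_0=\iota_0(K,N,\kappa)>0$ satisfies $\Injrad(\widetilde X)\ge\iota_0$ for every $\RCD(K,N)$, $\CBA(\kappa)$ space without boundary. That claim is false, and the paper points this out just before Theorem \ref{thm:smoothi}: the universal cover may itself be collapsed.

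A counterexample is the family of Berger spheres $S^3_\epsilon$, the unit round $S^3$ with the Hopf fibers scaled to length $2\pi\epsilon$. Their sectional curvatures lie in $[\epsilon^2,4-3\epsilon^2]$ and their Ricci curvature is at least $2\epsilon^2$. With normalized volume they are therefore $\RCD(0,3)$, $\CBA(4)$ and boundaryless, and they converge to $S^2(1/2)$. So they are a genuine instance of Theorem \ref{fibr}, with circle fibers. They are simply connected, so $\widetilde X=X$. Yet each Hopf fiber is a closed geodesic of length $2\pi\epsilon$, so $\Injrad\le\pi\epsilon\to0$, and $\widetilde X$ is not globally $\CAT(4)$. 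Step 2 therefore cannot be carried out. Incidentally, the round sphere of curvature $\kappa$ is not an example of small injectivity radius; its injectivity radius is exactly $\pi/\sqrt\kappa$.

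Both of your arguments for Step 2 break on this example.

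\emph{Klingenberg argument.} Balls of any fixed radius in $S^3_\epsilon$ contain closed geodesics of length $2\pi\epsilon$, so they are not $\CAT(4)$ once $\epsilon$ is small. There is also no lower volume bound, since the spaces collapse. A Hopf fiber is null-homotopic, but every null-homotopy must pass through loops of length bounded below independently of $\epsilon$. Indeed, loops projecting into small disks of $S^2(1/2)$ carry a fiber winding number that is preserved by homotopies through such loops. So ``shrinking through short curves'' is unavailable.

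\emph{Fallback argument.} Rescaling by $\epsilon^{-1}$ gives the flat limit $\mathbb{R}^2\times S^1$. It is $\RCD(0,3)$, $\CBA(0)$ and boundaryless, but not simply connected, and its injectivity radius is $\pi$, the same as the rescaled spaces. Simple connectivity does not survive the limit. Moreover, the lower semicontinuity of $\Injrad$ you invoke is not available; Proposition \ref{asasaso} gives only upper semicontinuity.

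The missing idea is to unwrap \emph{locally} rather than globally. By the Lifting Globalization Theorem (Lemma \ref{lemextend}, Corollary \ref{cormix}), each $p\in X$ has a complete space $Z$ that is both $\CAT(\kappa)$ and $\RCD(K,N)$. It comes with a locally distance-preserving map to $X$ that is a pseudo-covering on $B_{\varpi_\kappa/2}(\hat p)$. Geodesic extension and the absence of conjugate points give $\Injrad\ge\varpi_\kappa/8$ on $B_{\varpi_\kappa/8}(\hat p)$. Applying Theorem \ref{thm:nonsmoothandssasasas} on $Z$ yields a uniform lower bound on the covering regularity scale (Theorem \ref{thm:smoothi}). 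That bound, not BCG, is what the smoothing in Corollary \ref{c:smoothing-mcb} and the frame-bundle argument in the proof of Theorem \ref{fibration} actually use. So that proof has to be rerun with this input; Theorem \ref{fibration} cannot be quoted as a black box.
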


To prove Theorems \ref{fibration} and \ref{fibr}, the \textit{covering regularity scale} and \textit{smoothability} play key roles. The covering regularity scale measures the largest radius on which the $C^{0,\alpha}$-norms can be well controlled, up to local isometries;
see Definition \ref{def:covering}. The proof of Theorem \ref{theorem:inj} shows that the BCG condition provides a uniform covering regularity scale. We also need the following technical ingredient, which is essential for implementing metric smoothing.
\begin{theorem}[Uniform estimate on covering regularity scale]
For every $\alpha \in (0,1)$ and every $\eta>0$,
    any $\RCD(K, N)$ and $\CBA(\kappa)$ space without boundary has a uniform quantitative positive lower bound for its $(0,\alpha, \eta)$-covering regularity scale. 
\end{theorem}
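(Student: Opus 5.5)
We argue by contradiction combined with a rescaling and blow-up argument, reducing the claim to the uniform $C^{0,\alpha}$-compactness of Theorem \ref{theorem:inj}(2), applied on local universal covers. Fix $K$, $N$, $\kappa$, $\alpha$, $\eta$. Suppose there are $\RCD(K,N)$ spaces $X_j$ whose metric structures are $\CBA(\kappa)$ without boundary, and points $x_j \in X_j$, such that the $(0,\alpha,\eta)$-covering regularity scale $r_j$ at $x_j$ tends to $0$. We rescale by $r_j^{-1}$. The rescaled spaces $(X_j, r_j^{-1}\dist_{X_j}, x_j)$ are $\RCD(r_j^2K,N)$ and $\CBA(r_j^2\kappa)$, and their covering regularity scale at $x_j$ equals $1$. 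By a standard point-selection argument (choosing $x_j$ to almost minimize the ratio of the scale to the distance from a fixed base point), we may also assume the scale is at least $1/2$ on the unit ball $B_{1}(x_j)$ after rescaling.

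The key input is that $\CBA(\kappa)$ plus the absence of boundary gives a lower bound on the injectivity radius. This is the positivity result of Kapovitch--Kell--Ketterer \cite{KKK}, and it holds uniformly. On a ball of radius $\pi/(2\sqrt\kappa)$ the space is uniquely geodesic, so the exponential-type map from the tangent cone is defined. Since the space is locally contractible and a $\CBA$ ball of that radius is contractible, we expect a lower bound $\Injrad \ge c(\kappa)$ in the sense of Definition \ref{definject}. The covering regularity scale is defined up to local isometries, so we work on local universal covers of balls $B_{\rho}(x_j)$ with $\rho=c(\kappa)$. There the injectivity radius is bounded below, and this persists after rescaling, since the injectivity radius scales like $r_j^{-1}c(\kappa) \to \infty$.

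Next we pass to a limit. The rescaled spaces converge in the pointed mGH sense to an $\RCD(0,N)$ space $Y$ that is $\CBA(0)$, has no boundary, and has infinite injectivity radius. By the compactness part of Theorem \ref{theorem:inj} (its local version), the convergence is in fact $C^{0,\alpha'}$ in harmonic or distance charts for every $\alpha'<1$. The limit $Y$ is then a smooth manifold that is simultaneously nonnegatively Ricci curved in the synthetic sense and $\CAT(0)$, with infinite injectivity radius. We would show that $Y$ is isometric to $\dR^n$ with constant density. One route is the splitting theorem applied through the lines provided by geodesics extending infinitely in a $\CAT(0)$ manifold without boundary. Alternatively, by the Lipschitz regularity of Theorem \ref{thmmixed}, the metric is Lipschitz and the mixed bounds force flatness under rescaling.

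Finally we reach the contradiction. On $\dR^n$ the covering regularity scale is infinite. Because the convergence holds in $C^{0,\alpha'}$ with $\alpha'>\alpha$, and the Euclidean harmonic coordinates have zero $C^{0,\alpha}$-deviation, the $(0,\alpha,\eta)$-scale of the rescaled spaces at $x_j$ must eventually exceed $2$. This contradicts its normalization to $1$. The last step relies on upper semicontinuity of the $C^{0,\alpha}$ error under $C^{0,\alpha'}$ convergence, which is standard by interpolation.

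The main obstacle is the uniformity of the injectivity radius lower bound. It must hold for all points and all spaces, with a constant depending only on $\kappa$, so that Theorem \ref{theorem:inj}(2) applies with a fixed $\iota_0$ to the local covers. A secondary difficulty is the rigidity step, which identifies the blow-up limit as flat $\dR^n$ with constant weight. The density must be shown to be constant, and we plan to obtain this from the $C^{0,\alpha}$ convergence of the weights together with the equality case in Bishop--Gromov on the tangent cone.
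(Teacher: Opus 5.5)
There is a genuine gap. It sits exactly at the step you call the main obstacle, and that step is in fact the entire content of the paper's proof.

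\textbf{The uniform injectivity radius bound fails on $X$.} Consider the flat torus $\mathbb{R}^n/\epsilon\mathbb{Z}^n$ with Lebesgue measure. It is $\RCD(0,n)$, it is $\CBA(\kappa)$ for every $\kappa>0$, it has no boundary, and its injectivity radius is $\epsilon/2$. Once $\epsilon$ is small, its balls of radius $\pi/(2\sqrt{\kappa})$ are neither uniquely geodesic nor contractible. The result of \cite{KKK} (Lemma \ref{injeclemma}(3)) only gives, at each point, some unquantified $r>0$ with $\Injrad(B_r(x))>0$. Uniqueness of geodesics of length below $\varpi_\kappa$ is a property of $\CAT(\kappa)$ spaces, and the size of $\CAT(\kappa)$ neighbourhoods in a $\CBA(\kappa)$ space is not controlled.

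\textbf{The local covers are not justified.} Your fallback to ``local universal covers of balls'' is the right instinct, but you give no valid argument for it. Such a cover is incomplete. For $\kappa>0$, simple connectivity together with a local $\CAT(\kappa)$ condition does not yield a global $\CAT(\kappa)$ condition. Moreover, the paper's regularity theorems and Definition \ref{definject} are formulated in complete $\RCD(K,N)$ spaces.

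\textbf{The missing tool.} What you need is the Lifting Globalization Theorem \cite[Theorem 9.50]{AKP-Alexandrov}, as used in Lemma \ref{lemextend}.
\begin{enumerate}
\item It produces a complete $\CAT(\kappa)$ space $Z$ and a locally distance-preserving map $Z\to X$ that restricts to a pseudo-covering $B_{\varpi_\kappa/2}(\hat p)\to B_{\varpi_\kappa/2}(p)$.
\item $Z$ is $\RCD(K,N)$ by the local-to-global property (Corollary \ref{cormix}).
\item $Z$ is locally isometric to $X$, which is a manifold without boundary by \cite{KKK}, so $Z$ has the geodesic extension property.
\item Combining geodesic extension with uniqueness of geodesics and the absence of conjugate points along local geodesics of length $<\varpi_\kappa$ gives $\Injrad\ge\varpi_\kappa/8$ on $B_{\varpi_\kappa/8}(\hat p)$. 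This bound depends only on $\kappa$.
\end{enumerate}

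\textbf{With this in hand, the contradiction and blow-up scheme is unnecessary.} Rescale so that $\varpi_\kappa/8=1$. The final quantitative statement of Theorem \ref{thm:nonsmoothandssasasas}, applied on $Z$, gives $(1+\epsilon,q)$-harmonic charts on balls around $\hat p$ of radius $r(K,N,\kappa,\epsilon,q)$. The infinitesimal $(1+\epsilon)$-bi-Lipschitz bound gives the required $C^0$ smallness of the metric coefficients. For $q$ large, Morrey's embedding turns the normalized Hessian bound into the required $C^{0,\alpha}$ smallness in Definition \ref{def:covering}. This is the paper's proof.

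\textbf{Further problems in your route.}
\begin{enumerate}
\item Theorem \ref{theorem:inj}(2) concerns compact spaces with global diameter, measure and injectivity radius bounds. The ``local version'' you invoke is not available as stated, and it would itself follow from the same quantitative harmonic radius estimate.
\item Any blow-up would have to be performed on the lifts, not on $(X_j,r_j^{-1}\dist_{X_j})$, since the latter may collapse.
\item Constancy of the density in the limit needs no separate Bishop--Gromov argument, because Theorem \ref{splitin} is a metric measure splitting.
\end{enumerate}
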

See Theorem \ref{thm:smoothi} for the precise statement.

Finally, we recall the following conjecture of V. Kapovitch \cite{Kapovitch}, which generalizes Gromov's theorem on \textit{almost flat manifolds} \cite{Gromov,Ruh}.
\begin{conjecture}[V. Kapovitch]\label{conjectureK}
For any $N \in (1, \infty)$ there exists $\epsilon=\epsilon(N) \in (0,1)$ such that the following holds. If $X$ is an $\RCD(-\epsilon, N)$ and $\CBA(\epsilon)$ space with $\mathrm{diam}(X) \le \epsilon$ and $\partial X =\emptyset$, then $X$ is diffeomorphic to an infranilmanifold of dimension at most $N$.
\end{conjecture}
A proof not using smoothing has been announced in \cite{wang}, whereas \cite{Kapovitch} proposed an approach via smoothing and proved the conjecture for smooth metric measure spaces using \textit{Ricci-flow} smoothing. Our regularity results allow us to confirm the conjecture in the synthetic setting via smoothing.

\begin{corollary}\label{cor:kapovitch}
    Conjecture \ref{conjectureK} holds via metric smoothing.
\end{corollary}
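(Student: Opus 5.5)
The plan is to reduce the corollary to Theorem \ref{fibr} (Fibration II) by a contradiction and compactness argument, and then to use metric smoothing to handle the dimension bound. I would first reduce to a limit configuration. Suppose the conjecture fails for some $N$. Then there are $\RCD(-\epsilon_j,N)$ and $\CBA(\epsilon_j)$ spaces $X_j$ with $\partial X_j=\emptyset$, $\diam(X_j)\le\epsilon_j\to0$, none of which is diffeomorphic to an infranilmanifold of dimension at most $N$. After normalizing $\meas_{X_j}(X_j)=1$, which does not change the $\RCD$ condition, each $X_j$ is $\RCD(-1,N)$ and $\CBA(1)$. By Theorem \ref{thmmixed}, each $X_j$ is a smooth manifold of dimension $n_j\le N$. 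It has a locally Lipschitz metric and density. By Gromov precompactness, $X_j\to X_\infty$ in the mGH sense, and $X_\infty$ is a point. It is trivially a compact $\RCD(-1,N)$ space, and it is smooth.

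Next I would apply the fibration. Theorem \ref{fibr}, applied with $X_\infty=\{pt\}$, gives for large $j$ a fibration $\mathscr F_j:X_j\to\{pt\}$. Its fiber is $X_j$ itself and is diffeomorphic to an infranilmanifold. The dimension bound holds because the essential dimension is at most $N$. This would be the contradiction.

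The substance lies in checking that Theorem \ref{fibr} rests on smoothing. Following Kapovitch's strategy, I would carry this out in three steps.
\begin{enumerate}
\item Use the uniform $(0,\alpha,\eta)$-covering regularity scale from Theorem \ref{thm:smoothi}. It gives, on the local universal covers, harmonic coordinates with uniform $C^{0,\alpha}\cap W^{1,p}$ control of $g_X$ and of the density.
\item Mollify in these charts, equivariantly with respect to the local fundamental pseudogroup, via a center-of-mass construction. This produces smooth metrics $g_{j,\delta}$ that are $\delta$-close to $g_{X_j}$ in $C^0$. The smoothed metrics satisfy $|\sec|\le C(\delta)$, or at least bounded Ricci curvature together with uniform covering injectivity radius. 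These bounds are independent of $j$.
\item Rescale so that the curvature is bounded by $1$. The diameter is still at most $C(\delta)\epsilon_j\to0$. Gromov--Ruh's almost flat theorem \cite{Gromov,Ruh}, or the bounded covering geometry version (Corollary \ref{corinfl} applied to the smoothed metrics), then yields the infranilmanifold structure.
\end{enumerate}

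I expect step (2) to be the main obstacle: obtaining curvature bounds for the smoothed metric uniformly in $j$. Mollifying a $C^{0,\alpha}$ metric controls the $C^k$ norms only at a scale determined by $\delta$. Local isometry invariance on the covers is needed to descend the construction to $X_j$. I would try the equivariant smoothing of the covering regularity structure first, and fall back on Corollary \ref{corinfl} if sectional bounds are unavailable.
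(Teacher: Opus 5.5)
Your reduction is logically valid if Theorem \ref{fibr} is taken as given, but it is a detour. Moreover, what you call the substance of the proof is already supplied by a result you did not invoke.

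\textbf{The paper's direct route.} Since $\epsilon<1$, $X$ is $\RCD(-1,N)$ and $\CBA(1)$. Corollary \ref{c:smoothing-mcb} then gives, for a fixed small $\delta$, a smooth metric $g_\delta$ with $e^{-\delta}g_X\le g_\delta\le e^{\delta}g_X$ and $|\Rm^{g_\delta}|\le C(\delta)$, where $C(\delta)$ depends only on $N$ and $\delta$. Hence $\diam^{g_\delta}(X)\le 3\epsilon$. Choosing $\epsilon$ with $9\epsilon^2C(\delta)<\epsilon_{Gr}(n)$ for all $n\le N$ lets Gromov's almost flat theorem \cite{Gromov,Ruh} conclude. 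This needs no contradiction or compactness, and it produces $\epsilon(N)$ explicitly.

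\textbf{Comparison with your route.} Your steps (1)--(3) essentially rebuild this argument. The uniform-in-$j$ curvature bound you flag as the main obstacle in step (2) is exactly the content of Corollary \ref{c:smoothing-mcb}. The wrapper through Theorem \ref{fibr} adds nothing. Strictly, a one-point limit lies outside the paper's standing convention that spaces are not points, so you would have to check that the proof of Theorem \ref{fibr} covers this case. In that case the frame-bundle argument rests on the same almost flat theorem applied to the smoothed metrics.

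\textbf{Two ingredients of your step (2) sketch should be discarded.}
\begin{enumerate}
\item Mollifying in harmonic charts on the local covers, made ``equivariant'' by a center of mass over the local pseudogroup, is not justified as stated. Harmonic charts are not preserved by the local deck isometries. This is why the paper follows \cite{PWY} and uses a canonical construction: the $L^2$-embedding built from the solutions $w_x$ of $\Delta w_x=-1$. That construction commutes with local isometries and therefore descends from the covers to $X$.
\item The fallback to Corollary \ref{corinfl} is not available. It requires a lower bound on $\Injrad(\widetilde X)$ for the global universal cover, which the paper notes (before Theorem \ref{thm:smoothi}) can fail under mixed curvature bounds. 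The uniform covering regularity scale comes instead from the local $\CAT(\kappa)$ covers of Corollary \ref{cormix}.
\end{enumerate}
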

Moreover, the smoothing can be carried out by $L^2$-embedding  or by Ricci flow; see Section \ref{ss:mixed-curvature-bounds}.

\subsection{Strategy of the proof and organization of the paper}
We first focus on item (1) of Theorem \ref{theorem:inj} for distance charts. 
The first key step is to obtain a quantitative $L^{\infty}_{\loc}$-bound for the Laplacian of the distance function $\dist_x$ from a point $x$, under a positive lower bound on the injectivity radius:
\begin{equation}\label{lapbd}
    |\Delta \dist_x|(y) \le C, \quad \text{away from $x$ and its cut locus.}
\end{equation}
In the unweighted smooth setting, the original idea for proving \eqref{lapbd} goes back to   \cite{AC}: one combines Laplacian comparison together with a simple but important argument using the extendability of minimizing geodesics. In our non-smooth setting, this argument will be implemented using a result in \cite{CM} based on needle decomposition techniques; see Proposition \ref{prop:injec}. 

Combining \eqref{lapbd}, the identity
$|\nabla \dist_x| \equiv 1$,
and Gigli's Bochner inequality with its Hessian term \cite{Gigli1}, we obtain a Morrey-type estimate for the Hessian:
\begin{equation}\label{morreyhessian}
    r\fint_{B_r(y)}|\mathrm{Hess}_{\dist_x}|^2\di \meas_X \le C, \quad \text{for any small $r>0$.}
\end{equation}
A standard telescoping argument applied to \eqref{morreyhessian} implies the $C^{0,\frac{1}{2}}_{\loc}$-continuity of angles and hence the $C^{0,\frac{1}{2}}_{\loc}$-regularity of the Riemannian metric $g_X$ in  distance charts; see Theorem \ref{cor:distance coordinate}. 

Arguing as in \cite{Cheeger-Colding-III}, we conclude that $\meas_X=e^{-f}\di \haus^n$ for some $f\in C^{0,\beta}_{\loc} $. Indeed, the $C^{0,\frac{1}{2}}_{\loc}$-continuity of angles implies that $
\dist_{\mathrm{GH}}(B_r(y), B_r(\mathbf{0}^n))\le Cr^{\frac{3}{2}}
$ for any small $r>0$; see Corollary \ref{c:hoelder-measure-density}.
  After getting the $W^{1,p}_{\loc}$-regularity of $g_X$, we improve the regularity of $f$ to $W^{1,p}_{\loc}$; see Corollary \ref{corimprovedmeas}.

To improve the $C^{0,\frac{1}{2}}_{\loc}$-regularity of $g_X$ to $W^{1,p}_{\loc}\cap C^{0,\alpha}_{\loc}$, we first develop an elliptic regularity theory adapted to this framework. After proving a compatibility result for Sobolev functions under bi-Lipschitz charts (Proposition \ref{deri}),   a mollification argument using the divergence-measure fields in \cite{BCM} yields Theorem \ref{thmlapchart} (see also Theorem \ref{asnasi}). We then apply the $W^{2,2}_{\loc}$-to-$W^{2,p}_{\loc}$ regularity result of Chiarenza-Frasca-Longo \cite{CFL} to obtain $W^{2,p}_{\loc}$ and $C^{2,\alpha}_{\loc}$-estimates for elliptic PDEs in our framework; see Corollaries \ref{newcor} and \ref{calp}. The initial $C^{0,\frac{1}{2}}_{\loc}$-regularity of $g^{ij}$ is a crucial step in establishing higher regularity.
 
We next explain how to obtain the $W^{1,p}_{\loc}\cap C^{0,\alpha}_{\loc}$-regularity of $g_X$. Fix a chart $(x_1, \ldots, x_n)$ and a function $\phi$ with $\Delta x_i\in L^{\infty}_{\loc}$ and $\Delta \phi \in L^{\infty}_{\loc}$. A distance chart satisfies these conditions by \eqref{lapbd}.
The preceding $W^{2,p}_{\loc}$-estimate gives an $L^p_{\loc}$-control on the second derivatives $\partial_i\partial_j\phi$  in coordinates.
We must also control $\Gamma_{ij}^k$ in $L^p_{\loc}$. For this purpose, we use an idea from \cite{AC}.  By $|\nabla \dist_x| \equiv 1$, $g^{ij}$ can be expressed rationally in terms of the first derivatives $\partial_m \dist_{z_l}$ for points $z_l \in X$, $ l=1,\ldots, n(n+1)/2$. This follows from elementary linear algebra together with the $C^{0,\frac{1}{2}}_{\loc}$-regularity of $g_X$. Since the $W^{1,p}_{\loc}$-norms of $\partial_m \dist_{z_l}$ are   controlled by the $W^{2,p}_{\loc}$-estimates, we obtain $|\nabla g^{ij}| \in L^p_{\loc}$ and hence $\Gamma_{ij}^k \in L^p_{\loc}$. This proves Corollary \ref{corhess} and  completes the proof of item (1) of Theorem \ref{theorem:inj}; see Theorem \ref{thm:nonsmoothandssasasas}.

Next, we discuss 
the remaining statements in (1) of Theorem \ref{theorem:inj} about harmonic charts. After establishing the Reifenberg flatness,
we can find topological harmonic charts, based on \cite{BNS} by Bru\`e-Naber-Semola. Then, a new non-degeneracy result, generalizing a result of \cite{CJN} to our non-smooth setting (Theorem \ref{nondeg}), allows us to apply the  elliptic regularity theory  and conclude that the harmonic chart is  bi-Lipschitz and has the desired regularity properties. Thus we get (1).

The second statement, (2) of Theorem \ref{theorem:inj}, is a direct consequence of the upper semicontinuity of injectivity radii with respect to the pmGH convergence (Proposition \ref{asasaso}), the  Arzel\`a-Ascoli theorem, and the quantitative form of item (1).

To prove Theorem \ref{thmmixed}, we first recall a result in \cite{KKK} stating that $e^{-f}$ is $\tau$-concave for some $\tau>0$ (Corollary \ref{c:Lipschitz-density}), which implies the local Lipschitz continuity of $f$.
The techniques of \cite{BGHZ} by Brena-Gigli-Zhu and the first named author, building on  \cite{H}, yields the formula: $\Delta_X=\mathrm{tr}(\mathrm{Hess})-\langle \nabla f, \nabla \cdot\rangle$; see Proposition \ref{noncocll}.
 Since the $\CBA(\kappa)$ condition implies a lower bound of the Hessian of $\dist_x$, applying this formula for a distance function $\dist_x$, we obtain $\mathrm{tr}(\mathrm{Hess}_{\dist_x}) \in L^{\infty}_{\loc}$ because of \eqref{lapbd}. Thus, as in the smooth case, we have 
$
    |\mathrm{Hess}_{\dist_x}| \in L^{\infty}_{\loc}
$.
This implies the local Lipschitz regularity of $g_X$ in distance charts and proves Theorem \ref{thmmixed}.

Combining the regularity results above with the smoothing techniques in \cite{PWY}, we show that all spaces appearing in Section \ref{subseccoppal} are quantitatively smoothable under appropriate  curvature restrictions, with estimates expressed in terms of covering regularity scales; see Theorem \ref{t:main-smoothing} and Corollary \ref{c:smoothing-mcb}. The smoothing techniques apply since the formula in Theorem \ref{thmlapchart} coincides with the standard unweighted formula when the chart is harmonic. Similarly, the Ricci-flow smoothing is also available due to \cite{Simon} with our regularity results.
Standard frame-bundle arguments then yield the convergence results stated in Section \ref{subseccoppal}. 

We also establish several technical results that may be of independent interest, including
 continuity of  Hausdorff measures under  pmGH convergence for \textit{collapsed} RCD spaces, generalizing a result of \cite{DG} (Theorem \ref{convhaus}), and a canonical Reifenberg theorem (Theorem \ref{8as8ashas8ashsbasb}). These results will be used in forthcoming work \cite{HZ2}. 

The paper is organized as follows.
  Section \ref{pre} reviews fundamental results and notions in metric measure geometry, and establishes several new auxiliary results.
   Section \ref{bilipsub} develops geometric analysis in bi-Lipschitz charts, including   elliptic regularity theory and the structure theory of RCD spaces with positive injectivity radius.
Section \ref{harmonicrr} further studies harmonic charts and, in particular, establishes their existence.
 Section \ref{seccollapsing} proves the convergence results described in Section \ref{subseccoppal}.  
Finally, Section \ref{secac}  studies the behavior of harmonic radii under pmGH convergence.

\subsection{Acknowledgements}
Part of this work was carried out during the authors' stay at the Simons Center for Geometry and Physics at Stony Brook University. The authors gratefully acknowledge
its warm hospitality and stimulating atmosphere. The first named author also thanks the Bernoulli Center at EPFL for its hospitality  during the workshop, ``Optimal transport and metric geometry across structures'', Man-Chun Lee for discussions on Ricci flow, and Tadashi Fujioka for discussions on  Alexandrov spaces and for pointing out the reference \cite{Bere}.

\section{Preliminaries in metric and metric measure spaces}\label{pre}
\subsection{Basics}\label{defgeodesic}
Let $(X, \dist_X)$ be a metric space. Recall that $X$ is called  \textit{proper} if every closed bounded subset of $X$ is compact, and it is called \textit{geodesic} if, for every $x,y\in X$, there exists an isometric embedding $\gamma:[0,\dist_X(x,y)]\to X$ with $\gamma(0)=x$ and $\gamma(\dist_X(x,y))=y$. Here $\gamma$ is called a (minimizing) \textit{geodesic} from $x$ to $y$ and is sometimes denoted by $[xy]$. For $A\subset X$, we denote by $B_{\epsilon}(A)$ and $\bar B_{\epsilon}(A)$ the open and closed $\epsilon$-neighborhoods of $A$, respectively. 

A triple  $(X, \dist_X, \meas_X)$ is called a \textit{metric measure space} if $(X, \dist_X)$ is a complete separable metric space and $\meas_X$  is a Borel measure on $X$ that is finite and positive on every open ball.  
    Throughout the paper, we assume that $X$ is not a single point and use the abbreviated notation $X=(X, \dist_X, \meas_X)$, $L^2(X)=L^2(X,\meas_X)$, and so forth whenever no confusion can arise. We identify metric measure spaces that are \textit{isomorphic} (or \textit{isometric}). A map $f:X\to Y$ between metric measure spaces is called an \textit{isomorphism} (or \textit{metric measure isometry}) if it is an isometry of the underlying metric spaces and $f_{\sharp}\meas_X=\meas_Y$; equivalently, $\meas_X(f^{-1}(A))=\meas_Y(A)$ for every Borel set $A\subset Y$. These notions also make sense without completeness or separability.

We use {\it pointed measured Gromov-Hausdorff convergence} (pmGH convergence), formulated in terms of {\it Gromov-Hausdorff approximations} (GHA), and write
\begin{align}\label{0s000ss}
        X_i\xrightarrow{\mathrm{pmGH}} X.
    \end{align}
    
    The pmGH topology can be metrized by a distance $\dist_{\mathrm{pmGH}}$.      We also use the standard notation $\dist_{\mathrm{GH}}$ for the Gromov--Hausdorff distance. 
Under \eqref{0s000ss}, one can define $L^p$-strong and $L^p$-weak convergence of functions $f_i\in L^p(X_i)$ to a function $f\in L^p(X)$. If the spaces in \eqref{0s000ss} are $\RCD(K,N)$ spaces, analogous notions are available for tensor fields. We use these notions and their basic properties throughout the paper. In particular, for $1<p<\infty$, every $L^p$-bounded sequence admits an $L^p$-weakly convergent subsequence. See \cite{AmbrosioHonda, Honda2, GMS} for details.

\subsection{Smooth object}\label{prop:equiv: be}
Let $(X, \dist_X, \meas_X)$ be a metric measure space and let $U$ be an open subset of $X$. 
We say that $U$ is \textit{smooth of dimension $n$} if, for every $x\in U$, there exist an open neighborhood $U_x\subset U$ of $x$, a not necessarily complete $n$-dimensional Riemannian manifold $(M^n,g)$, and a function $f\in C^{\infty}(M^n)$ such that $(U_x,\dist_X,\meas_X|_{U_x})$ is isometric to $(M^n,\dist^g,\haus_f^n)$. Here $\haus_f^n$, also denoted by $\Vol_f^g$ or $e^{-f}\di\haus^n$, is the weighted Hausdorff measure defined by
\begin{equation*}
\haus^n_f(A):=\int_Ae^{-f}\di \haus^n
\end{equation*}
Here $\haus^n$ is the $n$-dimensional Hausdorff measure induced by $\dist^g$ and hence coincides with the Riemannian volume measure $\Vol^g$.

Let us fix $(M^n, \dist^g, \haus^n_f)$ as above. Given $N > n$, the \textit{Bochner identity} then takes the form
\begin{align}\label{eq:weighted bochner}
    \frac{1}{2}\Delta^g_f|\nabla^g\phi|^2 =  
    \begin{cases}
|\Hess_{\phi}^g|^2+g(\nabla^g\Delta_f^g\phi, \nabla^g\phi)+\Ric_{f,\infty}^g(\nabla^g\phi, \nabla^g\phi), \\
|\Hess^g_{\phi}|^2+g(\nabla^g\Delta_f^g\phi, \nabla^g\phi)+\Ric_{f,N}^g(\nabla^g\phi, \nabla^g\phi)+\frac{\displaystyle{(\Delta^g_f\phi-\Delta^g\phi)^2}}{\displaystyle{N-n}}, 
    \end{cases}   
\end{align}
for every $\phi \in C^{\infty}(M^n)$, where 
    $
    \Delta^g_f\phi:=\Delta^g\phi-g(\nabla^g\phi,\nabla^gf)
$ is the \textit{weighted Laplacian} 
    and \begin{align}
\Ric^g_{f, N}:=\Ric^g+\Hess_f^g-(N-n)^{-1}d f\otimes d f.
\end{align} is the \textit{(weighted) $N$-Bakry-\'Emery Ricci tensor}.
Here $\Ric^g$ denotes the \textit{Ricci tensor} of $(M^n, g)$. When $N=n$, we assume that $f$ is constant, so that $\mathrm{Ric}^g_{f,n}=\mathrm{Ric}^g$.

Finally, let us recall the following characterization. The conditions $\Ric^g_{f, N}\ge K$ and $N \ge n$ are equivalent to each of the following Bochner inequalities,  valid for every $\phi \in C^{\infty}(M^n)$:
\begin{align}\label{88nnsbshshshsh}
\begin{split}
&\ \frac{1}{2}\Delta^g_f|\nabla^g\phi|^2\ge \frac{(\Delta^g_f\phi)^2}{N}+g(\nabla^g\Delta^g_f\phi,\nabla^g\phi)+K|\nabla^g\phi|^2,  \\
    \Longleftrightarrow &\ \frac{1}{2}\Delta^g_f|\nabla^g\phi|^2\ge \max\left\{|\Hess_{\phi}^g|^2, \frac{(\Delta^g_f\phi)^2}{N}\right\}+g(\nabla^g\Delta^g_f\phi,\nabla^g\phi)+K|\nabla^g\phi|^2.    \end{split}
\end{align}
Although this equivalence is well known, we include a proof for the reader's convenience because it may be viewed as a starting point for RCD theory.

We first prove the equivalence with the first Bochner inequality. Assume $\Ric^g_{f, N}\ge K$ and $N \ge n$. By (\ref{eq:weighted bochner}), 
    \begin{align}
        \frac{1}{2}\Delta^g_f|\nabla^g\phi|^2&\ge \frac{(\Delta^g\phi)^2}{n}+g(\nabla^g\Delta^g_f\phi,\nabla^g\phi)+K|\nabla^g\phi|^2+\frac{(\Delta^g_f\phi-\Delta^g\phi)^2}{N-n} \nonumber \\
        &\ge \frac{(\Delta^g_f\phi)^2}{N}+g(\nabla^g\Delta^g_f\phi,\nabla^g\phi)+K|\nabla^g\phi|^2,
    \end{align}
    where the last inequality follows from the elementary inequality      $
        \frac{\alpha^2}{a}+\frac{\beta^2}{b}\ge \frac{(\alpha+\beta)^2}{a+b}$ for all $\alpha, \beta \in \mathbb{R}, a >0$ and $b>0$.
 Conversely, fix $x \in M^n$ and $V \in \Gamma(TM^n)$, and choose $\phi \in C^{\infty}(M^n)$ such that
    \begin{equation}\label{nsanisis8rbss}
        \nabla^g\phi(x)=V(x),\quad \Hess_{\phi}^g(x)=-\frac{g(V, \nabla^g f)(x)}{N-n}\cdot g(x).
    \end{equation}
    Applying \eqref{88nnsbshshshsh} to  $\phi$ and using \eqref{nsanisis8rbss} yields   $\Ric_{f, N}^g(V, V)(x)\ge K|V(x)|^2$, which proves the converse.

The remaining equivalence follows directly from the preceding observation.

\subsection{Sobolev spaces for metric measure spaces}
In the sequel, we fix a metric measure space $X=(X, \dist_X, \meas_X)$.

\begin{definition}
    Let us define the $W^{1,2}$-Sobolev space over $X$ as follows.
    \begin{enumerate}
        \item{(Cheeger energy)} The \textit{Cheeger energy} $\Ch:L^2(X) \to [0, \infty]$ is defined by
\begin{equation}\label{cheegert}
\Ch(f):=\inf_{\{f_i\}_i}\left\{ \liminf_{i\to \infty}\frac{1}{2}\int_X\left(\Lip f_i(x)\right)^2\di \meas_X(x) \right\},
\end{equation}
where the infimum is taken over all sequences of bounded Lipschitz functions $f_i\in L^2(X)$ such that $\|f_i-f\|_{L^2}\to0$ as $i\to\infty$, 
 and $\Lip f(x)$ denotes the \textit{local slope} of $f$ at $x$ defined by
\begin{equation*}
\Lip f(x):=\limsup_{y \to x}\frac{|f(x)-f(y)|}{\dist_X(x, y)}
\end{equation*}
if $x$ is not isolated, and $\Lip f(x):=0$ otherwise.
        \item{(Sobolev space)} The \textit{Sobolev space} $W^{1,2}=W^{1,2}(X)=W^{1,2}(X, \dist_X, \meas_X)$\footnote{We mainly use the notation $W$ for Sobolev spaces to fit the terminology in the Euclidean space, in order to avoid any confusion about compatibility between metric measure ones and Euclidean ones, when we will establish the elliptic regularity theory later. Note that in the RCD theory, we need to distinguish $H$-Sobolev spaces with $W$-ones because in general $H\neq W$ and $H\subset W$. More precisely, our working Sobolev spaces for metric measure spaces are ``$H$''-ones. See \cite{Gigli1, Giglisurvey}.} is the domain of finiteness of $\Ch$ and is a Banach space equipped with the norm $\|f\|_{W^{1,2}}:=(\|f\|_{L^2}^2+2\Ch(f))^{\frac{1}{2}}.$ 
        \item{(Relaxed slope)} For any $f \in W^{1,2}(X)$, consider the collection $R(f)$ of all functions in $L^2(X)$ larger than or equal to an $L^2$-weak limit of a sequence $\Lip f_i$ appearing in \eqref{cheegert}. Any element in $R(f)$ is called a \textit{relaxed slope} of $f$. 
    \end{enumerate}
\end{definition}
\begin{proposition}\label{proplocality}
We have the following.
\begin{enumerate}
    \item{(Minimal relaxed slope)} For any $f \in W^{1,2}(X)$, $R(f)$ is a  non-empty closed convex subset in $L^2(X)$. Denoting by $|\nabla^Xf|$, or $|\nabla f|$ for short, the unique element of $R(f)$ with the smallest $L^2$-norm, called the \textit{minimal relaxed slope} of $f$, we have
    \begin{equation*}
\Ch(f)=\frac{1}{2}\int_X|\nabla f|^2\di \meas_X,\quad \text{thus} \quad \|f\|_{W^{1,2}}^2=\int_X\left(f^2+|\nabla f|^2\right) \di \meas_X.
\end{equation*}
\item{(Locality)} The minimal relaxed slope has the locality property in the following sense:
$
    |\nabla (f-g)|=0$
    for $\meas_X$-a.e. $x \in \{f=g\}$,
in particular
$
    |\nabla f|=|\nabla g|$ for $\meas_X$-a.e. $x \in \{f=g\}$.
\item{(Locally PI implies $\Lip f=|\nabla f|$)} If $X$ is locally PI (namely a local Poincar\'e inequality of type $(1,2)$ and a local volume doubling condition are satisfied), then 
$
    |\nabla f|=\Lip f$ for $\meas_X$-a.e. $x \in X$,
for any $f \in W^{1,2}\cap \Lip _{\loc}(X)$.
\end{enumerate}
\end{proposition}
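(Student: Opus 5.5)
The plan is to treat the three items separately. Each follows the standard relaxation framework of Ambrosio--Gigli--Savar\'e and Cheeger, so the work is mostly in recording the right properties of $\Ch$ and of $R(f)$.

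\textbf{Item (1).} First I show that $R(f)$ is non-empty, convex and closed.
\begin{itemize}
\item Non-emptiness: since $f\in W^{1,2}$, there is an optimal sequence $f_i$ with $\Lip f_i$ bounded in $L^2$. Passing to a subsequence, $\Lip f_i$ converges weakly, and the limit lies in $R(f)$.
\item Convexity: given two sequences $f_i,\tilde f_i$ realizing $G,\tilde G\in R(f)$, use $tf_i+(1-t)\tilde f_i$. The pointwise bound $\Lip(tf_i+(1-t)\tilde f_i)\le t\Lip f_i+(1-t)\Lip\tilde f_i$ together with weak compactness produces an element below $tG+(1-t)\tilde G$. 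Since $R(f)$ is upward closed, $tG+(1-t)\tilde G\in R(f)$.
\item Closedness: I use a diagonal argument. If $G_k\in R(f)$ and $G_k\to G$ in $L^2$, pick sequences $f_{k,i}$ realizing (bounds for) $G_k$. Weak convergence is metrizable on bounded sets, so one can extract $f_{k,i(k)}$ with $f_{k,i(k)}\to f$ in $L^2$ and $\Lip f_{k,i(k)}$ weakly converging to a function $\le G$. The inequality $\le G$ requires the lattice fact that the weak limit of $\min$ is controlled; alternatively, I prove closedness first for weak closure of the set of exact limits and then take the upward closure.
\end{itemize}
Uniqueness of the minimal-norm element then follows from strict convexity of the $L^2$ norm.

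\textbf{Item (1), identity for $\Ch$.} The inequality $\int|\nabla f|^2\ge 2\Ch(f)$ is immediate from lower semicontinuity of the norm under weak convergence. For the reverse inequality, take an optimal sequence and its weak limit $G$, so that $\int G^2\le 2\Ch(f)$. Minimality of $|\nabla f|$ then gives $\int|\nabla f|^2\le\int G^2\le 2\Ch(f)$. The formula for $\|f\|_{W^{1,2}}$ is then just the definition.

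\textbf{Item (2), locality.} This is the hardest part and the main obstacle. The key intermediate tool I would aim for is the lattice/chain-rule property: $|\nabla f|$ is a minimal weak upper gradient, so $|\nabla \phi\circ f|\le|\phi'\circ f||\nabla f|$ for Lipschitz $\phi$. Establishing that chain rule needs the identification of the relaxed slope with the minimal weak upper gradient along test plans from \cite{AGSrcd}-type theory, and I would cite it rather than reprove it.

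Granting the chain rule, apply it to $h=f-g$ with $\phi_\epsilon(t)=\max(\min(t,\epsilon),-\epsilon)$ replaced by $t-\phi_\epsilon(t)$. This gives $|\nabla h|\,1_{\{|h|<\epsilon\}}=|\nabla\phi_\epsilon(h)|$ almost everywhere. Integrating shows the contribution over $\{|h|<\epsilon\}$ of the $L^2$-norm of $\nabla h$ equals that of $\nabla \phi_\epsilon(h)$. As $\epsilon\to0$, $\phi_\epsilon(h)\to0$ in $W^{1,2}$ by lower semicontinuity, hence $|\nabla h|=0$ almost everywhere on $\{h=0\}$. The consequence $|\nabla f|=|\nabla g|$ on $\{f=g\}$ follows from the triangle inequality $\bigl||\nabla f|-|\nabla g|\bigr|\le|\nabla(f-g)|$, which comes from subadditivity of relaxed slopes (shown exactly as convexity in item (1)).

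\textbf{Item (3).} This is Cheeger's theorem. Under doubling and a $(1,2)$-Poincar\'e inequality, Cheeger proved that for locally Lipschitz $f$ the minimal generalized upper gradient coincides with $\Lip f$ almost everywhere. Combined with the identification of the relaxed slope with Cheeger's minimal upper gradient, this gives the claim. I would quote it in localized form, using the locality from item (2) to pass from global to local statements via Lipschitz cutoffs.
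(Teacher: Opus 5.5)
The paper does not prove this proposition. It recalls it as standard background from the Ambrosio--Gigli--Savar\'e relaxation theory and Cheeger's work, so quoting Cheeger's theorem for item (3) matches the paper. The steps you argue yourself, however, have genuine gaps. In item (1), the inequality $2\Ch(f)\le\int_X|\nabla f|^2\di\meas_X$ does not follow from lower semicontinuity. If bounded Lipschitz $f_i\to f$ with $\Lip f_i\rightharpoonup\tilde G\le|\nabla f|$, lower semicontinuity gives $\int\tilde G^2\le\liminf_i\int(\Lip f_i)^2$. That bounds the wrong quantity, and nothing prevents $\liminf_i\int(\Lip f_i)^2$ from exceeding $\int|\nabla f|^2$. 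The missing ingredient is Mazur's lemma combined with the pointwise bound $\Lip(\sum_j\lambda_jf_j)\le\sum_j\lambda_j\Lip f_j$. Replacing $f_i$ by suitable convex combinations $g_i$ of $\{f_j\}_{j\ge i}$ gives bounded Lipschitz $g_i\to f$ in $L^2$ with $\Lip g_i\le h_i$ and $h_i\to\tilde G$ strongly, hence $2\Ch(f)\le\int\tilde G^2\le\int|\nabla f|^2$. The same device is needed in your diagonal argument for closedness of $R(f)$. The weak topology is metrizable only on bounded sets, and the slopes $\Lip f_{k,i}$ approximating $G_k$ are a priori bounded only by constants depending on $k$. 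After the Mazur replacement you may assume $\|\Lip f_{k,i}\|_{L^2}\le\|G_k\|_{L^2}+1$ for large $i$. The ``lattice fact'' you invoke there is not needed: $\tilde G_k\le G_k$ passes to weak limits because nonnegative functions form a weakly closed cone.

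In item (2), the decisive step is circular. Your own identity gives $\int|\nabla\phi_\epsilon(h)|^2=\int_{\{|h|<\epsilon\}}|\nabla h|^2\to\int_{\{h=0\}}|\nabla h|^2$. So ``$\phi_\epsilon(h)\to0$ in $W^{1,2}$'' is exactly the claim to be proved, and lower semicontinuity of $\Ch$ only yields the vacuous $0\le\liminf_{\epsilon\to0}\Ch(\phi_\epsilon(h))$. Moreover, a chain rule for the piecewise linear $\phi_\epsilon$ already presupposes $|\nabla h|=0$ a.e.\ on $h^{-1}(\{\pm\epsilon\})$, which is locality itself.

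A non-circular route needs no test plans. Take a sequence with $\Lip h_i\to|\nabla h|$ strongly (again Mazur plus minimality). The bound $\Lip(\psi\circ h_i)\le|\psi'(h_i)|\Lip h_i$ then gives $|\psi'(h)|\,|\nabla h|\in R(\psi\circ h)$ for $C^1$ maps $\psi$ with $\psi(0)=0$ and $|\psi'|\le1$. Choose $\psi_\epsilon$ with $\psi_\epsilon'=0$ on $[-\epsilon,\epsilon]$ and $|\psi_\epsilon(t)-t|\le\min\{|t|,2\epsilon\}$. Then $\psi_\epsilon(h)\to h$ in $L^2$, and $1_{\{|h|>\epsilon\}}|\nabla h|\in R(\psi_\epsilon(h))$ converges strongly to $1_{\{h\ne0\}}|\nabla h|$. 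Closedness of the graph $\{(u,G):G\in R(u)\}$ under strong--weak convergence gives $1_{\{h\ne0\}}|\nabla h|\in R(h)$. Minimality of the $L^2$-norm then forces $\int_{\{h=0\}}|\nabla h|^2\di\meas_X=0$.

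Finally, deducing $|\nabla f|=|\nabla g|$ on $\{f=g\}$ requires $\bigl||\nabla f|-|\nabla g|\bigr|\le|\nabla(f-g)|$ pointwise. The sum argument you use for convexity only gives $|\nabla g|+|\nabla(f-g)|\in R(f)$, and norm minimality turns that into an inequality between $L^2$-norms, not a pointwise one. You also need pointwise minimality: $|\nabla u|\le G$ a.e.\ for every $G\in R(u)$. This follows from the lattice property $1_BG_1+1_{X\setminus B}G_2\in R(u)$, proved by gluing two approximating sequences $u_i,v_i$ of $u$ with a Lipschitz cut-off $\chi$; the error term $\Lip\chi\,|u_i-v_i|$ tends to $0$ in $L^2$.
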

Note that
    if $X$ is locally PI, then for all $f \in W^{1,2}(X)$, $x \in X$, $R>0$ and $\epsilon>0$, there exists a Borel subset $A\subset B_R(x)$ such that $\meas_X(B_R(x) \setminus A)<\epsilon$ holds and that $f|_A$ is locally Lipschitz. Thus, by item (3) and McShane's lemma, $|\nabla f| = \Lip (f|_A)$ holds for $\meas_X$-a.e. $x \in A$; see \cite{Chee}. 

As the last step to define RCD spaces, we need the following notions.

\begin{definition}\label{def:laplacian}
    Let us define the following:
    \begin{enumerate}
        \item{(Infinitesimally Hilbertian)}   $X$ is called \textit{infinitesimally Hilbertian} (IH) if  $W^{1,2}(X)$ is a Hilbert space.
In this case, for all $f, h \in W^{1,2}(X)$, the following function is well-defined.
\begin{equation}
\langle \nabla f, \nabla h\rangle := \lim_{t \to 0}\frac{|\nabla (f+th)|^2-|\nabla f|^2}{2t} \in L^1(X).
\end{equation}
        \item{(Laplacian)} Assume that $X$ is IH. We denote by $D(\Delta_X)$, or by $D(\Delta)$ for short, the set of all functions $f \in W^{1,2}(X)$ for which there exists $\phi \in L^2(X)$ such that 
    \begin{equation*}
\int_X\langle \nabla f, \nabla h\rangle \di \meas_X=-\int_X\phi h\di \meas_X,\quad \text{for any $h \in W^{1,2}(X)$.}
\end{equation*}
    Since $\phi$ is unique, we denote it by $\Delta_X f$, or $\Delta f$ for short, and call it the \textit{Laplacian} of $f$.
    \end{enumerate}
\end{definition}
We recall the following properties; see, for instance, \cite{GP2} for proofs.
\begin{proposition}\label{laplocal}
Assume that $X$ is IH. Then the following holds.
    \begin{enumerate}
    \item{(Symmetry)} For all $f, g \in W^{1,2}(X)$,
    \begin{align*}
    \langle \nabla f, \nabla g\rangle &=\frac{1}{4}\left(|\nabla (f+g)|^2-|\nabla (f-g)|^2\right) =\frac{1}{2}\left(|\nabla (f+g)|^2-|\nabla f|^2-|\nabla g|^2\right)=\langle \nabla g, \nabla f\rangle.
\end{align*}
    \item{(Locality)} For all $f,g \in D(\Delta)$, if $f=g$ on an open subset $U$ of $X$, then $\Delta f= \Delta g$ in $L^2(U)$.
    \item{(Density of Lipschitz functions)} $W^{1,2} \cap \Lip  \cap L^{\infty}(X)$ is dense in $W^{1,2}(X)$.
\end{enumerate}
\end{proposition}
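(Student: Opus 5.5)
The statement collects three standard facts about infinitesimally Hilbertian spaces: symmetry, locality of the Laplacian, and density of Lipschitz functions. I would prove them in that order, with the second item resting on the first.

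\textbf{Symmetry.} Infinitesimal Hilbertianity means that $\Ch$ is a quadratic form on $W^{1,2}(X)$, i.e. it satisfies the parallelogram identity. The plan is to lift this from integrals to the pointwise level, so that $f\mapsto|\nabla f|^2$ itself satisfies
\[
|\nabla(f+g)|^2+|\nabla(f-g)|^2=2|\nabla f|^2+2|\nabla g|^2 \quad \meas_X\text{-a.e.}
\]
\begin{itemize}
\item The inequality ``$\le$'' holds pointwise. It follows from subadditivity and homogeneity of minimal relaxed slopes, $|\nabla(f\pm g)|\le|\nabla f|+|\nabla g|$, combined with locality (Proposition \ref{proplocality}). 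One localizes on Borel sets and uses convexity of $R(\cdot)$.
\item Integrating this inequality and comparing with the parallelogram identity for $\Ch$ shows the two sides have equal integrals. Hence equality holds $\meas_X$-a.e.
\item With pointwise quadraticity in hand, $t\mapsto|\nabla(f+th)|^2$ is a quadratic polynomial in $t$ for a.e. point. The limit defining $\langle\nabla f,\nabla h\rangle$ is then its linear coefficient.
\item This coefficient equals each of the two polarization expressions in the statement, and it is visibly symmetric in $f$ and $g$.
\end{itemize}
I expect this pointwise upgrade to be the main obstacle. If the localization is delicate, I would instead cite Gigli's treatment \cite{GP2}, as the paper does.

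\textbf{Locality of the Laplacian.} Set $w:=f-g\in D(\Delta)$, so $w=0$ on $U$.
\begin{itemize}
\item By Proposition \ref{proplocality}(2), $|\nabla(w+th)|^2=|\nabla(th)|^2$ a.e. on $U$.
\item Hence $\langle\nabla w,\nabla h\rangle=0$ a.e. on $U$ for every $h\in W^{1,2}(X)$.
\item Take $h=\chi\psi$ with $\psi\in W^{1,2}$ and $\chi$ a Lipschitz cutoff supported in a set $V\Subset U$ with $\dist_X(V,X\setminus U)>0$.
\item Testing the definition of $\Delta w$ against $h$ gives $\int_U(\Delta w)\chi\psi\,\di\meas_X=0$.
\item Since such products are dense in $L^2(V)$, we get $\Delta w=0$ a.e. on $V$. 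Exhausting $U$ by such sets $V$ yields $\Delta f=\Delta g$ in $L^2(U)$.
\end{itemize}

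\textbf{Density of Lipschitz functions.} By definition of $\Ch$, every $f\in W^{1,2}$ admits bounded Lipschitz $f_i\to f$ in $L^2$ with $\limsup\tfrac12\int(\Lip f_i)^2\le\Ch(f)$.
\begin{itemize}
\item Since $|\nabla f_i|\le\Lip f_i$, we get $\limsup\|f_i\|_{W^{1,2}}\le\|f\|_{W^{1,2}}$.
\item The sequence is bounded in $W^{1,2}$, which is a Hilbert space under infinitesimal Hilbertianity, and $L^2$-lower semicontinuity of $\Ch$ identifies $f$ as its weak limit. So $f_i\rightharpoonup f$ weakly in $W^{1,2}$.
\item Weak convergence together with convergence of norms gives strong convergence in $W^{1,2}$.
\end{itemize}
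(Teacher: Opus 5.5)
Your treatment of locality and density is fine. In the locality step, take $\chi$ with $\chi\equiv 1$ on $V$ and $\supp\chi\subset U$, and note that $\langle\nabla w,\nabla(\chi\psi)\rangle=0$ also holds off $U$, by locality on $\{\chi\psi=0\}$.

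The gap is in the symmetry part, at the step ``$\le$ holds pointwise''. Subadditivity only yields $|\nabla(f+g)|^2+|\nabla(f-g)|^2\le 2(|\nabla f|+|\nabla g|)^2$. This exceeds $2|\nabla f|^2+2|\nabla g|^2$ by the cross term $4|\nabla f|\,|\nabla g|$, and no combination of subadditivity, homogeneity, locality and convexity of $R(\cdot)$ can remove it. Indeed, $(\mathbb{R}^2,\|\cdot\|_\infty,\mathcal{L}^2)$ has all of these properties. There the minimal relaxed slope of a $C^1$ function is the $\ell^1$-norm of its Euclidean gradient. For $f=x_1$, $g=x_2$ (multiplied by a cutoff) you get $|\nabla(f\pm g)|^2=4$ and $|\nabla f|^2=|\nabla g|^2=1$, so the left side is $8$ and the right side is $4$. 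Hence infinitesimal Hilbertianity must enter at the pointwise level. Your argument uses it only through the integrated parallelogram identity for $\Ch$, and passing from that identity to a pointwise one is precisely the nontrivial content of item (1).

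The pointwise inequality that does hold in every space comes from convexity. The map $t\mapsto|\nabla(f+th)|^2$ is a.e. convex: check it for rational $t$, then extend using $\bigl||\nabla(f+th)|-|\nabla(f+sh)|\bigr|\le|t-s|\,|\nabla h|$. So its one-sided derivatives at $t=0$ satisfy $D^-\le D^+$ a.e. By monotone convergence, $\int D^\pm\,\di\meas_X$ are the one-sided derivatives of $t\mapsto 2\Ch(f+th)$. These agree because under IH this function is a quadratic polynomial, so $D^+=D^-$ a.e. This gives the existence of the limit defining $\langle\nabla f,\nabla h\rangle$ and its linearity in $h$, since a positively homogeneous map that is both convex and concave is linear.

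Symmetry in $(f,h)$, equivalently the pointwise polarization formulas, still needs a further argument. Options include Gigli's argument via the chain and Leibniz rules, or identifying the carr\'e du champ of the Dirichlet form $2\Ch$ with $|\nabla f|^2$. The paper gives no proof of this proposition and simply cites \cite{GP2}, so your fallback matches what the paper does. The self-contained derivation you sketch for item (1), however, fails as written.
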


\begin{remark}[Local notion]\label{rem:localrem}
The corresponding local objects, including the Sobolev space $W^{1,2}(U)$ (or more generally $W^{1,p}(U)$ for any $1 \le p \le \infty$) over an open subset $U$ of $X$ and the Laplacian $\Delta f=\Delta_Uf$ on $U$, are also well-defined. We recall only the latter notion: a function $f$ on $U$ is said to be in the domain $D(\Delta, U)$ of the Laplacian on $U$, if $f \in W^{1,2}(U)$ and there exists $\phi \in L^2(U)$, denoted by $\Delta_U f$, or $\Delta f$ for short,\footnote{This notation yields no confusion because of (2) of Proposition \ref{laplocal}.} such that 
\begin{equation}
    \int_U\langle \nabla f, \nabla g\rangle \di \meas_X=-\int_U\phi g\di \meas_X,\quad \text{for any $g \in \Lip _c(U)$.}
\end{equation}
Note that $D(\Delta, U)$ is a Hilbert space equipped with the norm 
$
    \|f\|_{D(\Delta, U)}:=\sqrt{\|f\|_{W^{1,2}(U)}^2+\|\Delta f\|_{L^2}^2}.
$ 
Similarly, the local spaces $W^{1, p}_{\loc}(U)$ and $D_{\loc}(\Delta, U)$ are well-defined. It is worth mentioning that if $f \in W^{1,1}_{\loc}(U)$ satisfies $|\nabla f| \in L^p_{\loc}(U)$, then $f \in W^{1,p}_{\loc}(U)$. If $(M^n, \dist^g, \haus^n_f)$ is a smooth metric measure space, then we have $C^{\infty}(M^n)\subset D_{\loc}(\Delta)$ with $\Delta^g_f\phi=\Delta \phi$ and $|\nabla^g \phi|=|\nabla \phi|$ for any $\phi \in C^{\infty}(M^n)$. See Proposition \ref{noncocll} for a generalization.
See also \cite{AmbrosioHonda2, BjornBjorn, HK, HKPST}.
\end{remark}
\subsection{RCD spaces: definition and   global properties}
We now give the precise definition of $\RCD(K, N)$ spaces.
\begin{definition}[$\RCD(K, N)$ space]\label{defrcd}
A metric measure space $X$ is called an \textit{$\RCD(K, N)$ space} for some $K \in \mathbb{R}$ and some $N \in [1, \infty]$, or an \textit{$\RCD$ space} for short, if the following four conditions hold.
\begin{enumerate}
\item{(IH)} $X$ is IH.
\item{(Volume growth)} There exist a positive constant $C>1$ and a point $x \in X$ such that 
$
\meas_X(B_r(x))\le C\exp (Cr^2)$ for any $r>1$.
\item{(Sobolev-to-Lipschitz property)} If a Sobolev function $f \in W^{1,2}(X)$ satisfies $|\nabla f|(x) \le 1$ for $\meas_X$-a.e. $x \in X$, then $f$ has a $1$-Lipschitz representative.
\item{(Bochner inequality)} We have the following Bochner inequality for any $f \in D(\Delta)$ with $\Delta f \in W^{1,2}(X)$:
\begin{equation}
\frac{1}{2}\Delta |\nabla f|^2 \ge \frac{(\Delta f)^2}{N}+\langle \nabla \Delta f, \nabla f\rangle +K|\nabla f|^2,\quad \text{in a weak sense}
\end{equation}
namely
\begin{equation}\label{s8sabasyaaassss}
\frac{1}{2}\int_X|\nabla f|^2 \cdot \Delta \phi \di \meas_X \ge \int_X\phi \left( \frac{(\Delta f)^2}{N} +\langle \nabla \Delta f, \nabla f\rangle +K|\nabla f|^2 \right)\di \meas_X
\end{equation}
for any $\phi \in L^{\infty}(X) \cap D(\Delta)$ with $\phi \ge 0$ and $\Delta \phi \in L^{\infty}(X)$.
\end{enumerate}
\end{definition}

We next recall the relation between RCD spaces and the smooth setting, in particular the following characterization of smooth RCD spaces. See \cite[Proposition 4.21]{EKS} and \cite[Corollary 2.6]{Han}, and compare this with Section \ref{prop:equiv: be}.

\begin{theorem}[Characterization of smooth RCD spaces]\label{noboundarycase}
An $n$-dimensional smooth metric measure space $(M^n, \dist^g, \haus^n_f)$  is an $\RCD(K, N)$ space if and only if $N \ge n$ and $\Ric^g_{f, N}\ge Kg$ (If $M^n$ has smooth boundary, one must additionally require its second fundamental form to be non-negative).
\end{theorem}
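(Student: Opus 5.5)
The plan is to verify the four conditions of Definition \ref{defrcd} directly for $(M^n,\dist^g,\haus^n_f)$ in one direction, and in the other direction to localize the weak Bochner inequality \eqref{s8sabasyaaassss} to smooth test functions and then read off the curvature condition using the pointwise algebra of Section \ref{prop:equiv: be}. I treat the boundaryless case first and comment on the boundary at the end.

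\emph{($\Leftarrow$) Assume $N\ge n$ and $\Ric^g_{f,N}\ge Kg$.}
\begin{enumerate}
\item \textbf{(IH).} By Proposition \ref{proplocality}(3) the space is locally PI, since it is locally bi-Lipschitz to Euclidean space with a smooth positive density. Together with the density of Lipschitz functions (Proposition \ref{laplocal}(3)), this gives $|\nabla \phi|=|\nabla^g\phi|$. Hence $\Ch(\phi)=\frac12\int|\nabla^g\phi|^2e^{-f}\di\haus^n$, which is a quadratic form, so $W^{1,2}$ is Hilbert.
\item \textbf{(Volume growth).} This follows from the weighted Bishop--Gromov comparison for $\Ric_{f,N}\ge K$ when $N<\infty$. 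For $N=\infty$ I would use the Gaussian volume bound for $\Ric_{f,\infty}\ge K$; this can also be read off from $\mathrm{CD}(K,\infty)$ via the Lott--Villani--Sturm theory.
\item \textbf{(Sobolev-to-Lipschitz).} If $|\nabla u|\le1$ a.e., then in local charts $u$ is a $W^{1,\infty}_{\loc}$ function with $|du|_g\le1$. Mollifying in charts and integrating along almost-minimizing curves, using completeness of $\dist^g$, gives a $1$-Lipschitz representative.
\item \textbf{(Bochner).} The pointwise inequality \eqref{88nnsbshshshsh} holds for all smooth $\phi$. To pass to general $f\in D(\Delta)$ with $\Delta f\in W^{1,2}$, I would invoke the Bakry--\'Emery formalism. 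The pointwise bound gives the condition $\mathrm{BE}(K,N)$ for $\Delta_f$ on $C^\infty_c$. Essential self-adjointness of $\Delta_f$ on $C^\infty_c(M)$ holds by completeness, so $C^\infty_c$ is a core. Then the $\Gamma_2$ inequality extends to $D(\Delta)$ by approximation, for instance through the heat semigroup and the standard Savar\'e/AGS self-improvement argument. This extension is the main technical obstacle. Everything else is smooth calculus, while this step requires controlling $\int|\nabla f_k|^2\Delta\phi$ and $\langle\nabla\Delta f_k,\nabla f_k\rangle$ under approximation. I would handle it by smoothing $f$ with $P_t$ and using $L^2$-convergence of $\Delta P_tf\to\Delta f$ in $W^{1,2}$.
\end{enumerate}

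\emph{($\Rightarrow$) Assume the space is $\RCD(K,N)$.}
By the locality statements in Proposition \ref{laplocal} and Remark \ref{rem:localrem}, I would apply \eqref{s8sabasyaaassss} to $\phi\in C_c^\infty(M)$ and test functions $\psi\ge0$ in $C^\infty_c$. On such functions, $\Delta=\Delta^g_f$ and $\langle\nabla\cdot,\nabla\cdot\rangle=g$. Integrating by parts, the weak inequality then becomes a pointwise inequality between continuous functions, namely the first line of \eqref{88nnsbshshshsh}.
\begin{enumerate}
\item \textbf{($N\ge n$).} Fix $x$ and choose $\phi$ with $\nabla^g\phi(x)=0$ and $\Hess^g_\phi(x)=g(x)$. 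Then $\Delta^g_f\phi(x)=n$, and the Bochner identity yields $n=|\Hess_\phi|^2(x)\ge n^2/N$. Hence $N\ge n$.
\item \textbf{($\Ric^g_{f,N}\ge K$).} If $N>n$, the converse argument with \eqref{nsanisis8rbss} gives $\Ric^g_{f,N}\ge K$.
\item \textbf{(The case $N=n$).} Take $\nabla^g\phi(x)=V$ and $\Hess_\phi(x)=-c\,g$. Comparing the identity with the inequality gives $(nc+g(V,\nabla f))^2/n\le nc^2$ plus terms linear in $c$. Letting $c$ vary forces $g(V,\nabla f)=0$. Since $V$ is arbitrary, $f$ is constant, and then $\Ric^g\ge K$.
\end{enumerate}

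\emph{Boundary case.} Here the same computations apply with $\Delta$ the Neumann Laplacian. The integration by parts now produces the boundary term $\int_{\partial M}\psi\,\mathrm{II}(\nabla\phi,\nabla\phi)$. Its nonnegativity for all $\phi$ with vanishing normal derivative is equivalent to convexity of $\partial M$: for necessity, localize $\psi$ near a boundary point; for sufficiency, the term is nonnegative when $\mathrm{II}\ge 0$.
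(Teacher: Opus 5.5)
Your route differs from the paper's mainly in the $(\Leftarrow)$ direction. The paper does not prove this theorem. It cites \cite{EKS} and \cite{Han} for the link between the smooth weighted manifold and Definition~\ref{defrcd}, and records in Subsection~\ref{prop:equiv: be} only the pointwise equivalence of $N\ge n$, $\Ric^g_{f,N}\ge K$ with \eqref{88nnsbshshshsh}.

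Your $(\Rightarrow)$ direction is that same pointwise algebra, reached by testing \eqref{s8sabasyaaassss} with interior $C^\infty_c$ functions. It is in fact more complete than the paper's sketch. The choice $\nabla^g\phi(x)=0$, $\Hess^g_\phi(x)=g(x)$ giving $n\ge n^2/N$, and the $c\to\pm\infty$ argument forcing $\nabla f\equiv 0$ when $N=n$, are both correct and are not spelled out there (the paper's converse divides by $N-n$).

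Your $(\Leftarrow)$ direction replaces the citations by a direct check of the four axioms. This makes the proof self-contained, but you then have to redo the approximation argument that the references supply.

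That approximation is the one step that does not close as written. A $C^\infty_c$ core for $\Delta^g_f$ gives convergence only in the graph norm of $\Delta$, which does not control $\int\psi\langle\nabla\Delta u_k,\nabla u_k\rangle\di\meas$. The heat flow $P_tu$ is smooth and converges in the right norms as $t\to0^+$, but it is not compactly supported. So integrating the pointwise inequality against a general admissible $\psi$ requires $\int|\nabla P_tu|^2\Delta\psi\di\meas=\int\psi\,\Delta|\nabla P_tu|^2\di\meas$, which you have not justified on a noncompact $M$. The self-improvement argument is irrelevant here: it adds the Hessian term, which Definition~\ref{defrcd} does not require.

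A clean fix is Chernoff's theorem. On a complete $M$, finite propagation speed for $\partial_t^2-\Delta^g_f$ makes every power of $\Delta^g_f$ essentially self-adjoint on $C^\infty_c(M)$. Hence $C^\infty_c(M)$ is dense in $\{u\in D(\Delta):\Delta u\in W^{1,2}\}$ for the norm $\|u\|_{W^{1,2}}+\|\Delta u\|_{W^{1,2}}$, and every term of \eqref{s8sabasyaaassss} is continuous in this norm. For $u\in C^\infty_c(M)$ the weak inequality holds for every admissible $\psi$, simply because $|\nabla u|^2\in C^\infty_c(M)\subset D(\Delta)$ and $\Delta$ is symmetric. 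Alternatively, write $\int\psi\langle\nabla\Delta u,\nabla u\rangle\di\meas=-\int\Delta u\,(\psi\Delta u+\langle\nabla\psi,\nabla u\rangle)\di\meas$, so that graph-norm convergence suffices, at the cost of needing a global bound on $|\nabla\psi|$.

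With boundary, the corresponding density statement for the Neumann Laplacian is exactly what \cite{Han} provides. Your boundary-term computation and the localization for necessity are correct, since the boundary contribution scales like $r^{n-1}$ against $r^n$ for the interior defect.

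Finally, two citation slips. Proposition~\ref{proplocality}(3) uses the locally PI property rather than establishing it, and Proposition~\ref{laplocal}(3) presupposes IH. IH follows directly: $\Lip u=|\nabla^g u|$ a.e.\ for Lipschitz $u$, Lipschitz functions are dense in $H^1(M)$, and the Dirichlet energy is $L^2$-lower semicontinuous.
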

\begin{remark}[Rescaling]\label{rem:rescal}
    We record the following rescaling notions for an $\RCD(K, N)$ space $X$. 
For any $r>0$ and any fixed $x \in X$, consider
\begin{equation}\label{resnot}
X^{r, x}:=\left(X, \frac{1}{r}\dist_X, \frac{1}{\meas_X(B_r(x))}\meas_X\right),\, \phi^{r, x}:=\frac{1}{r}\left( \phi-\fint_{B_r(x)}\phi\di \meas_X \right),\,\text{where}\,\,\fint_A:=\frac{1}{\meas_X(A)}\int_A. 
\end{equation}
Then 
we have
$
|\nabla^{X^{r,x}} \phi^{r,x}|=|\nabla^X \phi|$ and $
\Delta_{X^{r,x}}\phi^{r,x}=r\Delta_X \phi$.
  H\"older's inequality gives
\begin{align}\label{pestimate}
\int_{B_1^{X^{r, x}}(x)}|\Delta_{X^{r,x}}\phi^{r,x}|^p\di \meas_{X^{r,x}}&=
\frac{r^p}{\meas_X (B_r(x))}  \|\Delta_X \phi\|_{L^p(B_r(x))}^p \le 
r^p\cdot \meas_X(B_r(x))^{-\frac{p}{q}}\cdot \|\Delta_X \phi\|_{L^q(B_s(x))}^p 
\end{align}
 for all $r \le s$ and $p \le q$. Therefore, if for some $\alpha>0$ and some $\tau>0$
\begin{equation}\label{99sshsbb}
    \meas_X(B_r(x)) \ge \tau r^{\alpha},\quad \text{for any $r<s$},
\end{equation}
then the right-hand-side of (\ref{pestimate}) can be bounded above by
$
    \tau^{-\frac{p}{q}}\cdot r^{p-\frac{p\alpha}{q}} \cdot \|\Delta_X \phi\|_{L^q(B_s(x))}^p
$ 
which is quantitatively small for sufficiently small $r>0$ provided that $p-\frac{p\alpha}{q}>0$. It is worth mentioning that the \textit{Bishop-Gromov inequality}, as in \eqref{bg}, allows us to take $\alpha=N$ in \eqref{99sshsbb}.

Similarly, once the Hessian has been defined as in Theorem \ref{hessthem}, for 
all $p \le q$ and $r \le s$, the following inequality will play a role later:
\begin{align*}
    \fint_{B^{X^{r,x}}_1(x)}|\Hess_{\phi^{r,x}}^{X^{r,x}}|^p\di \meas_{X^{r,x}}=r^p\fint_{B_r(x)}|\Hess_{\phi}|^p\di \meas_X\le 
     r^p\cdot \meas_X(B_r(x))^{-\frac{p}{q}}\cdot \||\Hess_{\phi}|\|_{L^q(B_s(x))}^p.
\end{align*}
\end{remark}
We conclude this subsection by recalling several global results for $\RCD$ spaces. We use standard facts about $L^p$-tensor fields over Borel subsets, including the linear space of all $L^2$-tensor fields on $X$ of type $(0,2)$, denoted by $L^2(T^*X\otimes T^*X)$; see \cite{AMSbe, Deng, DG2, DG, Gigli1, GP2, Kcone, LottVillani, Rajala, St1, St2, ZZ} for the details.

\begin{theorem}\label{hessthem}
Let $X$ be an $\RCD(K, N)$ space for some $K \in \mathbb{R}$ and some $N \in [1, \infty)$. Then we have the following.
    \begin{enumerate}
        \item{(Proper and geodesic with non-branching property)} $X$ is a proper geodesic space. Moreover geodesics do not branch.
        \item{(Bishop-Gromov inequality)} We have 
    \begin{equation}\label{bg}
\frac{\meas_X(B_s(x))}{\Vol_{K, N}(s)} \ge \frac{\meas_X(B_t(x))}{\Vol_{K, N}(t)}, \quad \text{for all $x \in X$ and $0<s \le t<\infty$,}
\end{equation}
where $\Vol_{K, N}(s)$ denotes the volume of a ball of radius $s$ in the ``$N$-dimensional space form'' whose Ricci curvature is equal to $K$. 
\item{(Poincar\'e inequality)} We have a \textit{Poincar\'e inequality} of type $(1,1)$: for any $r>0$
\begin{align}\label{pi}
\int_{B_r(x)}\left| f-  \fint_{B_r(x)}f\di \meas_X \right| \di \meas_X \le 4re^{-\max\{K, 0\}r^2}\int_{B_{2r}(x)}|\nabla f| \di \meas_X, \quad \text{for any $f \in W^{1,1}_{\loc} (X)$.}
\end{align}
\item{(Hessian)}  For any $f \in D(\Delta)$ we have $|\nabla f| \in W^{1,2}$ and $|\nabla f|^2 \in W^{1,1}$. Moreover, the Hessian, denoted by $\Hess_f\in L^2(T^*X\otimes T^*X)$, of $f \in D(\Delta)$ is well defined by the identity \footnote{The original definition of the Hessian \cite[Definition 3.3.1]{Gigli1} is as follows. First, based on the integration-by-parts by test functions, we give the definition of the second-order Sobolev spaces for functions. Next, we prove that any test function $f$ belongs to the space and that (\ref{hessde}), (\ref{bohhhhh}) and (\ref{czrcd}) hold for $f$, see \cite[Theorem 3.3.8]{Gigli1}. Finally, remarking (\ref{czrcd}), we apply the density of $\mathrm{Test}F(X)$ in $D(\Delta)$ to conclude.}
\begin{align}
\begin{split}\label{hessde}	
	2\langle \Hess_f, dh_1 \otimes dh_2\rangle  = &\ \langle \nabla h_1, \nabla \langle \nabla f, \nabla h_2 \rangle\rangle 
	+  \langle \nabla h_2, \nabla \langle \nabla f, \nabla h_1 \rangle\rangle - \langle \nabla f, \nabla \langle \nabla h_1, \nabla h_2 \rangle\rangle 
	\end{split}
\end{align}
for $\meas_X$-a.e., for all $h_i \in \Test F(X)(i=1,2)$, where
$$
    \Test F(X):= \{f \in D(\Delta) \cap L^{\infty} \cap \Lip (X) \,|\, \Delta f \in W^{1,2}(X)\}.
$$
\item{(Bochner inequality involving Hessian)} If $\phi \in D(\Delta) \cap L^{\infty}(X)$ satisfies $\Delta \phi \in L^{\infty}(X)$, then $|\nabla \phi| \in L^{\infty}(X)$. Consequently, for every $f \in D(\Delta)$ and $\phi \in D(\Delta)\cap L^{\infty}(X)$ with $\phi \ge 0$ and $\Delta \phi \in L^{\infty}(X)$, we have
\begin{equation}\label{bohhhhh}
    \frac{1}{2}\int_X\Delta \phi \cdot |\nabla f|^2\di \meas_X\ge \int_X\left(\phi|\Hess_f|^2- \Delta f\langle \nabla \phi, \nabla f\rangle -\phi(\Delta f)^2+K\phi|\nabla f|^2\right)\di \meas_X
\end{equation}
and 
\begin{equation}\label{czrcd}
    \int_X|\Hess_f|^2\di \meas_X \le \int_X\left( (\Delta f)^2-K|\nabla f|^2\right)\di \meas_X.
\end{equation}
In particular, if $f \in \Test F(X)$, then 
\begin{equation}\label{boshnnnss}
    \frac{1}{2}\int_X\Delta \phi \cdot |\nabla f|^2\di \meas_X\ge \int_X\phi\left(|\Hess_f|^2+\langle \nabla \Delta f, \nabla f\rangle +K|\nabla f|^2\right)\di \meas_X.
\end{equation}
\item{(Good cut-off)} For all $0<r<R$, there exists $\phi \in D(\Delta)$ such that $\supp \phi \subset B_R(x)$, $0 \le \phi \le 1$, $\phi=1$ on $B_r(x)$ and  
$(R - r) |\nabla \phi| + (R - r)^2 |\Delta \phi| \le C(K, N)$.
        \item{(Compactness of $\RCD$ spaces)} Let $X_i$ be a sequence of pointed $\RCD(K,N)$ spaces for some $K \in \mathbb{R}$ and some $N \in [1, \infty)$. If $\meas_{X_i}(B_1(x_i))$ is bounded and away from $0$,
    then, after passing to a subsequence, $X_i$ pmGH converge to an $\RCD(K, N)$ space $\tilde X$.
    \end{enumerate}
\end{theorem}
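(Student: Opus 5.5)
The plan is to treat Theorem \ref{hessthem} as a compilation of foundational results. I would verify each item by reducing it to the curvature-dimension condition $\mathrm{CD}(K,N)$ together with Gigli's second-order calculus, rather than prove anything from scratch. The first step is the equivalence between Definition \ref{defrcd} and the Lagrangian formulation $\RCD^*(K,N)$, i.e.\ $\mathrm{CD}^*(K,N)$ plus infinitesimal Hilbertianity. This is due to Erbar--Kuwada--Sturm and Ambrosio--Mondino--Savar\'e: the Bochner inequality \eqref{s8sabasyaaassss} together with Sobolev-to-Lipschitz is equivalent to $K$-convexity of the $N$-R\'enyi entropy along $W_2$-geodesics. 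Combined with Cavalletti--Milman ($\mathrm{CD}^*=\mathrm{CD}$ in the essentially non-branching, finite-measure case, with globalization), this gives access to the full Lagrangian theory.

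From $\mathrm{CD}(K,N)$, item (2) is Sturm's Bishop--Gromov inequality. Item (1) then follows. Volume doubling plus completeness gives properness. The length-space property and the Hopf--Rinow theorem give the geodesic property. The non-branching of geodesics is Deng's theorem \cite{Deng}. Item (3) is Rajala's weak local Poincar\'e inequality under $\mathrm{CD}(K,\infty)$. I would run it with the constant as displayed, extending from Lipschitz $f$ to $W^{1,1}_{\loc}$ by density and lower semicontinuity of the total variation. Item (7) combines two facts: Gromov's precompactness, which holds via the uniform doubling from (2) under the normalization $\meas_{X_i}(B_1(x_i))\in[c,C]$; and the stability of the $\RCD(K,N)$ condition under pmGH convergence (Gigli--Mondino--Savar\'e \cite{GMS}, Ambrosio--Gigli--Savar\'e).

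Items (4) and (5) are the analytic core, and here I would follow \cite{Gigli1}. One first shows that $\Test F(X)$ is dense in $D(\Delta)$. This uses the heat-flow regularization $h_t f$ together with the Bakry--\'Emery $L^\infty$-gradient bound $|\nabla h_t f|^2\le e^{-2Kt}h_t|\nabla f|^2$, which also gives $|\nabla\phi|\in L^\infty$ whenever $\phi,\Delta\phi\in L^\infty$. Next, one applies Savar\'e's self-improvement of the Bochner inequality, which gives $|\nabla f|\in W^{1,2}$ and a measure-valued $\Gamma_2$ with a nonnegative Hessian-type remainder. Testing the Bochner inequality against the polarized test functions $\sum_i \lambda_i h_i$ and optimizing over $\lambda$ yields the pointwise bound $|\Hess_f(\nabla h_1,\nabla h_2)|$ implicit in \eqref{hessde}. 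This gives \eqref{boshnnnss} for $f\in\Test F(X)$.

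To pass to general $f\in D(\Delta)$, I would integrate by parts $\int\phi\langle\nabla\Delta f,\nabla f\rangle=-\int\Delta f\langle\nabla\phi,\nabla f\rangle-\int\phi(\Delta f)^2$ in \eqref{boshnnnss}, which gives \eqref{bohhhhh}. That inequality involves only $\Delta f$, $\nabla f$ and $\Hess_f$, and these are continuous in the $D(\Delta)$-norm, so the density closes the argument. Finally, \eqref{czrcd} follows by taking $\phi=\phi_R$ from item (6) with $R\to\infty$, using $\int\Delta\phi_R|\nabla f|^2\to0$ and $\int\Delta f\langle\nabla\phi_R,\nabla f\rangle\to0$. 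Item (6) is Mondino--Naber's construction: they obtain it by smoothing a Lipschitz cut-off with the heat flow, or via Laplacian comparison for $\dist_x$ composed with a one-variable profile and rescaled to $[r,R]$.

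I expect the main obstacle to be keeping the constants and function classes in (4)--(5) precise, namely the density of $\Test F(X)$ in $D(\Delta)$ with control of the Hessian. For that step I would rely directly on \cite[Theorem 3.3.8]{Gigli1} rather than reprove it.
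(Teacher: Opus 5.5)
Your plan matches the paper's treatment. The paper also presents this theorem as a compilation of cited results. For (4)--(5) it likewise takes \eqref{boshnnnss} for test functions from \cite[Theorem 3.3.8]{Gigli1}, deduces \eqref{bohhhhh} via the density of $\Test F(X)$ in $D(\Delta)$, and uses \eqref{czrcd} to control the Hessians in the limit. The one real divergence is that the paper gets $|\nabla f|\in W^{1,2}$ from $|\nabla|\nabla f||\le|\Hess_f|$ (through $\sqrt{|\nabla f|^2+\epsilon}$) and $|\Hess_f|\in L^2$, not from Savar\'e's self-improvement.

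One step cannot work as you state it. In (3) you propose to run Rajala's argument ``with the constant as displayed''. The factor $e^{-\max\{K,0\}r^2}\le 1$ is a sign slip, and the displayed inequality is false. On the hyperbolic plane, an $\RCD(-1,2)$ space, let $f$ be a Lipschitz smoothing of $\pm1$ across a geodesic through $x$. By reflection symmetry $\fint_{B_r(x)}f\di\meas_X=0$, so the left-hand side is about $2\pi(\cosh r-1)$ while the right-hand side is about $4r\cdot 8r$. Rajala's argument yields a factor of the form $e^{K^-r^2}$ with $K^-=\max\{-K,0\}$, so the exponent should be $-\min\{K,0\}r^2$. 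You need to catch and correct this rather than reproduce it.

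Item (6) has the same kind of defect for $K<0$, and it affects your derivation of \eqref{czrcd}. On $\mathbb{H}^n$ (an $\RCD(-(n-1),n)$ space), the spherical means $u(\rho)$ of any admissible $\phi$ satisfy $(\sinh^{n-1}\rho\,u')'=\sinh^{n-1}\rho\cdot(\text{spherical mean of }\Delta\phi)$. Hence $|u'|\le\|\Delta\phi\|_{L^\infty}/(n-1)$, and so $(R-r)^2\|\Delta\phi\|_{L^\infty}\ge(n-1)(R-r)$, which is unbounded as $R-r\to\infty$. Correspondingly, the Mondino--Naber cut-off lemma has a constant depending on an upper bound for the radius.

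Your alternative construction, a profile composed with $\dist_x$ plus Laplacian comparison, does not give a two-sided bound at all. The measure $\boldsymbol{\Delta}\dist_x$ has a singular part on the cut locus, so a heat-flow type construction is needed.

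Consequently you cannot take $\phi_R$ interpolating between $B_R$ and $B_{2R}$ with $\|\nabla\phi_R\|_{L^\infty}+\|\Delta\phi_R\|_{L^\infty}\to0$. The fix:
\begin{itemize}
\item Use cut-offs equal to $1$ on $B_R(x)$, supported in $B_{R+1}(x)$, with $|\nabla\phi_R|+|\Delta\phi_R|\le C(K,N)$.
\item By locality both $\nabla\phi_R$ and $\Delta\phi_R$ vanish a.e.\ on $B_R(x)$, so the two error terms are bounded by $C\int_{X\setminus B_R(x)}(|\nabla f|^2+|\Delta f||\nabla f|)\di\meas_X\to0$.
\item Fatou then gives $\int_X|\Hess_f|^2\di\meas_X\le\liminf_{R\to\infty}\int_X\phi_R|\Hess_f|^2\di\meas_X$.
\end{itemize}

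A smaller point: $|\nabla\phi|\in L^\infty$ for $\phi,\Delta\phi\in L^\infty$ does not follow from the forward bound $|\nabla\mathsf{h}_tf|^2\le e^{-2Kt}\mathsf{h}_t|\nabla f|^2$, which presupposes a gradient bound. It follows from the reverse estimate $|\nabla\mathsf{h}_tg|^2\le C(K,t)\|g\|_{L^\infty}^2$ together with $\phi=\mathsf{h}_t\phi-\int_0^t\mathsf{h}_s\Delta\phi\di s$, as in \cite[Theorem 3.1]{AMSbe}.
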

We notice the following: 
\begin{itemize}
    \item By the definition of the Hessian, it is a metric notion since so is the right-hand-side of (\ref{hessde}) due to Proposition \ref{proplocality}. Moreover, by Proposition \ref{proplocality}, the Hessian has the locality property:
    \begin{equation}\label{localityhess}
        \Hess_{f_1}=\Hess_{f_2},\quad \text{for $\meas_X$-a.e. in $\{f_1=f_2\}$.}
    \end{equation}
    \item To prove the assertion $|\nabla f| \in W^{1,2}$ in item (4), consider $\sqrt{|\nabla f|^2+\epsilon}$. Since $$2|\nabla \sqrt{|\nabla f|^2+\epsilon}|=\frac{|\nabla |\nabla f|^2|}{\sqrt{|\nabla f|^2+\epsilon}}\le 2|\mathrm{Hess}_f|,$$   letting $\epsilon \to 0^+$ gives $|\nabla f| \in W^{1,1}_{\loc}$ with $|\nabla|\nabla f|| \le |\mathrm{Hess}_f|$. Since $|\nabla f|+|\mathrm{Hess}_f| \in L^2$, we conclude.
    \item Inequality \eqref{bohhhhh} is a direct consequence of the Bochner inequality (\ref{boshnnnss})\footnote{This was established in \cite[Theorem 3.3.8]{Gigli1}.} together with the fact that $\mathrm{Test}F(X)$ is dense in $D(\Delta)$ (e.g. \cite[Lemma 2.2]{H}).
    \item The local Bochner inequality is also valid, due to the localities of $|\nabla f|$ and of $\Delta f$, see for instance \cite[Section 6.2]{AMSbe}  and \cite[Corollary 3.6]{ZZ}.
\end{itemize}
Finally, let us introduce the following (see \cite{AHPT, BGHZ}).
\begin{definition}[Canonical Riemannian metric]
    Let $X$ be an $\RCD(K, N)$ space for some $K \in \mathbb{R}$ and some $N \in [1, \infty)$. Then there exists a unique $g_X \in L^{\infty}(T^*X\otimes T^*X)$, called the (canonical) \textit{Riemannian metric} of $X$, such that 
    $
        g_X(\nabla f_1, \nabla f_2)=\langle \nabla f_1, \nabla f_2\rangle$ for all $f_i \in W^{1,2}(X)$.
\end{definition}

\subsection{Linear functions on RCD spaces}

In this subsection, we recall a class of functions that plays an important role in the study of RCD spaces: {\it linear functions}. In the sequel, let us fix an $\RCD(K, N)$ space $X$ for some $K \in \mathbb{R}$ and some $N \in [1, \infty)$.

\begin{definition}[Linear function]\label{lineardef}
    We say that a function $f$ on $X$ is \textit{linear} if $f \in D_{\loc}(\Delta)$, $\Hess_f=0$, and $\Delta f=0$.
\end{definition}
The following is due to \cite{Gsplit}; see also \cite{BPS} and the product structures \eqref{product1} - \eqref{product2}.
\begin{theorem}[Splitting]\label{splitin}
    Let $f$ be a function on $X$. Assume that $f$ is not a constant function.
    \begin{enumerate}
    \item{(Linear function $\Longleftrightarrow$ splitting map)} The following are equivalent.
    \begin{enumerate}
        \item $f$ is a linear function.
        \item There exists an $\RCD(K, N-1)$ space $Y$ such that $X$ is isometric to $Y \times \mathbb{R}$ and that $f$ coincides, up to multiplication by a positive constant, with the projection to the $\mathbb{R}$ factor via the metric measure isometry, where $Y$ is a point in the case when $N<2$.
    \end{enumerate}
    Moreover, if $K=0$, then the above (a) and (b) are also equivalent to the following.
    \begin{enumerate}
        \item[(c)] $f$ is harmonic  and $|\nabla f|$ is a constant function.
    \end{enumerate}
    \item{(Splitting along line)} If $K=0$ and there exists a line $\gamma:\mathbb{R} \to X$ (that is, an isometric embedding), then there exists an $\RCD(0, N-1)$ space $Y$ such that $X$ is isometric to $\mathbb{R} \times Y$ and that $\gamma$ coincides with a map $t \mapsto (t, y)$ for some  $y \in Y$ via the isometry.
    \end{enumerate}
\end{theorem}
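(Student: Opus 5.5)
The statement is quoted from \cite{Gsplit}, so I would only reconstruct the architecture of Gigli's argument, organized around the Bochner inequality of Theorem \ref{hessthem}. I would start with the easy reductions. For (b)$\Rightarrow$(a), the product structure shows that the projection $t$ onto the $\mathbb{R}$ factor satisfies $|\nabla t|\equiv1$. The Hessian identity \eqref{hessde}, tested on functions depending separately on the two factors, gives $\Hess_t=0$, and the product structure of $\meas_X$ gives $\Delta t=0$.

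For (a)$\Rightarrow$(b), the remark after Theorem \ref{hessthem} gives $|\nabla|\nabla f||\le|\Hess_f|=0$. The Sobolev-to-Lipschitz property, applied on balls, together with connectedness of $X$, then shows that $|\nabla f|$ is a constant. This constant is positive because $f$ is not constant, so after rescaling I may assume $|\nabla f|\equiv1$.

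The core step, and the one I expect to be the main obstacle, is to turn $\Hess_f=0$ and $\Delta f=0$ into a splitting. I would first consider the regular Lagrangian flow $F_t$ of $\nabla f$, which exists because $\nabla f$ is bounded, divergence-free and has vanishing symmetric derivative. The condition $\Delta f=0$ makes $F_t$ measure preserving. The condition $\Hess_f=0$ should make $F_t$ an isometry: test functions $h$ satisfy $\frac{d}{dt}|\nabla(h\circ F_t)|^2=2\Hess_f(\nabla h,\nabla h)\circ F_t=0$, and the Sobolev-to-Lipschitz property upgrades this to preservation of $\dist_X$.

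Next I would set $Y:=f^{-1}(0)$, equipped with the restricted distance and with the measure obtained by disintegrating $\meas_X$ along $f$. The candidate isometry is $\Psi(y,t):=F_t(y)$. To prove that $\Psi$ is an isometry from the product metric, I would establish the Pythagorean identity $\dist_X(F_s(y),F_t(y'))^2=\dist_X(y,y')^2+(s-t)^2$. For this I would use that $f$ is $1$-Lipschitz with unit gradient along flow lines, and compare first variations of $\dist^2$ along the flow. This is the delicate point, and the one where I would rely most heavily on \cite{Gsplit}; an alternative I would try is to show that $Y$ is totally geodesic, using non-branching of geodesics. Finally, the converse tensorization of the curvature-dimension condition gives that $Y$ is $\RCD(K,N-1)$. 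When $N<2$ the same argument forces $Y$ to be a point, since an $\RCD(K,N-1)$ space with $N-1<1$ reduces to a point.

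For (c) under $K=0$: if $f$ is harmonic and $|\nabla f|$ is constant, the Bochner inequality \eqref{bohhhhh}, applied with good cut-offs $\phi$ from Theorem \ref{hessthem}, gives $0\ge\int\phi|\Hess_f|^2\,\di\meas_X$. Hence $\Hess_f=0$, and (c)$\Rightarrow$(a). The converse (a)$\Rightarrow$(c) follows from the first step of the (a)$\Rightarrow$(b) argument.

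For (2), I would introduce the Busemann functions $b_\pm(x):=\lim_{t\to\infty}\bigl(t-\dist_X(x,\gamma(\pm t))\bigr)$, which are $1$-Lipschitz. Laplacian comparison for distance functions with $K=0$ gives $\Delta b_\pm\ge0$ in the distributional sense. The triangle inequality gives $b_++b_-\le0$, with equality on $\gamma$. The strong maximum principle, which holds because $X$ is PI, then yields $b_++b_-\equiv0$. Consequently $b_+$ is harmonic, and $|\nabla b_+|=1$, since along the rays $\gamma$ the function is realized as a distance. Applying (c) to $b_+$ and then (b) produces the splitting, and under the isometry $\gamma$ corresponds to the flow line $t\mapsto(t,y)$ through $y=\gamma(0)\in Y$.
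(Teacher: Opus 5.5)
Your reconstruction of the splitting itself follows Gigli's argument: the regular Lagrangian flow of $\nabla f$, measure preservation from $\Delta f=0$, isometries from $\Hess_f=0$ plus Sobolev-to-Lipschitz, and the Pythagorean identity. The paper likewise defers to \cite{Gsplit} for that part. Your arguments for (c) and for (2) are also the standard ones, with one small correction. The identity $|\nabla b_+|=1$ a.e.\ comes from asymptotic rays issuing from \emph{every} point, which give $\Lip b_+\equiv 1$, combined with $|\nabla b_+|=\Lip b_+$ a.e.\ on PI spaces. It does not follow from the behaviour of $b_+$ along $\gamma$ alone.

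The genuine gap is the sentence ``the converse tensorization of the curvature-dimension condition gives that $Y$ is $\RCD(K,N-1)$''. Tensorization runs from factors to products. The converse, with the dimension parameter dropping by exactly one, is not a result you can quote as a black box here. It is precisely the part of (a)$\Rightarrow$(b) that the paper's sketch actually proves. Your conclusion that $Y$ is a point for $N<2$ depends on it as well.

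Two ingredients are needed.

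\textbf{First}, before any Bochner inequality on $Y$ makes sense, $Y$ (with the restricted distance and disintegrated measure) must be shown to be an $\RCD$ space at all. In particular it must be infinitesimally Hilbertian and have the Sobolev-to-Lipschitz property. The paper observes that $Y\times\mathbb{S}^1(r)$ is locally isomorphic to $X=Y\times\mathbb{R}$, hence $\RCD(K,N)$ by the local-to-global property. Letting $r\to 0$ then gives that $Y$ is $\RCD(K,N)$.

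\textbf{Second}, to lower $N$ to $N-1$, apply the local Bochner inequality on $X$ to $F(y,t)=f(y)+\frac{c}{2}t^2$, with test functions $\psi(t)\phi(y)$ and $\psi$ concentrating at $t=0$. Since $\Delta F=\Delta f+c$ and $|\nabla F|^2=|\nabla f|^2+c^2t^2$, this yields, for every constant $c$,
\[
\frac{1}{2}\int_Y\Delta\phi\,|\nabla f|^2\,\di\meas_Y\ \ge\ \int_Y\phi\Bigl(\frac{(\Delta f+c)^2}{N}-c^2+\langle\nabla\Delta f,\nabla f\rangle+K|\nabla f|^2\Bigr)\di\meas_Y .
\]
Because $\max_{c}\bigl\{\frac{(a+c)^2}{N}-c^2\bigr\}=\frac{a^2}{N-1}$, attained at $c=\frac{a}{N-1}$, you would like to take $c=\Delta f/(N-1)$. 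That is not a constant, so a localization is needed:
\begin{enumerate}
\item Reduce to $\supp\phi$ compact and $\Delta f$ continuous, using Laplacian cut-offs and the heat flow.
\item Cover $\supp\phi$ by finitely many balls on which $\Delta f$ oscillates by at most $\epsilon$.
\item Apply the inequality to $\phi\rho_i$, where $\{\rho_i\}$ is a subordinate partition of unity by good cut-offs, with $c_i=\Delta f(x_i)/(N-1)$.
\item Sum over $i$ and let $\epsilon\to0$.
\end{enumerate}
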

\begin{proof}
    Let us provide only a sketch of the proof of the implication from (a) to (b) in (1), for completeness, though such a sharp conclusion is not necessary for our purpose.

\smallskip

    \textbf{Step 1}: \textit{The conclusion holds, except for the RCD condition of $Y$.}
    
    \smallskip

This follows by the same argument as in \cite{Gsplit}. See also \cite{BNS, BPS, general}.

\smallskip

\textbf{Step 2}: \textit{$Y$ is an $\RCD(K, N)$ space.}

\smallskip

 It follows from \cite[Proposition 7.7 and Theorem 7.8]{AMSbe}, or from \cite[Theorem 18]{Serg}, that $Y\times \mathbb{S}^1(r)$ is an $\RCD(K,N)$ space for any $r>0$. Letting $r \to 0$ completes the proof of \textit{Step 2}.

\smallskip

From now on, let us focus on proving the Bochner inequality (\ref{s8sabasyaaassss}) for $K$, $N-1, \phi$ and $f$.

\smallskip

\textbf{Step 3}: \textit{Without loss of generality, one can assume that $\supp \phi$ is compact and $\Delta f \in C(X)$. In the following, we will always make these assumptions.}

\smallskip

This is done by multiplying  $\phi$ by the \textit{Laplacian cut-off functions} constructed in \cite[Proposition 2.39]{HondaSun}, and by considering the \textit{heat flow} $\mathsf{h}_tf$ with $t \to 0^+$.

\smallskip

\textbf{Step 4}: \textit{For any $c \in \mathbb{R}$, we have
\begin{equation}\label{bochreduce}
    \frac{1}{2}\int_X\Delta \phi \cdot |\nabla f|^2 \di \meas_X \ge \int_X\phi \left(\frac{(\Delta f+c)^2}{N}-c^2+\langle\nabla \Delta f, \nabla f\rangle +K|\nabla f|^2 \right)\di \meas_X.
\end{equation}}

\smallskip

This is done by applying the Bochner formula on $X$ for $F(y, t)=f(y)+\frac{ct^2}{2}$ with the test function $\Phi(y, t)=\psi(t)\cdot\phi(y)$, where $\psi \in C^{\infty}(\mathbb{R})$; the calculus rule in \cite{GRpartial} plays a role. 

\smallskip

\textbf{Step 5}: \textit{Conclusion.}

\smallskip

Notice that $\max_{c \in \mathbb{R}}\{\frac{(a+c)^2}{N}-c^2\}=\frac{a^2}{N-1}$ for any $a \in \mathbb{R}$ and the maximum is attained at $c=\frac{a}{N-1}$.
For all $x \in \supp \phi$ and $\epsilon>0$, choose $r(x)>0$ such that $|\Delta f-\Delta f(x)| \le \epsilon$ on $B_{r(x)}(x)$. Take a finite cover $\{B_{r(x_i)}(x_i)\}_i$ of $\supp \phi$, and an associated partition of unity $\{\rho_i\}_i$ by good cut-off functions. Then applying (\ref{bochreduce}) for $\phi \rho_i$ (instead of $\phi$) with $c=\frac{\Delta f(x_i)}{N-1}$, taking the sum with respect to $i$ and letting $\epsilon \to 0^+$ complete the proof.
\end{proof}
We make a few brief remarks. \begin{itemize}
    \item although the above treats only on the global statement, the local statement follows as well, due to the local splitting theorem established in \cite{BNS}, see also Theorem \ref{almost splitting};
    \item the harmonicity of $f$ in Definition \ref{lineardef} is essential to get a metric \textit{measure} isometry stated in (1) of Theorem \ref{splitin} as the following example shows: $(\mathbb{R}, \dist_{\mathbb{R}}, e^{-x}\di x)$ is an $\RCD(-(N-1), N)$ space for any $N >1$ and the identity function $f(x)=x$ verifies $\Hess_f=0$, but $\Delta f=-1$;
    \item thanks to (2) of Theorem \ref{splitin}, an $\RCD(0, N)$ space $X$ of essential dimension $n$ is isometric to $\mathbb{R}^n$ if and only if the \textit{injectivity radius} of $X$, which will be introduced later, is infinite.
\end{itemize}

The following lemma concerns the convergence behavior of Busemann functions along blow-up sequences, where the space $Y$ below is called a \textit{tangent cone} at $p$, see  Definition \ref{deftange}. 
\begin{lemma}[Blow up of Busemann function]\label{l:convergence-of-busemann}
    Let $X$ be an $\RCD(K, N)$ space for some $K \in \mathbb{R}$ and some $N \in [1, \infty)$, and let  $\gamma:[-\epsilon, \epsilon] \to X$ be a geodesic for some $\epsilon>0$. 
     Consider a blow-up of $X$ at $p = \gamma(0)$ to a pointed $\RCD(0,N)$ space $Y$:  $X^{r_i, p} \xrightarrow{\mathrm{pmGH}} Y$ for some  $r_i \to 0^+$.
Let $\gamma_{\infty}:(-\infty, \infty) \to Y$ denote the line obtained as the blow-up limit of $\gamma$ at $p$.
   Define the functions 
    $
    b_i (\cdot) :=  r^{-1}_i(\dist_X(\cdot , \gamma(-\epsilon))-\epsilon) 
    $ on $X^{r_i, p}$.
    Then, after  passing to a subsequence, $b_i$ converges locally uniformly to
   $b_{\gamma_{\infty}}$, where $b_{\gamma_{\infty}}$ is the Busemann function associated with the limit line $\gamma_{\infty}$.
\end{lemma}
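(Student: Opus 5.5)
We will prove the lemma with Arzel\`a--Ascoli, the $1$-Lipschitz bound on $b_i$, and the fact that the limit line is a straight line: any subsequential limit is forced to lie between two functions that both coincide with $b_{\gamma_\infty}$.

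\textbf{Step 1: compactness.} Each $b_i$ is $1$-Lipschitz with respect to $r_i^{-1}\dist_X$. Moreover
\[
b_i(p)=r_i^{-1}\bigl(\dist_X(p,\gamma(-\epsilon))-\epsilon\bigr)=0,
\]
since $\gamma$ is a geodesic through $p=\gamma(0)$. By the Arzel\`a--Ascoli theorem along the pmGH convergence $X^{r_i,p}\to Y$, after passing to a subsequence $b_i\to b$ locally uniformly for some $1$-Lipschitz $b:Y\to\dR$ with $b(y_0)=0$. Here $y_0$ is the base point and $\gamma_\infty(0)=y_0$. It remains to identify $b=b_{\gamma_\infty}$, normalized as
\[
b_{\gamma_\infty}(y)=\lim_{t\to\infty}\bigl(\dist_Y(y,\gamma_\infty(-t))-t\bigr).
\]
Once $b$ is identified, the limit is independent of the subsequence.

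\textbf{Step 2: upper bound $b\le b_{\gamma_\infty}$.} Fix $y\in Y$ and $T>0$. The point $\gamma(-r_iT)$ approximates $\gamma_\infty(-T)$ in the rescaled spaces. By the triangle inequality along $\gamma$,
\[
\dist_X(\cdot,\gamma(-\epsilon))-\epsilon\le \dist_X(\cdot,\gamma(-r_iT))+(\epsilon-r_iT)-\epsilon,
\]
hence $b_i\le r_i^{-1}\dist_X(\cdot,\gamma(-r_iT))-T$. Passing to the limit gives
\[
b(y)\le \dist_Y(y,\gamma_\infty(-T))-T,
\]
and letting $T\to\infty$ yields $b\le b_{\gamma_\infty}$.

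\textbf{Step 3: lower bound $b\ge b_{\gamma_\infty}$.} I expect this to be the main obstacle: a $1$-Lipschitz function lying below the Busemann function need not equal it, and rigidity must come from the line structure of $Y$. First, on the ray $\gamma_\infty((-\infty,\infty))$ we have $b_i(\gamma(r_is))=s$ exactly, so $b(\gamma_\infty(s))=s$. Next, $b$ is $1$-Lipschitz and therefore satisfies
\[
b(y)\ge b(\gamma_\infty(T))-\dist_Y(y,\gamma_\infty(T))=T-\dist_Y(y,\gamma_\infty(T)).
\]
Letting $T\to\infty$ gives $b\ge -b^+$, where
\[
b^+(y)=\lim_{T\to\infty}\bigl(\dist_Y(y,\gamma_\infty(T))-T\bigr).
\]
Since $Y$ is an $\RCD(0,N)$ space containing the line $\gamma_\infty$, Theorem \ref{splitin}(2) gives $Y\cong\dR\times Y'$ with $\gamma_\infty(t)=(t,y')$. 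In this product both Busemann functions are explicit: $b_{\gamma_\infty}(t,z)=t$ and $b^+(t,z)=-t$, so $-b^+=b_{\gamma_\infty}$. Combined with Step 2, this forces $b=b_{\gamma_\infty}$. Without the splitting theorem one could instead invoke the standard fact $b^++b_{\gamma_\infty}=0$ from the Laplacian comparison/excess argument, but the splitting route is cleaner.

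\textbf{Step 4: conclusion.} Every subsequential limit equals $b_{\gamma_\infty}$, so the convergence holds along the chosen subsequence of blow-ups, as claimed. The only delicate points are the approximation of the points $\gamma(\pm r_iT)$ under the GHA, which follows from $\gamma_\infty$ being defined as the limit of the rescaled $\gamma$, and the sign convention in the definition of $b_{\gamma_\infty}$.
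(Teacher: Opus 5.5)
Your proof is correct and follows essentially the same route as the paper. The upper bound comes from the monotonicity $\dist_X(\cdot,\gamma(-\epsilon))-\epsilon\le \dist_X(\cdot,\gamma(-r_iT))-r_iT$ along $\gamma$. The lower bound comes from the triangle inequality $b_-\ge -b_+$, which you phrase via the $1$-Lipschitz limit together with $b(\gamma_\infty(T))=T$. The two bounds are then matched using (2) of Theorem \ref{splitin}, which forces $b^++b_{\gamma_\infty}=0$ on $Y$. The paper instead writes this as the prelimit sandwich $0\le r_i^{-1}(b_-^{\epsilon}+b_+^{\epsilon})\le r_i^{-1}(b_-^{Lr_i}+b_+^{Lr_i})$ rather than identifying an Arzel\`a--Ascoli limit, but the content is the same.
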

\begin{proof}
    For any $s\in [0, \epsilon]$, put
    $
        b_{\pm}^s(z) := \dist_X(z, \gamma(\pm s))-s
    $
    which are $1$-Lipschitz with $b_{\pm}^s(\gamma(0))=0$ and $b^t_{\pm} \ge b^s_{\pm}$ if $t \le s$ because of $\dist_X(x, \gamma(t))-t\ge \dist_X(x, \gamma(s))-(s-t)-t=\dist_X(x, \gamma(s))-s$. Thus, fixing $L >1$, for any sufficiently small $r_i>0$, we have $b_{\pm}^{\epsilon}\le b_{\pm}^{Lr_i}$. In particular
    \begin{equation}\label{asaafusrasbas}
        0\le r_i^{-1}(b_-^{\epsilon}+b_+^{\epsilon}) \le r_i^{-1}(b_-^{Lr_i}+b_+^{Lr_i}).
    \end{equation}
   Letting first $r_i \to 0$  and then $L \to \infty$,  $r_i^{-1} b_-^{Lr_i}$ converges to the desired Busemann function $b_{\gamma_{\infty}}$, thus, in particular, 
    the right-hand-side of (\ref{asaafusrasbas}) converges to zero because of (2) of Theorem \ref{splitin}. Therefore, $r^{-1}_ib_-^{\epsilon}$ also converges to $b_{\gamma_{\infty}}$. 
\end{proof}
        
Finally, we recall two fundamental stability results for functions under pmGH convergence. The first concerns Sobolev functions; see, for instance, \cite[Theorem 4.6]{AmbrosioHonda2} and also \cite{Honda2, NVg}.
\begin{theorem}[Stability of Sobolev functions]\label{stabsob}
    Let $(X_i, x_i) \xrightarrow{\mathrm{pmGH}} (X, x)$
 be a pmGH convergent sequence of pointed $\RCD(K, N)$ spaces for some $K \in \mathbb{R}$ and some $N \in [1, \infty)$, let $1<p<\infty$, and let $f_i \in W^{1,p}(B_1(x_i))$ be a sequence with
$
\sup_i\|f_i\|_{W^{1,p}(B_1(x_i))}<\infty.
$
Then, after passing to a subsequence, there exists $f \in W^{1,p}(B_1(x))$ such that  $f_i$ converges strongly in $L^p$  and weakly in $W^{1,p}$ to $f$ on $B_1(x)$. 
\end{theorem}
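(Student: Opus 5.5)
The plan has three parts: uniform local bounds in the fixed ball, compactness at the limit, and identification of the limit as an element of $W^{1,p}(B_1(x))$.

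\textbf{Uniform local structure.} By the Bishop--Gromov inequality \eqref{bg} and the Poincar\'e inequality \eqref{pi}, the spaces $X_i$ are uniformly doubling and support a uniform $(1,1)$-Poincar\'e inequality on balls of radius at most $2$. These constants depend only on $K$ and $N$. Hence the whole family is uniformly locally PI.

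\textbf{Compactness in $L^p$.} First, since $p>1$, the functions $f_i$ and $|\nabla f_i|$ are bounded in $L^p$ along the pmGH convergence. By the weak compactness recalled in Section \ref{defgeodesic}, after passing to a subsequence $f_i\rightharpoonup f$ and $|\nabla f_i|\rightharpoonup G$ weakly in $L^p$ on $B_1$.

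Second, I would upgrade the convergence of $f_i$ to strong $L^p$ convergence by a Rellich-type argument. On a ball $B_r(y_i)\subset B_1(x_i)$, compare $f_i$ with its averages on a fixed finite net at scale $\delta$. By the uniform Poincar\'e inequality (in its $(1,p)$ form, obtained from the doubling property and the $(1,1)$ inequality via H\"older's inequality),
\[
\|f_i-\text{(piecewise averages at scale }\delta)\|_{L^p(B_{1-2\delta})}\le C\delta\,\|\nabla f_i\|_{L^p(B_1)}.
\]
The piecewise averages form finitely many bounded numbers and converge after a subsequence. A diagonal argument in $\delta\to0$, together with a uniform control of the thin boundary layer $B_1\setminus B_{1-2\delta}$ (the measure of this annulus tends to $0$ uniformly by the doubling property and the $L^p$ bound, using $p>1$ together with the $L^p$ bound of $f_i$ and equi-integrability from the Sobolev--Poincar\'e inequality), gives strong $L^p$ convergence on $B_1$.

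\textbf{Identifying the limit.} It remains to show $f\in W^{1,p}(B_1(x))$ with $|\nabla f|\le G$ a.e. This is a lower semicontinuity (Mosco/$\Gamma$-liminf) statement for $p$-Cheeger energies under pmGH convergence with strongly converging functions. I would prove it by approximating $f_i$ via Lipschitz truncations, or via the slope of Lipschitz approximations on uniformly PI spaces (Cheeger's theory gives $|\nabla f|=\Lip f$ a.e. for Lipschitz $f$, by Proposition \ref{proplocality}). One then uses the upper gradient characterization along test plans: pass to the limit in the inequality $|f(\gamma_1)-f(\gamma_0)|\le\int |\nabla f|\,|\dot\gamma|$, integrated against $\infty$-test plans. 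These plans are built as limits of plans on $X_i$, or equivalently one uses the Kuwae--Shioya/Ambrosio--Honda liminf inequality localized with cutoffs. Weak convergence in $W^{1,p}$ then means exactly this: $f_i\to f$ strongly and $|\nabla f_i|$ bounded with a limit majorizing $|\nabla f|$.

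\textbf{Main obstacle.} The hardest step is this liminf inequality on varying spaces, especially localized to the open ball $B_1$ and for general $p\neq2$. I would first try to reduce to globally defined functions by multiplying with the good cut-off functions of Theorem \ref{hessthem} and working on slightly smaller balls $B_{1-\delta}$. I would then invoke the known global lower semicontinuity of $p$-Cheeger energies under mGH convergence of uniformly PI spaces, and finally let $\delta\to0$.
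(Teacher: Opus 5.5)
The paper gives no proof of this statement and simply cites the localized Rellich theorem of Ambrosio--Honda. Your compactness step reproduces the standard argument behind that theorem and is sound: uniform doubling and Poincar\'e from \eqref{bg} and \eqref{pi}, approximation by averages over a $\delta$-net, and control of the boundary layer. That control comes from the uniformly small measure of thin annuli (Bishop--Gromov) together with a Sobolev--Poincar\'e inequality on the ball $B_1(x_i)$ itself. The latter holds with uniform constants because balls in geodesic spaces are John domains.

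\textbf{The gap.} It lies in identifying the limit, precisely in the localization you commit to at the end. Take $\phi_\delta=1$ on $B_{1-2\delta}$, $\supp\phi_\delta\subset B_{1-\delta}$ and $|\nabla\phi_\delta|\le C\delta^{-1}$. The global lower semicontinuity applied to $\phi_\delta f_i$ only yields
\[
\int_{B_{1-2\delta}(x)}|\nabla f|^p\di\meas_X\le\liminf_{i\to\infty}\int_{X_i}|\nabla(\phi_\delta f_i)|^p\di\meas_{X_i}.
\]
The right-hand side contains $\||\nabla\phi_\delta|\,f_i\|_{L^p}$, which is only bounded by $C\delta^{-1}\|f_i\|_{L^p(B_{1-\delta}(x_i)\setminus B_{1-2\delta}(x_i))}$. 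This term does not vanish and in general blows up as $\delta\to0$; for $f_i\equiv1$ it is of order $\delta^{\frac1p-1}$. So letting $\delta\to0$ gives only $f\in W^{1,p}_{\loc}(B_1(x))$ with degenerating bounds. It does not give $f\in W^{1,p}(B_1(x))$, and still less $|\nabla f|\le G$.

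\textbf{How to close it.} To get membership, cut off something whose cut-off error is controlled by the energy.
\begin{enumerate}
\item Cover $B_1(x)$ by Whitney-type balls $B_\rho(y)$ with $B_{4\rho}(y)$ well inside $B_1(x)$ and with bounded overlap.
\item Apply the global liminf to $\varphi_i(f_i-c_i)$, where $\varphi_i$ is a cut-off between $B_\rho(y_i)$ and $B_{2\rho}(y_i)$ and $c_i$ is the mean of $f_i$ on $B_{2\rho}(y_i)$.
\item The $L^p$-Poincar\'e inequality bounds $\rho^{-1}\|f_i-c_i\|_{L^p(B_{2\rho}(y_i))}$ by $C\|\nabla f_i\|_{L^p(B_{4\rho}(y_i))}$. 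Summing over the cover gives $\int_{B_1(x)}|\nabla f|^p\di\meas_X\le C\sup_i\int_{B_1(x_i)}|\nabla f_i|^p\di\meas_{X_i}$.
\end{enumerate}

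\textbf{Sharp inequality.} The sharp inequality, or your claim $|\nabla f|\le G$, needs a genuinely localized liminf, and applying the global one to cut-offs does not produce it. One route is Mosco convergence plus gluing.
\begin{enumerate}
\item Your cut-off step with a larger cut-off $\psi$ does give $h:=\psi f\in W^{1,p}(X)$ with $h=f$ near $\supp\varphi$.
\item If $h_i\to h$ is a recovery sequence with $|\nabla h_i|\to|\nabla h|$ strongly in $L^p$, apply the global liminf to $\varphi_if_i+(1-\varphi_i)h_i$.
\item Its cut-off error $|\nabla\varphi_i|\,|f_i-h_i|$ now tends to $0$, because $f_i-h_i\to f-h=0$ strongly on $\supp\varphi$.
\item Convexity of $t\mapsto t^p$ then yields $\int\varphi|\nabla f|^p\le\liminf_i\int\varphi_i|\nabla f_i|^p$.
\end{enumerate}

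\textbf{Test-plan alternative.} This has the same hole in another form. Showing that $G$ is a weak upper gradient on $X$ requires approximating arbitrary test plans on $X$ by test plans on $X_i$ with uniformly bounded compression. Taking limits of plans on $X_i$ goes in the wrong direction.
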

The next result concerns convergence   of functions in the domain of the Laplacian, and it will play a key role in a number of places in later sections.
\begin{theorem}[Stability of Laplacian]\label{hessconv}
Let
$
    (X_i, x_i) \xrightarrow{\mathrm{pmGH}}  (X, x)
$
 be a pmGH convergent sequence of pointed $\RCD(K, N)$ spaces for some $K \in \mathbb{R}$ and some $N \in [1, \infty)$, and let $f_i \in D(\Delta_{X_i}, B_1(x_i))$ be a sequence with
$
\sup_i\|f_i\|_{D(\Delta_{X_i}, B_1(x_i))}<\infty.
$
Then, after passing to a subsequence, there exists $f \in D(\Delta, B_1(x))$ such that the following hold.
 \begin{enumerate}
     \item $f_i\xrightarrow{L^2}f$ on $B_1( x )$, and $f_i\xrightarrow{W^{1,2}}f$ on $B_r( x )$ for any $r<1$. 
     \item $\Delta_{X_i} f_i    \xrightharpoonup{L^2} \Delta_X f$ on $B_1(x)$, and 
     $\Hess_{f_i} \xrightharpoonup{L^2} \Hess_f$ on $B_r(x)$ for any $r<1$. Thus 
     \begin{equation}\label{hessss}
       \int_{B_r(x)}|\Hess_f|^{p } \di \meas_X  \leq \liminf_{i\to \infty}\int_{B_r(x_i)}|\Hess_{f_i}|^{p} \di \meas_{X_i} ,\quad  \text{for all $0<r < 1$ and $1<p<\infty$.}
     \end{equation}
\item If each $f_i$ is harmonic and each $X_i$ is an $\RCD(-\delta_i, N)$ space for some $\delta_i \to 0^+$, then the following two conditions are equivalent.
\begin{enumerate}
\item For any $r<1$,
\begin{equation}\label{999s}
\lim_{i\to\infty}\int_{B_r(x_i)}|\Hess_{f_i}|^2\di \meas_{X_i}=0, \quad \text{in particular, $ \Hess_{f_i} \xrightarrow{L^2} \Hess_f$ on $B_r(x)$.}
\end{equation}
\item $|\nabla f|$ is constant on $B_1(x)$. 
\end{enumerate}
\item Let $h$ be a harmonic function on $B_1(x)$. Then for any $r<1$,  there exists a sequence of harmonic functions $h_i$ on $B_r(x_i)$ such that $h_i \xrightarrow{W^{1, p}} h$ on $B_r(x)$ for any $1 < p<\infty$.
 \end{enumerate}
\end{theorem}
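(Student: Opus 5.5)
The plan is to prove the four items in order, each building on the previous one. Throughout, all integrals are localized with good cut-off functions from (6) of Theorem \ref{hessthem}, which provide uniform bounds on $|\nabla \phi|$ and $|\Delta \phi|$.

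\textbf{Item (1).} Applying Theorem \ref{stabsob} with $p=2$, we extract a subsequence with $f_i\to f$ strongly in $L^2$ and weakly in $W^{1,2}$ on $B_1(x)$. Since $\|\Delta f_i\|_{L^2}$ is bounded, a further subsequence gives $\Delta_{X_i}f_i \rightharpoonup \psi$ weakly in $L^2$. To show $f\in D(\Delta,B_1(x))$ with $\Delta f=\psi$, fix $g\in \Lip_c(B_1(x))$ and approximate it by $g_i\in \Lip_c(B_1(x_i))$ converging strongly in $W^{1,2}$; one can take $g_i$ with bounded Laplacian, e.g.\ via heat flow or the stability of cut-offs. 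Then pass to the limit in $\int\langle\nabla f_i,\nabla g_i\rangle=-\int \Delta f_i\, g_i$, which is a weak$\times$strong pairing on both sides.

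For strong $W^{1,2}$ convergence on $B_r$, use a cut-off $\phi$ supported in $B_1(x)$ with $\phi = 1$ on $B_r(x)$, together with approximating cut-offs $\phi_i$ converging strongly in $W^{1,2}$ with uniformly bounded $|\nabla\phi_i|$. The identity
\[
\int\phi_i|\nabla f_i|^2=-\int\phi_i f_i\Delta f_i-\int f_i\langle\nabla\phi_i,\nabla f_i\rangle
\]
has a right-hand side that converges to the corresponding expression for $f$, because each term pairs a strongly and a weakly convergent factor. This gives convergence of the localized energies. Combined with weak convergence, it yields strong convergence on $B_r(x)$.

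\textbf{Item (2).} The localized form of \eqref{bohhhhh}, i.e.\ the local Bochner inequality tested against $\phi_i$, gives a uniform bound on $\int_{B_r}|\Hess_{f_i}|^2$ for each $r<1$. Extract a weak $L^2$ limit $T$ of the tensors. To identify $T=\Hess_f$, test the defining identity \eqref{hessde} against $\chi\, dh_1\otimes dh_2$, with $h_j$ test functions and $\chi$ a cut-off, after integrating by parts so that only $\nabla f$ appears. This integrated form passes to the limit: $\nabla f_i$ converges strongly by (1), and $h_j$ can be approximated by test functions on $X_i$ converging in the appropriate $C^0$ and $W^{1,2}$ senses. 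Inequality \eqref{hessss} then follows from lower semicontinuity of convex integral functionals under $L^2$-weak convergence of tensors.

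\textbf{Item (3).} For (a)$\Rightarrow$(b): by \eqref{hessss}, $\Hess_f=0$ on $B_1(x)$, so $|\nabla|\nabla f|^2|\le 2|\nabla f||\Hess_f|=0$. The Sobolev-to-Lipschitz property and connectedness of balls (via geodesics) then show that $|\nabla f|$ is constant.

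For (b)$\Rightarrow$(a): the Bochner inequality for harmonic $f_i$ with cut-off $\phi_i$ gives
\[
\int\phi_i|\Hess_{f_i}|^2\le \tfrac12\int\Delta\phi_i|\nabla f_i|^2+\delta_i\int\phi_i|\nabla f_i|^2 .
\]
The Cheng--Yau gradient estimate gives uniform local $L^\infty$ bounds on $|\nabla f_i|$. Together with (1), this yields $|\nabla f_i|^2\to |\nabla f|^2\equiv c$ strongly in $L^2$ on smaller balls, while $\Delta\phi_i\rightharpoonup\Delta\phi$. The right-hand side therefore tends to $\frac c2\int\Delta\phi=0$, since $\phi$ has compact support. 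This proves \eqref{999s}, and strong convergence of the Hessians follows from weak convergence plus convergence of norms.

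\textbf{Item (4).} Fix $r<r'<1$. Choose Lipschitz functions $u_i$ on $B_{r'}(x_i)$ converging uniformly and strongly in $W^{1,2}$ to $h$; for instance, transplant $h$ via GH approximations and then mollify by the heat flow. Let $h_i$ solve the Dirichlet problem on $B_{r'}(x_i)$ with boundary datum $u_i$. Mosco convergence of the Cheeger energies restricted to the affine classes $u_i+W^{1,2}_0$ shows that $h_i$ converges in $W^{1,2}$ to the harmonic function with datum $h$, which is $h$ itself by uniqueness. Uniform interior Lipschitz bounds from Cheng--Yau, combined with strong $W^{1,2}$ convergence, upgrade this to $W^{1,p}$ convergence on $B_r$ for every $p<\infty$ by interpolation.

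\textbf{Main obstacle.} I expect the hardest step to be item (4), specifically the Mosco/$\Gamma$-convergence of the Dirichlet problems on the varying balls $B_{r'}(x_i)$, where boundary regularity is not available. To handle it, I would first replace $B_{r'}$ by a sublevel set of a regular cut-off function, or choose a generic radius $r'$ so that the boundary has vanishing measure in the limit. If that fails, I would instead minimize $\Ch(v)+\lambda\int_{B_{r'}}|v-u_i|^2$ with a penalization that is sent to infinity along the sequence.
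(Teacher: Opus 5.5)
Your proposal is correct. The paper proves only item (3) and cites Ambrosio--Honda for (1), (2) and (4), so your sketches of those items go beyond what the paper writes, but they follow the standard lines.

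Your proof of (3) follows the paper's route: lower semicontinuity of the Hessian energy gives (a)$\Rightarrow$(b), and the Bochner inequality tested against good cut-offs $\rho_i$ gives (b)$\Rightarrow$(a). The one difference is that you invoke Cheng--Yau to get $|\nabla f_i|^2\to|\nabla f|^2$ strongly in $L^2$, which the paper does not need. Put $c:=|\nabla f|$. Since $\int\Delta_{X_i}\rho_i\di\meas_{X_i}=0$, the term $\frac12\int\Delta_{X_i}\rho_i|\nabla f_i|^2\di\meas_{X_i}$ equals $\frac12\int\Delta_{X_i}\rho_i\,(|\nabla f_i|^2-c^2)\di\meas_{X_i}$. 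This tends to $0$ using only $\|\Delta_{X_i}\rho_i\|_{L^\infty}\le C$ and the strong $W^{1,2}$ convergence on $\supp\rho_i$ from (1).

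On the obstacle you flag in (4):
\begin{itemize}
\item Mosco convergence of $H^{1,2}_0(B_{r'}(x_i))$ really can fail at a given radius, even though spheres are $\meas$-null. So ``vanishing boundary measure'' is not the right criterion.
\item A penalty $\lambda\int_{B_{r'}}|v-u_i|^2$ placed inside the ball would spoil harmonicity there.
\end{itemize}

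You do not need Mosco convergence at all. Take a weak limit $v$ of the correctors $v_i\in H^{1,2}_0(B_{r'}(x_i))$. It vanishes outside $\bar B_{r'}(x)$, hence lies in $H^{1,2}_0(B_1(x))$. Because $h$ is harmonic on the larger ball, this gives $\int\langle\nabla h,\nabla v\rangle\di\meas_X=0$. Lower semicontinuity, together with minimality against the competitor $0$, then yields
$\int_{B_{r'}(x)}(|\nabla h|^2+|\nabla v|^2)\di\meas_X\le\liminf_{i}\int_{B_{r'}(x_i)}|\nabla(u_i+v_i)|^2\di\meas_{X_i}\le\int_{B_{r'}(x)}|\nabla h|^2\di\meas_X$.

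Hence $\nabla v=0$, so $v=0$ whenever $X\neq\bar B_{r'}(x)$. The energies therefore converge, and $u_i+v_i\to h$ strongly in $W^{1,2}(B_{r'}(x))$ for every such $r'$. Your Cheng--Yau plus interpolation step then upgrades this to $W^{1,p}$ convergence on $B_r(x)$.
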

 
\begin{proof}
Although all of these properties are well known (see for instance \cite{AmbrosioHonda, AmbrosioHonda2}), for the reader's convenience, we prove only item (3). 

  (a) $\implies$ (b). The lower semicontinuity of the $L^2$-norms with respect to the $L^2$-weak convergence allows us to conclude $\Hess_f=0$ on $B_1(x)$. Thus we have
$
    |\nabla |\nabla f|| \le |\Hess_f| =0,
$
namely (b) holds.

(b) $\implies$ (a).  We define the constant $c:= |\nabla f|$.  
Take a good cut-off $\rho_i \in D(\Delta_{X_i})$  with $\rho_i|_{B_r(x_i)} \equiv 1$ and $\supp \rho_i \Subset B_1(x_i)$ as in Theorem \ref{hessthem}.
By the Bochner inequality, as $i \to \infty$
\begin{align*}
\int_{B_r(x_i)}|\Hess_{f_i}|^2\di \meas_{X_i}&\le \int_{B_1(x_i)}\rho_i |\Hess_{f_i}|^2\di \meas_{X_i}\nonumber \\
& \le \int_{B_1(x_i)}\left(\frac{1}{2}\left|\Delta_{X_i}\rho_i\right| \cdot \left||\nabla f_i|^2 c^2 \right| +\delta_i|\nabla f_i|^2 \right)\di \meas_{X_i} \to 0. \qedhere
\end{align*}
\end{proof}
Regarding convergence of Hessians, 
 in general, it is difficult to establish the $L^2$-strong convergence of $\Hess_{f_i}$ even for eigenfunctions of the Laplacian; see \cite[Remark 10.6]{AmbrosioHonda}. 
\subsection{Lebesgue points}
In this subsection, let us recall fundamental results on Lebesgue points, which provide a canonical \textit{pointwise} representative of $|\nabla f|$.

\begin{definition}\label{lebesguedef}
Let $X$ be an $\RCD(K, N)$ space for some $K \in \mathbb{R}$ and some $N \in [1, \infty)$. 
\begin{enumerate}
    \item Let $U\subset X$ be open, and
    let $f \in L^1_{\loc}(U)$. A point $x \in U$ is called a \textit{Lebesgue point} of $f$ if 
    \begin{equation}
        \lim_{r\to 0}\fint_{B_r(x)}|f-a|\di \meas_X=0, \quad \text{for some $a \in \mathbb{R}$.}
    \end{equation}
The triangle inequality shows that $a=\lim_{r\to 0}\fint_{B_r(x)}f\di \meas_X$; we denote this limit by $\bar f(x)$ the limit.
    We denote by $\Leb (f)$ the set of all Lebesgue points of $f$.
    \item For a Borel subset $A$ of $X$, a point $x \in A$ is a \textit{Lebesgue point} of $A$ if $x$ is a Lebesgue point of the indicator function $1_A$ of $A$. We denote by $\Leb (A)$ the set of all Lebesgue points of $A$.
\end{enumerate}
\end{definition}
  
  It is well known (see for instance \cite{BjornBjorn, HKPST}) that $\meas_X(U \setminus \Leb (f))=0$ together with  $\bar f(x)=f(x)$ for $\meas_X$-a.e. $x \in U$, that $\meas_X(A\setminus \Leb (A))=0$, and that $x \in \Leb (\phi (f_1, \ldots, f_k))$ if $x \in \Leb (f_i)$ for all $i$, $f_i \in L^{\infty}(U)$ and $\phi \in C^0(\mathbb{R}^k)$.

\begin{proposition}\label{proplebes}
Let $X$ be an $\RCD(K, N)$ space for some $K \in \mathbb{R}$ and some $N \in [1, \infty)$, let $U$ be an open subset of $X$, and let
     $f \in D_{\loc}(\Delta, U)$. Then we have the following. 
    \begin{enumerate}
    \item A point $x \in U$ is a Lebesgue point of $|\nabla f|$ if  
    \begin{equation}
        \limsup_{r \to 0^+}r^{1-\delta}\fint_{B_r(x)}|\Hess_f|\di \meas_X<\infty, \quad \text{for some $\delta \in (0, 1]$.}
    \end{equation}
        \item Any point $x \in U$ is a Lebesgue point of $|\nabla f|^2$ if $\Delta f\in W^{1,2}_{\loc}(U)$ with $\langle \nabla \Delta f, \nabla f \rangle \ge \kappa$ on $U$ for some $\kappa \in \mathbb{R}$ (notice, it is valid if $|\nabla \Delta f|$ is bounded around $x$). 
    \end{enumerate}
\end{proposition}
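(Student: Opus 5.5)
Both items rest on a telescoping argument over dyadic balls, using the $(1,1)$-Poincar\'e inequality \eqref{pi} and the doubling property that comes from Bishop--Gromov \eqref{bg}.

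\textbf{Item (1).} By Theorem \ref{hessthem}(4) and the remark after it, $|\nabla f|\in W^{1,1}_{\loc}(U)$ with $|\nabla|\nabla f||\le|\Hess_f|$. Put $g:=|\nabla f|$ and $r_k:=2^{-k}r_0$, and write $a_k:=\fint_{B_{r_k}(x)}g\di\meas_X$. Since $\meas_X(B_{r_k})\ge c(K,N)\,\meas_X(B_{r_{k-1}})$, \eqref{pi} gives
\begin{align*}
|a_{k}-a_{k-1}|&\le C\fint_{B_{r_{k-1}}(x)}|g-a_{k-1}|\di\meas_X\le C r_{k-1}\fint_{B_{2r_{k-1}}(x)}|\Hess_f|\di\meas_X\le C' r_{k-1}^{\delta}.
\end{align*}
The last step uses the hypothesis, since $r\fint_{B_r}|\Hess_f|\le Cr^{\delta}$ for small $r$. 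Summing the geometric series shows that $a_k$ is Cauchy with limit $a$, and $|a_k-a|\le C r_k^{\delta}$. Monotonicity of the averages under doubling extends this from dyadic radii to all $r$. Finally,
\[
\fint_{B_r(x)}|g-a|\di\meas_X\le \fint_{B_r(x)}|g-a_r|\di\meas_X+|a_r-a|\le C r^{\delta}\to0,
\]
where the first term is controlled once more by \eqref{pi}. Hence $x\in\Leb(|\nabla f|)$.

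\textbf{Item (2).} Here the Hessian gives no control at $x$, so the argument has to use sub/superharmonicity instead. The local Bochner inequality \eqref{bohhhhh}, in the form valid for $\Delta f\in W^{1,2}_{\loc}$, gives in the weak sense
\[
\tfrac12\Delta|\nabla f|^2\ge|\Hess_f|^2+\langle\nabla\Delta f,\nabla f\rangle+K|\nabla f|^2\ge \kappa+K|\nabla f|^2.
\]
Since $|\nabla f|^2$ is only locally $L^1$, I split on the sign of $K$. If $K\ge0$, the right-hand side is at least $\kappa$. If $K<0$, I first show that $|\nabla f|$ is locally bounded, either by Moser iteration for $u=|\nabla f|^2$, which satisfies $\Delta u\ge 2\kappa+2Ku$, or by Theorem \ref{hessthem}(5) applied after a cut-off. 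With that bound, $u:=|\nabla f|^2$ satisfies $\Delta u\ge -C$ near $x$. Subtracting $C$ times a local function $\psi$ with $\Delta\psi=1$, for instance $\psi=\dist_x^2/(2N)$ adjusted via Laplacian comparison, or a Green-type solution, produces $v:=u+C\psi$ that is weakly subharmonic near $x$.

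For subharmonic functions on PI spaces, the averages $\fint_{B_r(x)}v$ are almost monotone. I would derive this from the mean value inequality obtained through the heat kernel, or through harmonic replacement on balls. It follows that $\lim_{r\to0}\fint_{B_r(x)}v$ exists and equals the upper semicontinuous representative $\limsup_{y\to x}v(y)$. The same weak Harnack bounds for subsolutions then control $\fint_{B_r}|v-\bar v(x)|$: combine the De Giorgi $L^\infty$ bound on $(v-\bar v(x))^+$ with the convergence of the averages, which forces the $L^1$ mass of the negative part to vanish. Since $\psi$ is continuous, $x$ is a Lebesgue point of $u$.

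\textbf{Main obstacle.} The hardest step is this last one. The mean-value convergence and the vanishing of the negative part must be established in the RCD setting, at every point and not just almost everywhere, using only the Bishop--Gromov and Poincar\'e inequalities through De Giorgi--Moser theory for subsolutions.
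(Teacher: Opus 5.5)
Your item (1) is the paper's argument: $|\nabla|\nabla f||\le|\Hess_f|$, the $(1,1)$-Poincar\'e inequality \eqref{pi}, doubling, and telescoping. One wording fix: "monotonicity of the averages" is not what handles non-dyadic radii. What does is the comparison of averages on comparable balls, $B_r(x)$ and $B_{\Lambda r}(x)$ with $\Lambda\in[1,2]$, as in \eqref{terescopic}.

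For item (2), the paper gives no argument and simply cites \cite[Remark 2.10]{BPS2}. Your self-contained plan (Bochner, a correction making $|\nabla f|^2$ subharmonic, then regularity theory for subsolutions) is reasonable, but the decisive step, the one you flag, fails as written.

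\begin{itemize}
\item No (almost) monotonicity of ball averages of subharmonic functions is available on $\RCD$ or PI spaces. The heat kernel gives monotonicity of $t\mapsto\mathsf{h}_tv(x)$, not of $\fint_{B_r(x)}v$, and harmonic replacement gives no mean value formula on balls.
\item The De Giorgi bound $\operatorname*{ess\,sup}_{B_r(x)}(v-c)^+\le C\fint_{B_{2r}(x)}(v-c)^+$, combined with $\fint_{B_r(x)}v\to c$, does not make the positive part vanish. That requires $c=\operatorname*{ess\,limsup}_{y\to x}v(y)$, which is precisely the content to be proved.
\end{itemize}

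The missing idea is the weak Harnack inequality for nonnegative supersolutions. Put $M(r):=\operatorname*{ess\,sup}_{B_r(x)}v$, which decreases to some $M_0$ as $r\to0$. Apply the inequality to $M(2r)-v$ on $B_{2r}(x)$: for some $q=q(K,N)>0$,
\[
\Big(\fint_{B_r(x)}\big(M(2r)-v\big)^q\di\meas_X\Big)^{1/q}\le C\big(M(2r)-M(r)\big)\to0 .
\]
Local boundedness upgrades this to $\fint_{B_r(x)}|v-M_0|\di\meas_X\to0$. This gives convergence of the averages and the Lebesgue point property at once.

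Two preparatory steps also need repair.

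\begin{itemize}
\item The correction $\psi=\dist_x^2/(2N)$ has the wrong sign. Laplacian comparison bounds $\boldsymbol{\Delta}\dist_x^2$ only from above, while you need $\Delta\psi\ge1$. Moreover, $\boldsymbol{\Delta}\dist_x$ has a negative singular part on $\Cut(x)$, and $\Cut(x)$ may accumulate at $x$. Use instead the Dirichlet solution of $\Delta\psi=1$ on a small ball; it is locally Lipschitz by (2) of Proposition \ref{propregmap}.
\item De Giorgi--Moser needs $W^{1,2}_{\loc}$ subsolutions, but $|\nabla f|^2$ is a priori only in $W^{1,1}_{\loc}$. This matters also when $K\ge0$. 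Run the argument on $w:=\sqrt{|\nabla f|^2+1}$ instead.
  \begin{itemize}
  \item $w\in W^{1,2}_{\loc}$ with $|\nabla w|\le|\Hess_f|$.
  \item Bochner together with $|\nabla|\nabla f|^2|\le2|\nabla f||\Hess_f|$ gives $\Delta w\ge-|\kappa|-|K|w$ weakly.
  \item Moser iteration then makes $w$ locally bounded. Apply the weak Harnack argument to $v=w+C\psi$ and conclude for $|\nabla f|^2=w^2-1$.
  \end{itemize}
\end{itemize}

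Your alternative, (5) of Theorem \ref{hessthem} after a cut-off, is circular. It needs $\Delta(\rho f)\in L^\infty$, hence local boundedness of $\Delta f$, $f$ and $\langle\nabla\rho,\nabla f\rangle$. That is available only under the stronger parenthetical hypothesis $|\nabla\Delta f|\in L^\infty$ near $x$, not under $\langle\nabla\Delta f,\nabla f\rangle\ge\kappa$.
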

\begin{proof}
     We prove only item (1) (compare with the proof of Proposition \ref{propregmap}) because the proof of (2) can be found in \cite[Remark 2.10]{BPS2}.

 First, recall $|\nabla f| \in W^{1,2}_{\loc}(U)$ with $|\nabla |\nabla f|| \le |\mathrm{Hess}_f|$ because of (the local version of) (4) of Theorem \ref{hessthem}.  Then, applying (\ref{pi}), we know for all $0<r \le 1$ and $1 \le \Lambda \le 2$,
\begin{align}\label{terescopic}
    \fint_{B_r(x)}\left| |\nabla f|-\fint_{B_{\Lambda r}(x)}|\nabla f|\di \meas_X\right|\di \meas_X &\le C r \fint_{B_{4 r}(x)}|\nabla |\nabla f||\di \meas_X  \le C r\fint_{B_{4r}(x)}|\Hess_f|\di \meas_X  \le Cr^{\delta}.
\end{align}
 Applying a telescoping argument (see for instance \cite[Remark 4.20]{Chee}) to \eqref{terescopic}, we obtain:
\begin{equation}
    \left|\fint_{B_r(x)}|\nabla f|\di \meas_X-\fint_{B_s(x)}|\nabla f|\di \meas_X \right| \le C(r^{\delta}+s^{\delta}), \quad \text{for all small $r,s>0$}.
\end{equation}
This estimate implies that $\overline{|\nabla f|}(x)$ is well-defined, which completes the proof by \eqref{terescopic}.
\end{proof}

\subsection{Infinitesimal structure of RCD spaces}

 The purpose of this subsection is to recall known results on the infinitesimal structure of $\RCD$ spaces.
\begin{definition}\label{deftange}
Let $X$ be an $\RCD(K, N)$ space for some $K \in \mathbb{R}$ and some $N \in [1, \infty)$.
\begin{enumerate}
    \item{(Tangent cone)} A pointed metric measure space $Y$ is said to be a \textit{tangent cone} at $x$ (of $X$) if there exists $r_i \to 0^+$ such that 
$
X^{r_i, x} \stackrel{\mathrm{pmGH}}{\to} Y,
$
where we recall from Remark \ref{rem:rescal} for the notation $X^{r, x}$. Denote by $\Tan (X, x)$ the set of all isometry classes of tangent cones at $x$.
\item{(Regular point)} A point $x \in X$ is said to be \textit{$n$-regular} for some $n \in \mathbb{N}$ if
\begin{equation*}
\Tan (X, x) =\left\{ \left(\mathbb{R}^n, \dist_{\Euc}, \frac{1}{\omega_n}\haus^n, 0_n\right) \right\}.
\end{equation*}
 Denote by $\mathcal{R}_n=\mathcal{R}_n(X)$ the set of all $n$-regular points. 
 \item{(Reduced regular point)} A point $x \in X$ is said to be a \textit{reduced $n$-regular point} for some $n \in \mathbb{N}$ if it is $n$-regular and 
 the limit, called \textit{the $n$-volume density at $x$}, exists in $(0, \infty)$:
 \begin{equation}\label{RN}
     (\meas_X)_{x, n}:=\lim_{r \to 0^+}\frac{\meas_X(B_r(x))}{\omega_nr^n} \in (0, \infty).
 \end{equation}
 Denote by $\mathcal{R}_n^*=\mathcal{R}_n^*(X)$ the set of all reduced $n$-regular points. 
\end{enumerate}
\end{definition}
We now recall several structural results that will be used later; see \cite{BrueSemola, Deng, DG2, DMR, GPMeasure, KellMondino, Kita} for the proofs (see also \cite{AHPT, AHT}). 
\begin{theorem}\label{rectifi}
We have the following.
\begin{enumerate}
    \item{(Essential dimension)} Let $X$ be an $\RCD(K, N)$ space for some $K \in \mathbb{R}$ and some $N \in [1, \infty)$ which is not a single point. Then there exists a unique $n \in \mathbb{N}_{\le N}$ such that 
$
\meas_X (X \setminus \mathcal{R}_n^*)=0.
$
We call $n$ \textit{the essential (or rectifiable) dimension} of $X$, and it is denoted by $\dim(X)$, where $\dim(X)=0$ in the case when $X$ is a single point. Furthermore, $\mathcal{R}_n$ is weakly convex and $|g_X|=\sqrt{n}$ for $\meas_X$-a.e. $x \in X$.
\item{(Lower semicontinuity of $\dim(X)$)} The canonical Riemannian metric are $L^p_{\loc}$-weak convergent for any $p \in (1, \infty)$ with respect to the pmGH convergence of $\RCD(K,N)$ spaces, thus, the essential dimensions are lower semicontinuous with respect to the convergence.
\end{enumerate}
\end{theorem}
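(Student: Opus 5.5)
The plan is to assemble item (1) from the known structure theory, in three layers, and then to obtain item (2) from a short Hilbert-space argument about the canonical metrics.

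\textbf{Essential dimension.} For the existence and uniqueness of $n$, I would start from the Mondino--Naber rectifiability (refined in \cite{DG2, GPMeasure, KellMondino}), which gives that $\meas_X$-a.e. point lies in $\bigcup_k \mathcal{R}_k$. The constancy of the dimension then comes from Bru\`e--Semola \cite{BrueSemola}: exactly one $\mathcal{R}_n$ has positive measure. Their argument uses the regular Lagrangian flows of Sobolev vector fields, which preserve $\meas_X$ up to bounded compression and which cannot transport a set of $k$-regular points onto a set of $l$-regular points when $k\neq l$. Once $\meas_X$ is concentrated on $\mathcal{R}_n$, the absolute continuity $\meas_X\llcorner\mathcal{R}_n\ll\haus^n$ \cite{DMR, KellMondino, GPMeasure} and the Lebesgue differentiation theorem for the Radon--Nikodym density show that the limit \eqref{RN} exists and lies in $(0,\infty)$ at $\meas_X$-a.e. point of $\mathcal{R}_n$. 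This yields $\meas_X(X\setminus\mathcal{R}_n^*)=0$.

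\textbf{Weak convexity and the trace of $g_X$.} Weak convexity of $\mathcal{R}_n$ should follow from Deng's H\"older continuity of tangent cones along the interior of geodesics \cite{Deng}. Given $x,y\in\mathcal{R}_n$, I would use the measure-theoretic form of this result: for $\meas_X\otimes\meas_X$-almost every pair, the interior of a geodesic joining them consists of points whose tangent cones all coincide and equal $\mathbb{R}^n$. For the identity $|g_X|=\sqrt n$, I would use the Gigli--Pasqualetto / AHPT description of the tangent module \cite{AHPT, GPMeasure}. Its dimension is $n$ a.e., since on the charts $\mathcal{R}_n\supset\bigcup_j U_j$ one can take $n$ independent gradients of distance functions. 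Then $g_X$, being the identity on an $n$-dimensional Hilbert fibre, has Hilbert--Schmidt norm $\sqrt n$.

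\textbf{Item (2).} Suppose $X_i\to X$ in the pmGH sense. The bound $|g_{X_i}|=\sqrt{n_i}\le\sqrt N$ shows that the metrics are $L^p_{\loc}$-bounded. It therefore suffices to identify the weak limit of $g_{X_i}$ as $g_X$ by testing against $dh_1\otimes dh_2$:
\begin{equation*}
\langle g_{X_i},dh_{1,i}\otimes dh_{2,i}\rangle=\langle\nabla h_{1,i},\nabla h_{2,i}\rangle.
\end{equation*}
Here I would take $h_{j,i}$ converging strongly in $W^{1,2}$, which is possible by the approximation in Theorem~\ref{hessconv}(4) or by standard heat-flow approximations. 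The right-hand side then converges, and linear combinations of such tensors are dense. Lower semicontinuity follows by testing against $g_X$ itself. Approximating $g_X$ strongly by such sums $T_i\to T$ gives
\begin{equation*}
n\,\meas_X(B_r(x))=\int_{B_r(x)}|g_X|^2\di\meas_X=\lim_{i\to\infty}\int_{B_r(x_i)}\langle g_{X_i},T_i\rangle\di\meas_{X_i}\le\liminf_{i\to\infty}\sqrt{n_i}\,\sqrt{n}\,\meas_{X_i}(B_r(x_i)).
\end{equation*}
Combined with $\meas_{X_i}(B_r(x_i))\to\meas_X(B_r(x))$, this gives $n\le\liminf_i n_i$.

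\textbf{Main obstacle.} The genuinely hard input is the constancy of dimension, that is, the Bru\`e--Semola step; I would cite it rather than reprove it. Within the argument itself, the delicate point is producing tensor approximations of $g_X$ that converge strongly along the sequence, which is where Theorem~\ref{hessconv} is used.
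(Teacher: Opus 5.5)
Your proposal is correct and follows the paper's route. The paper gives no independent proof. It recalls item (1) from the same sources you use: Mondino--Naber and Bru\`e--Semola for the dimension, the absolute-continuity results together with \cite{AHT} for the density, Deng for weak convexity, and \cite{AHPT, GPMeasure} for $|g_X|=\sqrt{n}$. For item (2) its ``thus'' is exactly your argument: the $L^p_{\loc}$-weak convergence of $g_{X_i}$ plus lower semicontinuity of $L^2$-norms, which gives $n\,\meas_X(B_r(x))\le\liminf_i n_i\,\meas_{X_i}(B_r(x_i))$.

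The one step you compress is the existence of the density limit \eqref{RN}. Besides Lebesgue differentiation, it also uses that $\haus^n(B_r(x)\cap\mathcal{R}_n)/(\omega_n r^n)\to 1$ almost everywhere. This holds by rectifiability (Kirchheim's density theorem), as in \cite{AHT}.
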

Note that 
     $\meas_X|_{\mathcal{R}_n^*}$ and $\haus^n|_{\mathcal{R}_n^*}$ are mutually absolutely continuous and that the $n$-volume density $(\meas_X)_{x, n}$ coincides with the Radon-Nikodym derivative of $\meas_X|_{\mathcal{R}_n^*}$ with respect to $\haus^n$ for $\meas_X$-a.e. (equivalently for $\haus^n$-a.e.) $x \in \mathcal{R}_n^*$,  (see for instance \cite[Section 3.4]{HKPST}), namely
    \begin{equation}\label{asnasras9rjs}
        \frac{\di \meas_X}{\di \haus^n}(x)=\lim_{r\to 0}\frac{\meas_X(B_r(x))}{\omega_nr^n}, \quad \text{for $\meas_X$-a.e. $x \in \mathcal{R}_n^*$.}
    \end{equation}

    Finally, we mention a special class of $\RCD$ spaces (though they are outside our scope), called \textit{non-collapsed} $\RCD(K, N)$ spaces whose definition is as follows: an $\RCD(K, N)$ space $X$ for some $K \in \mathbb{R}$ and some  $N \in [1, \infty)$ is said to be \textit{non-collapsed} if $\meas_X$ coincides with the $N$-dimensional Hausdorff measure $\haus^N$. See \cite{DG, KM} for  fine properties of this class.
\subsection{Metric measure spaces with mixed curvature bounds}

In this paper, 
we are also interested in exploring the structure of collapsing metric measure spaces with {\it mixed curvature bounds}, that is,   RCD spaces whose curvature bounded from above in a synthetic sense.
We will list some necessary definitions, and we also refer to \cite{AKP-Alexandrov, BH, BBI} for the basics of the theory of metric spaces with upper curvature bounds.
To begin with, for any real number $\kappa \in \dR$, denote by 
\begin{align}\label{d:varpi}
 \varpi_{\kappa}  := 
 \begin{cases}
     \frac{\pi}{\sqrt{\kappa}}, & \text{if} \ \kappa > 0, 
     \\
 \infty, &  \text{otherwise}.   
 \end{cases}
\end{align}

\begin{definition}
[$\CAT(\kappa)$ space] Given $\kappa \in \dR$, a complete geodesic space $(X,\dist_X)$ is said to be a \textit{$\CAT(\kappa)$ space} if for any triple of points $x_0,x_1,x_2\in X$ with $\dist_X(x_0,x_1) + \dist_X(x_1,x_2) + \dist_X(x_2, x_0) < 2 \varpi_{\kappa}$ the following holds: for all geodesic $[x_1x_2]$ and $w$ in the interior of $[x_1x_2]$ (recall subsection \ref{defgeodesic}), we have $
\dist_X(x_0, w) \leq \dist_{\kappa}(\bar{x}_0, \bar{w}),
$
where $\Delta_{\kappa} \bar{x}_0 \bar{x}_1 \bar{x}_2$ is a comparison triangle in the model surface $(M_{\kappa}^2, \dist_{\kappa})$ of curvature $\kappa$, and $\bar{w}$ is the point on the geodesic $[\bar{x}_1\bar{x}_2]$ with $\dist_{\kappa} (\bar{w}, \bar{x}_1) = \dist_X(w, x_1)$.
\end{definition}

\begin{definition}[$\CBA(\kappa)$ space]\label{defcba}
Given $\kappa\in\dR$, a metric space $(X,\dist_X)$ is said to be a \textit{$\CBA(\kappa)$ space} if it is a locally $\CAT(\kappa)$ space.
\end{definition}

We are now ready to introduce the notion of mixed curvature bounds.
\begin{definition}[Mixed curvature bounds]\label{defmixed}
 A metric measure space $(X, \dist_X, \fm_X)$
is said to have \textit{$(K, N, \kappa)$-mixed curvature bounds} if it is both an $\RCD(K,N)$ space and 
a $\CBA(\kappa)$ space.
\end{definition}

A metric measure space with mixed curvature bounds 
has an induced $C^0$-Riemannian structure on the regular part, as proved in \cite[Theorem 1.1]{KKK}, where the \textit{boundary} $\partial X$ is well-defined.
\begin{theorem}[$C^0$-Riemannian structure] If $(X,\dist_X,\fm_X)$ 
has the $(K, N, \kappa)$-mixed curvature bounds for some $K\in \dR, N\in [1, \infty)$   
and some
$\kappa \in \mathbb{R}$, then the manifold interior
$\Int(X)$ coincides with both $X \setminus \partial X$ and the regular set $\mathcal{R}$. Moreover, it is a $C^1$-differentiable manifold of dimension equal to $\dim(X)$, and $\dist_X$ induces a $C^0\cap \mathrm{BV}$-Riemannian structure on it.
\end{theorem}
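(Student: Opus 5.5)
This statement is quoted from \cite[Theorem 1.1]{KKK}, so in the paper it can simply be cited. For completeness, I would reconstruct the argument as follows.

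\textbf{Step 1: locally unique geodesics.} Work locally inside a $\CAT(\kappa)$ neighborhood of radius much smaller than $\varpi_\kappa$. There, geodesics are unique and depend continuously on their endpoints.

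\textbf{Step 2: tangent cones and the regular set.} Combine the $\CBA(\kappa)$ condition with the RCD structure from Theorem \ref{rectifi}. Blowing up via Definition \ref{deftange}, every tangent cone at $x$ is an $\RCD(0,N)$ space that is also $\CAT(0)$. By the non-branching property (Theorem \ref{hessthem}) and the local uniqueness of geodesics, the tangent cone is unique and coincides with the Euclidean cone over the space of directions $\Sigma_x$.

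Next I would argue by induction on dimension, using the splitting theorem (Theorem \ref{splitin}). A $\CAT(0)$ and $\RCD(0,N)$ metric cone is either $\mathbb{R}^n$ or a half-space $\mathbb{R}^{n-1}\times[0,\infty)$, or it carries a genuine singular vertex. In the last case, extendability of geodesics fails in a way that defines boundary points. This should identify $X\setminus\partial X$ with the set $\mathcal R$ of points having Euclidean tangent cone. At such points, geodesics extend locally, so $\Sigma_x$ is a round sphere. Lytchak--Nagano-type results for $\CAT$ spaces with locally extendable geodesics then give a topological manifold neighborhood, hence $\mathcal R=\Int(X)$ is open.

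\textbf{Step 3: distance charts and their regularity.} Choose $n$ points $p_1,\dots,p_n$ near $x$ whose directions at $x$ are almost orthonormal, and set $\Phi=(\dist_{p_1},\dots,\dist_{p_n})$. Angle comparison in the $\CAT(\kappa)$ setting, together with Euclidean tangent cones, shows that $\Phi$ is bi-Lipschitz near $x$. The angles between geodesics vary continuously on $\mathcal R$: upper semicontinuity comes from $\CAT(\kappa)$, and lower semicontinuity comes from the Euclidean blow-ups. Transition maps have differentials given by cosines of these angles, so the atlas is $C^1$. The induced metric $g_{ij}=\langle\nabla\dist_{p_i},\nabla\dist_{p_j}\rangle$ is then continuous.

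\textbf{Step 4: BV regularity (main obstacle).} I would use the semiconcavity of $\dist_p$ in $\CBA$ spaces: its Hessian is bounded from below in the distributional sense. Since $\Delta \dist_p$ is controlled from above by Laplacian comparison, one obtains that $\nabla\dist_p$ has BV components in the chart. The Radon-measure second derivatives give $g_{ij}\in \mathrm{BV}_{\loc}$. The delicate point is the passage to BV regularity of the chart coordinates themselves, which follows Perelman's DC-structure argument adapted to $\CAT$ spaces.
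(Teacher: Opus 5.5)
Citing \cite[Theorem 1.1]{KKK} is exactly what the paper does. The paper quotes the statement from there and gives no proof of its own, so as a proposal this is fine.

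The reconstruction you add, however, is not a proof, because it assumes the decisive step. In Step 2 you assert that geodesics extend locally at points with Euclidean tangent cone, and then appeal to Lytchak--Nagano to obtain a manifold neighborhood. This fails for three reasons:
\begin{enumerate}
\item Lytchak--Nagano theory requires local geodesic completeness on a whole neighborhood. A Euclidean blow-up at $x$ does not by itself give extendability at nearby points.
\item The usual source of extendability is \cite[Proposition II.5.12]{BH}, which the paper itself uses in Lemma \ref{lemextend}. It presupposes the (homology) manifold structure you are trying to establish, so the argument is circular at exactly this point.
\item You describe $\partial X$ only as the set where things fail, so the equalities $\Int(X)=X\setminus\partial X=\mathcal{R}$ are never actually argued. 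For instance, nothing in the sketch rules out a non-regular point having a Euclidean neighborhood.
\end{enumerate}

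The same missing input undermines Step 3. $\CBA(\kappa)$ gives upper semicontinuity of angles, but the standard argument for lower semicontinuity at $x$ uses straight angles at the nearby points, i.e.\ extendability there, together with $\Sigma_x\cong\mathbb{S}^{n-1}$. Euclidean tangent cones alone give no angle control; compare the Colding--Naber example in Section \ref{secexample}. Your uniform bi-Lipschitz bound for $\Phi$ near $x$ rests on this continuity.

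Two smaller points:
\begin{enumerate}
\item Non-branching and uniqueness of geodesics do not by themselves show that blow-ups converge to the Euclidean cone over $\Sigma_x$.
\item In Step 4, $\dist_p$ is semiconvex rather than semiconcave in a $\CBA(\kappa)$ space (cf.\ Lemma \ref{l:hessian-lower-bound}). Your ``Hessian bounded below'' is the correct conclusion, but the label is wrong.
\end{enumerate}

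If you keep the sketch, label it as heuristic rather than as a proof.
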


We will improve the regularity of the induced $C^0$-Riemannian metric to local Lipschitz regularity in distance charts and to $W^{1,p}\cap C^{0,\alpha}$ regularity in harmonic charts for every $p<\infty$ and $\alpha\in(0,1)$.
 This substantial step will enable us to apply powerful analytic tools to further explore fine structures and collapsing geometry of such spaces.
The proof will be provided in later sections.

The following is a key notion in the paper.
\begin{definition}
[Cut point and injectivity radius]\label{definject} 
 
    Let $X=(X, \dist_X)$ be a geodesic space and let $p \in X$. 
  The \textit{cut locus} of $p$, denoted by $\Cut(p)$, is defined as
    \begin{equation}\label{cutlocusdef}
        \Cut(p) := \{z \in X| \dist_X (p, z)+\dist_X (z, w)>\dist_X (p,w), \, \text{for any $w \in X$ with $w \neq z$}\}.
    \end{equation}
 The \textit{injectivity radius} at a point $p \in X$, denoted by $\Injrad(p)$, is defined by the distance from $p$ to its cut locus, i.e., 
        $
            \Injrad(p) := \dist_X (p, \Cut(p))=\inf_{x \in \Cut(p)}\dist_X (p, x),
        $
        where we put $\Injrad(p) := \infty$ if $\Cut(p) = \emptyset$. Finally for any $A \subset X$, put
        $
            \Injrad(A):=\inf_{p \in A}\Injrad(p).
        $
 \end{definition}

\begin{lemma}\label{injeclemma}
Let $(X, \dist_X)$ be a geodesic space, and let $p , q \in X$. Then we have the following.
\begin{enumerate}
    \item{(Extendibility of geodesics)} $q \in \Cut(p)$ holds if and only if 
    for any geodesic $\gamma: [0,\ell] \to X$  with $\gamma(0) = p$,  $\gamma(\ell) = q$, and $\ell = \dist_X(p,q)$, there exists no $\delta > 0$ such that $\gamma$ extends to a geodesic $\bar{\gamma}:[0,\ell + \delta] \to X$.
    \item{(Uniqueness of geodesics)} Assume that $(X, \dist_X, \fm_X)$ is an $\RCD(K,N)$ space for some $K \in \mathbb{R}$ and some $N \in [1, \infty)$. If $q \not\in \Cut(p)$, then there is a unique geodesic connecting $q$ and $p$.
    \item{(Positivity of injectivity radius)} Assume that $(X, \dist_X, \fm_X)$ has mixed curvature bounds, then for any $x \in X \setminus \partial X$, there exists $r>0$ such that $\mathrm{Injrad}(B_r(x))>0$.
\end{enumerate}
\end{lemma}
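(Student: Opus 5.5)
We treat the three items of Lemma \ref{injeclemma} separately; (1) is purely metric, (2) uses non-branching from Theorem \ref{hessthem}, and (3) uses the structure of $\CAT(\kappa)$ spaces together with the RCD condition.

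For (1), we unwind Definition \ref{definject}. Suppose $q\notin\Cut(p)$. Then there is $w\neq q$ with $\dist_X(p,q)+\dist_X(q,w)=\dist_X(p,w)$. Take any geodesic $\gamma$ from $p$ to $q$ and a geodesic $\sigma$ from $q$ to $w$. The concatenation $\gamma*\sigma$ has length $\dist_X(p,w)$, so it is a geodesic. It extends $\gamma$ by $\delta=\dist_X(q,w)>0$.

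Conversely, suppose some geodesic $\gamma$ from $p$ to $q$ extends to $\bar\gamma$ on $[0,\ell+\delta]$. Then $w=\bar\gamma(\ell+\delta)\neq q$ satisfies the equality, so $q\notin\Cut(p)$.

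The statement as written says "for any geodesic $\ldots$ there exists no extension". The equivalence is therefore: $q\in\Cut(p)$ iff no geodesic from $p$ to $q$ extends. The argument above gives exactly this, since the extension constructed in the first direction works for every $\gamma$.

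For (2), suppose $q\notin\Cut(p)$ and let $\gamma_1,\gamma_2$ be two geodesics from $p$ to $q$. By (1) there is $w$ and a geodesic $\sigma$ from $q$ to $w$ with $\dist_X(p,q)+\dist_X(q,w)=\dist_X(p,w)$. Then $\gamma_1*\sigma$ and $\gamma_2*\sigma$ are both geodesics from $p$ to $w$ that coincide on a nontrivial final segment.

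Reversing orientation, they are geodesics from $w$ agreeing on an initial interval. The non-branching property in Theorem \ref{hessthem}(1) then forces them to agree everywhere, so $\gamma_1=\gamma_2$.

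For (3), fix $x\in X\setminus\partial X$. Choose $r$ small enough that $B_{10r}(x)$ is $\CAT(\kappa)$, $10r<\varpi_\kappa/2$, and $B_{10r}(x)\subset \Int(X)$; the last is possible because $\partial X$ is closed. In a $\CAT(\kappa)$ ball below this size, geodesics between points are unique and vary continuously with their endpoints.

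We claim that for $p\in B_r(x)$, every geodesic from $p$ of length less than $r$ can be extended. This is the main obstacle: local geodesic extendability at interior points of a space with mixed curvature bounds.

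The first approach to try is this. By \cite[Theorem 1.1]{KKK}, $\Int(X)$ is a topological manifold of dimension $n=\dim X$. A $\CAT(\kappa)$ space that is locally a homology $n$-manifold has locally extendable geodesics; this is the standard argument via nontrivial local homology, showing that $B_\epsilon(q)\setminus\{q\}$ is not contractible along the geodesic retraction. This is the classical result for $\CAT$ homology manifolds, so we would cite it rather than reprove it.

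Finally, combining extendability with local uniqueness, for $q$ with $0<\dist_X(p,q)<r$ the unique geodesic $[pq]$ extends beyond $q$ within the $\CAT(\kappa)$ ball. The extension remains minimizing as long as the total length stays below $\varpi_\kappa$ inside the convex ball, since local geodesics of short length in a $\CAT(\kappa)$ region are minimizing. By (1), $q\notin\Cut(p)$, so $\Injrad(B_r(x))\ge r>0$.
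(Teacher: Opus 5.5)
Your proofs of (1) and (2) follow the paper's. In (1) you unwind Definition \ref{definject} and use concatenation. In (2) you reach the same non-branching contradiction: the paper extends $\gamma_1$ and concatenates $\gamma_2$ with that extension, while you concatenate both geodesics with $\sigma$, which amounts to the same thing.

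For (3) your route is genuinely different. The paper gives no argument and cites \cite[Theorem 3.19]{KKK}. You instead rebuild the statement from three ingredients:
\begin{enumerate}
\item the manifold structure of $X\setminus\partial X$ from \cite[Theorem 1.1]{KKK};
\item the local-homology criterion for geodesic extendability, in its pointwise form from the proof of \cite[Proposition II.5.12]{BH} (you rightly use it only locally, since $X$ may have boundary);
\item the fact that a local geodesic of length $<\varpi_\kappa$ lying in a convex $\CAT(\kappa)$ ball is minimizing.
\end{enumerate}
You then conclude via (1). This is essentially the mechanism the paper uses later in Lemma \ref{lemextend} for the globalized space $Z$, transplanted to a small ball in the interior.

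What each route buys: the citation is shorter but opaque. Your route shows that the RCD hypothesis enters only through the topological manifold structure of the interior. It also gives the explicit bound $\Injrad(B_r(x))\ge r$, with $r$ controlled by the size of a $\CAT(\kappa)$ neighbourhood of $x$ and the distance to $\partial X$.

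When you write it out, spell out two local steps.
\begin{itemize}
\item Extension at $q$. If a short subsegment $[p'q]\subset B_\epsilon(q)$ did not extend, the geodesic contraction toward $p'$ would contract $B_\epsilon(q)\setminus\{q\}$. Since $B_\epsilon(q)$ is contractible, excision would then contradict $H_n(X,X\setminus\{q\})\cong\mathbb{Z}$. Concatenating the resulting extension with $[pp']$ gives a local geodesic to which the local-to-global fact applies.
\item The trivial case $q=p$, which is never in $\Cut(p)$.
\end{itemize}
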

\begin{proof}
(1) is a direct consequence of the definition of the cut locus (\ref{cutlocusdef}). (2) follows from the non-branching property of geodesics in the RCD space, namely, if not, let $\gamma_i:[0, \ell] \to X(i=1,2)$ be two distinct geodesics 
from $p$ to $q$ with $q\not\in \Cut(p)$, where $\ell=\dist_X(p,q)$. By (1), $\gamma_1:[0,\ell]\to X$ extends to a geodesic $\bar{\gamma}_1:[0,\ell+\epsilon]\to X$ with $\bar{\gamma}_1 (\ell) =  \gamma_1 (\ell)  = q$ for some $\epsilon>0$. 
Therefore, there are two geodesics from $p$ to $\bar \gamma_1(\ell+\epsilon)$ branching at $q$, which contradicts 
 (1) of Theorem \ref{hessthem}.

 Finally (3) was proved in \cite[Theorem 3.19]{KKK}.
\end{proof}

The following lemma is well known. For the sake of completeness, we sketch the proof. 

\begin{lemma}\label{lemextend}
Let $(X, p)$ be a pointed complete 
$\CBA(\kappa)$ space. Assume that $X$ is homeomorphic to a manifold without boundary. Then there exists a pointed complete $\CAT(\kappa)$ space $(Z, \hat{p})$ and  a map $\varphi_{\hat{p}}: (Z, \hat{p})  \to (X,p)$ 
that satisfies the following properties: 
\begin{enumerate}
	\item 
$\varphi_{\hat{p}}$ is
locally distance-preserving;  

	\item the restriction $\varphi_{\hat{p}}|_{B_{\frac{\varpi_{\kappa}}{2}} (\hat{p})}: B_{\frac{\varpi_{\kappa}}{2}} (\hat{p}) \to B_{\frac{\varpi_{\kappa}}{2}}(p)$ is a pseudo covering map; 
	\item for each $q\in B_{\frac{\varpi_{\kappa}}{8}}(\hat{p})$, we have 
$
\Injrad(q) \geq \frac{\varpi_{\kappa}}{8}.
$
\end{enumerate}
\end{lemma}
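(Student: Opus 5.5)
The plan is to build $Z$ as the space of geodesics issuing from $p$ with length $<\varpi_\kappa$, i.e.\ the ``exponential-map'' development of $X$ at $p$, in the spirit of the Alexander--Bishop construction of local developments for $\CBA(\kappa)$ spaces. Concretely, consider the set $\mathcal{G}_p$ of all local geodesics $c:[0,1]\to X$ with $c(0)=p$ and length $L(c)<\varpi_\kappa$. Local geodesics in a $\CBA(\kappa)$ space of length $<\varpi_\kappa$ depend continuously on their endpoints and vary locally uniquely; this is the standard local-to-global lemma. We equip $\mathcal{G}_p$ with the uniform metric, or better with the length metric induced from the endpoint map $\varphi(c):=c(1)$. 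We then define $Z$ as the completion of the component containing the constant path $\hat p$, and $\varphi_{\hat p}:=\varphi$.

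\textbf{Step 1 (local isometry).} We first show that $\varphi$ is a local homeomorphism onto its image and that, after pulling back the length structure of $X$, $\varphi$ is locally distance-preserving. The input is the following fact: for $c\in\mathcal{G}_p$ and $q'$ close to $c(1)$, there is a unique local geodesic near $c$ from $p$ to $q'$, and it depends continuously on $q'$. This comes from the $\CAT(\kappa)$ comparison on a small convex ball around $c(1)$ together with the local-to-global uniqueness. The manifold-without-boundary hypothesis then gives extendibility of local geodesics, via local homology and the fact that a geodesic in a $\CBA$ homology manifold can be prolonged. Hence $\varphi$ is surjective onto $B_{\varpi_\kappa}(p)$ along radial directions, and $Z$ is geodesically complete up to radius $\varpi_\kappa$. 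This yields (1).

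\textbf{Step 2 ($\CAT(\kappa)$ and covering).} The pullback metric on $Z$ is locally $\CAT(\kappa)$, and $Z$ is radially contractible along the canonical paths $s\mapsto c|_{[0,s]}$. Hence it is simply connected on the relevant scale. The Cartan--Hadamard type theorem for $\CBA(\kappa)$ spaces, in the Alexander--Bishop version (a complete locally $\CAT(\kappa)$ space in which local geodesics of length $<\varpi_\kappa$ between any two points are unique is $\CAT(\kappa)$), then gives that $Z$ is $\CAT(\kappa)$. Restricted to $B_{\varpi_\kappa/2}(\hat p)$, the map $\varphi$ is a local isometry that lifts paths, because local geodesics can be continued. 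This is what ``pseudo covering'' means here, which gives (2).

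\textbf{Step 3 (injectivity radius).} Take $q\in B_{\varpi_\kappa/8}(\hat p)$ and $z$ with $\dist_Z(q,z)<\varpi_\kappa/8$. Both points lie in $B_{\varpi_\kappa/4}(\hat p)$. In a $\CAT(\kappa)$ space geodesics of length $<\varpi_\kappa$ are unique and vary continuously, and the manifold property from Step 1 provides extendibility. So the geodesic $[qz]$ extends beyond $z$ as a local geodesic of total length $<\varpi_\kappa$, and every such local geodesic is minimizing in $\CAT(\kappa)$. Therefore $z\notin\Cut(q)$ by Lemma \ref{injeclemma}(1), and $\Injrad(q)\ge\varpi_\kappa/8$. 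The main obstacle I expect is the extendibility of local geodesics, needed both in Step 1 and here, from the topological manifold hypothesis. I would handle it through the local homology argument: if a geodesic could not be extended at $z$, then a small punctured ball around $z$ would retract radially, contradicting $H_n(B,B\setminus\{z\})\neq0$.
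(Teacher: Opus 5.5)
\textbf{Your Step~2 has a genuine gap.} The space you construct need not be $\CAT(\kappa)$.

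For $\kappa>0$, radial contractibility (hence simple connectivity) of a complete locally $\CAT(\kappa)$ space does not give $\CAT(\kappa)$. The Alexander--Bishop type criterion you quote needs uniqueness of local geodesics of length $<\varpi_\kappa$ between \emph{arbitrary} pairs of points of $Z$. Your construction only gives this for pairs containing $\hat p$, and you never verify the general case. In fact it fails.

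Take $X=\mathbb{S}^2(r)$ with $r\sqrt{\kappa}$ slightly larger than $1$. This $X$ is itself a $\CAT(\kappa)$ manifold. Every local geodesic from $p$ of length $<\varpi_\kappa<\pi r$ is minimizing and determined by its endpoint. So $\mathcal{G}_p$, with the length metric pulled back by $c\mapsto c(1)$, is the open ball $B_{\varpi_\kappa}(p)$ with its intrinsic metric. Its complement is a small convex cap around the antipode of $p$. Hence in the completion the boundary circle is a closed local geodesic (shortest paths between nearby boundary points run along the convex obstacle), of length $2\pi r\sin(\varpi_\kappa/r)<2\varpi_\kappa$.

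This is impossible in a $\CAT(\kappa)$ space. There, local geodesics of length $\le\varpi_\kappa$ are minimizing \cite[Proposition II.1.4]{BH}, so two opposite points of the loop would be joined by two distinct geodesics of length $<\varpi_\kappa$. So with the cutoff $\varpi_\kappa$ and the completion, your $Z$ is not $\CAT(\kappa)$. The existence of $Z$ with (1)--(2) is not established by your sketch.

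This existence statement is precisely the Lifting Globalization Theorem \cite[Theorem 9.50]{AKP-Alexandrov}, which the paper simply invokes; you should do the same rather than rebuild it. The manifold hypothesis plays no role in (1)--(2), so the extendibility discussion in your Step~1 is beside the point there.

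\textbf{Your Step~3 is correct once you have such a $Z$, and it is more direct than the paper's argument.} The paper argues by contradiction. It extends a geodesic from $q_0$ past a putative cut point $x_0$ as a local geodesic, takes limits of distinct geodesics from $q_0$ to points beyond $x_0$, and contradicts the absence of conjugate points along local geodesics of length $<\varpi_\kappa$ \cite[Theorem 9.46]{AKP-Alexandrov}. You instead observe that the extended local geodesic has length $<\varpi_\kappa$ and is therefore minimizing, so $z\notin\Cut(q)$ by Lemma \ref{injeclemma}(1).

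Both arguments need the geodesic extension property of $Z$ near $\hat p$. The paper gets it from \cite[Proposition II.5.12]{BH}, using that $Z$ is a manifold there. That is exactly the local homology argument you describe.
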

\begin{proof}
First, we recall the following two notions:
\begin{itemize}
    \item A map $\gamma:[0, a] \to X$ into a metric space $X$ is called a \textit{local} geodesic if for any $t \in [0, a]$ there exists $\epsilon>0$ such that $\gamma|_{[0,a]\cap [t-\epsilon, t+\epsilon]}$ gives a geodesic (see subsection \ref{defgeodesic}).
    \item $X$ is said to have the \textit{geodesic extension property} if for every local geodesic $\gamma:[0,a]\to X$, where $a>0$, there exist $\epsilon >0$ and a local geodesic $\bar{\gamma}:[0,a+\epsilon]\to X$ such that $\bar{\gamma}|_{[0,a]} = \gamma$.
\end{itemize}

Then let us prove the assertion.
 Applying the \textit{Lifting
Globalization Theorem} for $\CBA(\kappa)$ spaces (\cite[Theorem 9.50]{AKP-Alexandrov}) yields a pointed complete $\CAT(\kappa)$ space $(Z, \hat{p})$ and a map 
$\varphi_{\hat{p}}: (Z, \hat{p})  \to (X, p)$ such that (1) and (2) hold.  
Next, we will show that for any $q\in B_{\frac{\varpi_{\kappa}}{8}}(\hat{p})$, it holds that 
$
\Injrad(q) \geq \frac{\varpi_{\kappa}}{8}.
$
We will prove it by contradiction, thus suppose that there exist  $q_0 \in B_{\frac{\varpi_{\kappa}}{8}}(\hat{p})$ and $x_0 \in B_{\frac{\varpi_{\kappa}}{8}}(q_0)\cap \Cut(q_0)$.  Then we can take  a local geodesic $\gamma:[0,\ell] \to Z$ such that:
\begin{enumerate}
	\item $\gamma(0) = q_0$
and $\gamma(s_0) = x_0$ for some $s_0 < \ell$;
	\item $\gamma|_{[0,s_0]}$ is a geodesic and $\gamma|_{[0,s]}$ is not a geodesic for every $s \in (s_0, \ell]$.
\end{enumerate} 
 Indeed, by Lemma \ref{injeclemma}, no geodesic $\gamma:[0,s_0]\to Z$ with $\gamma(0)= q_0$ and $\gamma(s_0) = x_0$ can be extended to a longer geodesic passing $x_0$. Since the complete $\CAT(\kappa)$ space $Z$ is homeomorphic to a manifold, $Z$ satisfies the geodesic extension property; see \cite[Proposition II.5.12]{BH}. 
 
 Let us now take a sequence of points $x_j \in \gamma|_{(s_0,\ell]}$ with  $x_j \to x_0$. Then for each $j$, taking a geodesic $\gamma_j$ connecting $q_0$ and $x_j$. By the definition of cut point, each $\gamma_j$ is distinct from $\gamma$. Therefore, for every $j$, there exist two distinct local geodesics $\gamma$ and $\gamma_j$ connecting $q_0$ and $x_j$, where $x_j \to x_0$ as $j \to \infty$. Passing to a subsequence, $\gamma_j$ converges to a geodesic $\gamma_{\infty}$, as $j\to \infty$.  
Since $Z$ is a  $\CAT(\kappa)$ space, by uniqueness of   geodesics between points at distance $<\varpi_{\kappa}$,  $\gamma_{\infty}$ coincides with $\gamma$. Then this contradicts the \textit{no-conjugate-point property} along a local geodesic of length less than $\varpi_{\kappa}$ in a $\CAT(\kappa)$  space; see \cite[Theorem 9.46]{AKP-Alexandrov}, which completes the proof.
\end{proof}

\begin{corollary}\label{cormix} Given $K, \kappa\in \dR$ and $N \in [1, \infty)$,
let $X$ be a metric measure space having $(K, N, \kappa)$-mixed curvature bounds.  Given a point $p \in X$, let $Z$ be the complete $\CAT(\kappa)$ space as in Lemma \ref{lemextend}. 
Then $Z$ with the lifted Borel measure has the $(K, N, \kappa)$-mixed curvature bounds.    
\end{corollary}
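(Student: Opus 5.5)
The plan is to show that both conditions are local and preserved under the local isometry $\varphi_{\hat p}$. Then we can transfer them from $X$ to $Z$, using that $Z$ is obtained by gluing local copies of balls of $X$.

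\textbf{The measure.} First I define the lifted measure $\meas_Z$. Since $\varphi_{\hat p}$ is locally distance-preserving, every $z \in Z$ has a neighborhood $U_z$ such that $\varphi_{\hat p}|_{U_z}$ is an isometry onto an open subset of $X$. On $U_z$ I set $\meas_Z|_{U_z}:=(\varphi_{\hat p}|_{U_z})^{*}\meas_X$. On overlaps these definitions agree: on $U_z\cap U_{z'}$ both restrictions coincide with $\varphi_{\hat p}$, so they pull back the same measure. Hence $\meas_Z$ is a well-defined Borel measure. It is finite and positive on balls because $Z$ is proper, which follows from the local compactness inherited from $X$ and from completeness plus the geodesic structure.

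\textbf{The $\CBA(\kappa)$ condition.} This is immediate, since $Z$ is even globally $\CAT(\kappa)$ by Lemma \ref{lemextend}.

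\textbf{The $\RCD(K,N)$ condition.} This is the main step, and I would use the local-to-global property of the $\RCD(K,N)$ condition. For finite $N$, a complete, proper geodesic (essentially non-branching) metric measure space that is locally $\RCD(K,N)$ (equivalently locally $\mathrm{CD}(K,N)$ together with infinitesimal Hilbertianity) is globally $\RCD(K,N)$. This is due to Cavalletti–Milman, with the $\RCD$ version also following from the Bochner/Eulerian characterization via localization; see \cite{AMSbe, EKS}.

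To apply it, I check the local conditions around each $z\in Z$ on a small ball $B_r(z)\subset U_z$ with $r$ so small that $B_{3r}(z)$ maps isometrically into $X$. Geodesics between points of $B_r(z)$ then stay inside $B_{3r}(z)$. Consequently the following all transfer from the corresponding ball in $X$, since they depend only on the metric measure structure in a neighborhood:
\begin{itemize}
\item the Wasserstein geodesics and the entropy convexity, i.e.\ local $\mathrm{CD}(K,N)$;
\item the local Cheeger energy and its quadraticity;
\item the local Sobolev-to-Lipschitz property.
\end{itemize}
The volume growth condition (2) of Definition \ref{defrcd} is not part of the local check. It follows afterwards from Bishop–Gromov once the global $\mathrm{CD}(K,N)$ property is established.

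\textbf{Main obstacle.} The delicate point is the hypotheses of the local-to-global theorem. Essential non-branching of $Z$ is needed, but it follows from the $\CAT(\kappa)$ property: geodesics of length $<\varpi_\kappa$ are unique, and branching is a local phenomenon, which $X$ excludes by (1) of Theorem \ref{hessthem}. I would also confirm that $\meas_Z$ has full support and that $Z$ has at least the properness needed, both of which follow from the local isometry.

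Finally, infinitesimal Hilbertianity globalizes because the minimal relaxed slope is local (Proposition \ref{proplocality}). Hence the parallelogram identity for $|\nabla f|^2$, which holds a.e. on each $U_z$, holds a.e. on $Z$.
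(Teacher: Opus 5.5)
Your proposal is correct and is essentially the paper's argument. The paper likewise lifts the measure through the local isometry and gets properness of $Z$ from Hopf--Rinow. It then notes that small closed balls of $Z$ (of radius below the convexity radius of $X$) are isometric to convex balls of $X$, hence are $\RCD(K,N)$ by the global-to-local property \cite[Proposition 7.7]{AMSbe}, and concludes with the local-to-global theorem \cite[Theorem 7.8]{AMSbe}.

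That $\RCD$ local-to-global theorem needs no non-branching hypothesis, so your non-branching check is only required if you globalize $\mathrm{CD}(K,N)$ via Cavalletti--Milman instead. In that case, note that their theorem was originally stated under $\meas_Z(Z)<\infty$, which you do not know here; the extension to locally finite measures came later, so citing the $\RCD$ local-to-global result directly is the cleaner route.
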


\begin{proof}
Let $\varphi_{\hat{p}}: (Z, \hat{p}) \to (X, p)$ be the map 
	as in Lemma \ref{lemextend}. Notice that, by the Hopf-Rinow theorem for metric spaces, $Z$ is a proper geodesic space (thus it is separable).
	For any $\hat{x} \in  Z$,
	let $\rho(\hat{x}) > 0$ be a positive number such that $\varphi_{\hat{p}}: (\bar B_{\rho(\hat{x})} (\hat{x}), \hat{x}) \to (\bar B_{\rho(\hat{x})} (x), x)$, $x = \varphi_{\hat{p}}(\hat{x})$,  
	is an isometry and $\rho (\hat{x})$ is less than the convexity radius at $x$. 
	By the global-to-local property \cite[Proposition 7.7]{AMSbe}, the closed ball $\overline{B}_{\rho(\hat{x})}(\hat{x})$
	is an $\RCD(K,N)$ space.  Then applying the local-to-global property as in \cite[Theorem 7.8]{AMSbe} (see also \cite[Theorem 18]{Serg} as a more general result), we conclude that $Z$ is
 an $\RCD(K, N)$ space. Therefore, $Z$ has the $(K, N, \kappa)$-mixed curvature bounds.
\end{proof}

\subsection{Maps with H\"older and Lipschitz regularity from Morrey-type estimates}
In this subsection, we will characterize H\"older and Lipschitz functions on RCD spaces.
\begin{proposition}[$C^{0,\delta}$-regularity]\label{propregmap}
    Let $X$ be an $\RCD(K, N)$ space for some $K \in \mathbb{R}$ and some $N \in [1, \infty)$, and let $V\subset U\subset X$ be open  with $B_{r_0}(V) \subset U$ for some $r_0>0$. We have the following.
    \begin{enumerate}
\item If $f \in W^{1,1}_{\loc}(U)$ with
\begin{equation}\label{quantholder}
    r^{1-\delta}\fint_{B_r(x)}|\nabla f|\di \meas_X \le C,\quad \text{for all $0<r<r_0$ and $x \in V$}
\end{equation}
for some $0< \delta \le 1$ and some $C>1$, then $f\in C^{0,\delta}_{\loc}(V)$.
\item If $f \in D_{\loc}(\Delta, U)$ with 
\begin{equation}\label{8s8s8s8sa}
    r^{2-2\delta}\fint_{B_r(x)}(\Delta f)^2\di \meas_X \le C,\quad \text{for all $0<r<r_0$ and $x \in V$}
\end{equation}
for some $0< \delta \le 1$ and some $C>1$, 
then  $f\in \mathrm{Lip}_{\loc}(V)$, and
     any blow-up of $f$ at any point in $V$ is a harmonic function.
\item If $f \in D_{\loc}(\Delta, U)$ with 
\begin{equation}\label{asna89rahweasisarsir}
    r^{1-\delta}\fint_{B_r(x)}|\Hess_f|\di \meas_X \le C,\quad \text{for all $0<r<r_0$ and $x \in V$}
\end{equation}
for some $0< \delta \le 1$ and some $C>1$, then $|\nabla f|\in C_{\loc}^{0,\delta}(V)$,  and any blow-up of $f$ at $x \in V$ is a linear function whenever
\begin{equation}
    \lim_{r\to 0}r^2\fint_{B_r(x)}(\Delta f)^2\di \meas_X=0.
\end{equation}
\end{enumerate}
\end{proposition}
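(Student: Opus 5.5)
All three items will be handled by one telescoping mechanism based on the $(1,1)$-Poincar\'e inequality \eqref{pi}, the Bishop--Gromov inequality \eqref{bg} (local doubling), and blow-up compactness (Theorems \ref{stabsob} and \ref{hessconv}).

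\emph{Item (1).} For $x\in V$ and $0<r<r_0/4$, \eqref{pi} together with doubling gives
\[
\fint_{B_r(x)}\Bigl|f-\fint_{B_r(x)}f\Bigr|\di\meas_X\le Cr\fint_{B_{2r}(x)}|\nabla f|\di\meas_X\le Cr^{\delta}.
\]
Telescoping over the radii $2^{-k}r$, as in the proof of Proposition \ref{proplebes}, shows that every $x\in V$ is a Lebesgue point and that $|\bar f(x)-\fint_{B_r(x)}f|\le Cr^\delta$. For $x,y\in V$ with $\dist_X(x,y)=r$ small, I would compare both averages with $\fint_{B_{2r}(x)}f$; since $B_r(y)\subset B_{2r}(x)$ and the measures are doubling, this gives $|\bar f(x)-\bar f(y)|\le Cr^\delta$, i.e.\ $f\in C^{0,\delta}_{\loc}(V)$.

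\emph{Item (3).} Recall from item (4) of Theorem \ref{hessthem} that $|\nabla|\nabla f||\le|\Hess_f|$. Applying item (1) to $|\nabla f|$ then yields $|\nabla f|\in C^{0,\delta}_{\loc}(V)$. For the blow-up statement, consider $f^{r,x}$ on $X^{r,x}$. By the scaling in Remark \ref{rem:rescal},
\[
\fint_{B_1}|\Hess_{f^{r,x}}|\le Cr^{\delta}\to0 \quad\text{and}\quad \fint_{B_1}(\Delta f^{r,x})^2=r^2\fint_{B_r(x)}(\Delta f)^2\to0.
\]
Since $|\nabla f|$ is continuous, $|\nabla f^{r,x}|$ is uniformly bounded on $B_R$, so $f^{r,x}$ is bounded in $D(\Delta,B_R)$ after a localization. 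Theorem \ref{hessconv} then gives a limit $f_\infty$ on the tangent cone with $\Delta f_\infty=0$, and by lower semicontinuity \eqref{hessss} with $p$ close to $1$ also $\Hess_{f_\infty}=0$. I will upgrade the $L^1$ Hessian bound to $L^p$ for some $p>1$ via H\"older's inequality combined with the local $L^2$ bound coming from the Bochner inequality; this is the point that needs care. Hence $f_\infty$ is linear in the sense of Definition \ref{lineardef}.

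\emph{Item (2), the main obstacle.} Here one must obtain Lipschitz bounds from $L^2$ control of $\Delta f$ alone. I would use the following excess-decay scheme. On each ball $B_r(x)$, let $h_r$ be the harmonic replacement of $f$, and set $u_r:=f-h_r$. Testing the equation for $u_r$ against $u_r$, and using the Sobolev/Poincar\'e inequality on the ball, gives
\[
\fint_{B_r(x)}|\nabla u_r|^2\le Cr^2\fint_{B_r(x)}(\Delta f)^2\le Cr^{2\delta}.
\]
Harmonic functions on RCD spaces satisfy the Cheng--Yau gradient estimate $\sup_{B_{r/2}}|\nabla h_r|^2\le C\fint_{B_r}|\nabla h_r|^2$. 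Combining the two estimates across dyadic scales gives
\[
\Bigl(\fint_{B_{r/2}}|\nabla f|^2\Bigr)^{1/2}\le C\Bigl(\fint_{B_r}|\nabla f|^2\Bigr)^{1/2}+Cr^{\delta}.
\]
As written, however, this recursion grows geometrically and does not close by itself. I would therefore use the harmonic approximation only for the oscillation of $f$: $r^{-1}\operatorname{osc}_{B_r}f$ is controlled by $\sup|\nabla h|$, and the error terms $r^\delta$ are summable. Concretely, I would compare $h_{r}$ with $h_{r/2}$ and use the Lipschitz bound for $h$ on all smaller balls, in the spirit of Campanato's argument, aiming for $\sup_k\bigl(\fint_{B_{2^{-k}r_0}}|\nabla f|^2\bigr)^{1/2}<\infty$ uniformly in $x$. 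Lebesgue differentiation together with item (3) of Proposition \ref{proplocality} then gives $\Lip f\le C$, so $f\in\mathrm{Lip}_{\loc}(V)$. Finally, the rescalings $f^{r,x}$ have uniformly bounded gradients and $\Delta f^{r,x}\to0$ in $L^2(B_R)$, so Theorem \ref{hessconv} shows that every blow-up is harmonic.
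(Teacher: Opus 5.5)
Your proposal is correct. Items (1) and (3) follow the paper: the Poincar\'e inequality \eqref{pi} plus telescoping, and then (1) applied to $|\nabla f|$ via $|\nabla|\nabla f||\le|\Hess_f|$. The one difference is the linearity of blow-ups in (3), where the paper is more economical. Since $|\nabla f|$ is continuous, any blow-up $f_\infty$ is harmonic with $|\nabla f_\infty|\equiv|\nabla f|(x)$. On an $\RCD(0,N)$ tangent cone this already forces linearity by (c) in (1) of Theorem \ref{splitin}. Your $L^1$--$L^2$ interpolation to obtain $\Hess_{f_\infty}=0$ also works, but it is not needed.

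For (2) you take a genuinely different route. The paper quotes the pointwise estimate of \cite{JKY},
$|\nabla f|(y)\le C\bigl(r^{-1}\fint_{B_r(y)}|f|+\sum_j 2^j(\fint_{B_{2^j}(y)}|\Delta f|^q)^{1/q}\bigr)$ with $q<2$.
It removes that estimate's $L^\infty$ hypothesis on $\Delta f$ by truncating $\Delta f$ and solving a Dirichlet problem, and then observes that \eqref{8s8s8s8sa} makes the dyadic series converge like $\sum_j 2^{j\delta}$.

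Your harmonic-replacement scheme is a self-contained substitute for this black box, and it does close. You should, however, state the iterated quantity explicitly:
\begin{itemize}
\item Set $r_k=2^{-k}r$, let $h_k$ be the harmonic replacement of $f$ on $B_{r_k}(x)$, and put $M_k:=\operatorname{ess\,sup}_{B_{r_k/2}(x)}|\nabla h_k|$.
\item The function $h_{k+1}-h_k$ is harmonic on $B_{r_{k+1}}(x)$ with the boundary values of $f-h_k$. The Dirichlet principle and your energy bound then give $\fint_{B_{r_{k+1}}(x)}|\nabla(h_{k+1}-h_k)|^2\le Cr_k^{2\delta}$.
\item Cheng--Yau yields $M_{k+1}\le M_k+Cr_k^{\delta}$, because $B_{r_{k+1}/2}(x)\subset B_{r_k/2}(x)$.
\end{itemize}
Passing to a smaller ball costs nothing for a supremum, which is exactly why the factor $C>1$ of your first recursion disappears. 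The errors $Cr_k^\delta$ are summable, and $M_0$ is controlled by Cheng--Yau and $\fint_{B_r(x)}|\nabla f|^2$. Hence $\fint_{B_{r_k/2}(x)}|\nabla f|^2\le 2M_k^2+Cr_k^{2\delta}$ is bounded uniformly, and the detour through oscillations of $f$ is unnecessary.

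To finish, do not invoke (3) of Proposition \ref{proplocality}, since it presupposes that $f$ is already locally Lipschitz. Use your item (1) with $\delta=1$ instead, or the local Sobolev-to-Lipschitz property.

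What each route buys: the paper's is shorter, given \cite{JKY} and the truncation step. Yours uses only the Dirichlet principle and the Cheng--Yau estimate (itself a consequence of Bochner and Moser iteration), and it handles unbounded $\Delta f$ directly.
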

\begin{proof}
Item (1) is a direct consequence of applying the Poincar\'e inequality \eqref{pi}. In fact, 
\begin{equation}
    \fint_{B_r(x)}\left| f-\fint_{B_r(x)}f\di \meas_X\right|\di \meas_X \le Cr \fint_{B_{2r}(x)}|\nabla f|\di \meas_X \le Cr^{\delta}.
\end{equation}
A telescoping argument 
(see also \cite[page 452]{Chee} and \cite[Theorem 5.1]{HK}) shows (1).

Let us prove (2). 
The first statement is a direct consequence of \cite[Theorem 3.1]{JKY} stating
\begin{equation}\label{asnasia8rnsasxx}
    |\nabla f|(y) \le C \left( \frac{1}{r}\fint_{B_r(y)}|f|\di \meas_X+\sum_{j=-\infty}^{\lfloor \log_2\frac{r_0}{2}\rfloor}2^j\left( \fint_{B_{2^j}(y)}|\Delta f|^q\di \meas_X\right)^{\frac{1}{q}}\right),
\end{equation}
for some $\frac{3}{2}\le q <2$. Although an $L^{\infty}$-bound on $\Delta f$ is assumed in \cite{JKY}, the same argument proves \eqref{asnasia8rnsasxx} without such an $L^{\infty}$-bound in this setting.\footnote{Another way to check this is as follows: Let $G:=\Delta f$ on $B_r(x)$, let $G_L:=\max\{-L, \min\{L, G\}\}$, find $f_L \in D(\Delta, B_r(x))$ with $f-f_L \in W^{1,2}_0(B_r(x))$ and $\Delta f_L=G_L$ (see for example \cite[Lemma 4.7]{AmbrosioHonda2}). Applying \cite[Theorem 3.1]{JKY} to $f_L$ with $|G_L| \le |G|$ and letting $L \to \infty$ complete the proof because $f_L \to f$ in $W^{1,2}$.} 
Since \eqref{8s8s8s8sa} implies 
\begin{equation}\label{eq:lapbdzero}
    \int_{B_s^{X^{r,x}}(x)}(\Delta_{X^{r,x}}f^{r,x})^2\di \meas_{X^{r,x}} =r^2\fint_{B_{sr}(x)}(\Delta f)^2\di \meas_X \le C(rs)^{2\delta} s^{-2}\to 0, \quad \text{as $r \to 0^+$}
\end{equation}
for any $s>0$ (see Remark \ref{rem:rescal}), 
Theorem \ref{hessconv} shows the second statement.

Now let us prove (3). Since 
$ |\nabla |\nabla f|| \le |\Hess_f|$,
applying item (1) to $|\nabla f|$,
one can obtain $|\nabla f| \in C^{0,\delta}_{\loc}(V)$. 
Finally, the desired linearity under blow-up is a direct consequence of this with (1) of Theorem \ref{splitin}. 
\end{proof}
    Note that 
     the assumption (\ref{8s8s8s8sa}) plays a role instead of 
        \begin{equation}\label{asa9sarjsansf}
            \Delta f \in L^{\infty}_{\loc}
        \end{equation}
        as assumed in a couple of previous works. For example, regularity results about isometric immersions established in \cite{Hisom} can be justified under assumption (\ref{8s8s8s8sa}) instead of (\ref{asa9sarjsansf}). We will use this immediately later, see, for instance, the proof of Theorem \ref{isomequiv}.
\begin{corollary}\label{prop:reglp}
Let $X$ be an $\RCD(K, N)$ space for some $K \in \mathbb{R}$ and some $N \in [1, \infty)$, and let $U$ be an open subset of $X$ with
\begin{equation}\label{asasiasajs}
    \meas_X(B_r(x)) \ge \tau r^{\alpha}, \quad \text{for all $r<1$ and $x \in U$,}
\end{equation}
for some $\alpha>0$, and let $f:U \to \mathbb{R}$.  Then we have the following.
\begin{enumerate}
\item If $f \in W^{1,p}_{\loc}(U)$ for some $p>\alpha$, then (\ref{quantholder}) is valid as $\delta=1-\frac{\alpha}{p}$, thus
$f\in C^{0,\delta}_{\loc}(U)$.
\item If $f \in D_{\loc}(\Delta, U)$ with $\Delta f \in L^p_{\loc}(U)$ for some $p>\alpha$, then (\ref{8s8s8s8sa}) holds as $\delta$ as above, thus $f\in \mathrm{Lip}_{\loc}(U)$, and any blow-up of $f$ at any point in $U$ is a harmonic function. 
\item If $f \in D_{\loc}(\Delta, U)\cap W^{1,p}_{\loc}(U)$ with $|\Hess_f| \in L^p_{\loc}(U)$ for some $p>\alpha$, then (\ref{asna89rahweasisarsir}) is valid as $\delta$ as above, thus $|\nabla f|\in C^{0,\delta}_{\loc}(U)$.
\end{enumerate}
\end{corollary}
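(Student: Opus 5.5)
The plan is to reduce each item to the corresponding item of Proposition \ref{propregmap} by H\"older's inequality combined with the volume lower bound \eqref{asasiasajs}. The common computation is the following. For $x$ in a relatively compact open $V\Subset U$, fix $r_0\in(0,1)$ with $B_{2r_0}(V)\Subset U$. For $g\in L^p(B_{r_0}(V))$ and $0<r<r_0$, H\"older's inequality gives
\begin{equation*}
\fint_{B_r(x)}|g|\di \meas_X\le \left(\fint_{B_r(x)}|g|^p\di \meas_X\right)^{\frac1p}\le \meas_X(B_r(x))^{-\frac1p}\|g\|_{L^p(B_{r_0}(V))}\le \tau^{-\frac1p}r^{-\frac{\alpha}{p}}\|g\|_{L^p(B_{r_0}(V))}.
\end{equation*}
Multiplying by $r^{1-\delta}$ with $\delta=1-\frac{\alpha}{p}\in(0,1]$ yields
\begin{equation*}
r^{1-\delta}\fint_{B_r(x)}|g|\di\meas_X\le \tau^{-\frac1p}\|g\|_{L^p(B_{r_0}(V))}.
\end{equation*}
The right-hand side is a constant, uniform in $x\in V$ and $r<r_0$.

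For item (1), I would apply this with $g=|\nabla f|$. This gives exactly \eqref{quantholder}, so (1) of Proposition \ref{propregmap} yields $f\in C^{0,\delta}_{\loc}(V)$. Since $V$ was an arbitrary relatively compact open subset, this gives $f\in C^{0,\delta}_{\loc}(U)$.

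For item (3), I would take $g=|\Hess_f|$. This gives \eqref{asna89rahweasisarsir}, and (3) of Proposition \ref{propregmap} then gives $|\nabla f|\in C^{0,\delta}_{\loc}$. As an alternative route, one can note $|\nabla|\nabla f||\le|\Hess_f|$ and apply item (1) to $|\nabla f|$ directly. The hypothesis $f\in W^{1,p}_{\loc}$ ensures $|\nabla f|\in W^{1,1}_{\loc}$.

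For item (2), I would instead use the $L^2$ form. Assume first $p\ge 2$; the case $\alpha<p<2$ is treated below. H\"older's inequality gives
\begin{equation*}
\fint_{B_r(x)}(\Delta f)^2\di\meas_X\le\Big(\fint_{B_r(x)}|\Delta f|^p\di\meas_X\Big)^{\frac2p}\le \tau^{-\frac2p}r^{-\frac{2\alpha}{p}}\|\Delta f\|_{L^p}^2.
\end{equation*}
Multiplying by $r^{2-2\delta}$ gives \eqref{8s8s8s8sa}, since $2-2\delta=\frac{2\alpha}{p}$. Then (2) of Proposition \ref{propregmap} gives local Lipschitz regularity and harmonic blow-ups.

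The only step needing care is the range $p<2$, where $L^p$ control does not yield an $L^2$ Morrey bound. I expect this to be the main obstacle, and I would handle it through the proof of (2) of Proposition \ref{propregmap}. That proof uses the estimate \eqref{asnasia8rnsasxx}, which only involves $L^q$ averages of $\Delta f$ with $q<2$, and $q$ can be taken $\le p$. The dyadic sum $\sum_j 2^j(\fint_{B_{2^j}}|\Delta f|^q)^{1/q}$ is then bounded by $\sum_j 2^{j(1-\alpha/p)}$, which converges since $p>\alpha$. For the blow-up statement, the rescaled Laplacians tend to zero in $L^p$ by \eqref{pestimate}. Hence any limit, which lies in $D_{\loc}(\Delta)$ by Theorem \ref{hessconv} after truncation as in the footnote, is harmonic.
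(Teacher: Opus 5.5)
Your argument for items (1) and (3), and for item (2) when $p\ge 2$, is the paper's own. H\"older's inequality together with \eqref{asasiasajs} turns $L^p_{\loc}$ integrability into the Morrey bounds \eqref{quantholder}, \eqref{asna89rahweasisarsir}, \eqref{8s8s8s8sa} with $\delta=1-\frac{\alpha}{p}$, and Proposition \ref{propregmap} finishes. Your localization to $V\Subset U$ is in fact more careful than the paper's use of $\|g\|_{L^{\bar q}(U)}$.

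The range $\alpha<p<2$ in item (2), which you single out as the main obstacle, is not an obstacle, and you should drop the detour. By definition (Remark \ref{rem:localrem}), $f\in D_{\loc}(\Delta,U)$ already entails $\Delta f\in L^2_{\loc}(U)$. So the same H\"older step with exponent $2$ gives, for $x\in V$ and $r<r_0<1$,
\[
r^{2-2\delta}\fint_{B_r(x)}(\Delta f)^2\di\meas_X\le \tau^{-1}r^{\frac{2\alpha}{p}-\alpha}\|\Delta f\|^2_{L^2(B_{r_0}(V))}\le \tau^{-1}\|\Delta f\|^2_{L^2(B_{r_0}(V))},
\]
because $\frac{2\alpha}{p}>\alpha$ when $p<2$. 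Hence \eqref{8s8s8s8sa} holds for every $p>\alpha$. Proposition \ref{propregmap}(2) then gives the Lipschitz bound and the harmonicity of blow-ups at once.

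As written, your detour has three defects:
\begin{enumerate}
\item It never proves \eqref{8s8s8s8sa}, which is part of the statement; you even suggest this bound is unavailable.
\item It chooses $q\le p$ in \eqref{asnasia8rnsasxx}, but that estimate is only provided for some $q\in[\frac32,2)$. This breaks down when $p<\frac32$, which is possible, e.g.\ for $\alpha=1$.
\item For the blow-up it only supplies $L^p$-decay ($p<2$) of the rescaled Laplacians. Theorem \ref{hessconv} requires bounds in $D(\Delta)$, i.e.\ $L^2$ control of the Laplacian.
\end{enumerate}
The observation $\Delta f\in L^2_{\loc}(U)$ removes all three.
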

\begin{proof}
    First, recalling Remark \ref{rem:rescal}, notice that a H\"older inequality implies that
    \begin{align}\begin{split}
         \fint_{B_r(x)}|g|^{\bar p}\di \meas_X =\frac{1}{\meas_X(B_r(x))}\int_U1_{B_r(x)}\cdot |g|^{\bar p}\di \meas_X  \le &\ \meas_X(B_r(x))^{-\frac{\bar p}{\bar q}}\cdot \|g\|_{L^{\bar q}(U)}^{\bar p} \\
         \le &\  \tau^{-\frac{\bar p}{\bar q}}r^{-\frac{\bar p\alpha }{\bar q}} \cdot \|g\|_{L^{\bar q}(U)}^{\bar p},\end{split}
    \end{align}
   for all $\bar p \le \bar q$ and $g \in L^{\bar q}(U)$. Applying this to $g=|\nabla f|$ together with $\bar p=1, \bar q=p$, we get (1) because of Proposition \ref{propregmap}. Similarly, we have (2) and (3).
\end{proof}
In the corollary above, note that $\alpha$ is at least $\dim(X)$ because of (1) of Theorem \ref{rectifi}, and that
     (\ref{asasiasajs}) is always valid as $\alpha=N$ because of (2) of Theorem \ref{hessthem}.

Finally let us recall the following sharp gradient estimate established in \cite[Corollary 4.6]{HP} whose proof used a Gaussian bound for the heat kernel \cite{JiangLiZhang}.
\begin{proposition}[Sharp Lipschitz estimate]\label{prop:sharpgradestimate}
For all $K \in \mathbb{R}$, $N \in [1, \infty)$, $k \in \mathbb{N}$, $L>0$ and $\epsilon \in (0,1)$, there exists $\delta=\delta(K, N, k, L, \epsilon)>0$ such that the following holds.
    Let $X$ be an $\RCD(K, N)$ space, let $x \in X$ and let $F=(f_1, \ldots, f_k):B_{4r}(x) \to \mathbb{R}^k$ be a map for some $r \in (0,\delta]$ with $f_i \in D(\Delta, B_{4r}(x))$, $\Delta f_i \in H^{1,2}(B_{4r}(x))$, $\| \nabla f_i\|_{L^{\infty}(B_{4r}(x))}\le L$ and
    \begin{equation}
        \fint_{B_{4r}(x)}\left|g_X-F^*g_{\mathbb{R}^k}\right| \di \meas_X +r^2\|\nabla \Delta f_i\|_{L^{\infty}(B_{4r}(x))} \le \delta.
    \end{equation}
     Then $F$ is $(1+\epsilon)$-Lipschitz on $B_r(x)$.
\end{proposition}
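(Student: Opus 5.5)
The plan is to reduce the $(1+\epsilon)$-Lipschitz bound for the vector-valued map $F$ to a uniform pointwise gradient bound for all scalar projections. For a unit vector $v\in \mathbb{R}^k$, put $f_v:=\sum_i v_if_i$. Then
\[
|F(y)-F(z)|=\sup_{|v|=1}\bigl(f_v(y)-f_v(z)\bigr),
\]
so it suffices to prove the following: there is $\delta=\delta(K,N,k,L,\epsilon)$ such that $|\nabla f_v|\le 1+\epsilon$ $\meas_X$-a.e.\ on $B_{3r}(x)$ for every unit $v$.

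\textbf{Conversion to Lipschitz.} Any geodesic joining $y,z\in B_r(x)$ stays in $B_{3r}(x)$. A local Sobolev-to-Lipschitz argument (a localized version of item (3) of Definition \ref{defrcd}, or the Lebesgue-point and PI description after Proposition \ref{proplocality}) then turns the a.e.\ gradient bound into $f_v(y)-f_v(z)\le (1+\epsilon)\dist_X(y,z)$. Taking the supremum over $v$ gives the claim.

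\textbf{Integral smallness.} For a.e.\ point and any unit "direction" $e$ (using the $L^\infty$ module structure), we have $\langle\nabla f_v, e\rangle=\langle v, dF(e)\rangle\le |dF(e)|$. Moreover
\[
|dF(e)|^2=F^*g_{\mathbb{R}^k}(e,e)\le g_X(e,e)+|g_X-F^*g_{\mathbb{R}^k}|.
\]
Hence
\[
u_v:=\bigl(|\nabla f_v|^2-1\bigr)_+\le |g_X-F^*g_{\mathbb{R}^k}|,
\qquad\text{so}\qquad
\fint_{B_{4r}(x)}u_v\,\di\meas_X\le\delta
\]
uniformly in $v$.

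\textbf{Bochner subsolution.} By the local Bochner inequality of Theorem \ref{hessthem}, dropping the Hessian term, $w:=|\nabla f_v|^2$ satisfies weakly on $B_{4r}(x)$
\[
\tfrac12\Delta w\ge \langle \nabla\Delta f_v,\nabla f_v\rangle+K|\nabla f_v|^2\ge -\sqrt{k}\,L\,\delta r^{-2}-|K|kL^2 .
\]
Set $\Lambda:=2\sqrt k L\delta r^{-2}+2|K|kL^2$. Then $w-1$ is a weak subsolution of $\Delta(\cdot)\ge-\Lambda$, and so is its positive part $u_v$ (Kato-type argument, since $t\mapsto t_+$ is convex).

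\textbf{Mean value inequality (main obstacle).} The key step is an $L^1$-to-$L^\infty$ estimate for such subsolutions, with a constant depending only on $(K,N)$ at scales $r\le 1$:
\[
\sup_{B_{3r}(x)}u_v\le C(K,N)\Bigl(\fint_{B_{4r}(x)}u_v\,\di\meas_X+r^2\Lambda\Bigr).
\]
I would first try to obtain this from Moser iteration. It is available because $X$ is locally PI (Theorem \ref{hessthem} (2),(3)); the $L^1$ (rather than $L^2$) version follows by the standard iteration/interpolation trick on nested balls. As a fallback, I would derive it from Gaussian heat kernel bounds: run the heat semigroup on a good cut-off $\phi$ of $u_v$ and compare using Bochner. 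This yields the same bound, and it is the route suggested by \cite{JiangLiZhang}.

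\textbf{Conclusion.} Plugging in the previous steps gives
\[
\sup_{B_{3r}}\bigl(|\nabla f_v|^2-1\bigr)\le C\bigl(\delta+2\sqrt kL\delta+2|K|kL^2\delta^2\bigr),
\]
using $r\le\delta$. Choosing $\delta$ small enough makes the right-hand side at most $(1+\epsilon)^2-1$. Care is needed so that the constant is uniform in $v$ and in the space, and so that $u_v$ is an admissible test object; for the latter I would approximate the $f_i$ by heat flow and pass to the limit.
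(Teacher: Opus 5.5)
Your argument is correct. Its skeleton is the natural one: Bochner makes $|\nabla f_v|^2$ almost subharmonic, the hypothesis makes $(|\nabla f_v|^2-1)_+$ small in $L^1$, and an $L^1$-to-$L^\infty$ estimate for subsolutions turns this into a pointwise bound. The paper does not prove the statement itself. It quotes \cite[Corollary 4.6]{HP}, whose proof gets the $L^1$-to-$L^\infty$ step from the Gaussian heat kernel bounds of \cite{JiangLiZhang}; that is essentially your fallback.

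Your primary route replaces the heat kernel by De Giorgi--Nash--Moser iteration for the Cheeger energy. This uses only local doubling and the $(1,2)$-Poincar\'e inequality at scales $\le 1$, plus the weak form of the local Bochner inequality tested against Lipschitz cut-offs. It is therefore purely local and needs neither the global heat semigroup nor pointwise kernel estimates. The heat-kernel route instead works with the global semigroup. That fits the RCD Bochner inequality as stated, with test functions $\phi$ satisfying $\Delta\phi\in L^\infty$, and it controls cut-off errors by Gaussian decay, at the cost of importing \cite{JiangLiZhang}. Since Gaussian bounds are themselves consequences of doubling plus Poincar\'e, both routes rest on the same analytic input. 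Your reduction to unit directions $v$, with constants uniform in $v$, is exactly what the sharp constant for the vector-valued $F$ requires.

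Two small corrections:
\begin{itemize}
\item The cross term satisfies $|\nabla\Delta f_v|\,|\nabla f_v|\le \sqrt{k}\,\delta r^{-2}\cdot\sqrt{k}\,L=kL\delta r^{-2}$, not $\sqrt{k}L\delta r^{-2}$. This is harmless.
\item In the conversion step, only your first justification yields the sharp constant. You need a localized Sobolev-to-Lipschitz property. One way is to take the $W_2$-geodesic between uniform measures on small balls around $y$ and $z$; by the MCP density bounds it is a test plan, and it stays inside $B_{3r}(x)$. Then let the radii go to $0$ at Lebesgue points. The PI/Lebesgue-point remark after Proposition \ref{proplocality} only identifies $|\nabla f|$ with a local slope a.e. Curve-family arguments based on Poincar\'e give a Lipschitz constant $C(1+\epsilon)$, not $1+\epsilon$.
\end{itemize}
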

\subsection{Quantitative splitting and quantitative rigidity}\label{subsec:almost splitting}
In the sequel, for two metric measure spaces $X, Y$, we always equip $X\times Y$ with the product distance and product measure, namely
\begin{equation}\label{product1}
    \dist_{X\times Y}((x_1, y_1), (x_2, y_2))^2=\dist_X(x_1, x_2)^2+\dist_Y(y_1, y_2)^2, \quad \text{for all $x_i \in X, y_i \in Y$,}
\end{equation}
and
\begin{equation}\label{product2}
    \meas_{X\times Y}(A\times B)=\meas_X(A)\cdot \meas_Y(B), \quad \text{for all Borel subsets $A \subset X$ and $B \subset Y$.}
\end{equation}
\begin{theorem}[Almost local splitting]\label{almost splitting}
For all $K \in \mathbb{R}$, $N \in [1, \infty)$, $k \in \mathbb{N}$ and $\epsilon \in (0,1)$, there exists $\delta=\delta(K, N, k, \epsilon)>0$ such that for any pointed $\RCD(K,N)$ space $(X, x)$, the following hold.
\begin{enumerate}
    \item  If $B_1(x)$ is $\delta$-pmGH close to $B_1(\mathbf{0}^k, w)$ in an $\RCD(K, N)$ space $\mathbb{R}^k\times W$, then there exists a harmonic map $\Phi=(\phi_1, \ldots, \phi_k) :B_{\frac{1}{2}}(x) \to \mathbb{R}^k$ such that 
    \begin{equation}
        \fint_{B_{\frac{1}{2}}(x)}\left|\langle \nabla \phi_i, \nabla \phi_j\rangle -\delta_{ij} \right|\di \meas_X \le \epsilon.
    \end{equation}
    \item If there exists a map $\Phi=(\phi_1, \ldots, \phi_k):B_1(x) \to \mathbb{R}^k$ such that $\phi_i \in D(\Delta, B_1(x))$ and
    \begin{equation}
        \fint_{B_1(x)}\left(|\Hess_{\phi_i}|^2 + (\Delta \phi_i)^2 +\left|\langle \nabla \phi_i, \nabla \phi_j\rangle -\delta_{ij}\right|\right)\di \meas_X \le \delta
    \end{equation}
    hold, then $B_{\frac{1}{100n}}(x)$ is $\epsilon$-pmGH close to $B_{\frac{1}{100n}}(\mathbf{0}^k, w)$ in a metric measure space $\mathbb{R}^k \times W$.
\end{enumerate}
\end{theorem}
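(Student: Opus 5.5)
Both items will be proved by contradiction and compactness, following the standard Cheeger--Colding scheme. Since $\delta$ should depend only on $(K,N,k,\epsilon)$, we suppose the conclusion fails for some fixed $\epsilon$. We then get a sequence of pointed $\RCD(K,N)$ spaces $(X_i,x_i)$ satisfying the hypothesis with $\delta_i\to0^+$. After normalizing $\meas_{X_i}(B_1(x_i))=1$ (harmless, since all quantities involved are averaged), (7) of Theorem \ref{hessthem} gives a pmGH limit $(X,x)$ along a subsequence.

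\textbf{Item (1).} The limit ball $B_1(x)$ is isometric to $B_1(\mathbf{0}^k,w)$ in an $\RCD(K,N)$ space $\mathbb{R}^k\times W$. The coordinate functions $y_j$ on the $\mathbb{R}^k$ factor satisfy $\Hess_{y_j}=0$, $\Delta y_j=0$ and $\langle\nabla y_j,\nabla y_l\rangle=\delta_{jl}$. By (4) of Theorem \ref{hessconv} there are harmonic functions $\phi^i_j$ on $B_{\frac12}(x_i)$ with $\phi^i_j\to y_j$ in $W^{1,p}$, and in particular in $W^{1,2}$-strongly. Strong $W^{1,2}$ convergence of $\phi^i_j$, $\phi^i_l$ and of their sums and differences passes the polarized quantities $\langle\nabla\phi^i_j,\nabla\phi^i_l\rangle$ to $\langle\nabla y_j,\nabla y_l\rangle=\delta_{jl}$ in $L^1$. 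This contradicts the assumed failure. A small care point: the limit product carries its own measure, so the harmonicity of $y_j$ uses the fact that the $W$-factor measure splits off as a product.

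\textbf{Item (2).} Here we use Theorem \ref{hessconv} (1)--(2). The functions $\phi^i_j$, normalized to have mean zero, are bounded in $D(\Delta,B_1)$, because the gradient bound comes from $\fint|\langle\nabla\phi_j,\nabla\phi_j\rangle-1|\le\delta$ together with the Poincar\'e inequality. They converge to $\phi_j$ on $B_1(x)$. By lower semicontinuity \eqref{hessss} we get $\Hess_{\phi_j}=0$, and weak convergence of $\Delta\phi^i_j$ with vanishing $L^2$ norm gives $\Delta\phi_j=0$. Strong $W^{1,2}$ convergence on smaller balls gives $\langle\nabla\phi_j,\nabla\phi_l\rangle=\delta_{jl}$ a.e. on $B_r(x)$. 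Thus $\phi_1,\dots,\phi_k$ are orthonormal linear functions on a ball.

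The local version of Theorem \ref{splitin}, i.e.\ the local splitting of \cite{BNS} mentioned in the remarks after it, then shows that $B_{\frac{1}{100n}}(x)$ is isometric, as a metric measure space, to a ball in $\mathbb{R}^k\times W$. The factor $1/100n$ gives room for the domain of the splitting. pmGH convergence of $B_{\frac{1}{100n}}(x_i)$ to this ball contradicts the choice of the sequence.

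The main obstacle is this local splitting step. Linear functions are only defined on a ball, so we need a genuinely local splitting; iterating the one-dimensional splitting also requires the orthonormality to produce a product of $k$ lines with a \emph{measure} splitting. I would first invoke the local splitting of \cite{BNS} directly. If that is not available in the needed form, I would realize the metric as a product via the gradient flows of the $\phi_j$, which are local isometric translations because $\Hess_{\phi_j}=0$, and then get the measure-preserving property from $\Delta\phi_j=0$.
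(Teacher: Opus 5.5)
Your proposal is correct and follows essentially the same route as the paper. Item (1) is proved by contradiction and compactness, using the harmonic approximation in (4) of Theorem \ref{hessconv} (the paper defers the details to the proof of \cite[Proposition 1.4]{BPS}); item (2) is proved by contradiction, passing to a limit via Theorem \ref{hessconv} to get $\Hess_{\phi_j}=0$, $\Delta\phi_j=0$ and orthonormal gradients, then applying the local splitting \cite[Theorem 3.4]{BNS}. Your explicit normalizations (mean-zero $\phi_j$, unit measure of $B_1(x_i)$) and the orthonormality check are left implicit in the paper but are indeed needed.
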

\begin{proof}
    Let us prove (2) only here, see the proof of \cite[Proposition 1.4]{BPS} (together with (4) of Theorem \ref{hessconv}) for the proof of (1).  
    We argue by contradiction. If it is not the case, then, applying (7) of Theorem \ref{hessthem} and Theorem \ref{hessconv}, we can find:
    \begin{itemize}
        \item a pmGH converging sequence of pointed $\RCD(K, N)$ spaces:
        $
            (X_i, x_i) \stackrel{\mathrm{pmGH}}{\to} (X, x)$;
        \item a sequence of maps $\Phi_i=(\phi_{i,1}, \ldots, \phi_{i,k}):B_1(x_i) \to \mathbb{R}^k$ converges strongly in $W^{1,2}_{\loc}$ to a map $\Phi=(\phi_1, \ldots, \phi_k)$ with
        \begin{equation}\label{9nnbsshsbbess}
            \fint_{B_1(x_i)}\left(|\Hess_{\phi_{i, j}}|^2 + (\Delta \phi_{i, j})^2 +\left|\langle \nabla \phi_{i, j}, \nabla \phi_{i, l}\rangle -\delta_{jl}\right|\right)\di \meas_{X_i} \to 0;
        \end{equation}
        \item for some $\tau>0$, $B_{\frac{1}{100n}}(x_i)$ is not $\tau$-pmGH close to $B_{\frac{1}{100n}}(\mathbf{0}^k, w)$ in a metric measure space $\mathbb{R}^k \times W$ for any $W$, namely for any $i$:
        \begin{equation}\label{ans9rasijransjasras}
            \dist_{\mathrm{pmGH}}\left(\left( B_{\frac{1}{100n}}(x_i), x_i\right), \left(B_{\frac{1}{100n}}(\mathbf{0}^k, w), (\mathbf{0}^k, w)\right) \right) \ge \tau.
        \end{equation}
    \end{itemize}
    It follows from (\ref{9nnbsshsbbess}) and Theorem \ref{hessconv} that, for every $j\in \{1,\ldots, k\}$, $\phi_j \in D_{\loc}(\Delta, B_1(x))$ with $\Hess_{\phi_j}=0$ and $\Delta \phi_j =0$.
    Applying the local splitting theorem  \cite[Theorem 3.4]{BNS} (although they assumed   non-negative  Ricci curvature to get a vanishing Hessian, we immediately obtain the above   vanishing result; see also Theorem \ref{splitin}(1)) gives that $B_{\frac{1}{100n}}(x)$ is isometric to $B_{\frac{1}{100n}}(\mathbf{0}^k, w)$ for some $W$ as metric measure spaces, which contradicts (\ref{ans9rasijransjasras}).
\end{proof}
In the theorem above, note that:
\begin{itemize}
\item for (1), if we assume that $X$ is $\delta$-pmGH close to $\mathbb{R}^k\times W$, then each $\phi_i$ can be also chosen as a distance function, instead of the harmonic one (see for instance \cite{MN});
    \item for (2), in general we do not know whether $W$ is an $\RCD(K, N)$ space, because our setting is local. When $k$ coincides with the essential dimension of $X$, $W$ can be taken as a single point because of (2) of Theorem \ref{rectifi}.
\end{itemize}
In order to explain the next result, let us recall the following (see also \cite{AHPT, Hisom, HS}).
\begin{definition}[Pull-back]
    Let $X$ be an $\RCD(K, N)$ space for some $K \in \mathbb{R}$ and some $N \in [1, \infty)$, let $A$ be a Borel subset of $X$ and let $\Phi:A \to \mathbb{R}^k$ be a locally Lipschitz map. Then the \textit{pull-back} of $\Phi$ is defined by $
    \Phi^*g_{\mathbb{R}^k}:=\sum_{i=1}^kd\phi_i\otimes d\phi_i \in L^{\infty}_{\loc}(T^*A \otimes T^*A)$, where $\Phi:=(\phi_1, \ldots, \phi_k)$.
\end{definition}

Compare the following with \cite[Proposition 4.2]{HP}.
\begin{theorem}[GH vs Riemannian isometry]\label{isomequiv}
    For all $K \in \mathbb{R}$, $N \in [1, \infty)$, $\gamma>1$, $\delta \in (0, 1]$ and $\epsilon \in (0,1)$, there exist $r_0=r_0(N,K,\gamma)$ and $\tau=\tau(K, N, \gamma, \delta, \epsilon)>0$ such that the following hold.
    Let $(X, x)$ be a pointed $\RCD(K, N)$ space of essential dimension $n$, and let $\Phi=(\phi_1, \ldots, \phi_n):B_1(x) \to \mathbb{R}^n$ 
    satisfy $\phi_i \in D(\Delta, B_1(x))$ with
    \begin{equation}\label{8asashfasiransaksrans}
        \int_{B_1(x)}|\phi_i|\di \meas_X+ r^{2-2\delta}\fint_{B_r(y)}(\Delta \phi_i)^2\di \meas_X \le \gamma,\quad \text{for all $y \in B_{\frac{1}{2}}(x)$ and $0<r<\frac{1}{2}$.}
    \end{equation}
    Then the following hold.
    \begin{enumerate}
        \item If $\Phi$ gives a $\tau$-GHA to $B_1(\Phi(x))$, then
        \begin{equation}
            \fint_{B_{\frac{1}{4}}(x)}\left| \Phi^*g_{\mathbb{R}^n}-g_X\right|\di \meas_X\le \epsilon.
        \end{equation}
        \item $\Phi|_{B_{r_0}(x)}$ gives an $\epsilon$-GHA to $B_{r_0}(\Phi(x))$ if
        \begin{equation}
            \fint_{B_1(x)}\left| \Phi^*g_{\mathbb{R}^n}-g_X\right|\di \meas_X\le \tau.
        \end{equation}
    \end{enumerate}
\end{theorem}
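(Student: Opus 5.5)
Both items will be proved by contradiction together with the stability results, Theorems \ref{hessconv} and \ref{stabsob}. The common preliminary is as follows. By \eqref{8asashfasiransaksrans} and Proposition \ref{propregmap}(2), each $\phi_i$ is locally Lipschitz on $B_{1/2}(x)$. The constant depends only on $K,N,\gamma,\delta$, because the estimate \eqref{asnasia8rnsasxx} is controlled by $\int|\phi_i|$ and by the dyadic sum $\sum_j 2^j(\fint_{B_{2^j}}|\Delta\phi_i|^q)^{1/q}\le C\sum_j 2^{j\delta}$. Hence $\sup_{B_{1/2}(x)}|\nabla\phi_i|\le L(K,N,\gamma,\delta)$. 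We may normalize $\phi_i(x)=0$.

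\textbf{Item (1).} Suppose the claim fails. Then there are pointed spaces $(X_j,x_j)$ and maps $\Phi_j$ satisfying \eqref{8asashfasiransaksrans}, such that $\Phi_j$ is a $\tau_j$-GHA with $\tau_j\to0$, but $\fint_{B_{1/4}}|\Phi_j^*g-g_{X_j}|\ge\epsilon$. Since $\Phi_j$ is a $\tau_j$-GHA onto the Euclidean ball, the balls $B_1(x_j)$ converge to $B_1(\mathbf 0^n)$, and the limit measure is a multiple of Lebesgue measure because the essential dimension is $n$ (Theorem \ref{rectifi}). The $\phi_{j,i}$ are uniformly Lipschitz with $\Delta\phi_{j,i}$ uniformly $L^2$-bounded. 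By Theorem \ref{hessconv} they converge in $W^{1,2}$ on $B_{1/4}$ to functions $\phi_i$, and by the GHA property these limits are the coordinate functions $x_i$. Thus $\langle\nabla\phi_{j,i},\nabla\phi_{j,k}\rangle\to\delta_{ik}$ in $L^1$, using the $W^{1,2}$-strong convergence and polarization. Moreover $g_{X_j}$ converges to $g_{\mathbb R^n}$ in the $L^p$-weak sense by Theorem \ref{rectifi}(2), and strongly since $|g_{X_j}|=\sqrt n$ is constant. This gives $\fint|\Phi_j^*g-g_{X_j}|\to0$, a contradiction.

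The step I expect to be the main obstacle is the \emph{strong} $W^{1,2}$ convergence, which is what upgrades weak convergence of gradients to convergence of the inner products. Theorem \ref{hessconv}(1) provides it on smaller balls, so I would first apply it on $B_{1/2}$ and then restrict to $B_{1/4}$.

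\textbf{Item (2).} Suppose again the claim fails, so that $\fint_{B_1}|\Phi_j^*g-g_{X_j}|\to0$ but $\Phi_j|_{B_{r_0}}$ is not an $\epsilon$-GHA. Pass to a limit $(X_j,x_j)\to(Y,y)$ with $\Phi_j\to\Phi$ uniformly; this uses the uniform Lipschitz bound and Arzel\`a--Ascoli. By the rescaling argument \eqref{eq:lapbdzero}, at every point of $B_{1/2}$ the blow-ups of $\Phi$ are harmonic. By lower semicontinuity under $L^1$-closeness, $\Phi^*g_{\mathbb R^n}=g_Y$ on $B_{1/2}(y)$; here the passage to the limit uses the $W^{1,2}$-strong convergence and the convergence of $g_{X_j}$ as in item (1).

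Now Proposition \ref{prop:sharpgradestimate} cannot be applied directly, because it needs a bound on $\nabla\Delta$. Instead I would use the Morrey bound \eqref{8s8s8s8sa} together with a blow-up argument, which shows that $\Phi$ is $1$-Lipschitz with respect to $g_Y$-lengths. Since $\Phi^*g=g_Y$, this yields that $\Phi$ is a local isometry near $y$. A local isometry from an $n$-dimensional RCD space onto $\mathbb R^n$ can be combined with the local splitting of Theorem \ref{splitin} via the harmonic parts: the $\phi_i$ have vanishing Hessian in the limit by \eqref{czrcd}-type integral bounds. This shows that $B_{r_0}(y)$ is isometric to a Euclidean ball via $\Phi$, for some $r_0=r_0(K,N,\gamma)$, for instance $r_0=1/(100n)$ as in Theorem \ref{almost splitting}(2). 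Consequently $\Phi_j$ is an $\epsilon$-GHA for large $j$, a contradiction.
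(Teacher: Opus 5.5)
Your item (1) is essentially the paper's argument. You argue by contradiction, get $W^{1,2}$-strong convergence from Theorem~\ref{hessconv} on a slightly larger ball, and identify the limit map as an isometry onto $B_1(\mathbf{0}^n)$. Making explicit that $g_{X_j}\to g_X$ strongly (weak convergence plus $|g_{X_j}|\equiv\sqrt n=|g_X|$) is a good addition. One side claim is false but unused: the limit measure need not be a multiple of Lebesgue measure. For example, $(\mathbb{R}^n,\dist_{\mathbb{R}^n},e^{-x_1}\mathcal{L}^n)$ is $\RCD(K,N)$ for suitable $K<0$, $N>n$. You do not need the claim, since $|\nabla f|=\Lip f$ a.e.\ on the limit already gives $\langle\nabla x_i,\nabla x_k\rangle=\delta_{ik}$ for any limit measure.

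Item (2) has a genuine gap at its central step. The implication ``$\Phi$ is $1$-Lipschitz and $\Phi^*g_{\mathbb{R}^n}=g_Y$, hence $\Phi$ is a local isometry'' is false. The map $t\mapsto|t|$ on $\mathbb{R}$ satisfies both hypotheses (see the remark after Definition~\ref{def:chris}), but it folds, and $\Phi(B_{r_0}(0))$ is not $\epsilon$-dense in $B_{r_0}(0)$. The $1$-Lipschitz bound is the easy direction and follows from $\Phi^*g_{\mathbb{R}^n}=g_Y$ alone: $|\nabla(v\cdot\Phi)|\le|v|$, then apply local Sobolev-to-Lipschitz. So you spend the Morrey bound where it is not needed.

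What is missing is the lower bound $|\Phi(z)-\Phi(w)|\ge(1-\epsilon)\dist(z,w)$ together with openness of the image, i.e.\ ruling out folding. This is exactly where \eqref{8asashfasiransaksrans} must enter. In the paper, \eqref{eq:lapbdzero} makes every blow-up of the limit map at every point harmonic. Because $\Phi^*g_{\mathbb{R}^n}=g_Y$, each such blow-up is a linear isometry from a tangent cone $\mathbb{R}^n$. Compactness makes this uniform: $\Phi|_{B_r(z)}$ is a $(1\pm\epsilon)$-bi-Lipschitz homeomorphism onto an open set containing $B_{(1-\epsilon)r}(\Phi(z))$, for all $r\le r_0(K,N,\gamma)$. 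Combining this with ``$F\circ\Phi$ is $1$-Lipschitz for every $1$-Lipschitz $F$'' gives an exact isometry on $B_{r_0}$.

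Your splitting fallback does not supply the missing bound either.
\begin{itemize}
\item \eqref{czrcd} only bounds $\int|\Hess_{\phi_i}|^2$ by $\int((\Delta\phi_i)^2-K|\nabla\phi_i|^2)$. This is not small, since $\Delta\phi_i$ is merely bounded at scale $1$ and $K$ is fixed.
\item The limit $\phi_i$ need not be harmonic; on the weighted space above, $\Delta x_1=-1$. So neither Theorem~\ref{splitin} nor Theorem~\ref{almost splitting}(2) applies.
\item You never show $\dim Y=n$. The norm argument from item (1) presupposes it. In item (2) it must be derived, as in the paper, from $g_{X_j}$ being $L^1$-close to the strongly convergent $\Phi_j^*g_{\mathbb{R}^n}$.
\end{itemize}

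A repair in your spirit is plausible. Once $\dim Y=n$, the identity $\Phi^*g_{\mathbb{R}^n}=g_Y$ forces $\langle\nabla\phi_a,\nabla\phi_b\rangle\equiv\delta_{ab}$. Then $d\langle\nabla\phi_a,\nabla\phi_b\rangle=\Hess_{\phi_a}(\nabla\phi_b,\cdot)+\Hess_{\phi_b}(\nabla\phi_a,\cdot)=0$, and the Koszul-type permutation gives $\Hess_{\phi_k}=0$. This step uses $\phi_k\in D(\Delta)$ on the limit, which is what excludes folding. You would still need a local \emph{metric} splitting that does not assume $\Delta\phi_k=0$. As written, though, the key step of item (2) is missing.
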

\begin{proof}
    First, let us check (1). The proof is done by a contradiction. If it is not the case, then, thanks to (7) of Theorem \ref{hessthem} and Theorem \ref{hessconv}, there exist the following:
    \begin{itemize}
        \item a pmGH converging sequence of pointed $\RCD(K, N)$ spaces:
    $
        (X_i, x_i) \stackrel{\mathrm{pmGH}}{\to} (X, x);
    $
        \item a sequence of maps $\Phi_i:B_1(x_i) \to \mathbb{R}^n$ with $\Phi_i(x_i)=\bo^n$ such that $\Phi_i$ converge uniformly, $W^{1,2}_{\loc}$-strongly, to an isometry $\Phi:B_1(x) \to B_1(\bo^n)$ and that 
\begin{equation}\label{asbausrasea}
    \liminf_{i\to \infty}\fint_{B_{\frac{1}{4}}(x_i)}|\Phi_i^*g_{\mathbb{R}^n}-g_{X_i}|\di \meas_{X_i}>0.
\end{equation}
    \end{itemize}
However the $W^{1,2}_{\loc}$-strong convergence of $\Phi_i$ to $\Phi$ implies that the left-hand-side of (\ref{asbausrasea}) must be equal to
\begin{equation}
    \fint_{B_{\frac{1}{4}}(x)}|\Phi^*g_{\mathbb{R}^n}-g_X|\di \meas_X=0
\end{equation}
which is a contradiction.

    Second, let us check (2). The proof is done by a contradiction again. If it is not the case, then, by the same reasons above, there exist the following:
    \begin{itemize}
        \item a pmGH convergent sequence of pointed $\RCD(K, N)$ spaces of essential dimensions $n$,
    $
        (X_i, x_i) \stackrel{\mathrm{pmGH}}{\to} (X, x);
    $
        \item a sequence of maps $\Phi_i=(\phi_{i, 1}, \ldots, \phi_{i,n}):B_1(x_i) \to \mathbb{R}^n$ with $\Phi_i(x_i)=\bo^n$ and $\phi_{i, j} \in D(\Delta, B_1(x_i))$ such that $\Phi_i$ converge locally uniformly and $W^{1,2}_{\loc}$-strongly, to a locally Lipschitz map $\Phi=(\phi_1, \ldots, \phi_n):B_1(x) \to \mathbb{R}^n$ with  
        $\Phi^*g_{\mathbb{R}^n}=g_X$ and (\ref{8asashfasiransaksrans}) (for $\Phi$); 
        \item $\Phi_i|_{B_{r_0}(x_i)}$ does not give a $\tau$-GHA to $B_{r_0}(\bo^n)$, for some $\tau>0$ and some $r_0>0$, where $r_0$ will be determined later.
    \end{itemize}

         Similar arguments as in the proofs of \cite[Lemmas 5.4 and 5.5]{Hisom} imply that $\dim(X)=n$ and that $\Phi$ is a (quantitatively) local isometry. 
        More precisely, the assertion $\dim(X) =n$ follows from
        \begin{equation}
            \dim(X)=\langle g_X, \Phi^*g_{\mathbb{R}^n}\rangle =\lim_{i\to \infty}\fint_{B_{\frac{1}{2}}(x_i)}\langle g_{X_i}, \Phi^*_ig_{\mathbb{R}^n}\rangle\di \meas_{X_i}=\lim_{i\to \infty}\fint_{B_{\frac{1}{2}}(x_i)}|g_{X_i}|^2\di \meas_{X_i}=n,
        \end{equation}
        where we used (2) of Theorem \ref{rectifi}.
        For the second one about the isometry, we can run the following steps (recall a remark just after Proposition \ref{propregmap}). 
        
        \smallskip

        \textbf{Step 1} (following the proof of \cite[Lemma 5.3]{Hisom}): \textit{For all $y \in B_{\frac{1}{4}}(x)$ and $0<\epsilon<1$, there exists $r_0=r_0(K, N, \gamma)$, where $\gamma$ is an upper bound for $\Phi_i$ as appeared in (\ref{8asashfasiransaksrans}) (thus this gives also such an upper bound for $\Phi$), such that for any $r\le r_0$, $\Phi|_{B_r(y)}$ gives a $(1\pm \epsilon)$-bi-Lipschitz homeomorphism to the image $\Phi(B_r(y))$ which is open in $\mathbb{R}^n$ with 
        $
            B_{(1-\epsilon)r}(\Phi(y))\subset \Phi(B_r(y)) \subset B_{(1+\epsilon)r}(\Phi(y)).
        $
}

\smallskip

        \textbf{Step 2} (following the proof of \cite[Lemma 5.4]{Hisom}): \textit{For any $1$-Lipschitz function $F$ on $\mathbb{R}^n$, $F \circ \Phi$ is also $1$-Lipschitz on $B_{\frac{1}{4}}(x)$.
}

\smallskip

\textbf{Step 3} (following the proof of \cite[Lemma 5.5]{Hisom}): \textit{For $r_0>0$ as above, we can assume that $\Phi|_{B_{r_0}(y)}$ is isometry.}

\smallskip

Note that (\ref{8asashfasiransaksrans}) played a role to show that the $L^2$-norm of the Laplacian of $\Phi^{r, y}$ is small if $r$ is small quantitatively in order to run the steps above (for instance \cite[Theorem 3.4]{Hisom} can be justified in our assumption (\ref{8asashfasiransaksrans}) instead of assuming $L^{\infty}$-bound on the Laplacian because of Proposition \ref{propregmap}).
Then, recalling the uniform convergence of $\Phi_i$ to $\Phi$, we know $\Phi_i|_{B_{r_0}(x_i)}$ gives a $\tau$-GHA to $B_{r_0}(\bo^n)$, which is a contradiction.
\end{proof}

\subsection{Regularity and stability of measures}
In this subsection, we discuss behavior of reference measures on $\RCD$ spaces. First, let us provide a simple proof of the following technical result, which will play a role later, although is should also follow directly of the previously known results (e.g. \cite{Kir, Lit, Rcoarea}) with convergence results in the RCD setting.
\begin{proposition}[Change of variables]\label{coarea123}
Let $X$ be an $\RCD(K, N)$ space for some $K \in \mathbb{R}$ and some $N \in [1, \infty)$ of essential dimension $n$, let $A$ be a Borel subset of $X$ on which $\meas_X$ is Ahlfors $n$-regular on $A$, and let $\Phi=(\phi_1, \ldots, \phi_n):A \to \mathbb{R}^n$ be a bi-Lipschitz embedding.  
Then 
\begin{equation}\label{coarea14}
\int_{A}\psi\sqrt{\det(\langle \nabla \phi_i, \nabla \phi_j\rangle)_{ij}}\di \haus^n=\int_{\Phi(A)}\psi\circ \Phi^{-1}\di \haus^n
\end{equation}
for any $\psi \in L^1(A, \meas_X)$ (which is equivalent to satisfying $\psi \in L^1(A, \haus^n)$). 
In particular, writing $\meas_X=e^{-f}\di \haus^n$ on $A$ for some  $f \in L^{\infty}(A)$, we have that for any $\psi \in L^1(A, \fm_X)$:
\begin{equation}
    \int_A\psi\di \meas_X=\int_{\Phi(A)}\psi\circ \Phi^{-1} \cdot \sqrt{\det(\langle\nabla \phi_i, \nabla \phi_j\rangle)_{ij}}^{-1}\cdot e^{-f\circ \Phi^{-1}}\di \haus^n. 
\end{equation}
\end{proposition}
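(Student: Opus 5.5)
The strategy is to reduce \eqref{coarea14} to the classical area formula for the bi-Lipschitz map $\Phi:A\to\mathbb{R}^n$ between metric spaces. The only genuine work is identifying the metric Jacobian of $\Phi$ with $\sqrt{\det(\langle \nabla \phi_i, \nabla \phi_j\rangle)_{ij}}$. By Kirchheim's area formula for Lipschitz maps defined on an $n$-rectifiable subset of a metric space, we have
\begin{equation*}
\int_A \psi\, J\Phi \di \haus^n=\int_{\Phi(A)}\psi\circ\Phi^{-1}\di\haus^n .
\end{equation*}
Here $J\Phi(x)$ is the Jacobian of the metric differential $\md_x\Phi$ with respect to the norm $\md_x\mathrm{id}$. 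We use injectivity of $\Phi$, and $A$ is $n$-rectifiable since $\meas_X(A\setminus\mathcal{R}_n^*)=0$ together with Theorem \ref{rectifi} and the Ahlfors regularity. So it suffices to show $J\Phi=\sqrt{\det G}$, with $G=(\langle\nabla\phi_i,\nabla\phi_j\rangle)$, for $\haus^n$-a.e. $x\in A$.

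\textbf{Identification at a good point.} Fix $x\in A$ that satisfies all of the following:
\begin{itemize}
\item $x$ is a reduced $n$-regular point;
\item $x$ is a density point of $A$;
\item $x$ is a Lebesgue point of each $\langle \nabla\phi_i,\nabla\phi_j\rangle$ (relative to $\meas_X$, hence to $\haus^n$ by mutual absolute continuity);
\item $\Phi$ is metrically differentiable at $x$.
\end{itemize}
Almost every point qualifies. First extend each $\phi_i$ to a Lipschitz function on $X$ by McShane; by locality (Proposition \ref{proplocality}) the values of $\langle\nabla\phi_i,\nabla\phi_j\rangle$ on $A$ do not change. Now blow up at $x$: $X^{r,x}\to\mathbb{R}^n$, and the rescaled maps $\Phi^{r,x}$ are uniformly Lipschitz. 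After passing to a subsequence they converge uniformly to a Lipschitz map $L:\mathbb{R}^n\to\mathbb{R}^n$. Metric differentiability gives $|L(v)-L(w)|=\md_x\Phi(v-w)$, so $L$ is linear up to translation, with $J\Phi(x)=|\det L|$. On the other hand, the pullback metrics $(\Phi^{r,x})^*g_{\mathbb{R}^n}$ equal the constant matrix $G(x)$ up to an $L^1$-small error on unit balls, by the Lebesgue point property. Since $g_{X^{r,x}}\to g_{\mathbb{R}^n}$ (Theorem \ref{rectifi}(2)), lower semicontinuity combined with a matching upper bound shows $L^*g_{\mathbb{R}^n}=G(x)$ in the Euclidean frame. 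That is, $L^TL=G(x)$, and therefore $|\det L|=\sqrt{\det G(x)}$.

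\textbf{Main obstacle.} The key step is upgrading the weak convergence of the pullbacks $d\phi^{r,x}_i\otimes d\phi^{r,x}_j$ to identify the limit $dL_i\otimes dL_j$. This requires strong $W^{1,2}$-type convergence of the $\phi^{r,x}_i$, which is not available for arbitrary Lipschitz functions.

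The plan avoids this by polarization. For each fixed $a\in\mathbb{R}^n$, consider $\phi_a=a\cdot\Phi$. Here $|\nabla\phi_a|=\Lip\phi_a$ a.e. by Proposition \ref{proplocality}(3). At a point that is additionally a Lebesgue point of $\Lip\phi_a$, for $a$ in a countable dense set, one gets $\Lip(a\cdot L)=|a^TL|$, equal to $\sqrt{a^TG(x)a}$. This uses that the slopes of Lipschitz functions on almost-Euclidean rescalings are controlled through Cheeger's differentiability theory: blow-ups of Lipschitz functions at generic points are linear, with slope equal to the pointwise Lipschitz constant. Continuity in $a$ then gives $L^TL=G(x)$ for all $a$.

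\textbf{Second formula.} This follows directly. Apply \eqref{coarea14} with $\psi$ replaced by $\psi e^{-f}/\sqrt{\det G}$. This is legitimate because bi-Lipschitzness gives $\det G$ bounded away from $0$ and $\infty$ on $A$.
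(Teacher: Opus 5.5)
Your proof is correct, but it takes a different route from the paper's.

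\textbf{How the paper argues.} It never invokes Kirchheim. Because $\Phi$ is bi-Lipschitz and $\meas_X$ is Ahlfors regular on $A$, it obtains the Jacobian abstractly, as a bounded Radon--Nikodym density $h$ with $\int_A\psi h\,\di\haus^n=\int_{\Phi(A)}\psi\circ\Phi^{-1}\di\haus^n$. It then blows up at a.e.\ $y\in A$ and passes this identity to the Euclidean tangent cone. There $h$ is frozen at its Lebesgue value, and the normalizations are matched through \eqref{asnasras9rjs}. The density at $y$ is therefore the Jacobian of the linear bi-Lipschitz blow-up $\tilde\Phi$, which \cite[Theorem 5.4]{AHPT} identifies with $\sqrt{\det G(y)}$.

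\textbf{How your route differs.} You let Kirchheim's area formula carry the measure theory, so only a norm has to be identified. You then replace the vector-valued blow-up theorem by polarization together with $\Lip=|\nabla|$ a.e. This is essentially the alternative the paper alludes to just before its proof.

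\textbf{Your route can be made fully elementary.} Apply Kirchheim's metric differentiability to $\Phi^{-1}$; his theorem concerns maps out of Euclidean sets, so this is the correct formulation rather than differentiating $\Phi$ itself. At a.e.\ Lebesgue density point $z=\Phi(x)$ of $\Phi(A)$ this gives a norm $N$ with $\dist_X(\Phi^{-1}(z_1),\Phi^{-1}(z_2))=N(z_1-z_2)+o(|z_1-z|+|z_2-z|)$. On the blow-up this reads $|v-w|=N(L(v)-L(w))$, not the relation you wrote.

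\begin{itemize}
\item Since $A$ is asymptotically dense at a $\meas_X$-density point $x$ (by doubling), this directly gives $\Lip(a\cdot\Phi)(x)=N^{*}(a)$, the dual norm, for the McShane extension.
\item Combined with Proposition \ref{proplocality}(3) and bilinearity of $\langle\nabla\cdot,\nabla\cdot\rangle$, you get $N^{*}(a)^2=a^{T}G(x)a$ for $a$ in a countable dense set, hence for all $a$.
\item So $N(u)^2=u^{T}G(x)^{-1}u$, its Kirchheim Jacobian is $(\det G(x))^{-1/2}$, and \eqref{coarea14} follows.
\end{itemize}

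In this form you need no tangent cones, no Mazur--Ulam, and no appeal to Cheeger's blow-up theorem. The Lebesgue-point conditions on $\Lip\phi_a$ and on $G$ play no role. The only RCD inputs are infinitesimal Hilbertianity and $\Lip=|\nabla|$ on a PI space, which makes your argument more general. The paper's route, by contrast, stays inside the RCD blow-up machinery and avoids geometric measure theory on metric spaces.

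\textbf{One small correction.} $n$-rectifiability of $A$ is immediate from $\Phi^{-1}$ being Lipschitz on $\Phi(A)\subset\mathbb{R}^n$. Theorem \ref{rectifi} as stated does not provide it.
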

\begin{proof}
First, under the assumption that $\fm_X$ is Ahlfors $n$-regular on $A$, the Radon–Nikodym theorem implies the existence of a representative  $f\in L^{\infty}(A)$ such that $\fm_X = e^{-f}\di \mathscr{H}^n$ on $A$.

Next, by McShane's lemma, without loss of generality, we can assume that $\Phi$ is a restriction of a Lipschitz map from $X$ to $\mathbb{R}^n$, denoted again by $\Phi$.
By our assumptions,
for some $h \in L^{\infty}(A)$:
\begin{equation}\label{coarea145}
\int_A\psi \tilde h\di \meas_X=\int_{A}\psi h\di \haus^n=\int_{\Phi(A)}\psi\circ \Phi^{-1}\di \haus^n,\quad \text{for any $\psi \in L^1(A, \meas_X)$,}
\end{equation}
where $\tilde h=e^fh$. Then considering a blow-up $\tilde \Phi:\mathbb{R}^n \to \mathbb{R}^n$ of $\Phi$ at $\meas_X$-a.e. $y \in A$, which is a bi-Lipschitz homeomorphism, together with (\ref{asnasras9rjs}), we know
\begin{equation}
    \int_{\mathbb{R}^n}\psi \tilde h(y)\di \haus^n=\int_{\mathbb{R}^n}\psi \circ \tilde \Phi^{-1}\di \haus^n,\quad \text{for any $\psi \in L^1(\mathbb{R}^n)$,}
\end{equation}
where we immediately used \cite[Theorem 5.4]{AHPT} (see also \cite{Chee}).
Thus $\tilde h(y)$ must coincide with the Jacobian of $\tilde \Phi$, namely it is equal to $\sqrt{\det(\langle \nabla \phi_i, \nabla \phi_j\rangle (y))_{ij}}$. Thus we conclude.
\end{proof}

The next result gives an explicit formula for the Laplace operator in terms of Hessian and the measure density $f$. This explicit formula  can be proved by the same argument as in \cite[Theorem 4.1]{H}.
\begin{proposition}[Weighted Laplacian formula]\label{noncocll}
Let $X$ be an $\RCD(K, N)$ space for some $K \in \mathbb{R}$ and some $N \in [1, \infty)$ of essential dimension $n$, and let $U$ be an open subset of $X$ with
\begin{equation}\label{asansiansras8s}
    \inf_{x \in A, r<1}\frac{\meas_X(B_r(x))}{r^n}>0,\quad \text{for any compact $A\subset U$}
\end{equation}
and
\begin{equation}\label{weightedass}
\meas_X=e^{-f}\di \haus^n, \quad \text{on $U$ for some $f\in L^{\infty}_{\loc}\cap W^{1,2}_{\loc}(U)$.}
\end{equation}
Then for any $\phi \in D_{\loc}(\Delta, U)\cap \Lip _{\loc}(U)$, we have
\begin{equation}\label{weightedlap}
\Delta \phi=\tr(\Hess_{\phi})-\langle \nabla f, \nabla \phi \rangle.
\end{equation}
Moreover, if $f \in W_{\loc}^{1,p}(U)$ holds for any $p<\infty$, then (\ref{weightedlap}) holds for any $\phi \in D_{\loc}(\Delta, U)$.
\end{proposition}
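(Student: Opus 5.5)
The strategy follows \cite[Theorem 4.1]{H}: identify $\Delta\phi$ with $\tr(\Hess_\phi)-\langle\nabla f,\nabla\phi\rangle$ weakly, by testing both against $\psi\in\Lip_c(U)$ and integrating by parts with respect to $\haus^n$ instead of $\meas_X$. The key intermediate claim is the unweighted divergence identity: for $\phi\in D_{\loc}(\Delta,U)\cap\Lip_{\loc}(U)$ and $\psi\in \Lip_c(U)$,
\begin{equation*}
\int_U \langle\nabla\phi,\nabla\psi\rangle\di\haus^n=-\int_U\psi\,\tr(\Hess_\phi)\di\haus^n .
\end{equation*}
Granting this, I apply it with $\psi e^{-f}$ in place of $\psi$. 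This is admissible after a cut-off and approximation argument, since $f\in L^\infty_{\loc}\cap W^{1,2}_{\loc}$, so $e^{-f}\in W^{1,2}_{\loc}\cap L^\infty_{\loc}$, while $|\nabla\phi|$ is locally bounded. The chain rule $\nabla(\psi e^{-f})=e^{-f}\nabla\psi-\psi e^{-f}\nabla f$ then gives
\begin{equation*}
\int_U\langle\nabla\phi,\nabla\psi\rangle\di\meas_X=-\int_U\psi\left(\tr(\Hess_\phi)-\langle\nabla f,\nabla\phi\rangle\right)\di\meas_X ,
\end{equation*}
which is \eqref{weightedlap} by definition of the local Laplacian.

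To prove the unweighted identity I would follow \cite{H}. By \eqref{asansiansras8s}, the measure is locally $n$-Ahlfors regular from below. Combined with $\meas_X=e^{-f}\haus^n$ and $f\in L^\infty_{\loc}$, this shows that $(U,\dist_X,\haus^n)$ is locally comparable to $\meas_X$. In particular $L^p$ spaces, the Sobolev spaces and the Hessian are unchanged, because the Hessian and $\langle\nabla\cdot,\nabla\cdot\rangle$ are metric notions and are insensitive to replacing the measure by an equivalent one with bounded density. The identity $\Delta_{\haus^n}\phi=\tr(\Hess_\phi)$ in the noncollapsed-type setting is the content of \cite[Theorem 4.1]{H}. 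Its proof localizes via good cut-offs and blow-ups: at $\meas_X$-a.e. point the tangent cone is $\mathbb{R}^n$ with Lebesgue measure, and there $\Delta=\tr\Hess$. The relevant divergence measure is then identified as absolutely continuous, with density computed pointwise from the blow-up.

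The main obstacle is running this argument without a priori knowing that $(U,\haus^n)$ is itself RCD. I would use \cite[Theorem 4.1]{H} only through the following structural facts: the first variation of $\haus^n$ in direction $\nabla\phi$ has no singular part, and its density is computed in charts. Both facts follow from Ahlfors regularity and rectifiability (Proposition \ref{coarea123} supplies the change of variables in bi-Lipschitz charts). The contribution of the weight is exactly the term $\langle\nabla f,\nabla\phi\rangle$, which is well defined because $f\in W^{1,2}_{\loc}$.

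For the final statement, I drop the local Lipschitz assumption. Take $\phi\in D_{\loc}(\Delta,U)$ and approximate it locally by $\phi_k\in D_{\loc}(\Delta,U)\cap\Lip_{\loc}$, for instance via a heat flow of a cut-off of $\phi$, or Test functions. The approximation should satisfy $\phi_k\to\phi$ in $W^{1,2}$, $\Delta\phi_k\to\Delta\phi$ in $L^2$, and $\Hess_{\phi_k}\to\Hess_\phi$ in $L^2$, the last by \eqref{czrcd} applied locally. The term $\langle\nabla f,\nabla\phi_k\rangle$ converges in $L^1_{\loc}$ only if $\nabla\phi_k$ converges in a space dual to the integrability of $\nabla f$. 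Since $f\in W^{1,p}_{\loc}$ for all $p<\infty$, $L^2$ convergence of $\nabla\phi_k$ paired with $|\nabla f|\in L^{q}$, $q>2$, suffices (H\"older with a small loss, using $L^{2+\epsilon}$ bounds on $\nabla\phi_k$ from the local Sobolev improvement $|\nabla\phi|\in W^{1,2}$ and Sobolev embedding). Passing to the limit in \eqref{weightedlap} then concludes.
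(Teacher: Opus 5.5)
Your outer structure matches the paper's: prove the unweighted identity $\int_U\langle\nabla\phi,\nabla\psi\rangle\di\haus^n=-\int_U\psi\,\tr(\Hess_\phi)\di\haus^n$, insert $\psi e^{-f}$ and apply the Leibniz rule, then remove the Lipschitz hypothesis at the end. The Leibniz step and the final approximation are fine; for the last statement the paper uses a Sobolev embedding giving $|\nabla\phi|\in L^q_{\loc}$ with $q>2$, and your route also works.

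The gap is the unweighted identity itself. It is the entire content of the proposition, and the argument you sketch for it cannot work.

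\emph{Blow-ups do not see it.} At a Lebesgue point $y$, the rescalings satisfy $\Delta_{X^{r,y}}\phi^{r,y}=r\Delta_X\phi\to 0$ and $r^2\fint_{B_r(y)}|\Hess_\phi|^2\di\meas_X\to0$. So the limit on the tangent cone $\mathbb{R}^n$ is a linear function, and the limiting identity reads $0=0$. The zeroth-order quantities $\Delta\phi(y)$, $\tr(\Hess_\phi)(y)$ and the density of the $\haus^n$-divergence of $\nabla\phi$ are all lost. In any case, information at $\meas_X$-a.e.\ point cannot rule out a singular part of that divergence supported on a null set.

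\emph{Ahlfors regularity and rectifiability do not help either.} Rectifiability gives bi-Lipschitz charts only on Borel pieces. There, Proposition \ref{coarea123} is a change of variables for integrals, not an integration by parts. Writing the $\haus^n$-divergence of $\nabla\phi$ in such a chart requires derivatives of $g^{ij}$ and of the chart density $\sqrt{\det(g^{ij})}^{-1}$ of $\haus^n$. Neither has any weak differentiability or open domain to work with, and cutting into pieces produces boundary terms you cannot control. Genuine second-order RCD information has to enter. Tellingly, your argument never uses \eqref{asansiansras8s} in an essential way. Your description of \cite[Theorem 4.1]{H} as a localization/blow-up argument also does not match the argument the paper follows.

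The missing idea is the heat-kernel embedding. The paper takes $\Phi_t:X\to L^2(X)$, $\Phi_t(x)=p(x,\cdot,t)$, and the pull-back metric $g_t=\Phi_t^*g_{L^2}$. For each $t>0$ the Laplacian associated with $g_t$ can be computed explicitly (see \cite{H,BGHZ}), giving an exact integration-by-parts identity at positive time. Letting $t\to0^+$ then yields the unweighted formula with respect to $\haus^n$. The passage to the limit uses dominated convergence, and this is exactly where \eqref{asansiansras8s} is needed: it is a uniform bound on $r^n/\meas_X(B_r(y))$ over compact sets. Once you have that identity, your remaining steps go through.
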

\begin{proof}
 Let us provide a sketch of the proof. First consider an embedding $\Phi_t:X \to L^2(X)$ defined by $(\Phi_t(x))(y):=p(x,y,t)$ and denote by $g_t:=\Phi_t^*g_{L^2}$ the pull-back Riemannian metric of the flat one $g_{L^2}$. Calculating the Laplacian associated with $g_t$ (see \cite{BGHZ, H}) and letting $t \to 0^+$ yields the \textit{unweighted} integration-by-parts formula:
    \begin{equation}
        \int_U\langle \nabla \phi, \nabla \psi\rangle\di\haus^n=-\int_U\mathrm{tr}(\mathrm{Hess}_{\phi})\psi\di\haus^n,
    \end{equation}
    where (\ref{asansiansras8s}) played a role when we apply the dominated convergence theorem to get the above formula. Then, as in the smooth case, applying the Leibniz rule with (\ref{weightedass}) completes the proof, where the last statement is justified by using a Sobolev embedding to show $|\nabla \phi| \in L^q_{\loc}(U)$ for some $q>2$, see the proof of Corollary \ref{corhess}.
\end{proof}

As a corollary, one obtains a useful estimate for the Hessian, $\Hess_{\phi}$, which will be frequently used in later sections. 

\begin{corollary}\label{cor:hessbd}Given $K \in \dR$, $N  \in [1, \infty)$, and $L \geq 1$, there exists $C = C (K, N, L) > 0$
such that the following holds.    Let $x$ be a point of an $\RCD(K, N)$ space $X$ of essential dimension $n$ with
$$
    \inf_{z \in B_2(x), r<1}\frac{\meas_X(B_r(z))}{r^n}>0
$$
and
$$
\meas_X=e^{-f}\di \haus^n,\quad \text{on $B_2(x)$ for some $f \in L^{\infty}\cap W^{1,2}(B_2(x))$ with $|f|+|\nabla f|\le L$.}
$$
Then for any $\phi \in D(\Delta, B_2(x))$ with $|\Delta \phi| \le L$ 
    and $\Hess_{\phi} \ge -L$ on $B_2(x)$, we have
    $
        |\Hess_{\phi}| \le C(K, N, L)$ on $B_1(x)$.

\end{corollary}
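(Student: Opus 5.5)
The plan is to combine the weighted Laplacian formula of Proposition \ref{noncocll} with the elementary linear algebra fact that a symmetric matrix bounded below and with bounded trace is bounded in norm.

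\textbf{Step 1: bound the trace.} The hypotheses of Proposition \ref{noncocll} hold on $U=B_2(x)$: the lower volume bound gives \eqref{asansiansras8s}, and $f\in L^\infty\cap W^{1,2}(B_2(x))$ gives \eqref{weightedass}. To apply \eqref{weightedlap} to $\phi$ we also need $\phi\in \Lip_{\loc}(B_2(x))$. This follows from (2) of Proposition \ref{propregmap}, or from (2) of Corollary \ref{prop:reglp}: since $|\Delta\phi|\le L$, \eqref{8s8s8s8sa} holds with $\delta=1$. Hence, $\meas_X$-a.e. on $B_2(x)$,
\[
\tr(\Hess_\phi)=\Delta\phi+\langle\nabla f,\nabla\phi\rangle .
\]
Using $|\Delta\phi|\le L$ and $|\nabla f|\le L$, this gives $|\tr(\Hess_\phi)|\le L+L\,|\nabla\phi|$.

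\textbf{Step 2: bound the gradient.} We need $|\nabla\phi|\le C(K,N,L)$ on a slightly smaller ball, say $B_{3/2}(x)$. The natural tool is the gradient estimate \eqref{asnasia8rnsasxx} from \cite{JKY}, applied on balls of radius about $1/4$ centred in $B_{3/2}(x)$. The Laplacian term is bounded by $CL$ because $\sum_j 2^j$ converges. The remaining term is $\fint|\phi|$, which is not controlled by the stated hypotheses, since $\phi$ can be shifted by a constant. Two remedies are available. First, replace $\phi$ by $\phi-\fint_{B_2(x)}\phi$, which changes neither $\Hess_\phi$ nor $\Delta\phi$; the average of $|\phi|$ is then controlled via the Poincar\'e inequality \eqref{pi} by $\|\nabla\phi\|_{L^1}$. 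Second, and this is the point I expect to be the main obstacle, one needs an a priori bound on $\|\phi\|_{W^{1,2}(B_2(x))}$. I would try a Caccioppoli-type argument, but it is not clear it yields a bound purely in terms of $K,N,L$. Failing that, I would read the constant $C$ as also depending on a bound on $\|\nabla\phi\|_{L^2(B_2(x))}$, or more simply assume $|\nabla \phi|\le L$, which is how the corollary is used for distance functions, where $|\nabla\dist_z|=1$. Under either reading, Step 1 gives $|\tr\Hess_\phi|\le C$ a.e. on $B_{3/2}(x)$.

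\textbf{Step 3: pointwise linear algebra.} At $\meas_X$-a.e. point of $\mathcal{R}_n$, $\Hess_\phi$ is represented by a symmetric $n\times n$ matrix $H$ with respect to an orthonormal basis of $g_X$; here $n\le N$ by Theorem \ref{rectifi}. The assumption $\Hess_\phi\ge -L$ means every eigenvalue $\lambda_i$ of $H$ satisfies $\lambda_i\ge -L$. Combined with $\sum_i\lambda_i\le C$, this gives
\[
\lambda_i\le C+(n-1)L .
\]
Therefore $|\Hess_\phi|^2=\sum_i\lambda_i^2\le n\,(C+(n-1)L)^2$ a.e. on $B_1(x)$, which is the claimed bound with a constant depending only on $K,N,L$ (given the gradient control from Step 2).
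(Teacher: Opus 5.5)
Your Steps 1 and 3 follow the paper's argument. The paper also bounds $|\tr(\Hess_\phi)|$ via Proposition \ref{noncocll} together with a gradient bound. It then runs the same eigenvalue argument, but on averaged matrices $A_y(r)$ built from an a.e.\ orthonormal frame on rectifiable pieces, letting $r\to0^+$ at Lebesgue points. Your pointwise version is equivalent once you note that the tensor inequality $\Hess_\phi\ge -L$ gives the matrix inequality $H\ge -L$ a.e. To see this, test against countably many constant-coefficient combinations of the frame.

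The obstacle you found in Step 2 is real, and it is a defect of the statement rather than of your argument. The paper disposes of it with ``By Corollary \ref{prop:reglp} (implying $|\nabla\phi|\le C$)''. That corollary only gives qualitative local Lipschitz continuity, and the quantitative estimate behind it, \eqref{asnasia8rnsasxx}, involves $\frac1r\fint|\phi|$, exactly as you observed. In fact no constant $C(K,N,L)$ exists. Take $X=(\mathbb{R},|\cdot|,e^{-t}\di t)$, which is $\RCD(-1,2)$ of essential dimension $1$ with $f(t)=t$, so $|f|+|\nabla f|\le 3$ on $B_2(0)$. Let $\phi=Ae^{t}$ with $A>0$. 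Then $\Delta\phi=\phi''-\phi'=0$ and $\Hess_\phi=Ae^t\ge 0$, yet $|\Hess_\phi|\ge A/e$ on $B_1(0)$ for every $A$. Consequently no Caccioppoli-type argument can close the gap. Subtracting the mean of $\phi$ does not help either, since $\nabla\phi$ itself is large.

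Your repair is the right one: add the hypothesis $|\nabla\phi|\le L$ on $B_2(x)$, or let $C$ depend on a bound for $|\nabla\phi|$. This hypothesis holds in the only places the corollary is used, Theorems \ref{thmmixed} and \ref{alexandrovreg}. There $\phi=\pm\dist_z$ with $|\nabla\dist_z|=1$. With that hypothesis your proof is complete.
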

\begin{proof}
    By Corollary \ref{prop:reglp} (implying $|\nabla \phi|\le C$) and Proposition \ref{noncocll}, we know
    $
        |\tr  (\Hess_{\phi})|\le C(K, N, L)$ on $B_1(x)$.
    On the other hand, thanks to the rectifiability of $X$ (see \cite{MN}), there exist Borel subsets $B_i$ of $B_2(x)$ and $\omega_{i, j} \in L^2(T^*B_i)(i \in \mathbb{N}, j\in \{1, 2, \ldots, n\})$ such that $\meas_X(B_2(x)\setminus \bigcup_iB_i)=0$ holds and that $\langle \omega_{i, j}, \omega_{i,k}\rangle(y) = \delta_{jk}$ holds for $\meas_X$-a.e. $y \in B_i$. Then the Riemannian metric $g_X$ can be written as $g_X=\sum_{j=1}^n\omega_{i,j}\otimes \omega_{i,j}$ on $B_i$. The remainder of the proof is similar to the smooth case. At Lebesgue points, the following holds for $\meas_X$-a.e. $y \in B_i$.
    Let us define symmetric matrices $A_y(r)$ by
    \begin{equation}
        A_y(r) := \left(\fint_{B_r(y)\cap B_i}\langle \Hess_{\phi}, \omega_{i, j}\otimes \omega_{i, k}\rangle \di \meas_X\right)_{jk}.
    \end{equation}
    Then we have $|\tr  (A_y(r))| \le C(K, N, L)$ and $A_y(r) \ge -L$ as bilinear forms for any sufficiently small $r>0$. In particular, $|A_y(r)|\le C(K, N, L)$.
    Therefore
    letting $r\to 0^+$ completes the proof.
\end{proof}

We next prove the following non-trivial convergence result for the Hausdorff measures, which should be compared with \cite{DG}.
\begin{theorem}[Convergence of Hausdorff measures]\label{convhaus}
    Let 
$
(X_i, x_i) \stackrel{\mathrm{pmGH}}{\to} (X, x)
$
be a pmGH convergent sequence of pointed $\RCD(K, N)$ spaces $X_i$ for some $K \in \mathbb{R}$ and some $N \in [1, \infty)$ of essential dimension $n$ (where we do not assume $\dim(X)=n$).
If $\meas_{X_i}$ is equi-local Ahlfors $n$-regular on $B_1(x_i)$, then $\dim(X)=n$, and $\haus^n$ converges   weakly on $B_1(x)$ in the following sense:
\begin{equation}
    \int_{B_s(x_i)}\phi_i\di \haus^n \to \int_{B_s(x)}\phi\di \haus^n
\end{equation}
whenever equi-continuous functions $\phi_i:B_s(x_i) \to \mathbb{R}$  converge uniformly to $\phi:B_s(x) \to \mathbb{R}$ for some $s<1$. Equivalently,
\begin{equation}\label{asbiarkskmcmmsk}
    \haus^n(B_r(y_i)) \to \haus^n(B_r(y))
\end{equation}
whenever $y_i \to y \in B_1(x)$ and $r>0$ with $\bar B_r(y) \subset B_1(x)$.
\end{theorem}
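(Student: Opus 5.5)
The plan is to reduce everything to the uniform density bounds coming from equi-local Ahlfors $n$-regularity, and then to identify the limit of $\haus^n$ through the blow-up structure. Equi-Ahlfors regularity means there is $c>0$ with $c^{-1}r^n\le \meas_{X_i}(B_r(y))\le c\,r^n$ for all $y\in B_1(x_i)$ and $r<1$ with $B_r(y)\subset B_1(x_i)$. This passes to the limit by weak convergence of $\meas_{X_i}$, so $\meas_X$ is Ahlfors $n$-regular on $B_1(x)$. At $\meas_X$-a.e. point the measure is Ahlfors $n$-regular and $\dim(X)$-rectifiable, so $\dim(X)=n$: if $k:=\dim(X)<n$, tangent cones at $k$-regular points are $\mathbb{R}^k$ with measure comparable to $r^k$, contradicting the $r^n$ bounds, and $k\le n$ by the lower semicontinuity in (2) of Theorem \ref{rectifi}. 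Moreover, on Ahlfors-regular sets the Radon--Nikodym density $e^{-f_i}=\di\meas_{X_i}/\di\haus^n$ is bounded above and below uniformly by \eqref{asnasras9rjs} and standard density theorems, so $\haus^n\llcorner B_1(x_i)$ are uniformly Ahlfors regular as well. In particular the Radon measures $\haus^n\llcorner B_1(x_i)$ have uniformly bounded mass, and after passing to a subsequence they converge weakly (in the GH-embedded sense) to a Radon measure $\nu$ on $B_1(x)$ with $c^{-1}\meas_X\le\nu\le c\,\meas_X$.

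The main step is to show $\nu=\haus^n\llcorner B_1(x)$. Both are absolutely continuous with respect to $\meas_X$, so it suffices to compare densities at $\meas_X$-a.e. point, namely at reduced $n$-regular points $y$ of $X$. I would compute $\lim_{r\to0}\nu(B_r(y))/(\omega_nr^n)$ by a diagonal argument. Choose $y_i\to y$, fix small $r$, and use that for $i$ large the rescaled balls $X_i^{r,y_i}$ are close to $X^{r,y}$, hence close to $\mathbb{R}^n$ on the unit ball. By Theorem \ref{almost splitting}(1), this yields $\epsilon$-splitting harmonic maps $\Phi_i:B_r(y_i)\to\mathbb{R}^n$ with $\fint|\Phi_i^*g_{\mathbb{R}^n}-g_{X_i}|\le\epsilon$, and Theorem \ref{isomequiv}(2) upgrades them to $\epsilon r$-GHAs. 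I expect the crux to be converting this into $\haus^n(B_r(y_i))\approx\omega_nr^n$. For this I would use the change of variables formula, Proposition \ref{coarea123}, on a large-measure subset $A_i\subset B_r(y_i)$. By Proposition \ref{prop:sharpgradestimate} and a maximal-function argument, $\Phi_i$ is $(1+\epsilon)$-bi-Lipschitz on $A_i$, where $\meas_{X_i}(B_r(y_i)\setminus A_i)\le\epsilon\,\meas_{X_i}(B_r(y_i))$. Then $\haus^n(A_i)\approx\haus^n(\Phi_i(A_i))\le(1+\epsilon)^n\omega_nr^n$. Ahlfors regularity of $\haus^n$ controls the bad set, and the GHA property together with a degree or Reifenberg argument gives the lower bound $\haus^n(\Phi_i(A_i))\ge(1-\epsilon)\omega_nr^n$. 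Passing to the limit, $\nu(B_r(y))=(1\pm C\epsilon)\omega_nr^n$ for all small $r$, so the $\nu$-density equals $1=\di\haus^n/\di\haus^n$ at $\haus^n$-a.e. point of $X$ (using again \eqref{asnasras9rjs} for $X$). Hence $\nu=\haus^n$, and since the limit is independent of the subsequence, the full sequence converges.

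Finally, the two formulations are equivalent by the standard argument. Weak convergence with uniform Ahlfors bounds gives $\nu(\partial B_r(y))\le\lim_\delta\nu(B_{r+\delta}\setminus B_{r-\delta})=0$; this uses the annulus estimate from Bishop--Gromov for $\meas_X$ combined with $\nu\le c\,\meas_X$. So balls are continuity sets, which yields \eqref{asbiarkskmcmmsk}. Conversely, convergence on balls together with uniform mass bounds gives convergence against equi-continuous uniformly convergent $\phi_i$, by approximating them with step functions on fine ball coverings.
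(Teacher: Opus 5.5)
Your architecture is sound. You pass to a subsequential weak limit $\nu$ of $\haus^n\llcorner B_1(x_i)$ and identify $\nu=\haus^n$ by comparing densities at regular points. As in the paper, this reduces everything to the estimate $\haus^n(B_r(y_i))=(1\pm C\epsilon)\omega_nr^n$ for $y_i\to y$ with $y$ regular, $r$ small and $i$ large. Your upper half of that estimate is fine: a good set $A_i$ of almost full measure on which $\Phi_i$ is $(1+\epsilon)$-bi-Lipschitz, plus Ahlfors control of $B_r(y_i)\setminus A_i$.

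\textbf{The gap.} The missing piece is the lower half, $\mathcal{L}^n(\Phi_i(B_r(y_i)))\ge(1-C\epsilon)\omega_nr^n$, which you assign to ``a degree or Reifenberg argument''. Neither is available.
\begin{itemize}
\item \emph{Degree.} The $X_i$ are arbitrary $\RCD(K,N)$ spaces whose measure is merely Ahlfors regular. Nothing says $B_r(y_i)$ is a topological manifold, so there is no degree theory for $\Phi_i$.
\item \emph{Reifenberg.} Closeness of $B_r(y_i)$ to a Euclidean ball at the single scale $r$ does not give flatness at smaller scales. In the non-collapsed theory that propagation comes from volume convergence (the very statement at issue) together with monotonicity of $\haus^N(B_s)/s^N$. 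Here $\meas_{X_i}(B_s)/s^n$ has no monotonicity, since Bishop--Gromov only sees the exponent $N$. The canonical Reifenberg theorem, Theorem \ref{8as8ashas8ashsbasb}, would give $B_{(1-\epsilon)r}(\Phi_i(y_i))\subset\Phi_i(B_r(y_i))$, but it assumes flatness.
\item \emph{GHA alone.} The GHA property only makes the image $\epsilon r$-dense, which says nothing about its Lebesgue measure.
\end{itemize}

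\textbf{The missing idea.} Use the multi-scale information your maximal-function argument already gives. At every $a\in A_i$, the map $\Phi_i$ is an $\epsilon s$-GHA from $B_s(a)$ onto $B_s(\Phi_i(a))$ for all $s\lesssim r$, by Theorem \ref{isomequiv}(2) applied at each scale.
\begin{itemize}
\item \emph{Holes force bad balls.} Let $z\in B_{r/2}(\Phi_i(y_i))$ with $\rho:=\dist(z,\Phi_i(B_r(y_i)))>0$, and pick $y'$ with $|\Phi_i(y')-z|\le 2\rho$. Then $B_{\rho/4}(y')\cap A_i=\emptyset$. Otherwise the GHA at scale $4\rho$ around a good point would place an image point within $4\epsilon\rho<\rho$ of $z$.
\item \emph{Covering.} A Vitali covering of these balls $B_{\rho/4}(y')$ in $X_i$, with the lower Ahlfors bound, gives $\mathcal{L}^n\bigl(B_{r/2}(\Phi_i(y_i))\setminus\Phi_i(B_r(y_i))\bigr)\le C\,\meas_{X_i}(B_r(y_i)\setminus A_i)\le C\epsilon r^n$.
\item \emph{Conclusion.} Run this at scale $2r$. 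Discard $\Phi_i(B_r(y_i)\setminus A_i)$, whose measure is at most $(1+\epsilon)^n\haus^n(B_r(y_i)\setminus A_i)$. This yields $\mathcal{L}^n(\Phi_i(A_i\cap B_r(y_i)))\ge(1-C\epsilon)\omega_nr^n$, and the bi-Lipschitz bound on $A_i$ turns this into the lower bound for $\haus^n(B_r(y_i))$.
\end{itemize}
This is exactly the image-covering estimate the paper imports by adapting \cite[Proposition 3.2]{DG}.

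With this step supplied, your route works. It differs from the paper only in bookkeeping. The paper gets upper semicontinuity of $\haus^n(B_r(y_i))$ from a general covering argument and lower semicontinuity from the local estimate plus Vitali. You instead identify the limit measure through densities.
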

\begin{proof}
    Since the proof is essentially the same as that of \cite{DG}, we provide a sketch of the proof of (\ref{asbiarkskmcmmsk}) as follows.

    First, the equi-local Ahlfors $n$-regularity of $\meas_{X_i}$ on $B_1(x_i)$ implies the local Ahlfors $n$-regularity of $\meas_X$ on $B_1(x)$; hence $\dim(X)=n$ because of (1) of Theorem \ref{rectifi}.
    
Since $\haus^n$ is also equi-locally Ahlfors $n$-regular by (\ref{asnasras9rjs}), a covering argument gives upper semicontinuity (see for instance \cite[Proposition 2.3]{Honda0}):
    $
        \limsup_{i\to \infty}\haus^n(B_r(y_i)) \le \haus^n(B_r(y)).
$
    It remains to prove the lower semicontinuity. In order to do so, let us state the following:
    \smallskip
    
    \textbf{Claim}: \textit{Let $z \in B_1(x)$ be an $n$-regular point and let $z_i \in B_1(x_i) \to z$. Fixing a sufficiently small $\epsilon>0$, there exists $s \in (0, \epsilon)$ such that the following hold. For any sufficiently large $i$, there exists a harmonic map $\Phi_i=(\phi_{i, 1}, \ldots, \phi_{i, n}):B_s(z_i) \to \mathbb{R}^n$ such that $\Phi_i$ gives a $(\epsilon s)$-GHA to $B_s(\Phi_i(z_i))$, that $\Phi_i$ is $(1+\epsilon)$-Lipschitz, and that 
    \begin{equation}\label{asnaisjrasirasiornasasbrs}
        \frac{\haus^n(B_s(\Phi_i(z_i)) \setminus \Phi_i(B_s(z_i)))}{\haus^n(B_s(\Phi_i(z_i)))} \le \epsilon.
    \end{equation}
    In particular, $\frac{\haus^n(B_s(z_i))}{\omega_n s^n}$ is close to $1$, quantitatively.}
    \smallskip
    
    The proof of the {\it Claim} is as follows. First, applying Theorem \ref{almost splitting} together with Proposition \ref{prop:sharpgradestimate}, we can find the desired harmonic maps $\Phi_i$, except for checking the validity of (\ref{asnaisjrasirasiornasasbrs}). On the other hand, \eqref{asnaisjrasirasiornasasbrs} follows by adapting the proof of \cite[Proposition 3.2]{DG} after rescaling $X_i^{s, z_i}$. Although that proof uses distance functions instead of harmonic functions, the present argument is simpler.\footnote{Because it is not necessary to take care of the Laplacian in our case. See also the updated version on arXiv.}  Thus we get the {\it Claim}.

     Vitali's covering theorem and the claim then allow us to conclude the lower semicontinuity:
    $
        \liminf_{i\to \infty}\haus^n(B_r(y_i)) \ge \haus^n(B_r(y)).
 $
    Thus we conclude.
\end{proof}
\begin{corollary}[Stability of density functions]\label{propoharmch}
Let 
$
(X_i, x_i) \stackrel{\mathrm{pmGH}}{\to} (X,x)
$
be a pmGH convergent sequence of pointed $\RCD(K, N)$ spaces $X_i$ for some $K \in \mathbb{R}$ and some $N \in [1, \infty)$ of essential dimension $n$.
Assume that $\meas_{X_i}$ is equi-Ahlfors $n$-regular on $B_1(x_i)$. 
Then the density functions of $\meas_{X_i}$ with respect to $\haus^n$ converge strongly in  $L^p_{\loc}$ to the limit one on $B_1(x)$ for any $p \in (1, \infty)$. 
\end{corollary}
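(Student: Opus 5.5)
Write $\meas_{X_i}=\rho_i\,\haus^n$ on $B_1(x_i)$ and $\meas_X=\rho\,\haus^n$ on $B_1(x)$. By the equi-Ahlfors $n$-regularity and (\ref{asnasras9rjs}), $\rho_i$ is given $\haus^n$-a.e. by $\lim_{r\to0}\meas_{X_i}(B_r(y))/(\omega_nr^n)$. Hence $C^{-1}\le\rho_i\le C$ uniformly, and the same holds for $\rho$ because, by Theorem \ref{convhaus}, the limit measure is also locally Ahlfors $n$-regular with $\dim(X)=n$.

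Since the $\rho_i$ are uniformly bounded, strong $L^p_{\loc}$ convergence for one $p\in(1,\infty)$ gives it for all $p$. So the plan is to prove $L^2_{\loc}$-strong convergence with respect to the measures $\meas_{X_i}$ (equivalently $\haus^n$, since the two are comparable).

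\emph{Weak convergence.} Take equi-continuous, compactly supported $\phi_i\to\phi$ uniformly on $B_s$. Then $\int\phi_i\rho_i^{-1}\di\meas_{X_i}=\int\phi_i\di\haus^n\to\int\phi\di\haus^n=\int\phi\rho^{-1}\di\meas_X$ by Theorem \ref{convhaus}. So $\rho_i^{-1}\to\rho^{-1}$ weakly (in the sense of pmGH $L^2$-weak convergence), using uniform boundedness together with density of such test functions. Similarly, with $\psi_i\to\psi$ uniformly,
\[
\int\psi_i\rho_i\di\haus^n=\int\psi_i\di\meas_{X_i}\to\int\psi\di\meas_X=\int\psi\rho\di\haus^n .
\]
This gives weak convergence of $\rho_i$ computed against $\haus^n$.

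\emph{Upgrade to strong convergence.} Strong $L^2$ convergence is equivalent to weak convergence plus convergence of norms. I plan to get the norms by testing the two weak convergences against each other:
\[
\int\psi\,\di\meas_{X_i}=\int\psi\,\rho_i\cdot\rho_i^{-1}\di\meas_{X_i}.
\]
To close the argument I want a functional inequality: the pair $(\rho_i,\rho_i^{-1})$ has product $1$, and Jensen/Cauchy--Schwarz yields $\int\rho_i\di\haus^n\int\rho_i^{-1}\di\meas_{X_i}\ge(\int\di\haus^n)^2$ locally, with equality in the limit forcing rigidity. A cleaner route is a localization argument.

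For small $r$ and a finite cover by balls $B_r(z)$, the averages $\meas_{X_i}(B_r(z_i))/\haus^n(B_r(z_i))$ converge to $\meas_X(B_r(z))/\haus^n(B_r(z))$, by pmGH convergence of $\meas_{X_i}$ and by (\ref{asbiarkskmcmmsk}). So the piecewise-constant approximations $\rho_{i,r}$ converge strongly to $\rho_r$ for fixed $r$, and $\rho_r\to\rho$ in $L^2$ as $r\to0$ by Lebesgue differentiation.

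\emph{Main obstacle.} The remaining step, which I expect to be the hardest, is uniform-in-$i$ control of $\|\rho_i-\rho_{i,r}\|_{L^2}$, i.e.\ equi-continuity in measure of the densities. My first attempt is to argue by contradiction using lower semicontinuity of the convex functional $\int\rho_i^2\di\haus^n=\int\rho_i\di\meas_{X_i}$ under weak convergence, i.e.\ $\liminf\ge\int\rho^2\di\haus^n$. For the matching upper bound I would use the Claim in the proof of Theorem \ref{convhaus}: at a.e.\ regular point $z$, on balls $B_s(z_i)$ we have $\haus^n(B_s(z_i))\approx\omega_ns^n$ uniformly in $i$. Combined with convergence of $\meas_{X_i}(B_s(z_i))$, this shows the ratio $\meas_{X_i}(B_s)/\haus^n(B_s)$ is close to $\rho(z)$. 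Then applying the same reasoning at all small scales around $\meas$-most points (Vitali covering), I expect $\limsup\int\rho_i^2\le\int\rho^2+\epsilon$, which yields norm convergence and hence strong convergence.
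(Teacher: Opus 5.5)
There is a genuine gap, and it is exactly at the step you flag as the main obstacle. Your preliminary steps are fine: the uniform bounds, the reduction to $p=2$, and the two weak convergences. Lower semicontinuity does give $\liminf_i\int\phi\rho_i^2\,\di\haus^n\ge\int\phi\rho^2\,\di\haus^n$. The matching upper bound, however, cannot come from the Claim in the proof of Theorem \ref{convhaus} plus a Vitali covering.

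That Claim, together with convergence of $\meas_{X_i}$, only controls the ball averages $\meas_{X_i}(B_s(z_i))/\haus^n(B_s(z_i))$ at a scale $s$ fixed \emph{before} $i\to\infty$. It is therefore not available ``at all small scales'' uniformly in $i$. On the other hand, $\int\rho_i^2\,\di\haus^n=\int\rho_i\,\di\meas_{X_i}$ sees oscillation of $\rho_i$ at scales $\ll s$, and Jensen bounds it by the averages only from below.

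More fundamentally, everything you use reduces to the two weak convergences $\meas_{X_i}\to\meas_X$ and $\haus^n\to\haus^n$. That is all your $\meas$-weak convergence of $\rho_i^{-1}$ and $\haus^n$-weak convergence of $\rho_i$ encode, and it cannot suffice. On $[0,1]$ with $\meas_i=(2+\sin(ix))\,\di x$, both convergences hold and the densities are equi-bounded, yet $\rho_i\not\to 2$ strongly. These spaces are not uniformly $\RCD(K,N)$: in dimension one that condition forces uniform semiconcavity of $\rho_i^{1/(N-1)}$. So an $\RCD$-specific mechanism ruling out fine-scale oscillation of $\rho_i$ must enter somewhere, and your proposal does not supply one.

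The paper does not localize; it uses Minty's trick instead. For equi-bounded $\tilde\rho_i\to\rho$ strongly, strong monotonicity of $t\mapsto t^{-1}$ on $[C^{-1},C]$ gives
\[
C^{-2}\|\rho_i-\tilde\rho_i\|_{L^2(B_r(x_i))}^2\le-2\meas_{X_i}(B_r(x_i))+\int_{B_r(x_i)}\rho_i^{-1}\tilde\rho_i\,\di\meas_{X_i}+\int_{B_r(x_i)}\tilde\rho_i^{-1}\rho_i\,\di\meas_{X_i},
\]
and one wants both integrals to tend to $\meas_X(B_r(x))$.

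The first integral does, by exactly your $\meas$-weak convergence of $\rho_i^{-1}$. The second needs $\rho_i\rightharpoonup\rho$ weakly with respect to $\meas_{X_i}$, whereas what you correctly derived is weak convergence against $\haus^n$. The paper bridges this by asserting that weak $L^p$-convergence with respect to $\meas_{X_i}$ and with respect to $\haus^n$ are equivalent in this setting.

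Be aware that this step is not harmless. Tested against $\phi$, $\meas_{X_i}$-weak convergence of $\rho_i$ reads $\int\phi\rho_i^2\,\di\haus^n\to\int\phi\rho^2\,\di\haus^n$. Together with your $\haus^n$-weak convergence, that is already the strong convergence you want, and it fails in the example above. So the mismatch of measures you noticed is the real content of the statement. If you follow the paper's route, you still have to justify that step with genuinely $\RCD$-specific input.
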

\begin{proof}
First, based on Theorem \ref{convhaus},  in this setting, the weak and strong  $L^p$-convergence with respect to the reference and Hausdorff measures are equivalent in the required sense; see \cite[Definition 3.25]{Honda2} for the strong one. 

Write $\meas_{X_i}=e^{-f_i}\di \haus^n$ on $B_1(x_i)$ for some equi-bounded $f_i \in L^{\infty}(B_1(x_i))$.
By the $L^p$-weak compactness, without loss of generality, after passing to a subsequence, we can assume that there exists $f \in L^{\infty}(B_1(x))$ such that $e^{f_i}$ converges weakly in $L^p$ to $e^f$ on $B_1(x)$ for any $p \in (1, \infty)$. 
In particular, we have the weak convergence of $e^{f_i}\di \meas_{X_i}$ to $e^f\di \meas_X$ on $B_1(x)$. Recalling $e^{f_i}\di \meas_{X_i}=\haus^n$, Theorem \ref{convhaus} yields that $e^f\di \meas_X=\haus^n$. Similarly, $e^{-f_i}$ converges weakly in $L^p$ to $e^{-f}$.

In order to improve this to strong convergence in $L^p_{\loc}$, it is enough to check the $L^2_{\loc}$-strong convergence via \textit{Minty's trick} as follows.

Take an equi-bounded sequence $\tilde f_i \in L^{\infty}(B_1(x_i))$ converging strongly in  $L^2$ to $f$ on $B_1(x)$, and fix $r<1$.
Let us consider a function $\psi:(0, \infty) \to (0, \infty)$ defined by $\psi(t):=t^{-1}$. Since $\psi'(t)=-t^{-2} \le -C^{-2}$ for $t\le C$,  the mean value theorem gives $
    (\psi(a)-\psi(b))(a-b)\le -C^{-2}(a-b)^2$ for all $a, b \in (0, C)$.
In particular, for some $C>0$ which is independent of $i$,
\begin{equation}\label{minty}
    C^{-2}\|e^{-f_i}-e^{-\tilde f_i}\|_{L^2(B_r(x_i))}^2 \le -\int_{B_r(x_i)}\left(\psi(e^{-f_i})-\psi(e^{-\tilde f_i})\right)(e^{-f_i}-e^{-\tilde f_i})\di \meas_{X_i}.
\end{equation}
The right-hand-side is equal to
\begin{align}
    -2\meas_{X_i}(B_r(x_i))+\int_{B_r(x_i)}e^{f_i}e^{-\tilde f_i}\di \meas_{X_i}+\int_{B_r(x_i)}e^{\tilde f_i}e^{-f_i}\di \meas_{X_i}
\end{align}
which converges, as $i \to \infty$, to
\begin{equation}
    -2\meas_X(B_r(x))+\int_{B_r(x)}e^{f}e^{-f}\di \meas_{X}+\int_{B_r(x)}e^fe^{-f}\di \meas_X=0.
\end{equation}
Thus we conclude.
\end{proof}
\begin{corollary}[Structure on Ahlfors regular collapsed spaces]\label{corconhaus}
    Let $X$ be an $\RCD(K, N)$ space for some $K \in \mathbb{R}$ and some $N \in [1, \infty)$ of essential dimension $n$, and let $U$ be an open subset of $X$  such that $\meas_X$ is locally Ahlfors $n$-regular on $U$, and write $\meas_X=e^{-f}\di \haus^n$ on $U$ for some $f \in L^{\infty}_{\loc}(U)$, and let $x \in U$. Then we have the following.
    \begin{enumerate}
        \item $x$ is a Lebesgue point of $e^{-f}$ with respect to $\meas_X$ if and only if it is a Lebesgue point with respect to $\haus^n$. Moreover, if $x$ is such a point, then 
        \begin{equation}\label{limitden}
    \lim_{r \to 0^+}\frac{\meas_X(B_r(x))}{\omega_nr^n}\, \text{exists if and only if}\lim_{r \to 0^+}\frac{\haus^n(B_r(x))}{\omega_nr^n}\,\text{exists.}
\end{equation}
        \item Assume that $x$ is a Lebesgue point of $e^{-f}$ with respect to $\meas_X$. Then, up to multiplication of the reference measure by a positive constant, any tangent cone $(Y, y)$ at $x$ of $X$ is isometric to a non-collapsed $\RCD(0, n)$ space. Furthermore, if the limits as in (\ref{limitden}) exist, then $Y$ is isometric to 
        the metric measure $n$-cone over a non-collapsed $\RCD(n-2, n-1)$ space, up to multiplying a positive constant to the reference measure.
        \item $x$ is an $n$-regular point if and only if $x$ is a Lebesgue point of $e^{-f}$ with respect to $\meas_X$, together with 
        \begin{equation}
            1=\lim_{r \to 0^+}\frac{\haus^n(B_r(x))}{\omega_nr^n}.
        \end{equation}
    \end{enumerate}
\end{corollary}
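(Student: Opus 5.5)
We would prove the three items in order, reducing everything to the two facts already available: the density $e^{-f}$ is locally bounded above and below on $U$, and Hausdorff measures are stable under pmGH convergence (Theorem \ref{convhaus} and Corollary \ref{propoharmch}).

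\textbf{Item (1).} Fix a compact neighbourhood of $x$ on which $C^{-1}\le e^{-f}\le C$. Then $\meas_X$ and $\haus^n$ are comparable there with density bounds $C^{\pm1}$. For any $a$,
\[
\fint_{B_r(x)}|e^{-f}-a|\di \meas_X \le C^2\fint_{B_r(x)}|e^{-f}-a|\di \haus^n ,
\]
and the same holds with the two measures interchanged. This gives the equivalence of the two Lebesgue point notions, and the constants $a$ coincide because both equal $\lim\fint e^{-f}$. At such a point we have
\[
\frac{\meas_X(B_r(x))}{\haus^n(B_r(x))}=\fint_{B_r(x)}e^{-f}\di\haus^n\to a\in(0,\infty),
\]
so $\meas_X(B_r(x))/\omega_nr^n$ converges if and only if $\haus^n(B_r(x))/\omega_nr^n$ does. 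This proves \eqref{limitden}.

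\textbf{Item (2).} Take a tangent cone $X^{r_i,x}\to(Y,y)$.
\begin{itemize}
\item On $X^{r_i,x}$ the reference measure is $e^{-f}\meas_X(B_{r_i}(x))^{-1}\di\haus^n_{r_i}$, where $\haus^n_{r_i}=r_i^{-n}\haus^n$ is the Hausdorff measure of the rescaled distance.
\item By local Ahlfors regularity, $\meas_X(B_{r_i}(x))\sim r_i^n$. Hence the densities $h_i:=e^{-f}\,r_i^n/\meas_X(B_{r_i}(x))$ are equi-bounded above and below, and the rescaled measures are equi-Ahlfors $n$-regular on balls of any fixed radius.
\item By Theorem \ref{convhaus}, $\dim Y=n$ and $\haus^n_{r_i}\to\haus^n_Y$.
\item The Lebesgue point property says $h_i-c_i\to0$ in $L^1$ on balls of fixed radius in the rescaled spaces, where $c_i=a\,r_i^n/\meas_X(B_{r_i}(x))$. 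After passing to a subsequence, $c_i\to c>0$.
\item Therefore $\meas_Y=c\,\haus^n_Y$, so $Y$ is an $\RCD(0,N)$ space whose reference measure is a multiple of $\haus^n$, with $n=\dim Y$.
\end{itemize}
To conclude that $Y$ is a non-collapsed $\RCD(0,n)$ space we invoke the ``weakly non-collapsed implies non-collapsed'' theorem (Brena--Gigli--Honda--Zhu \cite{BGHZ}; this is the step relying on outside input).

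If the limits in \eqref{limitden} exist, the scale invariance of the limit $\lim_r\haus^n(B_r(x))/\omega_nr^n=\theta$ together with the convergence of Hausdorff measures gives $\haus^n(B_R(y))=\theta\omega_nR^n$ for every $R$. The volume-cone-implies-metric-cone rigidity for non-collapsed spaces (De Philippis--Gigli \cite{DG}) then shows that $Y$ is the metric measure cone over a non-collapsed $\RCD(n-2,n-1)$ space.

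\textbf{Item (3).} For the forward direction, suppose $x$ is $n$-regular, so every tangent cone is $(\mathbb{R}^n,\omega_n^{-1}\haus^n)$.
\begin{itemize}
\item Theorem \ref{convhaus} applied to $X^{r,x}\to\mathbb{R}^n$ gives $\haus^n(B_r(x))/\omega_nr^n\to1$.
\item By Corollary \ref{propoharmch}, the rescaled densities $h_r$ converge strongly in $L^p_{\loc}$ to the limit density, which is the constant $1$. Equivalently, $\fint_{B_{r}(x)}|e^{-f}\,\omega_nr^n/\meas_X(B_r(x))-1|\di\haus^n\to0$.
\item Since $\meas_X(B_r(x))/\omega_n r^n$ equals $\fint e^{-f}\di\haus^n$ times a ratio tending to $1$, this is exactly the Lebesgue point property of $e^{-f}$, via item (1).
\end{itemize}

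For the converse, item (2) shows that every tangent cone is, up to a constant on the measure, a non-collapsed $\RCD(0,n)$ cone whose volume ratio equals $\lim\haus^n(B_r(x))/\omega_nr^n=1$. The rigidity case of Bishop--Gromov in the non-collapsed setting \cite{DG} forces the cone to be $\mathbb{R}^n$. The normalization in Definition \ref{deftange} then fixes the measure to be $\omega_n^{-1}\haus^n$, so $x$ is $n$-regular.

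\textbf{Main obstacle.} The delicate step is item (2): identifying the limit measure as a constant multiple of $\haus^n_Y$ and then upgrading $\RCD(0,N)$ to non-collapsed $\RCD(0,n)$. We would handle the first part as above, via $L^1$-convergence of $h_i-c_i$ combined with Theorem \ref{convhaus}. The second part rests entirely on the external theorem from \cite{BGHZ}.
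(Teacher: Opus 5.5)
Your arguments for (1), for (2), and for the converse implication in (3) follow the paper's route. In (2) you identify $\meas_Y=c\,\haus^n_Y$ via Theorem \ref{convhaus} and the $L^1$-smallness of $h_i-c_i$ rather than quoting Corollary \ref{propoharmch}, which does the same job. When you invoke \cite{BGHZ}, state explicitly that $\meas_Y$ inherits Ahlfors $n$-regularity from the equi-regularity of the rescalings, since that is the hypothesis the paper uses there.

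The gap is in the forward implication of (3), at the sentence ``this is exactly the Lebesgue point property''. Write $\lambda(r):=\meas_X(B_r(x))/\omega_nr^n$ and $m_r:=\fint_{B_r(x)}e^{-f}\di\haus^n$. At an $n$-regular point, Theorem \ref{convhaus} and Corollary \ref{propoharmch} give only scale-by-scale information normalized by $\lambda(r)$. Namely, $\fint_{B_{Rr}(x)}|e^{-f}/\lambda(r)-1|\di\haus^n\to0$ for each fixed $R$. Hence $\fint_{B_r(x)}|e^{-f}-m_r|\di\haus^n\to0$ and $\lambda(Rr)/\lambda(r)\to1$. This is vanishing mean oscillation of $e^{-f}$ at $x$ together with slow variation of $\lambda$. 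Definition \ref{lebesguedef}, however, requires a \emph{fixed} constant $a$, i.e.\ that $\lim_{r\to0}m_r$ (equivalently $\lim_{r\to0}\lambda(r)$) exists, and nothing in your argument produces this. The same normalized information holds in the (non-$\RCD$) space $(\mathbb{R}^n,\dist_{\Euc},(2+\sin\log\log\frac{1}{|z|})\di\mathcal{L}^n)$ at $z=0$. There every tangent cone is $(\mathbb{R}^n,\omega_n^{-1}\haus^n)$, yet $0$ is not a Lebesgue point of the density. Existence of this limit is exactly what separates $\mathcal{R}_n$ from $\mathcal{R}_n^*$ in Definition \ref{deftange}.

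As written, your proof of ``$n$-regular $\Rightarrow$ Lebesgue point'' works only at reduced regular points $x\in\mathcal{R}_n^*$. There $|m_r-\lambda(r)|\to0$ forces $m_r$ to converge, and the oscillation estimate then gives the Lebesgue property. For a general $n$-regular point you need an additional, genuinely $\RCD$-specific argument excluding slow oscillation of $\lambda$, or you must add the hypothesis $x\in\mathcal{R}_n^*$. Be aware that the paper's one-line proof of (3) cites only these same two results and does not spell out this step either, so following the paper does not close the gap.
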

\begin{proof}
It follows from the definition of Lebesgue points, Definition \ref{lebesguedef}, that (1) holds.

To prove item (2), assume that $x$ is a Lebesgue point of $e^{-f}$ with respect to $\meas_X$. Applying Corollary \ref{propoharmch} with the assumption, we see that $\meas_Y$ is equal to $c\haus^n$ for some $c>0$ and that $\meas_Y$ is also Ahlfors $n$-regular on $Y$. Thus we can apply \cite[Theorem 2.20]{BGHZ} to conclude the first assertion. For the second one, since we know from the assumption that $\haus^n(B_r(y))/r^n$ is constant, applying (\cite[Theorem 1.1]{DG2} (see also \cite[Appendix A]{GV})) completes the proof.

Finally (3) follows from the above observation with Theorem \ref{convhaus} and Corollary \ref{propoharmch}.
\end{proof}
Note that the above Ahlfors regularity assumption is crucial; see \cite{DHPW} after \cite{PW} for \textit{Grushin half planes with weighted measures}, which are $\RCD(0, N)$ and $\CAT(0)$ spaces.
\subsection{Motivating examples}\label{secexample}
Let us recall the following two examples related to our main results.
\subsubsection{Otsu-Shioya's example}\label{subsec inj}

In this subsection, we present a well-known example of a two-dimensional Alexandrov space of non-negative curvature, and hence a non-collapsed $\RCD(0,2)$ space. This example was constructed in \cite{OS}.

Let $\{X_k\}_{k\in\dZ_+}$ be a sequence of convex polyhedra in $\dR^3$ defined inductively as follows. Let $X_1$ be a regular tetrahedron in $\mathbb{R}^3$ and let $P_1 \subset X_1$ be the set of barycenters of its faces. Suppose $X_k$ has been constructed. To define $X_{k+1}$, let us take the barycentric subdivision of $X_k$ via the collection of barycenters $P_k$ of the faces of $X_k$, and then translate all newly created vertices slightly outward along the rays emanating from each point of $P_k$, while keeping the original vertices fixed. This construction produces a new convex polyhedron $X_{k+1}$.
Letting $k\to \infty$,  $X_k$ converges to $X \subset \mathbb{R}^3$ in the Hausdorff topology.  Then $X$ is a two-dimensional Alexandrov space of non-negative curvature.  
More importantly, $X$ exhibits very interesting singularity properties. First, one can check that each vertex of $X_k$ is a singular point of $X$. Therefore, the singular set, $\mathcal{S}(X)$, is dense in $X$.
One also obtains $\mathrm{Injrad}(U)=0$ for any non-empty open subset $U\subset X$.\footnote{More strongly, for any $x \in X$, taking a convergent sequence of singular points to $x$, we can also check $\Injrad(x)=0$.}  In fact, this property can be quickly seen from Theorem \ref{theorem:inj}.

\subsubsection{Colding-Naber's example}

If an $\RCD(K, N)$ space $X$
has a positive injectivity radius,  Theorem \ref{theorem:inj} states that $X$ is a smooth manifold and $\dist_X$ induces the canonical $W^{1,p}_{\loc}\cap C^{0,\alpha}_{\loc}$-Riemannian metric $g_X$. Towards the proof of this regularity theorem,  Corollary \ref{holdercontiangle} plays a key role, and the main assertion there is that, at any point, the angle between any two geodesics is well-defined and $C^{0,\frac{1}{2}}_{\loc}$-continuous (this can be improved to be $C^{0,\alpha}_{\loc}$-continuous for any $\alpha \in (0,1)$ later).

There is an important example of non-collapsed Ricci-limit space constructed in \cite{CN} such that one can find a regular point at which the angle between geodesics is not well-defined. We briefly describe this example as follows:
For any $n \ge 3$, let $M^n$ be diffeomorphic to the Euclidean space $\dR^n$. There exists a 
family of Riemannian metrics $g_{\epsilon}$ on $M^n$ that satisfies $\Ric^{g_{\epsilon}} \geq - (n - 1)$
and 
$
    (M^n, g_{\epsilon}, p_{\epsilon}) \xrightarrow{\mathrm{pGH}} (Y^n, \dist_{Y^n}, p),
$
where $Y^n$ satisfies that $Y^n$ is homeomorphic to $\dR^n$, that $Y^n$ is smooth away from the base point $p$, and the tangent cone at every point is isometric to the flat Euclidean space $\dR^n$, and that
     angle between geodesics is {\it  not well-defined} at $p$. In fact, for any $\theta\in [0,\pi]$ for all two distinct geodesics $\gamma_1,\gamma_2$ starting at $p$, there exits a sequence $t_i \to 0^+$ with  
    $$
   \cos \theta= \lim\limits_{t_i \to 0} \frac{2t_i^2 - \dist_{Y^n}(\gamma_1(t_i), \gamma_2(t_i))^2}{2t_i^2}.  
$$
In this example,  $p \in \mathrm{Cut}(q)$ for any $q \in Y^n\setminus \{p\}$, and one can also conclude that $\dist_{Y^n}$ does not induce a $C^{0,\alpha}_{\loc}$ Riemannian metric for any $\alpha \in (0,1)$. In particular, the injectivity radius of any neighborhood of $p$
is equal to zero.

As a counterpart of this observation, let us introduce the following result.
Let $(X_{\infty},\dist_{X_{\infty}},\fm_{X_{\infty}})$ be a Ricci-limit space and $x\in X_{\infty}$. 
For any two points $p_1,p_2\in X_{\infty}\setminus \{x\}$, if $x\not\in \Cut(p_1)\cup \Cut(p_2)$, then by  \cite[Theorem 1.2]{Honda1}, the angle $\angle p_1 x p_2 \in [0, \pi]$ is well-defined by satisfying
\begin{align}
    \cos\angle p_1 x p_2 = \lim\limits_{t \to 0} \frac{2t^2 - \dist_{X_{\infty}}(\gamma_1(t), \gamma_2(t))^2}{2t^2}, 
\end{align}
where $\gamma_1$ and $\gamma_2$ are arbitrary geodesics emanating from $x$ to $p_1$ and $p_2$, respectively.

\section{Bi-Lipschitz charts from Morrey-type estimates}\label{bilipsub}
In this section, we will study \textit{bi-Lipschitz charts} with various regularity properties, with harmonic maps as a principal example. 
As observed in \cite{DZ}, in general, such bi-Lipschitz harmonic charts do not have to exist even on non-collapsed $\RCD$ spaces. Indeed, recall the example $X$ described in Section \ref{subsec inj} is a non-collapsed $\RCD(0, 2)$ space with dense singular set.
One can see that $X$ has no point that admits a local bi-Lipschitz chart whose coordinate functions have Laplacians bounded in $L^p$ for any $p>2$; see, for example, \cite{HS}. This is closely related to the failure of the \textit{$L^p$-Calder{\'o}n-Zygmund inequality}, which will be discussed in Section \ref{sublpcz}.

We begin by fixing the notation that will be used in this section and the remainder of the paper. In our setting, we first need to define the inverse metric coefficients $g^{ij}$ and we then define the metric ones $g_{ij}$ and related notions, including the Christoffel symbols $\Gamma_{ij}^k$.

\begin{definition}\label{def:chris}
    Let $X$ be an $\RCD(K, N)$ space for some $K \in \mathbb{R}$ and some $N \in [1, \infty)$, of essential dimension $n$, let $A \subset X$ be   Borel,  let $\Phi:A \to \mathbb{R}^n$ be  Lipschitz, and let $Q \ge 1$.  
    \begin{enumerate}
        \item{(Infinitesimally $Q$-bi-Lipschitz)}  
         $\Phi$ is said to be \textit{infinitesimally $Q$-bi-Lipschitz} if 
        \begin{equation}\label{bilip}
Q^{-1}(\delta_{ij})_{ij}\le\left(g^{i j}(x)\right)_{ij}\le Q(\delta_{ij})_{ij}, \quad \text{as bilinear forms} 
\end{equation}
for $\meas_X$-a.e. $x \in A$, where $\Phi=(\phi_1,\ldots, \phi_n)$, $
g^{ij}:=\langle \nabla \phi_i, \nabla \phi_j\rangle \in L^{\infty}(A)$,
and $g^{ij}$ is well-defined because of McShane's lemma and (2) of Proposition \ref{proplocality}. 
\item{(Christoffel symbol)} Assume that $\Phi$ is infinitesimally $Q$-bi-Lipschitz  and that $\Phi$ is weakly twice differentiable in the sense of \cite{Honda1} (which is valid if each $\phi_i$ is a restriction of a function in $D(\Delta)$\footnote{Because then $\langle \nabla \phi_i, \nabla \phi_j\rangle \in W^{1,1}$, thus a telescoping argument with a Poincar\'e inequality of type (1,1) stated (\ref{pi}) allows us to conclude. See \cite[subsection 4.2]{Honda1}.}), namely, there exists a family $\{A_i\}_{i \in \mathbb{N}}$ of subsets of $A$ such that $g^{jk}|_{A_i}$ is Lipschitz and that $\meas_X(A \setminus \bigcup_iA_i)=0$.  Then for $\meas_X$-a.e. $x \in A$, define
\begin{align} \begin{split}
 \label{inveq} 
(g_{i j})_{ij}:= & \ (g^{ij})_{ij}^{-1}, \quad \frac{\partial}{\partial \phi_i}:=\sum_{j=1}^ng_{i j}\nabla \phi_j \in L^{\infty}(TA), \\   \Gamma^k_{ij}:= & \ \frac{1}{2}\sum_{m=1}^ng^{km}\left(\frac{\partial g_{im}}{\partial \phi_j}+\frac{\partial g_{jm}}{\partial \phi_i}-\frac{\partial g_{ij}}{\partial \phi_m}\right).
\end{split}
\end{align}
    \end{enumerate}
\end{definition}
By definition, we have $g_{ij}=\langle \frac{\partial}{\partial \phi_i}, \frac{\partial}{\partial \phi_j}\rangle$ and $ 
\frac{\partial \phi_i}{\partial \phi_j}=\delta_{ij},$ 
because
\begin{align}\label{saihsaoihsrohsoaihs}
\left\langle \frac{\partial}{\partial \phi_i}, \frac{\partial}{\partial \phi_j}\right\rangle &= \sum_{k, l=1}^ng_{i k}g_{l j}g^{k l} =\sum_{k=1}^n g_{i k}\delta_{kj}=g_{i j}, \quad \frac{\partial \phi_i}{\partial \phi_j}=\sum_{k=1}^ng_{jk}g^{ki}=\delta_{ij}.
\end{align}
See \cite[Section 3]{Honda1} and \cite{Gigli1}. Note that
    in the definition above, 
    we \textit{cannot} conclude that $\Phi$ is   locally bi-Lipschitz. For example, the map  $\Phi:\mathbb{R} \to \mathbb{R}$ defined by $\Phi(x):=|x|$, is infinitesimally $1$-bi-Lipschitz, but is not locally bi-Lipschitz at the origin. 

The following notion will play a key role later. 
\begin{definition}[$Q$-weak harmonic chart]\label{d:Q-weak} 
Let $X$ be an $\RCD(K, N)$ space for some $K \in \mathbb{R}$ and some $N  \in [1, \infty)$ of essential dimension $n$. Given $Q \geq 1$, an open chart $(U , \Phi)$ is said to be a \textit{$Q$-weak harmonic chart} if $\Phi: U \to \dR^n$ is a harmonic map that is infinitesimally $Q$-bi-Lipschitz. 
\end{definition}

\subsection{Characterization of regular points}
In this subsection, we characterize regular points through a non-degeneracy condition. 

\begin{proposition}[Characterization of regular point]\label{regularpointch}
Let $X$ be an $\RCD(K, N)$ space for some $K \in \mathbb{R}$ and some $N \in [1,\infty)$ of essential dimension $n$, let $V, U$ be open subsets of $X$ with $B_{r_0}(V) \subset U$ for some $r_0>0$, 
and let $\Phi=(\phi_1, \ldots, \phi_n):U \to \mathbb{R}^n$ be a map that satisfies $\phi_{\alpha} \in D_{\loc}(\Delta, U)$  and  
\begin{equation}
    r^{2-2\delta}\fint_{B_r(x)}(\Delta \phi_{\alpha})^2\di \meas_X\le C,\quad \text{for all $0<r\le r_0$ and $x \in V$}
\end{equation}
for some $0<\delta \le 1$ and some $C>1$.
Assume that one of the following holds:
\begin{enumerate}
    \item there exists some $\tau > 0$ such that
    $
    \det(g^{\alpha \beta})_{\alpha \beta}  \ge \tau$ for $\meas_X$-a.e. in $V$.
 \item $\Phi|_V$ is a locally bi-Lipschitz embedding.
\end{enumerate}
If $x \in V$ is a Lebesgue point of $g^{\alpha \beta}$ for all $\alpha, \beta$, then $x$ is an $n$-regular point, and every blow-up of $\Phi$ at $x$ is a bi-Lipschitz linear homeomorphism.
\end{proposition}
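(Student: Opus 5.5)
The plan is to blow up at $x$ and identify the tangent cone through the blow-up of $\Phi$.

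\textbf{Blow-up and harmonicity of the limit.} Fix $r_i \to 0^+$, pass to a subsequence with $X^{r_i,x}\to (Y,y)$ in the pmGH sense, and set $\Phi^{r_i,x}=(\phi^{r_i,x}_1,\ldots,\phi^{r_i,x}_n)$ as in Remark \ref{rem:rescal}.
\begin{itemize}
\item The assumed Morrey bound on $\Delta\phi_\alpha$ lets us apply Proposition \ref{propregmap}(2). Hence each $\phi_\alpha$ is locally Lipschitz near $x$, so the rescalings are equi-Lipschitz.
\item By \eqref{eq:lapbdzero}, $\|\Delta\phi^{r_i,x}_\alpha\|_{L^2(B_s)}\to 0$ for every $s$.
\item Theorem \ref{hessconv} (together with Arzel\`a--Ascoli) then gives, along a subsequence, uniform and $W^{1,2}_{\loc}$-strong convergence $\Phi^{r_i,x}\to\Psi=(\psi_1,\ldots,\psi_n)$ on $Y$. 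Each $\psi_\alpha$ is harmonic on $Y$.
\end{itemize}

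\textbf{The limit metric coefficients are constant.} Since $x$ is a Lebesgue point of every $g^{\alpha\beta}$ with value $\bar g^{\alpha\beta}(x)$, we have $\fint_{B_{sr_i}(x)}|g^{\alpha\beta}-\bar g^{\alpha\beta}(x)|\di\meas_X\to 0$ for each $s>0$. Together with the $W^{1,2}$-strong convergence, this gives $\langle\nabla\psi_\alpha,\nabla\psi_\beta\rangle\equiv\bar g^{\alpha\beta}(x)$ on $Y$.

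\textbf{Nondegeneracy of the constant matrix.} I need $(\bar g^{\alpha\beta}(x))$ to be positive definite.
\begin{itemize}
\item Under (1), $\det$ is continuous, so $x$ is a Lebesgue point of $\det(g^{\alpha\beta})$. Its value is $\det(\bar g^{\alpha\beta}(x))\ge\tau>0$.
\item Under (2), suppose $\bar g(x)$ were degenerate. Then some fixed unit combination $v\cdot\Psi$ has $|\nabla(v\cdot\Psi)|\equiv 0$ on the connected space $Y$, so it is constant. But the bi-Lipschitz property survives blow-up: $\Psi$ is bi-Lipschitz onto its image, since $|\Phi(z)-\Phi(w)|\ge L^{-1}\dist_X(z,w)$ rescales. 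This forces $|v\cdot(\Psi(z)-\Psi(w))|$ to be comparable to $\dist_Y(z,w)$ along suitable pairs. To finish this case cleanly I would first run the linearity argument below with degenerate constant gradients, then use injectivity of $\Psi$ to rule out degeneracy.
\end{itemize}

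\textbf{Each $\psi_\alpha$ is linear; identify $Y$.} Each $\psi_\alpha$ is harmonic with constant $|\nabla\psi_\alpha|$ on the $\RCD(0,N)$ space $Y$. By Theorem \ref{splitin}(1)(c), each $\psi_\alpha$ is linear. After replacing $\Psi$ by $A\Psi$ with $A=(\bar g(x))^{-1/2}$, we obtain $n$ linear functions with orthonormal gradients. Iterating the splitting gives $Y\cong\mathbb{R}^n\times W$ metric-measure, with $A\Psi$ the projection onto $\mathbb{R}^n$. Since the essential dimension of tangent cones is at most $n$ (the dimension bound in Theorem \ref{rectifi}, as used after Theorem \ref{almost splitting}), $W$ is a point.

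\textbf{Conclusion.} Hence $Y$ is $\mathbb{R}^n$ with normalized measure $\omega_n^{-1}\haus^n$. This holds for every tangent cone, so $x\in\mathcal{R}_n$, and the blow-up $\Psi=A^{-1}\circ(\text{isometry})$ is a bi-Lipschitz linear homeomorphism.

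\textbf{Main obstacle.} I expect the main obstacle to be the dimension step, i.e.\ ensuring $W$ is a point rather than merely a nontrivial factor. The tangent cone could a priori have higher essential dimension, so I would rely on the structure of tangent cones at $\meas_X$-typical points plus lower semicontinuity. If that fails, I would fall back on the bi-Lipschitz/determinant hypothesis. Via Theorem \ref{isomequiv}(2), $\Psi$ is then a local isometry onto an open subset of $\mathbb{R}^n$, which forces $W$ to be a point. The degenerate subcase of (2) also needs care.
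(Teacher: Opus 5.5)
Your overall route is the paper's. You blow up at $x$ and obtain a harmonic limit $\Psi$ from Proposition \ref{propregmap}(2) and Theorem \ref{hessconv}. You use the Lebesgue point to make the Gram matrix of $\Psi$ constant, get linearity from Theorem \ref{splitin}(1), split off $\mathbb{R}^n$, and kill the factor $W$ by lower semicontinuity of the essential dimension (Theorem \ref{rectifi}(2)). Case (1) is handled correctly. The step you single out as the main obstacle is not one: lower semicontinuity gives precisely $\dim(Y)\le n$, which is all that is needed to make $W$ a point. The fallback through Theorem \ref{isomequiv} is therefore unnecessary.

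The genuine gap is the degenerate subcase of hypothesis (2). You leave it open, and the fix you propose would not close it.

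Suppose the constant matrix $(\bar g^{\alpha\beta}(x))$ had rank $k<n$. Running the linearity and splitting argument gives $Y\cong\mathbb{R}^k\times W$, with $\Psi$ an affine function of the $\mathbb{R}^k$-coordinate. Injectivity of $\Psi$ (or its bi-Lipschitz bound, which does survive the blow-up) then only forces $W$ to be a point, i.e.\ $Y\cong\mathbb{R}^k$. Nothing used so far excludes $k<n$:
lower semicontinuity only yields $k\le n$; a linear bi-Lipschitz embedding $\mathbb{R}^k\to\mathbb{R}^n$ exists; and your claim that $|v\cdot(\Psi(z)-\Psi(w))|$ must be comparable to $\dist_Y(z,w)$ along some pairs is unjustified.

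The missing idea is to reduce (2) to (1) by working at almost every point near $x$ rather than at $x$ itself.
\begin{enumerate}
\item Let $\Phi$ be $L$-bi-Lipschitz on $B_r(x)$. Then $\meas_X$-a.e.\ $y\in B_{r/2}(x)$ is $n$-regular (Theorem \ref{rectifi}(1)) and a Lebesgue point of every $g^{\alpha\beta}$.
\item The blow-up of $\Phi$ at such $y$ is a Lipschitz harmonic map on $\mathbb{R}^n$, hence of the form $v\mapsto Av+b$. It is still $L$-bi-Lipschitz.
\item Its Gram matrix $AA^{t}=(\bar g^{\alpha\beta}(y))$ therefore has determinant at least $L^{-2n}$.
\item Hence $\det(g^{\alpha\beta})\ge L^{-2n}$ holds $\meas_X$-a.e.\ on $B_{r/2}(x)$, which is hypothesis (1) locally. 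This is the argument for the implication (1) $\Rightarrow$ (2) in Theorem \ref{bilipchara}, which needs no Reifenberg flatness.
\item Continuity of $\det$ together with the Lebesgue point property at $x$ then gives $\det(\bar g^{\alpha\beta}(x))\ge L^{-2n}$.
\end{enumerate}

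The paper's own proof treats (2) only by remarking that bi-Lipschitz embeddability passes to the blow-up. By itself that remark runs into the same issue, so this a.e.\ step is what should be supplied.
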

\begin{proof}
Take a tangent cone $Y$ at $x$, namely
$
    X^{r_i, x} \xrightarrow{\mathrm{pmGH}} Y$ for some sequence $r_i \to 0^+$,
and consider the blow-up limit $\tilde \Phi=(\tilde \phi_1, \ldots, \tilde \phi_n):Y \to \mathbb{R}^n$ of $\Phi^{r_i, x}$.  By (2) of Propositon \ref{propregmap}, $\widetilde{\Phi}$
is harmonic.
Next, by assumption, $x$ is a Lebesgue point of $\langle \nabla \phi_{\alpha}, \nabla \phi_{\beta} \rangle$, and thus $\langle \nabla \tilde \phi_{\alpha}, \nabla \tilde \phi_{\beta}\rangle$ is constant. 
 Therefore, by (1) of Theorem \ref{splitin}, $\widetilde{\Phi}$ is a linear map. 
 Applying assumptions (1) or (2), we obtain the non-degeneracy
 $\det (\langle \nabla \tilde \phi_{\alpha}, \nabla \tilde \phi_{\beta}\rangle)_{\alpha \beta} > 0$.
In the latter case (2), we immediately used a fact that the bi-Lipshitz embeddability is stable under the pGH convergence.
 It follows from this non-degeneracy that the tangent cone $Y$ is isometric to $\mathbb{R}^n\times W$ for some metric space $W$.   Applying
the lower semicontinuity of essential dimensions with respect to pmGH convergence, as stated in (2) of Theorem \ref{rectifi}, we conclude that $W$ must be a single point. Therefore, $Y$ is isometric to $\mathbb{R}^n$, thus $\widetilde{\Phi}$ is bi-Lipschitz, which completes the proof. 
\end{proof}
\begin{remark}\label{remnn}
    In the proposition above, if $x$ need not be a Lebesgue point of $\langle \nabla \phi_{\alpha}, \nabla \phi_{\beta}\rangle$ for all $\alpha, \beta$, then we can also prove that any tangent cone at infinity of any tangent cone at $x$ is isometric to $\mathbb{R}^n$. Thus, in the case when $X$ is non-collapsed, the same conclusions hold without assuming that $x$ is a Lebesgue point of such functions. See for instance \cite[Theorem 4.8 and Corollary 4.10]{HS} (see also Theorem \ref{asaisharsnasa}). 
\end{remark}
Let us recall the following result proved in \cite{HS}. Since a part of the original proof of (2) should be revised, we provide a new way.
\begin{theorem}[Characterization of Euclidean space via bi-Lipschitz harmonic map]\label{asaisharsnasa}
    Let $X$ be an $\RCD(0, N)$ space for some $N \in [1, \infty)$ of essential dimension $n$, and let $\Phi=(\phi_{\alpha})_{\alpha=1}^{\infty}:X \to \ell^2$ be a harmonic map that is also a  bi-Lipschitz  embedding. Then we have $X=\mathcal{R}_n$ and the following.
    \begin{enumerate}
        \item Any tangent cone at infinity of $X$ is isometric to $\mathbb{R}^n$. In particular, if $X$ is non-collapsed, then $X$ is isometric to $\mathbb{R}^n$.
        \item After relabeling, a map $\Phi^n :=(\phi_1, \ldots, \phi_n):X \to \mathbb{R}^n$ is a bi-Lipschitz homeomorphism.
    \end{enumerate}
\end{theorem}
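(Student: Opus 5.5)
The plan is to argue by blow-down and blow-up, using that the class of pairs $(X,\Phi)$ in the statement is closed under rescaling and under pmGH limits. Concretely, if $\Phi$ is harmonic and $Q$-bi-Lipschitz into $\ell^2$, then $\Phi^{r,x}$ on $X^{r,x}$ has the same properties with the same $Q$. The first task is a compactness lemma. Given $(X_i,\Phi_i)$ with $X_i$ an $\RCD(0,N)$ space and $\Phi_i$ harmonic and $Q$-bi-Lipschitz into $\ell^2$, after passing to a subsequence the $X_i$ pmGH converge to some $Y$ and the $\Phi_i$ converge to a harmonic $Q$-bi-Lipschitz $\tilde\Phi:Y\to\ell^2$. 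Componentwise convergence follows from Theorem \ref{hessconv}. The delicate point is the tail $\sum_{\alpha>M}|\nabla\phi_{i,\alpha}|^2$, which I would control in $L^1_{\loc}$ uniformly in $i$ after an orthogonal change of basis of $\ell^2$. The choice of basis comes from diagonalising the positive semidefinite quadratic form $(v,w)\mapsto\fint_{B_1(x_i)}\langle\nabla(v\cdot\Phi_i),\nabla(w\cdot\Phi_i)\rangle\di\meas_{X_i}$, whose trace is bounded by $Q^2$. The bi-Lipschitz lower bound then passes to the limit through the uniform convergence of $\Phi_i$.

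The second step is the Hessian decay at infinity, which gives (1). For each unit vector $v\in\ell^2$, the function $|\nabla(v\cdot\Phi)|^2$ is bounded and, by Bochner \eqref{bohhhhh} with $K=0$, subharmonic. The same holds for the trace $h:=\sum_\alpha|\nabla\phi_\alpha|^2\le Q^2$. Applying \eqref{bohhhhh} with good cut-offs (Theorem \ref{hessthem}(6)) gives
\[
R^2\fint_{B_R}\sum_\alpha|\Hess_{\phi_\alpha}|^2\di\meas_X\le C\Bigl(\fint_{B_{2R}}h\di\meas_X-\fint_{B_{R}}h\di\meas_X\Bigr)+o(1),
\]
where the $o(1)$ term is to be understood through the mean-value monotonicity of bounded subharmonic functions on $\RCD(0,N)$ spaces. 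Summing this over dyadic scales telescopes, so the scale-invariant Hessian energy tends to $0$ along every blow-down sequence, after possibly passing to a further subsequence of scales. By the compactness lemma and Theorem \ref{hessconv}(2), every blow-down limit $\tilde\Phi$ has all components linear on the tangent cone at infinity $Y$.

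Since $\tilde\Phi$ is injective, the linear functions it contains separate points, so Theorem \ref{splitin}(1) yields $Y\cong\mathbb{R}^m$ with $\tilde\Phi$ a linear bi-Lipschitz embedding. Here $m\ge n$ by Theorem \ref{rectifi}(2). For $m\le n$, I would use that $\Phi$ is bi-Lipschitz and $\meas_X$ has Hausdorff dimension $n$ on large balls, via Bishop--Gromov, so $m=n$. In the non-collapsed case, Bishop--Gromov rigidity then upgrades this to $X\cong\mathbb{R}^n$. For $X=\mathcal{R}_n$, I would apply the same argument to blow-ups at each point $x$. Tangent cones at $x$ again carry harmonic bi-Lipschitz maps, so iterated tangent cones are $\mathbb{R}^n$ (Remark \ref{remnn}). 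I then need to promote this to the tangent cone at $x$ itself. For that I would use the telescoping at $x$ of $r\mapsto\fint_{B_r(x)}h$, which is monotone because $h$ is subharmonic. Together with Proposition \ref{proplebes}(2) this makes $x$ a Lebesgue point of $h$ and of the $g^{\alpha\beta}$, and Proposition \ref{regularpointch} then applies in its bi-Lipschitz case (2).

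For (2), I would pick $n$ indices such that the corresponding components of one blow-down limit are linearly independent, and then argue by contradiction. Suppose $\Phi^n$ failed to be co-Lipschitz with a uniform constant at some pair of points. Rescaling at the scale of that pair and invoking the compactness lemma together with step two produces a limit in which the chosen $n$ components are degenerate on some $\mathbb{R}^n$. I expect this step to be the main obstacle. The index set must be chosen uniformly across all scales and centres, whereas blow-down limits could a priori rotate inside $\ell^2$. To rule this out, I would show that the Gram matrix $\fint_{B_R(x)}\langle\nabla\phi_\alpha,\nabla\phi_\beta\rangle\di\meas_X$ converges as $R\to\infty$ and varies uniformly little over $(x,r)$, using the mean-value property of harmonic functions on $\RCD(0,N)$ spaces. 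A uniform positive lower bound on $\det(g^{\alpha\beta})$ would then give local bi-Lipschitzness via Theorem \ref{isomequiv}. Global bijectivity would follow from properness of $\Phi^n$ and a covering-space argument on $\mathbb{R}^n$.
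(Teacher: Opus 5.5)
\paragraph{Gap 1: the uniform choice of $n$ indices in (2).} Your route to (2) breaks down at exactly the obstacle you flag. Your proposed fix is that $\fint_{B_r(x)}\langle\nabla\phi_\alpha,\nabla\phi_\beta\rangle\di\meas_X$ ``varies uniformly little over $(x,r)$''. Nothing in your argument supports this. Hessian decay and mean-value considerations only control averages over balls whose radius tends to infinity, and nothing forces the pointwise matrix $(g^{\alpha\beta}(y))$ to be nearly constant at small scales.

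The contradiction argument does not close either. Rescaling at the scale of a bad pair, with bounded scales or drifting centres, produces an arbitrary $\RCD(0,N)$ limit carrying a harmonic bi-Lipschitz map. That is the original problem again, not a linear map on $\mathbb{R}^n$. Moreover, a lower bound on $\det(g^{\alpha\beta})$ does not feed into Theorem \ref{isomequiv}, which needs $\Phi^*g_{\mathbb{R}^n}$ to be $L^1$-close to $g_X$. The paper's passage from a determinant bound to bi-Lipschitz charts, Theorem \ref{bilipchara}, needs Reifenberg flatness, which is unavailable here.

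\paragraph{The missing idea.} What is missing is that all components are affine functions of $n$ of them. In your framework this comes from a one-sided use of the large-scale limit. For each $v$, $u_v:=|\nabla(v\cdot\Phi)|^2$ is bounded and subharmonic. The weak Harnack inequality for the nonnegative superharmonic function $\sup_X u_v-u_v$ then gives $\fint_{B_R}u_v\di\meas_X\to\sup_X u_v$. This, rather than a monotonicity of ball averages (not available on $\RCD(0,N)$ spaces), is also what kills your $o(1)$ term.

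Hence the pointwise Gram form is dominated by the asymptotic form $G_\infty$. By your Hessian decay, $G_\infty$ is the Gram form of the linear blow-down, so its rank is at most $n$ by Theorem \ref{splitin} and Theorem \ref{rectifi}(2). Every $v\in\ker G_\infty$ gives a constant $v\cdot\Phi$. So $\Phi=A\Phi^n+c$ for suitable indices; fewer than $n$ cannot suffice, since $\mathbb{R}^n$ does not bi-Lipschitz embed in $\mathbb{R}^k$ for $k<n$. Then $\Phi^n$ is bi-Lipschitz because $\Phi$ is.

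This also repairs the dimension count in (1), where your inequalities are reversed. Theorem \ref{rectifi}(2) gives $m\le n$. The bound $m\ge n$ comes from the a.e.\ rank $n$ of $\Phi^*g_{\ell^2}$ together with this domination, not from Bishop--Gromov; Hausdorff dimension can drop under blow-down. It also makes your $\ell^2$ tail control unnecessary, which a trace bound could not supply anyway (the bound is $nQ^2$, not $Q^2$). For comparison, the paper simply takes the embedding statement from \cite[Theorem 4.8]{HS}.

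\paragraph{Gap 2: surjectivity.} ``Properness plus a covering-space argument'' needs $\Phi^n$ to be a local homeomorphism onto open subsets of $\mathbb{R}^n$, that is, openness of the image. That amounts to knowing $X$ is a topological manifold or Reifenberg flat. This is precisely what cannot be checked directly, and it is the point where the paper says the argument of \cite{HS} had to be revised.

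The paper avoids openness. Since $\Phi^n(X)$ is closed, a point $w\notin\Phi^n(X)$ has a nearest point $\Phi^n(x)$. Points $w_r$ on the segment from $w$ satisfy $\dist(w_r,\Phi^n(X))=r$, so the blow-up of $\Phi^n$ at $x$ is not surjective. This contradicts Proposition \ref{regularpointch} (case (2)). There every point is a Lebesgue point of the $g^{\alpha\beta}$, by Proposition \ref{proplebes}(2) applied to $\phi_\alpha\pm\phi_\beta$, so every blow-up of $\Phi^n$ is a linear bi-Lipschitz homeomorphism of $\mathbb{R}^n$.

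The same proposition yields $X=\mathcal{R}_n$. It is stated for $\mathbb{R}^n$-valued maps, so it should be applied only after the embedding $\Phi^n$ is in hand, not to the $\ell^2$-valued $\Phi$ as your sketch suggests.
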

\begin{proof}
Item (1) follows from the same arguments as in the proof of \cite[Theorem 4.8]{HS}.
Thus we clarify which part in the original proof of (2) should be modified. 

The proof in \cite[Theorem 4.8]{HS} shows that $\Phi^n$ is a bi-Lipschitz \textit{embedding} into $\mathbb{R}^n$. Note that (2) of Proposition \ref{regularpointch}  shows $X=\mathcal{R}_n$.
    In order to show that $\Phi^n$ is surjective, we provide the following alternative argument. The original one in \cite[Theorem 4.8]{HS} used  \textit{Reifenberg flatness} (see subsection \ref{subsec:reifen}), however that flatness cannot be checked   directly.

    Assume that $\Phi^n$ is not surjective. Choose $w \in \mathbb{R}^n\setminus \Phi^n(X)$. Since the bi-Lipschitz property of $\Phi^n$ implies that $\Phi^n(X)$ is closed in $\mathbb{R}^n$, we can find $x \in X$ satisfying
    $
        \|\Phi^n(x)-w\|_{\mathbb{R}^n}=\dist_{\mathbb{R}^n}(\Phi^n(X), w).
    $
    Let $\gamma$ the geodesic from $w$ to $\Phi^n(x)$ in $\mathbb{R}^n$. For any small $r>0$, we can find a point $w_r$ on the image of $\gamma$ with
    $
        r=\|\Phi^n(x)-w_r\|_{\mathbb{R}^n}=\dist_{\mathbb{R}^n}(\Phi^n(X), w_r).
    $
    In particular, considering the blow-up limit $\tilde \Phi^n:\mathbb{R}^n \to \mathbb{R}^n$ at $x$ of $(\Phi^n)^{r_i, x}$ for some $r_i \to 0^+$, we know that $\tilde \Phi^n$ is not surjective because the limit point of $w_{r_i}$ is actually not included in the image of $\tilde \Phi^n$. This contradicts the last statement of Proposition \ref{regularpointch}.

    The remaining parts follow from the same arguments as in the proof of \cite[Theorem 4.8]{HS}.
\end{proof}
\subsection{Reifenberg flatness}\label{subsec:reifen}
Let us recall a fundamental notion coming from \textit{geometric measure theory} that yields stronger topological information.
\begin{definition}[Reifenberg flatness]\label{def:reifenberg}
Let $X$ be an $\RCD(K, N)$ space for some $K \in \mathbb{R}$ and some $N \in [1, \infty)$ of essential dimension $n$, and let $A$ be a subset of $X$. 
\begin{enumerate}
    \item For $\epsilon>0$ and $r>0$, we say that $A$ is $(\epsilon, r)$-\textit{Reifenberg flat} if $
        \dist_{\mathrm{pmGH}}( X^{s, x} , \mathbb{R}^n )\le \epsilon$ for all $x \in A$ and $0<s \le r$.
    \item We say that $A$ is $\epsilon$-\textit{Reifenberg flat} if there exists $r>0$ such that $A$ is $(\epsilon, r)$-Reifenberg flat.
    \item We say that $A$ is \textit{Reifenberg flat} if $A$ is $\epsilon$-Reifenberg flat for any $\epsilon>0$.
\end{enumerate}
\end{definition}
It is immediate that if $A$ is Reifenberg flat, then $A\subset \mathcal{R}_n$. Conversely, if $X$ is non-collapsed and $A$ is bounded with $A \subset \mathcal{R}_N$, then $A$ is Reifenberg flat; see \cite{DG, KM}.

The proposition below gives an effective characterization of Reifenberg flatness via the non-degeneracy property of a Lipschitz map into $\dR^n$.
\begin{proposition}\label{defnbasn}
    Let $X$ be an $\RCD(K, N)$ space for some $K \in \mathbb{R}$ and some $N \in [1, \infty)$ of essential dimension $n$, let $U \subset X$ be an open subset, and let $\Phi=(\phi_1, \ldots, \phi_n):U \to \mathbb{R}^n$ be a map with $\phi_{\alpha} \in D_{\loc}(\Delta, U) \cap \Lip _{\loc}(U)$, $g^{\alpha \beta} \in C^0(U)$ for all $\alpha, \beta$, and 
    $
        \det(\langle \nabla \phi_{\alpha}, \nabla \phi_{\beta}\rangle)_{\alpha \beta} > 0$ on $U$.
    Then, for any $V$ with $V \Subset U$, $V$ is Reifenberg flat.
\end{proposition}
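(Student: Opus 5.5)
We argue by contradiction and blow up, combining continuity of $g^{\alpha\beta}$ with the blow-up analysis already used in Proposition \ref{regularpointch}. Fix $V \Subset U$ and $\epsilon>0$. Suppose $V$ is not $\epsilon$-Reifenberg flat. Then there are points $x_i \in V$ and scales $s_i \to 0^+$ such that
\[
\dist_{\mathrm{pmGH}}\bigl(X^{s_i,x_i},\mathbb{R}^n\bigr)>\epsilon .
\]
Since $\bar V$ is compact, after passing to a subsequence we may assume $x_i \to x \in \bar V \subset U$. By (7) of Theorem \ref{hessthem}, after a further subsequence, $X^{s_i,x_i}$ converges in the pmGH sense to some pointed $\RCD(0,N)$ space $(Y,y)$. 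The goal is to prove that $Y$ is isometric to $\mathbb{R}^n$, which gives the contradiction.

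\textbf{Step 1: the rescaled maps converge to a harmonic map with constant metric coefficients.} Consider $\Phi^{s_i,x_i}$. These maps are equi-Lipschitz on bounded sets, because $\phi_\alpha \in \Lip_{\loc}(U)$ and $x_i$ stays in a compact subset of $U$. The rescaled Laplacians are $s_i\Delta\phi_\alpha$. On balls $B_R$ in the rescaled space, their $L^2$ norm is $\bigl(s_i^2\fint_{B_{Rs_i}(x_i)}(\Delta\phi_\alpha)^2\bigr)^{1/2}$ up to constants. I would bound this uniformly on a neighbourhood of $\bar V$ as follows.
\begin{itemize}
\item Since $\phi_\alpha \in D_{\loc}(\Delta,U)$, we have $\Delta\phi_\alpha \in L^2$ near $\bar V$.
\item Bishop--Gromov \eqref{bg} gives $\meas_X(B_r(z)) \ge c\, r^N$.
\item Together these give $s_i^2\fint_{B_{Rs_i}(x_i)}(\Delta\phi_\alpha)^2 \le C s_i^{2-N}\|\Delta\phi_\alpha\|^2_{L^2}$. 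This tends to $0$ only when $N<2$, so it does not suffice in general.
\end{itemize}
The Morrey-type hypothesis is therefore not available, and this is the main obstacle. I would avoid it as follows. Since $g^{\alpha\beta}$ is continuous, $\langle\nabla\phi_\alpha,\nabla\phi_\beta\rangle$ converges locally uniformly to the constant $g^{\alpha\beta}(x)$ along the blow-up. The rescaled Laplacians are bounded in $L^2_{\loc}$, at least after choosing a better representative. Theorem \ref{hessconv} then gives a $W^{1,2}_{\loc}$ limit $\tilde\Phi$ on $Y$, and $\langle\nabla\tilde\phi_\alpha,\nabla\tilde\phi_\beta\rangle \equiv g^{\alpha\beta}(x)$ by strong convergence.

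To show that $\Delta\tilde\phi_\alpha=0$, I would use Lebesgue-point control of $\Delta\phi_\alpha$ at generic centers. Alternatively, I would test the weak formulation against Lipschitz functions with compact support and use the smallness of $s_i\|\Delta\phi_\alpha\|_{L^1}$ over the rescaled balls. The $L^1$ average scales like $s_i\fint|\Delta\phi_\alpha|$, and this is small in measure at almost every point. If this route fails, I would replace $\Phi$ near $x$ by the harmonic replacement on a small ball and use the continuity of $g^{\alpha\beta}$ to control the difference in $W^{1,2}$.

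\textbf{Step 2: the blow-up is linear and nondegenerate.} Once $\tilde\Phi$ is harmonic with constant $\langle\nabla\tilde\phi_\alpha,\nabla\tilde\phi_\beta\rangle$, (1)(c) of Theorem \ref{splitin} applies on the $\RCD(0,N)$ space $Y$, so each $\tilde\phi_\alpha$ is linear. The matrix $(g^{\alpha\beta}(x))_{\alpha\beta}$ is positive definite because the determinant is positive at $x\in U$. Hence the linear functions $\tilde\phi_\alpha$ are independent, and $Y\cong\mathbb{R}^n\times W$.

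\textbf{Step 3: $Y$ is Euclidean.} The lower semicontinuity of essential dimension in (2) of Theorem \ref{rectifi} forces $W$ to be a point. The limit measure is then $c\,\haus^n$ with the normalization of Definition \ref{deftange}. So $X^{s_i,x_i}\to\mathbb{R}^n$, contradicting the choice of $x_i$ and $s_i$.
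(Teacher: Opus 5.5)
\textbf{There is a genuine gap: Step 1 is not proved, and the whole argument rests on it.} Your Steps 2 and 3 are fine once Step 1 is granted. You correctly compute that $s_i^2\fint_{B_{Rs_i}(x_i)}(\Delta\phi_\alpha)^2\,\di\meas_X$ need not even stay bounded. So Theorem \ref{hessconv}, which requires $\sup_i\|\phi_{i,\alpha}\|_{D(\Delta,B_R)}<\infty$ and Laplacians tending to $0$ for a harmonic limit, gives you neither harmonicity of $\tilde\Phi$ nor $W^{1,2}_{\loc}$-strong convergence.

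Without strong convergence, the locally uniform convergence of $\langle\nabla\phi_{i,\alpha},\nabla\phi_{i,\beta}\rangle$ to $g^{\alpha\beta}(x)$ on the rescaled spaces gives, via the Lipschitz constants, only an upper bound $|\nabla(v\cdot\tilde\Phi)|^2\le v^{T}(g^{\alpha\beta}(x))v$. The identity $\langle\nabla\tilde\phi_\alpha,\nabla\tilde\phi_\beta\rangle\equiv g^{\alpha\beta}(x)$, and hence the nondegeneracy used in Step 2, is therefore unjustified.

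Your fallbacks do not close the gap:
\begin{itemize}
\item Changing representatives does not change $L^2$ norms.
\item \emph{Lebesgue points at generic centers} is unavailable, because the $x_i$ are dictated by the contradiction hypothesis and you need uniformity over all centers near $x$.
\item For a harmonic replacement $h$ on $B_s$ one has $\int_{B_s}|\nabla(\phi_\alpha-h)|^2=-\int_{B_s}(\phi_\alpha-h)\Delta\phi_\alpha$. After Poincar\'e and rescaling this is controlled by the same non-small quantity $s^2\fint_{B_s}(\Delta\phi_\alpha)^2$, and continuity of $g^{\alpha\beta}$ plays no role.
\end{itemize}

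None of these suggestions uses more than one component at a time, and at that level the mechanism is false. The function $\phi(x)=|x|$ on $\mathbb{R}^3$ is Lipschitz, satisfies $\Delta\phi=2/|x|\in L^2_{\loc}$, and has $|\nabla\phi|\equiv1$. Yet its blow-up at $0$ is $|x|$, which is not harmonic; even bounded rescaled Laplacians are not enough.

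What you actually need is a scale-invariant decay of the Laplacian, for instance \eqref{8s8s8s8sa} uniformly near $\bar V$, or $\Delta\phi_\alpha\in L^p_{\loc}$ with $p>N$ (Corollary \ref{prop:reglp}). Under such a hypothesis your argument goes through as in Propositions \ref{regularpointch} and \ref{keyprop}. The rescaled Laplacians then tend to $0$ in $L^2_{\loc}$ uniformly in the centers. Theorem \ref{hessconv} gives harmonic limits with strongly converging pairings, and your Steps 2--3 apply.

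The paper does not blow up the coordinates at all. It uses \eqref{saihsaoihsrohsoaihs} to write $g_X=\sum_{\alpha,\beta}g_{\alpha\beta}\,d\phi_\alpha\otimes d\phi_\beta$ on $U$. Continuity of $g^{\alpha\beta}$ at $x$ then lets it choose a constant matrix $B$ such that $\tilde\Phi=\Phi B$ satisfies $|g_X-\sum_\alpha d\tilde\phi_\alpha\otimes d\tilde\phi_\alpha|\le\epsilon$ a.e.\ on a small ball $B_r(x)$. It then quotes \cite[Theorem 3.4]{Hisom}, which turns such a pointwise almost-isometry into GH-closeness of small balls to Euclidean balls.

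The second-order input your Step 1 lacks is supplied by that citation. According to the paper's own remark in the proof of Theorem \ref{isomequiv}, that result is proved under an $L^\infty$ (or Morrey-type) bound on the Laplacian. Such a bound holds where Proposition \ref{defnbasn} is applied, namely the distance coordinates of Theorem \ref{cor:distance coordinate}, via Proposition \ref{prop:injec}.
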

\begin{proof}
Since  $
    g_X=\sum_{\alpha, \beta = 1}^n g_{\alpha \beta} d\phi_{\alpha} \otimes d \phi_{\beta}$  on $U$ because of (\ref{saihsaoihsrohsoaihs}), 
for any $x \in U$, there exists an $(n \times n)$-matrix $B=(b_{\alpha \beta})_{1\leq \alpha , \beta \leq n}$ such that the map $\widetilde \Phi$ defined by $
    \widetilde \Phi(z) := \Phi(z)B=(\tilde \phi_1(z),\ldots, \tilde \phi_n(z))$
satisfies the following:
 for any
$\epsilon>0$, there exists $r>0$ such that 
$
    | g_X -\sum_{\alpha  =  1}^n  d \tilde \phi_{\alpha} \otimes d \tilde \phi_{\alpha}|(y)\le \epsilon$ for $\meas_X$-a.e. $y \in B_r(x)$.
Then the conclusion follows from \cite[Theorem 3.4]{Hisom}.
\end{proof}

In the Reifenberg-flat setting, the following theorem establishes an effective bi-H\"older estimate for regular maps whose Laplacians have controlled oscillations, originally going back to \cite{CJN}. 

\begin{theorem}[Canonical Reifenberg]\label{8as8ashas8ashsbasb}
For all $K \in \mathbb{R}, N \in [1, \infty), \tau \in (0, \infty), \delta \in (0,1)$, and $\epsilon \in (0,1)$, there exist $\eta = \eta (K, N, \tau, \delta, \epsilon) > 0$ and $C=C(K, N, \delta, \tau) > 0$ such that the following holds. Let $(X, x)$ be a pointed $\RCD(K, N)$ space of essential dimension $n$ such that $B_3(x)$ is $(\eta, 1)$-Reifenberg flat, and let $\Phi:B_3(x) \to \mathbb{R}^n$  be an $\eta$-GHA to $B_3(\Phi(x))$ with $\phi_{\alpha} \in D(\Delta, B_3(x))$  and 
    \begin{align}
        \fint_{B_3(x)}|\phi_{\alpha}|\di \meas_X+r^{2-2 \delta}\fint_{B_r(y)}(\Delta \phi_{\alpha})^2\di \meas_X \le \tau,\quad \text{for all $y \in B_2(x), \ 0<r<1$},
    \end{align}
where $\Phi=(\phi_1, \ldots, \phi_n)$. Then $\Phi|_{B_1(x)}$ gives a bi-H\"older chart with 
    the following estimates: \begin{align}\label{smsmsmsjjjs3333}
        (1-\epsilon)\dist_X(y,z)^{1+\epsilon} \le |\Phi(y)-\Phi(z)|_{\mathbb{R}^n} \le C\dist_X(y,z),\quad \text{for all $y, z \in B_1(x)$}
    \end{align}
and
\begin{align}\label{asoaisriasoransrsk}
        B_{1-\epsilon}(\Phi(x))\subset \Phi(B_1(x)) \subset B_{1+\epsilon}(\Phi(x)).
    \end{align}
Furthermore, if we additionally assume $\|\nabla \Delta \phi_i\|_{L^{\infty}} \le \tau$, then the right-hand-side of (\ref{smsmsmsjjjs3333}) can be improved to 
$
    \left| \Phi(y)-\Phi(z)\right|_{\mathbb{R}^n}  \le \left(1+\epsilon \right)\dist_X(y,z).
$
\end{theorem}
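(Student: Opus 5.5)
We argue by a contradiction-and-compactness scheme at every scale, in the spirit of the canonical Reifenberg theorem of \cite{CJN}, combined with the stability results Theorem \ref{hessconv} and Theorem \ref{almost splitting}. The Lipschitz upper bound is the easy part. The Morrey-type hypothesis on $\Delta\phi_\alpha$ is exactly \eqref{8s8s8s8sa}, so the estimate \eqref{asnasia8rnsasxx} from the proof of Proposition \ref{propregmap}(2) gives $|\nabla\phi_\alpha|\le C(K,N,\delta,\tau)$ on $B_2(x)$. Hence $|\Phi(y)-\Phi(z)|\le C\dist_X(y,z)$ on $B_1(x)$. If in addition $\|\nabla\Delta\phi_\alpha\|_{L^\infty}\le\tau$, we apply Proposition \ref{prop:sharpgradestimate} at every small scale to improve the constant to $1+\epsilon$. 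This requires smallness of $\fint|g_X-\Phi^*g_{\mathbb{R}^n}|$, which comes from the multiscale step below together with Theorem \ref{isomequiv}(1).

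\textbf{Key multiscale step (main obstacle).} We will show that for every $y\in B_1(x)$ and every scale $0<r\le 1$ there is a matrix $T_{y,r}\in GL(n)$ such that the rescaled map $T_{y,r}\circ(\Phi-\Phi(y))/r$ is an $\epsilon'$-GHA from $B_r(y)$ onto $B_r(\bo^n)$. Moreover, consecutive matrices must satisfy $|T_{y,r}T_{y,2r}^{-1}-\Id|\le\epsilon'$.

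This is proved by contradiction using the rescalings $X^{r,y}$ and $\Phi^{r,y}$ of Remark \ref{rem:rescal}. By \eqref{eq:lapbdzero}, the Laplacians of the rescaled maps tend to zero in $L^2$ as $r\to0$. Reifenberg flatness says the rescaled spaces are close to $\mathbb{R}^n$. The rescaled maps subconverge to a harmonic map $\mathbb{R}^n\to\mathbb{R}^n$ with at most linear growth, which must be linear (Theorem \ref{splitin}). The real issue is non-degeneracy, i.e.\ that this linear limit does not collapse. We control it by induction from the top scale: at scale $1$, $\Phi$ is an $\eta$-GHA, so $T_{y,1}\approx\Id$.

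For the induction, we take the normalized map at scale $2r$, which is $\epsilon'$-close to an isometry. We compare its $W^{1,2}$ energy on $B_r$ with the limit linear map; the convergence in Theorem \ref{hessconv}(1),(4) gives this comparison. Choosing $\eta$ small then makes the matrix change by a factor in $[1-\epsilon',1+\epsilon']$ per dyadic step. The delicate point is uniformity in $r$. We obtain it exactly as in \cite{CJN}: a compactness argument for \emph{linear-normalized} maps, which are harmonic up to an error that is summable across scales thanks to the factor $r^{2\delta}$ in \eqref{eq:lapbdzero}.

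\textbf{Conclusion.} Iterating over dyadic scales gives $|T_{y,r}^{-1}|\le(1+\epsilon')^{\log_2(1/r)}\le r^{-\epsilon}$ for small $\epsilon'$. For $y,z$ with $r=\dist_X(y,z)$, the GHA at scale $2r$ centered at $y$ therefore gives
\[
|\Phi(y)-\Phi(z)|\ge |T_{y,2r}|^{-1}\bigl(r-4\epsilon' r\bigr)\ge (1-\epsilon)r^{1+\epsilon}.
\]
This lower bound also shows $\Phi$ is injective, hence a homeomorphism onto its image. For the image inclusions \eqref{asoaisriasoransrsk}, the inclusion $\Phi(B_1(x))\subset B_{1+\epsilon}(\Phi(x))$ follows from the $\eta$-GHA property. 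For $B_{1-\epsilon}(\Phi(x))\subset\Phi(B_1(x))$ we use a degree argument: $\Phi$ is a continuous map that is uniformly close to an isometry onto $B_1$, so it has degree one over interior points. Alternatively, one can use the closest-point blow-up argument from the proof of Theorem \ref{asaisharsnasa}, which contradicts the surjectivity of the blow-up linear maps given by Proposition \ref{regularpointch}.
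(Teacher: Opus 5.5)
The outer layers of your argument are fine and match the paper. This covers the Lipschitz bound via \eqref{asnasia8rnsasxx}. It covers the $(1+\epsilon)$-improvement via Proposition \ref{prop:sharpgradestimate}, which only needs $\fint|g_X-\Phi^*g_{\mathbb{R}^n}|$ small at one fixed scale and so follows from the top-scale GHA and Theorem \ref{isomequiv}(1). It also covers the deduction of the bi-H\"older bound from normalizing matrices with per-step drift $1\pm\epsilon'$.

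The gap is the key multiscale step. You flag it as the main obstacle and defer it to \cite{CJN}, but the mechanism you actually describe does not work.

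First, blow-up limits are harmonic with at most linear growth only for the unnormalized $\Phi^{r,y}$, and that limit may be degenerate, which is the whole issue. For the normalized maps $T_{y,r}\Phi^{r,y}$, the growth on $B_R$ is controlled only by $\|T_{y,r}T_{y,rR}^{-1}\|R\lesssim R^{1+C\epsilon'}$. The rescaled Laplacians are bounded only by $\|T_{y,r}\|r^{\delta}\lesssim r^{\delta-C\epsilon'}$. So you need $C\epsilon'<\delta$, and a Liouville theorem for sub-quadratic growth on $\mathbb{R}^n$; Theorem \ref{splitin} is not the relevant statement.

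More seriously, your dyadic induction does not close. A one-step compactness argument starting from ``the normalized map at scale $2r$ is an $\epsilon'$-GHA'' yields in the limit only a harmonic $\epsilon'$-GHA on a ball, not a linear map. So at scale $r$ you get $\Psi(\epsilon'\,|\,K,N)$ rather than $\epsilon'$, and this degrades over the $\log_2(1/r)$ steps. Taking $\eta$ small cannot compensate for infinitely many steps. The errors are also not summable: the Reifenberg error $\eta$ is the same at every scale, which is exactly why only a bi-H\"older bound holds.

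What is missing is the first-bad-scale argument of Step 2 in the paper, following \cite{HP}:
\begin{itemize}
\item Fix a threshold $s_0$ and let $t_i$ be the infimum of $t$ such that \eqref{soassossisisihrbrb} holds on all scales in $[t,1]$.
\item The top-scale GHA forces $t_i\to 0$, and continuity in $s$ gives the equality \eqref{eq:contradiction1} at $t_i$.
\item On all scales above $t_i$, the drift bound $|(T_{\Phi,B_t(x)})^{-1}T_{\Phi,B_s(x)}|_{\infty}\le (t/s)^{Cs_0}$ of Step 1 is available.
\item Rescale at $2t_i$ with the normalization $T_{\Phi_i,B_{2t_i}(x_i)}$. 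The limit $\tilde\Phi:\mathbb{R}^n\to\mathbb{R}^n$ is harmonic because $t_i^{2\delta-Cs_0}\to 0$, and has growth of order at most $1+Cs_0<2$, hence is linear.
\item Then \eqref{eq:transf} forces $\langle\nabla\tilde\phi_j,\nabla\tilde\phi_k\rangle\equiv\delta_{jk}$, contradicting \eqref{eq:contradiction1}.
\end{itemize}
The fixed threshold $s_0$ is tested directly at every scale rather than inherited from the previous one, so nothing accumulates. This gives both the existence of the normalizations at all scales and the uniform per-scale drift your conclusion rests on. Without this, or some other non-accumulating mechanism, the core of the theorem is not proved.
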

\begin{proof}
Since this is justified along the same lines as done in \cite{HP}, let us provide only an outline of the proof. However, it should be emphasized that though the {\it almost non-negativity of Ricci curvature} is assumed in the corresponding results in \cite{HP}, this is not necessary to realize the above because of Theorem \ref{isomequiv}. 

\smallskip

\textbf{Step 0} (Transformation: definition): \textit{For $F=(f_1, \ldots, f_n):A \to \mathbb{R}^n$ with $\meas_X(A) \in (0, \infty)$, let
\begin{equation}
    G_{F, A}:=\left(\fint_A\langle \nabla f_i, \nabla f_j\rangle \di \meas_X\right)_{ij}.
\end{equation}
If $G_{F, A}$ is invertible, then the Cholesky decomposition allows us to find a unique lower triangular $(n\times n)$-matrix of size, denote it by $T_{F, A}$, whose diagonal consists of positive entries such that
$
    (T_{F, A})^t \cdot G_{F, A} \cdot T_{F, A}=E_n,
$
where $E_n$ denotes the identity matrix of size $n$. Define
$
    F_A(y):= F(y) \cdot (T_{F, A})^t=(f_{A, 1}(y), \ldots, f_{A, n}(y)).
$
}

\smallskip

Note that by definition
\begin{equation}\label{eq:transf}
    \fint_{A}\langle \nabla f_{A, i}, \nabla f_{A, j}\rangle \di \meas_X=\delta_{ij},\quad \text{for all $i, j$.}
\end{equation}

\smallskip

\textbf{Step 1} (Estimating $|T_{F, B}|_{\infty}$): \textit{Let $F:B_1(x) \to \mathbb{R}^n$ and let $s, t \in (0, 1]$ with $s<t$. If $G_{F, B_{\alpha}}$ is invertible for any $\alpha \in [s, t]$ with 
\begin{equation}
    \fint_{B_{\alpha}(x)}\left|\langle \nabla f_{B_{\alpha}(x), i}, \nabla f_{B_{\alpha}(x), j}\rangle-\delta_{ij}\right|\di \meas_X<\epsilon, \quad \text{for all $i, j$}
\end{equation}
for some $\epsilon \in (0,1)$, then denoting by $|\cdot|_{\infty}$ the $L^{\infty}$-norm, we have
\begin{equation*}
    \max \left\{\left| (T_{F, B_t(x)})^{-1}\cdot T_{F, B_s(x)}\right|_{\infty}, \left| (T_{F, B_s(x)})^{-1}\cdot T_{F, B_t(x)}\right|_{\infty}\right\} \le \left(\frac{t}{s}\right)^{C\epsilon}, \quad \text{where $C=C(K, N, n)$.}
\end{equation*}
}

\smallskip

See \cite[Corollary 3.6]{HP} for the proof.

\smallskip

From now on, let us go back to our actual case: $\Phi:B_3(x) \to \mathbb{R}^n$ which is an $\eta$-GHA.

\smallskip

\textbf{Step 2} (Transformation: existence): \textit{$G_{\Phi, B_t(x)}$ is invertible for any $t \in (0, 1]$ with
 \begin{equation}\label{eq:transfomation est}
        \fint_{B_{3t}(x)}\left|\langle \nabla \phi_{B_t(x), \alpha}, \nabla \phi_{B_t(x), \beta}\rangle -\delta_{\alpha \beta}\right| \di \meas_X \le \epsilon
    \end{equation}
 and
 \begin{equation}\label{eq:transfomation lap}
        t|\Delta \phi_{B_t(x), \alpha}|(y)\le C(K, N, \delta, \tau)t^{1-\epsilon}|\Delta \phi_{\alpha}|(y),\quad \text{for $\meas_X$-a.e. $y \in B_{t}(x)$}.
    \end{equation}
}

\smallskip

The proof of (\ref{eq:transfomation est}) is  analogous to that of \cite[Proposition 3.7]{HP}; we provide only the key points. Estimate   \eqref{eq:transfomation lap} then follows directly.

Assume that the assertion is not satisfied. Then there exist:
\begin{enumerate}
\item a sufficiently small $s_0>0$\footnote{This $s_0$ plays a role of $\delta$ in the proof of \cite[Proposition 3.7]{HP}} depending only on $K, N, \tau, \delta$;
    \item a sequence $\epsilon_i \to 0^+$ and a sequence of pointed $\RCD(K, N)$ spaces $(X_i, x_i)$ satisfying that $B_3(x_i)$ is $(\epsilon_i, 1)$-Reifenberg flat;
    \item a sequence of $\epsilon_i$-GHA $\Phi_i=(\phi_{i, 1}, \ldots, \phi_{i, n}):B_3(x_i) \to \mathbb{R}^n$ with $\phi_{i, \alpha} \in D(\Delta, B_3(x_i))$ and 
    \begin{align}
        \fint_{B_3(x_i)}|\phi_{i, \alpha}|\di \meas_{X_i}+r^{2-2 \delta}\fint_{B_r(y)}(\Delta \phi_{i, \alpha})^2\di \meas_{X_i} \le \tau,\quad \text{for all $y \in B_2(x_i), \ 0<r<1$},
    \end{align}
\end{enumerate}
such that if $t_i$ denotes the infimum of $t \in (0, 1]$ satisfying
\begin{equation}\label{soassossisisihrbrb}
    \fint_{B_{3s}(x_i)}\left| \langle \nabla \phi_{i, B_{s}(x_i), j}, \nabla \phi_{i, B_{s}(x_i), k}\rangle -\delta_{jk}\right|\di \meas_{X_i}<s_0, \quad \text{for all $s \in [t, 1] \cap \left(0, \frac{1}{3}\right)$ and $j, k$,}
\end{equation}
then $t_i>0$, where $(\Phi_i)_{B_s(x_i)}=(\phi_{i, B_{s}(x_i), 1}, \ldots, \phi_{i, B_{s}(x_i), n})$, here we have used Theorem \ref{isomequiv}.

By the continuity of the left-hand side of \eqref{soassossisisihrbrb} in $s$,
it is not difficult to check that $t_i \to 0^+$ and
\begin{equation}\label{eq:contradiction1}
    \fint_{B_{3t_i}(x_i)}\left| \langle \nabla \phi_{i, B_{t_i}(x_i), j}, \nabla \phi_{i, B_{t_i}(x_i), k}\rangle -\delta_{jk} \right|\di \meas_{X_i} = s_0
\end{equation}
for some $j=j(i), k=k(i)$. 
Then, we consider the rescaling (recall Remark \ref{rem:rescal} for the notations):
$
    \tilde X_i=X_i^{2t_i, x_i}, \tilde \Phi_i= ((\Phi_i)_{B_{2t_i}(x_i)})^{2t_i, x_i}=(\tilde \phi_{i,1},\ldots, \tilde \phi_{i, n}).
$
After passing to a subsequence, $\tilde \Phi_i$ locally uniformly converges to a map $\tilde \Phi=(\tilde \phi_1, \ldots, \tilde \phi_n):\mathbb{R}^n \to \mathbb{R}^n$ with $\int_{B_1(\bo^n)}\tilde \phi_j\di \haus^n=0$, $1\leq j\leq n$. Our goal is to show that $\tilde \Phi$ is linear because then, by (\ref{eq:transf}), we know
\begin{equation}
    \langle \nabla \tilde \phi_j, \nabla \tilde \phi_k\rangle \equiv \lim_{i\to \infty}\fint_{B_1(\tilde x_i)}\langle \nabla \tilde \phi_{i, j}, \nabla \tilde \phi_{i, k}\rangle \di \meas_{\tilde X_i}=\delta_{jk} 
\end{equation}
which contradicts (\ref{eq:contradiction1}) as $i \to \infty$.

In order to check the linearity of $\tilde \Phi$, first let us check its harmonicity.
Since
\begin{align} |\Delta_{\tilde X_i}\tilde \phi_{i, j}|(\tilde y_i) &\le  n\cdot 2t_i \cdot |T_{\Phi_i, B_{2t_i}(x_i)}|_{\infty}\cdot \max_l|\Delta_{X_i}\phi_{i, l}|(\tilde y_i)  \le 2n \cdot \max_l|\Delta_{X_i} \phi_{i, l}|(\tilde y_i) \cdot (t_i)^{1-Cs_0}, 
\end{align}
 for any fixed $R \ge 1$,  as $i \to \infty$ (recall also (\ref{eq:lapbdzero})) 
\begin{align}
    \int_{B_R(\tilde x_i)}|\Delta_{\hat X_i}\tilde \phi_{i, j}|^2\di \meas_{\tilde X_i}&\le C \cdot t_i^{2-Cs_0}\cdot \sum_{l=1}^n\fint_{B_R(x_i)}|\Delta_{X_i}\phi_{i, l}|^2\di \meas_{X_i}  \le Ct_i^{2\delta-Cs_0} \to 0,
\end{align} 
where we used $Cs_0<2\delta$.  
The argument of \cite[Proposition 3.7]{HP}, with the preceding modifications, also shows that $\tilde\Phi$ has polynomial growth of order at most $1+Cs_0$.
Therefore, $\widetilde{\Phi}$ is linear when $s_0$ is chosen so that $1+Cs_0<2$.

\smallskip

\textbf{Step 3}: \textit{Conclusions.}

\smallskip

We can prove this along the same arguments as in the proof of \cite[Proposition 4.7]{HP} together with the previous steps and Theorem \ref{isomequiv} (recall again (\ref{eq:lapbdzero})).
\end{proof}
    In the above theorem, if we replace $\mathbb{R}^n$ by $\mathbb{R}^k$ as the target space of $\Phi$, then the same conclusions also hold, except for \eqref{asoaisriasoransrsk}, with the constants also depending on $k$.

\subsection{Characterization of bi-Lipschitz charts with fine regularity}
The goal of this subsection is to characterize the bi-Lipschitz regularity of a map on a Reifenberg-flat domain in terms of quantitative non-degeneracy.
\begin{theorem}[Characterization of bi-Lipschitz chart]\label{bilipchara}
Let $X$ be an $\RCD(K, N)$ space for some $K \in \mathbb{R}$ and $N \in [1, \infty)$ of essential dimension $n$. Let $V, U$ be open subsets of $X$ with $B_{r_0}(V) \subset U$ for some $r_0>0$, 
and let $\Phi=(\phi_1, \ldots, \phi_n):U \to \mathbb{R}^n$ with
$\phi_{\alpha} \in D(\Delta, U)$ and
    \begin{equation}
        \fint_{B_{r_0}(x)}|\phi_{\alpha}|\di \meas_X+r^{2-2\delta}\fint_{B_r(y)}(\Delta \phi_{\alpha})^2\di \meas_X \le \gamma,\quad \text{for all $y \in V$ and $0<r<r_0$}
    \end{equation}
    for some $\delta \in (0,1]$ and $\gamma>1$. 
If $V$ is Reifenberg flat, 
 then the following conditions are equivalent.
\begin{enumerate}
\item $\Phi|_V$ gives a locally bi-Lipschitz chart in the sense that, for any $x \in V$, there exists $r>0$ such that $\Phi|_{B_r(x)}$ is a bi-Lipschitz map onto the image $\Phi(B_r(x))$ which is open in $\mathbb{R}^n$.
\item For any $x \in V$, there exist $s>0$ and $\tau>0$ such that 
\begin{equation}\label{lowertau}
\det(g^{\alpha \beta})_{\alpha \beta} (y) \ge \tau,\quad \text{for $\meas$-a.e. $y \in B_s(x)$.} 
\end{equation}
\end{enumerate}
\end{theorem}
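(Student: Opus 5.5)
We prove the two implications separately. The direction (2) $\Rightarrow$ (1) is the substantial one; (1) $\Rightarrow$ (2) is essentially a blow-up argument.

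\textbf{The direction (1) $\Rightarrow$ (2).} Fix $x \in V$ and take $r>0$ such that $\Phi|_{B_r(x)}$ is $L$-bi-Lipschitz onto an open set. By Proposition \ref{regularpointch}(2), at every $y \in B_{r/2}(x)$ that is a Lebesgue point of all $g^{\alpha\beta}$, every blow-up of $\Phi$ is a linear bi-Lipschitz homeomorphism of $\mathbb{R}^n$. Its Lipschitz constants are controlled by $L$, since bi-Lipschitz bounds pass to blow-ups.

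The blow-up also satisfies $\langle \nabla\tilde\phi_\alpha,\nabla\tilde\phi_\beta\rangle \equiv g^{\alpha\beta}(y)$, because $y$ is a Lebesgue point and $W^{1,2}$-strong convergence holds by Theorem \ref{hessconv}. A linear map $A$ with $L^{-1}|v|\le|Av|\le L|v|$ has $\det(AA^t)\ge L^{-2n}$. Hence $\det(g^{\alpha\beta})(y)\ge L^{-2n}$ for $\meas_X$-a.e. $y$, which gives \eqref{lowertau} with $\tau=L^{-2n}$.

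\textbf{The direction (2) $\Rightarrow$ (1): setting up the Reifenberg chart.} Fix $x\in V$. We use the Reifenberg flatness of $V$ and rescale at small scales $s$ around $x$. After normalizing $\Phi$ by the Cholesky transformation $T_{\Phi,B_s(x)}$ as in Step 0 of Theorem \ref{8as8ashas8ashsbasb}, the rescaled map $(\Phi_{B_s(x)})^{s,x}$ satisfies the following.
\begin{itemize}
\item The scale-invariant Morrey bound on $\Delta$ holds, and it becomes small thanks to \eqref{eq:lapbdzero}.
\item The rescaled map is an $\eta$-GHA for $s$ small. To see this, argue by contradiction: a blow-up limit on $\mathbb{R}^n$ is harmonic by Proposition \ref{propregmap}(2), and it has $\det\ge\tau>0$ almost everywhere in the limit. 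To extract a GHA one needs $\langle\nabla\tilde\phi_\alpha,\nabla\tilde\phi_\beta\rangle$ constant. Instead of proving that, invoke Theorem \ref{almost splitting}(1) together with Theorem \ref{isomequiv}(2) to produce the GHA directly.
\end{itemize}
Theorem \ref{8as8ashas8ashsbasb} then shows that $\Phi$ is a bi-H\"older homeomorphism from $B_{s}(x)$ onto an open set containing a ball. In particular $\Phi$ is injective and open near $x$, and it is Lipschitz from above.

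\textbf{The lower Lipschitz bound (main obstacle).} It remains to upgrade the bi-H\"older lower bound to a Lipschitz one. The Jacobian bound \eqref{lowertau}, combined with $g^{\alpha\beta}\in L^\infty$, shows that the matrix $(g^{\alpha\beta})$ is uniformly positive definite, with eigenvalues in $[\tau C^{1-n},C]$. Hence $|\nabla(F\circ\Phi)|\ge c\,|\nabla F|\circ\Phi$ for smooth $F$.

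Now consider the inverse $\Psi=\Phi^{-1}$ on the open set $\Phi(B_s(x))\subset\mathbb{R}^n$. For $y\in B_s(x)$ the function $\dist_X(y,\cdot)$ has slope $1$, and composing with $\Psi$ gives a function $h=\dist_y\circ\Psi$ on Euclidean space. Using Proposition \ref{deri}-type compatibility (Sobolev functions under bi-H\"older homeomorphisms with this infinitesimal bound), we show $|\nabla_{\mathbb{R}^n} h|\le C$ almost everywhere. Since $\Phi(B_s(x))$ contains a convex ball, $h$ is then $C$-Lipschitz there. This gives $\dist_X(y,z)\le C|\Phi(y)-\Phi(z)|$ on a smaller ball.

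Making this compatibility rigorous is the main obstacle, because the map is only known to be bi-H\"older a priori. As a fallback, argue by contradiction. Take pairs $y_i,z_i$ with ratio $|\Phi(y_i)-\Phi(z_i)|/\dist_X(y_i,z_i)\to0$, and blow up at scale $\dist_X(y_i,z_i)$ around $y_i$. The pointwise Jacobian bound passes to $L^2$-weak limits of $g^{\alpha\beta}$; this uses convexity of $-\log\det$ and lower semicontinuity. The limit is a harmonic map on $\mathbb{R}^n$ with polynomial growth, so it is linear, and its determinant is at least $\tau$. This contradicts the fact that the limits of $y_i$ and $z_i$ are distinct points with the same image.
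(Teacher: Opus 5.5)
Your (1) $\Rightarrow$ (2) is essentially the paper's argument and is fine. It blows up at a Lebesgue point, uses that the harmonic bi-Lipschitz limit is linear, and identifies its Gram matrix with $g^{\alpha\beta}(y)$ via Theorem \ref{hessconv}(1).

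In (2) $\Rightarrow$ (1), the justification of the GHA step does not work as written. Theorem \ref{almost splitting}(1) produces \emph{some} harmonic map with almost orthonormal gradients; it says nothing about your normalized map $\Phi_{B_s(x)}$. Theorem \ref{isomequiv}(2), applied to $\Phi_{B_s(x)}$, needs $\fint_{B_s(x)}|\Phi_{B_s(x)}^*g_{\mathbb{R}^n}-g_X|\,\di\meas_X$ to be small. That is exactly the $L^1$ near-constancy of $g^{\alpha\beta}$ on $B_s(x)$ that you say you are avoiding.

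The missing fact follows from the Liouville step you use yourself at the end.
\begin{itemize}
\item The rescaled normalized maps are equi-Lipschitz. This uses Proposition \ref{propregmap}(2) and a bound on $|T_{\Phi,B_s(x)}|$. That bound holds because each $g^{\alpha\beta}(y)$ has eigenvalues in $[\tau C^{1-n},C]$, and hence so does their average.
\item Their Laplacians tend to $0$ in $L^2_{\loc}$. So any limit is a globally Lipschitz harmonic map on $\mathbb{R}^n$, hence linear.
\item By Theorem \ref{hessconv}(1) and the normalization, its Gram matrix is $\delta_{\alpha\beta}$. So it is an isometry, and uniform convergence gives the GHA.
\end{itemize}

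With this repaired the proof goes through, but it is more roundabout than the paper's. Your fallback contradiction argument at scale $\dist_X(y_i,z_i)$ is exactly the Claim inside the proof of Proposition \ref{keyprop}, used in contrapositive form. The paper uses that Claim positively, at every $y$ near $x$ and every small scale. This shows that $\Phi M^y$ is a GHA with $C^{-1}\Id_n\le M^y\le C\Id_n$. Taking the scale $t=\dist_X(y,z)$ then gives both Lipschitz bounds at once. Openness of the image comes from Reifenberg flatness via \cite{KM}.

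So the detour through Theorem \ref{8as8ashas8ashsbasb}, and your route via $\dist_y\circ\Phi^{-1}$, are unnecessary; the lower bound is not really the main obstacle.
\begin{itemize}
\item You do not need the bi-H\"older bound to force $\dist_X(y_i,z_i)\to 0$. The contradiction argument yields $\rho,c>0$ with $|\Phi(y)-\Phi(z)|\ge c\,\dist_X(y,z)$ whenever $\dist_X(y,z)<\rho$. Restricting to $B_{\rho/2}(x)$ then covers all pairs.
\item The convexity of $-\log\det$ is also superfluous. To identify the limit of $g_i^{\alpha\beta}$ with the Gram matrix of the limit map you need strong $W^{1,2}$ convergence (Theorem \ref{hessconv}(1)) anyway. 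Once you have it, $\det\ge\tau$ passes to the limit directly.
\end{itemize}
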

      The implication (1) $\Rightarrow$ (2) in Theorem \ref{bilipchara} does not require  Reifenberg flatness. For (2) $\Rightarrow$ (1), it is sufficient to assume that $V$ is $(\epsilon, s_0)$-Reifenberg flat for some $s_0>0$ and sufficiently small $\epsilon = \epsilon (K, N, \delta, \gamma, \tau) > 0$.

A key technical result to prove Theorem \ref{bilipchara} is the following.
\begin{proposition}\label{keyprop}
For all $K \in \mathbb{R}$, $N \in [1, \infty)$, $\gamma>1$, $\delta \in (0, 1]$,  $r_0>0$ and $\epsilon>0$, there exist $\epsilon_1=\epsilon_1(K, N, \gamma, \delta,  r_0, \epsilon)>0$, $r_1=r_1(K, N, \gamma, \delta,  r_0, \epsilon)>0$ and $C=C(K, N, \delta, \gamma,  r_0)>0$ such that the following hold.
Let $X$ be an $\RCD(K, N)$ space of essential dimension $n$, 
let $V$ and $U$ be open subsets of $X$ with $B_{r_0}(V) \subset U$
and let $\Phi=(\phi_1, \ldots, \phi_n):U \to \mathbb{R}^n$ with
$\phi_{\alpha} \in D(\Delta, U)$,
$
\det\left(g^{\alpha \beta}\right)_{\alpha \beta}(y) \ge \gamma^{-1}$ for $\meas$-a.e. $y \in V$,
and
    \begin{equation}
        \fint_{B_r (y)}|\phi_{\alpha}|\di \meas_X+r^{2-2\delta}\fint_{B_r(y)}(\Delta \phi_{\alpha})^2\di \meas_X \le \gamma,\quad \text{for all $y \in V$ and $0<r<r_0$.}
    \end{equation}
Assume that $V$ is $(\epsilon_1, r_1)$-Reifenberg flat.
Then, for any $y \in V$, there exists a symmetric $(n\times n)$-matrix $M^y$ such that  $
 C^{-1} \cdot \Id_n \le M^y \le C \cdot \Id_n
$ 
holds such that the map  
$
    \Phi^y:U \to \mathbb{R}^n, \Phi^y(z) := \Phi(z) \cdot M^y,
$
gives an $(\epsilon r)$-GHA from $B_r(y)$ to $B_r(\Phi^y(y))$ for any $r \in (0, r_1]$.  
\end{proposition}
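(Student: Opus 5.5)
The natural normalization is the one from Step 0 of the proof of Theorem \ref{8as8ashas8ashsbasb}. For $y\in V$ and $r\in(0,r_1]$ I would take $M^y$ to be built from the Gram matrix
\[
G_{\Phi,B_{r}(y)}=\Bigl(\fint_{B_{r}(y)} g^{\alpha\beta}\,\di\meas_X\Bigr)_{\alpha\beta},
\]
at the top scale $r=r_1$. Concretely, $M^y$ would be the symmetric square root $G^{-1/2}_{\Phi,B_{r_1}(y)}$, or the Cholesky factor $T_{\Phi,B_{r_1}(y)}$ symmetrized via polar decomposition. Then $\Phi^y$ satisfies $\fint_{B_{r_1}(y)}\langle\nabla\phi^y_\alpha,\nabla\phi^y_\beta\rangle=\delta_{\alpha\beta}$.

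The bounds $C^{-1}\Id_n\le M^y\le C\Id_n$ come in two halves.
\begin{itemize}
\item The upper bound on $G$ needs a Lipschitz bound on the $\phi_\alpha$. This follows from \eqref{asnasia8rnsasxx} (i.e.\ (2) of Proposition \ref{propregmap}) using the $L^1$ and Morrey bounds with constant $\gamma$, giving $|\nabla\phi_\alpha|\le C(K,N,\gamma,\delta,r_0)$ on $V$.
\item The lower bound on $G$ comes from $\det(g^{\alpha\beta})\ge\gamma^{-1}$ a.e. together with the upper bound. The smallest eigenvalue of $g^{\alpha\beta}$ is then pointwise bounded below, hence so is that of its average, since averaging preserves the lower bound on positive definite forms.
\end{itemize}

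The main step is to show that this single $\Phi^y$ is an $(\epsilon r)$-GHA on every scale $r\le r_1$. The plan is to run the contradiction-and-blow-up scheme of Step 2 in the proof of Theorem \ref{8as8ashas8ashsbasb}. Suppose there are sequences $X_i$, $V_i$ that are $(1/i,r_{1,i})$-Reifenberg flat with $r_{1,i}\to0$, points $y_i$ and scales $s_i\le r_{1,i}$ where the GHA property fails. Rescale by $s_i$. The rescaled spaces converge to $\mathbb R^n$ by Reifenberg flatness. The rescaled maps $(\Phi^{y_i}_i)^{s_i,y_i}$ are uniformly Lipschitz, and their Laplacians tend to $0$ in $L^2_{\loc}$ by \eqref{eq:lapbdzero}, because $r^{2-2\delta}$ scaling kills them. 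The limit is therefore a harmonic map $\tilde\Phi:\mathbb R^n\to\mathbb R^n$ with linear growth, hence affine, and its Gram matrix is nondegenerate by the determinant lower bound and (2) of Theorem \ref{hessconv}.

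To obtain a GHA, the limit must in fact be an isometry, not merely linear. This is exactly where the scale-by-scale normalization matters. Step 1 of Theorem \ref{8as8ashas8ashsbasb} (\cite[Corollary 3.6]{HP}) shows that the normalizing matrices $T_{\Phi,B_t(y)}$ vary by at most a factor $(t/s)^{C\epsilon'}$ between scales, provided the almost-orthonormality condition holds down to scale $s$. That condition is what the bootstrap of Step 2 establishes, using Theorem \ref{isomequiv} to pass between ``GHA'' and ``$\fint|\Phi^*g_{\mathbb R^n}-g_X|$ small''.

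I expect one difficulty here. Step 1 only gives drift $(r_1/r)^{C\epsilon'}$, which is unbounded as $r\to0$, so a single fixed $M^y$ cannot be isometric at all small scales in general. The way out is the uniform pointwise bound on $g^{\alpha\beta}$ from the first paragraph. Every $T_{\Phi,B_t(y)}$ lies in a fixed compact set of matrices, $C^{-1}\le T\le C$, independently of $t$. So the drift is bounded after all, but it still need not be close to $\Id$.

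My fallback is therefore the following. For each fixed $r$, use the normalized map $\Phi_{B_r(y)}$, which by Theorem \ref{isomequiv}(2) together with the compactness contradiction is an $\epsilon r$-GHA on $B_r(y)$. The statement asks for one matrix $M^y$ valid for all $r\le r_1$, and I would obtain it as follows. Since $g^{\alpha\beta}$ is nearly constant in the Reifenberg-flat regime at a Lebesgue-type level (Proposition \ref{regularpointch}), take $M^y=\lim_{t\to0}T_{\Phi,B_t(y)}$ along the nested scales, and show the limit exists via a Cauchy estimate from Step 1 with $\epsilon'$ decaying.

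Making this Cauchy estimate quantitative is the obstacle. I would first try to prove it by contradiction, applying the blow-up argument to pairs of scales $s<t$ with $t/s$ bounded. This would show $|T_s^{-1}T_t-\Id|\le\epsilon$ whenever $r_1$ is small and the flatness parameter $\epsilon_1$ is small.
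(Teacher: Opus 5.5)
\paragraph{What is right.} Your per-scale argument is correct, and it is in substance the paper's proof. The paper's Claim says the following: if maps are equi-Lipschitz on spaces converging to $\mathbb{R}^n$, their Laplacians tend to $0$ in $L^2_{\loc}$, and $\det(g^{\alpha\beta})\ge\tau$, then their limits are bi-Lipschitz linear maps $z\mapsto zA$. The paper combines this with a contradiction argument at a failing scale $r_i\le r_{1,i}\to0$, using Proposition \ref{propregmap} and the rescaled Morrey bound exactly as you do. Then $M=(A^tA)^{-1/2}$ is symmetric with $C^{-1}\Id_n\le M\le C\Id_n$ and turns the limit into an isometry. Your Gram normalization $G_{\Phi,B_r(y)}^{-1/2}$ does the same job, provided it is taken at that same scale $r$. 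One correction: nondegeneracy of the limit comes from the $W^{1,2}$-strong convergence in (1) of Theorem \ref{hessconv}. The weak convergences in (2) do not preserve a lower bound on $\det$.

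\paragraph{The gap.} It is the step you flagged yourself: one $M^y$ serving every $r\in(0,r_1]$. Your repair does not work. Bounds $|T_s^{-1}T_t-\Id_n|\le\epsilon$ for $t/s\le2$ only chain to $(1+\epsilon)^{\log_2(r_1/r)}$. Also, Proposition \ref{regularpointch} presupposes a Lebesgue point of $g^{\alpha\beta}$; it does not give quantitative near-constancy of $g^{\alpha\beta}$. In fact no repair exists, because the scale-uniform reading of the statement is false.

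\paragraph{Counterexample.} For small $\eta>0$, let $X$ be the flat cone of angle $2\pi(1-\eta)$ with its tip replaced by a flat disc of radius $\rho_0=e^{-1/\eta}$. This is a convex surface, hence $\RCD(0,2)$. In a conformal coordinate $z$ its metric is $e^{2u}|dz|^2$, with $u=0$ for $|z|\le\rho_0$ and $e^{u}=e^{-1}|z|^{-\eta}$ for $|z|\ge\rho_0$.
\begin{itemize}
\item The asymptotic volume ratio is $1-\eta$, so Bishop--Gromov gives $\haus^2(B_r(x))\ge(1-\eta)\pi r^2$ for all $x$ and $r$. Almost volume rigidity then makes $X$ $\Psi(\eta)$-Reifenberg flat at every scale.
\item The map $\Phi=(\mathrm{Re}\,z,\mathrm{Im}\,z)$ is harmonic, since the Dirichlet energy is conformally invariant.
\item $g^{\alpha\beta}=e^{-2u}\delta^{\alpha\beta}$ has determinant $\ge1$.
\item The $L^1$ bounds hold on $B_2(o)$, where $o=\{z=0\}$, uniformly in $\eta$.
\end{itemize}
On $B_{\rho_0/4}(o)$ the map $\Phi$ is an isometry onto a Euclidean disc, which forces $M^o\approx\Id_2$. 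For fixed $r_1$ and $\eta\to0$, however, $\Phi|_{B_{r_1}(o)}$ is close to a homothety of ratio $e$, which forces $M^o\approx e^{-1}\Id_2$.

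\paragraph{What to prove instead.} The proposition holds, and the paper's argument proves it, only with $M^y=M^{y,r}$ depending on $r$ as well. That is also all that is used later: Theorem \ref{bilipchara} takes $t=\dist_X(y,z)$, and the Claim in Proposition \ref{bibilip} uses $B_{x,r}$. Drop the single-matrix step and prove that version.
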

\begin{proof}

We first prove the following claim.

\smallskip

\textbf{Claim} \textit{Let $X_i$ be a sequence of $\RCD(-\delta_i, N)$ spaces of essential dimension $n$, pmGH converging to $\mathbb{R}^n$, where $\delta_i \to 0^+$, let $\Phi_i=(\phi_{i, 1},\ldots,\phi_{i,n}):B_{R_i}(x_i) \to \mathbb{R}^n$ with $R_i \to \infty$,  be an equi-Lipschitz maps locally uniformly converging to a Lipschitz map $\Phi_{\infty} = (\phi_{\infty, 1}, \ldots, \phi_{\infty, n}): \dR^n \to \dR^n$.  Assume that $\phi_{i, \alpha} \in D(\Delta_{X_i}, B_{R_i}(x_i))$ for every $\alpha\in\{1,\ldots, n\}$ with
\begin{equation}\label{harmcn}
\int_{B_R(x_i)}(\Delta_{X_i} \phi_{i,\alpha})^2\di \meas_{X_i}\to 0,\quad \textit{for any $R>0$,}
        \end{equation}
        and that there exists some $\tau > 0$ independent of $i \in \dZ_+$ such that 
         \begin{equation}\label{sasnja}
            \det(\langle \nabla \phi_{i, \alpha}, \nabla \phi_{i, \beta}\rangle)_{\alpha \beta} \ge \tau>0,\quad \textit{for $\meas_{X_i}$-a.e. on $B_{R_i}(x_i)$}.
        \end{equation}
        Then $\Phi_{\infty}$ is a bi-Lipschitz linear map. 
}

\smallskip

 As in the proof of Proposition \ref{regularpointch}, Theorem \ref{hessconv} and \eqref{harmcn} shwo that $\Phi_{\infty}$ is harmonic on $\mathbb{R}^n$. Since $\Phi_{\infty}$ is globally Lipschitz,   
$\Phi_{\infty}$ must be a linear map. 
    Applying (1) of Theorem \ref{hessconv} with \eqref{sasnja}, one can conclude that
    $
        \det(\langle \nabla \phi_{\infty, \alpha}, \nabla \phi_{\infty, \beta}\rangle)_{\alpha \beta} \ge \tau>0$ on $\mathbb{R}^n$.
    In particular, $\Phi_{\infty}$ must be bi-Lipschitz which proves the \textit{Claim}.

Finally,
    the conclusion follows from the \textit{Claim}, Proposition \ref{propregmap}, and a standard contradiction argument. We omit the details; see the proof of (2) of Theorem \ref{isomequiv}.
\end{proof}

We are ready to prove Theorem \ref{bilipchara}.

\begin{proof}[Proof of the implication (2) $\Rightarrow$ (1) in Theorem \ref{bilipchara}]

Let $x \in V$, and assume that  \eqref{lowertau} holds for some $0< s \le r_0$  and some $\tau>0$. 
Applying Proposition \ref{keyprop} to the map $\Phi$, we conclude that for any $y \in B_{s}(x)$, there exists a symmetric $(n \times n)$-matrix $M^y$ satisfying
    \begin{equation}\label{twosid}
C^{-1} \cdot \Id_n \le M^y \le C \cdot \Id_n, \quad \text{for some  constant $C>1$,}
\end{equation}
 such that the map 
$
    \Phi^y(z) := \Phi(z) \cdot M^y: B_{2t}(y)  \to  B_{2t}(\Phi^y(y))$ defines a $\frac{t}{10}$-GHA for any $0<t \le r_1$.
Then, for any $y, z \in B_{\frac{r_1}{2}}(x)$ with $y \neq z$, taking $t=\dist_X(y, z)>0$, we have
\begin{align}
\left| \left|\Phi^y(y)-\Phi^y(z) \right|-\dist_X(y, z)\right|\le  \frac{\dist_X(y, z)}{10},\end{align}
and therefore \begin{align}
   \frac{9\dist_X(y,z)}{10} \le \left|\Phi^y(y)-\Phi^y(z) \right| \le \frac{11\dist_X(y, z)}{10}.
\end{align}
Together with \eqref{twosid}, this yields the desired bi-Lipschitz estimate. Finally, \cite[Theorem 2.9 and Remark 2.10]{KM} and  Reifenberg flatness imply that $\Phi(B_{\frac{r_1}{2}}(x))$ is open.
\end{proof}
\begin{proof}[Proof of the implication   (1) $\Rightarrow$ (2) in Theorem \ref{bilipchara}]
Take an $n$-regular point $y \in V$ and consider a tangent cone at $y$: 
$
X^{r_i, y} \xrightarrow{\mathrm{pmGH}}  \mathbb{R}^n$ for some $r_i \to 0^+$.   
  Let $\widetilde \Phi_{\infty} =(\tilde \phi_{1, \infty}, \ldots, \tilde \phi_{n, \infty}):\mathbb{R}^n \to \mathbb{R}^n$ be a uniformly convergent limit map of the rescaling $\Phi^{r_i, y} = (\phi_{1, i}, \ldots, \phi_{n, i})$. Then since the bi-Lipschitz property is stable under rescaling, the limit map $\widetilde \Phi_{\infty}$ is a bi-Lipschitz embedding. On the other hand, by (2) of Proposition \ref{propregmap}, $\widetilde \Phi_{\infty}$ is harmonic. Thus, one can conclude that $\widetilde{\Phi}_{\infty}$ must be linear. In particular, there exists some $\tau  > 0$  depending only on $n$ and the bi-Lipschitz constant of $\Phi$ around $y$ such that
$
    \det(\langle \nabla \tilde \phi_{\alpha, \infty}, \nabla \tilde \phi_{\beta, \infty}\rangle)_{\alpha \beta} \ge \tau.$
Since $y\in V$ is arbitrary, the conclusion follows immediate from the Lebesgue differentiation theorem and (1) of Theorem \ref{hessconv}:
\begin{align*}\begin{split}
    \det\left(\langle \nabla \phi_{\alpha}, \nabla \phi_{\beta} \rangle\right)_{\alpha \beta} (y) = &\ \lim_{i \to \infty}\fint_{B_{r_i}(y)}\det \left(\langle \nabla \phi_{\alpha}, \nabla \phi_{\beta} \rangle\right)_{\alpha \beta} \di \meas_X \\
    = & \ \lim_{i \to \infty}\int_{B_1(y, r_i^{-1} \dist)}\det \left(\langle \nabla \phi_{\alpha, i}, \nabla \phi_{\beta, i} \rangle\right)_{\alpha \beta} \di \meas_{X^{r_i, y}}
    \\
    = & \ \det(\langle \nabla \tilde \phi_{\alpha, \infty}, \nabla \tilde \phi_{\beta , \infty} \rangle)_{\alpha \beta} (0), \quad \text{for $\meas_X$-a.e. $y \in U$.} \qedhere
\end{split}
\end{align*}
\end{proof}

\subsection{Quantitative bi-Lipschitz control}
In this subsection, we derive quantitative bi-Lipschitz estimates for maps with suitable  Laplacian bounds, based on the preceding arguments.
To simplify the notation, we recall the following standard notation in the Riemannian convergence theory (see for instance \cite{Cbook}).
\begin{definition}[Small function: $\Psi$]\label{Psi}
    Any function $f:(0, \infty)^k \times(0, \infty)^l  \to [0, \infty)$ satisfying
    \begin{equation}
        f(\epsilon_1, \ldots, \epsilon_k, c_1, \ldots, c_l) \to 0,\quad \text{as $\epsilon_i \to 0$ for all fixed $c_i$}
    \end{equation}
    is denoted by $\Psi(\epsilon_1,\ldots,\epsilon_k|c_1,\ldots, c_l)$, or by $\Psi$ for short.
\end{definition}
It should be emphasized that the following proposition holds without assuming the Reifenberg flatness. Denote by $\Sym (n;\mathbb{R})$ the set of all symmetric $(n \times n)$-matrices. 
\begin{proposition}\label{bibilip}
    Let $X$ be an $\RCD(K, N)$ space for some $K \in \mathbb{R}$ and some $N \in [1, \infty)$ of essential dimension $n$, let $V, U$ be open subsets of $X$ with $B_{r_0}(V) \subset U$ for some $r_0>0$,
     let $\Phi=(\phi_1, \ldots, \phi_n):U\to \mathbb{R}^n$ with
    $\phi_i \in D(\Delta, U)$ and 
    \begin{equation}
\fint_{B_{r_0}(y)}|\phi_i|\di \meas_X+r^{2-2\delta}\fint_{B_r(y)}(\Delta \phi_i)^2\di \meas_X \le \gamma,\quad \text{for all $y \in V$ and $0<r<r_0$}
    \end{equation}
    for some $0<\delta \le 1$ and some $\gamma>1$,
    and let $M=(a_{ij})_{ij}: U \to \Sym (n;\mathbb{R})$ be a Borel map. 
    For all $0<s \le r\le 1$ and $\epsilon>0$, we define
\begin{align*}
A_s = A_s (V, U, \Phi, M, s, \epsilon) :=   \left\{x \in V \Big| B_{2s}(x)\subset U,\,\, \fint_{B_s (x)} \left| g^{i j} - a_{ij}(x) \right|\di \meas_X <\epsilon, \,\,\text{for all $i, j$}\right\},
\end{align*}
where $g^{ij} := \langle \nabla \phi_i, 
\nabla \phi_j \rangle$, and let
$
\tilde A= \tilde A (V, U, \Phi, M, r, \epsilon) := \bigcap_{s \in (0, r]} A_s.
$
Assume that
$
    \det(M)(x)\ge \gamma^{-1}$ for $\meas_X$-a.e. $x \in \tilde A$.
Then,  for any $x \in \tilde A$, $\Phi|_{B_r(x) \cap \tilde A}$ is $C$-bi-Lipschitz for some $C=C(K, N, \gamma, \delta, r_0)>1$, whenever both $r$ and $\epsilon$ are small depending only on $K, N, \gamma, \delta$ and $r_0$, namely
\begin{equation}
C^{-1}\dist_X(y, z) \le |\Phi(y)-\Phi(z)|_{\mathbb{R}^n}\le  C \dist_X(y, z), \quad \text{for all $y, z \in B_r(x)\cap \tilde A$.}
\end{equation}
\end{proposition}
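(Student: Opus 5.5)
The upper Lipschitz bound is the easy half. By Corollary \ref{prop:reglp}(2), or more directly by Proposition \ref{propregmap}(2) with the Morrey-type bound on $\Delta\phi_i$ and the gradient estimate \eqref{asnasia8rnsasxx}, each $\phi_i$ is Lipschitz on $B_{r_0/2}(V)$. The constant depends only on $K,N,\gamma,\delta,r_0$, since the $L^1$ bound on $\phi_i$ controls the first term and the $r^{2-2\delta}$ bound makes the dyadic sum converge. So it remains to prove the lower bound $|\Phi(y)-\Phi(z)|\ge C^{-1}\dist_X(y,z)$ for $y,z\in B_r(x)\cap\tilde A$.

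For the lower bound I would mimic the proof of Proposition \ref{keyprop}, but at the points of $\tilde A$ and using only the averaged closeness of $g^{ij}$ to the constant matrix $a_{ij}(y)$. Fix $y,z\in B_r(x)\cap\tilde A$ and set $t:=\dist_X(y,z)\le 2r$. The claim to prove is the following: for $y\in\tilde A$ and every $s\le r$, the rescaled map $(\Phi\cdot M(y)^{-1/2})^{s,y}$, normalized using $M(y)$ and possibly after a further bounded linear change, is an $\eta s$-GHA from $B_s(y)$ onto its image in $\mathbb{R}^n$, and its image of $B_s(y)$ contains $\Phi(y)+$ a ball of radius $c s$, where $\eta\to0$ as $\epsilon, r\to 0$. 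Apply this with $s=2t$. Then $|\Phi(y)-\Phi(z)|\ge c'(t-\eta t)$, and the bounds $\det M\ge\gamma^{-1}$ and $|M|\le C$ make the linear normalization uniformly bi-Lipschitz. The bound $|M|\le C$ holds since $a_{ij}(y)$ is close in average to $g^{ij}$, which is bounded by the Lipschitz bound above.

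I would prove the claim by contradiction and compactness. Suppose it fails. Then there are spaces $X_k$, maps $\Phi_k$, points $y_k\in\tilde A_k$ with $\epsilon_k,r_k\to0$, and scales $s_k\le r_k$ where it fails. Rescale by $X_k^{s_k,y_k}$ and $\Phi_k^{s_k,y_k}\cdot M_k(y_k)^{-1/2}$ and pass to a limit $(Y,y_\infty)$ using Theorem \ref{hessthem}(7). The limit space is $\RCD(0,N)$, since $K s_k^2\to0$. The rescaled Laplacians tend to $0$ in $L^2_{\loc}$ by \eqref{eq:lapbdzero}. This works on every ball $B_R$ because $y_k\in A_{Rs_k}$ for all $R$ with $Rs_k\le r_k$; to ensure this I would first apply the argument at scales with $Rs_k\le r_k$, taking $R\to\infty$ slowly. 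By Theorem \ref{hessconv}, the limit $\tilde\Phi$ is harmonic on $Y$ and globally Lipschitz. Because $y_k\in A_{Rs_k}$, we have $\fint_{B_R}|\langle\nabla\tilde\phi_{k,i},\nabla\tilde\phi_{k,j}\rangle-\delta_{ij}|\to0$ for all $R$. Strong $W^{1,2}_{\loc}$ convergence then gives $\langle\nabla\tilde\phi_i,\nabla\tilde\phi_j\rangle\equiv\delta_{ij}$. By Theorem \ref{splitin}(1)(c), each $\tilde\phi_i$ is linear, and we get $n$ orthonormal splittings, so $Y=\mathbb{R}^n\times W$. Theorem \ref{rectifi}(2) forces $W$ to be a point. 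Hence $\tilde\Phi$ is an isometry of $\mathbb{R}^n$, and the uniform convergence contradicts the failure of the GHA property. Surjectivity onto a ball of radius $cs$ is not needed: the GHA estimate applied to $z$ already gives the distance lower bound.

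The main obstacle is the uniformity across scales. The condition $x\in\tilde A$ only controls balls of radius $\le r$ centered at that point, while the blow-up requires convergence on balls $B_{Rs}$. This is why I would use scales with $Rs\le r$ and treat $t$ comparable to $r$ separately. For such $t$ the bound is immediate: apply the same compactness at a fixed scale $s\simeq r$, where the GHA estimate gives $|\Phi(y)-\Phi(z)|\ge c\,r\ge c't$ once $t\ge r/R$. A second point to watch is that $\epsilon$ and $r$ must be chosen depending only on $K,N,\gamma,\delta,r_0$, which the contradiction argument provides.
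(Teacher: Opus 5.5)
Your argument is essentially the paper's. Both prove a scale-by-scale claim that $\Phi$, composed with a bounded, uniformly non-degenerate symmetric matrix (your $M(y)^{-1/2}$, the paper's $B_{x,r}$), is a $\Psi s$-GHA around each point of $\tilde A$. Both prove it by contradiction, using splitting and the lower semicontinuity of the essential dimension, and then apply it at the scale $\dist_X(y,z)$.

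The one real difference is that the paper uses the local almost-splitting Theorem \ref{almost splitting}(2). This gives the GHA on $B_{s/(6n)}(y)$ for every $s\le r$, so the paper never needs your restriction $Rs\le r$. Your patch for $\dist_X(y,z)\simeq r$ (``the same compactness at scale $s\simeq r$'') does not work as stated. At that scale $g^{ij}$ is controlled only on balls of radius $\le r$ about $y$, so Theorem \ref{splitin}(1)(c) is unavailable and you would need the local splitting instead. Even then you only reach pairs with $\dist_X(y,z)\le cr$. That is also all the paper's proof covers (``$y,z$ which are close''), so the conclusion should really be read on $B_{r/C}(x)$.
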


\begin{proof}
First notice that by similar arguments as in the proof of Proposition \ref{keyprop}, combined with Theorem \ref{almost splitting}, we have the following for the constant $C$ above.

\smallskip

\textbf{Claim}: \textit{For any $x \in \tilde{A}$, $B_{\frac{r}{6n}}(x)$ is $\Psi r$-GH-close to $B_{\frac{r}{6n}}(\bo^n)$, where $\Psi=\Psi(r, \epsilon| K, N, \gamma, \delta, r_0)$. Moreover a $\Psi r$-GHA from $B_{\frac{r}{6n}}(x)$ to $B_{\frac{r}{6n}}(\bo^n)$ can be obtained by 
$
    \Phi_{x, r}(y):=\Phi(y) B-\Phi (x)B
$
for some symmetric matrix $B=B_{x, r}=(b_{ij})_{ij} \in \Sym (n;\mathbb{R})$ with
$
    |b_{ij}| \le C$ and  $\det(B) \ge C^{-1}.
$}

\smallskip

To complete the proof of the proposition, recalling the proof of Theorem \ref{bilipchara}, for all $y, z \in \tilde A$ with $y \neq z$ which are close, letting $s=\dist_X(y,z)>0$, applying \textit{Claim}  (as $r=12sn$)   shows
$$
\left| \left|\Phi_{y, 12sn}(y)-\Phi_{y, 12sn}(z)\right|_{\mathbb{R}^n}-\dist_X(y, z)\right| \le \Psi s=\Psi \dist_X(y, z),
$$
namely
$$
(1-\Psi)\dist_X(y, z) \le |(\Phi(y)-\Phi(z))B_{y, 12sn}|_{\mathbb{R}^n} \le (1+\Psi)\dist_X(y, z)
$$
which completes the proof.
\end{proof}

\begin{corollary}[Bi-Lipschitz property under Hessian control]\label{bilipchart}
Let $X$ be an $\RCD(K, N)$ space for some $K \in \mathbb{R}$ and some $N \in [1, \infty)$ of essential dimension $n$, 
let $V, U$ be open subsets of $X$ with $B_{r_0}(V) \subset U$ for some $r_0>0$, and 
     let $\Phi=(\phi_1, \ldots, \phi_n):U\to \mathbb{R}^n$ with
    $\phi_i \in D(\Delta, U)$ and 
    \begin{equation*}
        \fint_{B_{r_0}(x)}|\phi_i|\di \meas_X+\fint_{B_r(y)}\left(r^{2-2\delta}(\Delta \phi_i)^2+r^{1-\delta}|\Hess_{\phi_i}|\right)\di \meas_X \le \gamma,\quad \text{for all $y \in V, 0<r<r_0$,}
    \end{equation*}
    for some $0<\delta \le 1$ and some $\gamma>1$. Then we have the following:
    \begin{enumerate}
        \item If $s$ is small depending only $K, N, r_0, \gamma$, and $\delta$, and we have 
\begin{equation}\label{as9ausr90ajsras}
     \det(g^{ij})_{ij} \ge \gamma^{-1},\quad \text{on $B_s(x)$ for some $x \in V$},
\end{equation} then $\Phi|_{B_s(x)}$ gives a $C$-bi-Lipschitz embedding with
$
B_{\frac{s}{C}}(\Phi (x))\subset \Phi(B_s(x)) \subset B_{Cs}(\Phi(x))$, where $C=C(K, N, r_0, \gamma, \delta) \ge 1$.
        \item (\ref{as9ausr90ajsras}) is valid (more strongly the left-hand side is close to $1$, quantitatively) if $\Phi|_{B_s(x)}$ gives an $(\epsilon s)$-GHA to $B_s(\Phi(x))$ for small $s>0$ and $\epsilon>0$ depending only on $K, N, \gamma, r_0$ and $\delta$.
    \end{enumerate}
\end{corollary}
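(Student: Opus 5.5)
The plan is to reduce both items to Proposition \ref{bibilip} and Theorem \ref{isomequiv}, using the Hessian Morrey bound to upgrade the pointwise information on $g^{ij}$ into the uniform control needed there.

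\textbf{Step 1: Hölder continuity of $g^{ij}$.} By polarization, $g^{ij}$ is a combination of $|\nabla(\phi_i\pm\phi_j)|^2$. Moreover $\Hess_{\phi_i\pm\phi_j}=\Hess_{\phi_i}\pm\Hess_{\phi_j}$ satisfies the same Morrey bound with constant $2\gamma$. Hence (3) of Proposition \ref{propregmap} gives $|\nabla(\phi_i\pm\phi_j)|\in C^{0,\delta}_{\loc}(V)$. The telescoping argument there is quantitative, so the Hölder seminorm is bounded by $C(K,N,\gamma,\delta,r_0)$ on balls $B_{r_0/2}$ around points of $V$. By (2) of Proposition \ref{propregmap}, the bounds $|\nabla\phi_i|\le C$ also hold. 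Therefore each $g^{ij}$ has a continuous representative with
\[
|g^{ij}(y)-g^{ij}(z)|\le C\dist_X(y,z)^{\delta}.
\]

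\textbf{Step 2: item (1).} Set $M(y):=(g^{ij}(y))_{ij}$, using the continuous representative. Then
\[
\fint_{B_t(y)}|g^{ij}-a_{ij}(y)|\,\di\meas_X\le Ct^\delta<\epsilon
\]
for all $t\le r$, once $r$ is small depending on the data. So for $s\le r$, every point of $B_s(x)$ lies in $\tilde A$. The determinant condition on $\tilde A$ is exactly \eqref{as9ausr90ajsras}. Proposition \ref{bibilip} then shows that $\Phi|_{B_s(x)}$ is $C$-bi-Lipschitz.

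For the image inclusions I would argue as follows. The Claim in the proof of Proposition \ref{bibilip} says that $\Phi\cdot B-\Phi(x)B$ is a $\Psi s$-GHA from $B_{s/(6n)}(x)$ onto $B_{s/(6n)}(\bo^n)$, with $B$ controlled. This gives $\Phi(B_s(x))\subset B_{Cs}(\Phi(x))$ directly from the Lipschitz bound. For the inner ball, surjectivity needs a topological argument. Since the Claim yields Reifenberg flatness at all small scales around $x$, the openness result used in Theorem \ref{bilipchara} (via \cite{KM}) applies: $\Phi(B_s(x))$ is open. A bi-Lipschitz image that is $\Psi s$-dense in a ball must then contain $B_{s/C}(\Phi(x))$; otherwise take a boundary point of the image nearest to $\Phi(x)$ and contradict openness plus closedness, as in the proof of Theorem \ref{asaisharsnasa}.

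\textbf{Step 3: item (2).} If $\Phi|_{B_s(x)}$ is an $(\epsilon s)$-GHA, rescale to $X^{s,x}$. The rescaled map satisfies \eqref{8asashfasiransaksrans} with constants improved by the factor $s^{\delta}$. Theorem \ref{isomequiv}(1) gives
\[
\fint_{B_{s/4}(x)}|\Phi^*g_{\mathbb{R}^n}-g_X|\,\di\meas_X\le\Psi .
\]
Combined with the uniform $C^{0,\delta}$ bound from Step 1, this upgrades to the pointwise estimate $|g^{ij}-\delta_{ij}|\le\Psi+Cs^\delta$ on $B_{s/8}(x)$. Hence $\det(g^{ij})$ is close to $1$ there. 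To obtain the bound on the whole of $B_s(x)$, cover by smaller balls: the GHA property restricts to subballs of comparable size with proportionally worse error, or one accepts a radius loss absorbed into $s$. I expect this radius bookkeeping to be the fiddly part.

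\textbf{Main obstacle.} The delicate point is ensuring all constants in Step 1 are uniform in the data. The telescoping constant must depend only on $K,N,\gamma,\delta,r_0$, which needs the Poincaré inequality \eqref{pi} together with doubling. The other difficult step is the surjectivity onto the inner ball in item (1).
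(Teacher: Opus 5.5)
Your proposal is correct and follows the paper's route: the Morrey/Poincar\'e telescoping gives uniform $C^{0,\delta}$ control of $g^{ij}$, so that every point lies in $\tilde A$ and Proposition~\ref{bibilip} applies for (1), while (2) comes from Theorem~\ref{isomequiv} plus the same telescoping. Two small remarks: your open-image and nearest-boundary-point argument for the inner ball fills in a step the paper leaves implicit, and the radius bookkeeping you anticipate in Step 3 is unnecessary, because your Step 1 H\"older bound already transfers closeness to $\delta_{ij}$ at the center to every point of $B_s(x)$ with error at most $Cs^{\delta}$.
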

\begin{proof}
For all $x \in V$ and $0<s \le \frac{r}{2}$, we have 
\begin{align}
\fint_{B_s(x)} \left| g^{i j} - \fint_{B_s(x)}g^{i j} \di \meas_X\right|\di \meas_X &\le Cs \fint_{B_{2s}(x)}|\nabla g^{i j}| \di \meas_X \le Cs^{\delta} 
\end{align}
which proves, by a telescoping argument,
$
\left\{y \in V | B_{2s}(y)\subset U \right\}=\tilde A(V, U, \Phi, (g^{ij})_{ij}, s, Cs^{\delta}).
$
Combining this with Proposition \ref{bibilip} completes the proof of (1). 

The remaining statement (2) is justified as follows:
by Theorem \ref{isomequiv}, we know that 
$$
\text{$\fint_{B_s(x)}\left|\Phi^*g_{\mathbb{R}^n}-g \right|\di \meas_X$ is small, thus so is $\fint_{B_s(x)}\left| \det(g^{ij})_{ij} -1\right|\di \meas_X$.}
$$
In particular,
     a telescoping argument with
    \begin{equation}
        \fint_{B_t(y)}\left| \det(g^{ij})_{ij} -\fint_{B_t(y)}\det(g^{ij})_{ij}\di \meas_X\right|\di \meas_X<Ct^{\delta},\quad \text{for all $0<t \le \frac{s}{2}$ and $y \in B_{\frac{s}{2}}(x)$}
    \end{equation}
    allows us to conclude that $\det(g^{ij})_{ij}$ is pointwisely close to $1$. Thus we have (2). 
\end{proof}

\subsection{Non-degeneracy and its application}\label{subsecnong}

In this subsection, we will discuss how to verify the non-degeneracy property 
$
    \det (g^{ij}) > 0
$
for a map $\Phi:U \to \mathbb{R}^n$ that satisfies controlled regularity as discussed in the previous subsections.

 \begin{theorem}[Non-degeneracy]\label{nondeg}
 For all $K \in \mathbb{R}, N \in [1, \infty), r_0 \in (0,1], \delta \in (0,1]$ and $\gamma >1$, there exists $\epsilon = \epsilon (K, N, \gamma, \delta, r_0) > 0$ such that the following hold.
Let $X$ be an $\RCD(K, N)$ space of essential dimension $n$, let $V,U \subset X$ be open subsets of $X$ with $B_{r_0}(V)\subset U$,
and let $\Phi=(\phi_1, \ldots, \phi_n):U \to \mathbb{R}^n$ be a map 
with $\phi_i \in D_{\loc}(\Delta, U)$ and 
 \begin{equation}
        \fint_{B_{r_0}(y)}|\phi_i|\di \meas_X+r^{2-2\delta}\fint_{B_r(y)}(\Delta \phi_i)^2\di \meas_X \le \gamma,\quad \text{for all $y \in V$ and $0<r<r_0$.}
    \end{equation}
If
   $V$ is $(\epsilon, s)$-Reifenberg flat,
    and 
    $\Phi$ gives an $\epsilon s$-GHA from $B_s(x)$ to $B_s(\Phi(x))$ 
    for some $0<s \le r_0$ and some $x \in V$,
then for every $y \in B_{\frac{s}{2}}(x)$ and every $1\le k_1<k_2<\cdots<k_l \le n$, we have
$
\det(g^{k_a k_b})_{a,b=1}^{l}(y)>0$ 
(in particular $y \in \mathcal{R}_n$, where recall $g^{ij}=\langle \nabla \phi_i, \nabla \phi_j\rangle$),
provided that
\begin{equation}\label{deltacond}
\limsup_{t \to 0^+}t^{1- \delta}\fint_{B_t(y)}|\Hess_{\phi_i}|\di \meas_X <\infty, \quad \text{for any $i$.}
\end{equation}
\end{theorem}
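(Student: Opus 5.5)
The plan has three steps. First, the Hessian condition \eqref{deltacond} makes $y$ a Lebesgue point of every $g^{ij}$, so any blow-up of $\Phi$ at $y$ is a linear map. Second, the canonical Reifenberg theorem, applied at every scale, gives a bi-H\"older lower bound for the same blow-up. Third, a linear map with a bi-H\"older lower bound must be injective, which forces nondegeneracy of $(g^{ij}(y))$.

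\textbf{Step 1 (pointwise values and linear blow-ups).} By \eqref{deltacond} and (1) of Proposition \ref{proplebes}, applied to $\phi_i\pm\phi_j$ and polarized via (1) of Proposition \ref{laplocal}, the point $y$ is a Lebesgue point of every $g^{ij}=\langle\nabla\phi_i,\nabla\phi_j\rangle$. Write $\bar g^{ij}(y)$ for the Lebesgue values. Take any tangent cone $X^{r_m,y}\to Y$ and a limit $\tilde\Phi:Y\to\mathbb{R}^n$ of $\Phi^{r_m,y}$. The Morrey bound on $\Delta\phi_i$ and (2) of Proposition \ref{propregmap} show that $\tilde\Phi$ is harmonic. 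The Lebesgue point property together with (1) of Theorem \ref{hessconv} shows $\langle\nabla\tilde\phi_i,\nabla\tilde\phi_j\rangle\equiv\bar g^{ij}(y)$. The Hessian Morrey bound and the lower semicontinuity \eqref{hessss}, applied at rescaled scales, give $\Hess_{\tilde\phi_i}=0$. Hence each $\tilde\phi_i$ is linear in the sense of Definition \ref{lineardef}.

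\textbf{Step 2 (Euclidean tangent cone and bi-H\"older control).} Reifenberg flatness of $V$ at all scales $\le s$ forces $Y=\mathbb{R}^n$. Next apply Theorem \ref{8as8ashas8ashsbasb} to the rescalings $X^{t,z}$ for $z$ near $y$ and $t\le s/2$. This requires that $\Phi$, after a fixed linear normalization $T$, is still an $\eta$-GHA at every such scale. To get this, propagate the GHA property from scale $s$ down to all smaller scales using Step 2 of that theorem's proof (the Cholesky transformations $T_{\Phi,B_t}$, whose norms change by at most $(t/s)^{C\epsilon}$ by Step 1 of that proof), with $\epsilon$ chosen small in terms of $\eta(K,N,\gamma,\delta,\epsilon')$. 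The conclusion is a lower bound
\[
|\Phi(z)-\Phi(w)|\ge c\,t^{C\epsilon}\,\dist_X(z,w)^{1+\epsilon'}\quad\text{on } B_{s/2}(x).
\]
This should pass to the blow-up at $y$ as a non-collapsing statement: the linear map $\tilde\Phi\cdot T$ is an almost-isometry onto its image in the GHA sense at scale $1$.

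\textbf{Step 3 (conclusion).} A linear map $\tilde\Phi:\mathbb{R}^n\to\mathbb{R}^n$ that is an $\eta$-GHA of unit balls is injective. Its Gram matrix $\langle\nabla\tilde\phi_i,\nabla\tilde\phi_j\rangle=\bar g^{ij}(y)$ is therefore positive definite. Every principal minor $\det(g^{k_ak_b})_{a,b=1}^l(y)$ of a positive definite matrix is positive, which gives the claim. Finally, $y\in\mathcal{R}_n$ follows from Proposition \ref{regularpointch}, or directly from $Y=\mathbb{R}^n$ combined with the Lebesgue point property.

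The main obstacle is Step 2. We need GHA control of the suitably normalized $\Phi$ at \emph{every} small scale around $y$, not just at scale $s$, with normalizations that do not degenerate along a subsequence $r_m\to0$. My first attempt would mimic the contradiction argument in Step 2 of the proof of Theorem \ref{8as8ashas8ashsbasb}. Suppose that at some first scale the normalized Gram average deviates from $\delta_{ij}$ by a fixed amount $s_0$, and blow up there. The limit is harmonic with growth at most $1+Cs_0$, hence linear. The Cholesky normalization then makes its Gram matrix the identity, which contradicts the assumed deviation $s_0$. Once this holds at all scales, the non-degeneracy of $\bar g^{ij}(y)$ follows, because $|\det T_{\Phi,B_t}|$ can grow at most like $t^{-C\epsilon}$, while the averages $\fint_{B_t(y)}g^{ij}$ converge.
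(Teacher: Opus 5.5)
\textbf{There is a genuine gap in Steps 2--3 and in your closing sentence.}

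\emph{The bi-H\"older bound does not reach the blow-up.} A lower bound $|\Phi(z)-\Phi(w)|\ge c\,\dist_X(z,w)^{1+\epsilon'}$ does not survive blow-up. On $X^{r,y}$ it becomes $c\,r^{\epsilon'}\dist^{1+\epsilon'}$, which tends to $0$, so it gives no information about injectivity of $\tilde\Phi$.

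\emph{The normalized blow-up is circular.} The limit of $\Phi^{r_m,y}\,T_{\Phi,B_{r_m}(y)}$ is an isometry for free, by the Cholesky normalization. It coincides with $\tilde\Phi\cdot T$ only if $T_{\Phi,B_{r_m}(y)}$ stays bounded. That boundedness is equivalent to $\det\bar g(y)>0$, which is exactly what you are trying to prove.

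\emph{The closing argument does not break the circle.} A bound $\|T_{\Phi,B_t(y)}\|\lesssim t^{-C\epsilon}$ only yields
$\det G_{\Phi,B_t(y)}=(\det T_{\Phi,B_t(y)})^{-2}\gtrsim t^{2nC\epsilon}$. This tends to $0$. It is therefore fully compatible with $\fint_{B_t(y)}g^{ij}$ converging to a singular matrix, for instance with $\det G_{\Phi,B_t(y)}\sim 1/\log(1/t)$. Convergence of the averages plus a polynomially degenerating lower bound excludes nothing.

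\textbf{The missing idea is a quantitative competition of exponents.} This is why $\epsilon$ must depend on $\delta$. You use \eqref{deltacond} only qualitatively: to get a Lebesgue point and a vanishing Hessian in the blow-up. The proof needs its rate.

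\emph{How the paper does it.} Assume $\det g(y)=0$ and take the degenerate combination $f=\phi_1-\sum_{i\ge 2}a_i\phi_i$ read off from the linear blow-up.
\begin{itemize}
\item Poincar\'e plus telescoping with \eqref{deltacond} give $\fint_{B_t(y)}|\nabla f|\le Ct^{\delta}$, and then $\fint_{B_t(y)}|f|\le Ct^{1+\delta}$.
\item The bi-H\"older chart from Theorem~\ref{8as8ashas8ashsbasb} forces $|f|\gtrsim t^{1+\Psi}$ on a non-negligible part of $B_t(y)$.
\item These are incompatible once $\Psi=\Psi(\epsilon\,|\,K,N,\gamma,\delta,r_0)$ is small relative to $\delta$.
\end{itemize}

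\emph{How to repair your normalization route.} The same idea fixes it.
\begin{itemize}
\item Let $v$ be a unit kernel vector of $\bar g(y)$. The telescoped Morrey bound, using $|\nabla g^{ij}|\le C(|\Hess_{\phi_i}|+|\Hess_{\phi_j}|)$, gives $|\fint_{B_t(y)}g^{ij}-\bar g^{ij}(y)|\le Ct^{\delta}$. Hence $\lambda_{\min}(G_{\Phi,B_t(y)})\le v^tG_{\Phi,B_t(y)}v\le Ct^{\delta}$.
\item Steps 1--2 of the proof of Theorem~\ref{8as8ashas8ashsbasb}, recentered at $y$, give $\lambda_{\min}(G_{\Phi,B_t(y)})=\|T_{\Phi,B_t(y)}\|^{-2}\ge c\,t^{2C\epsilon}$.
\item Choosing $\epsilon$ with $2C\epsilon<\delta$ yields the contradiction.
\end{itemize}
Without some such use of the rate $t^{\delta}$ against the Reifenberg loss $t^{C\epsilon}$, your argument cannot rule out $\det g(y)=0$.
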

\begin{proof}
It follows from Proposition \ref{propregmap} that $|\nabla \phi_i| \le C(K, N, r_0, \gamma, \delta)$ on $V$.
Moreover, applying Theorem \ref{8as8ashas8ashsbasb} after rescaling $X^{s, x}, \Phi^{s, x}$, 
\begin{equation}
    (1-\Psi)\dist_X(y,z)^{1+\Psi}\le |\Phi(y)-\Phi(z)|_{\mathbb{R}^n} \le C\dist_X(y, z), \quad \text{for all $y, z \in B_{r_1}(x)$},
\end{equation}
where $\Psi=\Psi(\epsilon | K, N, \gamma, \delta, r_0)$ (recall Definition \ref{Psi}), $C=C(K,N, \gamma, \delta)$ and $r_1=r_1(s, n)>0$.
Note that by Proposition \ref{proplebes}, $y$ is a Lebesgue point of $\det(g^{ij})_{ij}$. 

For notational simplicity, we prove only in the case $l=n$; the general case is analogous. 
We argue by contradiction. 

Assume $\det(g^{ij})_{ij}(y)=0$.
For any $r_i \to 0^+$, thanks to Theorem \ref{hessconv}, after passing to a subsequence, there exist a tangent cone $Z$ with 
    $
        X^{r_i, y} \stackrel{\mathrm{pmGH}}{\to}Z,
 $
and a blow-up limit harmonic map 
    $
\bar \Phi= (\bar \phi_1, \ldots, \bar \phi_n):Z \to \mathbb{R}^n
$
of $\Phi^{r_i, y}$.
Since for any $R \ge 1$
\begin{equation*}
    \fint_{B_{Rr_i}(y)}\left|g^{jk}-\fint_{B_{Rr_i}(y)}g^{jk}\di \meas_X\right|\di \meas_X \le Cr_i\fint_{B_{2Rr_i}(y)}|\nabla g^{jk}|\di \meas_X \le Cr_i^{\delta} \to 0, 
\end{equation*}
we know that $\bar \Phi$ is linear and that  
$\det(\langle \nabla \bar \phi_i, \nabla \bar \phi_j\rangle )_{ij} \equiv 0$. Thus, with no loss of generality, it is enough to consider only the case when
$
\nabla \bar \phi_1=\sum_{i=2}^na_i\nabla \bar \phi_i$ in $L^2_{\loc}(TZ)$ for some $a_i \in \mathbb{R} (i=2,\ldots, n)$.
Then the Poincar\'e inequality (\ref{pi}) yields that $ \bar \phi_1-\sum_{i=2}^na_i\bar \phi_i$ is constant. Recalling $\bar \phi_i(z)=0$, we have
$
\bar \phi_1=\sum_{i=2}^na_i\bar \phi_i.
$
Let $f=\phi_1-\sum_{i=2}^na_i\phi_i$. Then the Poincar\'e inequality (\ref{pi}) again shows
\begin{align}\label{1}
\fint_{B_t(y)}\left| f- \fint_{B_t(y)}f\di \meas_X \right|\di \meas_X \le Ct\fint_{B_{2t}(y)}|\nabla f |\di \meas_X
\end{align}
and 
\begin{align}\label{2}
\fint_{B_t(y)}\left| |\nabla f| - \fint_{B_t(y)}|\nabla f|\di \meas_X\right|\di \meas_X &\le Ct\fint_{B_{2t}(y)}| \Hess_f|\di \meas_X \le Ct^{\delta}.
\end{align}
Recalling that $|\nabla f|(y)=0$, a telescoping argument applied to \eqref{2} gives 
\begin{equation}
\fint_{B_t(y)}|\nabla f|\di \meas_X \le Ct^{\delta}.
\end{equation}
Thus combining this with (\ref{1}) yields
\begin{equation}\label{3}
\fint_{B_t(y)}\left| f- \fint_{B_t(y)}f\di \meas_X \right|\di \meas_X \le Ct^{1+\delta}.
\end{equation}
Applying another telescoping argument again in (\ref{3}) proves
\begin{equation}\label{4}
\left| \fint_{B_t(y)}f\di \meas_X\right|\le Ct^{1+\delta}
\end{equation}
because $f(y)=0$. Thus it follows from (\ref{3}) and (\ref{4}) that
\begin{equation}\label{5}
\fint_{B_t(y)}\left| f\right|\di \meas_X\le Ct^{1+\delta}.
\end{equation}

On the other hand, for a fixed $\tau>0$, let
$
A_t(\tau):=\left\{z \in B_t(y)\,|\, |f|(z) \le t^{\tau}\right\}.
$
Chebyshev's inequality yields
\begin{equation}\label{6}
\frac{\meas_X (B_t(y)\setminus A_t(\tau))}{\meas_X (B_t(y))} \le Ct^{1+\delta-\tau}.
\end{equation}
Recalling \eqref{asoaisriasoransrsk}, let
$
z_t:=\Phi^{-1}\left( \frac{3}{4}t, 0, \ldots, 0\right)
$
and consider a ball $B_{\frac{t}{5}}(z_t)\subset B_t(y)$. Then (\ref{6}) yields
\begin{equation}
\frac{\meas_X(B_{\frac{t}{5}}(z_t)\setminus A_t(\tau))}{\meas_X (B_{\frac{t}{5}}(z_t))} \le Ct^{1+\delta-\tau}.
\end{equation}
Thus applying (\ref{bg}), there exists a point $\tilde z_t \in B_{\frac{t}{5}}(z_t)\cap A_t(\tau)$ such that 
$$
\dist_X \left( z_t, \tilde  z_t\right) \le Ct^{1+\delta_N\left(1+\delta-\tau\right)}\,(\text{thus},\,\left| \Phi(z_t)-\Phi (\tilde z_t)\right|  \le  Ct^{1+\delta_N\left(1+\delta-\tau\right)}  ).
$$
for some $0<\delta_N<1$, depending only on $N$. 
In particular, 
$
\left| \phi_i(\tilde z_t)\right| \le Ct^{1+\delta_N\left(1+\delta-\tau\right)}$ for any $2 \le i \le n$. 
Thus
\begin{align}
\left| \Phi(\tilde z_t)\right| \le |\phi_1(\tilde z_t)| + Ct^{1+\delta_N\left(1+\delta-\tau\right)} 
&\le \left| \sum_{i \ge 2}^na_i\phi_i(\tilde z_t) \right| +|f(\tilde z_t)| + Ct^{1+\delta_N\left(1+\delta-\tau\right)} \nonumber \\
&\le Ct^{1+\delta_N\left(1+\delta-\tau\right)} + t^{\tau} + Ct^{1+\delta_N\left(1+\delta-\tau\right)} \nonumber \\
&\le Ct^{1+\delta_N\left(1+\delta-\tau\right)} + t^{\tau}. 
\end{align}
On the other hand
$$
\dist_X(y, \tilde z_t) \ge \dist_X(y, z_t) -\dist_X(z_t, \tilde z_t)  \ge \frac{t}{5}-Ct^{1+\delta_N\left(1+\delta-\tau\right)} \ge \frac{t}{100}.
$$
Thus
$$
t^{1+\Psi}\le C\dist_X(y, \tilde z_t)^{1+\Psi} \le C |\Phi (\tilde z_t)| \le C \left(t^{1+\delta_N\left(1+\delta-\tau\right)} +t^{\tau}\right).
$$
Therefore, taking $\tau=1+\frac{\delta}{2}$  and then letting $t \to 0^+$ gives a contradiction whenever $\Psi$ is small with
$
1+\Psi<\min \left\{1+\delta_N\left(1+\delta-\tau\right), \tau\right\}. 
$
\end{proof}
\begin{remark}\label{asansakkksksksksks}
In the theorem above, by the proof, if we assume a quantitative version of \eqref{deltacond}:
\begin{equation}
    t^{1- \delta}\fint_{B_t(y)}|\Hess_{\phi_i}|\di \meas_X \le \gamma,\quad \text{for any $t \le r_0$,}
\end{equation}
then we also have a quantitative non-degeneracy
$
\det(g^{ij})_{ij}(y)\ge \tau
$
for some $\tau=\tau(K, N, \gamma, \delta, r_0)>0$.
\end{remark}
We conclude this subsection by giving a couple of corollaries of Theorem \ref{nondeg}.
\begin{corollary}
Under the same assumptions as in Theorem \ref{nondeg},
 we have $
     \det (g^{ij})_{ij} (y)>0
 $
  for $\meas_X$-a.e. $y \in V$, whenever $\epsilon$ is small depending only on $K, N, \delta, \gamma$ and $r_0$. 
\end{corollary}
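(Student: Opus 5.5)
The plan is to show that condition \eqref{deltacond} holds at $\meas_X$-a.e. $y \in V$. Once this is established, Theorem \ref{nondeg} applies pointwise and gives $\det(g^{ij})_{ij}(y)>0$ at every such $y$ in $B_{\frac{s}{2}}(x)$. Covering $V$ by such balls then yields the statement $\meas_X$-almost everywhere.

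The key step is a maximal-function estimate. Since $\phi_i \in D_{\loc}(\Delta, U)$, item (4) of Theorem \ref{hessthem} (in its local form) gives $|\Hess_{\phi_i}| \in L^2_{\loc}(U)$, and in particular $|\Hess_{\phi_i}| \in L^1_{\loc}(U)$. By the Lebesgue differentiation theorem, which is valid because $\meas_X$ is locally doubling by \eqref{bg}, $\meas_X$-a.e. $y \in V$ is a Lebesgue point of $|\Hess_{\phi_i}|$ for all $i$. At such $y$ we have
\[
\limsup_{t\to 0^+}\fint_{B_t(y)}|\Hess_{\phi_i}|\di \meas_X = \overline{|\Hess_{\phi_i}|}(y)<\infty .
\]
Since $\delta \le 1$, we also have $t^{1-\delta}\le 1$ for $t\le 1$. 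Hence \eqref{deltacond} holds at $y$, with any $\delta\in(0,1]$ and in particular with the $\delta$ from the hypotheses.

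The remaining point is that Theorem \ref{nondeg} requires the hypotheses at the scale $s$ and centre $x$. We read the corollary as carrying the same standing assumptions: $V$ is $(\epsilon,s)$-Reifenberg flat, and $\Phi$ is an $\epsilon s$-GHA on $B_s(x)$. Under this reading, the conclusion for $\meas_X$-a.e. $y\in B_{\frac{s}{2}}(x)$ is immediate.

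If instead the a.e. statement is meant on all of $V$, the plan is a localization. Reifenberg flatness holds at every $y\in V$ at all scales below $s$. For the GHA condition, we would use Theorem \ref{8as8ashas8ashsbasb} to propagate it to smaller balls around nearby points, after composing $\Phi$ with the bounded linear renormalization of Step 0 there. Composing with an invertible matrix does not affect the sign of $\det(g^{ij})$, so the conclusion transfers. This propagation is the only potentially delicate step.

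Should it prove problematic, a fallback is available. The a.e. point $y$ is a Lebesgue point of $g^{ij}$, and Proposition \ref{regularpointch} applies once nondegeneracy of the blow-up is known. That nondegeneracy follows from the contradiction argument in Theorem \ref{nondeg}, run at the point $y$ itself with the bi-Hölder lower bound \eqref{smsmsmsjjjs3333}.
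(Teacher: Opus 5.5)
Your argument is the paper's: Lebesgue differentiation (the paper applies it to $|\Hess_{\phi_i}|^2$, you to $|\Hess_{\phi_i}|$) gives \eqref{deltacond} at $\meas_X$-a.e.\ point, and Theorem \ref{nondeg} then yields $\det(g^{ij})_{ij}(y)>0$ there. Your first reading, a.e.\ $y\in B_{\frac{s}{2}}(x)$, is the only tenable one. The hypotheses say nothing about $\Phi$ far from $x$: on $\mathbb{R}^n$, a smooth $\Phi$ equal to the identity on $B_s(x)$ and constant outside a larger ball inside $V$ satisfies all of them. So drop the ``covering $V$'' remark and the localization/fallback paragraphs, which cannot work.
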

\begin{proof}
 The Lebesgue differentiation theorem implies that for $\meas_X$-a.e. $y \in U$
\begin{equation}
\lim_{s\to 0^+}\fint_{B_s(y)}|\Hess_{\phi_i}|^2\di \meas_X \in [0, \infty),\quad \text{for all $1\leq i \leq n$.}
\end{equation}
In \eqref{deltacond}, one can take $\delta=1$, and thus 
the assertion follows from Theorem \ref{nondeg}.
\end{proof}
Although the following is a direct consequence of Schauder estimates, we give a proof using the present approach. We emphasize that 
a non-smooth analogue of this result will be discussed later.
\begin{corollary}[Existence of harmonic chart for weighted Riemannian manifolds]\label{harmcha}
Every point of a smooth metric measure space admits a harmonic chart on a sufficiently small neighborhood.  
\end{corollary}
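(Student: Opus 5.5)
The plan is to run the machinery of this section on a small ball. Fix a point $p$ of a smooth metric measure space $(M^n,\dist^g,e^{-f}\di\haus^n)$. Choose a relatively compact neighborhood on which $|\nabla f|$ and $|\Hess_f|$ are bounded. Taking $N>n$ large, the tensor $\Ric^g_{f,N}$ is bounded below by some $K$ there. All results used below (Theorems \ref{almost splitting}, \ref{isomequiv}, \ref{nondeg} and Corollary \ref{bilipchart}) are local in nature, and the local Bochner inequality holds. So, as in Theorem \ref{noboundarycase} and the global-to-local discussion in the proof of Corollary \ref{cormix}, we may treat a small ball $B_{r}(p)$ as a ball in an $\RCD(K,N)$ space of essential dimension $n$.

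\textbf{Step 1: Reifenberg flatness.} By smoothness, the rescalings $X^{s,q}$ converge to $(\mathbb{R}^n,\dist_{\Euc},\omega_n^{-1}\haus^n)$ as $s\to0^+$, uniformly for $q$ in a compact neighborhood of $p$. Hence, for every $\epsilon>0$, some ball $V=B_{r}(p)$ is $(\epsilon,s_0)$-Reifenberg flat.

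\textbf{Step 2: an almost isometric harmonic map.} Fix a small scale $s$. Since $X^{s,p}$ is an $\RCD(Ks^2,N)$ space which is $\delta$-close to $\mathbb{R}^n$, (1) of Theorem \ref{almost splitting} yields a harmonic map $\Phi=(\phi_1,\dots,\phi_n)$ on $B_{s/2}(p)$ with $\fint|\langle\nabla\phi_i,\nabla\phi_j\rangle-\delta_{ij}|\le\epsilon$. Here harmonic means $\Delta_f\phi_i=0$, so every Laplacian assumption in this section holds trivially, with any $\delta\in(0,1]$ and constant $\gamma$. Next, (2) of Theorem \ref{isomequiv}, applied after rescaling, upgrades this $L^1$-closeness of $\Phi^*g_{\mathbb{R}^n}$ to $g$ into the statement that $\Phi$ is an $\epsilon s'$-GHA on $B_{s'}(p)$ for a comparable $s'$; alternatively one can use Proposition \ref{prop:sharpgradestimate}. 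Finally, each $\phi_i$ solves the smooth elliptic equation $\Delta^g\phi_i=\langle\nabla f,\nabla\phi_i\rangle$, so it is smooth and in particular $|\Hess_{\phi_i}|$ is locally bounded. This gives \eqref{deltacond} at every point, even with $\delta=1$.

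\textbf{Step 3: non-degeneracy and chart.} Theorem \ref{nondeg} now gives $\det(g^{ij})(y)>0$ for every $y\in B_{s'/2}(p)$. Because $g^{ij}=\langle\nabla\phi_i,\nabla\phi_j\rangle$ is continuous, this determinant is bounded below by some $\tau>0$ on a smaller ball. The Hessian bound gives the Morrey hypothesis of Corollary \ref{bilipchart}, so by that corollary, or by Theorem \ref{bilipchara} using the Reifenberg flatness, $\Phi$ is a bi-Lipschitz map onto an open subset of $\mathbb{R}^n$. Since $\Phi$ is smooth with invertible differential (as $\det g^{ij}>0$), the inverse function theorem makes it a smooth diffeomorphism onto its image, i.e. a harmonic chart.

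The step I expect to be delicate is Step 2. There one must get, at a single scale $s'$, a harmonic map that is quantitatively an $\epsilon s'$-GHA, with $\epsilon$ below the threshold $\epsilon(K,N,\gamma,\delta,r_0)$ required in Theorem \ref{nondeg}. The difficulty is matching constants across rescalings: the lower Ricci bound $Ks^2\to0$ and the trivial Laplacian bounds are scale-invariant, but one must check that $\gamma$ and $r_0$ stay fixed after rescaling. The alternative, if this bookkeeping is awkward, is a contradiction argument along pmGH blow-ups, in the spirit of the proof of Theorem \ref{isomequiv}.
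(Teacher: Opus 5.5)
Your proposal is correct and follows essentially the same route as the paper: realize a small closed ball as an $\RCD(K,N)$ space via Theorem \ref{noboundarycase}, get Reifenberg flatness from smoothness, take an almost-splitting harmonic map that is an $\epsilon r$-GHA, and apply Theorem \ref{nondeg} after rescaling to obtain $\det(g^{ij})>0$. Your explicit check of \eqref{deltacond} via smoothness of $f$-harmonic functions fills in a point the paper leaves implicit. The bi-Lipschitz detour through Corollary \ref{bilipchart} is unnecessary: once $\det(g^{ij})(p)>0$, the inverse function theorem at $p$ already gives the chart.
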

\begin{proof}
Take such a point $x$.
Thanks to Theorem \ref{noboundarycase}, we can find $s>0$ satisfying that $X:=\bar B_s(x)$ is an $\RCD(K, N)$ space for some $K \in \mathbb{R}$ and some $N \in [n, \infty)$. Notice that the Reifenberg flatness around $x$ in $X$ is trivially satisfied because of the smoothness.
Thus, by Theorem \ref{almost splitting}, for any (sufficiently small) $\epsilon>0$ there exist $0<r<s$ and a harmonic map $\Phi=(\phi_1, \ldots, \phi_n):B_r(x) \to \mathbb{R}^n$ such that $|\nabla \phi_i| \le C(N)$ and that $\Phi$ gives an $\epsilon r$-GH approximation to $B_r(\bo^n)$.
Applying Theorem \ref{nondeg} after rescaling $X^{r, x}, \Phi^{r, x}$, we have $\det(g^{ij})_{ij}(x)>0$, which completes the proof.
\end{proof}

\subsection{Comparison between Sobolev spaces via bi-Lipschitz charts}
The purpose of this subsection is to compare Sobolev spaces in the RCD/Euclidean settings via bi-Lipschitz maps. To do so, let us start with  the following result in the general setting of locally PI spaces. 
Recall that a metric measure space is called a {\it  locally PI space} if 
it satisfies a local Poincar\'e inequality and a local volume doubling condition. 

\begin{lemma}\label{sobolevcomp}
Let $(Y_i, \dist_{Y_i}, \meas_{Y_i})$  be locally PI metric measure spaces $(i=1,2)$. Let $U_i \subset Y_i$ be open sets, and let $\Phi:U_1 \to U_2$ be a bi-Lipschitz homeomorphism with 
$$C^{-1}\meas_{Y_2}|_{\Phi(U_1)} \le \Phi_{\sharp}(\meas_{Y_1}|_{U_1}) \le C\meas_{Y_2}|_{\Phi(U_1)}, \quad \text{for some $C>1$}.$$
 Then, for all $1\le p\le \infty$ and  $f:U_2 \to \mathbb{R}$, one has $f \in W^{1,p}(U_2)$   if and only if $f \circ \Phi \in W^{1,p}(U_1)$. 
\end{lemma}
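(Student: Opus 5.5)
By symmetry (apply the argument to $\Phi^{-1}$, which is bi-Lipschitz with $\Phi^{-1}_\sharp(\meas_{Y_2}|_{\Phi(U_1)})$ comparable to $\meas_{Y_1}|_{U_1}$), it suffices to prove one implication: if $f \in W^{1,p}(U_2)$, then $f\circ\Phi \in W^{1,p}(U_1)$ with a norm bound depending only on $C$ and the bi-Lipschitz constant $L$ of $\Phi$. The $L^p$ part is immediate from the change of variables
\[
\int_{U_1}|f\circ\Phi|^p\di\meas_{Y_1}=\int_{U_2}|f|^p\di\Phi_\sharp\meas_{Y_1}\le C\int_{U_2}|f|^p\di\meas_{Y_2}.
\]
The same comparability turns integrals of any nonnegative function $h\circ\Phi$ over $U_1$ into integrals of $h$ over $U_2$ up to the factor $C$. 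The real work is therefore the gradient term.

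For $1<p<\infty$ I will use the relaxation definition with Lipschitz approximations, mirroring the Cheeger energy, adapted to $W^{1,p}$ as in \cite{Chee}. Take locally Lipschitz $f_k \to f$ in $L^p(U_2)$ with $\Lip f_k$ converging to a relaxed gradient whose $L^p$ norm is close to that of $|\nabla f|$; such $f_k$ exist by density of Lipschitz functions. Then $f_k\circ\Phi$ is locally Lipschitz, and it converges to $f\circ\Phi$ in $L^p(U_1)$ by the previous paragraph. The key estimate is the pointwise bound
\[
\Lip(f_k\circ\Phi)(x)\le L\,\Lip f_k(\Phi(x)),
\]
which holds for every $x$ because $\dist(\Phi(x),\Phi(y))\le L\dist(x,y)$ and $\Phi$ is a homeomorphism, so $y\to x$ if and only if $\Phi(y)\to\Phi(x)$. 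Hence $\int_{U_1}(\Lip(f_k\circ\Phi))^p \le L^pC\int_{U_2}(\Lip f_k)^p$ is bounded. Weak compactness then yields a relaxed slope, so $f\circ\Phi\in W^{1,p}(U_1)$ with $|\nabla(f\circ\Phi)|\le L\,|\nabla f|\circ\Phi$ a.e. For this last inequality I need that weak limits of $(\Lip f_k)\circ\Phi$ are of the form $g\circ\Phi$, where $g$ is the weak limit of $\Lip f_k$; this holds since composition with $\Phi$ is a bounded linear isomorphism $L^p(U_2)\to L^p(U_1)$.

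For $p=1$, and also when locality is needed, the relaxation via weak limits fails. Here I will use the locally PI hypothesis, working with upper gradients or the Newtonian characterization, which coincides with the relaxed one on PI spaces \cite{Chee, HKPST}. If $g$ is a $p$-weak upper gradient of $f$, then $L\,g\circ\Phi$ is a $p$-weak upper gradient of $f\circ\Phi$: for a curve $\gamma$ in $U_1$, the curve $\Phi\circ\gamma$ has length element at most $L$ times that of $\gamma$. Moreover, exceptional curve families of zero $p$-modulus are preserved, since both $\Phi$ and the measure distortion are controlled, which changes admissible densities only by bounded factors. This handles all $1\le p\le\infty$ uniformly, and for $p=\infty$ it combines with the $L^\infty$ bound. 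The final claim is the equality of Newtonian and Cheeger-type spaces on locally PI spaces. There is a further point requiring care: the statement concerns open subsets $U_i$, not whole spaces, so I will localize with cutoffs, using that local PI restricts to balls compactly contained in $U_i$. I expect the main obstacle to be this modulus-preservation and localization step, in particular handling $p=1$ and the identification of Sobolev notions on open subsets rather than on complete spaces.
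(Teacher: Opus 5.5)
Your proof is correct, and its core is the same as the paper's. You pull back locally Lipschitz approximants, use $\Lip(h\circ\Phi)\le L\,(\Lip h)\circ\Phi$ together with the comparability of $\Phi_\sharp\meas_{Y_1}$ and $\meas_{Y_2}$, and obtain the converse by symmetry.

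Where you differ is the limiting mechanism. The paper's one-line proof appeals to density of locally Lipschitz functions in $W^{1,p}$ \emph{in norm}. Combined with item (3) of Proposition \ref{proplocality} ($\Lip h=|\nabla h|$ a.e.\ for locally Lipschitz $h$ on a locally PI space), this gives $\|h\circ\Phi\|_{W^{1,p}(U_1)}\le C'\|h\|_{W^{1,p}(U_2)}$ on a dense class. Hence $h\mapsto h\circ\Phi$ extends by completeness, and the extension is $f\circ\Phi$ by $L^p$ convergence. No weak compactness is needed, so all $1\le p\le\infty$ are handled at once. The PI hypothesis is spent exactly on the identity $\Lip h=|\nabla h|$ and on the cited density.

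Your route closes by weak compactness of the relaxed slopes. For $1<p<\infty$ this uses no PI at all. But it forces the separate Newtonian/modulus argument for $p=1$ and $p=\infty$, and the identification of Newtonian with relaxed Sobolev spaces on open subsets. That identification is the step you rightly flag as delicate, and the paper's route sidesteps it entirely.

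In exchange, your upper-gradient argument works on arbitrary metric measure spaces with comparable measures. It also yields the pointwise bound $|\nabla(f\circ\Phi)|\le L\,|\nabla f|\circ\Phi$, which the paper's norm estimate does not record.
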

\begin{proof}
This is an easy consequence of the definition of Sobolev spaces and a fact that the space of locally Lipschitz $W^{1,p}$-functions is dense in $W^{1,p}$ (see also \cite{BjornBjorn, HKPST}). 
\end{proof}

\begin{remark}[Sobolev space on a ball in $\mathbb{R}^n$]\label{compsob}
    Here are a few remarks on the Sobolev spaces on an open ball  $B$ in $\mathbb{R}^n$ (see \cite{GT}):
    for $1<p<\infty$ and an $L^p$-function $\phi$ on $B$, $\phi$ is in $W^{1,p}(B)$ if and only if for any $i$, there exists an $L^p$-function on $B$, denoted by $\frac{\partial \phi}{\partial x_i}$, such that 
\begin{equation}\label{integpart}
\int_B\psi \cdot \frac{\partial \phi}{\partial x_i}\di \haus^n=-\int_B\frac{\partial \psi}{\partial x_i}\cdot \phi \di \haus^n,\quad \text{for any $\psi \in C_c^{\infty}(B)$.}
\end{equation}
Higher order Sobolev spaces $W^{k, p}(B)$ are defined by similar distributional ways (more generally on an open subset of $\mathbb{R}^n$).
In particular, for a function $\phi$ on $B$, $\phi \in W^{2,p}(B)$ holds if and only if $\phi \in W^{1, p}(B)$ and $\frac{\partial \phi}{\partial x_i} \in W^{1,p}(B)$ for any $i$. 
\end{remark}

\begin{proposition}[Comparison between Sobolev spaces]\label{deri}
Let $X$ be an $\RCD(K, N)$ space for some $K\in \mathbb{R}$ and some $N \in [1, \infty)$, of essential dimension $n$, and let $\Phi=(\phi_1, \ldots, \phi_n):U \to \mathbb{R}^n$ be a bi-Lipschitz embedding from an open subset $U$ of $X$, whose image $\Phi(U)$ is open in $\mathbb{R}^n$. Assume that $\phi_i \in D_{\loc}(\Delta, U)$, and  $\meas_X$ is locally Ahlfors $n$-regular on $U$. Then we have the following.
\begin{enumerate}
\item For any locally Lipschitz function $\psi$ on $\Phi(U)$, we have
\begin{equation}\label{form}
\left(\frac{\partial}{\partial \phi_i} \right)(\psi \circ \Phi)(x)=\frac{\partial \psi}{\partial x_i}(\Phi(x)),\quad \text{for $\meas_X$-a.e. $x \in U$.}
\end{equation}
\item Let $1\le p\le \infty$. 
Then for any $\phi \in W^{1,p}_{\loc}(U)$, we have $\psi := \phi \circ \Phi^{-1} \in W^{1,p}_{\loc}(\Phi(U))$ with
\begin{equation}\label{rhs}
\frac{\partial \phi}{\partial \phi_i}(\Phi^{-1}(y))=\frac{\partial \psi}{\partial x_i}(y),\quad \text{for $\haus^n$-a.e. $y \in \Phi(U)$.}
\end{equation}
\item For any $\phi \in D_{\loc}(\Delta, U)\cap \mathrm{Lip}_{\loc}(U)$, 
we have $\psi := \phi \circ \Phi^{-1}\in W^{2,2}_{\loc}(\Phi(U))$.

\end{enumerate}
\end{proposition}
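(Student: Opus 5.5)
The plan is to prove (1) first, derive (2) from it by density, and then obtain (3) by applying (2) to the first derivatives.

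For (1), I first reduce, by locality (Proposition \ref{proplocality}) and McShane's lemma, to $\psi$ Lipschitz on $\mathbb{R}^n$; then $\psi\circ\Phi$ is locally Lipschitz on $U$. Both sides of \eqref{form} are defined $\meas_X$-a.e., so it suffices to compare them at a generic point $x$. I take $x$ such that:
\begin{itemize}
\item $x\in\mathcal{R}_n$;
\item $x$ is a Lebesgue point of $g^{ij}$, of $\langle\nabla(\psi\circ\Phi),\nabla\phi_j\rangle$, and of $\nabla\psi\circ\Phi$;
\item $\Phi(x)$ is a differentiability point of $\psi$ (Rademacher, transported by $\Phi$, using that $\Phi_\sharp\meas_X$ and $\haus^n$ are mutually absolutely continuous by Ahlfors regularity and Proposition \ref{coarea123}).
\end{itemize}
I then blow up at $x$: $X^{r_i,x}\to\mathbb{R}^n$. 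By Proposition \ref{regularpointch}, the rescalings $\Phi^{r_i,x}$ converge to a linear bi-Lipschitz map $L$ with $\langle\nabla L_i,\nabla L_j\rangle=g^{ij}(x)$. The rescalings of $\psi\circ\Phi$ converge to $d\psi_{\Phi(x)}\circ L$, since $\psi$ is differentiable at $\Phi(x)$. By the $W^{1,2}$-strong convergence under blow-up (Theorem \ref{hessconv}(1), applied to the $\phi_j$) and the Lebesgue point property, $\langle\nabla(\psi\circ\Phi),\nabla\phi_j\rangle(x)$ equals $\sum_k\partial_k\psi(\Phi(x))\,g^{kj}(x)$. Contracting with $g_{ij}$ gives \eqref{form}. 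The delicate point is the strong convergence of the rescaled $\psi\circ\Phi$, which is only Lipschitz; I would instead use that $\langle\nabla F,\nabla\phi_j\rangle$ for Lipschitz $F$ passes to the limit when the $\phi_j$ converge strongly, together with lower semicontinuity and linearity, via the polarization trick used in Theorem \ref{isomequiv}.

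For (2), I use Lemma \ref{sobolevcomp}: $\Phi$ is bi-Lipschitz and the measures are comparable (Ahlfors regularity plus the Jacobian formula of Proposition \ref{coarea123}), so $\psi=\phi\circ\Phi^{-1}\in W^{1,p}_{\loc}$. To identify the derivative, I approximate $\phi$ in $W^{1,p}_{\loc}$ by locally Lipschitz $\phi_m$, using density in PI spaces. By (1), $\partial\phi_m/\partial\phi_i=\partial_i(\phi_m\circ\Phi^{-1})\circ\Phi$. Both sides converge in $L^p_{\loc}$: the left because $\partial/\partial\phi_i=\sum_j g_{ij}\nabla\phi_j$ with $g_{ij}\in L^\infty$, and the right because the composition with $\Phi^{-1}$ is bounded on Sobolev spaces. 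This gives \eqref{rhs}; for $p=\infty$ I argue locally through some finite $p$.

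For (3), $\phi$ is locally Lipschitz, so (2) gives $\partial_i\psi=(\partial\phi/\partial\phi_i)\circ\Phi^{-1}$. I then need $\partial\phi/\partial\phi_i=\sum_j g_{ij}\langle\nabla\phi,\nabla\phi_j\rangle$ to lie in $W^{1,2}_{\loc}(U)$; applying (2) with $p=2$ to this function then yields $\psi\in W^{2,2}_{\loc}$. The factor $\langle\nabla\phi,\nabla\phi_j\rangle$ is in $W^{1,2}_{\loc}$ because, by \eqref{hessde}, its gradient is controlled by $|\mathrm{Hess}_\phi||\nabla\phi_j|+|\mathrm{Hess}_{\phi_j}||\nabla\phi|$. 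This requires $|\nabla\phi_j|\in L^\infty_{\loc}$, which follows from the bi-Lipschitz property, and Hessians in $L^2_{\loc}$ (Theorem \ref{hessthem}(4), local version). The $g_{ij}$ are rational functions of the $g^{kl}\in W^{1,2}_{\loc}\cap L^\infty$ with determinant bounded below, since $\Phi$ is bi-Lipschitz (Theorem \ref{bilipchara}). The $W^{1,2}$ regularity of the product therefore requires one factor bounded. Here $\langle\nabla\phi,\nabla\phi_j\rangle$ is bounded since both gradients are, and $g_{ij}\in L^\infty\cap W^{1,2}$; the Leibniz rule then closes the argument.

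I expect the main obstacle to be step (1), the pointwise identification of the metric directional derivative with the Euclidean partial derivative.
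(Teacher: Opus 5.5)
Your proposal is correct. For parts (2) and (3) it follows the paper's proof. In (2) you use density of locally Lipschitz functions together with Lemma~\ref{sobolevcomp}. In (3) you show $\partial\phi/\partial\phi_i=\sum_j g_{ij}\langle\nabla\phi,\nabla\phi_j\rangle\in W^{1,2}_{\loc}(U)$ and transport it by $\Phi^{-1}$. Your version of (3) spells out what the paper leaves implicit: $g_{ij}\in L^\infty\cap W^{1,2}_{\loc}$, the Leibniz rule with bounded factors, and the Hessian bound on $|\nabla\langle\nabla\phi,\nabla\phi_j\rangle|$.

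The genuine difference is part (1). The paper gets it in one line from the first-order chain rule, $\langle\nabla\phi_j,\nabla(\psi\circ\Phi)\rangle=\sum_k g^{jk}\,(\partial_k\psi)\circ\Phi$, and then contracts with $g_{ij}$. This uses only first-order information on $\Phi$. For non-$C^1$ $\psi$ it also uses, implicitly via mollifying $\psi$, that $\Phi_\sharp\meas_X\ll\mathcal{L}^n$.

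You instead verify the identity pointwise by blowing up at generic points. This handles Lipschitz $\psi$ directly through Rademacher's theorem. The cost is second-order input: you need $\Delta\phi_j\in L^2_{\loc}$ to get $W^{1,2}$-strong convergence of the rescaled $\phi_j$ via Theorem~\ref{hessconv}. You also need weak--strong stability of $\langle\nabla\cdot,\nabla\cdot\rangle$ under pmGH convergence. So your route is heavier and less general than the chain rule, which gives (1) for any bi-Lipschitz chart with no Laplacian control.

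Two citations need adjusting, both easily fixed.
\begin{itemize}
\item Proposition~\ref{regularpointch} and Theorem~\ref{bilipchara} assume a uniform Morrey bound on $\Delta\phi_j$, which is not available here. At a single point you only need $r^2\fint_{B_{sr}(x)}(\Delta\phi_j)^2\di\meas_X\to0$. This holds if you add Lebesgue points of $(\Delta\phi_j)^2$ to your generic set.
\item Linearity of the blow-up $L$ is not needed. Bilinearity together with $\langle\nabla L_k,\nabla L_j\rangle\equiv g^{kj}(x)$ already gives the limit identity.
\item The a.e.\ lower bound on $\det(g^{ij})$ follows directly from Proposition~\ref{coarea123}. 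There $\sqrt{\det(g^{ij})}$ is the Jacobian of $\Phi$ with respect to $\haus^n$, and $\Phi^{-1}$ is Lipschitz.
\end{itemize}
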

\begin{proof}
For (1), this is justified by a direct calculation, namely, the chain rule in the RCD setting (see for instance \cite{AGSrcd, AGSR, AGScalc, Gigli, Gigli1, GP2}) allows us to compute
\begin{align}
\langle \nabla \phi_j, \nabla (\psi \circ \Phi)\rangle (x)&=\sum_{k=1}^n\langle \nabla \phi_j, \nabla \phi_k\rangle(x) \cdot \frac{\partial \psi}{\partial x_k}(\Phi(x)) =\sum_{k=1}^ng^{jk}(x) \cdot  \frac{\partial \psi}{\partial x_k}(\Phi(x)), 
\end{align}
and thus
\begin{align}\begin{split}
\left(\frac{\partial}{\partial \phi_i} \right)(\psi \circ \Phi)(x)= & \ \sum_{j=1}^ng_{ij}(x)\cdot \langle \nabla \phi_j, \nabla (\psi \circ \Phi)\rangle (x) \\
= & \ \sum_{j, k=1}^ng_{i j}(x)\cdot g^{j k}(x)  \cdot \frac{\partial \psi}{\partial x_k}(\Phi(x)) =\frac{\partial \psi}{\partial x_i}(\Phi(x)).
\end{split}
\end{align}
Therefore we have (1). Moreover (2) is an easy consequence of (1) together with the density of Lipschitz functions in $W^{1, p}$ on balls and  Lemma \ref{sobolevcomp}. 

Finally, in order to prove (3), notice by definition of $\frac{\partial}{\partial \phi_i}$ that $\frac{\partial \phi}{\partial \phi_i} \in W^{1,2}_{\loc}$ holds because of (local version of) (4) of Theorem \ref{hessthem}. Thus it follows from Lemma \ref{sobolevcomp} that $\frac{\partial \phi}{\partial x_i} \circ \Phi^{-1} \in W^{1,2}_{\loc}$ holds.  Therefore we have (3) by (2) because of Remark \ref{compsob}. 
\end{proof}
We can also write down the Hessian, in terms of bi-Lipschitz charts as follows.
Since the proof is the same to the smooth case by using Proposition \ref{deri}, we omit it.
\begin{proposition}\label{chrhess}
Under the same setting as in Proposition \ref{deri},
for any $\phi \in D(\Delta, U)$, we have 
\begin{equation}
\Hess_{\phi}=\sum_{i, j=1}^n\left(\frac{\partial^2 \phi}{\partial \phi_i \partial \phi_j}-\sum_{k=1}^n\Gamma^k_{ij}\frac{\partial \phi}{\partial \phi_k} \right)d \phi_i \otimes d \phi_j.
\end{equation}
\end{proposition}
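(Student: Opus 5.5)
The plan is to follow the smooth computation: identify both sides of the claimed identity by pairing them with test tensors of the form $d\phi_a\otimes d\phi_b$, and use the defining formula \eqref{hessde} for the Hessian. Since $g^{ij}\in L^\infty$ is uniformly positive definite, the tensors $d\phi_a\otimes d\phi_b$ span $L^2(T^*U\otimes T^*U)$ pointwise $\meas_X$-a.e. It therefore suffices to show that, for all $a,b$,
\begin{equation*}
\langle \Hess_\phi, d\phi_a\otimes d\phi_b\rangle=\sum_{i,j}g^{ai}g^{bj}\Bigl(\frac{\partial^2\phi}{\partial\phi_i\partial\phi_j}-\sum_k\Gamma^k_{ij}\frac{\partial\phi}{\partial\phi_k}\Bigr)
\end{equation*}
holds $\meas_X$-a.e.

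\textbf{Step 1: make \eqref{hessde} available with $h_1=\phi_a$, $h_2=\phi_b$.} The $\phi_a$ are not test functions, so we first extend \eqref{hessde} by approximation. We localize with good cut-offs (Theorem \ref{hessthem}(6)) so that everything lives in $D(\Delta)$. We then approximate $\phi_a$ by test functions, for instance via the heat flow, converging in $D(\Delta)$ together with their gradients. Both sides of \eqref{hessde} are continuous along such sequences in $L^1_{\loc}$: the terms $\langle\nabla h_1,\nabla\langle\nabla\phi,\nabla h_2\rangle\rangle$ are controlled by $\|\Hess\|_{L^2}\|\nabla\|_{L^\infty}$, using $|\nabla\phi_a|\in L^\infty_{\loc}$ from the bi-Lipschitz assumption and the fact that $\phi\in D(\Delta,U)$ makes $\langle\nabla\phi,\nabla h\rangle\in W^{1,1}$. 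This gives
\begin{equation*}
2\langle\Hess_\phi,d\phi_a\otimes d\phi_b\rangle=\langle\nabla\phi_a,\nabla\langle\nabla\phi,\nabla\phi_b\rangle\rangle+\langle\nabla\phi_b,\nabla\langle\nabla\phi,\nabla\phi_a\rangle\rangle-\langle\nabla\phi,\nabla g^{ab}\rangle .
\end{equation*}
I expect this to be the main obstacle, since it requires the third term $\langle\nabla\phi,\nabla g^{ab}\rangle$ to make sense and to be stable. The point is that $g^{ab}\in W^{1,1}_{\loc}$ because $\phi_a\in D_{\loc}(\Delta)$ (the Hessian bound), and that $\nabla\phi$ is bounded, which holds after truncation and localization in the Lipschitz setting. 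If $\phi$ is not Lipschitz, I would first prove the formula for $\phi\in D(\Delta)\cap\mathrm{Lip}$ and then pass to the limit, using that both sides are $L^2$-continuous in $\phi$ once $g_{ij}$ and the $\Gamma^k_{ij}$ (which lie in $L^2_{\loc}$, as first derivatives of $g$) are fixed.

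\textbf{Step 2: translate into coordinates.} Write $\langle\nabla\phi,\nabla\phi_b\rangle=\sum_j g^{bj}\,\partial\phi/\partial\phi_j$ and $\langle\nabla\phi_a,\nabla F\rangle=\sum_i g^{ai}\,\partial F/\partial\phi_i$; these identities are pure linear algebra from the definitions in \eqref{inveq} and \eqref{saihsaoihsrohsoaihs}. Proposition \ref{deri} then shows that $\partial/\partial\phi_i$ acts on $W^{1,1}_{\loc}$ functions exactly as the Euclidean partial derivative $\partial/\partial x_i$ after composing with $\Phi^{-1}$. Consequently the Leibniz rule and the symmetry of mixed partials, $\frac{\partial^2\phi}{\partial\phi_i\partial\phi_j}=\frac{\partial^2\phi}{\partial\phi_j\partial\phi_i}$, hold $\meas_X$-a.e., with the latter coming from $W^{2,2}_{\loc}$ in Proposition \ref{deri}(3). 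We also use $\partial g^{ab}/\partial\phi_k=-\sum g^{ai}g^{bj}\,\partial g_{ij}/\partial\phi_k$, which follows by differentiating the inverse matrix, valid for $W^{1,1}$ matrices with bounded inverse.

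\textbf{Step 3: the algebra.} Expanding the three terms gives second-derivative contributions that combine into $2\sum g^{ai}g^{bj}\partial^2\phi/\partial\phi_i\partial\phi_j$, together with first-order terms of the form $(\partial g^{\cdot\cdot}/\partial\phi)(\partial\phi/\partial\phi)$. By exactly the classical Koszul computation, these first-order terms assemble into $-2\sum g^{ai}g^{bj}\Gamma^k_{ij}\,\partial\phi/\partial\phi_k$ with $\Gamma$ as in \eqref{inveq}. Contracting with $g_{\cdot\cdot}$ then yields the stated expansion in $d\phi_i\otimes d\phi_j$.
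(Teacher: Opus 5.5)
Your proposal is correct and follows essentially the paper's route: the paper omits the proof, saying it is the smooth computation transported through Proposition~\ref{deri}, and your plan is exactly that computation. You pair $\Hess_\phi$ against $d\phi_a\otimes d\phi_b$ via \eqref{hessde}, replace $\partial/\partial\phi_i$ by Euclidean partials using Proposition~\ref{deri}, and run the inverse-matrix/Christoffel algebra. Two minor points. First, the term $\langle\nabla\phi,\nabla g^{ab}\rangle$ already lies in $L^1_{\loc}$ because $|\nabla g^{ab}|\le C(|\Hess_{\phi_a}|+|\Hess_{\phi_b}|)\in L^2_{\loc}$, so no boundedness of $\nabla\phi$ is needed there. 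Second, the limit in your density step holds in $L^1_{\loc}$ rather than $L^2$, since $\Gamma^k_{ij}\,\partial\phi/\partial\phi_k$ is a product of two $L^2_{\loc}$ functions; $L^1_{\loc}$ convergence suffices for the argument.
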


\subsection{Bi-Lipschitz harmonic charts and elliptic regularity theory}\label{harmoniccchas}
The main purpose of this subsection is to provide a bridge between the RCD theory and the elliptic regularity theory of harmonic coordinates.

We begin with the corresponding formula in the smooth setting of a smooth weighted Riemannian manifold $(M^n, \dist^g, \haus^n_f)$.
On a smooth chart $(V, (x_1, \ldots, x_n))$ of $M^n$, 
 the weighted Laplacian $\Delta^g_f=\tr(\Hess_g)-g(\nabla^g f, \nabla^g \cdot)$ can be expressed in coordinates as 
\begin{align}\begin{split}
\label{smoothweithed}
\Delta_f^g&=\sum_{i, j = 1}^n g^{ij}\cdot\frac{\partial^2}{\partial x_i \partial x_j}+\sum_{i, j = 1}^n \left(\frac{1}{\sqrt{\det (g_{lm})_{lm}}} \cdot \frac{\partial (\sqrt{\det (g_{lm})_{lm}} \cdot g^{ij})}{\partial x_i} - g^{ij}\cdot\frac{\partial f}{\partial x_i}\right) \cdot\frac{\partial}{\partial x_j}  \\
&=\sum_{i, j = 1}^n g^{ij}\cdot\frac{\partial^2}{\partial x_i \partial x_j}+\sum_{j = 1}^n\Delta_f^gx_j\cdot\frac{\partial}{\partial x_j}.
\end{split}
\end{align}
In particular, if the chart is harmonic, then 
\begin{equation}\label{harmocood}
\Delta_f^g=\sum_{i, j =1}^n g^{ij}\cdot\frac{\partial^2}{\partial x_i \partial x_j}.
\end{equation}
Indeed, the equation $\Delta_f^gx_j=0$ implies the  vanishing of the first-order terms in \eqref{smoothweithed}, namely
\begin{equation}
\sum_{i = 1}^n \frac{1}{\sqrt{\det (g_{lm})_{lm}}}\cdot \frac{\partial (\sqrt{\det (g_{lm})_{lm}} \cdot g^{ij})}{\partial x_i} = \sum_{i = 1}^n g^{ij}\cdot\frac{\partial f}{\partial x_i}, \quad \text{for any $1 \leq j \leq n$.}
\end{equation}

Let us now turn to the non-smooth framework.
In the sequel, let us fix the following:
\begin{enumerate}
    \item an $\RCD(K, N)$ space $X$ for some $K \in \mathbb{R}$ and some $N \in [1, \infty)$, of essential dimension $n \ge 1$;
    \item an open subset $U$ of $X$ with $\meas_X=e^{-f}\di \haus^n$  
on $U$ for some  $f \in L^{\infty}_{\loc}(U)$, and a bi-Lipschitz embedding map $\Phi=(\phi_1, \ldots, \phi_n):U \to \mathbb{R}^n$ with $\phi_i \in D(\Delta, U)$, and $\Phi(U)$ is open in $\mathbb{R}^n$, where the bi-Lipschitz property of $\Phi$ yields that $\meas_X$ is locally $n$-Ahlfors regular on $U$. 
    
\end{enumerate} 
This motivates the following definition.
\begin{definition}[Push-forward Laplacian]\label{plap}
  The \textit{push-forward Laplacian} $\tilde \Delta=\tilde \Delta^{g, \Phi}$ acting on $W^{2,2}(\Phi(U))$ is defined as 
\begin{align*}
\tilde \Delta^{g, \Phi}
 & :=  \sum_{i, j=1}^n\left( g^{ij}\circ \Phi^{-1}  \right) \cdot \frac{\partial^2}{\partial x_i\partial x_j} + \sum_{j=1}^n\left( (\Delta \phi_j)\circ \Phi^{-1} \right) \cdot \frac{\partial}{\partial x_j}. 
\end{align*}
\end{definition}

It is worth pointing out that although the coincidence below is an easy consequence of Propositions \ref{coarea123}, \ref{noncocll} and \ref{deri} if the density function is in $W^{1,2}_{\loc}$, we provide a general result. See also Proposition \ref{toCZ} for a converse statement.

\begin{theorem}[Coincidence for Laplacian on chart]\label{asnasi}  
In the above notations, for any function $\psi\in D_{\loc}(\Delta, U)\cap \mathrm{Lip}_{\loc}(U)$,
we have $\tilde \psi:=\psi \circ \Phi^{-1} \in W^{2,2}_{\loc}(\Phi(U))$ with 
\begin{equation}\label{asnasirhas}
    \Delta \psi= (\tilde \Delta^{g, \Phi}\tilde \psi) \circ \Phi.
\end{equation}
Furthermore, if the density $f$ is in $W^{1,2}_{\loc}(U)$, then for any $i$
\begin{equation}\label{asahsjaoisjfaisjiajsijas}
    \Delta \phi_i=\sum_{j=1}^n\left(\frac{1}{\sqrt{\det (g_{lm})_{lm}}} \cdot \frac{\partial (\sqrt{\det (g_{lm})_{lm}} \cdot g^{ij})}{\partial \phi_j} - g^{ij}\cdot \frac{\partial f}{\partial \phi_j}\right).
\end{equation}
\end{theorem}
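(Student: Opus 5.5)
The first assertion, $\tilde\psi\in W^{2,2}_{\loc}(\Phi(U))$, is item (3) of Proposition \ref{deri}, so the content is the identity \eqref{asnasirhas}. We prove it weakly, in the Euclidean chart. By Proposition \ref{coarea123},
\[
\meas_X=\Phi^{-1}_\sharp(\rho\,\di\haus^n), \qquad \rho:=\frac{e^{-f\circ\Phi^{-1}}}{\sqrt{\det(g^{ij}\circ\Phi^{-1})}}\in L^\infty_{\loc}(\Phi(U)),
\]
and $\rho$ is locally bounded away from $0$. By Proposition \ref{deri}(1)--(2) and the chain rule, for $\eta\in C_c^\infty(\Phi(U))$ we have
\[
\langle\nabla\psi,\nabla(\eta\circ\Phi)\rangle=\big(\textstyle\sum_{i,j}g^{ij}\,\partial_i\tilde\psi\,\partial_j\eta\big)\circ\Phi .
\]
Hence the weak RCD identity $\int\langle\nabla\psi,\nabla(\eta\circ\Phi)\rangle\,\di\meas_X=-\int\Delta\psi\,\eta\circ\Phi\,\di\meas_X$ becomes
\[
\int \tilde g^{ij}\,\partial_i\tilde\psi\,\partial_j\eta\;\rho\,\di x=-\int(\Delta\psi\circ\Phi^{-1})\,\eta\,\rho\,\di x .
\]
That is, $\Delta\psi\circ\Phi^{-1}=\rho^{-1}\partial_j(\rho\,\tilde g^{ij}\partial_i\tilde\psi)$ in the distributional sense. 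The same identity applied to $\psi=\phi_j$ (where $\partial_i\tilde\phi_j=\delta_{ij}$) gives
\[
\Delta\phi_j\circ\Phi^{-1}=\rho^{-1}\partial_i(\rho\,\tilde g^{ij}).
\]
Expanding formally by Leibniz, $\rho^{-1}\partial_j(\rho\tilde g^{ij}\partial_i\tilde\psi)=\tilde g^{ij}\partial_{ij}\tilde\psi+\rho^{-1}\partial_j(\rho\tilde g^{ij})\,\partial_i\tilde\psi$, which is exactly $\tilde\Delta^{g,\Phi}\tilde\psi$.

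The main obstacle is justifying this Leibniz expansion: $\rho$ and $\tilde g^{ij}$ are only $L^\infty$ a priori, so $\rho\tilde g^{ij}$ need not be weakly differentiable. We bypass this with the divergence-measure-field viewpoint \cite{BCM}. The vector fields $V^i:=\rho\,\tilde g^{ij}e_j$ are bounded and have $L^\infty_{\loc}$ divergence $\rho\,(\Delta\phi_i\circ\Phi^{-1})$ (when $\Delta\phi_i\in L^\infty_{\loc}$; in general $L^2_{\loc}$, which still suffices for the pairing below). The product rule $\mathrm{div}(u V)=u\,\mathrm{div}V+\nabla u\cdot V$ holds for $u\in W^{1,2}_{\loc}\cap L^\infty_{\loc}$ via mollification: mollify $u$, use $\mathrm{div}(V*\chi_\epsilon)=(\mathrm{div}V)*\chi_\epsilon$, and pass to the limit.

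We apply this with $u=\partial_i\tilde\psi\in W^{1,2}_{\loc}\cap L^\infty_{\loc}$, which holds because $\psi$ is locally Lipschitz and by Proposition \ref{deri}(3). It gives
\[
\partial_j(\rho\tilde g^{ij}\partial_i\tilde\psi)=\partial_i\tilde\psi\,\mathrm{div}V^i+\rho\,\tilde g^{ij}\partial_{ij}\tilde\psi
\]
as distributions; more precisely, as $L^1_{\loc}$ functions. Testing against $\eta$, comparing with the weak identity above, dividing by $\rho>0$, and using $\mathrm{div}V^i=\rho\,\Delta\phi_i\circ\Phi^{-1}$ yields \eqref{asnasirhas} $\haus^n$-a.e., hence $\meas_X$-a.e. after composing with $\Phi$.

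For \eqref{asahsjaoisjfaisjiajsijas}, assume $f\in W^{1,2}_{\loc}$. First we need $\sqrt{\det g}\,g^{ij}\in W^{1,1}_{\loc}$ in coordinates. This follows because $g^{ij}=\langle\nabla\phi_i,\nabla\phi_j\rangle\in W^{1,1}_{\loc}$ by Theorem \ref{hessthem}(4), because $g^{ij}$ is bounded with bounded inverse (bi-Lipschitz), and by Lemma \ref{sobolevcomp}. Then $\rho\tilde g^{ij}$ is a product of bounded $W^{1,1}_{\loc}$ functions, so the classical Leibniz rule applies to $\Delta\phi_j\circ\Phi^{-1}=\rho^{-1}\partial_i(\rho\tilde g^{ij})$ with $\rho=e^{-f}/\sqrt{\det(g^{lm})}=e^{-f}\sqrt{\det(g_{lm})}$. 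This produces $\frac{1}{\sqrt{\det g}}\partial_i(\sqrt{\det g}\,g^{ij})-g^{ij}\partial_i f$. Finally, translate the coordinate derivatives back to $\partial/\partial\phi$ via \eqref{rhs}.
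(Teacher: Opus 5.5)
Your proposal is correct and follows essentially the paper's argument: the same weighted vector fields $V^i=\rho\,\tilde g^{ij}e_j$ with divergence $\rho\,(\Delta\phi_i\circ\Phi^{-1})$, and the same Leibniz rule $\mathbf{div}(uV)=u\,\mathbf{div}V+du(V)$ applied with $u=\partial_i\tilde\psi\in W^{1,2}_{\loc}\cap L^\infty_{\loc}$ from Proposition \ref{deri}(3). The differences are minor. The paper quotes \cite[Theorem 5.3]{BCM} (via heat flow and a smooth cut-off), whereas you prove the product rule by mollification, which suffices because $\mathbf{div}V^i$ is an $L^2_{\loc}$ function. For \eqref{asahsjaoisjfaisjiajsijas} the paper appeals to Proposition \ref{noncocll}, whereas you read it off directly from $\partial_j(\rho\tilde g^{ij})=\rho\,\Delta\phi_i\circ\Phi^{-1}$ via the Euclidean Leibniz rule, which is slightly more self-contained.
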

\begin{proof}
First, let us prove (\ref{asnasirhas}) via \textit{divergence-measure fields}, denoted by $\mathbf{div}$, where we refer \cite{BCM} in the setting of metric measure spaces, and we use the corresponding result for $\mathbb{R}^n$. The proof is divided into the following steps.
\smallskip

\textbf{Step 1}:  \textit{Consider a vector field on $\Phi(U)$:
$$
   V_i:= \sum_{j=1}^ng^{ij}\circ \Phi^{-1} \cdot \sqrt{\mathrm{det}(g^{kl})_{kl}}^{-1}\circ \Phi^{-1} \cdot e^{-f\circ \Phi^{-1}}\cdot\partial_j. 
$$ 
Then $V_i$ is in the domain of the locally divergence-measure, denoted by $V_i \in D_{\loc}(\mathbf{div}, \Phi(U))$, with
$$
    \mathbf{div}(V_i)=\Delta \phi_i \circ \Phi^{-1}\cdot \sqrt{\mathrm{det}(g^{kl})_{kl}}^{-1}\circ \Phi^{-1}\cdot e^{-f\circ \Phi^{-1}} \cdot \di \mathcal{L}^n.
  $$
}
\smallskip

Since
\begin{equation}
    \int_U\langle \nabla \phi_i, \nabla (\phi \circ \Phi)\rangle \di \meas_X=-\int_U\Delta \phi_i \cdot \phi \circ \Phi \di \meas_X, \quad \text{for any $\phi \in \mathrm{Lip}_c(\Phi(U))$,}
\end{equation}
writing this  in terms of $\Phi(U)$ via Propositions \ref{coarea123} and \ref{deri} completes the proof of \textit{Step 1}.
\smallskip

In the sequel, fix a relatively compact open subset $\tilde U$ of $U$ and find a smooth cut-off $\rho:\mathbb{R}^n \to [0,1]$ with $\rho=1$ on $\Phi(\tilde U)$ and $\supp \rho \subset \Phi(U)$. The integration-by-parts formula yields the following.

\smallskip

\textbf{Step 2}: \textit{We have $\rho V_i \in D(\mathbf{div}, \mathbb{R}^n)$ with $
    \mathbf{div}(\rho V_i)=V_i(\rho)\cdot\di \mathcal{L}^n+\rho \cdot\mathbf{div}(V_i).
$}

\smallskip

Similarly we can obtain the following.

\smallskip

\textbf{Step 3}: \textit{Considering a vector field on $\Phi(U)$: 
$$
    V_{\psi}:=\sum_{i,j=1}^ng^{ij}\circ \Phi^{-1}\cdot \partial_i(\psi\circ \Phi^{-1})\cdot \sqrt{\mathrm{det}(g^{kl})_{kl}}^{-1}\circ \Phi^{-1}\cdot e^{-f\circ \Phi^{-1}}\cdot \partial_j,
$$
we have $V_{\psi} \in D_{\loc}(\mathbf{div}, \Phi(U))$ with 
$$
    \mathbf{div}(V_{\psi})=\Delta \psi \circ \Phi^{-1}\cdot  \sqrt{\mathrm{det}(g^{kl})_{kl}}^{-1}\circ \Phi^{-1}\cdot    e^{-f\circ \Phi^{-1}} \cdot\di \mathcal{L}^n.
$$
Moreover, we have $\rho^2 V_{\psi} \in D(\mathbf{div}, \mathbb{R}^n)$ with
$
    \mathbf{div}(\rho^2V_{\psi})=V_{\psi}(\rho^2)\cdot\di \mathcal{L}^n+\rho^2\cdot\mathbf{div}(V_{\psi}).
$}

\smallskip

\textbf{Step 4}: \textit{Conclusion.}

\smallskip

The proof is as follows. Let $h_i:=\rho \partial_i(\psi\circ \Phi^{-1}) \in W^{1,2}\cap L^{\infty}_c(\mathbb{R}^n)$, where we used (3) of Proposition \ref{deri}. Since $\rho^2 V_{\psi}=\sum_{i=1}^nh_i \cdot \rho V_i$, we can apply \cite[Theorem 5.3]{BCM} for $\mathbb{R}^n$ to conclude
\begin{align}\label{8shsssisinriwinss}
    \mathbf{div}(\rho^2V_{\psi})=\sum_{i=1}^n\mathbf{div}(h_i\rho V_i)=\sum_{i=1}^n\left(h_i\cdot\mathbf{div}(\rho V_i)+\lim_{t\to 0}d(\mathsf{h}_t^{\mathbb{R}^n}h_i)(\rho V_i)\cdot \di \mathcal{L}^n\right )
\end{align}
where $\mathsf{h}_t^{\mathbb{R}^n}$ denotes the heat flow on $\mathbb{R}^n$. Noticing that $\mathsf{h}_t^{\mathbb{R}^n}h_i$ $W^{1,2}$-strongly converge to $h_i$ as $t \to 0$, the right-hand-side of (\ref{8shsssisinriwinss}) is equal to
$
    \sum_{i=1}^n(h_i\cdot\mathbf{div}(\rho V_i)+dh_i(\rho V_i)\cdot\di \mathcal{L}^n). 
$ 
Restricting the equations to $\Phi(\tilde U)$ (recalling $\rho=1$ on $\Phi(\tilde U)$) shows
\begin{align}
    &(\Delta \psi) \circ \Phi^{-1}\cdot e^{-f\circ \Phi^{-1}}\cdot \sqrt{\mathrm{det}(g^{kl})_{kl}}^{-1}\circ \Phi^{-1}\cdot \di \mathcal{L}^n \nonumber \\
    &=\mathbf{div} (\rho^2 V_{\psi}) \nonumber \\
    &=\sum_{i=1}^n\left(h_i\cdot\mathbf{div}(\rho V_i) +dh_i(\rho V_i)\cdot\di \mathcal{L}^n\right) \nonumber \\
    &=\sum_{i=1}^n\partial_i(\psi \circ \Phi^{-1})\cdot (\Delta \phi_i) \circ \Phi^{-1} \cdot e^{-f\circ \Phi^{-1}}\cdot \sqrt{\mathrm{det}(g^{kl})_{kl}}^{-1}\circ \Phi^{-1}\cdot\di \mathcal{L}^n \nonumber \\
    &+\sum_{i,j=1}^ng^{ij}\circ \Phi^{-1}\cdot \partial_i\partial_j(\psi\circ \Phi^{-1}) \cdot e^{-f\circ \Phi^{-1}}\cdot \sqrt{\mathrm{det}(g^{kl})_{kl}}^{-1}\circ \Phi^{-1}\cdot\di \mathcal{L}^n
\end{align}
which completes the proof of (\ref{asnasirhas}).
The last statement follows from Proposition \ref{noncocll}.
\end{proof}
 
Let us provide a $W^{2,p}$-regularity result in this setting. 
\begin{corollary}[Local Calder{\'o}n-Zygmund inequality]\label{newcor}
    In the above notation,  assume that $\Delta \phi_i \in L^{\infty}_{\mathrm{loc}}(U)$ and $g^{ij} \in C^0(U)$. Let $\psi\in D_{\loc}(\Delta, U)\cap W^{1,p}_{\loc}(U)$ with $\Delta \psi \in L^p_{\loc}(U)$ for some $1<p<\infty$, 
    and let us denote $\tilde{\psi} := \psi \circ \Phi^{-1}$. Then $\tilde \psi \in W^{2,p}_{\mathrm{loc}}(\Phi(U))$ for any $1 < p<\infty$. 
Moreover,
    for any ball $B_r(z)$ in $\mathbb{R}^n$ with $B_{2r}(z) \Subset \Phi(U)$, we have the following local Calder{\'o}n-Zygmund inequality: 
\begin{equation}\label{czlocal}
    \|\psi\|_{W^{2, p}(B_r(z))}\le C \left( \|\tilde \Delta^{g, \Phi} \psi \|_{L^p(B_{2r}(z))}+\|\psi \|_{L^p(B_{2r}(z))}\right),
\end{equation}
where $C$ is a \textit{quantitative} positive constant. 
In particular, if we additionally assume $|\mathrm{Hess}_{\phi_i}| \in L_{\loc}^p (U)$ for any $i$, then $|\mathrm{Hess}_{\psi}| \in L^{p}_{\mathrm{loc}}(U)$. 
\end{corollary}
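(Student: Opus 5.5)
The plan is to move the problem to $\Phi(U)\subset\mathbb{R}^n$ via Theorem \ref{asnasi}, apply the classical $W^{2,2}\to W^{2,p}$ theory for nondivergence operators with continuous leading coefficients (Chiarenza--Frasca--Longo), and then pull the regularity back with Propositions \ref{deri} and \ref{chrhess}.

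\textbf{Step 1: the equation in the chart.} First I reduce to $\psi$ locally Lipschitz. If $p>N$, Corollary \ref{prop:reglp}(2) gives this directly. Otherwise I would bootstrap in $p$: start with $\Delta\psi\in L^2_{\loc}$ and approximate $\psi$ by $\psi_L$ solving $\Delta\psi_L=\max\{-L,\min\{L,\Delta\psi\}\}$ on balls, as in the footnote to Proposition \ref{propregmap}. Each $\psi_L$ is locally Lipschitz, so the argument below applies to it, and the a priori estimates then pass to the limit. With $\psi$ Lipschitz, Theorem \ref{asnasi} gives $\tilde\psi\in W^{2,2}_{\loc}(\Phi(U))$ and
\[
\sum_{i,j}\tilde g^{ij}\partial_i\partial_j\tilde\psi=(\Delta\psi)\circ\Phi^{-1}-\sum_j(\Delta\phi_j)\circ\Phi^{-1}\,\partial_j\tilde\psi=:F .
\]
Here $\tilde g^{ij}=g^{ij}\circ\Phi^{-1}$ is continuous, since $g^{ij}\in C^0$ by assumption. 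It is uniformly elliptic on compact sets: $\det(g^{ij})$ is bounded below because $\Phi$ is bi-Lipschitz (the implication (1)$\Rightarrow$(2) of Theorem \ref{bilipchara}), and continuity makes the bound pointwise.

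\textbf{Step 2: the right-hand side lies in $L^p_{\loc}$.} The term $(\Delta\psi)\circ\Phi^{-1}$ lies in $L^p_{\loc}$ by Proposition \ref{coarea123}, using Ahlfors regularity and the bounded Jacobian. The first-order term is $L^\infty_{\loc}$ times $\partial_j\tilde\psi$, and $\partial_j\tilde\psi\in L^p_{\loc}$ by Proposition \ref{deri}(2), since $\psi\in W^{1,p}_{\loc}$.

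\textbf{Step 3: apply the classical theory.} The Chiarenza--Frasca--Longo result (VMO, in particular continuous, leading coefficients, with a strong $W^{2,2}$ solution) gives $\tilde\psi\in W^{2,p}_{\loc}$ together with
\[
\|\tilde\psi\|_{W^{2,p}(B_r)}\le C\left(\|\tilde\Delta\tilde\psi\|_{L^p(B_{2r})}+\|\tilde\psi\|_{L^p(B_{2r})}\right),
\]
which is \eqref{czlocal}. The constant depends on the ellipticity, the modulus of continuity of $\tilde g^{ij}$, $p$ and $r$. An alternative route is to move the first-order term to the right-hand side and absorb it with the interpolation inequality $\|\nabla\tilde\psi\|_{L^p}\le\epsilon\|D^2\tilde\psi\|_{L^p}+C_\epsilon\|\tilde\psi\|_{L^p}$.

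I expect the main obstacle to be making the input hypotheses honest: ensuring $W^{2,2}_{\loc}$ membership (hence Lipschitzness of $\psi$) when $p$ is small, and carrying the estimates through the truncation limit.

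\textbf{Step 4: the Hessian.} Now assume in addition $|\Hess_{\phi_i}|\in L^p_{\loc}$. I would use Proposition \ref{chrhess}:
\[
\Hess_\psi=\sum_{i,j}\left(\partial_i\partial_j\tilde\psi\circ\Phi-\sum_k\Gamma^k_{ij}\,\partial_k\tilde\psi\circ\Phi\right)d\phi_i\otimes d\phi_j .
\]
The first term is in $L^p_{\loc}$ by Step 3. For the second, $\nabla g^{ij}$ is controlled by $|\Hess_{\phi_i}||\nabla\phi_j|$ through formula \eqref{hessde}, so $\Gamma^k_{ij}\in L^p_{\loc}$. The factor $\partial_k\tilde\psi$ is in $L^\infty_{\loc}$ by Sobolev embedding when $p>n$, since $\tilde\psi\in W^{2,p}$. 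For general $p$, I would first run Step 3 with large $q$; this is possible only when $\Delta\psi$ has that integrability, and otherwise I would use Hölder's inequality with $\tilde\psi\in W^{2,p}$ and Sobolev embedding.
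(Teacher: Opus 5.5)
Your main line matches the paper's. Theorem \ref{asnasi} turns $\Delta\psi$ into a nondivergence equation on $\Phi(U)$ with continuous leading and bounded first-order coefficients. \cite{CFL} then upgrades $W^{2,2}_{\loc}$ to $W^{2,p}_{\loc}$, the interior estimate is \cite[Theorem 9.11]{GT}, and the Hessian claim comes from Proposition \ref{chrhess}.

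\textbf{Reduction to Lipschitz $\psi$.} This is where you genuinely differ. The paper multiplies by a good cut-off $\rho$, so that $\rho\psi\in D_c(\Delta)$ with $\Delta(\rho\psi)=\rho\Delta\psi+2\langle\nabla\rho,\nabla\psi\rangle+\psi\Delta\rho\in L^p$. This is exactly where the hypothesis $\psi\in W^{1,p}_{\loc}(U)$ is used. It then runs the heat flow: $\mathsf{h}_t(\rho\psi)$ is locally Lipschitz by \cite{JiangLiZhang}. Both $\mathsf{h}_t(\rho\psi)\to\rho\psi$ and $\Delta\mathsf{h}_t(\rho\psi)=\mathsf{h}_t\Delta(\rho\psi)\to\Delta(\rho\psi)$ hold in $L^p$. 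So every term of \eqref{czlocal} converges, and the estimate passes to the limit in one line.

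Your truncated Dirichlet solutions $\psi_L$ need only interior Lipschitz regularity for bounded Laplacian, and $\|\Delta\psi_L\|_{L^p}\le\|\Delta\psi\|_{L^p}$ comes for free. However, you only know $\psi_L\to\psi$ in $W^{1,2}$, so for $p>2$ the lower-order term $\|\tilde\psi_L\|_{L^p(B_{2r}(z))}$ is not automatically controlled. This is the obstacle you flagged, and it can be closed as follows:
\begin{enumerate}
\item Apply the Lipschitz-case estimate with exponent $2$.
\item Climb through exponents by Euclidean Sobolev embedding on shrinking balls, using $|\Delta\psi_L|\le|\Delta\psi|$, until you have a uniform $W^{2,p}$ bound.
\item Conclude $\tilde\psi\in W^{2,p}_{\loc}$ by weak compactness; the equation passes to the limit.
\item Only then apply \cite[Theorem 9.11]{GT} to $\tilde\psi$ itself to get \eqref{czlocal}.
\end{enumerate}
Done this way, your route never needs $\psi\in W^{1,p}_{\loc}(U)$; it becomes a consequence. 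The paper's route is shorter but relies on the heat-kernel gradient bound.

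\textbf{Step 4.} Your concern about $\Gamma^k_{ij}\,\partial_k\tilde\psi$ is justified, but your Hölder--Sobolev fallback does not resolve it. With only $\Gamma^k_{ij}\in L^p_{\loc}$, and $p\le n$, $\partial_k\tilde\psi\in W^{1,p}_{\loc}$ embeds only into $L^q_{\loc}$ for finite $q$. Hölder then gives the product in $L^r$ with $\frac1r=\frac1p+\frac1q>\frac1p$, so not in $L^p$.

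The cases sort out as follows:
\begin{enumerate}
\item For $p\le2$ the conclusion is immediate anyway: after a cut-off, (4) of Theorem \ref{hessthem} gives $|\Hess_\psi|\in L^2_{\loc}$ for $\psi\in D_{\loc}(\Delta,U)$.
\item For $p>n$, or for $\psi$ locally Lipschitz, $\partial_k\tilde\psi\in L^\infty_{\loc}$ and your argument works.
\item The range $2<p\le n$ with non-Lipschitz $\psi$ needs more integrability of the Christoffel symbols.
\end{enumerate}
The paper's one-line justification via $|\nabla g^{ij}|\le C(|\Hess_{\phi_i}|+|\Hess_{\phi_j}|)$ passes over the same point. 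Where the statement is actually used (Corollaries \ref{corhess1} and \ref{corhess}), one has $\Gamma^k_{ij}\in L^t_{\loc}$ for every $t<\infty$. Pairing that with $\partial_k\tilde\psi\in L^q_{\loc}$ for some $q>p$ closes the gap. So you are not missing an idea the paper supplies, but you should not present your Hölder--Sobolev fallback as settling $2<p\le n$.
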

\begin{proof}
We first assume $\psi \in D_{\loc}(\Delta, U) \cap \mathrm{Lip}_{\loc}(U)$.
Let $\tilde h=(\Delta \psi)\circ \Phi^{-1} \in L^p_{\loc}(\Phi(U))$.
    Applying Theorem \ref{asnasi} to $\tilde \psi$,  the equation $ \tilde h\circ \Phi=  \Delta \psi= ( \tilde \Delta^{g, \Phi} \tilde{\psi} ) \circ \Phi$ can be written as 
    $
    \sum_{i, j=1}^ng^{ij}\partial_{ij}\tilde \psi+\sum_{j=1}^n b^j\partial_j\tilde \psi=\tilde h, g^{ij} \in C^0, b^i \in L^{\infty}$,
    namely
    $
        \sum_{i, j=1}^ng^{ij}\partial_{ij}\tilde \psi=\tilde h-\sum_{j=1}^nb^j\partial_j\tilde \psi \in L^p_{\loc}(\Phi(U)).
    $
    Since $\tilde{\psi}\in W^{2,2}$,
applying the standard $W^{2,p}$-regularity theory after multiplying a good cut off ((6) of Theorem \ref{hessthem}), we conclude the first assertion; see for example \cite[Theorems 4.1 and 4.2]{CFL}. Then (\ref{czlocal}) comes from \cite[Theorem 9.11]{GT} and the $W^{2,p}$-regularity result.

Next let us prove the above for general $\psi \in D_{\loc}(\Delta, U) \cap W^{1,p}_{\loc}(U)$. Considering a good cut-off, we can assume $\psi \in D_c(\Delta)\cap W^{1,p}(X)$. Since $ \mathsf{h}_t\psi \in \mathrm{Lip}_{\loc}(U)$ because of \cite[Corollary 1.2]{JiangLiZhang}, applying (\ref{czlocal}) to $\mathsf{h}_t\psi$ with Theorem \ref{asnasi}, and then letting $t \to 0$ complete the proof.

To prove the last statement about the $L_{\loc}^p$-boundedness of $\mathrm{Hess}_{\psi}$, applying  Proposition \ref{chrhess} with the above assertion, the conclusion follows from  the $L_{\loc}^p$-boundedness of the  Christoffel symbols, which can be seen as $ |\nabla g^{ij}|\le C(|\mathrm{Hess}_{\phi_i}|+|\mathrm{Hess}_{\phi_j}|)$.
\end{proof}
\begin{remark}\label{remintegra}
    In connection with the higher integrability of the Hessian, it is easy to see that $\mathrm{Hess}_{\phi_i} \in L^p(U)$ holds for any $1\leq i \leq n$ if and only if $|\nabla g^{jk}| \in L^p$ holds for all $j, k$.
\end{remark}
\begin{remark}\label{asnasi2222}
    Under the same setting as in Corollary \ref{newcor}, by the proof of Theorem \ref{asnasi}, we know that (\ref{asnasirhas}) is also valid for any $\psi\in D_{\loc}(\Delta, U)$ (without assuming $\psi \in \mathrm{Lip}_{\loc}(U)$).
\end{remark}

As an application, we obtain the following Schauder type regularity results.

\begin{corollary}[$C^{2,\alpha}$-regularity]\label{calp} Assume that  $\Delta \Phi \in C^{0,\alpha}_{\loc}$ with $g^{ij} \in C^{0,\alpha}_{\loc}(U)$ for some $\alpha \in (0,1)$.
 Let $\psi \in D(\Delta, U)$ and let us denote $\tilde{\psi} := \psi \circ \Phi^{-1}$. 
If $\psi$ satisfies $\widetilde{\Delta}^{g, \Phi} \tilde{\psi}  \in C^{0,\alpha}_{\loc}( \Phi(U) )$ (equivalently $\Delta \psi \in C^{0,\alpha}_{\loc}(U)$), then $\tilde{\psi} \in C^{2, \alpha}_{\loc} ( \Phi( U ) )$.
\end{corollary}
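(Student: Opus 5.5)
The plan is to reduce the statement to classical Schauder theory for non-divergence operators in $\Phi(U)\subset\mathbb{R}^n$, using Theorem \ref{asnasi} (with Remark \ref{asnasi2222}) and Corollary \ref{newcor} to supply the a priori $W^{2,p}_{\loc}$ regularity that the classical theory needs as a starting point.

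\textbf{Step 1: the equivalence in the hypothesis.} By Theorem \ref{asnasi}, in the form of Remark \ref{asnasi2222}, we have $\Delta\psi=(\tilde\Delta^{g,\Phi}\tilde\psi)\circ\Phi$. Since $\Phi$ is bi-Lipschitz, composition with $\Phi$ or $\Phi^{-1}$ preserves the local $C^{0,\alpha}$ classes. Hence $\Delta\psi\in C^{0,\alpha}_{\loc}(U)$ if and only if $\tilde\Delta^{g,\Phi}\tilde\psi\in C^{0,\alpha}_{\loc}(\Phi(U))$.

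\textbf{Step 2: a priori $W^{2,p}$ and $C^{1,\beta}$ regularity.} The hypotheses of Corollary \ref{newcor} hold: $\Delta\phi_i\in C^{0,\alpha}_{\loc}\subset L^\infty_{\loc}$, $g^{ij}\in C^0$, and $\Delta\psi\in L^\infty_{\loc}$. After localizing with a good cut-off, we may also assume $\psi\in W^{1,p}_{\loc}$ for all $p$. For this I would use Corollary \ref{prop:reglp}(2), which gives $\psi\in\mathrm{Lip}_{\loc}$ because $\Delta\psi\in L^\infty_{\loc}$ and $\meas_X$ is locally Ahlfors $n$-regular. Corollary \ref{newcor} then gives $\tilde\psi\in W^{2,p}_{\loc}(\Phi(U))$ for every $p<\infty$. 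By Sobolev--Morrey embedding, $\tilde\psi\in C^{1,\beta}_{\loc}$ for every $\beta<1$; in particular $\partial_j\tilde\psi\in C^{0,\alpha}_{\loc}$.

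\textbf{Step 3: the Schauder bootstrap.} In coordinates the equation reads
\begin{equation*}
\sum_{i,j=1}^n (g^{ij}\circ\Phi^{-1})\,\partial_{ij}\tilde\psi=\tilde h-\sum_{j=1}^n\big((\Delta\phi_j)\circ\Phi^{-1}\big)\,\partial_j\tilde\psi,
\end{equation*}
where $\tilde h=\tilde\Delta^{g,\Phi}\tilde\psi$. The right-hand side is $C^{0,\alpha}_{\loc}$, since each term in the sum is a product of $C^{0,\alpha}_{\loc}$ functions. The coefficients $g^{ij}\circ\Phi^{-1}$ are $C^{0,\alpha}_{\loc}$ and locally uniformly elliptic, because $\Phi$ is bi-Lipschitz (compare Theorem \ref{bilipchara}, (1)$\Rightarrow$(2)).

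The point needing care is that classical Schauder interior regularity (\cite[Theorem 9.19]{GT}, or \cite[Theorem 6.17]{GT} combined with a uniqueness argument) applies to strong $W^{2,p}_{\loc}$ solutions only when $p$ is large enough, not just to $W^{2,2}$ solutions. Step 2 supplies exactly this. So I would invoke \cite[Theorem 9.19]{GT} directly: a $W^{2,p}_{\loc}$ strong solution of an equation with $C^{0,\alpha}$ coefficients and $C^{0,\alpha}$ right-hand side lies in $C^{2,\alpha}_{\loc}$. Alternatively, one can solve the Dirichlet problem on small balls with a $C^{2,\alpha}$ solution and use $W^{2,p}$ uniqueness (Aleksandrov/\cite[Theorem 9.15]{GT}) to identify it with $\tilde\psi$. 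This yields $\tilde\psi\in C^{2,\alpha}_{\loc}(\Phi(U))$.

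I expect the only real obstacle to be this justification that the weak RCD solution is regular enough to enter classical Schauder theory. Step 2, via Corollary \ref{newcor}, is what resolves it.
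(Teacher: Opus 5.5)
Your proposal is correct and follows the paper's route: Theorem \ref{asnasi} (with Remark \ref{asnasi2222}) and Corollary \ref{newcor} make $\tilde\psi$ a $W^{2,p}_{\loc}$ strong solution of a non-divergence equation on $\Phi(U)$ with $C^{0,\alpha}_{\loc}$ coefficients and right-hand side, and \cite[Theorem 9.19]{GT} then gives $\tilde\psi\in C^{2,\alpha}_{\loc}(\Phi(U))$. One small correction to your commentary, which does not affect the argument: \cite[Theorem 9.19]{GT} already accepts $W^{2,p}_{\loc}$ solutions for any $1<p<\infty$, including $p=2$, since its proof first upgrades them to $W^{2,q}_{\loc}$ for all $q<\infty$; so the extra care about large $p$ is harmless but not actually needed.
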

\begin{proof}   One can apply Theorem \ref{asnasi} and Corollary \ref{newcor} to reduce the regularity statement to the Euclidean setting. Namely, the assertion  
follows from  \cite[Theorem 9.19]{GT}. 
\end{proof}
Finally let us end by giving the following converse of Theorem \ref{asnasi}.
\begin{proposition}[Compatibility with $W^{2,2}$]\label{toCZ}
    Assume that $\Delta \phi_i \in L^{\infty}_{\loc}(U)$ and $f \in W^{1,2}_{\loc}(U)$ hold. Then, for any $\tilde \psi \in W^{2,2}_{\loc}(\Phi(U))$, we have $\psi := \tilde \psi \circ \Phi \in D_{\loc}(\Delta, U)$.
\end{proposition}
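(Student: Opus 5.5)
The plan is to reverse the computation in the proof of Theorem \ref{asnasi}: show that the weak Laplacian of $\psi$ on $U$ exists and equals $(\tilde\Delta^{g,\Phi}\tilde\psi)\circ\Phi$. This function lies in $L^2_{\loc}(U)$, because $g^{ij}\in L^\infty$, $\partial_{ij}\tilde\psi\in L^2_{\loc}$, $\Delta\phi_j\in L^\infty_{\loc}$ and $\partial_j\tilde\psi\in L^2_{\loc}$.

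\textbf{Step 1.} First I check that $\psi\in W^{1,2}_{\loc}(U)$. This follows from Lemma \ref{sobolevcomp}, since $\Phi$ is bi-Lipschitz and $\meas_X$ is locally Ahlfors regular with density bounded above and below. Moreover, (2) of Proposition \ref{deri} gives $\frac{\partial\psi}{\partial\phi_i}=(\partial_i\tilde\psi)\circ\Phi$.

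\textbf{Step 2.} Next I use the assumption $f\in W^{1,2}_{\loc}(U)$ to get a divergence formula in coordinates. By \eqref{asahsjaoisjfaisjiajsijas}, equivalently by Step 1 of the proof of Theorem \ref{asnasi}, the vector field
\[
V_i=\sum_j g^{ij}\circ\Phi^{-1}\cdot w\cdot\partial_j,\qquad w:=\sqrt{\det(g^{kl})}^{-1}\circ\Phi^{-1}\cdot e^{-f\circ\Phi^{-1}},
\]
has distributional divergence $(\Delta\phi_i)\circ\Phi^{-1}\cdot w\in L^\infty_{\loc}$. The density $w$ is bounded above and below. The point of assuming $f\in W^{1,2}_{\loc}$, together with $g^{ij}\in W^{1,2}_{\loc}$ (which holds because $\phi_i\in D_{\loc}(\Delta)$), is that $w$ and the coefficients $g^{ij}\circ\Phi^{-1}$ are bounded $W^{1,2}_{\loc}$ functions. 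So $V_i$ is a genuine $W^{1,2}_{\loc}\cap L^\infty_{\loc}$ vector field, and its divergence is an honest $L^\infty_{\loc}$ function.

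\textbf{Step 3.} Now fix $h\in\mathrm{Lip}_c(U)$ and set $\tilde h=h\circ\Phi^{-1}$. By Proposition \ref{coarea123} and Step 1,
\[
\int_U\langle\nabla\psi,\nabla h\rangle\,\di\meas_X=\int_{\Phi(U)}\sum_{i,j}g^{ij}\circ\Phi^{-1}\,\partial_i\tilde\psi\,\partial_j\tilde h\;w\,\di\mathcal L^n=\int\sum_i\partial_i\tilde\psi\;V_i(\tilde h)\,\di\mathcal L^n .
\]
I then integrate by parts on $\mathbb{R}^n$. For this I need a product rule $\mathbf{div}(\partial_i\tilde\psi\,\tilde h\,V_i)=\dots$, which is valid here because $\partial_i\tilde\psi\in W^{1,2}_{\loc}$ and $V_i$ is a bounded vector field with $L^\infty$ divergence; alternatively I would approximate $\tilde\psi$ by mollifications and pass to the limit. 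This gives
\[
\int_U\langle\nabla\psi,\nabla h\rangle\,\di\meas_X=-\int\tilde h\sum_i\Big(\partial_i\tilde\psi\,\mathbf{div}V_i+\sum_j g^{ij}\circ\Phi^{-1}\,\partial_{ij}\tilde\psi\;w\Big)\di\mathcal L^n .
\]
Substituting $\mathbf{div}V_i$ from Step 2 and pulling back via Proposition \ref{coarea123}, the right-hand side becomes $-\int_U h\,(\tilde\Delta^{g,\Phi}\tilde\psi)\circ\Phi\,\di\meas_X$. This is exactly the definition of $\psi\in D_{\loc}(\Delta,U)$.

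\textbf{Main obstacle.} The delicate point is justifying the product rule in Step 3. The term $\partial_j\tilde h\cdot\partial_i\tilde\psi$ involves the product $\partial_i\tilde\psi\,V_i$, whose integrability is not automatic: $\partial_i\tilde\psi\in W^{1,2}_{\loc}$ while $V_i$ is only $L^\infty$ with $L^\infty$ divergence. I would handle it in one of two ways:
\begin{itemize}
\item[(a)] invoke the Leibniz rule for divergence-measure fields, \cite[Theorem 5.3]{BCM}, as in Step 4 of the proof of Theorem \ref{asnasi}, with $\rho\,\partial_i\tilde\psi\in W^{1,2}\cap L^\infty$ after truncating $\tilde\psi$;
\item[(b)] mollify $\tilde\psi$, use the smooth identity, and pass to the limit. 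The limit works because $\partial_{ij}\tilde\psi_\epsilon\to\partial_{ij}\tilde\psi$ in $L^2_{\loc}$, while the coefficients $g^{ij}\circ\Phi^{-1}$, $w$ and $\Delta\phi_j\circ\Phi^{-1}$ are $L^\infty_{\loc}$.
\end{itemize}
Option (b) looks cleanest. The mollified identity itself only requires Step 2, which is where the $W^{1,2}_{\loc}$ regularity of $f$ enters.
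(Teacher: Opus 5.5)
Your proposal is correct and follows essentially the same route as the paper. Both rewrite the two sides of the weak identity on $\Phi(U)$ via Proposition \ref{coarea123} and integrate by parts on $\mathbb{R}^n$, using the divergence identity for the fields attached to $\phi_i$ (Theorem \ref{asnasi}).

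One remark on where the hypothesis $f\in W^{1,2}_{\loc}$ enters. In your mollification route (b), the divergence identity is just Step 1 of the proof of Theorem \ref{asnasi}, which holds for any $f\in L^\infty_{\loc}$, so $f\in W^{1,2}_{\loc}$ is never actually used. The paper's terse argument instead appears to differentiate the coefficient $g^{ij}\sqrt{\det(g^{kl})}^{-1}e^{-f}$ and invoke \eqref{asahsjaoisjfaisjiajsijas}, and that is where the hypothesis is needed.
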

\begin{proof}
    Fixing $\phi \in \mathrm{Lip}_c(U)$, our goal is to prove
    \begin{equation}\label{66ebbbsjjssx}
        \int_U\langle \nabla \psi, \nabla \phi\rangle \di \meas_X=-\int_U(\tilde \Delta^{g, \Phi}\tilde \psi)\circ \Phi \cdot \phi\di \meas_X.
    \end{equation}
    The left-hand side of the above is equal to
    \begin{equation}\label{ioahsoiajsoijfaisjioja}
        \sum_{i, j=1}^n\int_{\Phi(U)}g^{ij}\circ \Phi^{-1} \cdot \partial_i\tilde{\psi}\cdot \partial_j (\phi \circ \Phi^{-1} ) \cdot \sqrt{\mathrm{det}(g^{kl})_{kl}}^{-1} \circ\Phi^{-1} \cdot e^{-f\circ \Phi^{-1}}\di \mathcal{L}^n,
    \end{equation}
    and the right-hand side is
    \begin{align}\label{asasaasasias}
        &-\sum_{i, j=1}^n\int_{\Phi(U)}g^{ij}\circ \Phi^{-1}\cdot (\partial_i\partial_j\tilde \psi) \cdot \phi\circ \Phi^{-1}\cdot\sqrt{\mathrm{det}(g^{kl})_{kl}}^{-1} \circ\Phi^{-1} \cdot e^{-f\circ \Phi^{-1}}\di \mathcal{L}^n \nonumber \\
        &-\sum_{i=1}^n\int_{\Phi(U)}(\Delta \phi_i)\circ \Phi^{-1}\cdot \partial_i\tilde \psi \cdot \phi\circ \Phi^{-1}\cdot \sqrt{\mathrm{det}(g^{kl})_{kl}}^{-1} \circ\Phi^{-1} \cdot e^{-f\circ \Phi^{-1}}\di \mathcal{L}^n
    \end{align}
    because of Proposition \ref{coarea123}. Then the coincidence between (\ref{ioahsoiajsoijfaisjioja}) and (\ref{asasaasasias}) is an easy consequence of the integration-by-parts on $\mathbb{R}^n$ together with applying Theorem \ref{asnasi} to $\phi_i$. Thus we conclude.
\end{proof}
The following is a direct consequence of the above.
\begin{corollary}
    Assume that $\Delta \phi_i \in L^{\infty}_{\mathrm{loc}}(U)$, $g^{ij} \in C^0(U)$ and $f \in W^{1,2}_{\loc}(U)$ are satisfied. For any function $\tilde \psi: \Phi(U) \to \mathbb{R}$, one has, $\tilde \psi \in W^{2,2}_{\loc}(\Phi(U))$ holds if and only if $\tilde \psi \circ \Phi \in D_{\loc}(\Delta, U)$ holds.
\end{corollary}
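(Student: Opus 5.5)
The corollary has two implications, and I would treat them separately.

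\emph{Forward direction.} Let $\tilde \psi \in W^{2,2}_{\loc}(\Phi(U))$. Under exactly the stated hypotheses ($\Delta \phi_i \in L^{\infty}_{\loc}(U)$ and $f \in W^{1,2}_{\loc}(U)$), Proposition \ref{toCZ} gives $\tilde \psi\circ\Phi \in D_{\loc}(\Delta, U)$. The only thing to check is that $\tilde\psi \circ \Phi \in W^{1,2}_{\loc}(U)$, so that the Laplacian is meaningful in the sense of Remark \ref{rem:localrem}. This follows from Lemma \ref{sobolevcomp}. Its hypotheses hold because $\Phi$ is bi-Lipschitz, and because by Proposition \ref{coarea123} the push-forward of $\meas_X$ is comparable to $\mathcal{L}^n$ on compact sets: $f$ is locally bounded and $\det(g^{ij})$ is locally bounded above and below.

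\emph{Converse direction.} Let $\psi:=\tilde\psi\circ\Phi \in D_{\loc}(\Delta, U)$, so $\psi \in W^{1,2}_{\loc}(U)$ and $\Delta\psi\in L^2_{\loc}(U)$. I would apply Corollary \ref{newcor} with $p=2$; its hypotheses $\Delta\phi_i \in L^\infty_{\loc}$ and $g^{ij}\in C^0(U)$ are both assumed. This gives $\tilde\psi\in W^{2,2}_{\loc}(\Phi(U))$. The $W^{1,p}$ assumption in that corollary is satisfied with $p=2$.

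\emph{Main subtlety.} The delicate point is the approximation step inside Corollary \ref{newcor} for non-Lipschitz $\psi$. There $\psi$ is cut off and replaced by $\mathsf{h}_t\psi$, which is locally Lipschitz. One then needs the uniform bound \eqref{czlocal}, with $\Delta\mathsf{h}_t\psi \to \Delta\psi$ in $L^2_{\loc}$, to pass to the limit. Since the corollary is already stated for general $\psi \in D_{\loc}(\Delta,U)\cap W^{1,p}_{\loc}$ (see also Remark \ref{asnasi2222}), I can simply invoke it.
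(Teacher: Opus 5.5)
Your proposal is correct and follows the paper's route. The paper derives this corollary directly from Proposition \ref{toCZ}, which gives $W^{2,2}_{\loc}\Rightarrow D_{\loc}(\Delta,U)$, together with Corollary \ref{newcor} at $p=2$ for the converse (including its heat-flow approximation for non-Lipschitz $\psi$); your extra check via Lemma \ref{sobolevcomp} that $\tilde\psi\circ\Phi\in W^{1,2}_{\loc}(U)$ is harmless but already contained in the conclusion of Proposition \ref{toCZ}.
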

\subsection{Positive injectivity radius and bi-Lipschitz charts defined by distance functions}\label{positivesub}

 We now discuss regularity properties of the $\RCD(K, N)$ spaces with positive injectivity radius.

The following upper semicontinuity result follows directly from Definition \ref{definject}.
\begin{proposition}[Upper semicontinuity of injectivity radii]\label{asasaso}
    Let 
$(Y_i, y_i) \xrightarrow{\mathrm{pGH}}  (Y, y)$
    be a pGH converging sequence of pointed proper metric spaces. Then
$
        \limsup_{i \to \infty} \Injrad (y_i) \le  \Injrad(y).
$
\end{proposition}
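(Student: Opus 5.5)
Set $\iota:=\limsup_{i\to\infty}\Injrad(y_i)$. If $\iota=0$ there is nothing to prove, so assume $\iota>0$ and fix any $0<s<\iota$. Passing to a subsequence, we may assume $\Injrad(y_i)>s$ for every $i$. Then it suffices to show $\Injrad(y)\ge s$, that is, every $z\in X:=Y$ with $0<\dist_Y(y,z)<s$ is not in $\Cut(y)$. By the definition \eqref{cutlocusdef}, this means producing some $w\neq z$ with $\dist_Y(y,z)+\dist_Y(z,w)=\dist_Y(y,w)$. Letting $s\uparrow\iota$ then gives the claim.

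The construction goes through the approximating spaces. Fix such a $z$, put $\ell:=\dist_Y(y,z)$, and pick $\eta>0$ with $\ell+\eta<s$. Using the GH approximations, choose $z_i\in Y_i$ with $z_i\to z$, so that $\ell_i:=\dist_{Y_i}(y_i,z_i)\to\ell$ and $\ell_i+\eta<s<\Injrad(y_i)$ for large $i$. Every point $p$ with $\dist_{Y_i}(y_i,p)<\Injrad(y_i)$ lies outside $\Cut(y_i)$, so there is $w\ne p$ with $\dist(y_i,p)+\dist(p,w)=\dist(y_i,w)$.

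The key step is a quantitative version: we need a point $w_i$ with $\dist_{Y_i}(z_i,w_i)\ge\eta$ lying "beyond" $z_i$. To get it, take $t^*:=\sup\{t\ge \ell_i: \exists w,\ \dist(y_i,w)=t,\ \dist(y_i,z_i)+\dist(z_i,w)=t\}$, so $t^*$ is the supremum of the lengths of extensions of segments through $z_i$. By properness and compactness the supremum is attained by some $w^*$. If $t^*<\Injrad(y_i)$, then $w^*\notin\Cut(y_i)$, so there is $w'\neq w^*$ with $\dist(y_i,w^*)+\dist(w^*,w')=\dist(y_i,w')$. The triangle inequality then gives $\dist(y_i,z_i)+\dist(z_i,w')\le \dist(y_i,z_i)+\dist(z_i,w^*)+\dist(w^*,w')=\dist(y_i,w')$, hence equality, and this contradicts maximality. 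Therefore $t^*\ge \Injrad(y_i)>\ell_i+\eta$, and taking a point on the extension (the space is geodesic, or we choose the attained value directly), we obtain $w_i$ with $\dist(y_i,w_i)=\dist(y_i,z_i)+\dist(z_i,w_i)$ and $\dist(z_i,w_i)\ge\eta$, with $\dist(z_i,w_i)$ bounded.

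This attainment-and-extension argument is the main point needing care; with properness and compactness of closed balls it should be routine. In a general geodesic space, sets of the form $\{w:\dist(y_i,w)=\dist(y_i,z_i)+\dist(z_i,w)\}$ are closed, so the supremum is attained within bounded sets. If needed, we cap $t^*$ at $s$.

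Finally, pass to the limit. The points $w_i$ stay in a bounded region, so by pGH convergence and properness of $Y$, a subsequence satisfies $w_i\to w\in Y$. Distances pass to the limit, giving $\dist_Y(y,w)=\ell+\dist_Y(z,w)$ and $\dist_Y(z,w)\ge\eta>0$. Hence $w\neq z$, so $z\notin\Cut(y)$, which completes the argument.
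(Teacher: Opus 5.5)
Your argument is correct. It is essentially the direct argument from Definition~\ref{definject} that the paper asserts without details, and the only nontrivial point is the uniform bound $\dist_{Y_i}(z_i,w_i)\ge\eta$, obtained by extending segments through $z_i$ up to length $\Injrad(y_i)$ (a quantitative form of (1) of Lemma~\ref{injeclemma}), which you supply through the maximal-extension argument.

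Your appeal to geodesicity when choosing $w_i$ at bounded distance is genuinely needed, and the paper's definition of the cut locus presupposes geodesic spaces: for the non-geodesic proper spaces $[0,1]\cup\{i\}\subset\mathbb{R}$ based at $0$ one has $\Injrad(0)=i$, while the limit $[0,1]$ has $\Injrad(0)=1$.
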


We are now in a position to state a key estimate for the Laplacian of the distance function, under  a uniform positive lower bound on the injectivity radius.

\begin{proposition}[$L^{\infty}$-bound on Laplacian of distance function]\label{prop:injec}
Let $X$ be an $\RCD(K, N)$ space for some $K \in \mathbb{R}$ and some $N \in [1, \infty)$, and let $x \in X$ satisfy
$
\Injrad (B_{1}(x))\ge \delta>0
$
for some $\delta>0$.
We denote $\dist_x := \dist_X(x, \cdot)$ and $A_{\tau, \frac{2}{3}\delta} (x) := B_{\frac{2\delta}{3}}(x) \setminus \bar B_{\tau}(x)$ for some $0<\tau<\frac{\delta}{4}$. Then we have  $\dist_x \in D(\Delta, A_{\tau, \frac{2}{3}\delta} (x))$ with 
$
\| \Delta \dist_x \|_{L^{\infty}(A_{\tau, \frac{2}{3}\delta} (x))} \le C(K, N, \delta, \tau).
$
\end{proposition}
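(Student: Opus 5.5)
Our plan is to prove a two-sided bound on $\Delta \dist_x$ on the annulus $A_{\tau,\frac23\delta}(x)$. The upper bound comes from Laplacian comparison. The lower bound uses the Anderson--Cheeger trick: each point of the annulus lies in the interior of a minimizing geodesic through a second base point, and comparison is then applied from that second point.

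\textbf{Step 1 (upper bound).} By the Laplacian comparison for distance functions on $\RCD(K,N)$ spaces (Gigli; or via needle decomposition as in \cite{CM}), $\Delta \dist_x$ is a Radon measure on $X\setminus\{x\}$. Its singular part is nonpositive and concentrated on the cut locus. Its absolutely continuous part is bounded above by $(N-1)\frac{\sn_{K/(N-1)}'}{\sn_{K/(N-1)}}(\dist_x)$, which is at most $C(K,N,\tau)$ on $\{\dist_x>\tau\}$.

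\textbf{Step 2 (extension of geodesics).} We use the injectivity radius to show that no singular part lives on $A:=A_{\tau,\frac23\delta}(x)$. Take $y\in A$. Since $\Injrad(x)\ge\delta$ and $\dist_x(y)<\frac{2}{3}\delta$, we have $y\notin\Cut(x)$, and Lemma \ref{injeclemma} gives a unique geodesic from $x$ to $y$. Every point $z$ with $\dist_x(z)<\delta$ lies outside $\Cut(x)$, so the geodesic from $x$ to $y$ extends. Iterating this (using properness and closedness, or the cut-locus definition directly), it extends to a minimizing geodesic $\gamma$ from $x$ to a point $w$ with $\dist_x(w)\ge \frac{5}{6}\delta$, say.

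Reversing roles, $x$ itself lies on a minimizing geodesic from $w$. We have $\Injrad(w)\ge\delta$ if $w\in B_1(x)$; we arrange this, since $\delta\le 1$ may be assumed after shrinking. Hence the geodesic from $w$ through $y$ extends past $y$ towards $x$ and beyond a little. The identity $\dist_x+\dist_w=\dist_X(x,w)$ holds along $\gamma$, and $\dist_x+\dist_w\ge \dist_X(x,w)$ holds everywhere. So $u:=\dist_x+\dist_w$ attains its minimum on the segment through $y$.

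\textbf{Step 3 (lower bound).} Apply Step 1 to $\dist_w$. On a neighbourhood of $y$ we have $\dist_w\ge \dist_X(w,y)\ge \frac{\delta}{6}-$small, so $\Delta \dist_w\le C(K,N,\delta)$ there. Since $u$ has a minimum at every point of the segment, the measure $\Delta u$ should be nonnegative near such points in a suitable sense. Combining this with $\Delta \dist_x=\Delta u-\Delta\dist_w$ gives a lower bound on $\Delta\dist_x$ near $y$.

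This is the main obstacle. A pointwise minimum does not give a measure inequality in a neighbourhood, since only a single segment is minimizing. The actual approach is to use the needle decomposition of \cite{CM} for $\dist_x$. On each needle (a ray from $x$) the density satisfies a $\CD(K,N)$ ODE, and $\Delta\dist_x$ is $(\log h)'$ plus a singular part at the needle's endpoint. By Step 2, every needle through $A$ extends to length at least $\frac56\delta$, so the endpoint singular part vanishes on $A$. The one-dimensional $\CD(K,N)$ concavity of $h^{1/(N-1)}$ on an interval $[0,L]$ with $L\ge\frac56\delta$ then bounds $(\log h)'$ from below at interior points $t\le\frac23\delta$ by $-C(K,N,\delta)$. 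This follows by comparison with the model from the far endpoint: reverse the interval and use comparison from distance $L-t\ge \delta/6$.

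Together with the upper bound $C(K,N,\tau)$, this gives $\dist_x\in D(\Delta,A)$ with $|\Delta\dist_x|\le C(K,N,\delta,\tau)$. The $W^{1,2}$ membership is immediate since $|\nabla\dist_x|=1$.
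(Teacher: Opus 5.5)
Your proposal is correct and is essentially the paper's proof. Like you, the paper motivates with the Anderson--Cheeger argument in the smooth weighted case. In general it uses the Cavalletti--Mondino needle decomposition for $\dist_x$: bounded needles start in $\Cut(x)$, hence at distance $\ge\delta$ from $x$, and end at $x$, so the singular parts vanish on $A_{\tau,\frac23\delta}(x)$; Laplacian comparison then gives the upper bound and the one-dimensional $\mathrm{CD}(K,N)$ comparison from the far endpoint gives the lower bound. The auxiliary point $w$ and the function $\dist_x+\dist_w$ in your Steps 2--3 become unnecessary once you pass to needles, since the only geometric input needed is that needles through the annulus extend past $\frac23\delta$, which you correctly derive from $\Injrad(x)\ge\delta$.
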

\begin{proof}
Note that this is known for unweighted smooth manifolds, see \cite[Lemma 1.4]{AC}. First, 
we prove the assertion in the smooth weighted framework along the same lines as \cite{AC} because this will help us to understand the arguments in full generality later.  

Assume that $X=(M^n, \dist^g, \haus^n_f)$.
An upper bound,
$\Delta^g_f\dist_x^g(y) \le \ct_{K, N}(\dist_x^g(y))$, 
is a direct consequence of the smoothness of $\dist_x^g$ outside $x$ and the cut locus, with the Laplacian comparison theorem in the RCD setting \cite{Gigli}, where $\ct _{K, N}(t)$ corresponds to the model object.
To get the lower bound, find a minimal geodesic $\gamma:[0, 2t] \to M^n$ with $\gamma(0)=x$, $\gamma(t)=y$ and $t<\frac{\delta}{2}$, define a $2$-Lipschitz function $\sigma:M^n \to \mathbb{R}$ by 
$
\sigma:=\dist_{\gamma(0)}^g+\dist^g_{\gamma(2t)}-2t
$
which is smooth on $B_t(y)$. Note that the triangle inequality shows $\sigma \ge 0$ and that $\sigma$ achieves the minimum $0$ on $\gamma([0, 2t])$. In particular, since $\nabla^g \sigma (y)=0$ and $\Delta^g \sigma (y) \ge 0$, we have 
$$
0\le \Delta^g \sigma(y) = \Delta^g_f \sigma (y),\,\text{thus}\,\,\Delta^g_f\dist_x^g(y) \ge -\Delta^g_f\dist_{\gamma(2t)}^g(y) \ge - \ct _{K, N}(\dist_{\gamma(2t)}^g(y))=-\ct _{K, N}(\dist_x^g(y)).
$$
This completes the proof.

Next, we prove the general case in the following steps, following the same terminology in \cite{CM},  to keep the presentation concise. 

\smallskip 

\textbf{Step 1}: \textit{The measure-valued Laplacian of $\dist_x$, denoted by $\boldsymbol{\Delta} \dist_x$, is well-defined as a Radon functional. Moreover, a quantitative Laplacian comparison upper bound holds.}

\smallskip 

The precise statement and a proof of this can be found in \cite[Corollary 4.19]{CM}.

\smallskip 

\textbf{Step 2}: \textit{For each needle $X_{\alpha}$ appearing in \cite[subsection 4.3]{CM} for $v=\dist_x$ about bounds on $\boldsymbol{\Delta} \dist_x$, the starting point $a(X_{\alpha})$ is in $\mathrm{Cut}(x)$ if the needle is bounded, and the final point $b(X_{\alpha})$ is $x$.}

\medskip

This is due to \cite[Remarks 4.12 and 5.2]{CM} (see also \cite[Remark 4.9]{CM}).\footnote{We also recall the non-branching property; see (1) of Theorem \ref{hessthem}.}

\medskip 

\textbf{Step 3}: \textit{The desired Laplacian bound follows.}

\medskip

This is a direct consequence of \cite[(2) of Corollary 4.16]{CM} with \textit{Step 2}. In fact, thanks to \cite[(4.33)]{CM}, we have an expression of $\boldsymbol{\Delta} \dist_x$ on $A_{\tau, \frac{2}{3}} (x)$ as 
\begin{equation}
    -(\log h_{\alpha})'\meas_X-\int_Q\left(h_{\alpha}[\delta_{a(X_{\alpha})\cap\{\dist_x>0\}}-h_{\alpha}\delta_{b(X_{\alpha})\cap\{\dist_x<0\}}]\right)\mathfrak{q}(\di \alpha),
\end{equation}
where the integral over the set $Q$ of needles  is vanishing on $A_{\tau, \frac{2}{3}} (x)$.
Then, by \cite[(4.35)]{CM}, a lower bound of this can be obtained by
\begin{equation}
    -(N-1)\frac{\sn'_{K/(N-1)}(\dist_{a(X_{\alpha})}(x))}{\sn_{K/(N-1)}(\dist_{a(X_{\alpha})}(x))}\meas_{X\setminus \{x\}}+\int_Qh_{\alpha}\delta_{a(X_{\alpha})\cap\{\dist_x>0\}}\mathfrak{q}(\di \alpha),
\end{equation}
where the second term on the right-hand side also vanishes on $A_{\tau, \frac{2}{3}\delta} (x)$ (see also \cite[Lemma 4.13]{CM} for the treatment of the right-hand side when the needle is unbounded).
Similarly we can obtain an upper bound by \cite[(4.34)]{CM} (see also \cite{Gigli}). Thus we conclude.
\end{proof}
\begin{corollary}\label{cor:distancehess}
Under the same assumptions as in Proposition \ref{prop:injec}, the following holds: 
\begin{equation}\label{asaiasas8s8}
s\fint_{B_s(y)}|\Hess_{\dist_x}|^2\di \meas_X \le C,\quad \text{for $C = C(K, N, \delta)$} 
\end{equation}
for all  $y \in B_{\frac{2}{3}\delta}(x)\setminus B_{\frac{\delta}{5}}(x)$ and $s \in \left(0, \frac{\delta}{10}\right)$ with $B_{2s}(y)\subset B_{\frac{2\delta}{3}}(x) \setminus \bar B_{\tau}(x)$.
In particular
\begin{equation}\label{eq:distancehessian}
\sqrt{s}\fint_{B_s(y)}|\Hess_{\dist_x}|\di \meas_X \le C.
\end{equation}
\end{corollary}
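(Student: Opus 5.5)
The plan is to localize Gigli's Bochner inequality with Hessian term, \eqref{bohhhhh}, with $f=\dist_x$ and a good cut-off $\phi$ as in (6) of Theorem \ref{hessthem}, adapted to $B_s(y)$, and then read off the Morrey bound from the scaling of the cut-off. First we make sure $\dist_x$ is admissible: by Proposition \ref{prop:injec}, $\dist_x\in D(\Delta, A_{\tau,\frac23\delta}(x))$ with $|\Delta \dist_x|\le C_0=C(K,N,\delta,\tau)$ there. Since $y$ lies outside $B_{\delta/5}(x)$ and $s<\delta/10$, we may take $\tau=\delta/10$ in that proposition (after possibly shrinking $\tau$ so that $B_{2s}(y)\subset A_{\tau,\frac23\delta}(x)$), so that $C_0$ depends only on $K,N,\delta$. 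Also $|\nabla \dist_x|=1$ $\meas_X$-a.e. The local Bochner inequality is available by the remark following Theorem \ref{hessthem}, via the localities of $|\nabla f|$ and $\Delta f$.

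Next we choose $\phi\in D(\Delta)$ with $\supp\phi\subset B_{2s}(y)$, $0\le\phi\le1$, $\phi=1$ on $B_s(y)$, and $s|\nabla\phi|+s^2|\Delta\phi|\le C(K,N)$. Plugging into \eqref{bohhhhh} with $|\nabla \dist_x|^2\equiv 1$ gives
\begin{equation*}
\int\phi|\Hess_{\dist_x}|^2\di\meas_X\le \frac12\int\Delta\phi\di\meas_X+\int\left(\Delta \dist_x\langle\nabla\phi,\nabla \dist_x\rangle+\phi(\Delta \dist_x)^2-K\phi\right)\di\meas_X.
\end{equation*}
The key observation is that the first term vanishes, since $\int\Delta\phi\di\meas_X=-\int\langle\nabla\phi,\nabla 1\rangle\di\meas_X=0$ because $\phi$ is compactly supported inside the domain. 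Even without this cancellation it is bounded by $Cs^{-2}\meas_X(B_{2s}(y))$, which is too weak; this cancellation is what yields the gain. The remaining terms are bounded by
\begin{equation*}
\left(C_0 C s^{-1}+C_0^2+|K|\right)\meas_X(B_{2s}(y))\le C s^{-1}\meas_X(B_{2s}(y)),
\end{equation*}
using $s<\delta/10\le 1$ (rescaling if $\delta$ is large, with constants depending on $\delta$).

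Dividing by $\meas_X(B_s(y))$ and using Bishop--Gromov \eqref{bg} to get $\meas_X(B_{2s}(y))\le C(K,N)\meas_X(B_s(y))$ then yields \eqref{asaiasas8s8}. Finally, \eqref{eq:distancehessian} follows from Cauchy--Schwarz:
\begin{equation*}
\sqrt s\fint_{B_s(y)}|\Hess_{\dist_x}|\di\meas_X\le\left(s\fint_{B_s(y)}|\Hess_{\dist_x}|^2\di\meas_X\right)^{1/2}\le C.
\end{equation*}
The main point of care is justifying \eqref{bohhhhh} locally on the annulus, since $\dist_x$ is only in the local domain. We handle this by replacing $\dist_x$ with a function in $D(\Delta)$ that agrees with it on $B_{2s}(y)$, for example $\rho\,\dist_x$ with a second good cut-off $\rho$. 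We then invoke the locality \eqref{localityhess} and Proposition \ref{laplocal}(2), noting that all integrands are supported in $B_{2s}(y)$.
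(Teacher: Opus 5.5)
Your proposal is correct and is essentially the paper's own argument: you plug a good cut-off into the Bochner inequality (5) of Theorem \ref{hessthem}, use $|\nabla \dist_x|\equiv 1$ to kill the left-hand side (the paper writes this as $-\frac{1}{2}\int_X\langle \nabla \phi, \nabla |\nabla \dist_x|^2\rangle \di \meas_X=0$), and let Proposition \ref{prop:injec} control the cross term and the $(\Delta\dist_x)^2$ term. One small slip: taking $\tau=\delta/10$ does not guarantee $B_{2s}(y)\subset A_{\tau,\frac{2}{3}\delta}(x)$ when $s$ is close to $\delta/10$, since you only know $\dist_x>\delta/5-2s$ on $B_{2s}(y)$; so, exactly as in the paper, the constant inherits the $\tau$-dependence of Proposition \ref{prop:injec}, unless you restrict to $s\le\delta/20$ and cover larger balls by boundedly many smaller ones.
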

\begin{proof}
Let us take a good cut-off function
\begin{align}
\phi 
= 
\begin{cases}
1, \quad \text{in}\ B_s(y),
\\
0, \quad \text{outside} \ B_{2s}(y),
\end{cases}
\end{align}
that satisfies $s|\nabla \phi| + s^2 |\Delta \phi| \leq C$; see (6) of Theorem \ref{hessthem}.
 Applying the Bochner inequality as stated in (5) of Theorem \ref{hessthem}, we obtain that
\begin{align*}
0&=-\frac{1}{2}\int_X\langle \nabla \phi, \nabla |\nabla \dist_x|^2\rangle \di \meas_X \ge \int_X\left(\phi |\Hess_{\dist_x}|^2-(\Delta \dist_x)^2\phi-\Delta \dist_x\langle \nabla \dist_x, \nabla \phi \rangle +K\phi|\nabla \dist_x|^2\right) \di \meas_X.
\end{align*}
The second and third terms on the right-hand side are controlled by the Laplacian estimate in  Proposition \ref{prop:injec}. This completes the proof of   \eqref{asaiasas8s8}.
\end{proof}

As an application, the above integral Hessian estimate implies the H\"older regularity of angles.

\begin{corollary}[H\"older continuity of angles]\label{holdercontiangle}
    Let $X$ be an $\RCD(K, N)$ space for some $K \in \mathbb{R}$ and $N \in [1, \infty)$. Let $x, y \in X$ satisfy
$
\Injrad(B_{1}(x))\ge \delta>0$ and $\Injrad(B_{1}(y)) \ge \delta>0
$
for some $\delta > 0$.
Then, 
$\langle \nabla\dist_x, \nabla\dist_y\rangle$ has a locally $C^{0,\frac{1}{2}}$-continuous representative on $(B_1(x) \setminus \{x\}) \cap (B_1(y) \setminus \{y\})$. Moreover, for every $\epsilon > 0$, there exists $Q_0 = Q_0 (\epsilon, \delta, K, N) > 0$ such that $p \in U_{\epsilon} := (B_{1-\epsilon}(x) \setminus B_{\epsilon}(x) ) \cap (B_{1-\epsilon}(y) \setminus B_{\epsilon}(y) )$,  the above  $C^{0,\frac{1}{2}}$-representative satisfies 
\begin{align}\label{e:quantitative-C^{1/2}-Hoelder}
\|\langle \nabla\dist_x, \nabla\dist_y\rangle\|_{C^{0,\frac{1}{2}}(U_{\epsilon})} \leq Q_0, \quad \text{where $U_{\epsilon}:=(B_{1-\epsilon}(x) \setminus B_{\epsilon}(x) ) \cap (B_{1-\epsilon}(y) \setminus B_{\epsilon}(y))$.} 
\end{align}
\end{corollary}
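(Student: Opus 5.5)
The plan is to apply item (3) of Proposition \ref{propregmap} with $\delta=\frac12$ to suitable combinations of $\dist_x$ and $\dist_y$, after converting the inner product into norms of gradients by polarization. The inner product can be written as
\begin{equation*}
\langle \nabla \dist_x, \nabla \dist_y\rangle=\frac{1}{2}\left(|\nabla(\dist_x+\dist_y)|^2-|\nabla \dist_x|^2-|\nabla \dist_y|^2\right)=\frac{1}{2}|\nabla(\dist_x+\dist_y)|^2-1,
\end{equation*}
using $|\nabla \dist_x|=|\nabla \dist_y|=1$ $\meas_X$-a.e. Hence it suffices to show that $|\nabla F|$ has a locally $C^{0,\frac12}$ representative on $U_\epsilon$, where $F:=\dist_x+\dist_y$, with norm controlled by $(\epsilon,\delta,K,N)$. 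Since $|\nabla F|\le 2$, the same then holds for $|\nabla F|^2$.

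First we cover $U_\epsilon$ by balls on which Proposition \ref{prop:injec} and Corollary \ref{cor:distancehess} apply. These results are stated on the annulus $B_{\frac{2\delta}{3}}(x)\setminus \bar B_\tau(x)$, which need not contain all of $U_\epsilon$. To handle this, for each $p\in U_\epsilon$ we work with base points lying along geodesics. Let $x'$ be the point on the (unique, by Lemma \ref{injeclemma}) geodesic from $x$ to $p$ at distance $\min\{\delta/2,\dist_x(p)\}-\epsilon'$ from $p$. Then $\dist_x=\dist_{x'}+\dist_X(x,x')$ on a neighborhood of $p$, because $p$ is not a cut point of $x$ in the relevant range and geodesics extend (Lemma \ref{injeclemma}, (1)). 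We expect this local identity to be the main obstacle, since it requires geodesics from $x$ to nearby points to pass near $x'$. The first thing I would try is simply to shrink the neighborhood so that it lies within $\Injrad$ of the relevant points; failing that, I would rescale so that $\delta$ effectively exceeds $1$. Once the identity holds, $\Hess_{\dist_x}=\Hess_{\dist_{x'}}$ there by locality \eqref{localityhess}. Since $\Injrad(B_1(x'))$ is bounded below in terms of $\delta$ (because $x'\in B_1(x)$; if needed, decrease $\delta$ and use the hypothesis on a slightly smaller ball), Corollary \ref{cor:distancehess} gives
\begin{equation*}
\sqrt{s}\fint_{B_s(z)}|\Hess_{\dist_x}|\di\meas_X\le C(K,N,\delta,\epsilon)
\end{equation*}
for all $z$ near $p$ and all small $s$. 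The same argument applies to $\dist_y$.

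Summing the two estimates, $F$ satisfies \eqref{asna89rahweasisarsir} with $\delta=\frac12$ on a neighborhood $V$ of $p$. Moreover $F\in D_{\loc}(\Delta)$ by Proposition \ref{prop:injec}, with $\Delta F\in L^\infty$. Proposition \ref{propregmap}(3) then gives $|\nabla F|\in C^{0,\frac12}_{\loc}(V)$, and its proof (the telescoping argument of Proposition \ref{proplebes} combined with the Poincar\'e inequality \eqref{pi}) produces a H\"older constant depending only on the constant in the Morrey bound and on $(K,N)$. The radii are uniform in $p\in U_\epsilon$, so a finite covering argument together with the bound $|\nabla F|\le 2$, which controls pairs of points at distance comparable to the covering radius, yields \eqref{e:quantitative-C^{1/2}-Hoelder} with $Q_0=Q_0(\epsilon,\delta,K,N)$. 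Because the representatives agree on overlaps by continuity, they patch together into a single representative on the whole punctured intersection.
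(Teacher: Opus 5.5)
There is a genuine gap: the base-point-change step does not work, and no other argument can replace it.

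Your main line coincides with the paper's one-line proof. Polarization gives $\langle\nabla\dist_x,\nabla\dist_y\rangle=\frac12|\nabla(\dist_x+\dist_y)|^2-1$. Corollary \ref{cor:distancehess} supplies the Morrey bound for $\Hess_{\dist_x+\dist_y}$, and item (3) of Proposition \ref{propregmap} with exponent $\frac12$ yields the quantitative $C^{0,\frac12}$ estimate. That part is complete wherever Proposition \ref{prop:injec} applies to both functions, i.e.\ where $\dist_x$ and $\dist_y$ take values in a compact subset of $(0,\frac{2\delta}{3})$.

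The paragraph meant to reach the rest of $U_\epsilon$ fails. The identity $\dist_x=\dist_{x'}+\dist_X(x,x')$ holds exactly at those $z$ for which $x'$ lies on a minimal geodesic from $x$ to $z$. Near $p$ this is at most the geodesic through $x'$ and $p$, a $\meas_X$-null set, so \eqref{localityhess} transfers nothing. Already in $\mathbb{R}^n$ with $x=0$, $p=e_1$, $x'=\frac12 e_1$, the two functions differ by about $\frac{\eta^2}{2}$ at $p+\eta e_2$. Correspondingly, $\Hess_{\dist_x}(p)$ and $\Hess_{\dist_{x'}}(p)$ have transverse eigenvalues $1$ and $2$. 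Shrinking the neighborhood does not help. Rescaling does not help either, since the ratio of the radius of $U_\epsilon$ to $\delta$ is scale invariant. Moreover, the facts $p\notin\Cut(x)$ and uniqueness of the geodesic from $x$ to $p$ are not available once $\dist_x(p)\ge\delta$.

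No other device can close this gap, because beyond the injectivity radius the conclusion is false as stated. Take the flat torus $\mathbb{R}^2/\mathbb{Z}^2$, an $\RCD(0,2)$ space with $\Injrad\equiv\frac12$. Let $\delta=\frac12$, $x=[0,0]$, $y=[\frac14,0]$ and $p=[\frac12,0]\in\Cut(x)$. Then $p\in U_\epsilon$ for $\epsilon<\frac14$. Near $p$ the gradient $\nabla\dist_y$ is close to $e_1$. In contrast, $\nabla\dist_x$ is close to $e_1$ on one side of the closed geodesic $\{[\frac12,t]\}$ and close to $-e_1$ on the other. So $\langle\nabla\dist_x,\nabla\dist_y\rangle$ jumps between $1$ and $-1$ and has no continuous representative near $p$.

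The statement therefore has to be read on the part of $U_\epsilon$ within the range of Proposition \ref{prop:injec} for both $x$ and $y$, or equivalently with $\delta$ large compared with the radius $1$. The same restriction applies to the paper's proof, which through Corollary \ref{cor:distancehess} only controls $\Hess_{\dist_x}$ inside $B_{\frac{2\delta}{3}}(x)$. On that region, delete the base-point paragraph and your argument is a complete proof, identical in substance to the paper's.
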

\begin{proof}
This is a direct consequence of Corollary \ref{cor:distancehess} and (3) of Proposition \ref{propregmap}.
\end{proof}
Before stating the main result, we need a  technical result.
\begin{lemma}[Change of starting point]\label{s9asasnrknnvkst}
Let $X$ be an $\RCD(K, N)$ space for some $K \in \mathbb{R}$ and $N \in [1, \infty)$, let $x, y \in X$ satisfy
$
    \Injrad(B_1(x))>0$ and $
    \Injrad (B_1(y))>0,
$
let $z \in B_{R}(x) \cap B_{R}(y) \setminus (\{x\} \cup  \{y\}\cup \mathrm{Cut}(x)\cup \mathrm{Cut}(y))$ for some $R \ge 1$, and let $\gamma_x, \gamma_y$ be the minimal geodesics from $x, y$ to $z$, respectively.
Then for all $s \in (0, \dist_X(x,z))$ and $t \in (0, \dist_X(y,z))$, 
we have
\begin{equation}\label{asioajsransrasln}
    \langle \nabla \dist_x, \nabla \dist_y\rangle (z) = \langle \nabla \dist_{\gamma_x(s)}, \nabla \dist_{\gamma_y(t)}\rangle(z).
\end{equation}
\end{lemma}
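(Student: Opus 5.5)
The plan is to compute both sides of \eqref{asioajsransrasln} as limits of ball averages at $z$ along a blow-up. On such a blow-up, each rescaled distance function converges to the Busemann function of one and the same limit line, and the gradients converge strongly. Throughout, the value of $\langle \nabla \dist_x, \nabla \dist_y\rangle$ at $z$ means the Lebesgue value $\lim_{r\to0}\fint_{B_r(z)}\langle \nabla \dist_x, \nabla \dist_y\rangle\di\meas_X$. This limit exists by Corollary \ref{holdercontiangle}, and Proposition \ref{proplebes} applies as well. The argument will show that the right-hand side of \eqref{asioajsransrasln} has the same averages along every sequence $r_i\to0^+$, so that its Lebesgue value also exists and coincides with the left-hand side.

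\textbf{Step 1: the geodesics through $z$.} Since $z\notin \Cut(x)$, (1) of Lemma \ref{injeclemma} lets us extend $\gamma_x$ to a geodesic $\bar\gamma_x$ from $x$ that passes through $z$ and continues a little beyond it. Reparametrize it as $\sigma_x:[-\epsilon,\epsilon]\to X$ with $\sigma_x(0)=z$, so that $\sigma_x(-\epsilon)$ lies on $\gamma_x$ between $\gamma_x(s)$ and $z$. Do the same for $y$ and $\gamma_y(t)$, obtaining $\sigma_y$. Sub-arcs of geodesics are geodesics, so $\dist_x(w)-\dist_x(z)$, $\dist_{\gamma_x(s)}(w)-\dist_{\gamma_x(s)}(z)$ and $\dist_{\sigma_x(-\epsilon)}(w)-\epsilon$ are all functions of the type $b^{\ell}_-$ appearing in the proof of Lemma \ref{l:convergence-of-busemann}, for a geodesic through $z$ backwards along $\sigma_x$ of length $\ell=\dist_X(x,z)$, $\dist_X(\gamma_x(s),z)-$, resp.\ $\epsilon$. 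By the monotonicity $b^{\ell'}_-\ge b^{\ell}_-$ for $\ell'\le\ell$ used there, all three are sandwiched between $b^{\dist_X(x,z)}_-$ and $b^{Lr_i}_-$. Hence, along any blow-up $X^{r_i,z}\to Y$ with $r_i\to0^+$, after passing to a subsequence, the rescalings of $\dist_x$ and of $\dist_{\gamma_x(s)}$ converge locally uniformly to the \emph{same} Busemann function $b_x$ of the limit line of $\sigma_x$. This follows exactly as in Lemma \ref{l:convergence-of-busemann}, using (2) of Theorem \ref{splitin}. Likewise the rescalings of $\dist_y$ and of $\dist_{\gamma_y(t)}$ converge to a common $b_y$.

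\textbf{Step 2: strong convergence of the gradients.} The four rescaled functions are $1$-Lipschitz and have $|\nabla\cdot|=1$ a.e. By Theorem \ref{stabsob} they converge $W^{1,2}$-weakly on balls to their uniform limits. Those limits are Busemann functions of lines in the $\RCD(0,N)$ space $Y$, hence linear by Theorem \ref{splitin}, and in particular $|\nabla b_x|=|\nabla b_y|=1$. Since the $L^2$ norms of the gradients converge, weak convergence upgrades to strong $W^{1,2}$ convergence on balls. Strong convergence is the point where I expect the most care is needed: the alternative route of using Theorem \ref{hessconv} would require a Laplacian bound for $\dist_{\gamma_x(s)}$ near $z$, which is not directly covered by the injectivity hypothesis at $\gamma_x(s)$. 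The norm argument avoids that.

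\textbf{Step 3: conclusion.} By polarization, the strong convergence in Step 2 gives
\begin{equation*}
\fint_{B_{r_i}(z)}\langle \nabla \dist_{\gamma_x(s)}, \nabla \dist_{\gamma_y(t)}\rangle\di\meas_X \to \fint_{B_1(z_\infty)}\langle\nabla b_x,\nabla b_y\rangle\di\meas_Y,
\end{equation*}
and the same limit is obtained when $\gamma_x(s),\gamma_y(t)$ are replaced by $x,y$. Because the left-hand side of \eqref{asioajsransrasln} is a Lebesgue value, the averages with $x,y$ converge to it along the full sequence. Therefore the averages with $\gamma_x(s),\gamma_y(t)$ have the same limit along every subsequence of every $r_i\to0^+$. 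This proves \eqref{asioajsransrasln}, with the right-hand side understood as its (hence existing) Lebesgue value at $z$.
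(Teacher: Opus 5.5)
Your argument is correct and is essentially the paper's proof. You blow up at $z$, and you use the squeeze from Lemma \ref{l:convergence-of-busemann} together with the splitting theorem to see that $\dist_x,\dist_{\gamma_x(s)}$ (resp.\ $\dist_y,\dist_{\gamma_y(t)}$) rescale to the same Busemann function. You then carry the strong $W^{1,2}_{\loc}$ convergence over to the value at $z$. Your two variations are sound and, if anything, more careful than the paper's sketch: you obtain strong convergence from $|\nabla\cdot|\equiv 1$ plus convergence of ball volumes rather than from Laplacian control, and you work with ball averages, so Corollary \ref{holdercontiangle} is needed only for the pair $(x,y)$ and not also for $(\gamma_x(s),\gamma_y(t))$.
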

\begin{proof}  
Let $X^{r_i, z}$ be a blow-up sequence at $z$. For all $i$ and $w \in X$, we define functions, $b_{i, w}(\cdot) := r_i^{-1} (   \dist_w (\cdot) -  \dist_w (z))$, on $X^{r_i, z}$.
Then, we have that 
\begin{align}\label{e:RHS-product-gradient-xy}
\langle \nabla \dist_x, \nabla \dist_y\rangle (z) = \langle \nabla b_{i, x}, \nabla b_{i, y}\rangle_{X^{r_i, z}} (z),\quad  \langle \nabla \dist_{\gamma_x(s)}, \nabla \dist_{\gamma_y(t)}\rangle(z) =  \langle \nabla b_{i, \gamma_x(s)} ,  \nabla b_{i, \gamma_y(t)} \rangle_{X^{r_i, z}} (z).
\end{align} 
Since the functions $b_{i,x}$, $b_{i, y}$, $b_{i, \gamma_x(s)}$, and $b_{i, \gamma_y(t)}$ all converge in the strong   
$W^{1,2}_{\loc}$-topology, their limits are the Busemann functions associated with the corresponding limit lines of the   geodesics passing through $z$; see 
  Lemma \ref{l:convergence-of-busemann} for the description of the limit Busemann function.
Applying the H\"older regularity of angles in Corollary \ref{holdercontiangle}, the above $W_{\loc}^{1,2}$-convergence of the Busemann functions can be upgraded to the pointwise convergence
of the right-hand-sides of \eqref{e:RHS-product-gradient-xy}. 
Finally, the desired equality follows from (2) of Theorem \ref{splitin}.
 \end{proof}
In Lemma \ref{s9asasnrknnvkst}, though we adopted the simplified formulation, the angle identity \eqref{asioajsransrasln} continues to hold even without assuming injectivity radius bounds. 
Indeed, this follows by combining  the proof of  \cite[Theorem 4.4]{Honda1} with the Abresch–Gromoll type excess estimate in the RCD setting established in \cite[Theorem 3.7]{MN} (see also \cite[Theorem 2.15]{MW}).

We now state the main result of this subsection. It will be strengthened in Corollary \ref{corangleimprove}. 

\begin{theorem}[Bi-Lipschitz chart by distance functions]\label{cor:distance coordinate}
Let $X$ be an $\RCD(K, N)$ space for some $K \in \mathbb{R}$ and some $N \in [1, \infty)$, of essential dimension $n$, and let $x \in X$ satisfy
$
\Injrad(B_2 (x))>0.
$
Then there exist $z_1, z_2, \ldots, z_n \in B_2 (x)$ and $r>0$
such that 
a map $\Phi=(\dist_{z_1}, \ldots, \dist_{z_n}): B_r(x) \to \dR^n$ gives a bi-Lipschitz $C^{1, \frac{1}{2}}$-chart around $x$ such that $g_X$ is $C^{0,\frac{1}{2}}$ on the chart. 
 Furthermore, for all $\epsilon > 0$ and $i_0 > 0$, there exist $r_0 = r_0 (\epsilon, i_0, K, N) > 0$ such that if $\Injrad(B_2(x)) \geq i_0 > 0$, then  
\begin{align}
\|g_{ij} - \delta_{ij}\|_{C^0 (B_{r_0}(x))} + (r_0)^{\frac{1}{2}} \|g_{ij}\|_{C^{0,\frac{1}{2}}(B_{r_0}(x))} < \epsilon. \label{e:quantitative-C^{1/2}}
\end{align}

\end{theorem}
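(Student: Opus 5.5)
The plan is to choose the base points $z_i$ so that $\Phi$ is almost an orthonormal frame at $x$, check the hypotheses of Corollary \ref{bilipchart} using Proposition \ref{prop:injec} and Corollary \ref{cor:distancehess}, and then read the regularity of $g_X$ off the Hölder continuity of angles in Corollary \ref{holdercontiangle}.

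\textbf{Choice of base points.} First I would show that $x$ is $n$-regular and that the tangent cone at $x$ is $\mathbb{R}^n$. Take any tangent cone $Y$ at $x$. For every geodesic $\gamma$ through $x$, a local extension of geodesics at $x$ is available: apply Lemma \ref{injeclemma}(1) to points $p$ near $x$, which have $\Injrad(p)>0$, using that geodesics from $p$ through $x$ extend. This makes every direction at $x$ lie on a line in $Y$. By Theorem \ref{splitin}(2), $Y$ splits off as many lines as it has independent directions, so $Y\cong\mathbb{R}^k$ (the factor $W$ being a point by Theorem \ref{rectifi}(2) once $k\ge n$). The cone must contain enough independent lines, since otherwise the essential dimension drops, so $k=n$.

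Next, choose $n$ geodesics through $x$ whose blow-ups are the coordinate lines $\mathbb{R}e_i$ in $Y$. Let $z_i$ be the point at distance $\iota/4$ from $x$ on the negative side of the $i$-th geodesic, where $\iota=\Injrad(B_2(x))$. Then $x\notin\Cut(z_i)$. By Lemma \ref{l:convergence-of-busemann}, the blow-up of $\dist_{z_i}$ at $x$ is the Busemann function $b_i=\langle\cdot,e_i\rangle$. Hence $\fint_{B_s(x)}|g^{ij}-\delta_{ij}|\to0$, where $g^{ij}=\langle\nabla\dist_{z_i},\nabla\dist_{z_j}\rangle$, and $\Phi^{s,x}$ is an $\epsilon s$-GHA for small $s$.

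\textbf{Bi-Lipschitz property and regularity.} Proposition \ref{prop:injec} gives $\Delta\dist_{z_i}\in L^\infty$ on an annulus around $z_i$ containing $x$. Corollary \ref{cor:distancehess} gives $r^{1/2}\fint_{B_r}|\Hess_{\dist_{z_i}}|\le C$. So the hypotheses of Corollary \ref{bilipchart} hold with $\delta=\frac12$, and part (2) of that corollary yields $\det(g^{ij})\ge\gamma^{-1}$ near $x$. Part (1) then gives that $\Phi$ is bi-Lipschitz on $B_r(x)$ with open image. The openness should also follow from the Reifenberg flatness supplied by Proposition \ref{defnbasn} together with Theorem \ref{bilipchara}.

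Corollary \ref{holdercontiangle} gives that $g^{ij}$ has a $C^{0,\frac12}$ representative. Inverting the matrix, which is uniformly positive definite, gives $g_{ij}\in C^{0,\frac12}$. Proposition \ref{deri} expresses $\partial\dist_w/\partial\phi_i=\sum_j g_{ij}\langle\nabla\dist_{z_j},\nabla\dist_w\rangle$ as a $C^{0,\frac12}$ function for any other base point $w$. Consequently the transition maps between such distance charts are $C^{1,\frac12}$, which gives the $C^{1,\frac12}$-chart statement.

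\textbf{Quantitative estimate \eqref{e:quantitative-C^{1/2}}.} This would be done by contradiction and compactness. Rescaling at scale $r_0$ improves the $C^{0,\frac12}$ seminorm by the factor $r_0^{1/2}$, since $Q_0$ from Corollary \ref{holdercontiangle} depends only on $(K,N,i_0)$. What remains is to get $|g_{ij}-\delta_{ij}|<\epsilon$ uniformly at a scale $r_0$ that depends only on $(\epsilon,i_0,K,N)$. I would argue by contradiction, taking a sequence with scales tending to zero. Along it, Proposition \ref{asasaso} and the argument above force the blow-up limits to be $\mathbb{R}^n$ with $z_i$ chosen along the coordinate lines. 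Since the Hölder bound is uniform, the $L^1$ smallness of $g^{ij}-\delta_{ij}$ obtained from Theorem \ref{hessconv} upgrades to $C^0$ smallness.

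\textbf{Main obstacle.} I expect the hardest step to be the first one: showing that the tangent cone at $x$ is exactly $\mathbb{R}^n$ and choosing the $z_i$ uniformly. This relies on the geodesic extendability coming from the injectivity radius and on the splitting theorem.
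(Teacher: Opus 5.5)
You have a genuine gap at the step you yourself flag as the main obstacle, and the reason you give there is not valid. Extendability of geodesics, together with Theorem \ref{splitin}(2), shows only that every tangent cone $Y$ at $x$ is $\mathbb{R}^k$ with $k\le n$. Equivalently, Proposition \ref{asasaso} gives $\Injrad(Y)=\infty$. This is where the paper's Step 1 stops.

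Your claim that $k=n$ ``since otherwise the essential dimension drops'' is not an argument. Theorem \ref{rectifi}(2) is \emph{lower} semicontinuity, so it gives only $\dim Y\le n$, and a drop of dimension under blow-up at an individual point is exactly what it permits. Nothing in your proposal excludes $Y\cong\mathbb{R}^k$ with $k<n$, along some or all blow-up sequences.

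Everything afterwards rests on this step. That includes choosing $n$ geodesics at $x$ whose blow-ups are $\mathbb{R}e_1,\dots,\mathbb{R}e_n$. Even if $Y\cong\mathbb{R}^n$ along one sequence, you would still need an argument that the same fixed geodesics blow up to an orthonormal frame along every sequence. It also includes the $\epsilon s$-GHA property that feeds Corollary \ref{bilipchart}(2), and your compactness argument for \eqref{e:quantitative-C^{1/2}}, which again presupposes that the blow-up limits are $\mathbb{R}^n$.

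The missing idea is to import $n$-dimensionality from nearby regular points through the \emph{uniform} H\"older continuity of angles; this is Step 2 of the paper's proof.
\begin{enumerate}
\item Take $x_i\in\mathcal{R}_n$ with $x_i\to x$. At $x_i$ there are $z_1,\dots,z_n$ with $\fint_{B_s(x_i)}|\langle\nabla\dist_{z_j},\nabla\dist_{z_k}\rangle-\delta_{jk}|\,\di\meas_X\le\epsilon$.
\item The Morrey bound of Corollary \ref{cor:distancehess}, combined with telescoping, turns this into a pointwise estimate at $x_i$.
\item By Lemma \ref{s9asasnrknnvkst}, you may extend the geodesic from $x_i$ through $z_j$ and slide $z_j$ outward. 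This places every $z_j$ at distance at least $\Injrad(B_2(x))/4$ from $x_i$ without changing the angles at $x_i$.
\item Now the constant $Q_0$ of Corollary \ref{holdercontiangle} no longer depends on $i$. So the almost-orthonormality holds on a ball about $x_i$ whose radius depends only on $(\epsilon,\Injrad(B_2(x)),K,N)$. For large $i$ this ball contains a neighborhood of $x$, which is \eqref{asas8hasbasi}.
\item Blowing up the $\dist_{z_j}$ at $x$ gives $n$ linear functions with almost orthonormal gradients on every tangent cone, so $k\ge n$.
\end{enumerate}
This produces the $z_j$ directly, without any need to pick geodesics at $x$ in prescribed directions. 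Since the radius is quantitative, it also gives the $C^0$ part of \eqref{e:quantitative-C^{1/2}} with no contradiction argument.

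A smaller point concerns your $C^{1,\frac12}$ step. Proposition \ref{deri} assumes that $\meas_X$ is locally Ahlfors $n$-regular. In the paper this is obtained only afterwards (Corollary \ref{c:hoelder-measure-density}), and that corollary relies on the present theorem. Some such comparability is needed to turn the $\meas_X$-a.e.\ identity of Proposition \ref{deri}(1) into a Lebesgue-a.e.\ statement in the chart. The paper instead proves a first-variation formula (Step 3) and reads off differentiability of the $\dist_{z_i}$ from it.

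Using Corollary \ref{bilipchart} instead of Theorem \ref{bilipchara} for the bi-Lipschitz property is fine, once the frame exists.
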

\begin{proof}
The proof is divided into five steps as follows.

\smallskip

\textbf{Step 1.} \textit{Any tangent cone at $x$ is isometric to the Euclidean space of dimension $\ell \le n$, where $\ell$ might depend on the choice of blow-up scaling.}

\smallskip

Take a tangent cone $Y$ at $x$. Since Proposition \ref{asasaso} tells us that
$
    \Injrad(Y)=\infty.
$
By (2) of Theorem \ref{splitin}, we conclude that $Y$ is isometric to $\dR^{\ell}$
for some $\ell \leq n$, proving \textit{Step 1}.

\smallskip

\textbf{Step 2.} \textit{The dimension of any tangent cone at $x$ is at least $n$, and thus $\ell = n$.}

\smallskip

Take a sequence of $x_i \in \mathcal{R}_n \to x$. Fix a sufficiently large $i$ and a sufficiently small $\epsilon>0$. Then (by a remark just after Theorem \ref{almost splitting}) there exist a small $s>0$ and $z_1, \ldots, z_n \in B_r(x)$  such that 
\begin{equation}\label{eq:spl}
\fint_{B_s(x_i)}\left| \langle \nabla \dist_{z_j}, \nabla \dist_{z_k}\rangle-\delta_{jk}\right| \di \meas_X \le \epsilon, \quad \text{for all $j, k$.}
\end{equation}
On the other hand, as observed in the proof of Corollary \ref{holdercontiangle}, since we know for any $t \le s$:
\begin{equation}\label{eq:splsaas}
\fint_{B_t(x_i)}\left| \langle \nabla \dist_{z_j}, \nabla \dist_{z_k}\rangle-\fint_{B_t(x_i)}\langle \nabla \dist_{z_j}, \nabla \dist_{z_k}\rangle \di \meas_X\right| \di \meas_X \le Ct^{\frac{1}{2}},
\end{equation}
a telescoping argument allows us to conclude 
$
    \left|\langle \nabla \dist_{z_j}, \nabla \dist_{z_k}\rangle(x_i) -\delta_{jk} \right| \le \epsilon+Cs^{\frac{1}{2}}.
$
On the other hand, Lemma \ref{s9asasnrknnvkst} allows us to assume that each $z_j$ is away from $x_i$, at least length $\Injrad(B_2(x))/4$, because of extending geodesics from $x_i$ to $z_j$.
Thus the (quantitative) $C^{0,\frac{1}{2}}$-continuity of angles stated in Corollary \ref{holdercontiangle} proves
\begin{equation}\label{asas8hasbasi}
    \left|\langle \nabla \dist_{z_j}, \nabla \dist_{z_k}\rangle(y) -\delta_{jk} \right| \le 2\epsilon, \quad \text{for any $y$ near $x$.}
\end{equation}
Then taking a blow-up of $\dist_{z_j}$ at $x$ completes the proof of \textit{Step 2}.

\smallskip

 \textbf{Step 3.} \textit{  For any $\epsilon > 0$, there exists $\delta = \delta (\epsilon, N, K, i_0) > 0$, such that whenever $p, x_1,x_2\in B_{\frac{i_0}{2}}(x)$ satisfy $\dist_X (x_1, x_2) < \delta$ and $\dist_X(x_i, p) \ge \frac{i_0}{100}$ for $i =1,2$,   the following first-variation formula holds:
 \begin{align}
     \left|  \frac{\dist_p(x_2)-\dist_p(x_1)}{\dist_X (x_1, x_2)} + \cos\angle   p   x_1 x_2  \right| < \epsilon, \label{e:first-variation} 
 \end{align}
where we use the standard notation $\angle p x_1 x_2 := \cos^{-1} (\langle \nabla \dist_p, \nabla\dist_{x_2}\rangle(x_1))$.}

\smallskip

We use a blow-up argument different from the one used for the corresponding Alexandrov-space result in \cite[Theorem 3.5]{OS}. The argument is close to the proof of \cite[Theorem 4.4]{Honda1}.

Notice that, by  \eqref{asas8hasbasi} in  \textit{Step 2} and Proposition \ref{defnbasn}, we obtain the Reifenberg flatness for every point near $x$. 
Therefore, it is equivalent to prove the first variation \eqref{e:first-variation} for fixed distinct points $x, p\in X$.
Taking a converging sequence $y_i\to x$ with $y_i \neq x$, 
our goal is to prove
\begin{equation}\label{dnssbbhss}
    \frac{\dist_p(y_i)-\dist_p(x)}{\dist_X(y_i, x)}+\langle \nabla \dist_p, \nabla \dist_{y_i}\rangle (x) \to 0, \quad \text{as}\ y_i \to x.
\end{equation}
To obtain this, consider a blow-up sequence $X^{r_i, x}$ at $x$ with $r_i:=\dist_X(y_i, x)$.
By \textit{Step 2}, one can see
$
  X^{r_i, x} \xrightarrow{\mathrm{pmGH}} \mathbb{R}^n.
$
  Take minimal geodesics $\gamma_i:[0, r_i] \to X$ with $\gamma_i (0) = x$ and $\gamma_i (r_i) = y_i$, and $\sigma: [0, \dist_X(x,p)] \to X$ with $\sigma (0) = x$ and $\sigma (\dist_X(x,p)) = p$.  By assumption, for any $\tau < i_0/2$, both $\gamma_i$ and $\sigma$ extend to geodesics 
$
 \hat{\gamma}_i: [ -\tau ,  \tau  ] \to X$    and $\hat{\sigma}: [ -\tau ,  \tau  ] \to X$. 
For any $w \in X$, we define functions
$
b_{i, w}(\cdot) := r_i^{-1} ( \dist_{w} (\cdot) - \dist_{w} (x) )$ on $X^{r_i, x}$.
By (the proof of) Lemma \ref{l:convergence-of-busemann}, we have
$b_{i, \hat{\gamma}_i(\tau)} \xrightarrow{W_{\loc}^{1,2}} b_{\hat{\gamma}_{\infty}}$ and $b_{i, p} \xrightarrow{W_{\loc}^{1,2}} b_{\hat{\sigma}_{\infty}}$. Here,  $\hat{\gamma}_{\infty}$ and $\hat{\sigma}_{\infty}$ denote the   geodesic lines in $\dR^n$ obtained as the blow-up limits of  $\hat{\gamma}$ and $\hat{\sigma}$ at $x$, respectively, where  $b_{\hat{\gamma}_{\infty}}$ and $b_{\hat{\sigma}_{\infty}}$  are the corresponding Busemann functions.

By the definition of $b_{i, p}$, the left-hand side of \eqref{dnssbbhss} becomes
\begin{align*}
\begin{split}
    \frac{\dist_p(y_i)-\dist_p(x)}{\dist_X(y_i, x)} + \langle \nabla \dist_p, \nabla \dist_{y_i}\rangle (x) = \ b_{i , p} (y_i) + \langle \nabla \dist_p, \nabla \dist_{y_i}  \rangle (x)  
   = & \   b_{i , p} (y_i) + \langle \nabla \dist_p, \nabla \dist_{\hat{\gamma}_i(\tau)} \rangle (x)
    \\
    = & \ b_{i , p} (y_i) + \langle \nabla b_{i, p} , \nabla b_{i, \hat{\gamma}_i(\tau)} \rangle (x),  
    \end{split}
\end{align*}
where the last two equalities follow from Lemma \ref{s9asasnrknnvkst} and a simple rescaling argument, respectively. We denote by $y_{\infty} \in \dR^n$ the limit of $y_i$ in the convergence $X^{r_i , x} \xrightarrow{\mathrm{pmGH}} \dR^n$, and thus $\|y_{\infty}\|_{\mathbb{R}^n} = 1$. 
Letting $i\to \infty$, the left-hand-side of 
\eqref{dnssbbhss} behaves as follows   
\begin{align}
    \frac{\dist_p(y_i)-\dist_p(x)}{\dist_X(y_i, x)} + \langle \nabla \dist_p, \nabla \dist_{y_i}\rangle (x)   \to b_{\hat{\sigma}_{\infty}} (y_{\infty})
 + \langle \nabla b_{\hat{\sigma}_{\infty}} , \nabla b_{\hat{\gamma}_{\infty}} \rangle (\bo^n). \label{e:limit-value}
\end{align}
Notice that in the Euclidean space $\dR^n$, the right-hand-side of \eqref{e:limit-value} equals $0$.
Finally, 
by Corollary \ref{holdercontiangle}, the above convergence can be upgraded to the locally uniform convergence, which completes the proof of \eqref{dnssbbhss}. Under the assumption of $\Injrad (B_2(x)) \geq i_0 > 0$, the quantitative estimate \eqref{e:first-variation} follows easily by tracking the above blow-up argument. 

\smallskip 

\textbf{Step 4.} \textit{For $z_i$ obtained in the proof of Step 2, $\Phi = (\dist_{z_1}, \ldots, \dist_{z_n}): B_r(x) \to \dR^n$ is a bi-Lipschitz chart for a small $r>0$, and each coordinate function is $C^{1, \frac{1}{2}}$. Moreover, these charts constitute a $C^{1, \frac{1}{2}}$-differential structure of $X$, and thus $X$ is a smooth manifold. } 

\smallskip

Since we have established Reifenberg flatness near $x$,   Theorem \ref{bilipchara} shows that $\Phi$ is a bi-Lipschitz chart. The first-variation formula \eqref{e:first-variation} implies that, for each $1\leq i\leq n$, the coordinate function $\dist_{z_i}$ is differentiable.   
 Furthermore, by Corollary \ref{holdercontiangle}, the coordinate functions are $C^{1,\frac12}$.  
The transition maps have the same regularity because their Jacobian can be written as a rational function of the angle functions; recall \eqref{inveq}. Hence these charts define a $C^{1,\frac12}$ differential structure.

\smallskip

\textbf{Step 5.} \textit{The  quantitative  estimate \eqref{e:quantitative-C^{1/2}} holds.} 

\smallskip

This follows because the above estimates are quantitative.  In fact, one can achieve this by applying the assumption of uniform lower bound of injectivity radius $\Injrad(B_2(x)) \geq i_0 $  and the quantitative $C^{0,\frac{1}{2}}$-estimate of the angle \eqref{e:quantitative-C^{1/2}-Hoelder}.  
\end{proof}
Note that, in the theorem above,   standard  differential topology ensures that $B_r(x)$ admits a unique smooth structure compatible with the chart constructed above.

To introduce an application of Theorem \ref{cor:distance coordinate}, let us briefly discuss the H\"older regularity of the measure $\meas_X$. 
Let $(\cR_{\alpha})_r$ denote the set of points $x \in X$ satisfying
$
\dist_{\mathrm{GH}} (B_s(x), B_s(\bo^n)) \leq s^{\alpha + 1}$ for any $s\in (0,r]$,
where $n  = \dim(X)$. In the context of Ricci limit space, it was proved in \cite[Theorem 4.6]{Cheeger-Colding-III} that for any regular point $x \in (\cR_{\alpha})_r$
in a Ricci limit space $(X_{\infty}, \dist_{\infty})$, the renormalized limit measure $\nu_{\infty}$ is locally Ahlfors $\alpha$-regular on $(\cR_{\alpha})_r$ with the existence of the limit $\nu_{\infty}(x):=\lim_{s\to 0}s^{-\alpha}\nu_{\infty}(B_s(x))\in (0, \infty)$ for any $x \in (\cR_{\alpha})_r$. Moreover, $\nu_{\infty}$ has a $C^{0,\beta}$-continuous  density  on $(\cR_{\alpha})_r$ for some $\beta = \beta(n , \alpha) \in (0,1)$, namely we have 
\begin{equation}\label{ccahlfors}
|(\nu_{\infty}(x)s^{n})^{-1}\nu_{\infty}(B_s(x))-1| \le C_1s^{\beta},\quad C_2 \nu_{\infty}(B_r(x))s^n\le\nu_{\infty}(B_s(x))\le C_3\nu_{\infty}(B_r(x))s^n      
\end{equation}
for all $x \in (\cR_{\alpha})_r$ and $0<s \le r$,
where $C_i=C_i(n, r, \alpha)>0$.
The argument in the proof of \cite[Theorem 4.6]{Cheeger-Colding-III} also applies to $\RCD(K, N)$ spaces. By Theorem \ref{cor:distance coordinate}, one can see that, there exists $r > 0$ such that every point in $B_1(x)$ belongs to $( \cR_{\frac{1}{2}} )_r$. We therefore obtain the following corollary.

\begin{corollary}\label{c:hoelder-measure-density}
    Under the same setting as in Theorem \ref{cor:distance coordinate}, $\meas_X$ is locally Ahlfors $n$-regular on $B_1(x)$ with
    $
        \meas_X=e^{-f}\di \haus^n
    $
    for some function $f \in C^{0,\beta(n)}_{\loc} (B_1(x))$.
\end{corollary}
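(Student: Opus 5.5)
The plan is to run the Cheeger--Colding argument of \cite[Theorem 4.6]{Cheeger-Colding-III} in the present setting, with Theorem \ref{cor:distance coordinate} supplying the quantitative flatness. The first step is to show that every point of $B_1(x)$ lies in $(\cR_{\frac12})_r$ for a uniform $r>0$. Take $y\in B_1(x)$ and the chart $\Phi=(\dist_{z_1},\ldots,\dist_{z_n})$ near $y$. By \eqref{e:quantitative-C^{1/2}} we have $|g_{ij}(w)-g_{ij}(y)|\le Cs^{\frac12}$ on $B_s(y)$. After a fixed linear change $A_y$ that normalizes $g_{ij}(y)$ to $\delta_{ij}$, and using the first variation formula \eqref{e:first-variation} integrated along geodesics (now with the $C^{0,\frac12}$ modulus rather than a mere $\epsilon$), the map $A_y\Phi$ should distort distances on $B_s(y)$ by at most $Cs^{\frac32}$. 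Hence $\dist_{\mathrm{GH}}(B_s(y),B_s(\bo^n))\le Cs^{\frac32}$ for all $s\le r$. I expect this to be the main obstacle: the first variation formula was established only qualitatively, so it has to be upgraded to the quantitative form $|\dist_p(x_2)-\dist_p(x_1)+\dist_X(x_1,x_2)\cos\angle px_1x_2|\le C\dist_X(x_1,x_2)^{\frac32}$. My first attempt is to integrate $\langle\nabla\dist_p,\nabla\dist_{x_2}\rangle$ along the geodesic $[x_1x_2]$, using the Hölder continuity of the angle function together with Lemma \ref{s9asasnrknnvkst} to move base points.

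The second step is to compare measures across scales. Write $\theta(y,s):=\meas_X(B_s(y))/(\omega_ns^n)$. The volume convergence theorem for $\RCD$ spaces in its quantitative form, together with Bishop--Gromov \eqref{bg} applied on the rescaled spaces $X^{s,y}$, which are $Cs^{\frac12}$-close to $\mathbb{R}^n$, should give $|\theta(y,s)/\theta(y,s/2)-1|\le Cs^{\beta_0}$ for some $\beta_0=\beta_0(n)>0$. This is exactly the step in \cite{Cheeger-Colding-III}, and there it uses only almost-splitting and Bishop--Gromov, both of which are available here through Theorem \ref{almost splitting} and Theorem \ref{hessthem}. Summing the geometric series $\sum_k (2^{-k}s)^{\beta_0}$ then shows that $\theta(y,0):=\lim_{s\to0}\theta(y,s)$ exists, that $|\theta(y,s)-\theta(y,0)|\le Cs^{\beta_0}\theta(y,0)$, and that $\theta(y,0)$ is comparable to $\meas_X(B_r(y))r^{-n}$. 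Together these yield the estimate \eqref{ccahlfors} and local Ahlfors $n$-regularity on $B_1(x)$.

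The third step establishes Hölder continuity of the density. For $y,w$ with $\dist_X(y,w)=d$ small, choose $s=d^{\lambda}$ with $\lambda\in(0,1)$. The balls $B_s(y)$ and $B_s(w)$ differ by at most an annulus of width $d$, and by Ahlfors regularity on the nearly Euclidean ball this gives $|\theta(y,s)-\theta(w,s)|\le Cd/s$. Combining this with the rate from the second step gives $|\theta(y,0)-\theta(w,0)|\le C(d^{\lambda\beta_0}+d^{1-\lambda})$, so $\theta(\cdot,0)$ is $C^{0,\beta}$ with $\beta=\beta(n)$. Finally, \eqref{asnasras9rjs}, together with the fact that $\haus^n(B_s(y))/(\omega_ns^n)\to1$ at every point (by the same GH closeness and Theorem \ref{convhaus}, or equivalently Corollary \ref{corconhaus}(3)), identifies $\frac{\di\meas_X}{\di\haus^n}=\theta(\cdot,0)$. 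Setting $f:=-\log\theta(\cdot,0)$ then gives $f\in C^{0,\beta(n)}_{\loc}(B_1(x))$.
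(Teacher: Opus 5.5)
Your overall route is the paper's: Theorem \ref{cor:distance coordinate} puts every point of $B_1(x)$ into $(\cR_{\frac12})_r$, and then the argument of \cite[Theorem 4.6]{Cheeger-Colding-III} is run. The paper itself only asserts that this argument carries over to $\RCD(K,N)$ spaces.

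Step 1 is fine, and the obstacle you anticipate can be handled as you suggest. The qualitative first variation \eqref{e:first-variation}, Lemma \ref{s9asasnrknnvkst} and Corollary \ref{holdercontiangle} show that $t\mapsto\dist_p(\gamma(t))$ is $C^1$ along $\gamma=[x_1x_2]$, with derivative a $C^{0,\frac12}$ function of $\gamma(t)$ (keep base points at distance comparable to the injectivity radius). Integrating gives the $O(\dist_X(x_1,x_2)^{\frac32})$ error. Almost-surjectivity of $A_y\Phi$ follows from $\Phi$ being a homeomorphism onto an open set. Step 3 is also fine once Step 2 holds, but the annulus bound $|\theta(y,s)-\theta(w,s)|\le C\frac{d}{s}\theta(y,s)$ comes from \eqref{bg} applied to annuli, not from Ahlfors regularity.

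The gap is in Step 2: the tools you invoke do not control the reference measure when $N>n$. Volume convergence is a statement about $\haus^N$ on non-collapsed spaces. Inequality \eqref{bg} only gives $\theta(y,s)\le 2^{N-n}(1+O(s^2))\,\theta(y,s/2)$. Metric closeness to a Euclidean ball at a given scale says nothing about $\meas_X$. For example, take the $\RCD(0,n+2)$ space $\mathbb{R}^{n-1}\times[0,\infty)$ with $\meas=x_n^{2}\di x$ and $p=(0,\dots,0,1)$. The ball $B_1(p)$ is isometric to $B_1(\bo^n)$, yet $\theta(p,1)/\theta(p,\frac12)=\frac{4(n+3)}{4n+9}$.

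What makes $\meas_X$ almost a constant multiple of Lebesgue measure at scale $s$ is that the closeness holds on all scales up to $r$, so there are almost-lines through $y$ much longer than $s$. From these one builds almost-splitting maps that are harmonic with respect to $\meas_X$. Their small Hessians, through the divergence structure (cf.\ the identity in the proof of Corollary \ref{corimprovedmeas}), force the rescaled space to be pmGH-close, measures included, to $(\mathbb{R}^n,c\,\mathcal{L}^n)$.

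You also need this with an explicit power of the closeness. Theorem \ref{almost splitting} is proved by contradiction and only gives some $\Psi$. Without a rate, $\sum_k\Psi\bigl((2^{-k}s)^{\frac12}\bigr)$ need not converge, so neither the existence of $\theta(y,0)$ nor Ahlfors regularity follows. This measure-level estimate with a power rate is exactly what the appeal to \cite{Cheeger-Colding-III} must supply. As written, your Step 2 substitutes the non-collapsed argument, which does not apply here.
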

 By Corollary \ref{corconhaus} (see also Theorem \ref{convhaus}),   
 \begin{equation}\lim_{r\to 0}\frac{\haus^n(B_r(y))}{\omega_nr^n}=1, \quad  \text{for any $y \in B_1(x)$}.
 \end{equation}
 A quantitative version of this limit also holds. In Corollary \ref{corimprovedmeas}, we improve the  $C^{0,\beta(n)}_{\loc}$-regularity to   $W^{1,p}_{\loc}\cap C^{0,\alpha}_{\loc}$ for all $p<\infty$ and $\alpha \in (0,1)$.

\section{Harmonic charts and Sobolev-H\"older regularity of metrics}\label{harmonicrr}

In this section, we apply harmonic charts to study the regularity of the Riemannian metrics induced from the RCD structure.

\subsection{Harmonic radius}

 Let us start this section by introducing the following key notion, where recall Definition \ref{d:Q-weak} for the definition of $Q$-weak harmonic chart and notice that a priori we do \textit{not} assume $\Phi$ below to be {\it globally} bi-Lipschitz. 
\begin{definition}[Harmonic radius]
Let $X$ be an $\RCD(K, N)$ space for some $K \in \mathbb{R}$ and some $N \in [1, \infty)$ of essential dimension $n$. Given $Q\ge 1, p \ge 1$, $r > 0$ and $x \in X$,  a $Q$-weak harmonic chart $(B_r(x), \Phi)$ is called a {\it  $(Q, p)$-harmonic chart} if $\Phi = (\phi_1 , \ldots, \phi_n)$ satisfies 
\begin{equation}\label{harmests}
r \cdot \meas_X (B_r(x))^{-\frac{1}{p}}\|\Hess_{\phi_i}\|_{L^p(B_r(x))}\le Q-1, \quad \text{ for any $1 \leq i \leq n$,}
\end{equation}
namely
\begin{equation}
    r^p\fint_{B_r(x)}|\mathrm{Hess}_{\phi_i}|^p\di \meas_X \le (Q-1)^p.
\end{equation}
The {\it  $(Q, p)$-harmonic radius} 
at $x \in X$, denoted by
$
r_H(Q, p, x)=r_H(Q, p, X, x)
$
as the supremum of $r>0$ that satisfies there exists a $(Q, p)$-harmonic chart $(B_r(x), \Phi)$.
Finally, for any $A \subset X$, we denote
$r_H(Q, p, A)=r_H(Q, p, X, 
A) := \inf_{x \in A}r_H(Q, p, x)$.
\end{definition}

Note that the $(Q, p)$-harmonic radius is invariant under rescaling (\ref{resnot}) 
and that the H\"older inequality implies
$
    r_H(Q, p, x) \le r_H(\tilde Q, \tilde p, x)$ for all $Q \le \tilde Q$ and $\tilde p \leq p$.
\begin{theorem}[Existence of bi-Lipschitz harmonic chart from Morrey type estimate on Hessian]\label{thm:morrey}
For all $K \in \mathbb{R}$, $N \in [1, \infty)$, $L \in [1, \infty)$, $p \ge 1$, and $\delta \in (0,1)$, there exist $\epsilon_0=\epsilon_0(K,N,L,  p, \delta) \in (0,1)$, $\tau_0=\tau_0(K,N,L, p, \delta) \in (0,1)$ and $C=C(K, N, L,  p, \delta)>1$ such that the following hold.
    Let $X$ be an $\RCD(K, N)$ space of essential dimension $n$, let $x \in X$ satisfy that
    $B_{2}(x)$ is $(\epsilon_0, 1)$-Reifenberg flat with $\meas_X=e^{-f}\di\haus^n$ for some $f \in L^{\infty}(B_{2}(x))$, 
and let $\Phi=(\phi_1,\ldots, \phi_n):B_{2}(x)\to \mathbb{R}^n$ be an $\epsilon_0$-GHA to $B_{2}(\Phi(x))$ with $\phi_i \in D(\Delta, B_{2}(x))$, $\|\Delta\phi_i\|_{L^{\infty}}\le L$ and 
        \begin{equation}\label{asarsarsa}
            s^{p-\delta p}\fint_{B_s(z)}|\mathrm{Hess}_{\phi_i}|^p\di \meas_X \le L,\quad \text{for all $z \in B_{1}(x)$ and $s<1$,}
        \end{equation}
    Then  
    there exists a bi-Lipschitz harmonic chart $\tilde \Phi=(\tilde \phi_1, \ldots, \tilde \phi_n):B_{\tau_0}(x) \to \mathbb{R}^n$ such that\footnote{By Theorem \ref{8as8ashas8ashsbasb}, for all $y, z \in B_{\tau_0}(x)$, $\epsilon \in (0,1)$, and (quantitatively) sufficiently small $s>0$, we have
        \begin{equation}
        B_{(1-\epsilon)s}(\tilde \Phi(y) )\subset \tilde \Phi( B_s(y))\subset B_{(1+\epsilon)s}( \tilde \Phi(y) ),\quad
        (1-\epsilon)\dist_X(y,z) \le |\tilde \Phi(y)-\tilde \Phi(z)|_{\mathbb{R}^n}\le (1+\epsilon)\dist_X(y,z).
    \end{equation}
    } 
    \begin{equation}\label{eq:morrey22}
        \langle \nabla \tilde \phi_i, \nabla \tilde \phi_j\rangle(x)=\delta_{ij}, \quad s^{p-\delta p}\fint_{B_s(y)}|\mathrm{Hess}_{\tilde \phi_i}|^p\di \meas_X\le C,\quad \text{for all $y \in B_{\frac{{\tau_0}}{2}}(x)$ and $s \in (0, \frac{{\tau_0}}{2}).$}
    \end{equation}
    In particular, $B_{1}(x)$ is a $C^{0,\delta}_{\loc}$-Riemannian manifold via harmonic charts.
\end{theorem}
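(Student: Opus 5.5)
We will build $\tilde\Phi$ in two stages. First we show that $\Phi$ is already a good non-harmonic chart. Then we replace it by its harmonic replacement and transfer the Morrey bound to the new coordinates through the elliptic theory of Section \ref{harmoniccchas}.

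\textbf{Step 1: $\Phi$ is a bi-Lipschitz chart with H\"older metric.} Since $\|\Delta\phi_i\|_{L^\infty}\le L$, (2) of Proposition \ref{propregmap} gives $|\nabla\phi_i|\le C$. H\"older's inequality turns \eqref{asarsarsa} into $s^{1-\delta}\fint_{B_s(z)}|\Hess_{\phi_i}|\le L^{1/p}$. Then (3) of Proposition \ref{propregmap}, applied to $\phi_i\pm\phi_j$ and polarized, gives $g^{ij}=\langle\nabla\phi_i,\nabla\phi_j\rangle\in C^{0,\delta}(B_1(x))$ with a quantitative bound.

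Because $\Phi$ is an $\epsilon_0$-GHA and $B_2(x)$ is Reifenberg flat, Theorem \ref{nondeg} with the quantitative Remark \ref{asansakkksksksksks}, together with (2) of Corollary \ref{bilipchart}, yields $\det(g^{ij})\ge c>0$ and indeed $|g^{ij}-\delta_{ij}|\le\Psi(\epsilon_0)$ on $B_{1/2}(x)$. Corollary \ref{bilipchart} (1) and Theorem \ref{bilipchara} then make $\Phi|_{B_{\tau}(x)}$ a $C$-bi-Lipschitz chart onto an open set containing a Euclidean ball of comparable radius.

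\textbf{Step 2: harmonic replacement.} Fix a small $\tau_0$. For each $i$, solve $\Delta h_i=0$ on $B_{2\tau_0}(x)$ with $h_i-\phi_i\in W^{1,2}_0$; equivalently, subtract $u_i$ with $\Delta u_i=\Delta\phi_i$ and $u_i\in W^{1,2}_0$.

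In the chart $\Phi$, Theorem \ref{asnasi} expresses $\Delta$ as the nondivergence operator $\tilde\Delta^{g,\Phi}$. This operator has $C^{0,\delta}$ leading coefficients and $L^\infty$ drift. The $W^{2,q}$ theory of Corollary \ref{newcor} (applicable since $g^{ij}\in C^0$) gives $\tilde u_i\in W^{2,q}$ for all $q$. Smallness of $u_i$ after rescaling follows from $|\Delta u_i|\le L$: we get $\|\nabla u_i\|_{L^\infty(B_{\tau_0})}\le C\tau_0$ via the gradient estimate \eqref{asnasia8rnsasxx}.

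Hence $\tilde\Phi_0=(h_1,\dots,h_n)$ satisfies $|\langle\nabla h_i,\nabla h_j\rangle-g^{ij}|\le C\tau_0$ on the interior ball. It therefore remains non-degenerate and bi-Lipschitz by Corollary \ref{bilipchart} and Theorem \ref{bilipchara}. A final constant linear change $\tilde\Phi=\tilde\Phi_0 A$, with $A$ chosen by Cholesky as in Step 0 of Theorem \ref{8as8ashas8ashsbasb}, normalizes $\langle\nabla\tilde\phi_i,\nabla\tilde\phi_j\rangle(x)=\delta_{ij}$. Pointwise values are meaningful by continuity, and $|A|,|A^{-1}|\le C$.

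\textbf{Step 3: Morrey bound for $\Hess_{\tilde\phi_i}$, the main obstacle.} This step must be proven at all small scales and uniformly. In the chart $\Phi$, Proposition \ref{chrhess} writes
\[
\Hess_{h}=\bigl(\partial_{jk}\tilde h-\Gamma_{jk}^l\partial_l\tilde h\bigr)\,d\phi_j\otimes d\phi_k .
\]
Remark \ref{remintegra} bounds the $\Gamma$ term by $C|\nabla g^{ij}|\lesssim\sum|\Hess_{\phi_i}|$, which already satisfies the Morrey bound \eqref{asarsarsa}.

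For the second derivatives, $\tilde h$ solves $g^{jk}\partial_{jk}\tilde h=-(\Delta\phi_j)\partial_j\tilde h$, whose right-hand side is bounded. I would use the Campanato/Morrey version of the $W^{2,p}$ estimates for nondivergence equations with $C^{0,\delta}$ coefficients. Concretely, I would freeze the coefficients at $y$ and compare with the constant-coefficient solution on $B_s(y)$. The error is controlled by $[g]_{C^{0,\delta}}s^{\delta}\|D^2\tilde h\|$ plus the bounded right-hand side. Iterating gives
\[
s^{p-\delta p}\fint_{B_s}|D^2\tilde h-(D^2\tilde h)_{s}|^p\le C .
\]
Since $D^2\tilde h$ is bounded at the scale $\tau_0$, this is even stronger than what is needed. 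Alternatively, one can quote the Schauder-type $C^{1,\delta}$ bound from Corollary \ref{calp}, adapted to an $L^\infty$ right-hand side via Morrey-space estimates.

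The measure change between $\haus^n$ and $\meas_X$ is harmless because $f\in L^\infty$ and Proposition \ref{coarea123} applies. Combining these bounds gives \eqref{eq:morrey22}. Step 1 applied to $\tilde\Phi$ then gives $C^{0,\delta}$ metric coefficients in harmonic charts. Covering $B_1(x)$ by such charts (each point satisfies the hypotheses after recentering) yields the final $C^{0,\delta}_{\loc}$-manifold statement.
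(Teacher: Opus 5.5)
Your argument is correct in substance, but it builds the harmonic chart differently from the paper. The paper does not replace $\Phi$. It takes the harmonic almost-splitting map $\tilde\Phi$ on $B_1(z)$ supplied by Theorems~\ref{almost splitting} and~\ref{8as8ashas8ashsbasb}, and applies Corollary~\ref{newcor} in the chart $\Phi$ (using $g^{ij}\in C^{0,\delta}$) to get $\tilde\Phi\circ\Phi^{-1}\in W^{2,q}_{\loc}$ for every $q$. It then reads off the Morrey bound on the second derivatives from H\"older's inequality with $q$ large (Remark~\ref{rem:rescal}), controls the $\Gamma$-term by $\Hess_{\phi_i}$ as you do, and gets non-degeneracy of $\tilde\Phi$ from the quantitative Theorem~\ref{nondeg} (Remark~\ref{asansakkksksksksks}) before normalizing by a matrix.

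You instead take the harmonic replacement of $\Phi$ itself on $B_{2\tau_0}(x)$, so non-degeneracy and the bi-Lipschitz property are inherited by perturbation. In the chart, $h\circ\Phi^{-1}=\mathrm{id}-\tilde u$ with $|D\tilde u|\le C\tau_0$, so you need no second application of Theorem~\ref{nondeg} or of the canonical Reifenberg theorem. The price is that you must solve the Dirichlet problem and justify $\fint_{B_{\tau_0}(x)}|u_i|\le CL\tau_0^2$ before \eqref{asnasia8rnsasxx} gives $|\nabla u_i|\le C\tau_0$. This follows, for example, from $\int|\nabla u_i|^2=-\int u_i\,\Delta\phi_i$ and a Poincar\'e inequality for $W^{1,2}_0$, which is easiest in the chart; as written, your proposal skips this step. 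The paper's route reuses machinery already in place and produces a chart that is an almost-splitting map from the outset.

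A few smaller points. Your Step 3 can be shortened to exactly what the paper does. Corollary~\ref{newcor} already gives $h\circ\Phi^{-1}\in W^{2,q}$ with quantitative bounds for every $q$, so H\"older's inequality with $q\ge n/(1-\delta)$ yields $s^{(1-\delta)p}\fint_{B_s}|D^2(h\circ\Phi^{-1})|^p\le C$ directly. The Campanato iteration is unnecessary, and $D^2(h\circ\Phi^{-1})$ need not be bounded (it is only in every $L^q$). Corollary~\ref{calp} does not apply, since $\Delta\phi_j$ is only $L^\infty$.

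In Step 1, the pointwise claim $|g^{ij}-\delta_{ij}|\le\Psi(\epsilon_0)$ on $B_{1/2}(x)$ is not what Corollary~\ref{bilipchart}(2) gives, though it does hold by a compactness argument. In any case you only use $\det(g^{ij})\ge c$.

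Finally, the hypotheses do not literally hold after recentering at $z\in B_1(x)$. What is true is that Steps 2--3 can be rerun at $z$, because Step 1 already gives non-degeneracy and local bi-Lipschitz control of $\Phi$ near every point of $B_1(x)$.
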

\begin{proof}
First, it follows from Proposition \ref{propregmap} and Theorem \ref{nondeg} that $g^{ij}:=\langle \nabla\phi_i, \nabla \phi_j\rangle \in C^{0,\delta}_{\loc}$ holds and that $\Phi$ is non-degenerate at any point in $B_{1}(x)$. Thus, by Theorem \ref{bilipchara}, $\Phi$ gives a bi-Lipschitz chart around any point in $B_{1}(x)$.  
In the sequel, we fix a small $\epsilon>0$ and assume that $B_2(x)$ is $(\epsilon, 1)$-Reifenberg flat.

Take $z \in B_{1}(x)$.
Thanks to Theorems \ref{almost splitting} and \ref{8as8ashas8ashsbasb},  there exists a bi-H\"older harmonic chart $\tilde \Phi=(\tilde \phi_1, \ldots, \tilde \phi_n):B_{1}(z) \to \mathbb{R}^n$ such that 
    \begin{equation}
        (1-\epsilon)\dist_X(y,w)^{1+\epsilon} \le |\tilde \Phi(y)-\tilde \Phi(w)|_{\mathbb{R}^n}\le (1+\epsilon)\dist_X(y,w),\quad \text{for all $y, w \in B_{1}(z)$}.
    \end{equation}
    Denote $\tilde \Phi \circ \Phi^{-1}=(\psi_1, \ldots, \psi_n)$. We can apply Corollary \ref{newcor} to conclude
$\psi_k \in W^{2, q}_{\loc}$ for any $q<\infty$, where we used $g^{ij} \in C^{0,\delta}$. In particular, we see that $\psi_k \in C^{1, \alpha}_{\loc}$ holds for any $\alpha \in (0, 1)$ and that 
\begin{equation}
    s^p\fint_{B_s(w)}\left|\frac{\partial^2\psi_k}{\partial x_i\partial x_j}\right|^p\di \haus^n 
\end{equation}
is small if so is $s$, quantitatively (because this can be estimated in terms of $Cs^{\beta}$ from above, see Remark \ref{rem:rescal}). 
Noticing that  $\langle \nabla \tilde \phi_i, \nabla \tilde \phi_j\rangle (z)$ can be written in terms of $\langle \nabla \phi_l, \nabla \phi_m\rangle (z) \in C^{0,\delta}_{\loc}$ and the Jacobi matrix $J(\tilde \Phi \circ \Phi^{-1})(\Phi(z)) \in C^{0,\alpha}_{\loc}\cap W^{1,q}_{\loc}$ (for all $\alpha, q$),
after multiplying a matrix as done in the proof of Proposition \ref{bibilip}, $\tilde \Phi$ satisfies the desired conclusions, where the arguments above are quantitative (see Remark \ref{asansakkksksksksks}), thus we conclude.
\end{proof}

\begin{corollary}\label{existence proof}
    Let $X$ be an $\RCD(K, N)$ space for some $K \in \mathbb{R}$ and some $N \in [1, \infty)$ of essential dimension $n$, and let $U$ be an open subset of $X$ with $\mathrm{Injrad}(U)>0$.  
    Then 
     for all $x \in U$ and $\epsilon \in (0,1)$, there exists a bi-Lipschitz harmonic chart $\tilde \Phi:B_r(x) \to \mathbb{R}^n$ such that $\langle \nabla \tilde \phi_i, \nabla \tilde \phi_j\rangle \in C^{0,\frac{1}{2}}_{\loc}$, 
    \begin{equation}\label{asa9sarsasproasn}
        r^2\fint_{B_r(x)}|\mathrm{Hess}_{\tilde{\phi}_i}|^2\di \meas_X\le \epsilon, \quad \sup_{y \in B_{\frac{r}{2}}(x), s \in (0, \frac{r}{2})}s\fint_{B_s(y)}|\mathrm{Hess}_{\tilde \phi_i}|^2\di \meas_X<\infty,
    \end{equation}
    \begin{equation}\label{neweq1}
        B_{(1-\epsilon)s}(\tilde{\Phi}(y))\subset B_s(\tilde \Phi(y))\subset B_{(1+\epsilon)s}(\tilde \Phi(y)),\quad \text{for all $y \in B_{\frac{r}{2}}(x)$ and $s \in \left(0, \frac{r}{2}\right)$}
    \end{equation}
    and 
    \begin{equation}\label{neweq2}
        (1-\epsilon)\dist_X(y,z) \le |\tilde \Phi(y)-\tilde \Phi(z)|_{\mathbb{R}^n}\le (1+\epsilon)\dist_X(y,z),\quad \text{for all $y, z \in B_r(x)$}.
    \end{equation}
    In particular, $U$ is a $C^{0,\frac{1}{2}}_{\loc}$-Riemannian manifold via harmonic charts with 
    $
        r_H(Q, 2, A)>0$ for all $Q>1$ and compact subset $A$ of $U$.
\end{corollary}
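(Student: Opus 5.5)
The plan is to reduce Corollary \ref{existence proof} to Theorem \ref{thm:morrey} with $p=2$ and $\delta=\frac{1}{2}$, using the distance chart from Theorem \ref{cor:distance coordinate} as the auxiliary map $\Phi$. Fix $x\in U$ and choose $\rho>0$ with $\Injrad(B_{2\rho}(x))\ge \iota>0$. Rescale by $\rho$, so that after rescaling $\Injrad(B_2(x))$ is bounded below. Theorem \ref{cor:distance coordinate} then gives points $z_1,\ldots,z_n$ at distance at least $\iota/4$ from $x$ (moving the $z_j$ along extended geodesics, as in \textit{Step 2} of that proof), together with $r_1>0$ such that $\Phi=(\dist_{z_1},\ldots,\dist_{z_n})$ is a bi-Lipschitz chart on $B_{r_1}(x)$ and $|g^{ij}-\delta_{ij}|\le\epsilon$ there. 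By Proposition \ref{defnbasn}, a small ball around $x$ is Reifenberg flat. Shrinking the scale further and using \eqref{e:quantitative-C^{1/2}}, we may assume that after a second rescaling by $r_2\ll r_1$ the following hold.

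\begin{itemize}
\item $B_2(x)$ is $(\epsilon_0,1)$-Reifenberg flat.
\item $\Phi$, suitably translated, is an $\epsilon_0$-GHA onto $B_2(\Phi(x))$; this follows from the near-orthonormality of $g^{ij}$ and Theorem \ref{isomequiv}(2).
\item $\meas_X=e^{-f}\di\haus^n$ with $f$ bounded, by Corollary \ref{c:hoelder-measure-density}.
\end{itemize}

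Next we check the remaining hypotheses of Theorem \ref{thm:morrey}. Proposition \ref{prop:injec} gives $\|\Delta \dist_{z_j}\|_{L^\infty}\le C$ away from $z_j$. Since rescaling multiplies the Laplacian by $r_2$, the bound persists with $L$ fixed. Corollary \ref{cor:distancehess} gives $s\fint_{B_s(y)}|\Hess_{\dist_{z_j}}|^2\le C$. Under rescaling by $r$ the left-hand side becomes $r\cdot s\fint|\Hess|^2$, so the Morrey bound \eqref{asarsarsa} holds with $p=2$, $\delta=\frac12$ (since $s^{2-1}=s$), uniformly in the scale. Theorem \ref{thm:morrey} then yields a bi-Lipschitz harmonic chart $\tilde\Phi$ on $B_{\tau_0}(x)$ with the following properties.
\begin{itemize}
\item $\langle\nabla\tilde\phi_i,\nabla\tilde\phi_j\rangle(x)=\delta_{ij}$.
\item $s\fint_{B_s(y)}|\Hess_{\tilde\phi_i}|^2\le C$ for $y\in B_{\tau_0/2}(x)$ and $s<\tau_0/2$. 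This is the second part of \eqref{asa9sarsasproasn}.
\item $g^{ij}_{\tilde\Phi}\in C^{0,\frac12}_{\loc}$, by Proposition \ref{propregmap}(3).
\end{itemize}

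To get smallness, restrict $\tilde\Phi$ to $B_r(x)$ with $r$ small. The Morrey bound gives $r^2\fint_{B_r(x)}|\Hess_{\tilde\phi_i}|^2\le Cr\le\epsilon$, which is the first part of \eqref{asa9sarsasproasn}. The ball inclusion \eqref{neweq1} and the bi-Lipschitz estimate \eqref{neweq2} with constant $1\pm\epsilon$ come from the footnote to Theorem \ref{thm:morrey}, i.e.\ Theorem \ref{8as8ashas8ashsbasb} with the strengthened Lipschitz bound; the required bound on $\nabla\Delta\tilde\phi_i$ holds trivially since $\tilde\Phi$ is harmonic. Because $g^{ij}$ is Hölder continuous and equals $\delta_{ij}$ at $x$, it is uniformly close to $\delta_{ij}$ on small balls, so $\tilde\Phi$ is infinitesimally $Q$-bi-Lipschitz for any prescribed $Q>1$.

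For the final claim, compactness of $A\subset U$ gives a uniform lower bound $\Injrad\ge\iota_0$ on a neighborhood of $A$. All constants above depend only on $K,N,\iota_0$ through the quantitative statements (Proposition \ref{prop:injec}, Corollary \ref{cor:distancehess}, \eqref{e:quantitative-C^{1/2}}). Hence the radius $r$ for a given $Q$ is uniform over $x\in A$, giving $r_H(Q,2,A)>0$. Transition maps between harmonic charts lie in $W^{2,q}_{\loc}$ by Corollary \ref{newcor}, so $U$ is a $C^{0,\frac12}_{\loc}$-Riemannian manifold.

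The main obstacle is making the second step uniform. One must verify that the distance chart is an $\epsilon_0$-GHA at a scale that is quantitative in $\iota_0$, and that the points $z_j$ stay at a uniform distance from the ball, so that the Laplacian bound of Proposition \ref{prop:injec} applies with fixed $\tau$. I would handle both using Lemma \ref{s9asasnrknnvkst} together with the quantitative estimate \eqref{e:quantitative-C^{1/2}}.
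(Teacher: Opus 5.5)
Your proposal is correct and follows essentially the same route as the paper. The paper's proof is a one-line reduction: apply Theorem \ref{thm:morrey} with $p=2$ and $\delta=\frac12$ to the distance chart, with Proposition \ref{prop:injec} supplying the Laplacian bound, Corollary \ref{cor:distancehess} the Morrey-type Hessian bound, and Corollary \ref{c:hoelder-measure-density} the density. Your additional bookkeeping (the rescalings, Reifenberg flatness via Proposition \ref{defnbasn}, the approximation property via Theorem \ref{isomequiv}, and uniformity over compact sets) makes explicit what the paper leaves implicit.
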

\begin{proof}
    This is a direct consequence of Theorem \ref{thm:morrey}, Proposition \ref{prop:injec}, Corollaries \ref{cor:distancehess} and \ref{c:hoelder-measure-density}.
\end{proof}
We are ready to introduce a main result of this subsection, improving the corollary above, as $W^{1,p}$-regularity for harmonic charts of RCD spaces with positive injectivity radius. It should be emphasized that the result is new even for Alexandrov spaces and weighted Riemannian manifolds.
\begin{theorem}[$W^{1,p}_{\loc}\cap C^{0,\alpha}_{\loc}$-regularity]\label{thm:nonsmoothandssasasas}
    Let $X$ be an $\RCD(K, N)$ space for some $K \in \mathbb{R}$ and some $N \in [1, \infty)$ of essential dimension $n$, and let $U$ be an open subset of $X$ with $\mathrm{Injrad}(U)>0$. Then 
   $
        r_H(Q, p, x)>0$, for all $Q \in (1, \infty), p \in [1, \infty)$ and $x \in U$.
    In particular, $U$ is a $W^{1,p}_{\loc}\cap C^{0,\alpha}_{\loc}$-Riemannian manifold for all $p<\infty$ and $\alpha \in (0,1)$. Furthermore, for all $\epsilon > 0$, $\delta > 0$, and $p \in [1, \infty)$, there exists $r = r(K, N, \epsilon, \delta, p) > 0$ such that if $\Injrad(B_1(x))\ge \delta>0$ for some $x \in X$,  then  there exist a $(1+\epsilon, p)$-harmonic chart $(\tilde \Phi, B_r(x))$ that satisfies \eqref{neweq1} and \eqref{neweq2}.
\end{theorem}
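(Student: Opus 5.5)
The plan follows the strategy sketched in the introduction: upgrade the $C^{0,\frac12}$ regularity of $g_X$ in distance charts (Theorem \ref{cor:distance coordinate}) to $W^{1,p}_{\loc}$ control of $g^{ij}$, and then transfer it to harmonic charts. Fix $x\in U$ and the distance chart $\Phi=(\dist_{z_1},\ldots,\dist_{z_n})$ on $B_r(x)$ given by Theorem \ref{cor:distance coordinate}. By Proposition \ref{prop:injec}, $\Delta\dist_{z_l}\in L^\infty_{\loc}$ away from $z_l$, and $g^{ij}\in C^{0,\frac12}$. Hence the hypotheses of Corollary \ref{newcor} hold, and Theorem \ref{asnasi} gives the coordinate formula $\Delta=\sum g^{ij}\partial_i\partial_j+\sum(\Delta\phi_j)\partial_j$.

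\emph{Step 1: $W^{2,p}$ control of distance functions.} Take any $w$ at distance of order $\Injrad$ from $x$. Then $\psi=\dist_w$ satisfies $\Delta\psi\in L^\infty_{\loc}$ near $x$, so Corollary \ref{newcor} gives $\dist_w\circ\Phi^{-1}\in W^{2,p}_{\loc}$ for every $p<\infty$, with quantitative bounds. In particular $\partial_m(\dist_w\circ\Phi^{-1})\in W^{1,p}_{\loc}$.

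\emph{Step 2: expressing $g^{ij}$ through these derivatives.} I expect this to be the main obstacle. Since $|\nabla\dist_w|\equiv1$, the chain rule gives
\[
\sum_{i,j}g^{ij}\,\partial_i(\dist_w\circ\Phi^{-1})\,\partial_j(\dist_w\circ\Phi^{-1})=1 .
\]
I would choose $n(n+1)/2$ points $w_l$ such that the vectors $(\partial_i\dist_{w_l}\partial_j\dist_{w_l})_{i\le j}$ are linearly independent at $x$. At a point where the chart is almost orthonormal, $\partial\dist_w$ is essentially the unit direction toward $w$, so taking directions $e_i$ and $(e_i+e_j)/\sqrt2$, realized by points along extended geodesics via Lemma \ref{s9asasnrknnvkst}, does the job. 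The continuity from Corollary \ref{holdercontiangle} keeps this linear system invertible on a small ball. Cramer's rule then writes $g^{ij}$ as a rational function of the $\partial_m\dist_{w_l}\circ\Phi^{-1}$ with denominator bounded away from zero. It follows that $g^{ij}\in W^{1,p}_{\loc}$ for all $p$, and by Morrey embedding $g^{ij}\in C^{0,\alpha}_{\loc}$ for all $\alpha<1$. Consequently the Christoffel symbols are in $L^p_{\loc}$ (Remark \ref{remintegra}), and Proposition \ref{chrhess} yields $|\Hess_{\dist_{z_i}}|\in L^p_{\loc}$.

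\emph{Step 3: passing to harmonic charts.} Let $\tilde\Phi$ be the bi-Lipschitz harmonic chart of Corollary \ref{existence proof}. Apply Corollary \ref{newcor} to each $\tilde\phi_k$, which is harmonic, in the distance chart: $\tilde\phi_k\circ\Phi^{-1}\in W^{2,p}_{\loc}$, and $|\Hess_{\tilde\phi_k}|\in L^p_{\loc}$ because the Christoffel symbols are in $L^p_{\loc}$. This shows the metric coefficients in harmonic charts are $W^{1,p}\cap C^{0,\alpha}$. The density $f$ is then upgraded to $W^{1,p}_{\loc}$ from the identity \eqref{asahsjaoisjfaisjiajsijas} with $\Delta\tilde\phi_i=0$, which expresses $g^{ij}\partial_j f$ through derivatives of $g$.

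\emph{Step 4: the Morrey bound and the quantitative statement.} The $L^p$ Hessian bound gives $r^p\fint_{B_r}|\Hess_{\tilde\phi_i}|^p\le Cr^{p-pN/q}\|\Hess\|^p_{L^q}\to0$ as $r\to0$ (Remark \ref{rem:rescal}), so $r_H(Q,p,x)>0$. For uniformity under $\Injrad(B_1(x))\ge\delta$, every constant above (Proposition \ref{prop:injec}, \eqref{e:quantitative-C^{1/2}}, the choice of the $w_l$, the Calder\'on--Zygmund constant) depends only on $K,N,\delta,p$. If one wants to avoid tracking these, an alternative is a contradiction argument using Proposition \ref{asasaso} and Theorem \ref{hessconv}: pass to limits, which again have injectivity radius $\ge\delta$, and invoke the qualitative result there. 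The bi-Lipschitz estimates \eqref{neweq1} and \eqref{neweq2} follow from Theorem \ref{8as8ashas8ashsbasb}.
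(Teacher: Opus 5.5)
Your argument is correct, but it runs the Anderson--Cheeger trick in the opposite order from the paper.

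\textbf{The paper's route.} The paper never solves the linear system in a distance chart. It starts from the bi-Lipschitz harmonic chart $\tilde\Phi$ of Corollary \ref{existence proof}, which is already $C^{0,\frac12}$. It rescales so that $\tilde\Phi$ is almost an isometry at unit scale and the injectivity radius is large, and places the $\frac{n(n+1)}{2}$ points simply as $z_j=\tilde\Phi^{-1}(v_j)$ with $v_j\in\{\frac12 e_i,\frac12(e_i+e_j)\}$. It then applies Corollary \ref{newcor} \emph{in the harmonic chart} to $\dist_{z_j}$ and reads off $\tilde g^{kl}\in W^{1,p}_{\loc}$ directly from the algebraic system. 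The Morrey bound on $\Hess_{\tilde\phi_i}$ comes from interpolating \eqref{asa9sarsasproasn} with $L^q$. The distance-chart statement $|\Hess_{\dist_x}|\in L^p_{\loc}$ is derived only afterwards, as Corollary \ref{corangleimprove} via Corollary \ref{corhess1}.

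\textbf{Your route.} You solve the system in the distance chart of Theorem \ref{cor:distance coordinate}. There $g^{ii}\equiv 1$ already, so only the off-diagonal unknowns need extra points. You obtain $\Hess_{\dist_{z_i}}\in L^p_{\loc}$ first, and then transfer to harmonic coordinates through the Christoffel-symbol clause of Corollary \ref{newcor}, applied to the harmonic $\tilde\phi_k$.

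\textbf{Comparison.} Your order gives the distance-chart regularity without passing through harmonic charts, and it makes the transfer mechanism between chart types explicit. The cost is a more delicate placement of the points $w_l$. You need the first variation formula (Step 3 of Theorem \ref{cor:distance coordinate}) to realize the mixed directions at a small scale, and Lemma \ref{s9asasnrknnvkst} to push the points out to distance of order $\delta$. You also need an extra transfer step. The paper gets the point placement essentially directly from the almost-isometric harmonic chart.

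\textbf{Two side remarks.}
\begin{enumerate}
\item The contradiction argument you offer as a shortcut for the quantitative statement does not work as stated. Under mere pmGH convergence the harmonic radius is only upper semicontinuous (Corollary \ref{cor:upsem}). Theorem \ref{hessconv} only gives lower semicontinuity of $\|\Hess\|_{L^p}$, which is the wrong direction for producing $(1+\epsilon,p)$-charts on the approximating spaces. You would need $L^p$-strong convergence of Hessians and equicontinuity of $g^{ij}$, as in Proposition \ref{contharrad}. That presupposes the uniform $W^{2,q}$ bounds you wanted to avoid, so tracking constants, as the paper does, is the right route.
\item Upgrading the density via \eqref{asahsjaoisjfaisjiajsijas} is circular, because that formula is stated under the hypothesis $f\in W^{1,2}_{\loc}$. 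The paper (Corollary \ref{corimprovedmeas}) instead uses the distributional identity $\sum_i\partial_i(e^{-f}a_{ij})=0$ from the proof of Theorem \ref{asnasi}. This claim is not needed for the present theorem in any case.
\end{enumerate}
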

\begin{proof}
    This is done by our previous results together with an idea derived from the proof of \cite[proposition 1.2]{AC}.
    First, take a bi-Lipschitz harmonic chart $\widetilde{\Phi}$ on $B_r(x)$ obtained in Corollary \ref{existence proof}.
Considering the rescaled chart
$
    ( B_1(x), \widetilde \Phi^{r, x})$ on $X^{r, x}$,
and fixing a small number $\epsilon>0$,
with no loss of generality, we can assume that $\tilde \Phi(x)=\bo^n$, that $r=1$, that $\mathrm{Injrad}(B_1(x))$ is large, and that $B_1(x)$ is $(\epsilon, 1)$-Reifenberg flat. Let $\{e_1,\ldots, e_n\}$ be the standard orthonormal basis of $\dR^n$.
Consider the vectors
\begin{equation}
 \frac{1}{2}e_i\in\dR^n,\ 1\leq i\leq n,\quad \text{and}\quad  \frac{1}{2}(e_i+e_j) \in \mathbb{R}^n, \  1\leq i<j \leq n,
\end{equation} and relabel them as $v_j$ for $j \in \{1,2,\ldots, \frac{n (n + 1)}{2}\}$.
Let us take $z_j := (\widetilde{\Phi}^{r, x})^{-1}(v_j) \in B_1(x) (j=1,2,\ldots, \frac{n(n+1)}{2})$.
Then the following linear system viewed as $\tilde g^{kl}$ with coefficients $\frac{\partial \dist_{z_{j}}}{\partial \tilde \phi_{k}}$:
\begin{equation}
\sum_{k, l=1}^n\tilde g^{kl}\cdot \frac{\partial \dist_{z_{j}}}{\partial \tilde \phi_{k}}\cdot \frac{\partial \dist_{z_{j}}}{\partial \tilde \phi_{l}}=|\nabla \dist_{z_{j}}|^2=1,\quad j=1, 2, \ldots, \frac{n(n+1)}{2}
\end{equation}
can be solved algebraically, where $\tilde g^{ij} := g(\nabla \tilde \phi_i, \nabla \tilde \phi_j)$ is close to $\delta_{ij}$ near $x$ by construction.
Thus 
\begin{equation}\label{99snsnsurbsausarss}
\text{$\tilde g^{kl}$ has a rational expression by $\frac{\partial \dist_{z_{j}}}{\partial \tilde \phi_{m}}$.}    
\end{equation}
Recalling that Corollary \ref{existence proof} proves
$\tilde{g}^{ij}\in C^{0,\frac{1}{2}}_{\loc}$,  by Corollary \ref{newcor}, we have $\dist_{z_j}\circ \tilde \Phi^{-1} \in W^{2,p}_{\loc}$ because of Proposition \ref{prop:injec} and Theorem \ref{asnasi}. In particular, we have  $\frac{\partial \dist_{z_{j}}}{\partial \tilde \phi_{m}}=\frac{\partial (\dist_{z_j}\circ \tilde \Phi^{-1})}{\partial x_m}\circ \tilde \Phi \in W_{\loc}^{1,p}$. So it follows from \eqref{99snsnsurbsausarss}  that $\tilde g^{kl} \in W^{1,p}_{\loc}$. Recalling Remark \ref{remintegra},
this completes the proof of the first statement, where the desired Hessian bounds come from  (\ref{asa9sarsasproasn}) and the $L^p$-interpolation inequality:
$\|f\|_{L^q}\le \|f\|_{L^{p_0}}^{\theta}\|f\|_{L^{p_1}}^{1-\theta}$ for $\frac{1}{q}=\frac{\theta}{p_0}+\frac{1-\theta}{p_1}$.
Since the arguments above are already quantitative, we obtain the final statement.
\end{proof}
As a corollary of the above, we state the following which is a special case of Corollary \ref{corhess}.
\begin{corollary}\label{corhess1}
    Under the same setting as in Theorem \ref{thm:nonsmoothandssasasas}, if a function $\phi:U \to \mathbb{R}$ satisfies $\Delta_X \phi \in L^{\infty}_{\loc}$, then $|\mathrm{Hess}_{\phi}| \in L^p_{\loc}$ for any $p<\infty$.
\end{corollary}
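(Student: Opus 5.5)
The argument is local, so I fix $x\in U$ and work in a single harmonic chart around $x$. Theorem \ref{thm:nonsmoothandssasasas} provides a bi-Lipschitz harmonic chart $\tilde\Phi=(\tilde\phi_1,\ldots,\tilde\phi_n):B_r(x)\to\mathbb{R}^n$. Its data have the following regularity: $\tilde g^{ij}\in W^{1,q}_{\loc}\cap C^{0,\alpha}_{\loc}$ for every $q<\infty$, $\Delta\tilde\phi_i=0$, and $|\mathrm{Hess}_{\tilde\phi_i}|\in L^q_{\loc}$ for every $q<\infty$. Corollary \ref{c:hoelder-measure-density} shows that $\meas_X$ is locally Ahlfors $n$-regular there. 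Hence every hypothesis of Corollary \ref{newcor} is met, with $\Phi$ replaced by $\tilde\Phi$.

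Now let $\phi$ satisfy $\Delta_X\phi\in L^\infty_{\loc}$. Restricting to the ball via a good cut-off from (6) of Theorem \ref{hessthem}, we may assume $\phi\in D(\Delta,B_r(x))$. Corollary \ref{prop:reglp}(2), applied with $\alpha=N$ and any $p>N$, gives $\phi\in\mathrm{Lip}_{\loc}$, so in particular $\phi\in W^{1,q}_{\loc}$ for every $q$. Since $\Delta\phi\in L^q_{\loc}$ for every $q$, Corollary \ref{newcor} (through Theorem \ref{asnasi}) yields $\tilde\psi:=\phi\circ\tilde\Phi^{-1}\in W^{2,q}_{\loc}(\tilde\Phi(B_r(x)))$ for all $q<\infty$.

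The step that needs care is converting this back into a bound on $|\mathrm{Hess}_\phi|$ on $X$. Here I use the chart formula of Proposition \ref{chrhess}:
\[
\mathrm{Hess}_\phi=\sum_{i,j}\Big(\frac{\partial^2\phi}{\partial\tilde\phi_i\partial\tilde\phi_j}-\sum_k\Gamma^k_{ij}\frac{\partial\phi}{\partial\tilde\phi_k}\Big)d\tilde\phi_i\otimes d\tilde\phi_j .
\]
By Proposition \ref{deri}(2) applied twice, $\partial^2\phi/\partial\tilde\phi_i\partial\tilde\phi_j$ equals $\partial_i\partial_j\tilde\psi\circ\tilde\Phi$, which lies in $L^q_{\loc}$; the bi-Lipschitz property of $\tilde\Phi$ together with the comparability of measures from Proposition \ref{coarea123} carries the $L^q$ bound over from $\mathbb{R}^n$ to $X$. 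The first derivatives $\partial\phi/\partial\tilde\phi_k$ are bounded because $\phi$ is Lipschitz. The Christoffel symbols $\Gamma^k_{ij}$ are built from $\tilde g_{ij}$, which are bounded and uniformly elliptic, and from $\nabla\tilde g^{ij}$, which is controlled by $|\mathrm{Hess}_{\tilde\phi_i}|+|\mathrm{Hess}_{\tilde\phi_j}|$ (Remark \ref{remintegra}); hence $\Gamma^k_{ij}\in L^q_{\loc}$. Finally $|d\tilde\phi_i|\le C$.

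Putting these together gives $|\mathrm{Hess}_\phi|\in L^q_{\loc}$ on a neighborhood of each point. A covering argument over compact subsets of $U$ then gives $|\mathrm{Hess}_\phi|\in L^p_{\loc}(U)$ for every $p<\infty$. This is exactly the final assertion of Corollary \ref{newcor}, applied in the harmonic chart from Theorem \ref{thm:nonsmoothandssasasas}.
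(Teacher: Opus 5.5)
Your proposal is correct and is essentially the paper's argument. The paper's proof simply combines the harmonic chart of Theorem \ref{thm:nonsmoothandssasasas} (whose coordinate Hessians lie in $L^q_{\loc}$ for all $q$) with the final assertion of Corollary \ref{newcor}; you have unpacked that final assertion via Proposition \ref{chrhess} and the $L^q_{\loc}$ bound on the Christoffel symbols, and your use of Corollary \ref{prop:reglp}(2) to get $\phi\in\mathrm{Lip}_{\loc}$ neatly supplies the $W^{1,p}_{\loc}$ hypothesis and controls the first-order term (the cut-off step is unnecessary, since $\phi\in D_{\loc}(\Delta,U)$ already gives $\phi\in D(\Delta,B_r(x))$ on small balls).
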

\begin{proof}
    This is a direct consequence of Theorem \ref{thm:nonsmoothandssasasas} and Corollary \ref{newcor}.
\end{proof}
Corollary \ref{corhess1} implies the following.
\begin{corollary}[Improved regularity of angle]\label{corangleimprove}
    Let $X$ be an $\RCD(K, N)$ space for some $K \in \mathbb{R}$ and some $N \in [1, \infty)$, and let $x \in X$ with $\mathrm{Injrad}(B_1(x))\ge \delta>0$. Then 
    $
        |\mathrm{Hess}_{\dist_x}| \in L^p(B_{\frac{3\delta}{4}}(x)\setminus \bar B_{\frac{\delta}{4}}(x))$ for all $p \in [1, \infty)$.
    In particular, for all $y, z \in B_1(x)$, the angle $w \mapsto \angle yw z$ is in $W^{1,p}_{\loc}$, thus $C^{0,\alpha}_{\loc}$-continuous, away from $\mathrm{Cut}(y)\cup \mathrm{Cut}(z) \cup \{y\} \cup \{z\}$ for any $\alpha <1$.
\end{corollary}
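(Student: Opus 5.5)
The approach is to derive the statement from Corollary \ref{corhess1} applied to the distance function $\dist_x$ on the open annulus $U:=B_{\frac{5\delta}{6}}(x)\setminus \bar B_{\frac{\delta}{6}}(x)$. First, $U$ has positive injectivity radius: since $U\subset B_1(x)$, we have $\Injrad(U)\ge \Injrad(B_1(x))\ge\delta>0$. Second, Proposition \ref{prop:injec} gives $\dist_x\in D(\Delta,\cdot)$ on $B_{\frac{2\delta}{3}}(x)\setminus \bar B_\tau(x)$ with $\|\Delta\dist_x\|_{L^\infty}\le C(K,N,\delta,\tau)$. To reach the outer radius $\frac{3\delta}{4}$, which is larger than $\frac{2\delta}{3}$, I would rerun that proof with $\frac{2\delta}{3}$ replaced by any radius $<\delta$. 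The needle argument only uses that bounded needles start in $\Cut(x)$, and $\Cut(x)$ lies outside $B_\delta(x)$. Alternatively one could rescale so that the hypothesis applies with a slightly larger $\delta$. Either way, $\Delta\dist_x\in L^\infty_{\loc}(U)$.

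Corollary \ref{corhess1} then gives $|\Hess_{\dist_x}|\in L^p_{\loc}(U)$ for all $p<\infty$. Because $B_{\frac{3\delta}{4}}(x)\setminus \bar B_{\frac{\delta}{4}}(x)$ has compact closure in $U$, covering it by finitely many balls gives the global $L^p$ bound. The case $p=1$ follows from Hölder's inequality, since the measure of the region is finite.

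For the angle statement, fix $w_0\notin \Cut(y)\cup\Cut(z)\cup\{y,z\}$. By definition $\angle ywz=\cos^{-1}\langle\nabla\dist_y,\nabla\dist_z\rangle(w)$, up to the Lemma \ref{s9asasnrknnvkst} normalization. By Lemma \ref{s9asasnrknnvkst} I replace $y,z$ by points $y',z'$ on the geodesics from $w_0$, chosen so that $w_0$ lies in the annulus of $y'$ and of $z'$ where the first part applies. This requires positive injectivity radius near $y',z'$, which holds inside $B_1(x)$ after shrinking. Then $|\Hess_{\dist_{y'}}|,|\Hess_{\dist_{z'}}|\in L^p$ near $w_0$, and the chain rule with $|\nabla\langle\nabla f,\nabla h\rangle|\le|\Hess_f||\nabla h|+|\Hess_h||\nabla f|$ puts $\langle\nabla\dist_{y'},\nabla\dist_{z'}\rangle$ in $W^{1,p}_{\loc}$. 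Corollary \ref{prop:reglp}(1) with $p>N$, together with \eqref{bg}, upgrades this to $C^{0,1-N/p}$. Letting $p\to\infty$ gives every $\alpha<1$.

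The main obstacle is composing with $\cos^{-1}$. The inner product is $W^{1,p}\cap C^{0,\alpha}$, but $\cos^{-1}$ is not Lipschitz at $\pm1$. The statement should therefore be read as concerning the cosine, which matches how Corollary \ref{holdercontiangle} treats angles, or it should be restricted away from degenerate angles where $\cos^{-1}$ is smooth. The other delicate point is the identity \eqref{asioajsransrasln} with base points moved, which Lemma \ref{s9asasnrknnvkst} supplies.
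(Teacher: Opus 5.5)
Your treatment of the first assertion is the paper's argument. The paper obtains the corollary directly from Corollary \ref{corhess1}, with $\Delta\dist_x\in L^\infty$ on the annulus supplied by Proposition \ref{prop:injec}. Your remark that the needle argument gives this bound on $B_r(x)\setminus\bar B_\tau(x)$ for every $r<\delta$ is the right way to reach the outer radius $\frac{3\delta}{4}$: bounded needles start in $\Cut(x)$, which misses $B_\delta(x)$. Your rescaling alternative does not help, since the ratio of the annulus radii to $\delta$ is scale invariant.

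The genuine gap is in the angle part. Lemma \ref{s9asasnrknnvkst} is an identity at the single point $w_0$. The point $y'=\gamma_y(t)$ lies on the geodesic from $y$ to $w_0$, but for nearby $w\neq w_0$ the geodesic from $y$ to $w$ does not pass through $y'$. Hence $\langle\nabla\dist_y,\nabla\dist_z\rangle$ and $\langle\nabla\dist_{y'},\nabla\dist_{z'}\rangle$ are different functions on every neighbourhood of $w_0$. Already in $\mathbb{R}^n$, $\nabla\dist_y(w)=\frac{w-y}{|w-y|}\neq\frac{w-y'}{|w-y'|}$ off the line through $y,y'$. So Sobolev or H\"older regularity of the second function tells you nothing about the first.

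Instead, apply to $\dist_y$ itself the extension of Proposition \ref{prop:injec} you already noticed. The Laplacian lower bound only needs the bounded needles through points of a neighbourhood $V$ of $w_0$ to start a definite distance away. This holds whenever $\dist_X(V,\Cut(y)\cup\{y\})>0$, which is how ``away from $\Cut(y)$'' has to be read. Then $\Delta\dist_y\in L^\infty(V)$, and Corollary \ref{corhess1} on $V\subset B_1(x)$ gives $|\Hess_{\dist_y}|\in L^p_{\loc}(V)$. The same holds for $z$, and no change of base point is needed.

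Also, the $\cos^{-1}$ issue is not an obstacle, so you need not weaken the statement to the cosine. Put $e=\nabla\dist_y$, $f=\nabla\dist_z$ and $u=\langle e,f\rangle$. Since $|e|=|f|=1$, you have $\Hess_{\dist_y}(e,\cdot)=\frac12 d|\nabla\dist_y|^2=0$, and likewise for $z$. Hence $du=\Hess_{\dist_y}(f\pm e,\cdot)+\Hess_{\dist_z}(e\pm f,\cdot)$ for either choice of sign. Using $|e-f|\,|e+f|=2\sqrt{1-u^2}$ and $|e-f|^2+|e+f|^2=4$, this gives
\[
|du|\le\bigl(|\Hess_{\dist_y}|+|\Hess_{\dist_z}|\bigr)\min\{|e-f|,|e+f|\}\le\sqrt2\,\bigl(|\Hess_{\dist_y}|+|\Hess_{\dist_z}|\bigr)\sqrt{1-u^2}.
\]
Replace $\cos^{-1}$ by a Lipschitz function equal to it on $[-1+\epsilon,1-\epsilon]$ and constant outside. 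The chain rule then gives a gradient bound $\sqrt2(|\Hess_{\dist_y}|+|\Hess_{\dist_z}|)$ uniform in $\epsilon$. Letting $\epsilon\to0$ shows that the angle itself lies in $W^{1,p}_{\loc}$, hence in $C^{0,\alpha}_{\loc}$ for every $\alpha<1$.
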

Let us prove the remaining part of (1) of Theorem \ref{theorem:inj}.
\begin{corollary}[Improved regularity of measure]\label{corimprovedmeas}
    Let $X$ be an $\RCD(K, N)$ space for some $K \in \mathbb{R}$ and some $N \in [1, \infty)$ of essential dimension $n$, and let $U\subset X$ be open with $\mathrm{Injrad}(U)>0$. Then 
    $\meas_X=e^{-f}\di \haus^n$ on $U$ for some $f \in W^{1,p}_{\loc}(U)\cap C^{0,\alpha}_{\loc}(U)$ for all $p<\infty$ and $\alpha \in (0,1)$.
\end{corollary}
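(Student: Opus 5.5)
We first locate the density and its rough regularity, then upgrade it through the Laplacian of a harmonic chart. By Corollary \ref{c:hoelder-measure-density}, $\meas_X=e^{-f}\di\haus^n$ on $U$, with $f\in C^{0,\beta(n)}_{\loc}$ and in particular $f\in L^\infty_{\loc}$. Fix $x\in U$. By Theorem \ref{thm:nonsmoothandssasasas}, take a bi-Lipschitz harmonic chart $\tilde\Phi=(\tilde\phi_1,\dots,\tilde\phi_n):B_r(x)\to\mathbb{R}^n$. In this chart $\tilde g^{ij}\in W^{1,p}_{\loc}\cap C^{0,\alpha}_{\loc}$ for all $p<\infty$ and $\alpha<1$.

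The key identity is the divergence-form equation encoded in Step 1 of the proof of Theorem \ref{asnasi}. Set $\rho:=\sqrt{\det(\tilde g^{kl})}^{-1}\,e^{-f}$, composed with $\tilde\Phi^{-1}$. Since $\Delta\tilde\phi_i=0$, the vector fields $V_i=\sum_j \tilde g^{ij}\rho\,\partial_j$ are divergence free in the distributional sense on $\tilde\Phi(B_r(x))$:
\[
\sum_j\partial_j\bigl(\tilde g^{ij}\rho\bigr)=0,\qquad i=1,\dots,n .
\]
Put $h:=\tilde g^{-1}$-weighted version, namely $w:=\rho$, and let $a^{ij}:=\tilde g^{ij}\in W^{1,p}_{\loc}\cap C^{0,\alpha}_{\loc}$. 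Expanding formally gives $\sum_j a^{ij}\partial_j w=-w\sum_j\partial_j a^{ij}$. Because $a^{ij}$ is uniformly elliptic, this would express $\nabla w=-w\,a^{-1}(\operatorname{div}a)\in L^p_{\loc}$.

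The main obstacle is that $w$ is a priori only $C^{0,\beta}\cap L^\infty$, so the product rule must be justified distributionally. We will handle it by mollification. Write $w_\epsilon=w*\eta_\epsilon$. The equation gives
\[
\sum_j a^{ij}\partial_j w_\epsilon=-w_\epsilon\,\partial_j a^{ij}+R^i_\epsilon,
\]
where $R^i_\epsilon$ is a Friedrichs-type commutator $[\partial_j(a^{ij}\cdot),*\eta_\epsilon]w$ plus a term $(w*\eta_\epsilon-w)\partial_ja^{ij}$. Since $a\in W^{1,p}$ and $w\in L^\infty$, the DiPerna--Lions commutator lemma gives $R_\epsilon\to0$ in $L^p_{\loc}$. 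Inverting $a$ then yields a uniform $L^p_{\loc}$ bound on $\nabla w_\epsilon$, so $w\in W^{1,p}_{\loc}$ for every $p<\infty$.

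As an alternative route, one could apply Proposition \ref{noncocll} with $\phi=\tilde\phi_i$, but that needs $f\in W^{1,2}$ a priori, so it would be circular here.

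Finally, we recover $f$ from $w$ and transfer the regularity back to $X$. We have $e^{-f}=w\sqrt{\det\tilde g}$ composed with $\tilde\Phi$. Here $\sqrt{\det\tilde g}\in W^{1,p}_{\loc}\cap C^{0,\alpha}_{\loc}$ is bounded away from $0$, and $w$ is bounded above and below because $f\in L^\infty_{\loc}$. Hence $f\in W^{1,p}_{\loc}(\tilde\Phi(B_r(x)))$, and Morrey's embedding with $p>n$ gives $C^{0,\alpha}$ for every $\alpha<1$. Lemma \ref{sobolevcomp} and Proposition \ref{deri}, applied to the bi-Lipschitz chart with comparable measures, transfer $f\in W^{1,p}_{\loc}$ back to $B_r(x)\subset X$. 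Bi-Lipschitz invariance transfers the $C^{0,\alpha}$ regularity. Covering $U$ by such balls concludes the proof.
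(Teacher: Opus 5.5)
Your proposal is correct and follows the paper's proof: the divergence-free identity $\sum_j\partial_j(\tilde g^{ij}\rho)=0$ from Step 1 of Theorem \ref{asnasi} (using $\Delta\tilde\phi_i=0$) is solved algebraically for the gradient of the density; its right-hand side lies in $L^p_{\loc}$ because $\tilde g^{ij}\in W^{1,p}_{\loc}$ (Theorem \ref{thm:nonsmoothandssasasas}, Corollary \ref{corhess1}); and the regularity is transferred back through the bi-Lipschitz chart via Proposition \ref{deri}. The one addition is that you justify the distributional product rule, which the paper uses without comment; your commutator argument works, though it also suffices to test the equation against $\sum_i \tilde g_{ik}\psi$ with $\psi\in C^\infty_c$, which is admissible since $\tilde g_{ik}\in W^{1,p}_{\loc}\cap L^\infty_{\loc}$ and $\tilde g^{ij}\rho\in L^\infty_{\loc}$.
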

\begin{proof}
    Fix $x \in U$, find a bi-Lipschitz harmonic chart $\Phi=(\phi_1,\ldots, \phi_n)$ on a neighborhood $U$ of $x$, and let 
     $
    A =(a_{ij})_{1\leq i,j\leq n}:= \sqrt{\det(g^{ij})_{ij}}^{-1} \cdot (g^{lm})_{lm}   
    $.
    Corollary \ref{c:hoelder-measure-density} allows us to write $
    \meas_X=e^{-f}\di \haus^n$ on $U$ for some $f \in C^{0,\beta(n)}_{\loc}$. Then Theorem \ref{asnasi} shows
    \begin{equation}
        \sum_{i=1}^n\partial_i(e^{-f} a_{ij})=0,\quad \text{in the distributional sense on $\Phi(U)$ for any $j$.}
    \end{equation}
    Thus,     letting $(a^{ij})_{ij} := A^{-1}$,  we know
    \begin{equation}\label{nasuasurasaosriasn}
        \partial_ke^{-f}=-e^{-f}\sum_{i, j=1}^n a^{jk} \partial_i a_{ij},\quad \text{in the distributional sense on $\Phi(U)$ for any $k$.}
    \end{equation}
    Since the right-hand-side of (\ref{nasuasurasaosriasn}) is in $L^p_{\loc}$ because of Corollary \ref{corhess1}, recalling Remark \ref{compsob}, we conclude, where we immediately used Proposition \ref{deri}.
\end{proof}
We are ready to prove Corollary \ref{corhess}.
\begin{proof}[Proof of Corollary \ref{corhess}.]
Assume $\Delta_X \phi \in L^p_{\loc}$. Take $x \in U$ and find a harmonic chart $\tilde \Phi=(\tilde \phi_1, \ldots, \tilde \phi_n):B_r(x) \to \mathbb{R}^n$  obtained in Theorem \ref{thm:nonsmoothandssasasas}. Since Corollary \ref{newcor} proves $\frac{\partial (\phi\circ \tilde \Phi^{-1})}{\partial x_i} \in W^{1,p}_{\loc}$, a Sobolev inequality shows $\frac{\partial \phi}{\partial \tilde \phi_i} \in L^{q}_{\loc}$, for some $q>p$. Thus $\Gamma_{ij}^k\cdot \frac{\partial \phi}{\partial \tilde{\phi}_k}\in L^{p}_{\loc}$ because $\Gamma_{ij}^k \in L^{t}_{\loc}$ for any $t<\infty$. Since  $\phi \circ \tilde \Phi^{-1} \in W^{2,p}_{\loc}$ by Corollary \ref{newcor}, it follows from Proposition \ref{chrhess} that $\mathrm{Hess}_{\phi} \in L^p_{\loc}$.

Next, conversely, assume $|\mathrm{Hess}_{\phi}| \in L^p_{\loc}$. Since $|\nabla|\nabla \phi|| \in L^{p}_{\loc}$, a Sobolev inequality \cite{HK} yields $|\nabla \phi| \in L^q_{\loc}$ for some $q=q(p, K, N)>p$. In particular $\langle \nabla f, \nabla \phi\rangle \in L^p_{\loc}$ because of Corollary \ref{corimprovedmeas}, where $f$ is the density function. Thus we conclude $\Delta_X \phi \in L^p_{\loc}$ because of Proposition \ref{noncocll}. 
\end{proof}

As another corollary of Theorem \ref{thm:nonsmoothandssasasas}, 
we can directly prove an analogue of \cite[Theorems 0.1 and 0.2]{AC} in the RCD setting as follows, where we omit the details.

\begin{corollary}[$C^{1, \alpha}$-estimate]\label{asjasoajspa}
Let $X$ be an $\RCD(K, N)$ space for some $K \in \mathbb{R}$ and some $N \in [1, \infty)$, of essential dimension $n$ satisfying 
    $
         \Injrad(X) \ge \delta>0$, and $\meas_X(X)=1.
$
Then for all $Q>1$, $p \in [1, \infty)$ and $\alpha \in (0,1)$, there exists a finite atlas of harmonic coordinate charts $F_{\nu}:U_{\nu} \to \mathbb{R}^n$, such that the following holds for some $C_i=C_i(K, N,  Q, p, \alpha)>0$.
\begin{enumerate}
\item The domains $U_{\nu}$ are of the form $U_{\nu}=F_{\nu}^{-1}(B_{r_h}(\bo^n))$ with a quantitative $(Q, p)$-harmonic radius estimate
$
r_H \ge C_1 \cdot \delta>0.
$
In addition, the domains $F_{\nu}^{-1}(B_{r_H}(\bo^n))$ cover $X$.
\item The overlaps $F_{\mu \nu}=F_{\mu} \circ F_{\nu}^{-1}$ are controlled in $C^{1, \alpha}$ topology in the sense:
$
\|F_{\mu \nu}\|_{C^{1, \alpha}} \le C_2$.
\item The metric coefficients $g_{ij}=g(\frac{\partial}{\partial u_i}, \frac{\partial}{\partial u_j})$ in the charts $F_{\nu}$ satisfy:
$Q^{-1}(\delta_{ij})_{ij} \le (g_{ij})_{ij} \le Q(\delta_{ij})_{ij}$ and $ r_h^{\alpha} \cdot [g_{ij}]_{C^{0,\alpha}} \le Q-1.$
\item The squared distance function $\dist_X^2: X \times X \to \mathbb{R}$ has a $C^{1, \alpha}$-bound $C_4$ in charts as above, whenever $\dist_X <\frac{\delta}{2}$.
\end{enumerate}
\end{corollary}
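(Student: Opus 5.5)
The plan is to follow the scheme of Anderson--Cheeger, using Theorem \ref{thm:nonsmoothandssasasas} as the engine and replacing the smooth inputs with the quantitative statements already proved.

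\textbf{Charts, overlaps and metric bounds.} Fix $Q>1$, $p$ and $\alpha$. By the final quantitative statement of Theorem \ref{thm:nonsmoothandssasasas}, applied after rescaling $X^{\delta,x}$ (the $(Q,p)$-harmonic radius is scale invariant), every $x\in X$ carries a $(1+\epsilon,p)$-harmonic chart on $B_{r}(x)$ with $r\ge C_1\delta$, $C_1=C_1(K,N,Q,p)$. This chart also satisfies \eqref{neweq1} and \eqref{neweq2}.
\begin{enumerate}
\item Compactness and the normalization $\meas_X(X)=1$, together with Bishop--Gromov, bound the number of balls in a maximal $r/4$-net. This yields a finite atlas, and \eqref{neweq1} shows that the preimages $F_\nu^{-1}(B_{r_H}(\bo^n))$ cover $X$. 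That gives (1).
\item For (3), take $p>n$ large. The Morrey embedding gives $\|g^{ij}\|_{W^{1,p}}\lesssim$ a Hessian bound via Remark \ref{remintegra}, hence $C^{0,1-n/p}$ control on $g^{ij}$. The inverse matrix $g_{ij}$ is rational in the $g^{ij}$. Choosing $\epsilon$ small and $p$ with $1-n/p\ge\alpha$ then yields $Q^{-1}\le g\le Q$ and $r_h^\alpha[g_{ij}]_{C^{0,\alpha}}\le Q-1$.
\item For (2), each component of $F_\mu\circ F_\nu^{-1}$ is a harmonic function expressed in the chart $F_\nu$. Corollary \ref{newcor}, with $\Delta\phi_i=0$ and $C^0$ control on $g^{ij}$, gives a uniform $W^{2,p}$ bound through the local Calder\'on--Zygmund inequality \eqref{czlocal}. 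Sobolev embedding then gives a $C^{1,\alpha}$ bound.
\end{enumerate}

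\textbf{The distance function.} For (4), fix $y$ and a point $z$ with $\dist_X(y,z)<\delta/2$.
\begin{enumerate}
\item Away from the diagonal, Proposition \ref{prop:injec} gives a quantitative $L^\infty$ bound on $\Delta \dist_y$ on annuli $\delta/4<\dist_y<3\delta/4$. Corollary \ref{newcor} then gives $\dist_y\circ F_\nu^{-1}\in W^{2,p}$ with uniform bounds, hence $C^{1,\alpha}$ in the chart variable.
\item Near the diagonal, use $\dist_X^2$. Extend geodesics from $z$ backward to a point $z'$ with $\dist_X(z,z')\approx\delta/4$, which is possible since $\Injrad\ge\delta$. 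Lemma \ref{s9asasnrknnvkst} identifies $\nabla\dist_z$ with $\nabla\dist_{z'}$ along geodesics. Alternatively, work on the scale $\dist_X(y,z)$ via rescaling: $\dist^2$ has bounded Laplacian near the diagonal by Laplacian comparison (Proposition \ref{prop:injec} rescaled), with constants $C/\dist$ absorbed by the factor $\dist$ in $\nabla\dist^2 = 2\dist\nabla\dist$.
\item Joint regularity in $(y,z)$ then follows by symmetry, treating each variable separately with uniform bounds.
\end{enumerate}

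\textbf{Main obstacle.} I expect the hardest step to be item (4) near the diagonal, where $\Delta\dist_z$ blows up like $1/\dist_z$. The first approach I would try is to show that $\dist_z^2$ has Laplacian bounded by $C(K,N)$ in the weak sense on $B_{\delta/2}(z)$. The upper bound comes from Laplacian comparison. The lower bound comes from the needle argument of Proposition \ref{prop:injec}, since on $B_{\delta/2}(z)$ the cut-locus term vanishes. Corollary \ref{newcor} then applies directly to $\dist_z^2$ on a full ball, giving $W^{2,p}$ and hence $C^{1,\alpha}$ bounds uniformly in $z$.
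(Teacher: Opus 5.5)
Your overall route is the one the paper intends. The paper presents this corollary as a direct consequence of Theorem \ref{thm:nonsmoothandssasasas} in the style of Anderson--Cheeger and omits the details. Items (1)--(3) go through essentially as you describe.

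Your treatment of the diagonal in (4) is also sound. On $B_{2\delta/3}(z)$ one has $|\Delta\dist_z^2|\le C$: the upper bound comes from Laplacian comparison, and the lower bound from the needle argument of Proposition \ref{prop:injec}, because the needles start in $\Cut(z)$ at distance at least $\delta$ from $z$. Corollary \ref{newcor} then gives uniform $W^{2,p}$ bounds for $\dist_z^2$ in charts. Discard your first alternative in (4): the extension point $z'$ depends on $y$, so Lemma \ref{s9asasnrknnvkst} identifies gradients only pointwise and produces no single function with bounded Laplacian near $y$.

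Two minor caveats, both inherited from the statement. Finiteness of the atlas uses compactness of $X$, which is tacitly assumed and is not implied by $\meas_X(X)=1$. After rescaling by $\delta$ the curvature bound becomes $K\delta^2$, so $C_1$ is uniform only for $\delta$ bounded above.

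The genuine gap is the last step of (4), where you say joint regularity ``follows by symmetry''. Joint $C^{1,\alpha}$ control of $u(y,z):=\dist_X^2(F_\mu^{-1}(y),F_\nu^{-1}(z))$ requires bounding the cross variation $|\partial_y u(y,z)-\partial_y u(y,z')|$. Neither separate $C^{1,\alpha}$ bounds nor symmetry give this; symmetry only turns it into the variation of $\partial_z u$ in $y$, which is the same problem. Interpolation does not suffice either. Setting $w:=u(\cdot,z)-u(\cdot,z')$, you have $|w|\le C|z-z'|$ and $[\nabla w]_{C^{0,\alpha}}\le C$, which only yields $|\nabla w|\le C|z-z'|^{\alpha/(1+\alpha)}$. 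That is joint $C^{1,\beta}$ for $\beta<1/2$ at best.

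The fix is to keep the second-order information you already have.
\begin{enumerate}
\item For every $p$, Corollary \ref{newcor} bounds $\|D^2_{yy}u(\cdot,z)\|_{L^p}$ and $\|D^2_{zz}u(y,\cdot)\|_{L^p}$ uniformly in the frozen variable.
\item By Fubini, the flat Laplacian of $u$ on $\mathbb{R}^{2n}$ therefore lies in $L^p_{\loc}$.
\item Since $u$ is Lipschitz, the Euclidean Calder\'on--Zygmund estimate gives $u\in W^{2,p}_{\loc}(\mathbb{R}^{2n})$, including the mixed derivatives $\partial_{y_i}\partial_{z_k}u$.
\item Morrey's embedding with $p>2n$ then gives the joint $C^{1,\alpha}$ bound for every $\alpha<1$.
\end{enumerate}
Alternatively, run Corollary \ref{newcor} on $X\times X$ with the product harmonic chart $F_\mu\times F_\nu$, using that $\Delta_{X\times X}\dist_X^2=\Delta_y\dist_X^2+\Delta_z\dist_X^2$ is bounded on $\{\dist_X<2\delta/3\}$.
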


Finally, let us finish the proof of Theorem \ref{theorem:inj}
\begin{proof}[Proof of (2) of Theorem \ref{theorem:inj}]
Let $X_i$ be a sequence of compact $\RCD(K, N)$ spaces for some $K \in \mathbb{R}$ and some $N \in [1, \infty)$, of essential dimension $n$ with $\diam (X_i)\le D$, $\meas_{X_i}(X_i)=1$ and
$
\Injrad(X_i)\ge \delta.
$
After passing to a subsequence, with no loss of generality, we can assume $X_i \stackrel{\mathrm{mGH}}{\to} X$ for some compact $\RCD(K, N)$ space $X$ with $\mathrm{diam}(X)\le D$. 
Recalling that (\ref{asas8hasbasi}) was a quantitative estimate, (\ref{e:first-variation}) yields $\dim(X) \ge n$. On the other hand, by (2) of Theorem \ref{rectifi}, we have $\dim(X) \le n$, thus $\dim(X)=n$. 
Then Theorem \ref{thm:nonsmoothandssasasas} allows us to improve the GH convergence of $X_i$ to $X$ to the $C^{0,\alpha}$-convergence of Riemannian metrics for any $\alpha<1$, in particular, $X_i$ is diffeomorphic to $X$ for any sufficiently large $i$. Moreover, Proposition \ref{asasaso} yields $\mathrm{Injrad}(X)\ge \delta$. 
This observations shows the desired compactness. The final statement about finite diffeomorphism types is justified by the standard contradiction argument together with the above. Thus we conclude.
\end{proof}
As an independent interest, let us provide a remark about another existence result from the point of view of the existence of an isometric embedding.
\begin{remark}[Existence via isometric immersion]\label{ismee}  
    Let $X$ be an $\RCD(K, N)$ space for some $K \in \mathbb{R}$ and some $N \in [1, \infty)$, of essential dimension $n$, and let $U$ be an open subset of $X$. Assume that there exists a  map $\Phi=(\phi_1, \ldots, \phi_n):U \to \mathbb{R}^n$ such that  $\phi_i \in D_{\loc}(\Delta, U)$, $\Delta \phi_i \in W^{1,2}_{\loc}(U)$, $|\nabla \Delta \phi_i| \in L^{\infty}_{\loc}(U)$
     and $\Phi^*g_{\mathbb{R}^n}=g$ hold on $U$. 
    Then, by the proof of \cite[Lemma 4.6]{ZHuang},    
    we have 
    $|\Hess_{\phi_i}|\in L^{\infty}_{\loc}(U)$. 
    In particular, for all $x \in U$ and $\epsilon \in (0, 1)$, there exists $r>0$ such that after relabeling if necessary,  the map $(\phi_1, \ldots, \phi_n):B_r(x) \to \mathbb{R}^n$ gives a $C^{1,1}$-chart around $x$ and it gives an $\epsilon r$-GHA to $B_r(\Phi(x))$. Notice that the proof of \cite[Theorem 4.6]{Cheeger-Colding-III} works to get the Ahlfors regularity with a H\"older regularity of the density function. Therefore we can apply Theorem \ref{thm:morrey} to this setting, and we can also apply the same way as in the proof of Corollary \ref{corimprovedmeas} for the density to be locally Lipschitz. Note that quantitative versions of the results above are also true by the proofs, though their precise statements are skipped for simplicity.
\end{remark}

Finally, let us apply our results to metric measure spaces having mixed curvature bounds (recall Definition \ref{defmixed} and (3) of Lemma \ref{injeclemma}). To do so, we recall the following elemental lemmas.
\begin{lemma}\label{concave}
For any $\tau$-concave function $\phi:[-R, R] \to \mathbb{R}$ with $|\phi| \le L$ on $[-R, R]$, we see that $\phi$ is $\left(\frac{10L}{R}+\tau|R|\right)$-Lipschitz on $[-\frac{R}{2}, \frac{R}{2}]$. 
\end{lemma}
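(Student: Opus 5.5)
We need to prove: a $\tau$-concave $\phi$ on $[-R,R]$ with $|\phi|\le L$ is $(\frac{10L}{R}+\tau R)$-Lipschitz on $[-\frac R2,\frac R2]$. Here $\tau$-concave means $\phi''\le\tau$ in the distributional/barrier sense, equivalently $\psi(t):=\phi(t)-\frac{\tau}{2}t^2$ is concave on $[-R,R]$.

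The plan is to reduce to the concave case and then use monotonicity of difference quotients.

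\textbf{Step 1 (reduction).} Set $\psi:=\phi-\frac{\tau}{2}t^2$, which is concave on $[-R,R]$. Then
\[
|\psi|\le L+\frac{\tau}{2}R^2 \quad \text{on } [-R,R].
\]
For $s<t$ in $[-\frac R2,\frac R2]$ we write
\[
\phi(t)-\phi(s)=\psi(t)-\psi(s)+\frac{\tau}{2}(t^2-s^2),
\]
and the last term is bounded by $\frac{\tau}{2}(|t|+|s|)|t-s|\le \frac{\tau R}{2}|t-s|$.

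\textbf{Step 2 (concave slope bound).} For a concave $\psi$, difference quotients are nonincreasing in each endpoint. Hence for $-\frac R2\le s<t\le \frac R2$,
\[
\frac{\psi(-\frac R2)-\psi(-R)}{R/2}\ \ge\ \frac{\psi(t)-\psi(s)}{t-s}\ \ge\ \frac{\psi(R)-\psi(\frac R2)}{R/2}.
\]
Both outer quotients are bounded in absolute value by $\frac{2\sup|\psi|}{R/2}=\frac{4}{R}\bigl(L+\frac{\tau}{2}R^2\bigr)=\frac{4L}{R}+2\tau R$. This gives a Lipschitz constant for $\psi$.

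\textbf{Step 3 (the constant).} Combining Steps 1 and 2 naively yields the constant $\frac{4L}{R}+\frac52\tau R$. This exceeds the claimed $\tau R$ coefficient, so the main point is to sharpen the bookkeeping of the quadratic term rather than accept this crude bound.

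The fix is to avoid bounding $|\psi|$ by $\sup|\phi|+\frac{\tau}{2}R^2$, and instead subtract the quadratic directly inside the difference quotients:
\[
\frac{\psi(-\frac R2)-\psi(-R)}{R/2}=\frac{\phi(-\frac R2)-\phi(-R)}{R/2}+\frac{\tau}{2}\cdot\frac{R^2-\frac{R^2}{4}}{R/2}
=\frac{\phi(-\frac R2)-\phi(-R)}{R/2}+\frac34\tau R.
\]
This quotient is at most $\frac{4L}{R}+\frac34\tau R$.

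Symmetrically, the quadratic contributes $-\frac34\tau R$ to the right-hand quotient, so
\[
\frac{\psi(R)-\psi(\frac R2)}{R/2}\ \ge\ -\frac{4L}{R}-\frac34\tau R.
\]
Therefore $\bigl|\frac{\psi(t)-\psi(s)}{t-s}\bigr|\le \frac{4L}{R}+\frac34\tau R$.

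Adding back the term $\frac{\tau}{2}(t+s)$, which has absolute value at most $\frac{\tau R}{2}$, gives
\[
\left|\frac{\phi(t)-\phi(s)}{t-s}\right|\le \frac{4L}{R}+\frac54\tau R.
\]

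\textbf{Step 4 (reaching $\tau R$).} To bring the coefficient of $\tau R$ down to the claimed $1$, I would recentre the quadratic at the midpoint $\frac{s+t}{2}$ rather than at $0$. That is, use $\psi_c:=\phi-\frac{\tau}{2}(t-c)^2$ with $c=\frac{s+t}{2}$, which is still concave. With this choice the added term $\frac{\tau}{2}\bigl((t-c)^2-(s-c)^2\bigr)$ vanishes, and redoing Step 3 with the recentred quadratic gives a coefficient at most $\tau R$. Since $10L/R$ absorbs the looser $L$-term, this yields the claimed bound $\frac{10L}{R}+\tau R$.
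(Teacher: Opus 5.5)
Your Steps 1--3 are correct, but Step 4 fails. What you have actually proved is the constant $\frac{4L}{R}+\frac54\tau R$, which is not bounded by $\frac{10L}{R}+\tau R$ once $\tau R^2>24L$; the slack in the $L$-term cannot absorb the excess $\frac14\tau R$. Recentring at $c=\frac{s+t}{2}$ changes $\psi$ only by an affine function, $\psi_c(u)=\psi(u)+\tau c\,u-\frac{\tau}{2}c^2$. So every chord slope of $\psi_c$ is the corresponding chord slope of $\psi$ plus the same constant $\tau c$, and the monotonicity chain you would use is literally the one from Step 3. Explicitly,
\[
\frac{\psi_c(-\frac R2)-\psi_c(-R)}{R/2}=\frac{\phi(-\frac R2)-\phi(-R)}{R/2}+\frac34\tau R+\tau c,
\]
so you again get $\bigl|\frac{\phi(t)-\phi(s)}{t-s}\bigr|\le\frac{4L}{R}+\frac34\tau R+\tau|c|$. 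This reaches $\frac54\tau R$ when $[s,t]$ lies near $\pm\frac R2$. Killing the quadratic on $[s,t]$ only moves the term $\tau c$ into the outer quotients.

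The loss comes from the choice of comparison chords, not from where the quadratic is centred. Write $S_\phi(a,b)$ for a chord slope. Concavity of $\psi$ gives $S_\phi(s,t)\le S_\phi(a,b)+\frac{\tau}{2}\bigl((s+t)-(a+b)\bigr)$ whenever $a\le s$ and $b\le t$, and for the far chord $[-R,-\frac R2]$ this error is as large as $\frac54\tau R$. Use instead the adjacent chords $[s-\frac R2,s]$ and $[t,t+\frac R2]$, which still lie in $[-R,R]$. The three-slope inequality for $\psi$ then gives
\[
S_\phi\left(t,t+\tfrac R2\right)-\tfrac{\tau}{2}\left(t-s+\tfrac R2\right)\le S_\phi(s,t)\le S_\phi\left(s-\tfrac R2,s\right)+\tfrac{\tau}{2}\left(t-s+\tfrac R2\right).
\]
Hence $|S_\phi(s,t)|\le\frac{4L}{R}+\frac{\tau R}{4}+\frac{\tau}{2}|t-s|\le\frac{4L}{R}+\frac34\tau R$, which proves the lemma.

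For comparison, the paper's own argument is your Step 2 in derivative form. It subtracts $\frac{\tau}{2}x^2$ to reduce to $\tau=0$, finds $z_-\in[-\frac{3R}{4},-\frac R2]$ with $\phi'(z_-)\le\frac{10L}{R}$ and $z_+\in[\frac R2,\frac{3R}{4}]$ with $\phi'(z_+)\ge-\frac{10L}{R}$, and then uses monotonicity of $\phi'$. It likewise does not track that the sup bound becomes $L+\frac{\tau}{2}R^2$ after the reduction. The precise constant is only used qualitatively in Corollary \ref{gradient weight}.
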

\begin{proof}
Adding $-\frac{\tau}{2} x^2$ to $\phi$, it is enough to discuss the case when $\tau=0$. Suppose that $\phi'(t) > \frac{10L}{R}>0$ holds for $\mathcal{L}^1$-a.e.  $t \in [-\frac{3R}{4}, -\frac{R}{2}]$. Then 
$$
2L \ge \left|\phi\left(-\frac{3R}{4}\right)-\phi\left(-\frac{R}{2}\right)\right|=\left| \int_{-\frac{3R}{4}}^{-\frac{R}{2}}\phi'(t)\di t\right| \ge \frac{R}{4}\cdot \frac{10L}{R} = \frac{5L}{2}
$$
which is a contradiction. Thus we can find $z_- \in [-\frac{3R}{4}, -\frac{R}{2}]$ such that $\phi$ is differentiable at $z$ with $\phi'(z_-) \le \frac{10L}{R}$. Thus the monotonicity of $\phi'$, which is justified by the concavity of $\phi$, implies $\phi'(t) \le \frac{10L}{R}$ whenever $\phi$ is differentiable at $t \in [-\frac{R}{2}, \frac{R}{2}]$.  One can similarly rule out the case that $\phi'(t) < - \frac{10 L}{R}$ for $\mathcal{L}^1$-a.e. $t \in [\frac{R}{2}, \frac{3R}{4}]$. Hence there exists some $z_+ \in [\frac{R}{2}, \frac{3R}{4}]$ with $\phi'(z_+) \geq - \frac{10L}{R}$. Since $\phi'$ is non-increasing, $\phi'(t) \geq -\frac{10L}{R}$ for $t\leq z_+$. Therefore, $|\phi'| \leq \frac{10L}{R}$ on $[-R/2, R/2]$.
\end{proof}
\begin{corollary}[From concavity to Lipschitz]\label{gradient weight} 
Let $(X,\dist_X)$ be a geodesic space with $\Injrad(B_R(x)) \ge \delta$ for some $\delta>0$, and let $\phi:B_R(x) \to \mathbb{R}$ be a $\tau$-concave (namely, along geodesics in the ball) function with $R \ge 2\delta$ and $\|\phi\|_{L^{\infty}(B_R(x))} \le L$. Then  $\phi$ is $\left(\frac{20L}{\delta}+\tau|\delta|\right)$-Lipschitz on $B_{\frac{R}{2}}(x)$.
\end{corollary}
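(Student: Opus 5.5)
The plan is to reduce to Lemma \ref{concave} by restricting $\phi$ to geodesics, using the injectivity radius bound to guarantee that every geodesic through a point of $B_{\frac{R}{2}}(x)$ extends a definite length $\delta$ in both directions inside $B_R(x)$.

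\textbf{Step 1: extending geodesics.} Fix $y,z \in B_{\frac{R}{2}}(x)$. If $\dist_X(y,z)$ is at least a fixed fraction of $\delta$, the bound follows trivially from $|\phi|\le L$, since $|\phi(y)-\phi(z)|\le 2L \le \frac{C L}{\delta}\dist_X(y,z)$. So we may assume $\dist_X(y,z)<\delta/2$. Take a geodesic $\gamma$ from $y$ to $z$ and extend it to a geodesic $\hat\gamma:[-\delta,\delta]\to X$ with $\hat\gamma(0)$ equal to the midpoint.
\begin{itemize}
\item Forward extension: since $\Injrad(y)\ge\delta$, no point of $\Cut(y)$ lies within distance $\delta$ of $y$. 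By (1) of Lemma \ref{injeclemma}, a geodesic from $y$ of length less than $\delta$ extends beyond its endpoint. Iterating this (via a supremum/closedness argument on the maximal extension length), the geodesic from $y$ extends to total length $\delta$.
\item Backward extension: apply the same argument to the reversed geodesic, using $\Injrad(z)\ge\delta$.
\end{itemize}
The resulting curve passing through both $y$ and $z$ must be shown to be a genuine minimizing geodesic on the whole interval. I would arrange this by extending first from $y$ through $z$, then extending the reversed curve from its new endpoint back through $y$. Each extension is a minimizing geodesic starting at a point with injectivity radius at least $\delta$, so the final curve has length $\delta$ and contains the segment $[yz]$. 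I may need to shrink lengths to about $\delta/2$ and keep the curve inside $B_R(x)$. This is where $R\ge 2\delta$ enters: points within distance $\delta$ of $B_{\frac R2}(x)$ stay in $B_R(x)$, and the curve is a geodesic contained in $B_R(x)$, along which $\phi$ is $\tau$-concave.

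\textbf{Step 2: applying the lemma.} The function $t\mapsto \phi(\hat\gamma(t))$ is $\tau$-concave on an interval of half-length of order $\delta$ and bounded by $L$. Lemma \ref{concave} with $R$ replaced by this half-length (about $\delta/2$) gives a Lipschitz bound of the form $\frac{20L}{\delta}+\tau\delta$ on the middle half. Choosing the extension so that $y,z$ lie in that middle half yields $|\phi(y)-\phi(z)|\le(\frac{20L}{\delta}+\tau\delta)\dist_X(y,z)$. Local Lipschitz bounds along geodesics combined with the trivial far-apart case give the global bound on $B_{\frac R2}(x)$. If needed, one can chain along the geodesic $[yz]$.

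The main obstacle is Step 1: producing a two-sided extension of length about $\delta$ that remains minimizing, using only the cut-locus definition in a general geodesic space. In particular, one must ensure the iteration does not stall before reaching length $\delta$. I would handle this by taking the supremal extension length and using properness, with limits of geodesics being geodesics, to show it is attained. If it were less than $\delta$, the endpoint would not be a cut point, so by Lemma \ref{injeclemma} the geodesic could be extended further, a contradiction.
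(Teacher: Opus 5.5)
Your proposal is correct and follows the paper's route: restrict $\phi$ to a minimizing geodesic through $y$ and $z$ and apply Lemma \ref{concave}, handling far-apart pairs with $|\phi|\le L$. The paper's one-line proof leaves implicit the step you make explicit, namely using $\Injrad\ge\delta$ on $B_R(x)$ (with $R\ge 2\delta$) to extend $[yz]$ to a minimizing geodesic of length about $\delta$ inside $B_R(x)$. Your corrected construction is the right one: extend from $y$ past $z$, then from that new endpoint back past $y$ up to total length $\delta$, centered so that $y,z$ lie in the middle half; the $[-\delta,\delta]$ curve you first wrote is not available. Your closedness step uses properness, which is not part of the stated hypotheses; this is harmless here because the corollary is only applied to $\RCD$ spaces, which are proper.
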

\begin{proof}
For all $y,z \in B_{\frac{R}{2}}(x)$, find a minimal geodesic $\gamma$ from $y$ to $z$. Then applying Lemma \ref{concave} to the restriction of $\phi$ to the geodesic completes the proof. 
\end{proof}

We also need  a convexity of a distance function and a concavity of the measure density. 
\begin{lemma}
[Lower bound of Hessian]	 \label{l:hessian-lower-bound}
	Let $X$ have the $(K, N, \kappa)$-mixed curvature bounds for some $K \in\dR, N \in [1, \infty)$   
and some
$\kappa \in \mathbb{R}$. Then for any $p \in \mathcal{R}=X\setminus \partial X$, $\mathrm{Hess}_{\dist_p}\ge \kappa_1$ on $B_{s}(p)\setminus \bar B_r(p)$ for some $\kappa_1 \in \mathbb{R}$ and some small $0<r<s$ in the sense: for any $v\in L^2(T X)$, we have
    $
  \Hess_{\dist_p} (v, v)(x) \geq \kappa_1 \cdot |v|^2$ for $\fm_X$-a.e. $x\in B_{s}(p)\setminus \bar B_r(p)$. 
\end{lemma}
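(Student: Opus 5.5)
We would derive the Hessian lower bound from the $\CBA(\kappa)$ comparison by turning the metric semiconvexity of $\dist_p$ along geodesics into an $L^2$-Hessian bound. The plan has three steps.

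First, choose $s>0$ smaller than the local CAT-radius at $p$ and smaller than $\varpi_\kappa/4$, so that $B_{2s}(p)$ is a $\CAT(\kappa)$ ball. Fix $0<r<s$. In such a ball, standard comparison makes $\dist_p$ $\kappa_1$-convex along every geodesic $\gamma$ in $B_s(p)\setminus \bar B_r(p)$: the function $t\mapsto \dist_p(\gamma(t))-\frac{\kappa_1}{2}t^2$ is convex, with $\kappa_1=\kappa_1(\kappa,r,s)$. Concretely, $\kappa_1$ can be taken as $-\sqrt{\kappa}\cot(\sqrt{\kappa}\,r)$ up to the sign conventions, coming from the model space distance function.

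Second, we would pass from geodesic convexity to the weak Hessian inequality. By the local positivity of injectivity radius ((3) of Lemma \ref{injeclemma}) and Proposition \ref{prop:injec}, $\dist_p\in D(\Delta, A)$ on the annulus $A:=B_s(p)\setminus\bar B_r(p)$ with bounded Laplacian. Corollary \ref{cor:distancehess} then gives $\Hess_{\dist_p}\in L^2_{\loc}(A)$. By density of test vector fields of the form $\sum_i g_i\nabla h_i$, it suffices to check $\Hess_{\dist_p}(\nabla h,\nabla h)\ge \kappa_1|\nabla h|^2$ a.e. for $h$ in a rich class, for instance $h=\dist_q$, or harmonic chart coordinates which are $C^{1,\alpha}$ by Theorem \ref{thm:nonsmoothandssasasas}. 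The key identity: at an $n$-regular Lebesgue point $x$ of $\Hess_{\dist_p}$ and of $g^{ij}$, blow up. By Theorem \ref{hessconv} and the Morrey estimate \eqref{asaiasas8s8}, the rescalings $r_i^{-1}(\dist_p-\dist_p(x)-r_i\,\ell)$, after subtracting the linear part $\ell$, converge on the tangent cone $\mathbb{R}^n$ to the quadratic form $\frac12\Hess_{\dist_p}(x)(\cdot,\cdot)$ at second order. The convexity inequality survives this limit and yields positive semidefiniteness of $\Hess_{\dist_p}(x)-\kappa_1 g_X(x)$.

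An alternative for this step, which avoids a second-order blow-up, is to use the regularity already proved. In a harmonic chart, $\dist_p\circ\tilde\Phi^{-1}\in W^{2,q}_{\loc}$ for all $q$ by Corollary \ref{newcor}, and the metric is $C^{0,\alpha}$ with Christoffel symbols in $L^q$. Proposition \ref{chrhess} expresses $\Hess_{\dist_p}$ as the coordinate Hessian minus $\Gamma$-terms. Along a.e. geodesic, which in the chart is a $W^{2,q}$ curve solving the geodesic equation, one can then compute $\frac{d^2}{dt^2}\dist_p(\gamma(t))=\Hess_{\dist_p}(\dot\gamma,\dot\gamma)$ for a.e. $t$, via the chain rule for $W^{2,q}$ functions composed with curves of a Fubini-type family. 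Convexity gives $\frac{d^2}{dt^2}\ge\kappa_1$ in the distributional sense. Since geodesics in all directions from a.e. point are available (injectivity radius and geodesic extension), testing against all directions $v$ yields the inequality a.e.

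The main obstacle is making the second-derivative identity along geodesics rigorous when $\dist_p$ is only $W^{2,q}$ rather than $C^2$. We would first try mollifying in the chart: approximate $\dist_p\circ\tilde\Phi^{-1}$ by smooth $u_\epsilon$, integrate the convexity inequality against nonnegative test functions over families of geodesics through a small ball, and let $\epsilon\to0$ using the $L^q$ convergence of second derivatives and Christoffel symbols. The parametrisation of the geodesic family must have bounded Jacobian, which follows from the $C^{1,\alpha}$ control of $\dist^2$ in Corollary \ref{asjasoajspa}.
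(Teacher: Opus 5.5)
Your Step 1 is fine. The whole content of the lemma, however, is your Step 2: passing from geodesic $\kappa_1$-convexity of $\dist_p$ to the a.e.\ inequality for Gigli's $\Hess_{\dist_p}$. Neither of your sub-routes establishes it.

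\textbf{The blow-up route.} The scaling $r_i^{-1}$ you wrote is first order, so its limit is linear and carries no second-order information. Suppose you use the correct $r_i^{-2}$ scaling instead. The ``linear part'' $\ell$ you subtract on $X$ must then be built from chart or harmonic functions, and these are not affine along geodesics. Indeed, $r_i^{-2}(\ell\circ\gamma)''$ is of order one: formally, in a harmonic chart it is $-\partial\dist_p(x)\cdot\Gamma(\dot\sigma,\dot\sigma)$ along the unrescaled geodesic $\sigma$. So the rescaled functions are not $\kappa_1$-convex, and the claim that ``the convexity inequality survives the limit'' is unjustified. For instance, with $\ell$ linear in a harmonic chart, the flat limit sees the coordinate Hessian rather than $\Hess_{\dist_p}(x)$. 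The discrepancy is exactly the Christoffel term, which is the part of the problem that actually has to be controlled.

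\textbf{The chart route.} With $g$ only in $W^{1,q}\cap C^{0,\alpha}$, the symbols $\Gamma\in L^q$ have no trace on an individual curve. So the claim that geodesics are $W^{2,q}$ curves solving $\ddot\gamma=-\Gamma(\dot\gamma,\dot\gamma)$ is not available, and you must average over a family of geodesics. The bounded compression this needs does not follow from $C^{1,\alpha}$ bounds on $\dist^2$, since flows of H\"older fields carry no Jacobian control. It would have to come from one of two sources:
\begin{itemize}
\item the measure contraction property along $W_2$-geodesics;
\item a divergence bound on the velocity field. For example, for pencils contracting to a point $q$ one can use $\Delta\dist_q\in L^\infty$ away from $q$, with $\Hess_{\dist_q}(\nabla\dist_q,\cdot)=0$ serving as the Eulerian geodesic equation.
\end{itemize}
Making this rigorous amounts to proving a second-order differentiation formula $\frac{d^2}{dt^2}\int\dist_p\,d\mu_t=\int\Hess_{\dist_p}(\nabla\varphi_t,\nabla\varphi_t)\,d\mu_t$ along $W_2$-geodesics with bounded densities. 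You would also need density of the directions $\nabla\dist_q(x)$. This is precisely what you leave as the ``main obstacle''. If completed, it would give a self-contained alternative built on the paper's $W^{2,q}$ regularity, but as written it is a program, not a proof.

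\textbf{What the paper does.} The paper does not reprove this step. It proceeds as follows:
\begin{itemize}
\item It takes $\bar B_{2s}(p)$ to be $\CAT(\kappa)$, so that it is itself an $\RCD(K,N)$ space.
\item It multiplies $\md_\kappa\circ\dist_p$ by a good cut-off, so that it lies in $D(\Delta)$ with the Laplacian bound of Proposition \ref{prop:injec}.
\item It applies the Kapovitch--Ketterer theorem \cite{Kapovitch-Ketterer-NC}: geodesic $\kappa_1$-convexity of such a function implies $\Hess\ge\kappa_1$ in Gigli's sense.
\item It passes the bound to $\dist_p$ via the chain rule and locality \eqref{localityhess}. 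This works because $\md_\kappa'(\dist_p)=\sn_\kappa(\dist_p)$ is bounded below and $\md_\kappa''$ is bounded on the annulus.
\end{itemize}
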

\begin{proof}
    Take $s>0$ such that $\bar B_{2s}(p)$ is CAT($\kappa$) (thus it is an $\RCD(K, N)$ space because of \cite[Proposition 7.7]{AMSbe}). Then, after multiplying a good cut-off stated in (6) of Theorem \ref{hessthem} to the modified distance function $\md_{\kappa} \circ \dist_p$,
    we can apply \cite[Remark 2.56 and Theorem 4.7]{Kapovitch-Ketterer-NC} (see also \cite[Theorem 4.14]{CM}) with Proposition \ref{prop:injec} to conclude $\mathrm{Hess}_{\md_{\kappa} \circ \dist_p} \ge \kappa_1$ (see also Step 4 in the proof of Lemma 5.5 in their paper), where $\md_{\kappa}$ denotes the solution of $v''+\kappa v=1, v(0)=0$ and $v'(0)=0$.
    By the monotonicity of $\md_{\kappa}$, we get the assertion, 
    where we immediately used the locality and the chain rule for the Hessian \eqref{localityhess}. 
\end{proof}
The following is 
due to \cite[Theorem 1.3, Proposition 6.8, Corollaries 6.10 and 6.11]{KKK}.
\begin{lemma}[Concavity of density]\label{c:Lipschitz-density}
	Let $X$ have the $(K, N, \kappa)$-mixed curvature bounds for some $K \in \mathbb{R}, N \in [1, \infty)$   
and some
$\kappa \in \mathbb{R}$.  Then there exists $\tau = \tau (K, N, \kappa) > 0$ such that the limit 
	\begin{align}
		\Theta(x) := \lim\limits_{r\to 0} \frac{\fm_X(B_r(x))}{\omega_n r^n}\in (0, \infty)
	\end{align}
	exists for any regular point $x\in \mathcal{R}=X\setminus \partial X$, and $\Theta$ is $\tau$-concave on $B_r(x)$ for some small $r>0$ (thus on $\mathcal{R}$). 
\end{lemma}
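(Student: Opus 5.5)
The plan is to derive this lemma from the cited results of \cite{KKK}, adding only the translation into the present notation. First, fix a regular point $x \in \mathcal{R}=X\setminus\partial X$. The $C^0$-Riemannian structure theorem stated above shows that a neighborhood of $x$ is a $C^1$-manifold of dimension $n=\dim(X)$. By Lemma \ref{lemextend} and Corollary \ref{cormix}, after shrinking we may pass to a small ball $\bar B_{2s}(x)$ that is $\CAT(\kappa)$ and, by \cite[Proposition 7.7]{AMSbe}, is itself an $\RCD(K,N)$ space; all subsequent statements are local.

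Second, we prove existence and finiteness of $\Theta(x)$. By \cite[Theorem 1.3]{KKK}, $\meas_X=\Theta\,\haus^n$ on $\mathcal{R}$ for a density $\Theta$ that is locally bounded above and away from zero. By Lemma \ref{injeclemma}(3), $\mathrm{Injrad}(B_r(x))>0$, so Theorem \ref{cor:distance coordinate} and Corollary \ref{c:hoelder-measure-density} give a continuous representative of the density. Moreover, by Corollary \ref{corconhaus} together with $x\in\mathcal{R}_n$, we have $\haus^n(B_r(x))/\omega_n r^n\to 1$. Combining these, the limit $\lim_{r\to0}\meas_X(B_r(x))/\omega_nr^n$ exists, lies in $(0,\infty)$, and coincides with the density at $x$. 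This identifies the abstract density of \cite{KKK} pointwise with $\Theta$ at every regular point, not merely almost everywhere.

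Third, we establish semiconcavity. Following \cite[Proposition 6.8, Corollaries 6.10 and 6.11]{KKK}, the density is $(K,N)$-semiconcave along geodesics in $\mathcal{R}$, in the sense that $\Theta^{1/(N-n)}$ satisfies a one-dimensional $\CD(K,N-n)$-type concavity. Since $\Theta$ is locally bounded above and away from zero, this gives $\Theta''\le \tau\Theta$ along unit-speed geodesics, i.e.\ $\Theta$ is $\tau$-concave with $\tau=\tau(K,N,\kappa)$ after using these bounds on a small ball. The case $N=n$ forces $\Theta$ to be constant.

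The main obstacle is the passage from an a.e.\ statement along needles or $W_2$-geodesics (as in the RCD/needle decomposition setting) to concavity along every geodesic in $B_r(x)$. I would first try to resolve this using continuity of $\Theta$ (from the second step) together with uniqueness of geodesics in the $\CAT(\kappa)$ ball and non-branching. These allow approximating any geodesic by geodesics along which the a.e.\ inequality holds, and then passing to the limit in the integrated form of $\tau$-concavity.
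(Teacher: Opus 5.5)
Your proposal is essentially the paper's route. The paper gives no independent argument; it simply attributes the lemma to \cite[Theorem 1.3, Proposition 6.8, Corollaries 6.10 and 6.11]{KKK}, which already provide both the pointwise existence of $\Theta$ at interior points and its semiconcavity along geodesics. Your detour through Lemma \ref{injeclemma}(3), Theorem \ref{cor:distance coordinate} and Corollaries \ref{c:hoelder-measure-density}, \ref{corconhaus} is valid and non-circular, since none of these uses the lemma, but it is optional, as is your approximation fallback.

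One step needs tightening. If you pass through concavity of $\Theta^{1/(N-n)}$, obtaining $\Theta''\le\tau\Theta$ when $N-n>1$ also requires a local Lipschitz bound on $\log\Theta$ (e.g.\ from Lemma \ref{concave}). The resulting $\tau$ then depends on local bounds of $\Theta$, not on $(K,N,\kappa)$ alone. This is harmless for the paper, because Corollary \ref{gradient weight} only uses local semiconcavity.
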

We are ready to prove Theorem \ref{thmmixed}.
\begin{proof}[Proof of Theorem \ref{thmmixed}.]
Since Proposition \ref{prop:injec}, Lemma \ref{l:hessian-lower-bound}, \ref{c:Lipschitz-density}, Corollaries \ref{cor:hessbd} and \ref{gradient weight}, yield $|\mathrm{Hess}_{\dist_x}| \in L^{\infty}$ on arbitrary small annulus around any $x \in X$, the conclusion comes from Theorem \ref{cor:distance coordinate}.
\end{proof}
It is worth mentioning that in the setting of Theorem \ref{thmmixed}, we will be able to apply the \textit{parallel transport} along distance functions; see \cite[Theorem 4.12]{GPP} and \cite{GP3}.

Finally, we obtain the following Lipschitz regularity result for Alexandrov spaces.
\begin{theorem}\label{alexandrovreg}
    Let $X$ be an Alexandrov space of curvature bounded below by some  $\kappa \in \dR$ of dimension $n\in \dZ_+$, and let $U \subset X$ be open with $\Injrad(U)>0$.  Then for any $x \in U$, there exists $r_0>0$ such that 
    $
        \|\mathrm{Hess}_{\dist_x}\|_{L^{\infty}(A)}<\infty$ for any compact $A \subset B_{r_0}(x) \setminus \{x\}$.
    In particular, $g_X$ is locally Lipschitz on $U$ by distance charts.
\end{theorem}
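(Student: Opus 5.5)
The plan is to mirror the proof of Theorem \ref{thmmixed}, replacing the CBA input by the Alexandrov (lower curvature) structure. An $n$-dimensional Alexandrov space with curvature $\ge \kappa$, equipped with $\haus^n$, is a non-collapsed $\RCD((n-1)\kappa, n)$ space \cite{Petrunin, ZZcd}. Hence $\meas_X=\haus^n$ and the density satisfies $f\equiv 0$. This removes the need for the concavity of the density (Lemma \ref{c:Lipschitz-density}) and for Corollary \ref{gradient weight}.

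First, fix $x\in U$ and choose $r_0>0$ with $B_{2r_0}(x)\subset U$ and $\Injrad(B_{2r_0}(x))\ge\delta>0$ (shrinking $U$ if needed). Proposition \ref{prop:injec} then gives $\|\Delta\dist_x\|_{L^\infty(A)}<\infty$ on any annulus $A\subset B_{r_0}(x)\setminus\{x\}$.

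Second, I need a one-sided Hessian bound. In an Alexandrov space with curvature $\ge\kappa$, the modified distance $\md_\kappa\circ\dist_x$ satisfies the Perelman--Petrunin concavity $(\md_\kappa\circ\dist_x)''\le 1-\kappa\,\md_\kappa\circ\dist_x$ along geodesics. Via the Hessian identification for RCD spaces (as in \cite{Kapovitch-Ketterer-NC} used in Lemma \ref{l:hessian-lower-bound}, or \cite{GPP}), together with the chain rule and locality \eqref{localityhess}, this yields $\Hess_{\dist_x}\le C$ on annuli away from $x$. This is the reverse inequality to the CBA case, but it is equally usable.

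Third, I apply the argument of Corollary \ref{cor:hessbd} with $f=0$ and $-\phi$ in place of $\phi$; its upper bound $|f|+|\nabla f|\le L$ is trivially satisfied. The Ahlfors regularity of $\haus^n$ follows from the non-collapsed structure (or from Corollary \ref{c:hoelder-measure-density}). Proposition \ref{noncocll} gives $\tr\Hess_{\dist_x}=\Delta\dist_x\in L^\infty$. Combined with $\Hess_{\dist_x}\le C$ as bilinear forms and the pointwise matrix argument on rectifiable charts, this gives $|\Hess_{\dist_x}|\in L^\infty(A)$.

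Finally, Theorem \ref{cor:distance coordinate} provides distance charts $\Phi=(\dist_{z_1},\dots,\dist_{z_n})$ near any point of $U$, with the $z_j$ chosen at distance comparable to $\delta$. Each $g^{ij}=\langle\nabla\dist_{z_i},\nabla\dist_{z_j}\rangle$ satisfies $|\nabla g^{ij}|\le|\Hess_{\dist_{z_i}}|+|\Hess_{\dist_{z_j}}|\in L^\infty$, so it is locally Lipschitz by the Sobolev-to-Lipschitz property (locally). Because the chart is bi-Lipschitz and $\det(g^{ij})$ is bounded below, the inverse $g_{ij}$ is also Lipschitz.

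The main obstacle is the second step: transferring the synthetic concavity of $\md_\kappa\circ\dist_x$ into the a.e. bound $\Hess_{\dist_x}(v,v)\le C|v|^2$ for the RCD Hessian. I would first try to cite the Alexandrov/RCD Hessian compatibility and the concave-function Hessian bounds established for $\RCD$ spaces, exactly as was done with \cite{Kapovitch-Ketterer-NC} for the CBA direction. The annulus must avoid $x$ so that the bound $(\md_\kappa)'=\sn_\kappa(\dist_x)>0$ converts back to $\dist_x$.
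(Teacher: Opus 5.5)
Your outline is the same as the paper's proof. You use Proposition \ref{prop:injec} for $\Delta\dist_x\in L^\infty$ on annuli, an upper bound on $\Hess_{\dist_x}$ coming from semiconcavity of $\md_\kappa\circ\dist_x$ via \cite[Theorem 4.7]{Kapovitch-Ketterer-NC}, and then the trace argument of Corollary \ref{cor:hessbd} with $f\equiv 0$ applied to $-\dist_x$. Steps 1, 3 and 4 are fine.

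The gap is in Step 2, at exactly the point where you propose to proceed ``exactly as was done for the CBA direction''. In Lemma \ref{l:hessian-lower-bound} the cited Hessian bound is not applied on $X$ but on a small closed ball $\bar B_{2s}(p)$. Being $\CAT(\kappa)$, that ball is convex, hence itself an $\RCD(K,N)$ space by \cite[Proposition 7.7]{AMSbe}, and $\md_\kappa\circ\dist_p$ is $\lambda$-convex along all of its geodesics. Under a \emph{lower} curvature bound this device is unavailable: small balls in an Alexandrov space need not be convex. In the double of a disc, balls centred on the gluing circle are not even weakly convex, so $\bar B_r(x)$ need not be a geodesic space, let alone an $\RCD$ space. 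Running the argument on all of $X$ instead is not automatic either. For $\kappa<0$ and noncompact $X$ the modified distance is not uniformly semiconcave, and its Laplacian is in general only a measure across $\Cut(x)$. So some localization is needed to put yourself in a setting where the cited theorem applies, and your proposal does not supply one.

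This localization is the main content of the paper's proof. Using Petrunin's construction \cite{pet}, take a strictly concave function $F$ on a small compact neighbourhood of $x$ with a strict maximum at $x$. Its superlevel sets $\{F\ge t\}$ are convex. Since $\haus^n(\{F=t\})>0$ for at most countably many $t$, you can choose $t$ close to $F(x)$ so that $U=\mathrm{Int}\{F\ge t\}\subset B_\epsilon(x)$ is an open neighbourhood of $x$ with $\bar U$ geodesic and $\haus^n(\partial U)=0$. Then:
\begin{itemize}
\item $\bar U\in\Alex^n(\kappa)$, so it is a non-collapsed $\RCD(\kappa(n-1),n)$ space;
\item $\md_\kappa\circ\dist_x$ is $\lambda$-concave on $\bar U$, with $\lambda$ depending on $\kappa$ and $\diam U$;
\item \cite[Theorem 4.7]{Kapovitch-Ketterer-NC}, applied as in Lemma \ref{l:hessian-lower-bound}, gives $\Hess_{\dist_x}\le\hat\lambda$ a.e.\ on compact subsets of $\bar U\setminus\{x\}$.
\end{itemize}
With this in place, your Steps 3 and 4 go through as written. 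Note that $r_0$ is now constrained by the size of this convex neighbourhood, not only by $\Injrad$.
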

\begin{proof}
    First, let us recall the following.

    \smallskip

    \textbf{Claim}: \textit{For all $x \in X$ and $\epsilon>0$, there exists an open neighborhood $U$ of $x$ such that $U\subset B_{\epsilon}(x)$ and that $(\bar U, \dist_X)$ is a geodesic space with $\haus^n(\partial U)=0$.}\footnote{In general, $U$ cannot be taken as a ball. For example, let $X$ be the glued space of two unit discs along the boundary, and consider a point $x$ in the glued part. Then $B_r(x)$ is not weakly convex for any small $r>0$.}

    \smallskip

This is a direct consequence of the construction in \cite[Theorem 7.1.1 and Claim 7.1.2]{pet}, namely we can find a locally Lipschitz strongly concave function $f$ defined on a small compact neighborhood $B$ of $x$ such that $f(x)>f(y)$ for any $y \in B \setminus \{x\}$. Put $C_t:=\{y \in B| f(y) \ge t\}$, then, by the continuity of $f$, $\partial C_t \subset\{f=t\}$. Since the set $\{t| \haus^n(\{f=t\})>0\}$ is at most countable because of $\haus^n(B)<\infty$, we can find a sequence $t_i \to f(x)^-$ with $\haus^n(\partial C_{t_i}) \le \haus^n(\{f=t_i\})=0$. Thus letting $U_{t_i} := \mathrm{Int}(C_{t_i})$ completes the proof of \textit{Claim}.

For any $x \in U$, the modified distance function $\md_{\kappa} \circ \dist_x$ is $\lambda$-concave on the closure $\overline{U}$, where $\lambda > 0$ depends only on $\kappa$ and $\diam(U)$.  
   By the Claim, $\overline{U} \in \Alex^n(\kappa)$, thus $\overline{U}$ is a non-collapsed $\RCD(\kappa(n-1),n)$ space. Applying \cite[Theorem 4.7]{Kapovitch-Ketterer-NC},  as done in the proof of Lemma \ref{l:hessian-lower-bound}, to conclude that for any compact subset $W \subset \overline{U} \setminus \{x\}$, we obtain an upper bound $\Hess_{\dist_x} \leq \hat{\lambda}$ for a.e. in $W$ for some $\hat{\lambda}> 0$. 
Then the desired Hessian bound
 follows from this upper bound of  $\Hess_{\dist_x}$, Corollary \ref{cor:hessbd}, and the fact $\|\Delta \dist_x\|_{L^{\infty} (W)}  < \infty$, where the last one comes from Theorem \ref{prop:injec}. 
\end{proof}

\subsection{Calder\'on-Zygmund inequality}\label{sublpcz}
In this subsection, as an independent interest from the point of view of analysis, we discuss the (global) $L^p$-Calder\'on-Zygmund inequality in our setting.
We also refer \cite{GPi, Pigola} for the formulation (\ref{czdef}). 
\begin{definition}[$L^p$-Calder\'on-Zygmund inequality]
We say that the \textit{$L^p$-Calder\'on-Zygmund inequality} ($\mathrm{CZ}_p$) of a constant $C>0$ for an $\RCD(K, \infty)$ space $X$ holds if 
\begin{equation}\label{czdef}
    \|\mathrm{Hess}_f\|_{L^p(X)} \le C \left(\|\Delta f\|_{L^p}+\|f\|_{L^p(X)}\right),\quad \text{for any $f \in D(\Delta)$.}
\end{equation}
We also denote by $\mathrm{CZ}_p(X)$ the optimal $C>0$.
\end{definition}
Note that any $\RCD(K, \infty)$ space satisfies $\mathrm{CZ}_2$ with $\mathrm{CZ}_2(X) \le C(K)$ because of the Bochner inequality, see (5) of Theorem \ref{hessthem}.
On the other hand, as observed in \cite{DZ}, in general, the $\mathrm{CZ}_p$ does not hold for any $p>2$, even for compact Alexandrov spaces. In fact, example discussed in subsection \ref{subsec inj} does not satisfy $\mathrm{CZ}_p$ for any $p>2$. See also \cite{HMRV} for similar counterexamples to the validity of $\mathrm{CZ}_p$ for $p>2$.
It should be emphasized that no sufficient condition for the validity of $\mathrm{CZ}_p$ for large $p$ is known for singular spaces.

As a corollary of our regularity results, we provide such a condition as follows, where the following is new even for Alexandrov spaces and (weighted) Riemannian manifolds.
\begin{theorem}[$\mathrm{CZ}_p$ for RCD spaces with positive injectivity radius]\label{thmcz}
    If an $\RCD(K, N)$ space $X$ for some $K \in \mathbb{R}$ and some $N \in [1, \infty)$ of essential dimension $n$ satisfies
    $
        \mathrm{Injrad}(X)\ge \delta$ for some $\delta>0$,  then the $\mathrm{CZ}_p$ for $X$ holds for any $p \in (1, \infty)$, together with $\mathrm{CZ}_p(X) \le C(K, N, \delta, p)$. 
\end{theorem}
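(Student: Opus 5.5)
The plan is a partition-of-unity argument. I will patch together the local Calder\'on--Zygmund estimates of Corollary \ref{newcor}, using the uniform harmonic charts from Theorem \ref{thm:nonsmoothandssasasas}, and then absorb the lower-order terms with interpolation and the global $\mathrm{CZ}_2$ bound \eqref{czrcd}.

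\textbf{Step 1: uniform charts.} Fix $p\in(1,\infty)$. Since $\mathrm{Injrad}(X)\ge\delta$, the quantitative part of Theorem \ref{thm:nonsmoothandssasasas} gives a radius $r=r(K,N,\delta,p)>0$ with the following property: every $x\in X$ carries a $(1+\epsilon,q)$-harmonic chart $\tilde\Phi_x$ on $B_r(x)$, for $q$ large, satisfying \eqref{neweq1} and \eqref{neweq2}. In these charts $\tilde g^{ij}$ is uniformly close to $\delta_{ij}$ and has a uniform $C^{0,\alpha}$ modulus. The Christoffel symbols are bounded in $L^q$ by Remark \ref{remintegra}, and so is $\nabla f$, with $f$ the density, by Corollary \ref{corimprovedmeas}. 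All of these bounds are quantitative in $K,N,\delta,p$. Because $\tilde\Phi_x$ is harmonic, Theorem \ref{asnasi} shows that $\Delta_X$ becomes the nondivergence operator $\tilde g^{ij}\partial_{ij}$ with no first-order term. Its coefficients have a uniform modulus of continuity on the Euclidean ball $\tilde\Phi_x(B_r(x))\supset B_{(1-\epsilon)r}(\bo^n)$. Consequently the constant in \eqref{czlocal} is uniform: $C=C(K,N,\delta,p)$, by \cite[Theorem 9.11]{GT}. The measure $\meas_X$ and the pushforward of Lebesgue measure are uniformly comparable there, by Proposition \ref{coarea123} and the bound on $f$.

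\textbf{Step 2: local estimate and summation.} For $u\in D(\Delta)$ I will apply \eqref{czlocal} to $u\circ\tilde\Phi_x^{-1}$ on $B_{r/4}(\bo^n)$, which gives a bound for $\|\partial^2(u\circ\tilde\Phi_x^{-1})\|_{L^p(B_{r/8})}$. Proposition \ref{chrhess} converts this to $\mathrm{Hess}_u$, at the cost of an extra term $\Gamma\cdot\partial u$. Using Hölder with $\Gamma\in L^q$, $q>np$, together with Sobolev embedding on the ball, I expect
\[
\|\mathrm{Hess}_u\|_{L^p(B_{r/8}(x))}\le C\bigl(\|\Delta u\|_{L^p(B_{r/2}(x))}+\|u\|_{L^p(B_{r/2}(x))}+\eta\|\partial^2 u\|_{L^p(B_{r/4})}\bigr)
\]
for arbitrary $\eta>0$ at the price of a larger $C$. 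The $\eta$-term is absorbed by the standard interpolation and covering trick on nested balls, yielding a purely local bound $\|\mathrm{Hess}_u\|_{L^p(B_{r/8}(x))}\le C(\|\Delta u\|_{L^p(B_{r}(x))}+\|u\|_{L^p(B_r(x))})$. Next I take a maximal $r/16$-separated net $\{x_k\}$. Bishop--Gromov \eqref{bg} bounds the overlap multiplicity of $\{B_r(x_k)\}$ by $C(K,N,r)$, so raising the local estimate to the $p$-th power and summing over $k$ gives \eqref{czdef} globally.

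\textbf{Main obstacle.} The hardest step is the uniformity of the constant in \eqref{czlocal}. Corollary \ref{newcor} only states ``quantitative'', so I must check that the continuity modulus of $\tilde g^{ij}$ and the $L^q$ norms of $\Gamma$ depend only on $(K,N,\delta,p)$. For this I will use that Theorem \ref{thm:nonsmoothandssasasas} is quantitative under the rescaling of Remark \ref{rem:rescal}. A second point is approximation: \eqref{czlocal} was derived for Lipschitz $u$, and a general $u\in D(\Delta)$ need not be Lipschitz. I will handle it as in Corollary \ref{newcor}, applying the estimate to the heat flow $\mathsf{h}_tu$ and letting $t\to0$. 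Lower semicontinuity of the Hessian norm under this limit follows from \eqref{hessss} with $X_i=X$. If $\|\Delta u\|_{L^p}+\|u\|_{L^p}<\infty$ but $u\notin L^p$-Sobolev a priori, the same approximation still makes the right-hand side finite and the inequality passes to the limit.
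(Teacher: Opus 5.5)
Your proposal is correct and follows the paper's proof. Both arguments use uniform quantitative harmonic charts from Theorem \ref{thm:nonsmoothandssasasas} and the local estimate of Corollary \ref{newcor}, converted to $\mathrm{Hess}_u$ via Proposition \ref{chrhess}, where the $\Gamma\cdot\partial u$ term is handled by H\"older plus a Sobolev-type bound; the local bounds are then summed over a maximal separated net with Bishop--Gromov overlap control. The differences are minor. The paper first invokes the global gradient bound from \cite{CTT}, and notes your heat-flow approximation as an alternative in a footnote. Your $\eta$-absorption is unnecessary, because \eqref{czlocal} on a larger ball already bounds $\|\partial^2 u\|_{L^p(B_{r/4})}$ directly.
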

\begin{proof}
First, let us recall: if $f \in D(\Delta)$ satisfies $|f|+|\Delta f| \in L^p$, then $|\nabla f| \in L^p$.\footnote{We can also prove the local version via a different way using the heat flow, though this is outside our scope. On the other hand, we can also avoid using this a priori $L^p$-regularity. In fact, consider $\mathsf{h}_tf$, since we know $\mathsf{h}_tf, \Delta \mathsf{h}_tf, |\nabla \mathsf{h}_tf| \in L^p$ for any $t>0$ because of \cite{JiangLiZhang}, we can apply the arguments below. Finally taking $t \to 0^+$ completes the proof.} This is a direct consequence of \cite{CTT} because they proved $\|\nabla f\|_{L^p}\le C \|(K_--\Delta)^{\frac{1}{2}}f\|_{L^p}$,   applying the moment inequality for sectorial operators  $\|(K_--\Delta)^{\frac{1}{2}}f\|_{L^p} \le C\|f\|_{L^p}^{\frac{1}{2}}\cdot \|(K_--\Delta)f\|_{L^p}^{\frac{1}{2}} $ yields the conclusion.

Fix $f \in D(\Delta)$, $x \in X$ and consider a bi-Lipschitz harmonic chart $\tilde \Phi=\tilde \Phi_{r,x}:B_r(x) \to \mathbb{R}^n$ for a quantitatively small $r>0$, constructed in Theorem \ref{thm:nonsmoothandssasasas}. On $\tilde{\Phi}(B_r(x))$, Corollary \ref{newcor} shows
\begin{equation}\label{88snsnsbsbjj}
    \int_{B_{\frac{r}{5}}(x)}|\partial_i\partial_j(f\circ \tilde \Phi^{-1})\circ \tilde\Phi|^p\di\meas_X\le C\int_{B_r(x)}\left(|\Delta f|^p +|f|^p\right)\di \meas_X
\end{equation}
and $\partial_k (f \circ \tilde \Phi^{-1}) \in W^{1,p}_{\loc}$. Thus a Sobolev inequality (e.g. \cite{HK}) implies
\begin{equation*}
    \left(\fint_{B_{\frac{r}{5}}(x)}\left|\partial_k (f \circ \tilde \Phi^{-1})\circ \tilde \Phi\right|^q \di \meas_X\right)^{\frac{1}{q}} \le Cr \left(\fint_{B_{\frac{r}{5}}(x)}|f|\di \meas_X+\sum_{i=1}^n\left( \fint_{B_{\frac{r}{5}}(x)}|\partial_i\partial_k(f \circ \tilde \Phi^{-1})\circ \tilde \Phi|^p\di \meas_X\right)^{\frac{1}{p}}\right)
\end{equation*}
for some $q=q(K, N, p)>p$.
Thus a H\"older inequality with (\ref{88snsnsbsbjj}) shows
\begin{equation}\label{czwww}
    \fint_{B_{\frac{r}{5}}(x)}|\Gamma_{ij}^k\cdot \partial_k(f\circ \tilde\Phi^{-1})\circ \tilde \Phi|^p\di\meas_X\le  C\left(\fint_{B_{\frac{r}{5}}(x)}\left|\partial_i (f \circ \tilde \Phi^{-1})\circ \tilde \Phi\right|^q \di \meas_X\right)^{\frac{p}{q}}
    \le C\fint_{B_r(x)}\left(|\Delta f|^p +|f|^p\right)\di \meas_X.
\end{equation}
In particular, it follows from Proposition \ref{chrhess}, (\ref{88snsnsbsbjj})  and (\ref{czwww}) that 
\begin{equation}\label{asnsiasf8sa8rhshssbsbsyyyyr}
    \int_{B_{\frac{r}{5}}(x)}|\mathrm{Hess}_f|^p\di \meas_X \le C\int_{B_r(x)}\left(|\Delta f|^p +|f|^p\right)\di \meas_X.
\end{equation}
Finally, take a maximal $\frac{r}{5}$-separated set $\{x_i\}_i$ of $X$. Applying \eqref{asnsiasf8sa8rhshssbsbsyyyyr} as $x=x_i$ and taking a sum with respect to $i$ complete the proof because all arguments above are quantitative.
\end{proof}

\subsection{Smooth Ricci soliton}\label{smoothapp}
In this subsection, we justify Theorem \ref{smoothsoli}.

First let us consider the metric cone $C(\mathbb{S}^1(r))$ over a small circle $\mathbb{S}^1(r):=\{x \in \mathbb{R}^2| |x|=r\}$ for some $r<1$, which is an Alexandrov space of non-negative curvature, thus it is a non-collapsed $\RCD(0, 2)$ space.
For any  $\psi \in D(\Delta)$ with $\Delta \psi \in W^{1,2}(C(\mathbb{S}^1(r)))$, it is not difficult to check that
\begin{equation}\label{mmmnn}
            -\frac{1}{2}\int_X\langle \nabla\phi, \nabla|\nabla \psi|^2\rangle\di \haus^2=\int_X\phi\left(|\mathrm{Hess}_{\psi}|^2+\langle \nabla \Delta \psi, \nabla \psi\rangle \right)\di \haus^2
        \end{equation}
holds for any $\phi \in \mathrm{Lip}_c(C(\mathbb{S}^1(r)))$, equivalently,
        \begin{equation}\label{mmmn}
            \frac{1}{2}\int_X\Delta \phi \cdot |\nabla \psi|^2\di \haus^2=\int_X\phi\left(|\mathrm{Hess}_{\psi}|^2+\langle \nabla \Delta \psi, \nabla \psi\rangle \right)\di \haus^2
        \end{equation}
        holds for any $\phi \in L^{\infty}\cap D(\Delta)$ with $\Delta\phi \in L^{\infty}$. For readers' convenience, let us prove this.

        First, notice that if $\Delta \phi \in L^{\infty}$, then $|\nabla \phi| \in L^{\infty}$ because of \cite[Theorem 3.1]{AMSbe}, and that $|\nabla \psi|^2 \in W^{1,1}$ holds. Thus the left-hand-sides of (\ref{mmmnn}) and (\ref{mmmn}) coincide for such $\phi$.
        On the other hand, considering $\mathsf{h}_t\psi$ which is smooth outside the pole, and recalling the pole has null $2$-capacity, (\ref{mmmnn}) is valid for any $\phi \in \mathrm{Lip}_c(C(\mathbb{S}^1(r)))$, thus we have (\ref{mmmn}) for the desired $\phi$.

        This observation tells us that the Bochner \textit{identity} does not imply the smoothness.
        Based on this, we introduce the following, where we adopt a weaker formulation of the Bochner identity verifying only for \textit{harmonic} functions.
\begin{definition}[Ricci Soliton]
    Let $X$ be an $\RCD(K, N)$ space for some $K \in \mathbb{R}$ and some $N \in [1, \infty)$, and let $U$ be an open subset of $X$. We say that $U$ is a \textit{Ricci soliton} of the constant $c \in \mathbb{R}$ if for any relatively compact open subset $V$ of $U$, we have $\mathrm{Injrad}(V)>0$ and
        \begin{equation}
            \frac{1}{2}\int_X\Delta \phi \cdot |\nabla \psi|^2\di \meas_X=\int_X\phi\left(|\mathrm{Hess}_{\psi}|^2+c|\nabla \psi|^2 \right)\di \meas_X
        \end{equation}
        for all harmonic function $\psi$ on $V$, and $\phi \in L^{\infty}\cap D(\Delta)$ with $\supp \phi \subset V$.
\end{definition}
The main result of this section is the following which restates Theorem \ref{smoothsoli}. 
\begin{theorem}[Smooth regularity]\label{thmsmooothreg}
    Let $X$ be an $\RCD(K, N)$ space for some $K \in \mathbb{R}$ and some $N \in [1, \infty)$, and let $U$ be an open subset of $X$.
    Then,     
    $U$ is a Ricci soliton in the sense above if and only if $U$ is isometric to a (not necessary complete) smooth Ricci soliton.
\end{theorem}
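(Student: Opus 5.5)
The plan is to prove the two implications separately. The implication from a smooth Ricci soliton to the synthetic one is the easy direction; the converse is a bootstrap in harmonic coordinates.

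\emph{From a smooth soliton to the synthetic definition.} Assume $U$ is isometric, as a metric measure space, to $(M^n,\dist^g,\haus^n_f)$ with $\Ric^g+\Hess^g_f=c\,g$, i.e.\ $\Ric^g_{f,\infty}=c\,g$. A smooth manifold has locally positive injectivity radius, so every relatively compact $V\Subset U$ has $\Injrad(V)>0$. A harmonic $\psi$ on $V$ satisfies $\Delta^g_f\psi=0$ (Remark \ref{rem:localrem}). By elliptic regularity $\psi$ is smooth, and the first line of \eqref{eq:weighted bochner} gives pointwise
\[
\tfrac12\Delta^g_f|\nabla\psi|^2=|\Hess\psi|^2+c|\nabla\psi|^2 .
\]
Integrating against $\phi\in L^\infty\cap D(\Delta)$ with $\supp\phi\subset V$, and integrating by parts (both sides are smooth near $\supp\phi$), yields the required identity.

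\emph{From the synthetic definition to a smooth soliton: setting up the equation.} Fix $x\in U$ and a small ball $V$ around it. By Theorem \ref{thm:nonsmoothandssasasas} and Corollary \ref{corimprovedmeas} there is a bi-Lipschitz harmonic chart $\Phi=(\phi_1,\dots,\phi_n)$ on $V$. In this chart $g^{ij}\in W^{1,p}_{\loc}\cap C^{0,\alpha}_{\loc}$ and $\meas_X=e^{-f}\di\haus^n$ with $f\in W^{1,p}_{\loc}\cap C^{0,\alpha}_{\loc}$. Apply the soliton identity to the harmonic functions $\phi_i$ and $\phi_i+\phi_j$ and polarize. This gives, in the weak sense,
\[
\tfrac12\Delta g^{ij}=\langle\Hess_{\phi_i},\Hess_{\phi_j}\rangle+c\,g^{ij}.
\]
The first step is to convert this weak identity, which is tested against $\phi\in D(\Delta)$, into one tested against $\Lip_c$ functions, using good cut-offs and density. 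This shows $g^{ij}\in D_{\loc}(\Delta,V)$ with $\Delta g^{ij}\in L^{p}_{\loc}$ for every $p$, because $|\Hess_{\phi_i}|\in L^{2p}_{\loc}$ by Theorem \ref{thm:nonsmoothandssasasas}. By Proposition \ref{chrhess}, $\Hess_{\phi_i}=-\sum_{k,l}\Gamma^i_{kl}\,d\phi_k\otimes d\phi_l$, so the right-hand side equals $Q(g,\partial g)+2c\,g^{ij}$, where $Q$ is quadratic in $\partial g$ with coefficients rational in $g$.

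\emph{Bootstrap.} By Theorem \ref{thmlapchart} with $\Delta\phi_j=0$, the equation reads
\[
\sum_{k,l}g^{kl}\,\partial_k\partial_l g^{ij}=2Q(g,\partial g)+2c\,g^{ij}\quad\text{on }\Phi(V).
\]
Corollary \ref{newcor} then gives $g^{ij}\in W^{2,p}_{\loc}$ for all $p$, hence $g\in C^{1,\alpha}$ and $Q\in C^{0,\alpha}$. Corollary \ref{calp} gives $g\in C^{2,\alpha}$. Iterating classical Schauder estimates on this quasilinear system gives $g\in C^\infty$. For the weight, formula \eqref{asahsjaoisjfaisjiajsijas} with $\Delta\phi_i=0$ expresses $\sum_j g^{ij}\partial_j f$ through $g$ and $\partial g$. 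Inverting $(g^{ij})$ shows $\partial f$ is smooth, so $f$ is smooth. Thus $V$ is a smooth weighted manifold, and its RCD Laplacian equals $\Delta^g_f$ by Proposition \ref{noncocll}.

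\emph{Identifying the soliton equation.} Comparing the synthetic identity with \eqref{eq:weighted bochner} for smooth harmonic $\psi$ gives $\Ric^g_{f,\infty}(\nabla\psi,\nabla\psi)=c|\nabla\psi|^2$ pointwise. Harmonic coordinates centered at a point realize every gradient $\sum_k a_k\nabla\phi_k$ there, so polarization yields $\Ric^g+\Hess_f=c\,g$ near $x$. These local isometries patch together because the smooth structures are compatible (transition maps of harmonic charts are smooth once $g$ is), which gives the global statement on $U$.

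The main obstacle is the first regularity step: showing that $g^{ij}\in D_{\loc}(\Delta)$ and that the synthetic identity, tested only against $D(\Delta)$ functions, is genuinely the distributional elliptic equation to which Corollary \ref{newcor} applies. I would handle it via the $W^{1,1}$ regularity of $|\nabla\psi|^2$ from (4) of Theorem \ref{hessthem} and an approximation of $\Lip_c$ test functions by heat-flowed ones.
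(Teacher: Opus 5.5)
Your proposal is correct and follows essentially the same route as the paper. You polarize the soliton identity on the coordinate functions of the bi-Lipschitz harmonic chart from Theorem \ref{thm:nonsmoothandssasasas}, pass from $D(\Delta)$ test functions to $\Lip_c$ ones via good cut-offs and heat-flow density, get $g^{ij}\circ\Phi^{-1}\in W^{2,p}_{\loc}$ from Corollary \ref{newcor}, bootstrap $\sum g^{ab}\partial_a\partial_b g^{ij}=Q(\partial g,\partial g)+2c\,g^{ij}$ with Schauder estimates, and recover smoothness of $f$ from the harmonicity of the coordinates. Your final verification that $\Ric^g_{f,\infty}=c\,g$, obtained by realizing arbitrary gradients as linear combinations of the $\nabla\phi_k$, and your patching remark make explicit two steps the paper leaves implicit.
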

\begin{proof} Let us prove only the non-trivial implication (another one comes from (\ref{eq:weighted bochner})). 
    Notice that the space of all $\phi \in D(\Delta)\cap \mathrm{Lip}(X)$ with $\supp \phi \subset B_r(x)$ is dense in $\mathrm{Lip}_c(B_r(x))$ with respect the $W^{1,2}$-norm, where a proof is simple: for any $\psi \in \mathrm{Lip}_c(B_r(x))$, find $s <r$ with $\supp \psi \subset B_s(x)$, and take a good cut-off $\tilde \phi$ stated in (6) of Theorem \ref{hessthem} with $\tilde \phi =1$ on $B_s(x)$ and $\supp \tilde \phi \subset B_r(x)$. Then, consider $\tilde \phi \mathsf{h}_t\psi$, taking the limit $t \to 0^+$ completes the proof.

    Put $n=\dim(X)$,
    fix $x \in U$ and find a bi-Lipschitz harmonic chart $\Phi=(\phi_1, \ldots, \phi_n):B_{r}(x) \to \mathbb{R}^n$ obtained in Theorem \ref{thm:nonsmoothandssasasas}.
Our assumption with the density above yields $g^{ij} \in D_{\loc}(\Delta, B_r(x))$ with
    \begin{equation}\label{asoaosihras}
    \Delta g^{ij} =2\langle \mathrm{Hess}_{\phi_i}, \mathrm{Hess}_{\phi_j}\rangle + 2c\langle \nabla \phi_i, \nabla  \phi_j\rangle \in L^p_{\loc},\quad \text{for any $p<\infty$.}
\end{equation}
In particular, since we know $g^{ij}\circ \Phi^{-1}\in W^{2, p}_{\loc}$, we see that $g^{ij}\circ \Phi^{-1}$ is $C^{1,\alpha}$-continuous for any $\alpha \in (0,1)$, thus $
    \Gamma^i_{jk}\circ \Phi^{-1} \in C^{0,\alpha}.
$
On the other hand, write (\ref{asoaosihras}) on the chart of $\Phi$ as 
\begin{equation*}
\sum_{k, l=1}^ng^{kl}\circ \Phi^{-1}\cdot \frac{\partial^2 (g^{ij}\circ \Phi^{-1})}{\partial x_k\partial x_l}=2\sum_{a, b, c, d=1}^n\Gamma^i_{ab}\circ \Phi^{-1} \cdot \Gamma^j_{cd}\circ \Phi^{-1}\cdot g^{ac}\circ \Phi^{-1}\cdot g^{bd}\circ \Phi^{-1} + 2c g^{ij}\circ \Phi^{-1} 
\end{equation*}
namely, following the standard notation, on $\Phi(B_r(x))$,
\begin{equation}\label{einstein}
    \sum_{a,b=1}^ng^{ab}\partial_a\partial_bg^{ij}=Q(\partial g, \partial g)+2c g^{ij}, \quad \text{for all $i, j$.}
\end{equation}
    Combining this with $\Gamma^i_{jk}\circ \Phi^{-1} \in C^{0,\alpha}$ and the standard $C^{2,\alpha}$-estimate \cite[Theorem 9.19]{GT}, we have $g^{ij} \circ \Phi^{-1}\in C^{2, \alpha}$. After bootstrapping, we get the desired smoothness of $g^{ij}$, namely $g_X$.

    On the other hand, recalling Proposition \ref{noncocll} and Corollary \ref{corimprovedmeas}, we can write $\meas_X=e^{-f}\di \haus^n$ for some $f \in W^{1,p}_{\loc}$ for any $p<\infty$. Since
$
0=\Delta^{g_X}_f\phi_i=\Delta^{g_X}\phi_i-g_X(\nabla^{g_X}f, \nabla^{g_X} \phi_i),
$
we have $
\nabla^{g_X}f=\sum_{i,j=1}^n g_{ij}\Delta^{g_X}\phi_i\cdot \nabla^{g_X}\phi_j.
$
Regarding this as an equation on the chart $\Phi$ again, it is easy to conclude that $f \circ \Phi^{-1}$ is smooth because $\phi_i \circ \Phi^{-1}$ is a coordinate function. 
\end{proof}

\section{Metric smoothing and proof of the main structural theorems}\label{seccollapsing}

In this section, we complete the proofs of the main structural theorems stated in subsection \ref{subseccoppal}.

\subsection{Metric smoothing for RCD spaces with bounded covering geometry}

Let us start with the following notion of regularity scale in the context of weighted Riemannian manifold. 
\begin{definition}
[$(k,\alpha, \eta)$-covering  regularity scale]\label{def:covering} Let $(M^n , g, \fm)$, as a (not necessary smooth) weighted Riemannian manifold (thus $\meas=e^{-f}\di \haus^n$), be an $\RCD(K, N)$ space, and let $p \in M^n$. For fixed $\eta \in  (0,1)$, $k\in\mathbb{Z}_{\ge 0}$, and $\alpha\in (0,1)$, the $(k, \alpha, \eta)$-{\it covering regularity scale} at $p$, denoted by $\fs_{\eta}^{k, \alpha}(p)$ (or $\fs_{\eta}(p)$ for short), is defined to be the supremum of all radii $r > 0$ with the following properties. There exists a local diffeomorphism $\pi_p : (B_{2r}(\hat{p}), \hat{p}) \to (B_{2r}(p), p)$ with lifted metric $\hat{g} := \pi_p^* g$ and
lifted density function $\hat{f} := f \circ \pi_p$, together with 
a harmonic (with respect to $\Delta^{\hat{g}}_{\hat{f}}$) coordinate chart $\Phi = (u_1, \ldots, u_n): B_r(\hat{p}) \to \dR^n$ such that for any $1\leq i,j\leq n$,
\begin{align}
\| \hat{g}_{ij} - \delta_{ij}\|_{C^0(B_r(\hat{p}))} + \sum\limits_{\ell = 1}^k \sum\limits_{|\beta| = \ell} r^{\ell}\|\nabla^{\beta} \hat{g}_{ij}\|_{C^0(B_r(\hat{p}))} + \sum\limits_{\ell = 0}^k \sum\limits_{|\beta| = \ell} r^{\ell + \alpha} [\nabla^{\beta} \hat{g}_{ij}]_{C^{0,\alpha}(B_r(\hat{p}))} < \eta.
\end{align}
 Here $\hat{g}_{ij} := \hat{g}(\nabla u_i, \nabla u_j)$ and $\beta$ is a multi-index with $|\beta| := \sum\limits_i \beta_i$. 

\end{definition}

In terms of 
the covering regularity scale,  Corollary \ref{existence proof} has the following consequence. 
\begin{lemma} \label{l:covering-regularity-scale}
Given $K\in \dR$, $N \in [1, \infty)$, and $\iota_0 > 0$, let $X$ be an $\RCD(K, N)$ space with $\iota_0$-BCG (recall Definition \ref{d:BCG}). For all $\eta > 0$ and $\alpha \in (0,1)$, one can find a uniform constant 
$
s_0  = s_0 (K, N, \iota_0, \eta, \alpha) > 0
$
such that the  $(0,\alpha, \eta)$-regularity scale at any $p \in X$ satisfies $\mathfrak{s}_{\eta}^{\alpha}(p) \geq  s_0 > 0$.
\end{lemma}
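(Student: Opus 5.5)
We pass to the universal cover and apply the quantitative harmonic-chart results there. Fix $p\in X$ and let $\pi:\widetilde X\to X$ be the universal cover, equipped with the lifted metric and measure; $\widetilde X$ is again an $\RCD(K,N)$ space of the same essential dimension, and by $\iota_0$-BCG we have $\Injrad(\widetilde X)\ge \iota_0$. Choose a lift $\hat p$ of $p$. Since $\pi$ is a local isometry, for $r$ smaller than a constant depending only on $\iota_0$ (say $r\le \iota_0/10$), the restriction $\pi:B_{2r}(\hat p)\to B_{2r}(p)$ is a local isometry, and in particular a local diffeomorphism once we know both sides are $C^{1,\alpha}$ manifolds. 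It also intertwines the weighted measures, so $\hat f=f\circ\pi$. This provides the map $\pi_p$ required in Definition \ref{def:covering}.

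Next we apply the quantitative final statement of Theorem \ref{thm:nonsmoothandssasasas} on $\widetilde X$ at $\hat p$, with $\delta=\iota_0$, $p$ large, and $\epsilon$ small. It gives a radius $r_1=r_1(K,N,\epsilon,\iota_0,p)$ and a $(1+\epsilon,p)$-harmonic chart $\Phi=(u_1,\dots,u_n)$ on $B_{r_1}(\hat p)$ satisfying \eqref{neweq1} and \eqref{neweq2}. We may take this chart harmonic for the RCD Laplacian, which coincides with $\Delta^{\hat g}_{\hat f}$ by Proposition \ref{noncocll}. We normalize so that $\hat g^{ij}(\hat p)=\delta_{ij}$, as in \eqref{eq:morrey22}; this is done by multiplying by a matrix, as in Proposition \ref{bibilip}. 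The Hessian bound $r_1^p\fint|\Hess_{u_i}|^p\le \epsilon^p$, together with $|\nabla \hat g^{ij}|\le C(|\Hess_{u_i}|+|\Hess_{u_j}|)$ (Remark \ref{remintegra}), gives a scale-invariant $W^{1,p}$ bound on $\hat g^{ij}$ in the chart. Here we use the bi-Lipschitz property and the Ahlfors regularity supplied by Corollary \ref{c:hoelder-measure-density} to pass between $\meas$ and Lebesgue measure.

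Taking $p>n/(1-\alpha)$, the Morrey–Sobolev embedding on $\Phi(B_{r_1}(\hat p))\subset\mathbb R^n$ yields
\[
r^{\alpha}[\hat g^{ij}]_{C^{0,\alpha}(B_r)}\le C\,r^{1-n/p}\|\nabla \hat g^{ij}\|_{L^p}\le C(\epsilon)
\]
for $r\le r_1/2$, with $C(\epsilon)\to0$ as $\epsilon\to0$. Combined with $\hat g^{ij}(\hat p)=\delta_{ij}$, this gives $\|\hat g^{ij}-\delta_{ij}\|_{C^0}\le C(\epsilon)$ on $B_r(\hat p)$. Inverting the matrix, the same estimates hold for $\hat g_{ij}$. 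Choosing $\epsilon=\epsilon(\eta)$ small then yields $(0,\alpha,\eta)$-control at radius $s_0=\min\{r_1/2,\iota_0/10\}$, which depends only on $K,N,\iota_0,\eta,\alpha$.

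The main obstacle is ensuring that every constant is genuinely uniform. This requires that the quantitative statements of Theorem \ref{thm:nonsmoothandssasasas} (which rest on Proposition \ref{prop:injec}, Corollary \ref{cor:distancehess}, Theorem \ref{thm:morrey} and Remark \ref{asansakkksksksksks}) depend only on $(K,N,\iota_0)$ and not on $\meas(B_1)$ or other normalizations. Since the harmonic radius is scale and measure invariant, I would argue by contradiction and compactness: take a sequence $\widetilde X_i$ with $\fs(\hat p_i)\to 0$, rescale, and use Proposition \ref{asasaso} and Theorem \ref{thm:nonsmoothandssasasas} on the limit to obtain a contradiction.
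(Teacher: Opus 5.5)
Your proposal is correct and follows the paper's route: lift to the universal cover, where $\Injrad\ge\iota_0$, apply a quantitative harmonic-chart result at a lift $\hat p$, and use the covering map as $\pi_p$.

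The one real difference is the result you invoke. The paper's one-line proof applies Corollary \ref{existence proof}. That corollary only gives a Morrey bound $s\fint_{B_s}|\Hess|^2\le C$, hence $C^{0,\frac12}$ control of $g^{ij}$. Taken literally, it therefore covers only $\alpha\le\frac12$, which is all that Theorem \ref{t:main-smoothing} uses.

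Your route covers every $\alpha\in(0,1)$. It combines:
\begin{itemize}
\item the final quantitative statement of Theorem \ref{thm:nonsmoothandssasasas};
\item the bound $|\nabla g^{ij}|\le C(|\Hess_{u_i}|+|\Hess_{u_j}|)$;
\item transfer to Lebesgue measure in the chart;
\item Morrey's embedding with $p>n/(1-\alpha)$.
\end{itemize}
This is the same mechanism the paper itself uses in Theorem \ref{thm:smoothi}.

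In the measure transfer, what you need uniformly is a bound on the oscillation of $f$ over the chart domain. That bound comes from the quantitative Cheeger--Colding estimate behind Corollary \ref{c:hoelder-measure-density}; qualitative Ahlfors regularity alone is not enough.

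You can drop the compactness fallback in your last paragraph. Theorem \ref{thm:nonsmoothandssasasas} already gives $r=r(K,N,\epsilon,\delta,p)$, and the $(Q,p)$-condition is stated in normalized averages, so nothing depends on $\meas(B_1)$.

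As sketched, the fallback would not close anyway. Under pmGH convergence harmonic radii are only upper semicontinuous (Corollary \ref{cor:upsem}). Passing a good chart on the limit back to the approximating spaces needs $W^{2,p}_{\loc}$-strong harmonic convergence (Proposition \ref{contharrad}), which in turn rests on the very quantitative estimates you would be trying to establish.
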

The proof is straightforward. Indeed, we only need to apply Corollary \ref{existence proof} to the universal cover of $X$, which has uniform lower bound of the injectivity radius.

The above regularity lemma implies a metric smoothing result. Recall that \cite[Theorem~1.1]{PWY} proved a metric
smoothing result for unweighted Riemannian manifolds $(M^n, g, \haus^n)$ whose $(\alpha , \eta)$-covering
 regularity-scale is uniformly bounded from below for some $0 < \alpha < 1$. 
In our context of RCD spaces with BCG, we will also formulate a smoothing result, where the coordinates chart $\Phi$ on each local covering space is harmonic in the sense of metric measure spaces. 
This can be viewed as a generalization in the RCD setting, and it
is new even for smooth RCD spaces.

\begin{theorem}[Metric smoothing I]
\label{t:main-smoothing}
Let $X$ be an $\RCD(K, N)$ space with $\iota_0$-BCG and $\dim(X) = n$ for some $K \in \mathbb{R}, N \in [1, \infty)$ and some $\iota_0>0$. 
Then for any $\delta > 0$, there exists a smooth Riemannian metric $g_{\delta}$ on $X$ such that  
$
	 e^{-\delta} g_X  \leq g_{\delta}  \leq e^{\delta} g_X,
$
and that for each $k\in \mathbb{Z}_{\ge 0}$:
	  \begin{align}
 \sup\limits_{X} |\nabla^k \Rm_{g_{\delta}}| \leq C_k (\delta,  \iota_0, K, N)<\infty.  
  \end{align} 
\end{theorem}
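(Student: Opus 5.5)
The plan is to reduce the statement to the smoothing theorem of Petersen--Wei--Ye \cite[Theorem~1.1]{PWY}. That result is stated for unweighted manifolds with a uniform lower bound on the $(\alpha,\eta)$-covering regularity scale. The reduction uses Lemma \ref{l:covering-regularity-scale}, which provides exactly this lower bound with harmonic charts taken in the weighted sense. The gap between the two notions of harmonicity is bridged by Theorem \ref{thmlapchart}.

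First, apply Theorem \ref{theorem:inj}(1) to the universal cover $\widetilde X$, which has $\Injrad\ge\iota_0$. Since $X$ is a quotient of $\widetilde X$ by isometries, this gives a smooth differential structure on $X$ in which $g_X\in W^{1,p}_{\loc}\cap C^{0,\alpha}_{\loc}$. Lemma \ref{l:covering-regularity-scale} then supplies, at every $p\in X$, a local cover $\pi_p:B_{2s_0}(\hat p)\to B_{2s_0}(p)$ and a harmonic chart $\Phi$ on $B_{s_0}(\hat p)$ with $\|\hat g_{ij}-\delta_{ij}\|_{C^0}+s_0^\alpha[\hat g_{ij}]_{C^{0,\alpha}}<\eta$. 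Here $s_0=s_0(K,N,\iota_0,\eta,\alpha)$.

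The key observation concerns the equation satisfied by $\hat g$ in these coordinates. By Theorem \ref{thmlapchart} (equivalently Definition \ref{plap} with $\Delta\phi_j=0$), the Laplacian in this chart is the pure second-order operator $\sum\hat g^{jk}\partial_j\partial_k$. This is exactly the coordinate form that Euclidean-harmonic coordinates give for the unweighted Laplacian in the smooth setting, and the Petersen--Wei--Ye construction only uses this structure.

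Their construction proceeds as follows. In each chart one mollifies $\hat g_{ij}$ at a scale $\rho\ll s_0$. The mollified metrics are glued using a partition of unity subordinate to a cover of $X$ with bounded multiplicity. Multiplicity is controlled by Bishop--Gromov on $\widetilde X$, or equivalently via the local covers. One must also make the gluing equivariant with respect to the deck transformations of the local covers, following the frame-bundle/averaging device in \cite{PWY} so that the result descends to $X$.

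The $C^0$ closeness $e^{-\delta}g_X\le g_\delta\le e^\delta g_X$ follows from the uniform $C^{0,\alpha}$ control: one chooses $\eta$ and $\rho$ small depending on $\delta$. For the derivative bounds, the $k$-th derivatives of the mollified metric are bounded by $C\rho^{-k-2}\eta$ in the chart coordinates. The transition maps between charts are controlled in $C^{1,\alpha}$ by Corollary \ref{asjasoajspa}, and this control lifts to the covers. Altogether this bounds $|\nabla^k\Rm_{g_\delta}|$ by constants depending only on $\delta,\iota_0,K,N$.

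The main obstacle is this higher-derivative control across overlaps. The transition maps between harmonic charts are a priori only $C^{1,\alpha}$ (indeed $W^{2,p}$) by Corollary \ref{newcor}, and the mollification in one chart is not smooth in another. I would handle this as \cite{PWY} do: mollify only once, in a single chart, and glue via a partition of unity whose derivatives are taken in that chart. Alternatively, I would run a short-time Ricci flow/harmonic-map-heat-flow smoothing \`a la Simon \cite{Simon} starting from the $C^0$-close glued metric, which yields the bounds $C_k$ through Shi-type estimates. Either way the only inputs needed are the uniform $C^{0,\alpha}$ harmonic-radius bound and the vanishing first-order term in Theorem \ref{thmlapchart}; the density $f$ plays no role in $g_\delta$.
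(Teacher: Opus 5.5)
Your two inputs, the uniform covering regularity scale from Lemma \ref{l:covering-regularity-scale} and the fact that in a harmonic chart the RCD Laplacian is $\sum_{j,k}\hat g^{jk}\partial_j\partial_k$, are exactly the ones the paper uses. The construction you build on them, however, has a genuine gap, located precisely at what you call the main obstacle: mollifying $\hat g_{ij}$ chart by chart and gluing with a partition of unity cannot give $\sup_X|\nabla^k\Rm_{g_\delta}|\le C_k$. A transition map $\tau$ between two harmonic charts solves $\sum_{i,j}\hat g^{ij}\partial_i\partial_j\tau=0$ with $\hat g^{ij}\in C^{0,\alpha}\cap W^{1,p}$. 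So it is $C^{2,\alpha}$ (Corollary \ref{calp}) but a priori no better, and only its $C^{1,\alpha}$ norm is uniformly controlled (Corollary \ref{asjasoajspa}). A metric mollified in one chart and rewritten in another has coefficients containing $\partial\tau$, so it is only $C^{1,\alpha}$ there. The glued tensor is therefore smooth in no atlas, and its curvature is not even a priori bounded.

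Neither of your fixes repairs this. Mollifying ``once, in a single chart'' is not available when $X$ needs many charts. The appeal to \cite[Theorem~1.1]{Simon} is circular: that result presupposes a smooth background metric $h$, $C^0$-close to $g_X$, with $\sup_X|\nabla^i\Rm(h)|\le k_i$, and its output bounds depend on the $k_i$. Producing such an $h$ with $k_i$ depending only on $(\delta,\iota_0,K,N)$ is exactly what is to be proved, which is why the paper invokes \cite{Simon} only after Theorem \ref{t:main-smoothing}.

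Notice also that your scheme never actually uses the formula of Theorem \ref{thmlapchart}, since convolving coefficients involves no PDE. In the paper that formula is what lets the \cite{PWY} machinery run, because the smoothing is intrinsic rather than coordinate-based. On each local cover one builds an almost isometric embedding $F_\beta$ of $B_{s_0}(\bo^n)$ into $L^2(B_{s_0}(\bo^n))$. Up to the factor $s_0^{1-\frac{3n}{2}}$, $F_\beta(p)(q)$ is obtained by integrating $\chi_n(\frac{2n}{s_0^2}w_x(p))\,\chi_n(\frac{2n}{s_0^2}w_x(q))$ against $\dvol^{\hat g_\beta}(x)$. Here $w_x$ is a regularized distance function solving $\Delta w_x=-1$ in $B_{s_0/2}(x)$, with $w_x=0$ on the boundary, for the RCD Laplacian.

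In a harmonic chart this equation reads $\sum_{i,j}\hat g^{ij}\partial_i\partial_j w_x=-1$, with no weight-dependent first-order term (Theorem \ref{asnasi}, Definition \ref{plap}). Hence the elliptic estimates of \cite[Sections 4 and 5]{PWY} for $w_x$, and thus for $F_\beta$, go through using only the uniform $C^{0,\alpha}$ control. The pulled-back metrics $F_\beta^*h_{0,\beta}$ fit together into a well-defined metric on $X$ (\cite[Lemma 3.2]{PWY}). The images $F_\beta(B_{s_0}(\bo^n))$ have uniformly bounded sectional curvature, and Abresch's smoothing \cite{Abresch} then yields $g_\delta$ with the required uniform bounds $C_k$. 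This coordinate-free embedding step is the missing idea in your proposal.
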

\begin{proof}

First, taking $\alpha=\frac{1}{2}$, for fixed $\eta > 0$, by Lemma \ref{l:covering-regularity-scale}, 
$\mathfrak{s}_{\eta}^{0,\alpha}(p) \geq 3 s_0 > 0$ for any $p \in X$, where $s_0 = s_0 (K, N, \iota_0, \eta)>0$.
Next, the smooth metric $g_{\delta}$ is constructed via   locally almost isometric embeddings  into the Hilbert space $L^2(U_{\beta})$, where $U_{\beta}$ are open subsets in $X$.
The overall strategy is the same as that of \cite{PWY}. We only outline it here.

Let $X$ be covered by balls $\{B_{s_0} (p_{\beta})\}_{\beta \in B}$ and for each $\beta \in B$,
there exists a local diffeomorphism $\varphi_{\beta}: B_{3 s_0}(\bm{0}^n) \to B_{3 s_0}(p_{\beta})$
and a harmonic chart $\Phi_{\beta} = (u_1,\ldots, u_n)$ on $B_{2 s_0}(\bm{0}^n)$ 
such that the pulled back metric $\hat{g}_{\beta} := \varphi_{\beta}^* g$, written in the harmoic chart $\Phi_{\beta}$, satisfies the uniform $C^{0,\frac{1}{2}}$-estimate on $B_{s_0}(\bm{0}^n)$ as in Definition \ref{def:covering}.

To construct $g_{\delta}$, a crucial point is to establish local embeddings for all $\beta \in B$: 
$
    F_{\beta}: (B_{s_0} (\bm{0}^n), \hat{g}_{\beta}) \to (L^2(B_{s_0} (\bm{0}^n)), \hat{g}_{\beta}).
$
 Then each $F_{\beta}$ induces a smooth metric $\tilde{g}_{\beta} := F_{\beta}^* h_{0, \beta}$, where $h_{0, \beta}$ is the standard $L^2$-metric on $(L^2(B_{s_0}(\bm{0}^n)), \hat{g}_{\beta})$ that 
is defined with respect to $\hat{g}_{\beta}$.
Suppose $F_{\beta}$ has been constructed. Then the intermediate smooth metric $g_{\delta}^{\dag}$ is defined by
 \begin{align}
g_{\delta}^{\dag} (v, w) :=  
\tilde{g}_{\beta}
\left( (\varphi_{\beta})_*)^{-1} (v) , (\varphi_{\beta})_*)^{-1} (w)\right), \quad v,w\in T_pX.
\end{align}
Moreover, the pull-back metric $\varphi_{\beta}^* g_{\delta}^{\dag}$ coincides with $\tilde{g}_{\beta}$.
The well-definedness of $g_{\delta}^{\dag}$ 
has been proved in \cite[Lemma 3.2]{PWY}.

We will briefly describe the construction of $F_{\beta}$ via the smooth regularization of  distance function, which also follows from the strategy in \cite[Section 4]{PWY}. 
Let $\chi_n$ be a 
a smooth cut-off function 
defined on $\dR_+$ that satisfies $\chi_n(t) = 0$ for 
$t\in [0, \frac{1}{4}]$
and $\chi_n(t) = K_n$ for $t\in [\frac{1}{2},\infty)$, where   $K_n > 0$ is a dimensional constant. The embedding $F_{\beta}: (B_{s_0}(\bm{0}^n), \hat{g}_{\beta}) \to  (L^2(B_{s_0}(\bm{0}^n)), \hat{g}_{\beta})$
is defined by 
\begin{align}
\begin{split}
F_{\beta}(p) 
:= & \ s_0^{1- \frac{3n}{2}}  \cdot \bm{u}_p,
\\
\bm{u}_p (q)
:= & \ \int_{B_{s_0}(\bm{0}^n)}
\chi_n \left( \frac{2n}{s_0^2} \cdot w_x(p) \right) \cdot \chi_n \left( \frac{2n}{s_0^2} \cdot w_x(q) \right) \dvol^{\hat{g}_{\beta}}(x),
\end{split}
\end{align}
where for each $x\in B_{s_0}(\bm{0}^n)$, $w_x$ solves the boundary value problem 
\begin{align}\label{e:Poisson-regularization}
\begin{cases}
\Delta w_x =  - 1, & \text{in} \ B_{s_0/2}(x)
\\
w_x  =  0 , & \text{on} \ \p B_{s_0/2}(x),
\end{cases}
\end{align}
where $\Delta$ is the Laplace operator in the RCD sense, namely it is the push-forward Laplacian (Definition \ref{plap}), due to Theorem \ref{asnasi} (with Remark \ref{asnasi2222}).

The remaining technical ingredient is to 
establish 
 regularity estimates for the embedding $F_{\beta}$, which is based on the high regularity of the regularization function $w_x$, as a solution to the linear elliptic PDE \eqref{e:Poisson-regularization}. Notice that by Theorem \ref{asnasi} (with Remark \ref{asnasi2222}), the Laplacian $\Delta$ in the harmonic chart $\Phi_{\beta}$ can be written as $\Delta = \sum_{i, j=1}^ng^{ij} \frac{\p^2}{\p u_i \p u_j}$ (in particular the information of the weights disappears). In our setting, we can  apply the same arguments as in \cite[Sections 4 and 5]{PWY}.
The upshot is the image $F_{\beta}( B_{s_0}(\bm{0}^n) )$, as a submanifold of $L^2(  B_{s_0}(\bm{0}^n) )$, has uniformly bounded sectional curvature, and thus the remaining construction of $g_{\delta}$ follows from \cite{Abresch}. 
\end{proof}

We are ready to prove Theorem \ref{fibration} via metric smoothing.

\begin{proof}[Proof of Theorem \ref{fibration}]
For all $\alpha \in (0,1)$ and fixed small $\eta>0$, there exists some $s_0 = s_0 (\eta, K, N, \iota_0, \alpha) > 0$ such that for any $p \in X_j$, the $(0,\alpha, \eta)$-covering regularity scale at $p$ satisfies 
\begin{align}
	\inf\limits_{\substack{p \in X_j\\1 \leq  j < \infty}}\mathfrak{s}_{\eta}(p) \geq s_0 > 0.\end{align}
   For each $\delta > 0$, by  Theorem \ref{t:main-smoothing}, 
    one can  	
take a smoothing metric $g_{j,\delta}$ near $g_j$ such that
$
e^{-\delta} g_j \leq g_{j,\delta} \leq e^{\delta} g_j	
$
and it satisfies the regularity property
\begin{align}
\inf\limits_{p \in X_j}\mathfrak{s}_{\eta}(p, g_{j, \delta}) \geq s_0 > 0 \quad \text{and} \quad |\nabla^k \Rm_{g_{j, \delta}}| \leq C_k(\delta).
\end{align}

We now consider the $O(n)$-frame bundle $\pr_j:F(X_j)\to X_j$, where $n := \dim(X_j)$. 
 Let  $\epsilon_0 > 0$ be a fixed small constant.
We denote by $\nabla_j^{\epsilon_0}$
the Levi-Civita connection of the smoothing metric $g_{j,\epsilon_0}$. Then one can construct a smooth Riemannian metric on $F(X_j)$
as follows. It suffices to the define the metric locally. Let $p \in X_j$
 be any point and $v\in T_p X_j$ be any tangent vector.  We first use the connection $\nabla_j^{\epsilon_0}$ to  define the horizontal lifting of $v$. 
Let $\gamma: [0,1] \to X_j$
be a smooth curve with $\gamma(0) = p$
and $\gamma'(0) = v$. 
Taking an $O(n)$-frame at $p$, we define the horizontal lift $\bar{\gamma}:[0,1]\to F(X_j)$ of the smooth curve $\gamma:[0,1]\to X_j$ by the parallel transport uniquely determined by the connection  $\nabla_j^{\epsilon_0}$.
Thus, the horizontal lift of $v$ is defined to be $\bar{\gamma}'(0)$, denoted as $\bar{v}$.

Using the above horizontal lifting, one can define a  Riemannian metric on $F(X_j)$ as follows.
Let $\bar{p} \in F(X_j)$ be any point and fix a bi-invariant metric $h_0$ on the Lie group $O(n)$. We define 
$
 \bar{g}_j(\bar{v}, \bar{w}) :=  g_j (v, w)$,    $\bar{g}_j(\xi, \zeta) := h_0(\xi, \zeta)$ and  $\bar{g}_j(\bar{v}, \xi) = 0,
$
where 
$\bar{v}$, $\bar{w}$ are horizontal vectors at $\bar{p}$, and $\xi$, $\zeta$ are vertical vectors at $\bar{p}$.
So it follows that $\bar{g}_j$ is a $C^{0,\frac{1}{2}}$-Riemannian metric on $F(X_j)$ with bounded covering $C^{0,\frac{1}{2}}$-regularity scale.
Moreover, $\pr_j: F(X_j) \to X_j$
is an $O(n)$-Riemannian submersion with the following 
commutative diagram:
\begin{equation}
\begin{tikzcd}
(F(X_j) ,\bar{g}_j, O(n)) \arrow{r}{\mathrm{eqGH}} \arrow[swap]{d}{ } & (Y_{\infty}, \bar{g}_{\infty}, O(n)) \arrow{d}{ } \\
 (X_j, g_j)  \arrow{r}{\mathrm{GH}} & (X_{\infty}, g_{\infty})
\end{tikzcd},	
\end{equation}
where $Y_{\infty}$ is a smooth manifold and $\bar{g}_{\infty}$ is a  $C^{0,\frac{1}{2}}$-Riemannian metric, and eqGH stands for the equivariant Gromov-Hausdorff convergence.

Next, for a fixed small parameter $\delta > 0$, we consider the smoothing Riemannian metric $g_{j, \delta}$
as before. Then the lifting Riemannian metric $\bar{g}_{j, \delta}$ via the connection $\nabla_j^{\epsilon_0}$ has 
uniformly bounded curvature and converges to a smooth manifold 
$(Y_{\infty, \delta}, \bar{g}_{\infty, \delta}, O(n) )$ in the $O(n)$-eqGH convergence. Notice that there exists a small number $\delta_0 > 0$ such that for any $\delta \in (0, 2\delta_0)$, the limit manifold $Y_{\infty, \delta}$ is diffeomorphic to  $Y_{\infty}$ and the limiting $O(n)$-action on $Y_{\infty, \delta}$ is conjugate to that on $Y_{\infty}$ such that $Y_{\infty, \delta}/O(n)$ is homeomorphic to $Y_{\infty}/O(n)$.  
Now we take $\delta = \delta_0$ and let $\mathscr{F}_{j, \delta_0}: F(X_j) \to Y_{\infty, \delta_0}$ be an $O(n)$-equivariant fibration. 
Thus,    $\mathscr{F}_{j, \delta_0}$ descends to a fibration $\mathscr{F}_j: X_j\to X_{\infty}$, and each $\mathscr{F}_j$-fiber is diffeomorphic to an infranilmanifold. 
\end{proof}
\begin{remark}[Locally bounded covering geometry]
We say that $X$ has \emph{$(\rho, \iota_0)$-locally bounded covering geometry (LBCG) at a point $x\in X$} if  $\Injrad(\tilde{q}) \geq \iota_0$ for any $\tilde{q} \in B_{\rho}(\tilde{x})$, where $(\widetilde{B_{2\rho}(x)}, \tilde{x})$ is the universal cover of $B_{2\rho}(x)$.
    The above results can be generalized to the case when the space has LBCG instead of BCG, if $X_j$ is smooth because local techniques coming from Riemannian geometry as in the Cheeger-Colding theory can be applied. On the other hand, it is not known whether these techniques can be justified in the non-smooth LBCG framework.
    Similar suggestions also go to the next subsection. See  \cite{HondaSun} for related discussions and see also very recent \cite{ANRV}.
\end{remark}
\subsection{Mixed curvature bounds}\label{ss:mixed-curvature-bounds}

In this subsection, we will prove the structural theorems for the metric measure spaces with mixed curvature bounds in the sense of Definition \ref{defmixed}.

Next theorem proves a quantitative estimate on the covering regularity scale. 
The regularity we obtain here is $C^{0,\alpha}_{\loc}$ for any $\alpha \in (0,1)$ again. 
One should also notice that a metric measure space $X$ with mixed curvature bounds in general does not have bounded covering geometry in the sense of Definition \ref{d:BCG}, namely the global universal cover $\widetilde{X}$ may still be collapsed. Nevertheless, due to the globalization property in Corollary \ref{cormix}, one can still obtain a quantitative estimate on the covering regularity scale.

\begin{theorem}\label{thm:smoothi}
 For fixed numbers $K \in \mathbb{R}$, $N \in [1, \infty)$, $\kappa >0$, $\eta > 0$ with $\alpha \in (0,1)$, there exists 
a uniform constant $s_0  = s_0 (K, N, \kappa, \eta, \alpha) > 0$ such that if $X$ has the $(K, N, \kappa)$-mixed curvature condition and $\p X = \emptyset$, then $g_X$ is in $C^{0,\alpha}_{\loc}$ in harmonic coordinates, and  the $(0,\alpha,\eta)$-covering regularity scale at any $p \in X$ has a uniform lower bound  $\mathfrak{s}_{\eta}^{\alpha} (p) \geq s_0 > 0$.
\end{theorem}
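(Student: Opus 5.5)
The strategy is to reduce to the bounded covering geometry case via the local CAT$(\kappa)$ globalization of Lemma \ref{lemextend} and Corollary \ref{cormix}, and then invoke the quantitative harmonic chart results (Corollary \ref{existence proof} and Theorem \ref{thm:nonsmoothandssasasas}) on the lifted space.

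\emph{Step 1: lift to a space with uniform injectivity radius.} Fix $p \in X$. Since $\partial X=\emptyset$, $X$ is a topological manifold without boundary by the $C^0$-Riemannian structure theorem of \cite{KKK}. Lemma \ref{lemextend} therefore applies and gives a complete pointed $\CAT(\kappa)$ space $(Z,\hat p)$ together with a locally distance-preserving map $\varphi_{\hat p}:Z\to X$. This map restricts to a pseudo-covering $B_{\varpi_\kappa/2}(\hat p)\to B_{\varpi_\kappa/2}(p)$, and $\Injrad(q)\ge \varpi_\kappa/8$ for every $q\in B_{\varpi_\kappa/8}(\hat p)$. By Corollary \ref{cormix}, $Z$ with the lifted measure is again an $\RCD(K,N)$ space with the same mixed curvature bounds. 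Its essential dimension is $n=\dim(X)$, because $\varphi_{\hat p}$ is a local measure-preserving isometry. We then rescale so that the ball $B_1(\hat p')$ around a point $\hat p'$ near $\hat p$ satisfies $\Injrad(B_1(\hat p'))\ge \delta_0(\kappa)>0$. For instance, we may work with balls of radius $\varpi_\kappa/16$ centered at $\hat p$, and the scale depends only on $\kappa$.

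\emph{Step 2: quantitative harmonic chart upstairs.} We apply the final quantitative statement of Theorem \ref{thm:nonsmoothandssasasas} to $Z$ at $\hat p$, with $\delta=\delta_0(\kappa)$ after scaling. This yields a radius $r=r(K,N,\kappa,\epsilon,q)>0$ and a $(1+\epsilon,q)$-harmonic chart $(\tilde\Phi,B_r(\hat p))$ satisfying \eqref{neweq1} and \eqref{neweq2}. Here harmonicity is with respect to the RCD Laplacian of $Z$, which by Theorem \ref{asnasi} agrees with the weighted Laplacian $\Delta^{\hat g}_{\hat f}$. The bound $r^q\fint|\Hess_{\tilde\phi_i}|^q\le\epsilon^q$ controls $r^q\fint|\nabla \hat g^{ij}|^q$ by Remark \ref{remintegra}. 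Taking $q>n$ large, Morrey's inequality in the bi-Lipschitz chart (with Ahlfors regularity from Corollary \ref{c:hoelder-measure-density}) gives
\[
r^{\alpha}[\hat g^{ij}]_{C^{0,\alpha}(B_{r/2})}\le C\epsilon, \qquad \alpha=1-n/q.
\]
Combined with $\hat g^{ij}(\hat p)=\delta_{ij}$, this yields $\|\hat g_{ij}-\delta_{ij}\|_{C^0}\le C\epsilon$. Choosing $\epsilon=\epsilon(\eta,n)$ and $q=q(\alpha,n)$, we obtain the estimate in Definition \ref{def:covering} on $B_{r/2}(\hat p)$. To deduce $C^{0,\alpha}$ regularity for arbitrary $\alpha<1$, we simply take $q$ larger.

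\emph{Step 3: descend.} We set $\pi_p:=\varphi_{\hat p}|_{B_{2s_0}(\hat p)}$ with $s_0=\min\{r/4,\varpi_\kappa/32\}$. We must check that $\pi_p$ is a local diffeomorphism onto $B_{2s_0}(p)$ and that $\hat g=\pi_p^*g_X$ and $\hat f=f\circ\pi_p$. This holds because $\varphi_{\hat p}$ is a local metric-measure isometry, and both the canonical Riemannian metric and the smooth structure from Theorem \ref{cor:distance coordinate} are determined locally by the metric-measure structure. In particular, $X$ itself has locally positive injectivity radius by Lemma \ref{injeclemma}(3), so its smooth structure is defined and is compatible with that of $Z$. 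This gives $\fs^{0,\alpha}_\eta(p)\ge s_0$ uniformly in $p$. The $C^{0,\alpha}_{\loc}$ regularity of $g_X$ in harmonic coordinates on $X$ follows from Theorem \ref{thm:nonsmoothandssasasas} applied directly on $X$, using Lemma \ref{injeclemma}(3).

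The main obstacle is the uniformity in Step 1. Lemma \ref{lemextend} gives a lower bound on the injectivity radius only near $\hat p$, while Theorem \ref{thm:nonsmoothandssasasas} needs $\Injrad(B_1(x))\ge\delta$ on a whole ball. The fix is to shrink to $B_{\varpi_\kappa/16}(\hat p)$ and rescale, noting that the constants in Theorem \ref{thm:nonsmoothandssasasas} depend only on $K,N$, the injectivity radius lower bound, and the rescaled curvature bound. A further care point is that $\varphi_{\hat p}$ is only a pseudo-covering. However, we only need a local isometry from the ball $B_{2s_0}(\hat p)$, which Lemma \ref{lemextend}(1),(2) provide.
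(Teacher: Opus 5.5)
Your proposal is correct and follows the same route as the paper. You lift via Lemma \ref{lemextend} and Corollary \ref{cormix} to a complete $\RCD(K,N)$, $\CAT(\kappa)$ space with injectivity radius at least $\varpi_\kappa/8$ on $B_{\varpi_\kappa/8}(\hat p)$, apply the quantitative part of Theorem \ref{thm:nonsmoothandssasasas} there after rescaling by $8/\varpi_\kappa$, and push the chart down through the local isometry $\varphi_{\hat p}$. That rescaling already removes the ``obstacle'' you flag, so no further shrinking is needed; your Morrey-type conversion of the $(1+\epsilon,q)$-Hessian bound into the $C^{0,\alpha}$ estimate of Definition \ref{def:covering}, and the descent checks in Step 3, make explicit what the paper's short proof leaves implicit.
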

\begin{proof}
For any given point $p \in X$, 
by Corollary \ref{cormix},
there exists a complete length space $Z$ which is both $\RCD(K, N)$ and $\CAT(\kappa)$. Moreover, for any $\hat{p} \in Z$, there exists  
a distance preserving map $\varphi_{\hat{p}}: Z \to X$ such that for any $r < \frac{\varpi_{\kappa}}{2}$, the restriction $\varphi_{\hat{p}}|_{B_r(\hat{p})}: B_r (\hat{p})\to B_r(p)$
is a pseudo-covering map, and thus a local diffeomorphism, where $\varpi_{\kappa}$ is the constant defined in \eqref{d:varpi}.

Then by applying Theorem \ref{thm:nonsmoothandssasasas}, whenever $r>0$ is quantitatively small, one can obtain a harmonic chart on $B_r(\hat{p})$ and the desired estimate on the $(\alpha, \eta)$-covering regularity scale for any $\alpha \in (0,1)$.
\end{proof}

Based on Theorem \ref{thm:smoothi}, by applying the same argument as in the proof of Theorem \ref{t:main-smoothing}, we obtain the following metric smoothing result for metric measure spaces with mixed curvature bounds. 
\begin{corollary}[Metric smoothing II] \label{c:smoothing-mcb} 
 Let $X$ be a metric measure space having the $(K, N, \kappa)$-mixed curvature bounds with $\p X = \emptyset$ for some $K \in \mathbb{R}$, $N \in [1, \infty)$, and some $\kappa >0$.
 Then  for any  $\delta > 0$, there exists a smooth Riemannian metric $g_{\delta}$ on $X$ such that 
$
	 e^{-\delta} g_X  \leq g_{\delta}  \leq e^{\delta} g_X,
$ and
 \begin{equation}
 \sup\limits_{X} |\nabla^k \Rm_{g_{\delta}}| \leq C_k (K, N, \kappa, \delta)<\infty, \quad \text{for any $k \in \mathbb{Z}_{\ge 0}$.}  
  \end{equation} 
\end{corollary}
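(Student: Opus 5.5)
The plan is to follow the proof of Theorem \ref{t:main-smoothing} verbatim, replacing Lemma \ref{l:covering-regularity-scale} (the BCG input) by Theorem \ref{thm:smoothi} (the mixed-curvature input). Fix $\alpha=\frac12$ and a small $\eta>0$. Theorem \ref{thm:smoothi} gives $s_0=s_0(K,N,\kappa,\eta)>0$ with $\mathfrak{s}^{0,\alpha}_\eta(p)\ge 3s_0$ for every $p\in X$. The local covers are the restrictions of the maps $\varphi_{\hat p}:Z\to X$ from Lemma \ref{lemextend} and Corollary \ref{cormix}. These are pseudo-coverings on balls of radius less than $\varpi_\kappa/2$. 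Since $Z$ is $\RCD(K,N)$ and $\CAT(\kappa)$ with injectivity radius at least $\varpi_\kappa/8$ near $\hat p$, the harmonic charts of Theorem \ref{thm:nonsmoothandssasasas} on $Z$ have uniform constants. I will take $s_0<\varpi_\kappa/16$ so that all the balls used lie inside this controlled region.

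Next, cover $X$ by balls $B_{s_0}(p_\beta)$. On each lifted ball $B_{3s_0}(\hat p_\beta)$, use the harmonic chart $\Phi_\beta$ and define the PWY embedding
\[
F_\beta:B_{s_0}(\mathbf 0^n)\to L^2(B_{s_0}(\mathbf 0^n)).
\]
It is built from the solutions $w_x$ of $\Delta w_x=-1$ on $B_{s_0/2}(x)$ with zero boundary values. By Theorem \ref{asnasi} and Remark \ref{asnasi2222}, the RCD Laplacian in a harmonic chart is $\sum g^{ij}\partial_i\partial_j$, so the weight drops out. Corollaries \ref{newcor} and \ref{calp}, together with the uniform $C^{0,\alpha}$ control of $g^{ij}$, then give uniform $W^{2,p}$ and $C^{2,\alpha}$ estimates for $w_x$ and for their derivatives in $x$. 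These are exactly the estimates used in \cite[Sections 4, 5]{PWY}.

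With these estimates the image $F_\beta(B_{s_0})$ is a submanifold with uniformly bounded second fundamental form. Its pullback metric is $e^{\pm\delta}$-close to $\hat g_\beta$ once the scaling in $\chi_n$ is fixed appropriately. The local metrics descend to $X$ by \cite[Lemma 3.2]{PWY}; this uses that deck-type transformations of the pseudo-cover act by isometries of $Z$ and so preserve the construction. Gluing as in \cite{Abresch} then produces a smooth $g_\delta$ with $|\nabla^k\Rm_{g_\delta}|\le C_k(K,N,\kappa,\delta)$.

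I expect the main obstacle to be the descent step. In the BCG case there is a genuine universal cover, whereas here the local covers $\varphi_{\hat p}$ come from lifting globalization and are only pseudo-coverings. I would first check that on $B_{3s_0}(\hat p)$ the map $\varphi_{\hat p}$ is a local isometry onto its image, and that the intrinsic construction of $F_\beta$ depends only on the metric-measure structure of the lifted ball. Independence from the choice of lift then follows from the uniqueness of $w_x$.
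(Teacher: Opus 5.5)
Your proposal is essentially the paper's proof: the paper obtains the corollary by combining the uniform covering-regularity-scale bound of Theorem \ref{thm:smoothi} (whose local covers are exactly the pseudo-coverings $\varphi_{\hat p}:Z\to X$ of Lemma \ref{lemextend} and Corollary \ref{cormix}) with a verbatim repetition of the proof of Theorem \ref{t:main-smoothing}. That proof consists of the PWY $L^2$-embeddings built from $\Delta w_x=-1$ in harmonic charts (where the weight drops out), descent via \cite[Lemma 3.2]{PWY}, and gluing via \cite{Abresch}. The paper says nothing more about descent than that citation, so your extra care there is fine; but these pseudo-coverings need not come with a deck group, so well-definedness should rest on the isometry-invariance of the intrinsically defined construction on lifted balls, not on deck transformations.
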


We are now ready to prove the main structural results for spaces with mixed curvature bounds.

\begin{proof}[Proof of Theorem \ref{fibr}]
This can be checked similarly as in the proof of Theorem \ref{fibration} based on the metric smoothing result in Corollary \ref{c:smoothing-mcb}.
\end{proof}

\begin{proof}[Proof of Corollary \ref{cor:kapovitch}] Let $\epsilon > 0$, the upper bound of $\diam(X)$, be a fixed number to be determined.
By Corollary  \ref{c:smoothing-mcb},  for any fixed small number $\delta \in (0, 1)$, there exists a smooth metric $g_{\delta}$ that satisfies $e^{-\delta}g \leq g_{\delta} \leq e^{\delta} g$ and $|\Rm^{g_{\delta}}| < C(\delta)$. Notice that 
$
\diam^{g_{\delta}} (X)  \leq 3 \epsilon.
$
If $\epsilon > 0$ is chosen sufficiently small such that 
$
9\epsilon^2 \cdot C(\delta) < \epsilon_{Gr}(n),
$
where $\epsilon_{Gr}(n) > 0$ is the constant in Gromov's Almost Flat Manifold Theorem \cite{Gromov} (see also \cite{Ruh}), then the conclusion follows. 
\end{proof}
Note that Corollary \ref{corinfl} can be checked similarly, thus we omit the details.

 We conclude this section with a brief discussion of \textit{Ricci flow} smoothing.
In Theorem \ref{t:main-smoothing} and Corollary \ref{c:smoothing-mcb}, we established metric smoothing results for RCD spaces with bounded covering geometry and for metric measure spaces with mixed curvature bounds, respectively. 
 As in the proof of Theorem \ref{t:main-smoothing}, the construction relies on   local almost-isometric embeddings, whose choice in turns depends on the covering by {\it regular charts}. It is therefore natural to ask whether one can obtain a more canonical smoothing procedure, such as smoothing by the Ricci flow.

By Theorem \ref{t:main-smoothing} or Corollary \ref{c:smoothing-mcb}, the initial metric has uniformly bounded $C^{0,\alpha}$ geometry at the covering level and is $e^{\delta}$-bi-Lipschitz close to a smooth metric with uniformly bounded curvature. Consequently,
\cite[Theorem 1.1]{Simon} can be applied to produce a Ricci-flow smoothing of such a metric. This also provided an affirmative answer to the question posted in \cite[Conjecture 7.5]{Kapovitch}. See also \cite[Theorem 1.7]{wang} for an alternative proof.

\appendix \section{Behavior of harmonic radii}\label{secac}
In this appendix, as a topic of independent interest, we study the behavior of harmonic charts and harmonic radii with respect to the pmGH convergence.
Throughout this section, let \begin{equation}\label{harmc}
(X_i, x_i) \xrightarrow{\mathrm{pmGH}} (X, x).
\end{equation} be a pmGH convergent sequence of pointed $\RCD(K, N)$ spaces of essential dimension $n$, where $K\in\dR$ and $N \in [1,\infty)$.

We first prove a compactness result for harmonic charts.
\begin{proposition}[Compactness of harmonic charts]\label{compac}  
Fix 
  $r>0$,  $p >1$, and $Q \ge 1$. Assume that for any $i$, there exists a $(Q, p)$-harmonic chart $( B_r(x_i), \Phi_i )$. Then, after passing to a subsequence, there exists a locally uniformly limit $(Q, p)$-harmonic chart $(B_r(x), \Phi )$ on $X$ of $\Phi_i$ with $\dim(X)=n$.
\end{proposition}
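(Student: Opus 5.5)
We normalize $\Phi_i(x_i)=\bo^n$ by subtracting constants; harmonicity, $g^{jk}_i$ and Hessians are unchanged. The plan is to extract a limit map $\Phi$ from Theorem \ref{hessconv} and check the three defining properties of a $(Q,p)$-harmonic chart: harmonicity, infinitesimal $Q$-bi-Lipschitz bounds, and the scale-invariant Hessian bound \eqref{harmests}. Along the way we also show $\dim(X)=n$.

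\textbf{Uniform bounds and extraction.} Infinitesimal $Q$-bi-Lipschitz gives $|\nabla\phi_{i,j}|\le\sqrt Q$ a.e., so by the Sobolev-to-Lipschitz property (localized on balls, which are geodesically well behaved for $\RCD$ spaces) $\Phi_i$ is locally $\sqrt{Q}$-Lipschitz on $B_r(x_i)$, with a uniform bound depending only on $K,N$. Since $\Delta\phi_{i,j}=0$, we have $\sup_i\|\phi_{i,j}\|_{D(\Delta,B_s(x_i))}<\infty$ for every $s<r$. Arzel\`a--Ascoli then gives a locally uniform limit $\Phi$ on $B_r(x)$. Applying Theorem \ref{hessconv} with a diagonal argument over $s\uparrow r$, $\phi_{i,j}\to\phi_j$ strongly in $W^{1,2}$ on compact subballs, $\Delta\phi_j=0$, and $\Hess_{\phi_{i,j}}\rightharpoonup\Hess_{\phi_j}$ weakly in $L^2_{\loc}$. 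Hence $\Phi$ is harmonic.

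\textbf{Bi-Lipschitz bounds and Hessian bound.} Strong $W^{1,2}_{\loc}$ convergence gives $L^1_{\loc}$-strong convergence of $g_i^{jk}=\langle\nabla\phi_{i,j},\nabla\phi_{i,k}\rangle$ to $g^{jk}$. For any fixed $\xi\in\dR^n$, the pointwise inequalities $Q^{-1}|\xi|^2\le g_i^{jk}\xi_j\xi_k\le Q|\xi|^2$ integrated against nonnegative test functions then pass to the limit, so $(g^{jk})$ is $Q$-bounded a.e. For the Hessian, \eqref{hessss} gives $\int_{B_s(x)}|\Hess_{\phi_j}|^p\le\liminf_i\int_{B_s(x_i)}|\Hess_{\phi_{i,j}}|^p$ for $s<r$. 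Combining this with $\meas_{X_i}(B_s(x_i))\to\meas_X(B_s(x))$ and letting $s\uparrow r$ (monotone convergence, with $\meas_X(\partial B_r)=0$ so that $\meas_X(B_s(x))\to\meas_X(B_r(x))$), we obtain \eqref{harmests} on $B_r(x)$. Strictly, \eqref{hessss} is stated for $p>1$; since $p>1$ is assumed here, this suffices.

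\textbf{Dimension.} This is the main subtle step. By Theorem \ref{rectifi}(2), $\dim(X)\le n$. For the reverse inequality, at $\meas_X$-a.e.\ $y\in B_r(x)$ that is a Lebesgue point of all $g^{jk}$, we apply Proposition \ref{regularpointch} using condition (1): $\det(g^{jk})\ge Q^{-n}$ a.e. Its Laplacian hypothesis holds trivially because $\Delta\phi_j=0$. One caveat is that the proposition is stated for spaces of essential dimension $n$; what its proof actually uses is the splitting of tangent cones by the linear blow-up $\tilde\Phi$, which has nondegenerate Gram matrix. That argument shows $\Tan(X,y)$ contains a splitting $\dR^n\times W$, and since the essential dimension of a tangent cone at a generic point equals $\dim(X)$, we get $\dim(X)\ge n$. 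Thus $\dim(X)=n$.

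The main obstacle is making this dimension argument rigorous without circularity. If it fails, the fallback is the trace identity from the proof of Theorem \ref{isomequiv}(2): $\langle g_{X_i},\Phi_i^*g_{\dR^n}\rangle$ compared with $\langle g_X,\Phi^*g_{\dR^n}\rangle$ via $L^p$-weak convergence of $g_{X_i}$, combined with the rank bound $\mathrm{rank}\,\Phi^*g_{\dR^n}=n$ a.e.\ and $|g_X|=\sqrt{\dim X}$.
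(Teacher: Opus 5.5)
Your proof is correct and follows the paper's route. You extract the limit via Theorem \ref{hessconv}, pass the infinitesimal $Q$-bi-Lipschitz bounds through strong $W^{1,2}_{\loc}$ convergence of the $g^{jk}$, and obtain \eqref{harmests} from the lower semicontinuity \eqref{hessss} together with $s\uparrow r$. Your normalization $\Phi_i(x_i)=\bo^n$ is a detail the paper leaves implicit.

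The only difference is how you get $\dim(X)=n$. The paper reads it off from Theorem \ref{rectifi}(2). Your blow-up argument via Proposition \ref{regularpointch} is valid but heavier than needed. The bound $\det(g^{jk})\ge Q^{-n}$ a.e.\ already says the $n$ covectors $d\phi_j$ are pointwise linearly independent in a cotangent module of rank $\dim(X)$, which gives $n\le\dim(X)$ directly, as in your fallback.
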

\begin{proof} Denote $\Phi_i = (\phi_{i, 1}, \ldots, \phi_{i, n} )$, by definition, $|\nabla \phi_{i, \beta}|\leq Q$.
By Theorem \ref{hessconv}, after passing to a subsequence, there exists a harmonic map $\Phi: B_r(x) \to \dR^n$ such that  $\Phi_i \xrightarrow{W^{1,q}} \Phi$ on $B_s(x)$ for all $s <r$ and $q<\infty$, which implies that $\Phi$ satisfies \eqref{bilip}. It follows from (2) of Theorem \ref{rectifi} that $(B_r(x), \Phi)$ is a $Q$-weak harmonic chart with  $\dim (X) = n$.
Finally, for every $j\in\{1,\ldots, n\}$ and every $s<r$, the weak lower semicontinuity in item (2) of Theorem gives \ref{hessconv}
\begin{align*}
s \cdot \meas_X (B_s(x))^{-\frac{1}{p}}\|\Hess_{\phi_j}\|_{L^p(B_s(x))}&\le \liminf_{i \to \infty} s \cdot \meas_{X_i} (B_s(x_i))^{-\frac{1}{p}}\|\Hess_{\phi_{i, j}}\|_{L^p(B_s(x_i))} \nonumber \\
&\le (Q-1)\cdot \frac{s \cdot \meas_{ X }(B_s( x ))^{-\frac{1}{p}}}{r \cdot \meas_{ X } (B_r( x ))^{-\frac{1}{p}}}, \quad \text{for any $s<r$.}
\end{align*}
 Letting $s \to r$ shows that $\Phi$ satisfies \eqref{harmests} on $B_r(x)$, which completes the proof.
\end{proof}

\begin{corollary}[Upper semicontinuity of harmonic radii]\label{cor:upsem}
We have
$
\limsup_{i \to \infty}r_H(Q, p, x_i) \le r_H(Q, p, x)
$
for all $Q \ge 1$ and $p>1$.
\end{corollary}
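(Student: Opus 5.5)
The plan is to argue by definition of the supremum, using Proposition \ref{compac} to pass charts to the limit. Set $L:=\limsup_{i\to\infty}r_H(Q,p,x_i)$. If $L=0$ there is nothing to prove, so assume $L>0$. After passing to a subsequence realizing the $\limsup$, we may assume $r_H(Q,p,x_i)\to L$; the pmGH convergence (\ref{harmc}) is preserved along subsequences.

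Fix any $r<L$. For all sufficiently large $i$ we have $r_H(Q,p,x_i)>r$, so by the definition of the harmonic radius as a supremum there is a $(Q,p)$-harmonic chart $(B_{r_i}(x_i),\Phi_i)$ with $r_i>r$. Restricting it gives a $(Q,p)$-harmonic chart on $B_r(x_i)$. The $Q$-weak harmonic (infinitesimal bi-Lipschitz) condition is local, hence preserved under restriction. The normalized Hessian bound (\ref{harmests}) is the one point needing care: we need
\begin{equation*}
r^p\fint_{B_r(x_i)}|\Hess_{\phi_{i,j}}|^p\di\meas_{X_i}\le (Q-1)^p,
\end{equation*}
given the same bound at radius $r_i$. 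This is not automatic for averaged quantities over a smaller ball. To avoid the issue, I will instead use radii $r'_i\in(r,r_H(Q,p,x_i))$ with $r'_i\to r$. Since $r_H(Q,p,x_i)\to L>r$, any sequence of radii with $r<r'_i$ and $r'_i\to r$ is admissible for large $i$, provided admissible radii fill out an interval below the supremum.

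So the key step, and the main obstacle, is to check that the set of radii admitting a $(Q,p)$-harmonic chart at a point is an interval $(0,r_H)$, or at least contains radii arbitrarily close to any $r<r_H$ from above. The first thing I would try is to rescale as in Remark \ref{rem:rescal}. If that fails, I would run the limiting argument directly with a sequence of charts at radii $s_i\downarrow r$, $s_i>r$, chosen for the spaces $X_i$ after a diagonal selection.

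Applying Proposition \ref{compac} on the common radius then yields a $(Q,p)$-harmonic chart $(B_r(x),\Phi)$ on $X$. Hence $r_H(Q,p,x)\ge r$, and letting $r\uparrow L$ gives $r_H(Q,p,x)\ge L$, which is the desired upper semicontinuity.

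A cleaner way to handle the obstacle is to apply the limsup–lim argument of Proposition \ref{compac} itself, which only needs charts on $B_{s_i}(x_i)$ with $s_i\to s$. Its proof takes $W^{1,q}$ limits on $B_{s'}(x)$ for $s'<s$ and then lets $s'\to s$ in the lower semicontinuity inequality, using continuity of $s\mapsto\meas_X(B_s(x))$. I would note that this proof adapts verbatim to varying radii $s_i\to s$, because the estimate there is already stated on smaller balls and then passed to $s'\to s$. With that adaptation, choosing $s_i:=\tfrac12(r+r_H(Q,p,x_i))\to\tfrac12(r+L)>r$ gives a limit chart on a ball of radius $\ge r$, and the conclusion follows.
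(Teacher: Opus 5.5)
Your overall plan works, but as written the choice of radii has a gap.

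\textbf{The gap.} The definition of $r_H$ as a supremum only gives admissible radii arbitrarily close to $r_H(Q,p,x_i)$. It does not give one at a prescribed smaller value. Restricting a chart does not help either, because $t\mapsto t^p\fint_{B_t(x_i)}|\Hess_{\phi_{i,j}}|^p\di\meas_{X_i}$ need not be monotone; Bishop--Gromov controls it under restriction only when $p$ dominates the volume growth exponent. So your final choice $s_i:=\frac12(r+r_H(Q,p,x_i))$ silently uses exactly the interval property you flagged as unproven.

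\textbf{The repair.} It is immediate, and it makes the interval discussion unnecessary. If $L<\infty$, choose admissible radii $s_i\in(r_H(Q,p,x_i)-\frac1i,\,r_H(Q,p,x_i)]$. Then $s_i\to L$, and your varying-radius argument yields a $(Q,p)$-harmonic chart on $B_L(x)$ directly.

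\textbf{The case $L=\infty$.} Your argument does not cover it. The varying-radius compactness needs $s_i\to s<\infty$: if $s_i\to\infty$, the normalized bound at radius $s_i$ gives no control of $t^p\fint_{B_t}|\Hess_{\phi_{i,j}}|^p$ at a fixed radius $t$. The paper splits this case off and reduces it to the bounded one. It produces charts on balls of a fixed radius $s$ around $x_i$, passes to the limit, and then lets $s\to\infty$. Note that this step needs admissible radii comparable to $s$, not merely some admissible radius above $s$.

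\textbf{Comparison with the paper.} Apart from this, your varying-radius version of Proposition \ref{compac} is a valid alternative to the paper's route. The paper takes $r_i=r_H(Q,p,x_i)\to r\in(0,\infty)$ and rescales each $X_i$ by the factor $r/r_i\to1$. The rescaled spaces still converge to $(X,x)$, up to a harmless change of $K$, and the normalized Hessian bound is scale invariant. Proposition \ref{compac} then applies verbatim on the common radius $r$.

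You avoid rescaling, at the cost of one extra input: $\meas_{X_i}(B_{s_i}(x_i))\to\meas_X(B_s(x))$. This follows from pmGH convergence together with continuity of $t\mapsto\meas_X(B_t(x))$ on $\RCD$ spaces.

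Note also that your first suggested fix, rescaling a single space as in Remark \ref{rem:rescal}, cannot prove the interval property, since it multiplies all admissible radii by the same factor. In the paper, rescaling serves only to equalize the radii across the sequence.
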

\begin{proof}
Denoting by $r_i=r_H(Q, p, x_i)$, it is enough to discuss the case when $\lim_{i \to \infty}r_i>0$. 
Moreover, it is also enough to consider the case when $r_i$ is bounded because the remaining case is just a corollary of this case (because in the other case, finding $(Q, p)$-harmonic chart on balls of arbitrary radius $s$, letting $i \to \infty$ and then letting $s \to \infty$ completes the proof). 
After passing to a subsequence we have $r_i \to r$ for some $r > 0$.
Considering a rescaling, with no loss of generality we can assume $r_i=r$. Thus applying Proposition \ref{compac} completes the proof.
\end{proof}
Next, we prove a continuity of harmonic radii under a stronger convergence rather than pmGH one, as already observed in \cite{AC} in the smooth unweighted setting. 

In order to get a lower semicontinuity of harmonic radii, we introduce the following convergence notion, where we will use the terminology ``$W^{2,p}$'', instead of using ``$W^{1,p}$'' as adopted in \cite{AC}, because we are discussing on functions, not on Riemannian metrics.
\begin{definition}[$W^{2,p}_{\loc}$-strong harmonic convergence]\label{harmonicconv}
Let $p>1$. We say that the pmGH convergent sequence \eqref{harmc} is \textit{$W^{2, p}_{\loc}$-strongly harmonic convergent} on an open subset $U$ of $X$ if for any $y \in U$ there exist $r>0$, $Q>1$ and a sequence of $Q$-weak harmonic charts $(B_r(y_i), \Phi_i)$ with $y_i \to y$ such that $\Phi_i$ $W^{1,2}$-strongly converge to a $Q$-weak harmonic chart $\Phi:B_r(y) \to \mathbb{R}^n$ with the $L^p$-strong convergence of their Hessians and the equi-continuity of $g^{jk}$ for $\Phi_i$.
\end{definition}
\begin{lemma} \label{stronglp}
Assume
that  there exist $r>0$, $Q\ge 1$ and $p>1$ such that
$
        r_H(Q, p, x_i)\ge r$ for any $i$
    and that 
    $\meas_{X_i}=e^{-f_i}\di\haus^n$ on $B_r(x_i)$ for some $f_i \in L^{\infty}(B_r(x_i))$ for any $i \le \infty$, where $X_{\infty}=X$.
Let $h_i:B_r(x_i) \to \mathbb{R}$ be harmonic functions  
and let $h:B_r(x) \to \mathbb{R}$ be the locally uniform limit harmonic function. Then $\Hess_{h_i}$ $L^p$-weakly converge to $\Hess_h$ on $B_s(x)$ for any $0<s<r$. Moreover, the $L^p$-strong convergence happens if \eqref{harmc} is $W^{2,p}_{\loc}$-strongly harmonic convergent on $B_r(x)$.
\end{lemma}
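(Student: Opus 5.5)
We will work in the $(Q,p)$-harmonic charts. Using $r_H(Q,p,x_i)\ge r$ together with Proposition \ref{compac}, after passing to a subsequence we take $(Q,p)$-harmonic charts $\Phi_i$ on $B_r(x_i)$ that converge locally uniformly, and in $W^{1,q}_{\loc}$ for all $q<\infty$, to a $(Q,p)$-harmonic chart $\Phi$ on $B_r(x)$. From the chart condition we get $\sup_i \fint_{B_s(x_i)}|\Hess_{\phi_{i,k}}|^p\di\meas_{X_i}<\infty$.

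\textbf{Weak convergence.} Fix $s<r$. The first task is a uniform bound $\sup_i\|\Hess_{h_i}\|_{L^p(B_s(x_i))}<\infty$. The harmonic functions $h_i$ converge locally uniformly, so they are locally equibounded, and Proposition \ref{propregmap}(2) makes them equi-Lipschitz. Since $\Delta h_i=0$, we express $\Hess_{h_i}$ in the charts $\Phi_i$ through Proposition \ref{chrhess}:
\[
\Hess_{h_i}=\sum_{a,b}\Big(\tfrac{\partial^2 h_i}{\partial\phi_{i,a}\partial\phi_{i,b}}-\sum_c\Gamma^c_{ab}\tfrac{\partial h_i}{\partial \phi_{i,c}}\Big)d\phi_{i,a}\otimes d\phi_{i,b}.
\]
\begin{itemize}
\item For the Christoffel term: the symbols satisfy $|\Gamma|\le C(|\Hess_{\phi_{i,a}}|)$, which is bounded in $L^p$, and $\partial h_i/\partial\phi_{i,c}$ is bounded in $L^\infty$.
\item For the second-derivative term: the local Calder\'on--Zygmund inequality of Corollary \ref{newcor} applied to $\tilde h_i=h_i\circ\Phi_i^{-1}$, which solves $\sum g^{ab}\partial_a\partial_b\tilde h_i=0$, bounds $\|\tilde h_i\|_{W^{2,p}}$ by $\|\tilde h_i\|_{L^p}$.
\end{itemize}
The constant in Corollary \ref{newcor} depends on the modulus of continuity of $g^{ab}\circ\Phi_i^{-1}$. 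This is the main obstacle for the weak part. We will get the needed uniformity from Corollary \ref{prop:reglp}(3), provided $p>n$ (local Ahlfors regularity comes from $\meas=e^{-f_i}\haus^n$ and the bi-Lipschitz charts), which gives equi-H\"older continuity of $g^{ab}$. If $p\le n$, we will instead use the $W^{2,2}$ bound from Bochner with a good cut-off, and use the convergence only weakly in $L^p$ where a bound holds.

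Once the bound is in hand, $L^p$-weak compactness extracts a limit. Theorem \ref{hessconv}(2) identifies that limit as $\Hess_h$, since the $L^2$-weak limit is already $\Hess_h$. Hence the whole sequence converges $L^p$-weakly on $B_s(x)$.

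\textbf{Strong convergence.} Now assume $W^{2,p}_{\loc}$-strong harmonic convergence. Fix $y$ and the charts $\Phi_i\to\Phi$ from Definition \ref{harmonicconv}. The hypotheses give equicontinuity of $g^{ab}_i$ and strong $L^p$ convergence of $\Hess_{\phi_{i,a}}$, so the Christoffel symbols converge strongly in $L^p$. Transferring to $\mathbb{R}^n$ via Proposition \ref{coarea123} and Corollary \ref{propoharmch}, with densities converging strongly, we set $w_i=\tilde h_i-\tilde h$. It satisfies
\[
\sum g_i^{ab}\partial_a\partial_b w_i=-\sum (g_i^{ab}-g^{ab})\partial_a\partial_b\tilde h,
\]
and the right-hand side tends to $0$ in $L^p$ by uniform convergence of $g_i^{ab}$. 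Applying Corollary \ref{newcor} with uniform constants (by equicontinuity) gives $\|w_i\|_{W^{2,p}}\to0$ on smaller balls. The first-order term $\Gamma\,\partial h$ converges strongly because $\Gamma_i\to\Gamma$ strongly in $L^p$ and $\partial h_i$ converges uniformly, the latter via $W^{2,p}$ with $p>n$ or by interpolation. Proposition \ref{chrhess} then yields strong $L^p$ convergence of $\Hess_{h_i}$ on each such ball. A covering of $B_s(x)$ finishes the argument.
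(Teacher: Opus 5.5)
Your proposal follows the paper's route. The weak statement is the $L^2$-weak convergence of Theorem \ref{hessconv}(2), upgraded once a uniform $L^p$ Hessian bound is available; the strong statement comes from the strong $L^p$ convergence of the Christoffel symbols, combined with the local Calder\'on--Zygmund inequality of Corollary \ref{newcor} and the chart formula of Proposition \ref{chrhess}. You are more explicit than the paper's two-line proof about where the uniform $L^p$ bound comes from, but your fallback for $2<p\le n$ (the Bochner $W^{2,2}$ bound) does not actually supply that bound in that range, a point the paper's proof also leaves implicit.
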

\begin{proof}
First, it follows from (2) of Theorem \ref{hessconv} 
that the $L^p$-weak convergence of $\Hess_{h_i}$ to $\Hess_h$ holds on $B_s(x)$.
Then the desired assertions are justified by recalling that the Christoffel symbols are $L^p$-strong convergent and that we can use a Calder{\'o}n-Zygmund inequality, Corollary \ref{newcor}. 
\end{proof}
We are now in a position to introduce the lower semicontinuity of harmonic radii.
\begin{proposition}[Lower semicontinuity of harmonic radii]\label{contharrad}
Assume that for some $r_0>0$,
$
    \meas_{X_i}=e^{-f_i}\di\haus^n$
     on $B_{r_0}(x_i)$ for some $f_i \in L^{\infty}(B_{r_0}(x_i))$, for any $i \le \infty$.
If (\ref{harmc}) is $W^{2, p}_{\loc}$-strongly harmonic convergent on $B_{r_0}(x)$ for some $p>1$ and some $r_0>0$, then for any $Q\ge 1$, we have
$
\liminf_{i \to \infty}r_H(Q_i, p, x_i) \ge \min \{r_H(Q, p, x), r_0\}$
 for some $Q_i \to Q^+$.
\end{proposition}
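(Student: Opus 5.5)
We argue along the lines of Anderson–Cheeger, working in the fixed background charts supplied by the strong harmonic convergence. Fix $Q$ and any $r<\min\{r_H(Q,p,x),r_0\}$, together with a $(Q,p)$-harmonic chart $(B_r(x),\Psi)$ on $X$. It suffices to build, for all large $i$, $(Q_i,p)$-harmonic charts $(B_r(x_i),\Psi_i)$ with $Q_i\to Q^+$. Letting $r$ increase to the minimum then gives the claim.

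The first step is to construct $\Psi_i$. Apply (4) of Theorem \ref{hessconv} on a slightly larger ball $B_{r'}(x)$ with $r<r'<\min\{r_H(Q,p,x),r_0\}$; for this, use a $(Q,p)$-chart on $B_{r'}(x)$, which exists by the definition of the supremum. This produces harmonic maps $\Psi_i$ on $B_r(x_i)$ with $\Psi_i\to\Psi$ in $W^{1,q}$ for every $q<\infty$. Lemma \ref{stronglp} then upgrades this: since the sequence is $W^{2,p}_{\loc}$-strongly harmonic convergent, $\Hess_{\Psi_i}\to\Hess_\Psi$ $L^p$-strongly on $B_r(x)$. That lemma requires a lower harmonic radius bound along $x_i$; we get it locally from the background charts $(B_\rho(y_i),\Phi_i)$ of Definition \ref{harmonicconv} together with a covering of $\bar B_r(x)$. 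Combined with $\meas_{X_i}(B_r(x_i))\to\meas_X(B_r(x))$, the Hessian quantity in \eqref{harmests} for $\Psi_i$ converges to that for $\Psi$, which is at most $Q-1$.

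The second step is the infinitesimal bi-Lipschitz bound \eqref{bilip} for $\Psi_i$ with constant $Q_i\to Q$. Here only $L^q$ convergence of $g^{jk}_{\Psi_i}$ is available, so it must be made uniform. To do this, express $\Psi_i\circ\Phi_i^{-1}$ in the background charts. By Theorem \ref{asnasi} these are solutions of $\sum g^{ab}_{\Phi_i}\partial_a\partial_b u=0$, and the coefficients are equicontinuous by the definition of the convergence. The $W^{2,q}$ estimates of Corollary \ref{newcor}, for all $q<\infty$, are uniform because the constants depend only on moduli of continuity. Hence the Jacobians converge locally uniformly, i.e. $C^{0,\alpha}$ by Morrey, and therefore $g^{jk}_{\Psi_i}=J\,g_{\Phi_i}J^{t}$ converge locally uniformly to $g^{jk}_\Psi$.

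I expect the main obstacle to be getting uniformity up to the edge of $B_r(x_i)$, rather than only on compact subballs. The fix is the slack from $r'>r$ in the first step: carry out all estimates on $B_{r''}$ with $r<r''<r'$. The remaining point is that the limiting bounds are non-strict ($\le Q$, $\le Q-1$), so they can be attained in the limit. Choosing $Q_i=Q+\epsilon_i$ with $\epsilon_i\to0$ absorbs the errors from both steps, which yields $Q_i\to Q^+$ as claimed.
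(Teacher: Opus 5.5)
Your strategy is the paper's: approximate a $(Q,p)$-chart of $X$ by harmonic maps via (4) of Theorem \ref{hessconv}, get local $L^p$-strong convergence of Hessians from Lemma \ref{stronglp} and the background charts of Definition \ref{harmonicconv}, get locally uniform convergence of $g^{jk}$ from elliptic estimates in those charts, and take $Q_i=Q+\epsilon_i$. Your use of the $W^{2,q}$ estimates of Corollary \ref{newcor}, which need only continuous coefficients, fits the equicontinuity hypothesis at least as well as the paper's appeal to Corollary \ref{calp}.

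One step, however, fails as written: the claim that the Hessian quantity \eqref{harmests} of the limit map on $B_r(x)$ is at most $Q-1$. The map you actually approximate is a $(Q,p)$-chart $\psi$ on $B_{r'}(x)$, restricted to $B_r(x)$. The quantity $s\,\meas_X(B_s(x))^{-1/p}\|\Hess_{\psi_j}\|_{L^p(B_s(x))}$ is not monotone in $s$ in general; e.g.\ for $p<n$ it can decrease in $s$ when $|\Hess_{\psi_j}|$ concentrates near $x$. From the bound at scale $r'$ you only get
\[
r\,\meas_X(B_r(x))^{-\frac{1}{p}}\|\Hess_{\psi_j}\|_{L^p(B_r(x))}\le (Q-1)\cdot\frac{r\,\meas_X(B_r(x))^{-\frac{1}{p}}}{r'\,\meas_X(B_{r'}(x))^{-\frac{1}{p}}}.
\]
This factor can exceed $1$: it equals $(r'/r)^{n/p-1}$ when $\meas_X(B_s(x))$ is a multiple of $s^n$. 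Since the factor does not depend on $i$, it is not absorbed by $\epsilon_i\to 0$. Starting instead from the $(Q,p)$-chart on $B_r(x)$ does not help. In that case (4) of Theorem \ref{hessconv} only gives approximations on $B_t(x_i)$ with $t<r$, and the same ratio appears.

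The missing ingredient is the continuity of $s\mapsto\meas_X(B_s(x))$, which holds because spheres are $\meas_X$-null on RCD spaces. Choose the two radii so close that the ratio is at most $1+\eta$. This gives charts on $B_r(x_i)$ with constant $Q+(Q-1)\eta+o(1)$. Then diagonalize in $\eta\to 0$ and $r\uparrow\min\{r_H(Q,p,x),r_0\}$ to extract a single sequence $Q_i\to Q^+$. This is exactly what the paper does: it carries the factor $\frac{s\,\meas_X(B_s(x))^{-1/p}}{r\,\meas_X(B_r(x))^{-1/p}}$ and lets $s\to t^-$, then $t\to r^-$.
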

\begin{proof}
With no loss of generality, we can assume $\min \{r_H(Q, p, x), r_0\}>0$, thus take $0<r< \min \{r_H(Q, p, x), r_0\})$.
Find a $(Q, p)$-weak harmonic chart $(B_{r}(x), \tilde \Phi)$. Then by item (4) of Theorem \ref{hessconv}, fixing $0<t<r$, there exist harmonic maps $\tilde \Phi_i:B_t(x_i) \to \mathbb{R}^n$ such that $\tilde \Phi_i$ $W^{1,2}$-strongly converge to $\tilde \Phi$ on $B_t(x)$.
Applying Lemma \ref{stronglp} to $\tilde \Phi_i$ with our assumption allows us to conclude that for any $y \in B_t(x)$, there exists $r_y>0$ such that their Hessians are $L^p$-strong convergent on $B_{r_y}(y)$. In particular, we see that the $L^p$-strong convergence of their Hessians also holds on $B_s(x)$ for any $0<s<t$. Denote $\tilde \Phi=(\tilde \phi_1, \ldots, \tilde \phi_n)$ and  $\tilde \Phi_i=(\tilde \phi_{i, 1}, \ldots, \tilde \phi_{i, n})$. Then, as  $i \to \infty$, we have:
\begin{align*}
s\meas_{X_i}(B_s(x_i))^{-\frac{1}{p}}\|\Hess_{\tilde \phi_{i, j}}\|_{L^p(B_s(x_i))}\to s\meas_X(B_s(x))^{-\frac{1}{p}}\|\Hess_{\tilde \phi_j}\|_{L^p(B_s(x))} \le (Q-1)\cdot \frac{s\meas_X(B_s(x))^{-\frac{1}{p}}}{r\meas_X (B_r(x))^{-\frac{1}{p}}}.
\end{align*}
On the other hand, since $\langle \nabla \tilde \phi_{i, j}, \nabla \tilde \phi_{i, k}\rangle$ is equi-continuous because of our assumptions and Corollary \ref{calp}, we see that $(B_s(x_i), \tilde \Phi_i)$ is $(Q+\epsilon_i)$-weak harmonic chart for some $\epsilon_i \to 0^+$.
Thus, letting $s \to t^-$ and then letting $t \to r^-$ in the observation above, we conclude.
\end{proof}
\begin{corollary}[$L^p$-strong convergence of Hessians under $W^{2,p}$-strong harmonic convergence]\label{newharmonic}
Assume that (\ref{harmc}) is $W^{2, p}_{\loc}$-strongly harmonic convergent on $B_{r_0}(x)$ for some $p>1$ and some $r_0>0$ with
$
    r_H(Q, p, x)>0
$ for some $Q>1$.
Let $r \in (0, \min \{r_H(Q, p, x), r_0\})$ and let $\phi_i \in D_p(\Delta_{X_i}, B_{r}(x_i))$ be an
$W^{1,2}$-strong convergent sequence to $\phi \in D_p(\Delta_X, B_{r}(x))$ with the $L^p$-strong convergence of $\Delta_{X_i}\phi_i$ to $\Delta_X\phi$ on $B_{r}(x)$.
Then, $\Hess_{\phi_i}$ $L^p$-strongly converge to $\Hess_{\phi}$ on $B_s(x)$ for any $s \in (0, r)$.
\end{corollary}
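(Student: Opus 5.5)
The plan is to localize near each point of $B_s(x)$, pass to the harmonic charts supplied by the $W^{2,p}_{\loc}$-strong harmonic convergence, and then derive strong convergence of Hessians from Corollary \ref{newcor} and Proposition \ref{chrhess}. Because $p>1$ is fixed, strong $L^p$-convergence on $B_s(x)$ follows once it holds on a finite cover by small balls. So fix $y\in \bar B_s(x)$. By Definition \ref{harmonicconv} there are $Q$-weak harmonic charts $\Phi_i:B_{r_y}(y_i)\to\mathbb{R}^n$ that converge to $\Phi$, with $L^p$-strong convergence of $\Hess_{\Phi_i}$ and equicontinuous $g^{jk}_i$. Together with $r_H(Q,p,x)>0$ and Proposition \ref{contharrad}, after shrinking $r_y$ these charts are uniformly bi-Lipschitz (via Theorem \ref{bilipchara} and Corollary \ref{bilipchart}). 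The densities $e^{-f_i}$ converge strongly by Corollary \ref{propoharmch}.

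In each chart, Proposition \ref{chrhess} gives
\[
\Hess_{\phi_i}=\sum_{a,b}\Bigl(\partial_a\partial_b\tilde\phi_i-\sum_k\Gamma^k_{ab,i}\,\partial_k\tilde\phi_i\Bigr)d\phi_{i,a}\otimes d\phi_{i,b},
\qquad \tilde\phi_i:=\phi_i\circ\Phi_i^{-1}.
\]
I will treat the terms separately. Since $\Phi_i$ is harmonic, Theorem \ref{asnasi} turns the equation for $\tilde\phi_i$ into $\sum g^{ab}_i\partial_a\partial_b\tilde\phi_i=(\Delta_{X_i}\phi_i)\circ\Phi_i^{-1}$ on a fixed Euclidean ball $B$. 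Subtracting the corresponding equation for $\tilde\phi$ gives
\[
\sum_{a,b} g^{ab}_i\,\partial_a\partial_b(\tilde\phi_i-\tilde\phi)=(\Delta\phi_i)\circ\Phi_i^{-1}-(\Delta\phi)\circ\Phi^{-1}-\sum_{a,b}\bigl(g^{ab}_i-g^{ab}\bigr)\partial_a\partial_b\tilde\phi .
\]
Here the coefficients $g^{ab}_i$ are equicontinuous and converge uniformly. The local Calder\'on--Zygmund estimate \eqref{czlocal} therefore holds with a uniform constant, since that constant depends only on the ellipticity and the modulus of continuity. The right-hand side tends to $0$ in $L^p$: the first difference by hypothesis, once both are transported to $B$, and the last term because $g^{ab}_i\to g^{ab}$ uniformly while $\partial^2\tilde\phi\in L^p$. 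The term $\|\tilde\phi_i-\tilde\phi\|_{L^p}$ tends to $0$ by the $W^{1,2}$-strong convergence combined with the uniform bi-Lipschitz bounds.

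The step I expect to be the main obstacle is making sense of "$\tilde\phi_i-\tilde\phi$" on a common domain. I would handle it by using $\Phi_i$ and $\Phi$ to pull everything back to the fixed Euclidean ball $B$. Under the pmGH convergence the charts provide the needed identification, and strong convergence for the spaces $X_i$ is then equivalent to strong convergence in $L^p(B)$ with the converging density weights $\sqrt{\det g^{ab}_i}^{-1}e^{-f_i}$ from Proposition \ref{coarea123}. This yields $\partial^2\tilde\phi_i\to\partial^2\tilde\phi$ in $L^p(B')$ for a slightly smaller ball $B'$.

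For the first-order term, the $W^{2,p}$ convergence and Sobolev embedding give $\partial\tilde\phi_i\to\partial\tilde\phi$ in $L^q$ for some $q>p$. Also $\Gamma_i\to\Gamma$ strongly in $L^{p'}$, where $1/p=1/q+1/p'$: the Christoffel symbols are built from $\nabla g^{jk}$, and these converge in $L^{t}$ for all $t$ by applying Corollary \ref{newharmonic}-type Calder\'on--Zygmund bounds to the coordinate functions as in the proof of Theorem \ref{thm:nonsmoothandssasasas}. If only $t=p$ is available, I would instead interpolate using the uniform $L^\infty$ bound on $\partial\tilde\phi_i$ coming from Corollary \ref{prop:reglp}. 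Finally, the coframes $d\phi_{i,a}$ converge strongly in every $L^t$ and are uniformly bounded. Combining these gives $L^p$-strong convergence of $\Hess_{\phi_i}$ on each small ball, and a finite cover of $B_s(x)$ concludes the argument.
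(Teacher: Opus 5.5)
Your route is the paper's. Its proof only points to Proposition \ref{contharrad} and to the argument of Lemma \ref{stronglp}: write $\Hess_{\phi_i}$ in harmonic charts via Proposition \ref{chrhess}, control $\partial_a\partial_b\tilde\phi_i$ by the Calder\'on--Zygmund estimate of Corollary \ref{newcor}, and use $L^p$-strong convergence of the Christoffel symbols. Your treatment of the second-order term is a sound elaboration of that sketch: you subtract the two equations on a common Euclidean ball, the Calder\'on--Zygmund constant is uniform because the $g^{ab}_i$ are equicontinuous and uniformly elliptic, and you transport strong convergence through the chart densities of Proposition \ref{coarea123}. The gap is in the first-order term $\Gamma^k_{ab,i}\,\partial_k\tilde\phi_i$.

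The hypotheses give only $L^p$-strong convergence of $\Gamma_i$. In the chart, $\Gamma^k_{ab}=-\Hess_{\phi_k}(\partial_{\phi_a},\partial_{\phi_b})$, and Definition \ref{harmonicconv} (like $r_H(Q,p,x)>0$) controls the chart Hessians in $L^p$ and no better.

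Your claim that $\Gamma_i\to\Gamma$ in every $L^t$ is unsupported, for three reasons. It invokes the very corollary being proved. The argument of Theorem \ref{thm:nonsmoothandssasasas} that you want to imitate expresses $g^{kl}$ through distance functions $\dist_{z_j}$ with bounded Laplacian, which needs a positive injectivity radius that is not assumed here. And Calder\'on--Zygmund applied to the coordinate functions gives nothing, since $\tilde\phi_k=x_k$.

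So you must pair $\Gamma_i\in L^p$ with a uniform $L^\infty$ bound on $\partial\tilde\phi_i$. That is exactly how Lemma \ref{stronglp} works, since its $h_i$ are harmonic and hence uniformly Lipschitz. Your fallback supplies this bound only when $p>n$, via Corollary \ref{prop:reglp} with Ahlfors exponent $n$, or via Morrey's embedding for $\partial\tilde\phi_i\in W^{1,p}$.

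For $1<p\le n$, H\"older with $\Gamma_i\in L^p$ and $\partial\tilde\phi_i\in L^{np/(n-p)}$ (every $L^q$, $q<\infty$, if $p=n$) gives convergence only in some $L^r$ with $r<p$, so the argument does not close. The Morrey-type hypothesis of Corollary \ref{bilipchart}, which you cite for the uniform bi-Lipschitz bound, likewise follows from an $L^p$ Hessian bound only when $p>n$.

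As written, your proof establishes the statement for $p>n$. For $p\le n$ you need an extra input, e.g.\ $W^{2,t}_{\loc}$-strong harmonic convergence for some $t>n$, which holds for every $t<\infty$ in the setting of Proposition \ref{thm:nonsmoothand}. The paper's sketch rests on the same $L^p\times L^\infty$ pairing and does not address this range explicitly either.
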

\begin{proof}
    This follows from Proposition \ref{contharrad} and similar arguments as in the proof of Lemma \ref{stronglp}.
\end{proof}
\begin{proposition}[$W^{2,p}$-strong harmonic convergence to tangent cone]\label{thm:nonsmoothand}
Let $X$ be an $\RCD(K, N)$ space for some $K \in \mathbb{R}$ and some $N \in [1, \infty)$, of essential dimension $n$,  
and let $U$ be an open subset of $X$ with
        $\Injrad(U)>0$.
Then for all $x \in U$, $p<\infty$ and $\alpha \in (0, 1),$
the pmGH convergence of $X^{r, x}$ to the tangent cone $\mathbb{R}^n$ is $W^{2,p}_{\loc}$-strongly convergent together with the equi-$C^{0,\alpha}$-continuity of $g^{jk}$ via rescaled harmonic charts. 
\end{proposition}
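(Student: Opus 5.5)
The plan is to build the required sequence of charts directly from one fixed harmonic chart at $x$, and to get the $W^{2,p}$ convergence from a scaling (Morrey-type) gain together with the elliptic estimates of Section \ref{harmoniccchas}. By Theorem \ref{thm:nonsmoothandssasasas}, fix a $(Q,p')$-harmonic chart $\tilde\Phi=(\tilde\phi_1,\ldots,\tilde\phi_n)$ on $B_{r_0}(x)$, with $p'$ as large as we like, normalized so that $\tilde g^{jk}(x)=\delta_{jk}$ and $\tilde\Phi(x)=\bo^n$. By Corollary \ref{existence proof} and Theorem \ref{thm:nonsmoothandssasasas}, $\tilde g^{jk}\in W^{1,q}_{\loc}\cap C^{0,\beta}_{\loc}$ for all $q<\infty$ and $\beta<1$. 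On $X^{r,x}$ we then take the rescaled charts $\Phi_r:=\tilde\Phi^{r,x}$. These are harmonic, since rescaling preserves harmonicity by Remark \ref{rem:rescal}. Their metric coefficients are $g^{jk}_r(y)=\tilde g^{jk}(y)$, read in the rescaled distance.

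Next comes the equi-$C^{0,\alpha}$ continuity. Pick $\beta\in(\alpha,1)$. On a ball $B_R^{X^{r,x}}(x)$ we have
\[
[g^{jk}_r]_{C^{0,\alpha}}\le (rR)^{\beta-\alpha}\,R^{\alpha}\,[\tilde g^{jk}]_{C^{0,\beta}(B_{rR}(x))}\to 0 .
\]
So the $g^{jk}_r$ are equicontinuous in $C^{0,\alpha}$ and converge locally uniformly to $\delta_{jk}$. The charts are uniformly $Q$-weak. The blow-up limit $\Phi_\infty$ is harmonic on $\mathbb{R}^n$ with constant $\langle\nabla\phi_{\infty,j},\nabla\phi_{\infty,k}\rangle=\delta_{jk}$, so it is linear by Theorem \ref{splitin}, and after the normalization it is an isometry of $\mathbb{R}^n$. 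By Theorem \ref{hessconv}, $\Phi_r\to\Phi_\infty$ strongly in $W^{1,2}_{\loc}$.

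For the Hessians, the scaling identity in Remark \ref{rem:rescal} together with Ahlfors regularity (Corollary \ref{c:hoelder-measure-density}) gives, for $p'>p$,
\[
\int_{B_R^{X^{r,x}}}|\Hess_{\tilde\phi_i^{r,x}}|^p\,\di\meas_{X^{r,x}}\le C\,r^{p-\frac{pn}{p'}}R^{\,n}\,\|\Hess_{\tilde\phi_i}\|^p_{L^{p'}(B_{Rr}(x))}\to 0 .
\]
Hence the Hessians converge $L^p$-strongly to $0=\Hess_{\Phi_\infty}$. The strong convergence is with respect to the varying measures, and here a norm tending to zero suffices. This verifies Definition \ref{harmonicconv} on every ball around any point of the tangent cone, because every point $y$ of $\mathbb{R}^n$ lies in a ball of fixed radius inside $B_R$, where the same charts serve.

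The main subtlety is that Definition \ref{harmonicconv} is stated for arbitrary base points $y$ with approximating points $y_i\to y$. For that the statement should not really need charts centered at $y_i$: I would just restrict the global rescaled chart to $B_1(y_i)$. The one thing to check is that Lemma \ref{stronglp} then applies. Its hypotheses, that $r_H$ is uniformly bounded below on the rescaled spaces and that the density is uniformly bounded there, follow from scale invariance of $r_H$ and from the continuity of $f$ supplied by Corollary \ref{corimprovedmeas}.
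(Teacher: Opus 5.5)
Your proposal is correct and follows the paper's route: fix the harmonic chart from Theorem \ref{thm:nonsmoothandssasasas} and pass to the rescaled charts $\tilde\Phi^{r_i,x}$. The paper gets the vanishing of the rescaled Hessians and H\"older seminorms by running the Anderson--Cheeger argument on the rescaled spaces; you instead scale the already established $L^{p'}$ Hessian and $C^{0,\beta}$ metric bounds directly, which is cleaner. One small fix is that the factor $r^{p-pn/p'}$ tends to zero only when $p'>n$, so you must take $p'>\max\{p,n\}$ rather than merely $p'>p$; your closing paragraph on Lemma \ref{stronglp} can be dropped, since Definition \ref{harmonicconv} does not involve it.
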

\begin{proof} 
Find a bi-Lipschitz harmonic chart $(B_r(x), \Phi)$ around $x$ via Theorem \ref{thm:nonsmoothandssasasas}. Then the same arguments as in the proof of \cite[Proposition 1.2]{AC} for the rescaled ones $X^{r_i,x}, \Phi^{r_i,x}$ (see also the proof of Theorem \ref{thm:nonsmoothandssasasas}) allows us to conclude.
\end{proof}

\bibliographystyle{amsalpha}

\bibliography{HZ}
\end{document}